\newcommand{\eps}{\epsilon}
\newcommand{\cB}{\mathcal B}
\newcommand{\cE}{\mathcal E}
\newcommand{\cO}{\mathcal O}
\newcommand{\cQ}{\mathcal Q}
\newcommand{\cJ}{\mathcal J}
\newcommand{\cV}{\mathcal V}
\newcommand{\cZ}{\mathcal Z}
\newcommand{\cZC}{\mathcal Z^\complement}
\newcommand{\bv}{\mathbf v}
\newcommand{\bw}{\mathbf w}
\newcommand{\bp}{\mathbf p}
\newcommand{\bq}{\mathbf q}
\newcommand{\IR}{\mathbb{R}}
\newcommand{\IN}{\mathbb{N}}
\newcommand{\sA}{\mathscr{A}}
\title[Control of eigenfunctions in variable curvature]%
{Control of eigenfunctions\\
on surfaces of variable curvature}
\author{Semyon Dyatlov}
\email{dyatlov@math.berkeley.edu}
\address{Department of Mathematics,
University of California,
Berkeley, CA 94720}
\address{Department of Mathematics,
Massachusetts Institute of Technology,
Cambridge, MA 02139}
\author{Long Jin}
\email{jinlong@mail.tsinghua.edu.cn}
\address{Yau Mathematical Sciences Center, Tsinghua University, China}
\author{St\'ephane Nonnenmacher}
\email{stephane.nonnenmacher@u-psud.fr}
\address{Laboratoire de Math\'ematiques d'Orsay, Univ. Paris-Sud, CNRS, Universit\'e Paris-Saclay, 91405 Orsay cedex, France}
\begin{document}

\begin{abstract}
We prove a microlocal lower bound on the mass of high energy eigenfunctions of the Laplacian
on compact surfaces of negative curvature, and more generally on surfaces with Anosov
geodesic flows. This implies controllability for the Schr\"odinger equation
by any nonempty open set, and shows that every semiclassical measure has full support.
We also prove exponential energy decay for solutions to the damped
wave equation on such surfaces, for any
nontrivial damping coefficient. These results extend previous works~\cite{meassupp,JinDWE}, 
which considered the setting of surfaces of constant negative curvature.

The proofs use the strategy of~\cite{meassupp,JinDWE} and rely on the
fractal uncertainty principle of~\cite{fullgap}. However, in the variable curvature case
the stable/unstable foliations are not smooth, so we can no longer associate to these foliations
a pseudodifferential calculus of the type used in~\cite{hgap}. Instead, our argument uses Egorov's Theorem up to local Ehrenfest time and the hyperbolic parametrix of~\cite{NZ09}, together with the $C^{1+}$ regularity
of the stable/unstable foliations.
\end{abstract}

\maketitle

%%%%%%%%%%%%%%%%%%%%%%%%%%%%%%%%%%%%%%%%%%%%%%%%%%%%%%%%%%%%%%%%%%%%%%%%%%%%%%%%
%                                 INTRODUCTION                                 %
%%%%%%%%%%%%%%%%%%%%%%%%%%%%%%%%%%%%%%%%%%%%%%%%%%%%%%%%%%%%%%%%%%%%%%%%%%%%%%%%
\addtocounter{section}{1}
\addcontentsline{toc}{section}{1. Introduction}

Let $(M,g)$ be a compact smooth Riemannian manifold. The Laplace--Beltrami operator $\Delta$
admits a complete set of eigenfunctions
$$
u_j\in C^\infty(M),\quad
(-\Delta-\lambda_j^2)u_j=0,\quad
\|u_j\|_{L^2(M)}=1.
$$
These can be interpreted as stationary states of a quantum
particle evolving freely on~$M$,
with $\lambda_j^2$ being the energy of the particle, and $|u_j(x)|^2$ the probability density
of finding the particle at the point~$x$. One fundamental question in
the field of spectral geometry
is to understand the structure of the eigenfunctions $u_j$ in the
high-energy r\'egime $\lambda_j\to \infty$, using some information on
the geodesic flow on~$M$  (this flow corresponds to the dynamics of a classical
particle evolving freely on $M$). In particular, the field of Quantum
Chaos focuses on situations where the geodesic flow on~$M$ has chaotic
behavior.

In this paper we assume that $(M,g)$ is a compact connected Riemannian
surface without boundary, whose geodesic flow has the Anosov property (see~\S\ref{s:hyperbolics} for
definitions and properties); we will refer to such $(M,g)$ as an
Anosov surface.
Anosov flows form a standard mathematical model of systems with
strongly chaotic behavior, in some sense they are the ``purest'' form
of chaotic systems. A large
family of examples is provided by the surfaces of negative Gauss
curvature. 
Our first result gives a lower bound on the mass distribution of $u_j$, showing that the probability
of finding the quantum particle in any fixed open set is bounded away
from zero uniformly in the high-energy limit:
%%%%%%%%%%%%%%%%%%%%%%%%%%%%%%%%%%%%%%%%%%%%%%%%%%%%%%%%%%%%%%%%%%%%%%%%%%%%%%%%
\begin{theo}
  \label{t:coneig} Assume that $(M,g)$ is
an Anosov surface. Choose $\Omega\subset M$ open and nonempty. Then there exists a constant
$c_\Omega>0$ such that any eigenfunction $u_j$ of the Laplace--Beltrami
operator on $(M,g)$ satisfies
\begin{equation}
  \label{e:coneig}
\|u_j\|_{L^2(\Omega)}\geq c_\Omega\,.
\end{equation}
\end{theo}
%%%%%%%%%%%%%%%%%%%%%%%%%%%%%%%%%%%%%%%%%%%%%%%%%%%%%%%%%%%%%%%%%%%%%%%%%%%%%%%%
On any Riemannian manifold, the unique continuation principle shows
that a positive lower bound~\eqref{e:coneig} holds if one allows
$c_\Omega$ to depend on $\lambda_j$; see e.g. Lebeau--Robbiano~\cite[Corollaire~2]{LeRo}; an introduction to 
quantitative unique continuation for eigenfunctions of the Schr\"odinger operators on $\IR^d$ can be found in~\cite[Theorem~7.7]{e-z}. 
In general, the lower bound decays
exponentially fast as $\lambda_j\to\infty$, as can be seen in the
case of the round sphere, where one can construct Gaussian beam eigenstates
concentrating on a closed geodesic and exponentially small away from
this geodesic.
Note that related {\it propagation of smallness} results for solutions
of elliptic equations were also obtained for 
any set $\Omega$ of positive Lebesgue measure $\vol(\Omega)$ by Logunov--Malinnikova~\cite[\S1.7]{Remez}, who showed that
$$
\sup_\Omega |u_j|\geq \bigg({\vol(\Omega)\over C}\bigg)^{C\lambda_j}\sup_M |u_j|
$$
for some constant $C$ depending on $(M,g)$, but not on $\Omega$ or $j$.
In our situation, the energy-independent lower bound~\eqref{e:coneig} strongly
relies on the chaotic behavior of the geodesic flow. 

The proof of Theorem~\ref{t:coneig} gives
a stronger result featuring the localization of~$u_j$
in both position and Fourier spaces.
Let $\Op_h$ be a semiclassical quantization procedure on $M$,
and $S^0(T^*M)$ be the standard symbol class, see~\S\ref{s:semiclassics}.
Denote by $S^*M\subset T^*M$ the cosphere bundle.
%%%%%%%%%%%%%%%%%%%%%%%%%%%%%%%%%%%%%%%%%%%%%%%%%%%%%%%%%%%%%%%%%%%%%%%%%%%%%%%%
\begin{theo}
  \label{t:eig}
Assume that $a\in S^0(T^*M)$ and $a|_{S^*M}\not\equiv 0$.
Then there exist constants $C>0$ and $h_0>0$ depending only on $a$, such that
for all $h\in(0,h_0)$ and all $u\in H^2(M)$ we have the estimate
\begin{equation}
  \label{e:eig}
\|u\|_{L^2(M)}\leq C\|\Op_h(a)u\|_{L^2(M)}+{C\log(1/h)\over h}
\big\|(-h^2\Delta-I)u\big\|_{L^2(M)}.
\end{equation}
\end{theo}
%%%%%%%%%%%%%%%%%%%%%%%%%%%%%%%%%%%%%%%%%%%%%%%%%%%%%%%%%%%%%%%%%%%%%%%%%%%%%%%%
If $a=a(x)$ is a function on $M$, then $\Op_h(a)$ is the multiplication operator by~$a$.
Hence Theorem~\ref{t:eig} implies Theorem~\ref{t:coneig} by taking $a(x)$ supported inside $\Omega$
and putting $h:=\lambda_j^{-1}$, $u:=u_j$. More generally, the lower bound~\eqref{e:coneig}
holds for quasimodes $u_h$ of the Laplacian of the following type:
\begin{equation}
  \label{e:quasimode}
\|(-h^2\Delta-I)u_h\|_{L^2(M)}= o\big(h/\log(1/h)\big),\quad
h\to 0;\quad
\|u_h\|_{L^2(M)}=1.
\end{equation}
On the opposite, the lower bound~\eqref{e:coneig} may fail
for quasimodes of error $\cO(h/\log(1/h))$: for $(M,g)$ a surface of constant
negative curvature (also known as a \emph{hyperbolic surface}),
Brooks~\cite{Brooks} constructed quasimodes of such strength localized along a closed geodesic;
the construction was extended to more general two-dimensional quantum systems by
Eswarathasan--Nonnenmacher~\cite{Suresh1}, and in higher dimension to quasimodes localized
on an invariant submanifold of~$M$ by Eswarathasan--Silberman~\cite{Suresh2}.

%%%%%%%%%%%%%%%%%%%%%%%%%%%%%%%%%%%%%%%%%%%%%%%%%%%%%%%%%%%%%%%%%%%%%%%%%%%%%%%%
\subsection{Application to semiclassical measures}

We now discuss two applications of Theorem~\ref{t:eig}.
The first one concerns \emph{semiclassical measures}, which describe
asymptotic macroscopic distribution of
subsequences of eigenfunctions. More precisely, if $(u_{j_k})_{k\in\IN}$ is a sequence
of eigenfunctions with $\lambda_{j_k}\to\infty$ and $h_{j_k}:=\lambda_{j_k}^{-1}$,
then we say that $(u_{j_k})_{k}$ converges to a measure $\mu$ on $T^*M$ if
\begin{equation}
  \label{e:semimes}
\langle\Op_{h_{j_k}}(a)u_{j_k},u_{j_k}\rangle_{L^2(M)}\xrightarrow{k\to\infty}\int_{T^*M}a\,d\mu\quad\text{for all}\quad
a\in S^0(T^*M).
\end{equation}
The measure $\mu$ is called a semiclassical measure of the manifold
$(M,g)$, it describes the asymptotic microlocal properties of the
eigenstates along the sequence $(u_{j_k})$ of eigenfunctions.
A compactness argument shows that, from any sequence of eigenstates
$(u_{j_k})$, it is always possible to extract a subsequence which
converges to a semiclassical measure. Any semiclassical measure is a
probability measure supported inside $S^*M$, which is invariant under the geodesic flow,
see~\cite[Chapter~5]{e-z}. 

From~\eqref{e:semimes} and the semiclassical calculus we see that
$\|\Op_{h_{j_k}}(a)u_{j_k}\|_{L^2(M)}^2$ converges to $\int
|a|^2\,d\mu$. Thus Theorem~\ref{t:eig} implies the following
%%%%%%%%%%%%%%%%%%%%%%%%%%%%%%%%%%%%%%%%%%%%%%%%%%%%%%%%%%%%%%%%%%%%%%%%%%%%%%%%
\begin{theo}
\label{t:semimes}
Let $\mu$ be a semiclassical measure associated to a sequence
of Laplacian eigenfunctions on $M$. Then $\supp\mu=S^*M$, that is
$\mu(U)>0$ for any open nonempty $U\subset S^*M$.
\end{theo}
%%%%%%%%%%%%%%%%%%%%%%%%%%%%%%%%%%%%%%%%%%%%%%%%%%%%%%%%%%%%%%%%%%%%%%%%%%%%%%%%
While we do not provide an explicit formula for the lower bound on $\mu(U)$ in terms
of~$U$, we show that this lower bound only depends on a certain dynamical quantity associated to~$U$:
%%%%%%%%%%%%%%%%%%%%%%%%%%%%%%%%%%%%%%%%%%%%%%%%%%%%%%%%%%%%%%%%%%%%%%%%%%%%%%%%
\begin{theo}
\label{t:eig-quant}
There exists $\varepsilon_0>0$ depending only on $(M,g)$ such that the following holds.
Assume that $U\subset S^*M$ is an open set which is $(L_0,L_1)$-dense
in both unstable and stable directions in the sense of Definition~\ref{d:stun-dense} below,
and has diameter less than~$\varepsilon_0$.
Then for each semiclassical measure $\mu$ we have
$\mu(U)\geq c$, where the constant $c>0$ depends only on $(M,g)$
and on the lengths $(L_0,L_1)$.
\end{theo}
%%%%%%%%%%%%%%%%%%%%%%%%%%%%%%%%%%%%%%%%%%%%%%%%%%%%%%%%%%%%%%%%%%%%%%%%%%%%%%%%
Theorem~\ref{t:eig-quant} follows by analyzing the dependence of various parameters
in the proof of Theorem~\ref{t:eig}. We indicate the required changes
in various remarks throughout the paper, with the proof of
Theorem~\ref{t:eig-quant} explained at the end of~\S\ref{s:uncon-long}.
Let us remark that Theorems~\ref{t:semimes} and~\ref{t:eig-quant} also apply to semiclassical measures associated with quasimodes of the form~\eqref{e:quasimode}.

We believe that our results are not specific to the Laplacian, but can be extended to operators of the form $P=-\Delta + P_1 + P_0$ on $(M,g)$, where $P_i$ are symmetric differential operators of order $i$ with smooth coefficients. `
One could also consider semiclassical Schr\"odinger operators $P_h=-h^2\Delta + V$ with $V\in C^\infty(M;\mathbb R)$, and study families of eigenstates $P_h u_h =E(h) u_h$, with eigenvalues $E(h)\to 1$ when $h\to 0$. If the potential $V$ is sufficiently small, the Hamiltonian flow generated by the symbol $p(x,\xi)=|\xi|^2_g+V(x)$, restricted to the energy hypersurface $p^{-1}(1)$, will still enjoy the Anosov property, due to the structural stability of that property. We then believe that the eigenstates $(u_h)_{h\to 0}$, as well as the associated semiclassical measures, will satisfy similar delocalization properties as in Theorems~\ref{t:coneig}--\ref{t:eig-quant}.

To put Theorems~\ref{t:eig}--\ref{t:eig-quant} into context, let us
give a brief historical review, referring to
the expository articles of Marklof~\cite{MarklofReview},
Zelditch~\cite{ZelditchReview}, and Sarnak~\cite{SarnakQUE}
for more information. The Quantum Ergodicity theorem of Shnirelman~\cite{Shnirelman,Shnirelman2},
Zelditch~\cite{ZelditchQE}, and Colin de Verdi\`ere~\cite{CdV} states that
when the geodesic flow on $S^*M$ is ergodic (with respect to the
Liouville measure $\mu_L$), there exists a density one sequence
$(u_{j_k})$ which \emph{asymptotically equidistributes}, namely which
converges to the Liouville measure $\mu_L$ in the sense of \eqref{e:semimes}. The Quantum Unique Ergodicity (QUE) conjecture
formulated by Rudnick--Sarnak~\cite{RudnickSarnak} states that on any Anosov manifold,
the full sequence of eigenfunctions
equidistributes, that is $\mu_L$ is the unique semiclassical measure. So far this
conjecture has only been established for hyperbolic surfaces possessing arithmetic
symmetries~\cite{LindenstraussQUE}. On the other hand, there exist toy models of
quantized Anosov maps on the two-dimensional torus, where the corresponding QUE conjecture fails, see
Faure--Nonnenmacher--de Bi\`evre~\cite{FNB} and Anantharaman--Nonnenmacher~\cite{AnantharamanNonnenmacherBaker}.
On a similar Anosov toy model on a higher dimensional torus, Kelmer~\cite{Kelmer} exhibited counterexamples to QUE, but also to our full delocalization result, featuring semiclassical measure supported on proper submanifolds.

With QUE seeming out of reach, it is natural to wonder which flow invariant probability
measures on $S^*M$ can arise as semiclassical measures; in other
words, does quantum mechanics select certain invariant measures, or
allow all of them? The first restrictions
on semiclassical measures were proved by Anantharaman~\cite{AnAnn},
Anantharaman--Nonnenmacher~\cite{AN07}, Rivi\`ere~\cite{Riv10}, and
Anantharaman--Silberman~\cite{AnantharamanSilberman}, in the form of
positive lower bounds on the Kolmogorov--Sinai entropy of $\mu$. The
entropy is a nonnegative number associated with each invariant
measure, representing the information theoretic complexity of the
measure. Low-entropy measures therefore have low complexity. 
These lower bounds on the entropy exclude, for instance, the extreme case when $\mu$ is a $\delta$ measure
on a closed geodesic. 
Our Theorem~\ref{t:semimes} gives a different type of restriction on
$\mu$.
As explained in~\cite{meassupp}, there exist invariant measures which are excluded by Theorem~\ref{t:semimes}
but not by entropy bounds, and vice versa. For instance, on any Anosov
surface one can construct flow invariant fractal subsets $F\subsetneq
S^*M$ of Hausdorff dimension close to~$3$, which support invariant measures
of large entropy. Conversely, an invariant measure of the form $\epsilon
\mu_L+(1-\epsilon)\delta_\gamma$, with $\delta_\gamma$ the delta
measure on a closed geodesic and $0<\eps\ll 1$, will have full support
but small entropy.

In the special case of hyperbolic surfaces, Theorems~\ref{t:coneig}--\ref{t:semimes} were proved
by Dyatlov--Jin~\cite{meassupp}; see also the reviews~\cite{fwlEDP,ICMPReview}.
The proofs in the present paper partially
use the strategy of~\cite{meassupp}, in particular they rely on the fractal uncertainty
principle (FUP) established by Bourgain--Dyatlov~\cite{fullgap}. However, many new difficulties
arise in the variable curvature case, in particular from the fact that the
stable and unstable foliations on $S^*M$ are not smooth, see~\S\S\ref{s:remarks-proofs},\ref{s:outline} below.

%%%%%%%%%%%%%%%%%%%%%%%%%%%%%%%%%%%%%%%%%%%%%%%%%%%%%%%%%%%%%%%%%%%%%%%%%%%%%%%%
\subsection{Application to control theory}

The second application of Theorem~\ref{t:eig} is to observability
and exact null-controllability
for the (nonsemiclassical) Schr\"odinger equation:
%%%%%%%%%%%%%%%%%%%%%%%%%%%%%%%%%%%%%%%%%%%%%%%%%%%%%%%%%%%%%%%%%%%%%%%%%%%%%%%%
\begin{theo}
  \label{t:control}
Assume that $\Omega\subset M$ is open and nonempty, and fix $T>0$. Then:
\begin{itemize}
\item (Observability)
There exists a constant $K>0$ depending only on $M$, $\Omega$, and~$T$, such that for any $u_0\in L^2(M)$, we have
\begin{equation}
\label{e:observe}
\|u_0\|_{L^2(M)}^2\leq K\int_0^T\|e^{it\Delta}u_0\|_{L^2(\Omega)}^2dt;
\end{equation}
\item (Control) For any $u_0\in L^2(M)$, there exists $f\in L^2((0,T)\times\Omega)$ such that the solution to the equation
\begin{equation*}
(i\partial_t+\Delta)u(t,x)=f\mathbf 1_{(0,T)\times\Omega}(t,x),
\quad 
u(0,x)=u_0(x)
\end{equation*}
satisfies
\begin{equation*}
u(T,x)\equiv 0.
\end{equation*}
\end{itemize}
\end{theo}
%%%%%%%%%%%%%%%%%%%%%%%%%%%%%%%%%%%%%%%%%%%%%%%%%%%%%%%%%%%%%%%%%%%%%%%%%%%%%%%%
The proof that the above statements follow from Theorem~\ref{t:eig} is identical
to the one in Jin~\cite{JinControl}, so we will not reproduce it here.

For a general manifold, such observability/control is known to hold if the open set $\Omega$ 
satisfies the geometric control condition of Bardos--Lebeau--Rauch~\cite{BLR92,Le92}, namely if every geodesic ray intersects $\Omega$. Yet, it may hold as well if this geometric condition is violated, for instance on compact manifolds of negative sectional curvature, provided the set of geodesics never meeting $\Omega$ is ``sufficiently thin'', see Anantharaman--Rivi\`ere~\cite{AnRi12}. The novelty in the above two-dimensional result, is that this control holds for \emph{any} open set $\Omega$, now matter how thick the set of uncontrolled geodesics.
So far the only other family of manifolds for which observability/control was known to hold
for any $\Omega$ were the flat tori, see Haraux~\cite{Ha89} and Jaffard~\cite{Ja90}.  Further references on this question may be found in Burq--Zworski~\cite{BZ04} and Jin~\cite{JinControl}.
%%%%%%%%%%%%%%%%%%%%%%%%%%%%%%%%%%%%%%%%%%%%%%%%%%%%%%%%%%%%%%%%%%%%%%%%%%%%%%%%
\subsection{Damped wave equation}

Our final result concerns the long time behavior of solutions to the damped wave equation on $M$, with damping function $b\in C^\infty(M)$, $b\geq 0$, $b\not\equiv 0$:
\begin{equation}
	\label{e:dampwave}
(\partial_t^2-\Delta+2b(x)\partial_t)v(t,x)=0,
\quad v|_{t=0}=v_0(x),\quad
 \partial_tv|_{t=0}=v_1(x).
\end{equation}
Semigroup theory shows that for initial data $(v_0,v_1)\in \mathcal{H}^0:=H^1(M)\times L^2(M)$,
the above equation has a unique solution in
$C(\mathbb{R}^+;H^1(M))\cap C^1(\mathbb R^+;L^2(M))$.
The energy of this solution at time $t\geq 0$ is defined by
\begin{equation}
	\label{e:energy}
E(v(t)):=\frac{1}{2}\int_M|\partial_tv(t,x)|^2+|\nabla_x v(t,x)|^2\,dx.
\end{equation}
It is well-known that on every compact Riemannian manifold, this energy
decays to zero when $t\to\infty$. However, the rate of decay depends
on a subtle interplay between the geodesic flow and the
support of the damping function, see Lebeau~\cite{Le96}. In particular,
exponential decay (the fastest possible decay) always holds if the
damping function satisfies the geometric control condition, that is
any geodesic intersects the set $\{b > 0\}$. 
In the case of an Anosov
surface with any damping function $b$, we obtain exponential decay
without requiring this geometric condition:
%%%%%%%%%%%%%%%%%%%%%%%%%%%%%%%%%%%%%%%%%%%%%%%%%%%%%%%%%%%%%%%%%%%%%%%%%%%%%%%
\begin{theo}
  \label{t:dwe}
Assume that $b\geq0$ but $b\not\equiv0$.
Then for every $s>0$, there exist constants $C$ and
$\gamma=\gamma(s)>0$ such that for any $(v_0,v_1)\in
\mathcal{H}^s:=H^{s+1}(M)\times H^s(M)$, the energy of the solution decays exponentially:
\begin{equation}
	\label{e:energy-decay}
E(v(t))\leq C\,e^{-\gamma t}\,\|(v_0,v_1)\|_{\mathcal{H}^s}^2\,.
\end{equation}
\end{theo}
%%%%%%%%%%%%%%%%%%%%%%%%%%%%%%%%%%%%%%%%%%%%%%%%%%%%%%%%%%%%%%%%%%%%%%%%%%%%%%%
We remark that on any compact manifold, the decay~\eqref{e:energy-decay} holds for $s=0$ if and only if the set $\{b>0\}$ satisfies the geometric control condition,
see Rauch--Taylor~\cite{RaTa75}. 
On manifolds of negative curvature, an exponential decay controlled by a higher Sobolev norm $s>0$ has been proved in situations where the set of undamped trajectories is sufficiently ``thin'', see Schenck~\cite{Schenk}.

To our knowledge, Theorem~\ref{t:dwe} gives the first class of manifolds (of
dimension $\geq 2$) for which the energy decays exponentially (under a control by a higher Sobolev norm), no matter how small the support of the damping is. As a comparison, in the case of flat tori, in absence of geometric control of the region $\{b>0\}$,  the decay is instead algebraic in time, see Anantharaman--L\'eautaud~\cite{AnLe14}. For an account on previous results on the rate of energy decay for damped waves, the reader may consult the introduction to~\cite{JinDWE} and the references therein.

The proof of Theorem~\ref{t:dwe} uses many of the ingredients of the proof of Theorem~\ref{t:eig}, including the key estimate, Proposition~\ref{l:longdec-0}.
In the special case of hyperbolic surfaces, Theorem~\ref{t:dwe} was proved by Jin~\cite{JinDWE} using the methods of~\cite{meassupp}.

%%%%%%%%%%%%%%%%%%%%%%%%%%%%%%%%%%%%%%%%%%%%%%%%%%%%%%%%%%%%%%%%%%%%%%%%%%%%%%%%
\subsection{Structure of the article}
\label{s:remarks-proofs}

\begin{itemize}
\item In~\S\ref{s:ingredients} we review various ingredients used in the proof.
Those include: hyperbolic (Anosov) dynamics and stable/unstable manifolds (\S\ref{s:hyperbolics});
pseudodifferential operators with mildly exotic symbols and Egorov's theorem (\S\ref{s:semiclassics}); Lagrangian distributions/Fourier integral operators (\S\ref{s:prelim-fio});
fractal uncertainty principle (\S\ref{s:fups});
proof of porosity of dynamically defined sets (\S\ref{s:porosity-basic}).
%%%%%
\item In~\S\ref{s:proofs-of-theorems} we give the proofs of Theorems~\ref{t:eig}, \ref{t:eig-quant}
(\S\ref{s:proofs-1}), and~\ref{t:dwe} (\S\ref{s:proofs-2}). The
strategy of proof is
similar to the one used in~\cite{meassupp,JinDWE} in the constant
curvature case. It starts from a microlocal partition of the identity,
quantizing the partition of~$S^*M$ into the controlled vs. uncontrolled
regions. Using the wave group, we may refine this microlocal partition up to a time $N$, 
each element of the refined partition being an operator
$A_{\bw}=A_{w_{N}}(N)\cdots A_{w_1}(1)A_{w_0}$ indexed by a word
$\bw=w_0\ldots w_N$, each symbol $w_j$ indicating whether the system
sits in the controlled or uncontrolled region at the time $j$. We need to push
this refinement up to a time $N\sim C\log(1/h)$ exceeding the
Ehrenfest time, which implies that the operators $A_{\bw}$ are no longer
pseudodifferential operators. The core of the proof then
consists in a key estimate on these ``long'' operators $A_{\bw}$, given in Proposition~\ref{l:longdec-0}.
%%%%%
\item \S\ref{s:long-word-fup} is devoted to the proof of this key
  Proposition. It proceeds by transforming this estimate into a
  collection of fractal uncertainty principles. 
This part of the proof is very different from the constant curvature case,
due to the fact that the Ehrenfest time is not uniform, but depends on
the trajectory; the difficulty also comes from the low regularity of
the stable/unstable foliations, which are not
$C^\infty$, but only $C^{2-\eps}$.
An outline of the proof is provided in~\S\ref{s:outline}.
%%%%%
\item In~\S\ref{s:ops-Aq} we complete the analysis of the operators
  $A_{\bw}$, by splitting them into more elementary pieces, which we
  may precisely analyze through a version of Egorov's Theorem up to
  the local Ehrenfest time. Similar elementary pieces were already introduced
  in the proofs of entropic lower bounds~\cite{AnAnn,NZ09,Riv10}; we will need 
 a somewhat more precise description of these operators for our aims. 
%%%%%
\item Appendix~\ref{s:semi-detail} contains quantitative estimates for
  the semiclassical pseudodifferential calculus on a compact surface,
used in~\S\ref{s:semiclassics} and~\S\ref{s:ops-Aq}.
\end{itemize}

%%%%%%%%%%%%%%%%%%%%%%%%%%%%%%%%%%%%%%%%%%%%%%%%%%%%%%%%%%%%%%%%%%%%%%%%%%%%%%%%
%%%%%%%%%%%%%%%%%%%%%%%%%%%%%%%%%%%%%%%%%%%%%%%%%%%%%%%%%%%%%%%%%%%%%%%%%%%%%%%%
\section{Ingredients}
  \label{s:ingredients}

In this section we review some of the ingredients used in the proof:
hyperbolic dynamics (\S\ref{s:hyperbolics}), semiclassical analysis (\S\S\ref{s:semiclassics}--\ref{s:prelim-fio}),
fractal uncertainty principle (\S\ref{s:fups}),
and porosity properties in the stable/unstable directions (\S\ref{s:porosity-basic}).

%%%%%%%%%%%%%%%%%%%%%%%%%%%%%%%%%%%%%%%%%%%%%%%%%%%%%%%%%%%%%%%%%%%%%%%%%%%%%%%%
\subsection{Hyperbolic dynamics}
  \label{s:hyperbolics}
  
Let $(M,g)$ be a compact connected Riemannian surface. Denote
$$
\begin{aligned}
T^*M\setminus 0& := \{(x,\xi)\in T^*M\colon \xi\neq 0\},\\
S^*M&:=\{(x,\xi)\in T^*M\colon |\xi|_g=1\}.
\end{aligned}
$$
Define the smooth function
\begin{equation}
  \label{e:p-def}
p:T^*M\setminus 0\to \mathbb R,\quad
p(x,\xi):=|\xi|_g.
\end{equation}
The Hamiltonian flow of $p$,
\begin{equation}
  \label{e:phi-def}
\varphi_t:=\exp(tH_p):T^*M\setminus 0\to T^*M\setminus 0
\end{equation}
is the homogeneous geodesic flow, note that it preserves $S^*M$.

We assume that the restriction of $\varphi_t$ to $S^*M$ is an \emph{Anosov flow},
namely for each $\rho\in S^*M$ there is a splitting
of the tangent space $T_\rho(S^*M)$ into one-dimensional spaces
$$
T_\rho (S^*M)=E_0(\rho)\oplus E_s(\rho)\oplus E_u(\rho)
$$
such that:
\begin{itemize}
\item $E_0(\rho)=\mathbb RH_p(\rho)$ is the flow direction;
\item $E_s,E_u$ are invariant under $d\varphi_t$;
\item $E_s$ is \emph{stable} and $E_u$ is \emph{unstable} in the following
sense: for any choice of continuous metric $|\bullet|$ on the fibers of $T(S^*M)$, there
exist $C,\theta>0$ such that
\begin{equation}
  \label{e:stun-exp}
|d\varphi_t(\rho)v|\leq Ce^{-\theta|t|}|v|,\quad\begin{cases}
v\in E_s(\rho),& t\geq 0;\\
v\in E_u(\rho),& t\leq 0.
\end{cases}
\end{equation}
\end{itemize}
The Anosov assumption holds in particular if $(M,g)$ has everywhere negative Gauss curvature,
see~\cite[Theorem~17.6.2]{KaHa}, \cite[Theorem~3.9.1]{Klingenberg}, or~\cite[Theorem~6 in~\S5.1]{stunnote}.
In the present setting the dependence of the spaces $E_s,E_u$ (and the stable/unstable manifolds defined in~\S\ref{s:stun} below)
on the base point $\rho$ is $C^{2-}$ but (unless $M$ has constant curvature) not $C^2$, see
 Remark~1 following Lemma~\ref{l:stun-straight}.

Since $\varphi_t$ is a homogeneous Hamiltonian flow, it preserves the
canonical 1-form $\xi\,dx$ (which is the symplectic dual of the dilation field
$\xi\cdot\partial_\xi$). By~\eqref{e:stun-exp} we see that
$\xi\,dx$ annihilates $E_s\oplus E_u$, that is
\begin{equation}
  \label{e:contact-structure}
E_s\oplus E_u=\ker(dp)\cap \ker(\xi\,dx).
\end{equation}
We fix \emph{adapted metrics} $|\bullet|_s$, $|\bullet|_u$, which are smooth Riemannian
metrics on $S^*M$, so that
the following stronger version of~\eqref{e:stun-exp} holds for some $\Lambda_0>0$:
\begin{equation}
  \label{e:stun-exp-2}
\begin{aligned}
|d\varphi_t(\rho)v|_s\leq e^{-\Lambda_0|t|}|v|_s,& \quad v\in E_s(\rho),\quad t\geq 0;\\
|d\varphi_t(\rho)v|_u\leq e^{-\Lambda_0|t|}|v|_u,& \quad v\in E_u(\rho),\quad t\leq 0.
\end{aligned}
\end{equation}
See for instance~\cite[Lemma~4.7]{stunnote} for the construction of such metrics.
By homogeneity we extend the spaces $E_0,E_s,E_u$ to $T^*M\setminus 0$.
We also extend $|\bullet|_s$, $|\bullet|_u$ to homogeneous metrics of degree~0
on~$T^*M\setminus 0$.

For each $\rho \in T^*M\setminus 0$ and $t\in\mathbb R$ we define the \emph{stable/unstable expansion rates}
(since $E_s,E_u$ are one-dimensional these coincide with the stable/unstable Jacobians):
\begin{equation}
  \label{e:stun-J-def}
\begin{aligned}
|d\varphi_t(\rho)v|_s= J^s_t(\rho)|v|_s,&\quad
v\in E_s(\rho);\\
|d\varphi_t(\rho)v|_u= J^u_t(\rho)|v|_u,&\quad
v\in E_u(\rho).
\end{aligned}
\end{equation}
From the stable/unstable decomposition and the homogeneity of the flow we see that
for all $\rho\in \{{1\over 4}\leq |\xi|_g\leq 4\}$ and all~$t$
\begin{equation}
  \label{e:exp-rate}
\begin{aligned}
\|d\varphi_t(\rho)\|\leq CJ^u_t(\rho),&\quad t\geq 0;\\
\|d\varphi_t(\rho)\|\leq CJ^s_t(\rho),&\quad t\leq 0.
\end{aligned}
\end{equation}
Since $E_0$ is spanned by $H_p$ and $E_s,E_u$ are tangent to the level sets of $p$,
we see that the weak stable/unstable spaces $E_s\oplus E_0$,
$E_u\oplus E_0$ are Lagrangian with respect to the standard symplectic
form $\omega$ on $T^*M\setminus 0$
and $E_s\oplus E_u$ is symplectic.
Since $\varphi_t$ are symplectomorphisms,
there exists a constant $C$ such that
for all $\rho\in T^*M\setminus 0$ and $t\in \mathbb R$
\begin{equation}
  \label{e:J-su}
C^{-1}\leq J^s_t(\rho)J^u_t(\rho)\leq C.
\end{equation}
Moreover, $J^s_t$ and $J^u_t$ are invariant under a short time evolution
by the flow $\varphi_t$ up to a multiplicative constant: for all
$\rho\in T^*M\setminus 0$, $t'\in [-1,1]$, and $t\in\mathbb R$
\begin{equation}
  \label{e:J-flow}
C^{-1}J^s_t(\rho)\leq J^s_t(\varphi_{t'}(\rho))\leq CJ^s_t(\rho),\quad
C^{-1}J^u_t(\rho)\leq J^u_t(\varphi_{t'}(\rho))\leq CJ^u_t(\rho).
\end{equation}
By~\eqref{e:stun-exp-2},
$J^s_t$ is exponentially decaying in time, and $J^u_t$ cannot grow faster than exponentially due to the compactness of $M$.
As a result, there exist constants%
\footnote{We can think of $\Lambda_0$ as the \emph{minimal expansion rate} and $\Lambda_1$ as
the \emph{maximal expansion rate} but strictly speaking this is not the case:
instead one should take as $\Lambda_0$ any number smaller than the
minimal expansion rate, and as $\Lambda_1$ any number larger than the maximal
expansion rate.}
$0<\Lambda_0\leq\Lambda_1$ such that
for all $\rho\in T^*M\setminus 0$
\begin{equation}
  \label{e:Lambda-0-1}
\begin{aligned}
e^{\Lambda_0 |t|}\leq J^u_t(\rho)\leq e^{\Lambda_1 |t|},&\quad
e^{-\Lambda_1 |t|}\leq J^s_t(\rho)\leq e^{-\Lambda_0 |t|}
&\quad\text{for all}\quad t\geq 0;\\
e^{-\Lambda_1 |t|}\leq J^u_t(\rho)\leq e^{-\Lambda_0 |t|},&\quad
e^{\Lambda_0 |t|}\leq J^s_t(\rho)\leq e^{\Lambda_1 |t|}
&\quad\text{for all}\quad t\leq 0.
\end{aligned}
\end{equation}
For technical reasons (in the proof of Lemma~\ref{l:cq-log}) we choose to take $\Lambda_1\geq 1$.

Define also
\begin{equation}
  \label{e:Lambda-def}
\Lambda:=\bigg\lceil{\Lambda_1\over\Lambda_0}\bigg\rceil\in\mathbb N.
\end{equation}  

%%%%%%%%%%%%%%%%%%%%%%%%%%%%%%%%%%%%%%%%%%%%%%%%%%%%%%%%%%%%%%%%%%%%%%%%%%%%%%%%
\subsubsection{Stable/unstable manifolds}
  \label{s:stun}

For $\rho\in S^*M$, denote by
$$
W_s(\rho),W_u(\rho)\subset S^*M
$$
the \emph{local stable/unstable leaves passing through $\rho$}.
These are $C^\infty$-embedded one dimensional disks (i.e. intervals)
tangent to $E_s$, $E_u$. Their definition depends on arbitrary choices
(because of the freedom of choosing where to end the interval)
however their behavior near each point depends only on $(M,g)$.
For the construction of $W_s(\rho),W_u(\rho)$
and their properties we refer to~\cite[Theorem~17.4.3]{KaHa},
\cite[Theorem~3.9.2]{Klingenberg},
or~\cite[Theorem~5 in~\S4.6]{stunnote}. We can ajust the definition of
these local laves such that they satisfy the following invariance properties under the flow $\varphi_t$:
\begin{equation}
  \label{e:hyprop-0}
\forall \rho\in S^*M,\qquad \varphi_1(W_s(\rho))\subset W_s(\varphi_1(\rho)),\quad
\varphi_{-1}(W_u(\rho))\subset W_u(\varphi_{-1}(\rho)).
\end{equation}
We also use the
\emph{local weak stable/unstable leaves}
\begin{equation}
  \label{e:weak-leaves}
W_{0s}(\rho):=\bigcup_{|t|\leq\tilde\varepsilon}\varphi_t(W_s(\rho)),\quad
W_{0u}(\rho):=\bigcup_{|t|\leq\tilde\varepsilon}\varphi_t(W_u(\rho)),
\end{equation}
which are $C^\infty$-embedded two dimensional rectangles inside $S^*M$ tangent
to the weak stable/unstable spaces $E_0\oplus E_s$, $E_0\oplus E_u$. 
Here $\tilde\varepsilon>0$ is fixed small,
depending only on $(M,g)$.
We extend $W_s,W_u,W_{0s},W_{0u}$ to $T^*M\setminus 0$ by homogeneity, however
for simplicity
the lemmas below are stated on~$S^*M$.

The stable/unstable manifolds are related to the dynamics of $\varphi_t$ by the following
lemma. To state it we introduce the following piece of notation: for $A,B>0$
\begin{equation}
A\sim B\quad\text{iff}\quad
C^{-1}A\leq B\leq CA\quad\text{for some}\ C>0
\ \text{depending only on}\ (M,g).
\end{equation}
%%%%%%%%%%%%%%%%%%%%%%%%%%%%%%%%%%%%%%%%%%%%%%%%%%%%%%%%%%%%%%%%%%%%%%%%%%%%%%%%
\begin{lemm}
  \label{l:stun-main}
Fix a Riemannian metric on $S^*M$ which
induces a distance function $d(\bullet,\bullet)$. Then
there exist $C,\varepsilon_0>0$ such that for all $\rho,\tilde\rho\in S^*M$ we have:
%%%%%%%%%%%%
\begin{enumerate}
\item if $\tilde \rho\in W_s(\rho)$, then
\begin{equation}
  \label{e:st-main-1}
d(\varphi_t(\rho),\varphi_t(\tilde \rho))\leq C J^s_t(\rho)d(\rho,\tilde\rho)\quad\text{for all}\quad t\geq 0;
\end{equation}
%%%%%%
\item if $\tilde\rho\in W_u(\rho)$, then 
\begin{equation}
  \label{e:un-main-1}
d(\varphi_t(\rho),\varphi_t(\tilde \rho))\leq C J^u_t(\rho)d(\rho,\tilde\rho)\quad\text{for all}\quad t\leq 0;
\end{equation}
%%%%%%
\item if $\tilde\rho \in W_{0s}(\rho)$, then
$J^s_t(\rho)\sim J^s_t(\tilde\rho)$ and
$J^u_t(\rho)\sim J^u_t(\tilde\rho)$ for all $t\geq 0$;
%%%%%%
\item if $\tilde\rho \in W_{0u}(\rho)$, then
$J^s_t(\rho)\sim J^s_t(\tilde\rho)$ and
$J^u_t(\rho)\sim J^u_t(\tilde\rho)$ for all $t\leq 0$;
%%%%%%
\item\label{l:point5} if $T\in\mathbb N_0$ and $d(\varphi_t(\rho),\varphi_t(\tilde\rho))\leq \varepsilon_0$ for all
integers $t\in [0,T]$, then
\begin{equation}
  \label{e:st-main-2}
d(\tilde\rho,W_{0s}(\rho))\leq {C/ J^u_T(\rho)}
\end{equation}
and $J^s_t(\rho)\sim J^s_t(\tilde\rho)$,
$J^u_t(\rho)\sim J^u_t(\tilde\rho)$ for all $t\in[0,T]$;
%%%%%%
\item\label{l:point6} if $T\in\mathbb N_0$ and $d(\varphi_t(\rho),\varphi_t(\tilde\rho))\leq \varepsilon_0$ for all
integers $t\in [-T,0]$, then
\begin{equation}
  \label{e:un-main-2}
d(\tilde\rho,W_{0u}(\rho))\leq {C/ J^s_{-T}(\rho)}
\end{equation}
and $J^s_t(\rho)\sim J^s_t(\tilde\rho)$, $J^u_t(\rho)\sim J^u_t(\tilde\rho)$ for all $t\in[-T,0]$.
\end{enumerate}
%%%%%%%%%%%%
\end{lemm}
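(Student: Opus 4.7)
The six parts of Lemma~\ref{l:stun-main} are standard consequences of the hyperbolic structure from~\S\ref{s:hyperbolics}, combined with the H\"older regularity of the stable/unstable distributions. My plan is as follows.

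For parts (1) and (2), I would use that $W_s(\rho)$ is a $C^1$ curve tangent to $E_s$, together with the flow invariance~(\ref{e:hyprop-0}), to connect $\rho$ and $\tilde\rho$ by an arc inside $W_s(\rho)$ of length $\sim d(\rho,\tilde\rho)$. Integrating $\|d\varphi_t\big|_{E_s}\|=J^s_t$ along this arc (using~(\ref{e:J-flow}) to absorb the variation of $J^s_t$ as the basepoint moves along the leaf into the constant $C$) yields the bound~(\ref{e:st-main-1}). Part (2) follows by the same argument with $t\le 0$ and $W_u$.

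For parts (3) and (4), the strategy is a telescoping cocycle argument. For integer $t\ge 0$ one has $\log J^s_t(\rho)=\sum_{k=0}^{t-1}\log J^s_1(\varphi_k(\rho))$, extended to real $t$ via~(\ref{e:J-flow}) up to $O(1)$. If $\tilde\rho\in W_{0s}(\rho)$, write $\tilde\rho=\varphi_{t_0}(\rho')$ with $\rho'\in W_s(\rho)$ and $|t_0|\le \tilde\varepsilon$; by part (1), $d(\varphi_k(\rho),\varphi_k(\rho'))\le Ce^{-\Lambda_0 k}d(\rho,\rho')$ decays geometrically. Since the Anosov splitting is H\"older, the function $J^s_1$ on $S^*M$ is H\"older with some exponent $\alpha>0$, so
\begin{equation*}
\sum_{k\ge 0}\bigl|\log J^s_1(\varphi_k(\rho))-\log J^s_1(\varphi_k(\rho'))\bigr|\ \lesssim\ \sum_{k\ge 0}e^{-\alpha\Lambda_0 k}\ <\ \infty,
\end{equation*}
yielding $J^s_t(\rho)\sim J^s_t(\rho')\sim J^s_t(\tilde\rho)$ uniformly in $t\ge 0$ (the last comparison using~(\ref{e:J-flow})). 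The statement for $J^u_t$ on weak stable leaves is then immediate from~(\ref{e:J-su}). Part (4) is the time-reversed analogue.

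For parts (5) and (6), I would invoke the local product structure near $\rho$: on a uniform neighborhood (choosing $\varepsilon_0$ small enough) one has transverse coordinates $(y_{0s},y_u)$ with $\rho$ at the origin, $y_u$ parametrizing a local unstable transversal and $y_{0s}$ a weak stable disk. Write the unstable coordinate of $\tilde\rho$ as $y_u(\tilde\rho)$; by the expansion property~(\ref{e:un-main-1}) applied at later points along the orbit (or equivalently, by tracking how $d(\varphi_t(\rho),\varphi_t(\tilde\rho))$ decomposes into stable/flow/unstable parts), the unstable component grows like $J^u_t(\rho)$ under $\varphi_t$. The hypothesis $d(\varphi_t(\rho),\varphi_t(\tilde\rho))\le\varepsilon_0$ on $[0,T]$ at the endpoint $t=T$ forces $|y_u(\tilde\rho)|\cdot J^u_T(\rho)\lesssim \varepsilon_0$, which is exactly~(\ref{e:st-main-2}). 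For the Jacobian statement, let $\rho^{0s}\in W_{0s}(\rho)$ be the projection of $\tilde\rho$ along $y_u$; then $d(\tilde\rho,\rho^{0s})\le C/J^u_T(\rho)$, and for all $t\in[0,T]$,
\begin{equation*}
d(\varphi_t(\tilde\rho),\varphi_t(\rho^{0s}))\ \lesssim\ \frac{J^u_t(\rho)}{J^u_T(\rho)}\ \le\ C,
\end{equation*}
so a H\"older cocycle argument as in (3), applied up to time $T$ along two orbits that stay uniformly close, gives $J^s_t(\tilde\rho)\sim J^s_t(\rho^{0s})$, and combining with part (3) for $\rho^{0s}$ versus $\rho$ completes the proof. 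Part (6) is entirely analogous, with time-reversed roles of $W_{0s},W_{0u}$ and of $J^u_T,J^s_{-T}$.

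The main obstacle will be the uniformity in part (5): one must guarantee that the local product coordinates, the constants in the H\"older regularity of $J^s_1$, and the comparison between $\tilde\rho$ and its weak-stable projection are all independent of $\rho$ and $T$. The fact that the foliations are only $C^{1+\alpha}$ rather than smooth is not an issue at this level, since the estimates are metric and only require H\"older (not smooth) dependence of $J^s_1,J^u_1$ on the base point.
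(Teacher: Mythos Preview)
Your overall architecture is right, but there are two places where the argument as written does not close.

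In part~(1) you invoke~\eqref{e:J-flow} to control the variation of $J^s_t$ ``as the basepoint moves along the leaf''. That inequality, however, compares $J^s_t(\rho)$ with $J^s_t(\varphi_{t'}(\rho))$ for $|t'|\le 1$---movement along the \emph{flow}, not along $W_s(\rho)$. What you actually need is $J^s_t(\rho')\sim J^s_t(\rho)$ for $\rho'\in W_s(\rho)$, which is precisely part~(3); so your argument for~(1) is circular with~(3). This is easy to repair: prove~(3) \emph{first}, using only the crude decay $d(\varphi_k(\rho),\varphi_k(\rho'))\le Ce^{-\theta k}$ from the standard stable-manifold theorem rather than part~(1) (your own cocycle computation for~(3) only needs this crude decay). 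Then your arc-length integration for~(1) goes through. The paper instead proves~(1) directly by an iterative Taylor-type estimate $|d(\varphi_1(\rho),\varphi_1(\tilde\rho))-J^s_1(\rho)\,d(\rho,\tilde\rho)|\le C\,d(\rho,\tilde\rho)^2$, which avoids any dependence on~(3).

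In part~(5), your Jacobian step says ``a H\"older cocycle argument as in~(3), applied \dots\ along two orbits that stay uniformly close''. Uniform closeness alone is not enough: the cocycle argument needs $\sum_{k=0}^{T-1} d(\varphi_k(\tilde\rho),\varphi_k(\rho^{0s}))^\alpha$ bounded independently of~$T$, and a merely bounded summand gives a sum of order~$T$. You do record the sharper bound $d(\varphi_t(\tilde\rho),\varphi_t(\rho^{0s}))\lesssim J^u_t(\rho)/J^u_T(\rho)\le e^{-\Lambda_0(T-t)}$, which \emph{is} summable---so the argument can be completed, but you must use this decay explicitly, and you also need to justify the forward-time expansion estimate for points on an unstable leaf (your part~(2) only gives $t\le 0$). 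The paper sidesteps both issues: it takes $\rho'\in W_{0s}(\rho)\cap W_u(\tilde\rho)$, shows by induction that $\varphi_t(\tilde\rho)\in W_u(\varphi_t(\rho'))$ for all integer $t\in[0,T]$, then applies part~(2) \emph{backwards} from time~$T$ to get~\eqref{e:st-main-2}, and obtains the Jacobian comparability by applying part~(4) to $\varphi_t(\rho'),\varphi_t(\tilde\rho)$ with propagation time~$-t$. This backward-time use of parts~(2) and~(4) is cleaner than tracking forward expansion.
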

%%%%%%%%%%%%%%%%%%%%%%%%%%%%%%%%%%%%%%%%%%%%%%%%%%%%%%%%%%%%%%%%%%%%%%%%%%%%%%%%
\Remarks 1. The difference between Lemma~\ref{l:stun-main} and standard facts from
hyperbolic dynamics (see for instance~\cite[Theorem~17.4.3]{KaHa}) is that
our estimates involve the \emph{local} expansion rates for the point $\rho$
rather than the minimal expansion rate. This will be important later in our analysis.

2. By~\eqref{e:J-su} we have $J^s_t(\rho)\sim 1/ J^u_t(\rho)$.
However the present lemma does not rely on $\varphi_t$ being symplectomorphisms
which is why we choose to keep both the stable and unstable Jacobians in the estimates.
%%%%%%%%%%%%%%%%%%%%%%%%%%%%%%%%%%%%%%%%%%%%%%%%%%%%%%%%%%%%%%%%%%%%%%%%%%%%%%%%
\begin{proof}
We only prove parts~(1), (3), (5), with parts~(2), (4), (6) proved similarly.

(1) Without loss of generality we may assume that the distance
function $d(\bullet,\bullet)$ is induced by the metric $|\bullet|_s$
used in~\eqref{e:stun-J-def} to define $J^s_t(\rho)$.
Since the tangent space to $W_s(\rho)$ at $\rho$ is $E_s(\rho)$, there exists
a constant $C$ such that
for every $\rho\in S^*M$ and $\tilde\rho\in W_s(\rho)$
\begin{equation}
  \label{e:sma-1}
\big| d(\varphi_1(\rho),\varphi_1(\tilde\rho))-J^s_1(\rho)d(\rho,\tilde\rho)\big|
\leq C d(\rho,\tilde\rho)^2.
\end{equation}
That is, when $\tilde\rho$ is close to $\rho$ the dilation factor of the distance
$d(\rho,\tilde\rho)$ by the map $\varphi_1$ is well-approximated by the
norm of the differential $d\varphi_1(\rho)$ on $E_s(\rho)$.

Since $\tilde\rho\in W_s(\rho)$, there exist constants $C,\theta>0$ such that
(see for instance~\cite[Theorem~17.4.3(3)]{KaHa} or~\cite[(4.67)]{stunnote})
\begin{equation}
  \label{e:hyprop-1}
d(\varphi_t(\rho),\varphi_t(\tilde \rho))\leq C e^{-\theta t}d(\rho,\tilde\rho)\quad\text{for all}\quad
t\geq 0.
\end{equation}
For each integer $t\geq 0$, we have $\varphi_t(\tilde\rho)\in W_s(\varphi_t(\rho))$
by~\eqref{e:hyprop-0}. Applying~\eqref{e:sma-1} with $\rho,\tilde\rho$
replaced by $\varphi_t(\rho),\varphi_t(\tilde\rho)$ we have
\begin{equation}
  \label{e:sma-2}
\begin{aligned}
d(\varphi_{t+1}(\rho),\varphi_{t+1}(\tilde\rho))&\leq J^s_1(\varphi_t(\rho))d(\varphi_t(\rho),\varphi_t(\tilde\rho))
+Cd(\varphi_t(\rho),\varphi_t(\tilde\rho))^2\\
&\leq
(1+Ce^{-\theta t})J^s_1(\varphi_t(\rho))d(\varphi_t(\rho),\varphi_t(\tilde\rho)).
\end{aligned}
\end{equation}
By the chain rule we have for all integers $t\geq 0$
\begin{equation}
  \label{e:sma-chain}
J^s_t(\rho)=J^s_1(\rho)J^s_1(\varphi_1(\rho))\cdots J^s_1(\varphi_{t-1}(\rho)).
\end{equation}
Iterating~\eqref{e:sma-2} and using that the product $\prod_{j=0}^\infty (1+Ce^{-\theta j})$ converges,
we get~\eqref{e:st-main-1} for all integer $t\geq 0$, which immediately implies it
for all $t\geq 0$. 

\smallskip

(3) We show that $J^s_t(\rho)\sim J^s_t(\tilde\rho)$, with the statement
$J^u_t(\rho)\sim J^u_t(\tilde\rho)$ proved similarly. Assume first that $\tilde\rho\in W_s(\rho)$.
The map $\rho\mapsto E_s(\rho)$ is in the H\"older class $C^\gamma$ for some~$\gamma>0$
(see for instance~\cite[Lemma~4.3]{stunnote}; in~\S\ref{s:stun-regularity} below we see that
in our setting it is in fact $C^{2-}$). Recalling~\eqref{e:stun-J-def} we have for all $\rho,\tilde\rho\in S^*M$
$$
|J^s_1(\rho)-J^s_1(\tilde\rho)|\leq C d(\rho,\tilde\rho)^\gamma.
$$
Applying this with $\rho$, $\tilde\rho$ replaced
by $\varphi_t(\rho)$, $\varphi_t(\tilde\rho)$ and using~\eqref{e:hyprop-1} we get
for all $t\geq 0$
\begin{equation}
  \label{e:baguette}
(1+Ce^{-\gamma\theta t})^{-1}J^s_1(\varphi_t(\rho))\leq J^s_1(\varphi_t(\tilde\rho))\leq (1+Ce^{-\gamma\theta t})J^s_1(\varphi_t(\rho)).
\end{equation}
Using the chain rule~\eqref{e:sma-chain} and iterating~\eqref{e:baguette}, we get
$J^s_t(\rho)\sim J^s_t(\tilde\rho)$ for all $t\geq 0$.
The general weak stable case $\tilde\rho\in W_{0s}(\rho)$ follows since
$J^s_t(\varphi_{t'}(\rho))\sim J^s_t(\rho)$ for all $\rho\in S^*M$ and $t'\in [-1,1]$
by~\eqref{e:J-flow}.

\smallskip

(5) Since $E_0\oplus E_s$ is transversal to $E_u$,
for $\varepsilon_0$ small enough and all $\rho,\tilde\rho\in S^*M$ such that
$d(\rho,\tilde\rho)\leq\varepsilon_0$, there exists a point (see Figure~\ref{f:sma})
%%%%%%%%%%%%%%%%%%%%%%%%%%%%%%%%%%%%%%%%%%%%%%%%%%%%%%%%%%%%%%%%%%%%%%%%%%%%%%%%
\begin{figure}
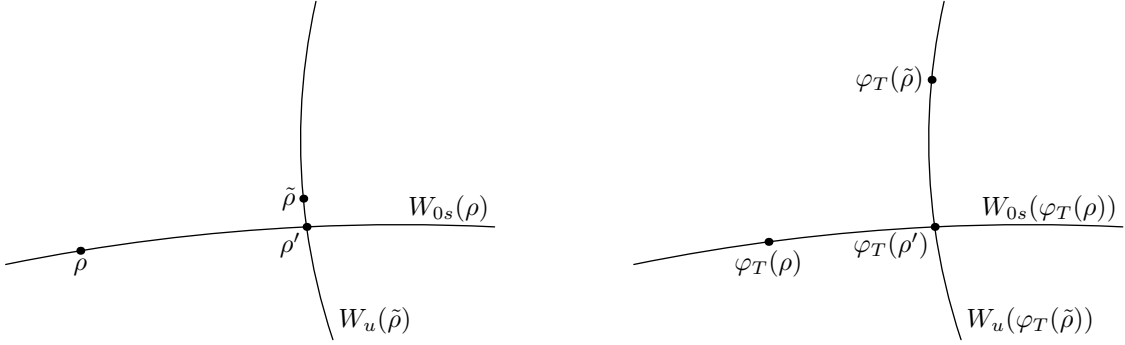

\includegraphics{varfup.1}
\qquad\qquad
\includegraphics{varfup.2}
\caption{Left: the points $\rho,\tilde\rho,\rho'$ in the proof of part~(5) of Lemma~\ref{l:stun-main},
with the flow direction removed.
Right: the image of the left half by $\varphi_T$.}
\label{f:sma}
\end{figure}
%%%%%%%%%%%%%%%%%%%%%%%%%%%%%%%%%%%%%%%%%%%%%%%%%%%%%%%%%%%%%%%%%%%%%%%%%%%%%%%%
\begin{equation}
  \label{e:sma-3}
\rho'\in W_{0s}(\rho)\cap W_u(\tilde\rho),\quad
d(\rho,\rho')\leq C\varepsilon_0.
\end{equation}
See for instance~\cite[Proposition~6.4.13]{KaHa} (in the related case of maps) or~\cite[(4.66)]{stunnote}.
Since $\rho'\in W_{0s}(\rho)$, by~\eqref{e:hyprop-1} there exists a constant $C_0\geq 1$ such that
\begin{equation}
  \label{e:sma-4}
d(\varphi_t(\rho'),\varphi_t(\rho))\leq C_0\varepsilon_0\quad\text{for all}\quad t\geq 0.
\end{equation}
By~\eqref{e:hyprop-0},
for $\varepsilon_0$ small enough we have (denoting by $B_d$ balls with respect to the distance function
$d(\bullet,\bullet)$)
\begin{equation}
  \label{e:sma-5}
\varphi_1(W_u(\hat\rho))\cap B_d(\varphi_1(\hat\rho),2C_0\varepsilon_0)
\subset W_u(\varphi_1(\hat\rho))
\quad\text{for all}\quad \hat\rho\in S^*M.
\end{equation}
Now, assume that $\rho,\tilde\rho\in S^*M$ and $d(\varphi_t(\rho),\varphi_t(\tilde\rho))\leq\varepsilon_0$
for all integers $t\in [0,T]$. Choose $\rho'$ satisfying~\eqref{e:sma-3}.
If $\varepsilon_0$ is small enough, then by the local uniqueness of unstable leaves
we have $\tilde\rho\in W_u(\rho')$.
By~\eqref{e:sma-4} we have for all integers $t\in [0,T]$
$$
d(\varphi_t(\rho'),\varphi_t(\tilde\rho))
\leq d(\varphi_t(\rho'),\varphi_t(\rho))
+d(\varphi_t(\rho),\varphi_t(\tilde\rho))\leq 2C_0\varepsilon_0.
$$
Applying~\eqref{e:sma-5} with $\hat\rho:=\varphi_t(\rho')$, we see by induction on~$t$
that
$$
\varphi_t(\tilde \rho)\in W_u(\varphi_t(\rho'))\quad\text{for all integer}\quad t\in [0,T].
$$
In particular, $\varphi_T(\tilde\rho)\in W_u(\varphi_T(\rho'))$.
Applying~\eqref{e:un-main-1} with $t:=-T$ and $\rho,\tilde\rho$
replaced by $\varphi_T(\rho'),\varphi_T(\tilde\rho)$,
$$
d(\rho',\tilde\rho)=d\big(\varphi_{-T}(\varphi_T(\rho')),\varphi_{-T}(\varphi_T(\tilde\rho))\big)
\leq {C J^u_{-T}(\varphi_T(\rho'))}
={C\over J^u_T(\rho')}
\leq {C\over J^u_T(\rho)}
$$
where the last inequality follows from
part~(3) of the present lemma.
Since $\rho'\in W_{0s}(\rho)$ this proves~\eqref{e:st-main-2}.

It remains to show that $J^s_t(\rho)\sim J^s_t(\tilde\rho)$,
$J^u_t(\rho)\sim J^u_t(\tilde\rho)$ for all $t\in [0,T]$.
As before, we prove the first statement with the second one proved similarly.
We can moreover restrict ourselves to integer values of $t$.
By part~(4) of the present lemma applied to the points $\varphi_t(\rho')$,
$\varphi_t(\tilde\rho)\in W_u(\varphi_t(\rho'))$
and propagation time $-t$, we have
$J^s_{-t}(\varphi_t(\rho'))\sim J^s_{-t}(\varphi_t(\tilde\rho))$.
Since $J^s_t(\rho')=1/J^s_{-t}(\varphi_t(\rho'))$
this implies that $J^s_t(\rho')\sim J^s_t(\tilde\rho)$.
On the other hand by part~(3) of the present lemma we have
$J^s_t(\rho)\sim J^s_t(\rho')$. Combining the last two statements
we get $J^s_t(\rho)\sim J^s_t(\tilde\rho)$ as needed.
\end{proof}
%%%%%%%%%%%%%%%%%%%%%%%%%%%%%%%%%%%%%%%%%%%%%%%%%%%%%%%%%%%%%%%%%%%%%%%%%%%%%%%%
Parts~(5) and~(6) of Lemma~\ref{l:stun-main} applied to $\tilde\rho:=\varphi_t(\rho)$
together with~\eqref{e:Lambda-0-1} give
%%%%%%%%%%%%%%%%%%%%%%%%%%%%%%%%%%%%%%%%%%%%%%%%%%%%%%%%%%%%%%%%%%%%%%%%%%%%%%%%
\begin{figure}
\includegraphics[width=3.2cm]{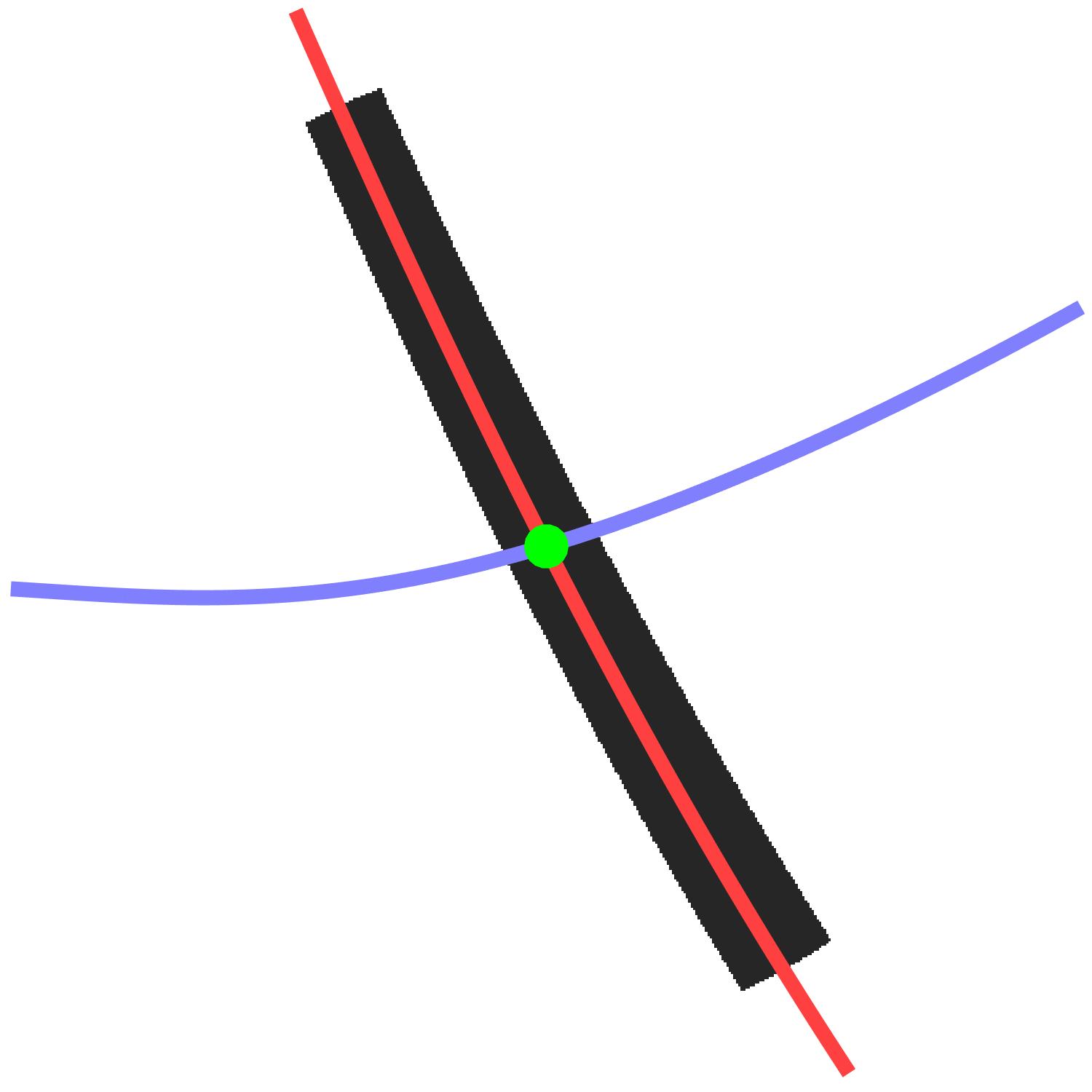}\qquad
\includegraphics[width=3.2cm]{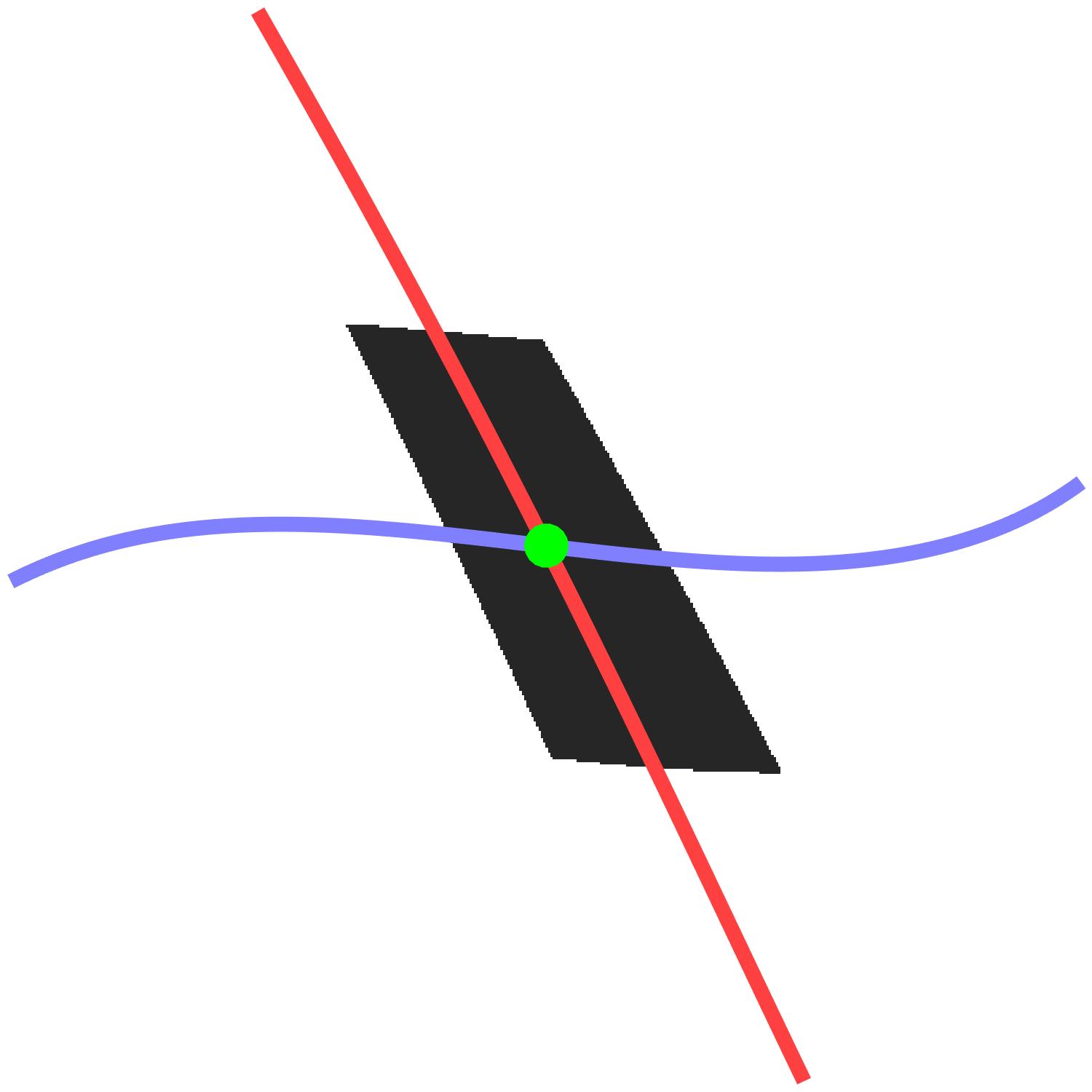}\qquad
\includegraphics[width=3.2cm]{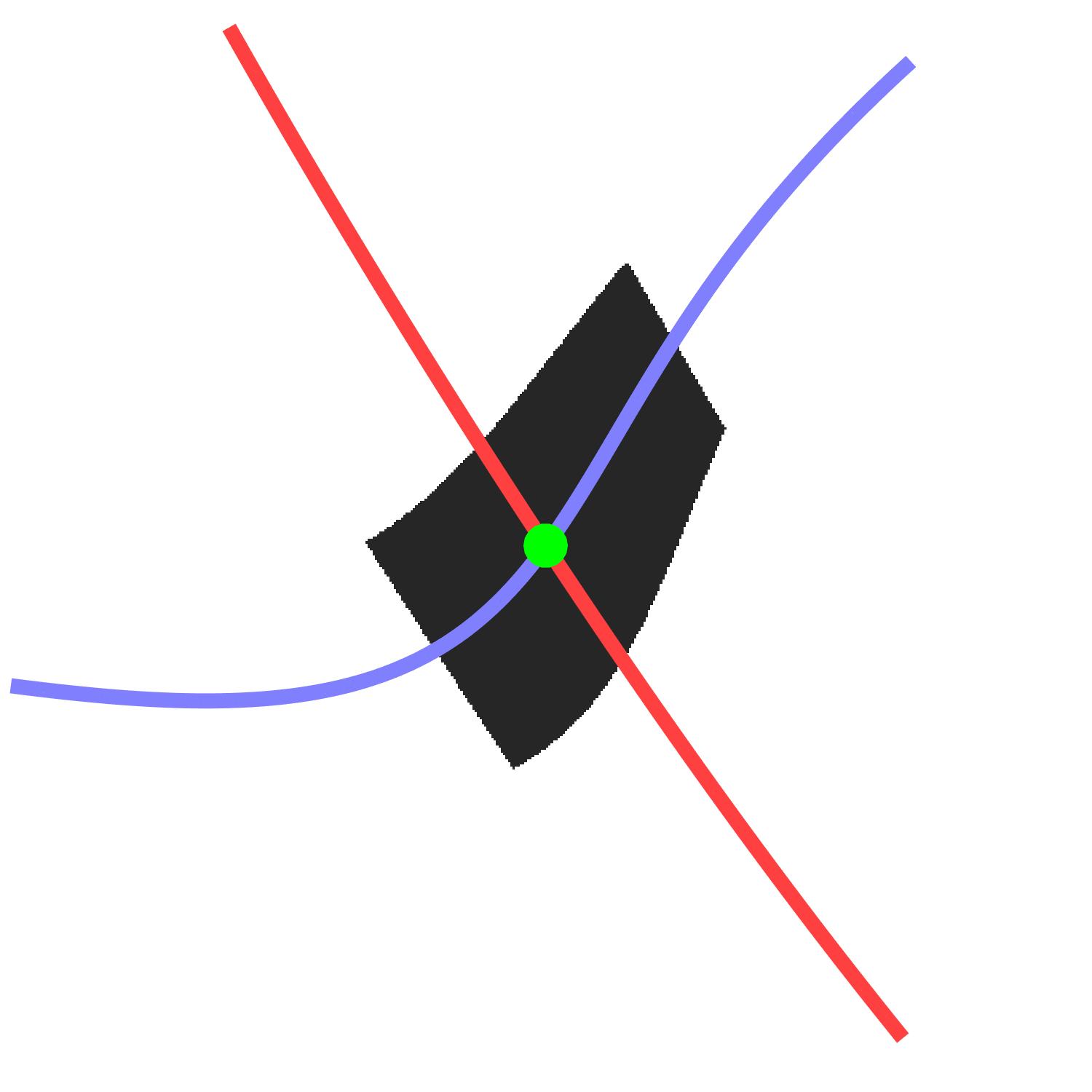}\qquad
\includegraphics[width=3.2cm]{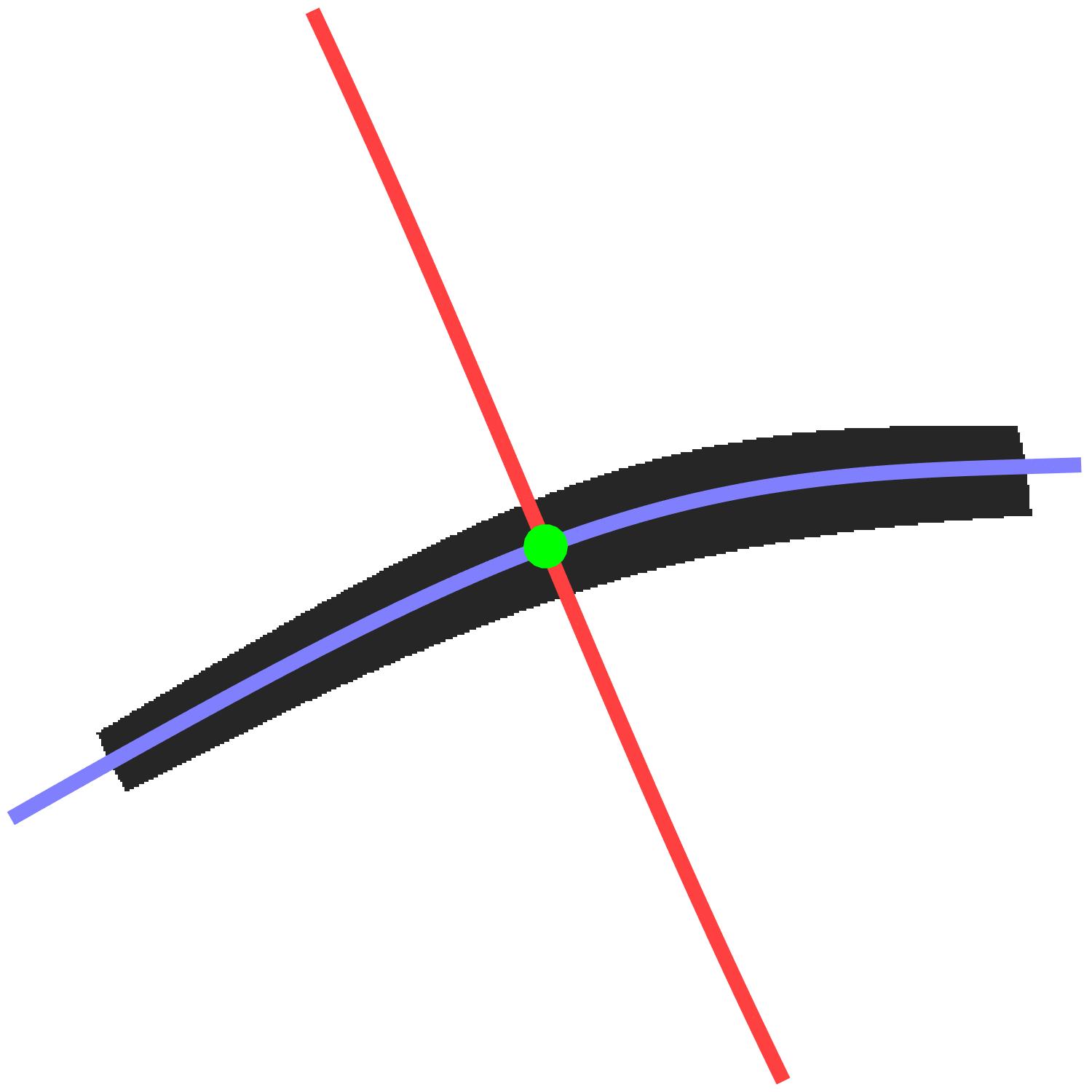}
\hbox to\hsize{\hss\quad
$t=0$ \hss\hss\quad
$t=1$ \hss\hss\quad
$t=2$ \hss\hss\quad
$t=3$
\hss}
\caption{An illustration of Corollary~\ref{l:stun-main-cor}
for $T=3$ with the flow direction removed. The green points are $\varphi_t(\rho_0)$,
the curves are the local stable (red) and unstable (blue) manifolds of these points,
and the black rectangles are the sets $\varphi_t(\mathcal V)$.}
\label{f:stun-cor}
\end{figure}
%%%%%%%%%%%%%%%%%%%%%%%%%%%%%%%%%%%%%%%%%%%%%%%%%%%%%%%%%%%%%%%%%%%%%%%%%%%%%%%%
\begin{corr}
  \label{l:stun-main-cor}
Let $d(\bullet,\bullet)$ and $\varepsilon_0>0$ be fixed in Lemma~\ref{l:stun-main}.
Fix $\rho_0\in S^*M$, $T\in\mathbb N_0$, and consider the set
$$
\mathcal V:=\big\{ \rho\in S^*M \mid d(\varphi_t(\rho),\varphi_t(\rho_0))\leq \varepsilon_0\quad\text{for all integer}\ t\in [0,T]\big\}.
$$
Then we have for all $\rho\in\mathcal V$ and $t\in [0,T]$
\begin{equation}
  \label{e:stun-main-rect}
\begin{aligned}
d\big(\varphi_t(\rho),W_{0s}(\varphi_t(\rho_0))\big)&\leq C/J^u_{T-t}(\rho_0)\leq Ce^{-\Lambda_0 (T-t)},\\
d\big(\varphi_t(\rho),W_{0u}(\varphi_t(\rho_0))\big)&\leq CJ^s_{t}(\varphi_t(\rho_0))\leq Ce^{-\Lambda_0 t}.
\end{aligned}
\end{equation}
\end{corr}
%%%%%%%%%%%%%%%%%%%%%%%%%%%%%%%%%%%%%%%%%%%%%%%%%%%%%%%%%%%%%%%%%%%%%%%%%%%%%%%%
Roughly speaking~\eqref{e:stun-main-rect} implies that
$\varphi_t(\mathcal V)$ lies inside
an $\varepsilon_0\times e^{-\Lambda_0 t}\times e^{-\Lambda_0(T-t)}$ sized
rectangle (with dimensions along $E_0,E_s,E_u$ respectively)
centered at $\varphi_t(\rho_0)$~-- see
Figure~\ref{f:stun-cor}.

%%%%%%%%%%%%%%%%%%%%%%%%%%%%%%%%%%%%%%%%%%%%%%%%%%%%%%%%%%%%%%%%%%%%%%%%%%%%%%%%
\subsubsection{Straightening out the weak unstable foliation}
\label{s:stun-regularity}

In~\S\ref{s:longtime} and \S\ref{s:fup-normal} below (most crucially in the proof
of Lemma~\ref{l:close-unstable}) we rely on the following construction of normal coordinates
which straighten out a given weak unstable leaf.
Similarly to Lemma~\ref{l:stun-main} we fix a distance function $d(\bullet,\bullet)$ on $S^*M$.
%%%%%%%%%%%%%%%%%%%%%%%%%%%%%%%%%%%%%%%%%%%%%%%%%%%%%%%%%%%%%%%%%%%%%%%%%%%%%%%%
\begin{lemm}
  \label{l:stun-straight}
For $\varepsilon_0>0$ small enough and for any $\rho_0\in S^*M$ there exists a $C^\infty$ symplectomorphism
$$
\varkappa=\varkappa_{\rho_0}:U_{\rho_0}\to V_{\rho_0},\quad
U_{\rho_0}\subset T^*M\setminus 0,\quad
V_{\rho_0}\subset T^*\mathbb R^2\setminus 0,
$$
such that, denoting points in $T^*M$ by $(x,\xi)$ and points in $T^*\mathbb R^2$ by $(y,\eta)$,
we have:
\begin{enumerate}
\item $U_{\rho_0},V_{\rho_0}$ are conic sets and the ball $B_d(\rho_0,\varepsilon_0)$
is contained in $U_{\rho_0}\cap S^*M$;
\item $\varkappa$ is homogeneous, namely it maps the vector field
$\xi\cdot\partial_\xi$ to $\eta\cdot\partial_\eta$;
\item $\varkappa(\rho_0)=(0,0,0,1)$, $d\varkappa(\rho_0)E_u(\rho_0)=\mathbb R\partial_{y_1}$,
and $d\varkappa(\rho_0)E_s(\rho_0)=\mathbb R\partial_{\eta_1}$;
\item putting $p(x,\xi):=|\xi|_g$, we have $p=\eta_2\circ\varkappa$ on $U_{\rho_0}$;
\item for each $\tilde\rho\in U_{\rho_0}$, the weak unstable leaf
$W_{0u}(\tilde\rho)$ satisfies for some $\tilde\zeta=Z(\tilde\rho)\in\mathbb R$
\begin{equation}
  \label{e:stun-straight}
\varkappa(W_{0u}(\tilde\rho)\cap U_{\rho_0})=\big\{(y_1,y_2,p(\tilde\rho)F(y_1,\tilde\zeta),p(\tilde\rho))\mid
(y_1,\tilde\zeta)\in \Omega,\ y_2\in\mathbb R\big\}\cap V_{\rho_0}
\end{equation}
where $F=F_{\rho_0}$ is a function from an open set $\Omega=\Omega_{\rho_0}\subset \mathbb R^2$
to $\mathbb R$ lying in the H\"older class $C^{3/2}(\Omega)$,
the map $y_1\mapsto F(y_1,\zeta)$ is $C^\infty$ for every~$\zeta$,
and $Z:U_{\rho_0}\to\mathbb R$ is homogeneous of degree~0,
in the class $C^{3/2}$ 
on $U_{\rho_0}\cap S^*M$,
and constant on each local weak unstable leaf;
\item $Z(\rho_0)=0$, $F(y_1,0)=0$, and $F(0,\zeta)=\zeta$;
\item $\partial_\zeta F(y_1,0)=1$.
\item there exists $C_{\rho_0}>0$ such that $|F(y_1, \zeta) - \zeta | \leq C_{\rho_0}\,|\zeta|^{3/2}$.
\end{enumerate}
The derivatives of all orders of $\varkappa_{\rho_0}$ and the constant $C_{\rho_0}$
are bounded independently of~$\rho_0$. 
\end{lemm}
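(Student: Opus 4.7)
The plan is to proceed via a symplectic normal form in two stages: first produce a $C^\infty$ homogeneous chart that normalizes $p$ and the geodesic flow, then describe the weak unstable foliation as a family of graphs and deduce the H\"older bounds from the $C^{3/2}$ regularity of the foliation.

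\medskip

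\emph{Stage 1 (smooth symplectic normal form).} Since $p=|\xi|_g$ is homogeneous of degree one with $dp\neq 0$ and $H_p$ nowhere vanishing, the standard homogeneous Darboux theorem for Hamiltonians produces a $C^\infty$ homogeneous symplectomorphism $\varkappa_1$ sending $\rho_0$ to $(0,0,0,1)$ and conjugating $p$ to $\eta_2$, so that $H_p$ becomes $\partial_{y_2}$. Composing with a homogeneous linear symplectic map fixing $(0,0,0,1)$ and $\eta_2$ further arranges $d\varkappa_1(\rho_0)E_u(\rho_0)=\mathbb R\partial_{y_1}$ and $d\varkappa_1(\rho_0)E_s(\rho_0)=\mathbb R\partial_{\eta_1}$. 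An additional $C^\infty$ homogeneous symplectic transformation preserving $\eta_2$ then flattens the (smooth) reference leaf $\varkappa_1(W_{0u}(\rho_0))$ to the plane $\{\eta_1=0,\eta_2=1\}$ and aligns the tangent line of the strong unstable distribution $E_u$ along the reference leaf with $\partial_{y_1}$; this alignment is possible because the strong unstable manifolds form a smooth subfoliation of the weak unstable leaf, so that $E_u$ restricted to the latter is $C^\infty$. Call the composition $\varkappa$. Conditions (1)--(4) follow from this construction.

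\medskip

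\emph{Stage 2 (graph form and labelling).} Because every weak unstable leaf $W_{0u}(\tilde\rho)$ is $\varphi_t$-invariant, lies in $\{\eta_2=p(\tilde\rho)\}$, and the flow acts as $y_2$-translation in the chart, its image has the graph form
\[
\varkappa(W_{0u}(\tilde\rho)\cap U_{\rho_0})=\big\{(y_1,y_2,p(\tilde\rho)G(y_1,\tilde\rho),p(\tilde\rho)):y_1\in I,\ y_2\in\mathbb R\big\}\cap V_{\rho_0},
\]
where $G$ depends on $\tilde\rho$ only through its leaf. I then define $Z(\tilde\rho):=G(0,\tilde\rho)$, labelling each leaf by its $\eta_1/p$-value on the smooth transversal $\{y_1=0\}$. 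Because the weak unstable foliation is $C^{3/2}$ in the variable-curvature case and the transversal is smooth, $Z|_{S^*M\cap U_{\rho_0}}$ lies in $C^{3/2}$; it is homogeneous of degree zero, constant on weak unstable leaves, and $Z(\rho_0)=0$. Setting $F(y_1,\zeta):=G(y_1,\tilde\rho)$ for any $\tilde\rho$ with $Z(\tilde\rho)=\zeta$ gives a well-defined function in $C^{3/2}$ jointly, smooth in $y_1$ for each $\zeta$ (individual leaves are smooth), with $F(0,\zeta)=\zeta$ (by construction of $Z$) and $F(y_1,0)=0$ (the reference leaf is $\{\eta_1=0\}$). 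This establishes (5) and (6).

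\medskip

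\emph{Stage 3 (normalization (7) and refined estimate (8)).} The Stage-1 alignment of $E_u$ along the reference leaf with $\partial_{y_1}$, together with the symplectic relations $H_p=\partial_{y_2}$, preservation of the canonical 1-form $\eta\,dy$, and the reference leaf being $\{\eta_1=0\}$, forces the direction $\partial_\zeta$ at each point $(y_1,y_2,0,1)$ to equal $\partial_{\eta_1}$, giving $\partial_\zeta F(y_1,0)\equiv 1$. Once (7) holds, I Taylor-expand in $\zeta$ at fixed $y_1$:
\[
F(y_1,\zeta)=F(y_1,0)+\zeta\,\partial_\zeta F(y_1,0)+R(y_1,\zeta)=\zeta+R(y_1,\zeta),
\]
and the H\"older Taylor remainder estimate for the $C^{3/2}$ function $F(y_1,\cdot)$ yields $|R(y_1,\zeta)|\le C_{\rho_0}|\zeta|^{3/2}$, proving (8). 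Uniformity of the $C^\infty$ bounds on $\varkappa_{\rho_0}$ and of the constant $C_{\rho_0}$ in $\rho_0$ is obtained by compactness of $S^*M$ and continuous dependence of each construction on the base point.

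\medskip

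The main obstacle will be justifying (7) rigorously: although $F$ is only $C^{3/2}$ jointly, the identity $\partial_\zeta F(y_1,0)=1$ has to hold for every $y_1$ on the reference leaf, not merely at $y_1=0$. The key input is the asymmetry between $E_u$ and $E_s$ along the reference leaf -- the former is smooth while the latter is only $C^{3/2}$ -- which, together with the symplectic alignment carried out in Stage~1, converts the smooth normalization of $E_u$ into the required exact transverse normalization of the labelling $\zeta$ at first order.
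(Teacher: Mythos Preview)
Your Stages~1 and~2 track the paper's construction closely (this is the content of the auxiliary Lemma producing a chart $\varkappa_0$ with properties (1)--(6)): homogeneous Darboux plus a shear to normalize $E_s(\rho_0)$, flattening the reference leaf to $\{\eta_1=0,\eta_2=1\}$, writing nearby leaves as graphs, and invoking Hurder--Katok for the $C^{3/2}$ regularity of $F$ and~$Z$. Your derivation of~(8) from~(7) by a $C^{3/2}$ Taylor remainder is also the paper's argument.

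The genuine gap is your argument for~(7). The quantity $\partial_\zeta F(y_1,0)$ is the linearized holonomy of the weak unstable foliation between the transversals $\{y_1=0\}$ and $\{y_1=\text{const}\}$, evaluated at the reference leaf. Nothing about the symplectic form, the canonical $1$-form, or the direction of $E_u$ along the reference leaf constrains this holonomy to have derivative~$1$. Your assertion that ``the direction $\partial_\zeta$ at each point $(y_1,y_2,0,1)$ equals $\partial_{\eta_1}$'' conflates direction with speed: the curve $\zeta\mapsto(y_1,y_2,F(y_1,\zeta),1)$ always moves in the $\eta_1$-slot, but its speed $\partial_\zeta F(y_1,0)$ can be any positive smooth function of~$y_1$ compatible with all the structures you have normalized. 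In particular, the alignment of $E_u$ with $\partial_{y_1}$ along the reference leaf only encodes $\partial_{y_1}F(y_1,0)=0$, which is already part of~(6).

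What the paper actually does for~(7) is an explicit extra step you omit. Starting from $\varkappa_0$ with function $F_0$, one reparametrizes $y_1$ by $\psi(y_1)=\int_0^{y_1}\partial_\zeta F_0(s,0)\,ds$ and postcomposes with the symplectic lift $\Psi(y_1,y_2,\eta_1,\eta_2)=(\psi(y_1),y_2,\eta_1/\psi'(y_1),\eta_2)$; the new $F(y_1,\zeta)=F_0(\psi^{-1}(y_1),\zeta)/\partial_\zeta F_0(\psi^{-1}(y_1),0)$ then visibly satisfies $\partial_\zeta F(y_1,0)=1$. For $\varkappa=\Psi\circ\varkappa_0$ to remain $C^\infty$ one needs $\psi\in C^\infty$, i.e.\ $y_1\mapsto\partial_\zeta F_0(y_1,0)\in C^\infty$. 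This is a strictly stronger regularity statement than the joint $C^{3/2}$ you invoke; it is the content of Hurder--Katok's \emph{$C^\infty$-adapted transverse coordinates} (the transverse derivative of the foliation is smooth along each leaf, even though it is only $C^{1/2}$ transversally). Your proposal does not use this input, and without it there is no smooth reparametrization achieving~(7).
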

%%%%%%%%%%%%%%%%%%%%%%%%%%%%%%%%%%%%%%%%%%%%%%%%%%%%%%%%%%%%%%%%%%%%%%%%%%%%%%%%
\begin{figure}
\includegraphics{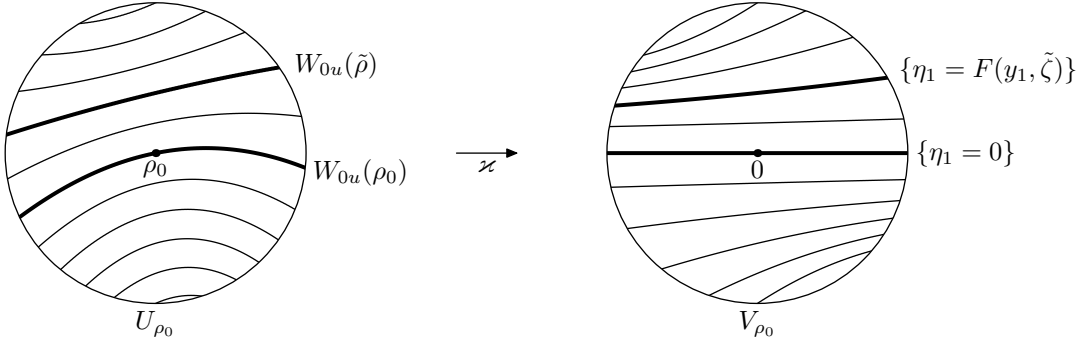}
\caption{An illustration of Lemma~\ref{l:stun-straight}, restricted to $S^*M$ and with the flow
direction removed. The curves on the left are the (weak) unstable manifolds and
the curves on the right are their images under $\varkappa$.}
\label{f:reg}
\end{figure}
%%%%%%%%%%%%%%%%%%%%%%%%%%%%%%%%%%%%%%%%%%%%%%%%%%%%%%%%%%%%%%%%%%%%%%%%%%%%%%%%
\Remarks 1. The statements (1)--(7) of Lemma~\ref{l:stun-straight} rely on the $C^{3/2}$ regularity of the unstable distribution $(E_u(\rho))_{\rho\in S^*M}$, proved by Hurder--Katok \cite[Theorem~3.1]{HK90}. They actually proved that for a generic surface of negative curvature, the distribution has regularity $C^{2-}$, but not better: by~\cite[Theorem~3.2 and Corollary~3.7]{HK90}, if the regularity is $C^2$, then
$(M,g)$ must have constant curvature. For our application the regularity $C^{1+\epsilon_0}$
for some $\epsilon_0>0$ would suffice, but we will use the $C^{3/2}$
regularity to simplify the expressions.

\noindent 2. The point (6) in the Lemma shows that the weak unstable manifold $W_{0u}(\rho_0)$ is
represented, in the coordinates given by~$\varkappa$, by the horizontal plane $\{\eta_1=0,\,\eta_2=1\}$, see \eqref{e:regw-1}.
The nearby unstable leaves $W_{0u}(\tilde\rho)$ will then be approximately horizontal, that is close
to planes $\{\eta_1=\zeta=\text{const},\, \eta_2=\text{const}\}$. The statements (7)--(8) express this almost
horizontality more precisely. In~\S\ref{s:long-word-fup} this almost horizontality will allow us to
apply the (``straight'') fractal uncertainty principle to families of almost-horizontal
unstable manifolds. The statement (8), which relies on the $C^{3/2}$ regularity, will be directly used in Lemma~\ref{l:close-unstable}.

To prove Lemma~\ref{l:stun-straight} we start by constructing a local coordinate frame under slightly weaker conditions:
%%%%%%%%%%%%%%%%%%%%%%%%%%%%%%%%%%%%%%%%%%%%%%%%%%%%%%%%%%%%%%%%%%%%%%%%%%%%%%%%
\begin{lemm}
  \label{l:stun-straight-weak}
Under the assumptions of Lemma~\ref{l:stun-straight} there exists a symplectomorphism $\varkappa_0$
having properties~(1)--(6) of that lemma.
\end{lemm}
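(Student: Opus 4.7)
The plan is to construct $\varkappa_0 = \varkappa_2 \circ \varkappa_1$ as the composition of two homogeneous symplectomorphisms near $\rho_0$. The role of $\varkappa_1$ is to bring $p$ into the standard form $\eta_2$ and to align the stable/unstable lines at $\rho_0$ with $\partial_{\eta_1}$ and $\partial_{y_1}$, giving properties~(1)--(4). The role of $\varkappa_2$ is to further straighten the single leaf $W_{0u}(\rho_0)$ into the flat plane $\{\eta_1 = 0, \eta_2 = 1\}$ without destroying the earlier normalizations; the graphical description~(5)--(6) will then drop out automatically.

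For $\varkappa_1$ I would use the standard homogeneous Darboux construction. Pick a smooth two-dimensional submanifold $\Sigma_0 \subset \{p = 1\}$ through $\rho_0$, transverse to $H_p$, with $T_{\rho_0}\Sigma_0 = E_s(\rho_0) \oplus E_u(\rho_0)$. By~\eqref{e:contact-structure} and the symplecticity of $E_s \oplus E_u$, the symplectic form restricts to a nondegenerate form on $\Sigma_0$ near $\rho_0$, so the two-dimensional Darboux theorem lets us pick coordinates $(y_1, \eta_1)$ on $\Sigma_0$ with $\omega|_{\Sigma_0} = dy_1 \wedge d\eta_1$ and with $\partial_{y_1}|_{\rho_0}, \partial_{\eta_1}|_{\rho_0}$ aligned with $E_u(\rho_0)$ and $E_s(\rho_0)$ respectively. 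Extending $y_1, \eta_1$ to be $H_p$-invariant in a neighborhood of $\rho_0$ inside $\{p=1\}$, taking $y_2$ to be the $H_p$-flow time from $\Sigma_0$ and $\eta_2 := p$, and finally extending everything conically using homogeneity, produces a homogeneous symplectomorphism $\varkappa_1$ satisfying properties~(1)--(4).

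In the $\varkappa_1$-coordinates, $W_{0u}(\rho_0)$ is a two-dimensional Lagrangian submanifold of $\{\eta_2 = 1\}$, invariant under $H_p = \partial_{y_2}$, passing through the origin, and tangent there to $\mathrm{span}(\partial_{y_1}, \partial_{y_2})$; these constraints force it to have the graph form $\{\eta_1 = \phi(y_1), \, \eta_2 = 1\}$ with $\phi(0) = \phi'(0) = 0$. Letting $\Phi$ be the antiderivative of $\phi$ with $\Phi(0) = 0$, I would set
\[
\varkappa_2(y_1, y_2, \eta_1, \eta_2) := \bigl(y_1, \, y_2 + \Phi(y_1), \, \eta_1 - \eta_2 \phi(y_1), \, \eta_2\bigr)
\]
and check by direct computation that this is a homogeneous symplectomorphism whose differential at $(0,0,0,1)$ is the identity, and which sends $W_{0u}(\rho_0)$ to $\{\eta_1 = 0, \eta_2 = 1\}$. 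The cross-cancellation between the $y_2$-shift by $\Phi$ and the $\eta_1$-shift by $\eta_2 \phi$ is essential: the naive fiber-translation $\eta_1 \mapsto \eta_1 - \phi(y_1)$ is not homogeneous, while simply multiplying it by $\eta_2$ alone breaks symplecticity. Getting this coupling right, while preserving the identity $p = \eta_2 \circ \varkappa$, is in my view the main technical point of the proof.

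Setting $\varkappa_0 := \varkappa_2 \circ \varkappa_1$, every nearby weak unstable leaf $W_{0u}(\tilde\rho)$ is $\partial_{y_2}$-invariant and lies in $\{\eta_2 = p(\tilde\rho)\}$, hence has the form $\varkappa_0(W_{0u}(\tilde\rho)) = \{(y_1, y_2, g_{\tilde\rho}(y_1), p(\tilde\rho))\}$ for some smooth function $g_{\tilde\rho}$. Defining $Z(\tilde\rho) := g_{\tilde\rho}(0)/p(\tilde\rho)$ and $F(y_1, \tilde\zeta) := g_{\tilde\rho}(y_1)/p(\tilde\rho)$ for $\tilde\zeta = Z(\tilde\rho)$, the identities $F(0,\zeta) = \zeta$, $Z(\rho_0) = 0$ and $F(y_1, 0) = 0$, the degree-zero homogeneity and leaf-constancy of $Z$, and the smoothness of $y_1 \mapsto F(y_1, \zeta)$ for each fixed $\zeta$ are all immediate from the construction. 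The transverse $C^{3/2}$-regularity of $F$ and $Z$ is then inherited from the $C^{3/2}$-regularity of the weak unstable foliation established by Hurder--Katok~\cite[Theorem~3.1]{HK90}.
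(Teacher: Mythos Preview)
Your argument is correct and complete; the differences from the paper's proof are in the order of operations rather than in substance. The paper first chooses $\eta_1|_{S^*M}$ to be a defining function of the leaf $W_{0u}(\rho_0)$ (with $H_p\eta_1=0$), then invokes the homogeneous Darboux theorem \cite[Theorem~21.1.9]{Hormander3} to complete to symplectic coordinates; this makes $\varkappa(W_{0u}(\rho_0))=\{\eta_1=0,\eta_2=1\}$ automatic, while the alignment of $E_u(\rho_0)$ with $\partial_{y_1}$ falls out of the contact identity~\eqref{e:contact-structure}, and a residual shear $(y,\eta)\mapsto(y+d\mathcal F(\eta),\eta)$ with $\mathcal F=\theta\eta_1^2/\eta_2$ is applied only to align $E_s(\rho_0)$ with $\partial_{\eta_1}$. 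You instead align both $E_u(\rho_0)$ and $E_s(\rho_0)$ at the outset via Darboux on the transversal $\Sigma_0$, and defer the straightening of the single leaf to the explicit map $\varkappa_2$. Your approach is slightly more hands-on (the flow-box construction and the explicit $\varkappa_2$ in place of an abstract Darboux citation), while the paper's ordering has the minor advantage that the leaf is flat from the start so the final adjustment is a pure momentum shear. The treatment of properties~(5)--(6), including the graph description of nearby leaves, the definition of $Z$ and $F$ via the $\eta_1$-value at $y_1=0$, and the appeal to Hurder--Katok for the $C^{3/2}$ regularity, is essentially identical in both arguments.
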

%%%%%%%%%%%%%%%%%%%%%%%%%%%%%%%%%%%%%%%%%%%%%%%%%%%%%%%%%%%%%%%%%%%%%%%%%%%%%%%%
\begin{proof}
To construct $\varkappa_0$ we need to define a system of symplectic coordinates
$(y_1,y_2,\eta_1,\eta_2)$ on a conic neighborhood of $\rho_0$ which are homogeneous
(more precisely $y_1,y_2$ are homogeneous of degree~0 and $\eta_1,\eta_2$ are homogeneous of degree~1).
Put $\eta_2:=p$ and let $\eta_1|_{S^*M}$ be a defining function of the leaf $W_{0u}(\rho_0)$
(namely $\eta_1$ vanishes on $W_{0u}(\rho_0)$ and its differential is nondegenerate on that
submanifold) satisfying $H_p\eta_1=0$; this is possible since $H_p$ is tangent to $W_{0u}(\rho_0)$.
Extending $\eta_1$ to be homogeneous of degree~1, we see that the Poisson bracket
$\{\eta_1,\eta_2\}$ vanishes in a conic neighborhood of $\rho_0$.
The existence of the system of coordinates $(y_1,y_2,\eta_1,\eta_2)$ now follows
from the Darboux Theorem~\cite[Theorem~21.1.9]{Hormander3}, where we can arrange that
$y_1(\rho_0)=y_2(\rho_0)=0$. 

Since $\varkappa_0$ is homogeneous, it sends the canonical 1-form $\xi\,dx$ on $T^*M$
to the canonical 1-form $\eta\,dy$ on $T^*\mathbb R^2$. By~\eqref{e:contact-structure} we then have
$$
d\varkappa_0(\rho_0)(E_s(\rho_0)\oplus E_u(\rho_0))=\ker(d\eta_2)\cap \ker (d y_2).
$$
Since $E_u(\rho_0)$ is tangent to $W_{0u}(\rho_0)$, we see that
$d\varkappa_0(E_u(\rho_0))=\mathbb R \partial_{y_1}$.
To ensure that $d\varkappa_0(E_s(\rho_0))=\mathbb R\partial_{\eta_1}$ we compose $\varkappa_0$
with the nonlinear shear
$$
(y,\eta)\mapsto (y+d\mathcal F(\eta),\eta),\quad
\mathcal F(\eta_1,\eta_2):=\theta{\eta_1^2\over\eta_2}
$$
for an appropriate choice of $\theta\in\mathbb R$.

Properties~(1)--(4) of Lemma~\ref{l:stun-straight} follow immediately from the
discussion above. For property~(5),
we first note that by construction
\begin{equation}
  \label{e:regw-1}
\varkappa_0(W_{0u}(\rho_0))=\{\eta_1=0,\ \eta_2=1\}.
\end{equation}
Since the tangent spaces $E_{0u}(\rho)$ to the leaves $W_{0u}(\rho)$ depend continuously
on $\rho$, we see that for $\tilde\rho\in S^*M$ near $\rho$
the images $\varkappa_0(W_{0u}(\tilde\rho))$ project diffeomorphically onto the $(y_1,y_2)$
variables. Therefore we can locally write
$$
\varkappa_0(W_{0u}(\tilde\rho))=\{\eta_1=F_0(y_1,y_2,\tilde\zeta),\ \eta_2=1\}
$$
for some function $F_0(y_1,y_2,\zeta)$ and some $\tilde\zeta=Z_0(\tilde\rho)$ depending on $\tilde\rho$,
and we can assume that $F_0(0,0,\zeta)=\zeta$ which uniquely determines the functions $F_0,Z_0$.
Since $W_{0u}(\tilde\rho)$ is a $C^\infty$ submanifold, the function $y\mapsto F_0(y,\zeta)$
is $C^\infty$ for each $\zeta$. 
Since $H_p$ is tangent to each $W_{0u}(\tilde\rho)$ and is mapped by $\varkappa_0$
to $\partial_{y_2}$, we see that $\partial_{y_2}F_0=0$, thus $F_0$ is a function of $(y_1,\zeta)$ only.
This shows that~\eqref{e:stun-straight} holds for all $\tilde\rho\in U_{\rho_0}\cap S^*M$,
and it is easy to see that it holds for all $\tilde\rho\in U_{\rho_0}$
by homogeneity, with $Z_0$ homogeneous of degree~0.
Property~(6) follows from~\eqref{e:regw-1}. 

It remains to prove that the functions $F_0,Z_0$ have regularity $C^{3/2}$. According to \cite[Definition~4.1 and Theorem~4.2]{HK90}, the function $F_0$ is $C^\infty$ in the variable $y_1$ (this shows that each unstable leaf is smooth submanifold), and is $C^1$ w.r.t. $\zeta$. Besides, \cite[Theorem~3.1]{HK90} shows that the distribution $E_u(\rho)$ depends $C^{3/2}$ on $\rho$. In our coordinates~$\varkappa_0$, this regularity means that the ``slope function'' $e_u(y_1,\eta_1)$ of the unstable distribution has regularity $C^{3/2}$ w.r.t. its variables. Now, the function $F_0$ is a solution of the differential equation
$$
\frac{d}{dy_1} F_0(y_1,\zeta)= e_u\big(y_1, F_0(y_1,\zeta)\big),\quad\text{with initial condition}\ \ F_0(0,\zeta)=\zeta.
$$
Standard results on ODEs \cite[Chapter~V]{Ha02} show that the unique solution to such an ODE with $C^k$ function $e_u$ will depend in a $C^k$ way of the initial condition $\zeta$. The proof of \cite[Theorem~3.1]{Ha02} can be easily adapted to show that a $C^{3/2}$ function $e_u$ induces a solution $F_0$ with regularity $C^{3/2}$.
\end{proof}
%%%%%%%%%%%%%%%%%%%%%%%%%%%%%%%%%%%%%%%%%%%%%%%%%%%%%%%%%%%%%%%%%%%%%%%%%%%%%%%%

We now modify the map $\varkappa_0$ from Lemma~\ref{l:stun-straight-weak}
to obtain a map $\varkappa$ satisfying also the condition (7) of Lemma~\ref{l:stun-straight}.
Let $F_0$ be the function constructed in the proof of Lemma~\ref{l:stun-straight-weak}.
We have for every $\zeta$
\begin{equation}
  \label{e:reg-better}
y_1\mapsto \partial_\zeta F_0(y_1,\zeta)\quad\text{lies in }C^\infty.
\end{equation}
This follows from the existence of $C^\infty$-adapted transverse coordinates,
see~\cite[Point~2 in Definition~4.1 and Proposition~4.2]{HK90}. 

From the normalization $F_0(0,\zeta)=\zeta$ we see that
$\partial_\zeta F_0(y_1,\zeta)>0$ for $y_1$ close to~0.
Take the diffeomorphism $\psi$ of neighborhoods of $0$ in $\mathbb R$ defined by
$$
\psi(y_1)=\int_0^{y_1} \partial_\zeta F_0(s,0)\,ds.
$$
We define $\varkappa$ as the composition
$\varkappa:=\Psi\circ\varkappa_0$
where $\Psi$ is the symplectic lift of $\psi$:
$$
\Psi(y_1,y_2,\eta_1,\eta_2)=(\psi(y_1),y_2,\eta_1/\psi'(y_1),\eta_2).
$$
Then $\varkappa$ satisfies all the properties in Lemma~\ref{l:stun-straight},
with the function
$$
F(y_1,\zeta)={F_0(\psi^{-1}(y_1),\zeta)\over \partial_\zeta F_0(\psi^{-1}(y_1),0)},\quad
Z=Z_0\,.
$$
Like $F_0$, the function $F$ is $C^{3/2}$ w.r.t. the variable $\zeta$. 
We now use this regularity to prove part~(8) of Lemma~\ref{l:stun-straight}. This $C^{3/2}$ regularity, together with the property (7), implies the Taylor expansion of $F$ at the point $(y_1,0)$:
\begin{align*}
F(y_1,\zeta) &= F(y_1,0) + \zeta \partial_\zeta F(y_1,0) + \cO(\zeta^{3/2})\\
& = \zeta + \cO(\zeta^{3/2}),
\end{align*}
with the implied constant being uniform w.r.t. $y_1$. The second line is the point (8) of the Lemma: the leaf $W_{0u}(\rho)$ at ``height'' $\zeta$ from the reference horizontal leaf $W_{0u}(\rho_0)$ is contained in horizontal rectangle of thickness $\cO(\zeta^{3/2})$.

Finally, the fact that the derivatives of all orders of $\varkappa_{\rho_0}$ 
are bounded uniformly in $\rho_0$
follows directly from the arguments above and the fact
that the leaf $W_{0u}(\rho_0)$ depends continuously on $\rho_0$ as an embedded $C^\infty$
submanifold of~$S^*M$. It also shows that the constant $C_{\rho_0}$ in item (8) is uniformly bounded w.r.t $\rho_0$.
\hfill$\square$

%%%%%%%%%%%%%%%%%%%%%%%%%%%%%%%%%%%%%%%%%%%%%%%%%%%%%%%%%%%%%%%%%%%%%%%%%%%
\subsection{Pseudodifferential operators}
  \label{s:semiclassics}
  
Let $M$ be a manifold.
We use the standard semiclassical symbol class $S^k_h(T^*M)$ whose elements
$a(x,\xi;h)$
satisfy uniform derivative bounds on every compact subset $K\subset M$:
$$
|\partial^\alpha_x\partial^\beta_\xi a(x,\xi;h)|\leq C_{\alpha\beta K}\langle\xi\rangle^{k-|\beta|},\quad
x\in K,\ \xi\in T^*_xM,
$$
and admit an expansion in powers of $h$ and $|\xi|$. See for instance~\cite[Definition~E.3]{dizzy}
or~\cite[\S2.1]{hgap}. Denote by $S^k(T^*M)$ the class of $h$-independent symbols
in $S^k_h(T^*M)$.
We fix a (noncanonical) quantization procedure $\Op_h$ on~$M$,
see~\eqref{e:Op-h-M} below and~\cite[Proposition~E.15]{dizzy}.
Denote the class of semiclassical pseudodifferential operators 
with symbols in $S^k_h(T^*M)$ by $\Psi^k_h(M)$ and the (canonical)
principal symbol map by $\sigma_h:\Psi^k_h(M)\to S^k(T^*M)$.
See for instance~\cite[\S E.1.7]{dizzy} or~\cite[\S14.2]{e-z}.

If $M$ is noncompact, then we do not impose any restrictions on the growth
of $a(x,\xi;h)\in S^k_h(T^*M)$ as $|x|\to\infty$ and likewise do not
say anything about the asymptotic behavior of operators in $\Psi^k_h(M)$
as we approach the infinity of~$M$. Therefore in general
operators in $\Psi^k_h(M)$ are bounded (uniformly in~$h$)
acting $H^s_{h,\comp}(M)\to H^{s-k}_{h,\loc}(M)$ where
$H^s_{h,\loc}(M)$ denotes the space
of distributions locally in the semiclassical Sobolev space $H^s_h$
and $H^s_{h,\comp}(M)$ consists of the compactly supported elements of~$H^s_{h,\loc}(M)$.
See~\cite[\S E.1.8]{dizzy} or~\cite[\S8.3.1]{e-z}.
We will typically use operators in $\Psi^k_h(M)$ which are \emph{properly
supported}, mapping $H^s_{h,\comp}(M)\to H^{s-k}_{h,\comp}(M)$
and $H^s_{h,\loc}(M)\to H^{s-k}_{h,\loc}(M)$.
The quantization procedure $\Op_h$ is chosen so that
$\Op_h(a)$ is properly supported for every $a$
and $\Op_h(a)$ is \emph{compactly supported} (i.e. it has a compactly supported Schwartz kernel)
for symbols $a$ which are compactly supported in the $x$ variable. 
Of course if~$M$ is a compact manifold (which will mostly be the case in this paper),
then $H^s_{h,\loc}(M)$ and~$H^s_{h,\comp}(M)$ are the same space, denoted by $H^s_h(M)$.
We will mostly use the space $H^0_h(M)=L^2(M)$.

For $A\in \Psi^k_h(M)$ we denote by $\WFh(A)$ its wavefront set and by $\Ell_h(A)$
its elliptic set. Both are subsets of the fiber-radially compactified
cotangent bundle $\overline T^*M$. See for instance~\cite[\S E.2]{dizzy} or~\cite[\S2.1]{hgap}.
For $A\in\Psi^k_h(M)$, $B\in\Psi^\ell_h(M)$ we say that
$$
A=B+\mathcal O(h^\infty)\quad\text{microlocally on some open set $U\subset \overline T^*M$}
$$
if $\WFh(A-B)\cap U=\emptyset$.

We also use the notion of the wavefront set $\WFh(u)\subset\overline T^*M$ of an $h$-dependent tempered family of distributions $u=u(h)\in\mathcal D'(M)$
and the wavefront set $\WFh'(B)\subset \overline T^*(M_1\times M_2)$ of an $h$-dependent tempered family of
operators $B=B(h):\CIc(M_2)\to \mathcal D'(M_1)$, see~\cite[\S E.2.3]{dizzy}.

%%%%%%%%%%%%%%%%%%%%%%%%%%%%%%%%%%%%%%%%%%%%%%%%%%%%%%%%%%%%%%%%%%%%%%%%%%%%%%%%
\subsubsection{Mildly exotic symbols}
  \label{s:mildly-exotic}

We also use the mildly exotic symbol class $S^{\comp}_\delta(T^*M)$, $\delta\in[0,{1\over 2})$, consisting of symbols $a(x,\xi;h)$ such that:
\begin{itemize}
\item the $(x,\xi)$-support of $a$ is contained in an $h$-independent compact
subset of~$T^*M$;
\item the symbol $a$ satisfies derivative bounds
$$
|\partial^\alpha_{(x,\xi)} a(x,\xi;h)|\leq C_{\alpha\beta} h^{-\delta|\alpha|}.
$$
\end{itemize}
The operator class corresponding to $S^{\comp}_\delta(T^*M)$ is denoted by $\Psi_\delta^{\comp}(M)$.
We require operators in~$\Psi_\delta^{\comp}(M)$ to be compactly supported.
We use the same quantization procedure $\Op_h$ for this class
and note that compactly supported elements of $S^k_h(T^*M)$ lie in $S^{\comp}_0(T^*M)$.
See~\cite[\S4.4]{e-z} or~\cite[\S3.1]{qeefun}.

Operators in the class $\Psi^{\comp}_\delta(M)$ satisfy the following version of the \emph{sharp G\r arding inequality}
for all $u\in L^2(M)$:
\begin{equation}
  \label{e:gaarding}
a\in S^{\comp}_\delta(T^*M),\
\Re a\geq 0
\quad\Longrightarrow\quad
\Re\langle \Op_h(a) u,u\rangle_{L^2}\geq -Ch^{1-2\delta}\|u\|_{L^2}^2
\end{equation}
where the constant $C$ depends only on a certain $S^{\comp}_\delta(T^*M)$ seminorm of $a$.
The inequality~\eqref{e:gaarding} can be reduced to the case of the standard quantization on $\mathbb R^n$;
the latter case is proved by applying the standard sharp G\r arding inequality~\cite[Theorem~4.32]{e-z}
to the rescaled symbol $\tilde a(x,\xi):=a(h^\delta x,h^\delta\xi)$ and using the identity
$\Op_h(a)=T^{-1}\Op_{h^{1-2\delta}}(\tilde a)T$
where $T u(x)=u(h^\delta x)$.

We also have the following norm bound when $M$ is compact:
\begin{equation}
  \label{e:precise-norm}
a\in S^{\comp}_{\delta}(T^*M)
\quad\Longrightarrow\quad
\|\Op_h(a)\|_{L^2\to L^2}\leq\sup_{T^*M}|a|+Ch^{{1\over 2}-\delta}
\end{equation}
where the constant $C$ depends only on some $S^{\comp}_{\delta}(T^*M)$ seminorm of $a$.
To show~\eqref{e:precise-norm} it suffices to apply~\eqref{e:gaarding}
to the operator $c^2-\Op_h(a)^*\Op_h(a)=\Op_h(c^2-|a|^2)+\mathcal O(h^{1-2\delta})_{L^2\to L^2}$
where $c=c(h):=\sup_{T^*M}|a|$.

\noindent\textbf{Notation:} We remark that there is a slight conflict of notation between the classes~$S^k_h$ ($h$-dependent
symbols of order~$k$ in~$\xi$ which are polyhomogeneous in both~$\xi$ and~$h$) and~$S^{\comp}_\delta$ ($h$-dependent compactly supported
symbols losing $h^{-\delta}$ with each differentiation). A more proper notation would be
$$
S^k_{h,\text{phg}}(T^*M):=S^k_h(T^*M),\quad
S^{\comp}_{h,\delta}(T^*M):=S^{\comp}_\delta(T^*M).
$$
We however keep the shorter notation to reduce the number of indices used.
For $\delta\in [0,{1\over 2})$ we define the symbol class
$$
S^{\comp}_{\delta+}(T^*M)=\bigcap_{\epsilon>0} S^{\comp}_{\delta+\epsilon}(T^*M).
$$
We also use the following notation:
$$
f(h)=\mathcal O(h^{\alpha-})\quad\text{if}\quad
f(h)=\mathcal O_\eps(h^{\alpha-\epsilon})\quad\text{for all }\epsilon>0.
$$
When writing $a\in \CIc(T^*M)$ for a symbol $a$, we assume that
$a$ is $h$-independent unless stated otherwise.

%%%%%%%%%%%%%%%%%%%%%%%%%%%%%%%%%%%%%%%%%%%%%%%%%%%%%%%%%%%%%%%%%%%%%%%%%%%%%%%%
\subsubsection{Egorov's Theorem}
\label{s:egorov}

We now specialize to the case when $(M,g)$ is a compact Anosov surface
as in~\S\ref{s:hyperbolics}.
Since $\sigma_h(-h^2\Delta)=p^2$ where $p(x,\xi)=|\xi|_g$, by
the functional calculus of pseudodifferential operators
(see~\cite[Theorem~14.9]{e-z} or~\cite[\S8]{DimassiSjostrand})
we have
\begin{equation}
  \label{e:funcal}
\begin{gathered}
\psi\in \CIc(\mathbb R)\quad\Longrightarrow\quad
\psi(-h^2\Delta)\in \Psi^{-\infty}_h(M),\\
\WFh(\psi(-h^2\Delta))\subset \supp \psi(p^2),\quad
\sigma_h(\psi(-h^2\Delta))=\psi(p^2).
\end{gathered}
\end{equation}
We now discuss conjugation of pseudodifferential operators by the wave group.
Similarly to~\cite[\S2.2]{meassupp}, to avoid technical issues coming
from the zero section, instead of the true half-wave propagator
$e^{-it\sqrt{-\Delta}}$ we use the unitary operator
\begin{equation}
  \label{e:U-t-def}
U(t):=\exp(-itP/h),\quad
P:=\psi_P(-h^2\Delta)\in\Psi^{-\infty}_h(M),\quad
P^*=P,
\end{equation}
where we fixed some function
$$
\psi_P\in \CIc((0,\infty);\mathbb R),\quad
\supp\psi_P\subset \{\textstyle{1\over 25}<\lambda<25\},\quad
\psi_P(\lambda)=\sqrt\lambda\quad\text{for }{\textstyle{1\over 16}\leq\lambda\leq 16}.
$$
For a bounded operator $A$ on $L^2(M)$, we define the
Heisenberg-evolved operators
\begin{equation}
  \label{e:A-t-def}
A(t):=U(-t)AU(t),\qquad  t\in\IR\,.
\end{equation}
Assume that $a\in \CIc(T^*M)$ and $\supp a\subset \{{1\over 4}<|\xi|_g<4\}$. Then
Egorov's Theorem~\cite[Theorem~11.1]{e-z} implies that for $t$ bounded
independently of $h$ we have
\begin{equation}
  \label{e:egorov-basic}
A=\Op_h(a)\quad\Longrightarrow\quad
A(t)=\Op_h(a\circ\varphi_t)+\mathcal O(h)_{L^2\to L^2}
\end{equation}
where $\varphi_t=\exp(tH_p)$ is the homogeneous geodesic flow. In fact, the proof
in~\cite{e-z} gives the following stronger statement (see e.g.~\cite[\S C.2]{qeefun}
or Lemma~\ref{l:egorov-precise} below
for details): for each time $t$ there exists $a_t(x,\xi;h)\in
S^{\comp}_{0}(T^*M)$
such that
\begin{equation}
  \label{e:egorov-basic-more}
A(t)=\Op_h(a_t)+\mathcal O(h^\infty)_{\Psi^{-\infty}},\quad
a_t=a\circ\varphi_t+\mathcal O(h),\quad
\supp a_t\subset \varphi_{-t}(\supp a).
\end{equation}
We next extend~\eqref{e:egorov-basic} to the case of $t$ bounded by a small constant times
$\log(1/h)$, using the mildly exotic symbol classes described in~\S\ref{s:mildly-exotic}.
Let $\Lambda_1>0$ be the `maximal expansion rate' from~\eqref{e:Lambda-0-1}.
It follows from~\eqref{e:exp-rate} and~\eqref{e:Lambda-0-1} that
\begin{equation}
  \label{e:flow-long-bound}
\sup_{\rho\in \{{1\over 4}\leq |\xi|_g\leq 4\}}\|d\varphi_t(\rho)\|\leq Ce^{\Lambda_1|t|}\quad\text{for all }
t\in\mathbb R.
\end{equation}
%%%%%%%%%%%%%%%%%%%%%%%%%%%%%%%%%%%%%%%%%%%%%%%%%%%%%%%%%%%%%%%%%%%%%%%%%%%%%%%%
\begin{lemm}
  \label{l:egorov-mild}
Assume that $a\in \CIc(T^*M)$ and $\supp a\subset \{{1\over 4}\leq |\xi|_g\leq 4\}$;
put $A:=\Op_h(a)$.
Fix $\delta\in (0,{1\over 2})$.
Then we have uniformly in $t$ satisfying
$|t|\leq \delta\Lambda_1^{-1}\log(1/h)$:
\begin{enumerate}
\item $a\circ\varphi_t\in S^{\comp}_{\delta+}(T^*M)$;
\item $A(t)=\Op_h(a\circ\varphi_t)+\mathcal O(h^{1-2\delta-})_{L^2\to L^2}$.
\end{enumerate}
\end{lemm}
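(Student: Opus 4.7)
For part~(1), the support of $a\circ\varphi_t$ equals $\varphi_{-t}(\supp a)$, which sits inside the fixed compact annulus $\{{1\over 4}\le|\xi|_g\le 4\}$ and so is $h$-uniformly compact. To obtain the derivative bounds I would first control the derivatives of the flow: a Gronwall argument applied to the linearized (and higher-linearized) Hamilton ODEs, together with~\eqref{e:flow-long-bound}, gives
\begin{equation*}
\|D^k\varphi_t(\rho)\|\le C_k\,e^{k\Lambda_1|t|}(1+|t|)^{N_k}
\end{equation*}
uniformly on the annulus. Plugging in $|t|\le\delta\Lambda_1^{-1}\log(1/h)$ yields $\|D^k\varphi_t\|\le C_k\,h^{-k\delta}(\log(1/h))^{N_k}$. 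The multivariate chain rule (Fa\`a di Bruno) then expresses $\partial^\alpha(a\circ\varphi_t)$ as a sum of products of $(\partial^\beta a)\circ\varphi_t$ with derivatives of $\varphi_t$ of total order $|\alpha|$, whence
\begin{equation*}
\big|\partial^\alpha(a\circ\varphi_t)\big|\le C_\alpha\,h^{-\delta|\alpha|}(\log(1/h))^{N'_\alpha}.
\end{equation*}
Since any polylogarithmic factor in $1/h$ is $O(h^{-\varepsilon})$ for every $\varepsilon>0$, this places $a\circ\varphi_t$ in $S^{\comp}_{\delta+\varepsilon}(T^*M)$ for every $\varepsilon>0$, i.e.\ in $S^{\comp}_{\delta+}(T^*M)$.

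For part~(2), I would use a Duhamel-type comparison between the exact Heisenberg evolution and its classical-symbol approximation. Set $E(t):=A(t)-\Op_h(a\circ\varphi_t)$. Using $\frac{d}{dt}A(t)=\frac{i}{h}[P,A(t)]$ and $\frac{d}{dt}(a\circ\varphi_t)=H_p(a\circ\varphi_t)$, the error obeys $E(0)=0$ and the inhomogeneous Heisenberg equation
\begin{equation*}
\frac{d}{dt}E(t)=\frac{i}{h}[P,E(t)]+F(t),\qquad F(t):=\frac{i}{h}\big[P,\Op_h(a\circ\varphi_t)\big]-\Op_h\big(H_p(a\circ\varphi_t)\big).
\end{equation*}
Writing $P$ microlocally near $\{{1\over 4}\le|\xi|_g\le 4\}$ as $\Op_h(p)+h\Op_h(p_{\mathrm{sub}})+\cdots$ via the functional calculus~\eqref{e:funcal}, and applying the Moyal expansion in the mildly exotic class $S^{\comp}_{\delta+}$ (where each derivative costs $h^{-\delta-}$), one obtains $\|F(t)\|_{L^2\to L^2}=O(h^{1-2\delta-})$.

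Variation of parameters (using that $e^{-isP/h}\cdot e^{isP/h}$ is the propagator of $\tfrac{i}{h}[P,\cdot]$) gives
\begin{equation*}
E(t)=U(-t)\bigg(\int_0^t U(s)F(s)U(-s)\,ds\bigg)U(t),
\end{equation*}
and unitarity of $U$ yields
\begin{equation*}
\|E(t)\|_{L^2\to L^2}\le|t|\sup_{|s|\le|t|}\|F(s)\|_{L^2\to L^2}=O\big(\log(1/h)\cdot h^{1-2\delta-}\big)=O(h^{1-2\delta-}),
\end{equation*}
which is the claim.

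The main technical point is the sharp operator-norm bound on the Moyal remainder $F(t)$: because $a\circ\varphi_t$ only belongs to $S^{\comp}_{\delta+}$, each derivative in the Moyal expansion costs $h^{-\delta-}$, and one must verify that the $h$-gains from the higher-order terms in the expansion, together with the handling of the $O(h)$ subprincipal correction of $P$, leave an overall gain of $h^{1-2\delta-}$ after integration over times of order $\log(1/h)$. This quantitative bookkeeping is exactly the content of the refined pseudodifferential estimates collected in Appendix~\ref{s:semi-detail}; once those are in hand, the argument sketched above closes.
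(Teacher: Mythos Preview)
Your approach is correct and essentially identical to the paper's: part~(1) via Gronwall and the chain rule, part~(2) via Duhamel reducing to the source term $F(t)$ (which is precisely what the paper bounds through~\eqref{e:commfor}) and then integrating using unitarity of $U(t)$.

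One point needs sharpening. Your heuristic ``each derivative costs $h^{-\delta-}$'' does not justify $\|F(t)\|=\mathcal O(h^{1-2\delta-})$. If both symbols in the commutator are treated as elements of $S^{\comp}_{\delta+}$, the symmetric calculus gives only $[P,\Op_h(\tilde a)]=-ih\Op_h(H_p\tilde a)+\mathcal O(h^{2-4\delta-})$, hence $F(t)=\mathcal O(h^{1-4\delta-})$, which diverges for $\delta\in[1/4,1/2)$. The paper makes this explicit in the sentence following~\eqref{e:commfor}: the improved remainder $\mathcal O(h^{2-2\delta-})$ relies on the fact that $p\in S^{\comp}_0$, so derivatives landing on $p$ in the Moyal expansion are free and only the derivatives on $a\circ\varphi_t$ cost $h^{-\delta}$ each. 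You correctly point to Appendix~\ref{s:semi-detail} (this is Remark~2 after Lemma~\ref{l:prod-mfld}) for the actual estimate, and the bound you claim is right; just make sure what you invoke is this asymmetric commutator estimate with one classical symbol, not the generic $S^{\comp}_{\delta+}$ product rule.
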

%%%%%%%%%%%%%%%%%%%%%%%%%%%%%%%%%%%%%%%%%%%%%%%%%%%%%%%%%%%%%%%%%%%%%%%%%%%%%%%%
\Remarks
1. A stronger statement similar to~\eqref{e:egorov-basic-more}, which shows that
the remainder $\mathcal O(h^{1-2\delta-})$ is actually pseudodifferential,
is proved for instance in~\cite[Proposition~3.9]{qeefun}.

\noindent 2. Lemma~\ref{l:egorov-mild} shows that Egorov's theorem holds for all times
$t$ which are smaller (by at least $\epsilon\log(1/h)$ for some $\epsilon>0$) than the \emph{minimal Ehrenfest time} $\log(1/h)\over 2\Lambda_1$.
Later we will show a finer version of Egorov's theorem, up to the (potentially much longer) \emph{local Ehrenfest time}~--
see Proposition~\ref{l:ehrenfest-prop}.
%%%%%%%%%%%%%%%%%%%%%%%%%%%%%%%%%%%%%%%%%%%%%%%%%%%%%%%%%%%%%%%%%%%%%%%%%%%%%%%%
\begin{proof}
(1) The estimate~\eqref{e:flow-long-bound} implies the following bounds on higher derivatives:
for all $t\in \mathbb R$, all multiindices $\alpha$, and all $\epsilon>0$
\begin{equation}
  \label{e:egorov-mild-int-1}
\sup_{T^*M}|\partial^\alpha (a\circ\varphi_t)|\leq C_{\alpha,\epsilon} e^{(\Lambda_1+\epsilon)|\alpha|\cdot |t|}.
\end{equation}
See for instance~\cite[Lemma~C.1]{qeefun}, whose proof applies directly to the present situation;
alternatively one could use the proof of Lemma~\ref{l:symbols-bounds} below in the special case $k=0$.
Under the condition $|t|\leq \delta\Lambda_1^{-1}\log(1/h)$ the bound~\eqref{e:egorov-mild-int-1}
implies that $a\circ\varphi_t\in S^{\comp}_{\delta+}(T^*M)$ uniformly in~$t$.

(2)
We use the following commutator formula
valid for all $\tilde a\in S^{\comp}_{\delta+}(T^*M)$
with $\supp \tilde a\subset \{{1\over 4}\leq |\xi|_g\leq 4\}$:
\begin{equation}
  \label{e:commfor}
[P,\Op_h(\tilde a)]=-ih\Op_h(H_p \tilde a)+\mathcal O(h^{2-2\delta-})_{L^2\to L^2}.
\end{equation}
Here it is important that $p\in S^{\comp}_0(T^*M)$
and we use the same quantization procedure $\Op_h$
on both sides of the equation; the $S^{\comp}_\delta$ calculus
would only give an $\mathcal O(h^{2-4\delta-})$ remainder.
See Remark~2 following Lemma~\ref{l:prod-mfld} for the proof.

Using~\eqref{e:commfor} and part~(1) we compute
for $|t|\leq \delta\Lambda_1^{-1}\log(1/h)$
$$
\begin{aligned}
\partial_t\big(U(t)\Op_h(a\circ\varphi_t)U(-t)\big)
&=U(t)\big(-ih^{-1}[P,\Op_h(a\circ\varphi_t)]+\Op_h(\partial_t(a\circ\varphi_t))\big)U(-t)\\
&=\mathcal O(h^{1-2\delta-})_{L^2\to L^2}.
\end{aligned}
$$
Integrating this from~0 to~$t$, we get $U(t)\Op_h(a\circ\varphi_t)U(-t)=\Op_h(a)+\mathcal O(h^{1-2\delta-})_{L^2\to L^2}$ which finishes the proof since $U(t)$ is unitary.
\end{proof}
%%%%%%%%%%%%%%%%%%%%%%%%%%%%%%%%%%%%%%%%%%%%%%%%%%%%%%%%%%%%%%%%%%%%%%%%%%%%%%%%
We will also need to control products of many pseudodifferential
operators. The following Lemma considers products of logarithmically many
pseudodifferential operators; it is proved in the same way as~\cite[Lemmas~A.1 and~A.6]{meassupp}
using the norm bound~\eqref{e:precise-norm}:
%%%%%%%%%%%%%%%%%%%%%%%%%%%%%%%%%%%%%%%%%%%%%%%%%%%%%%%%%%%%%%%%%%%%%%%%%%%%%%%%
\begin{lemm}
  \label{l:log-product}
Let $C$ be an arbitrary fixed constant, $\delta\in [0,{1\over 2})$, and assume that the symbols
$$
a_1,\dots, a_N\in S^{\comp}_{\delta}(T^*M),\quad
N\leq C\log(1/h),\quad
\sup|a_j|\leq 1
$$
have each $S^{\comp}_\delta$ seminorm bounded uniformly in~$j$.
Assume also that we are given operators
$A_j=\Op_h(a_j)+\mathcal O(h^{1-2\delta})_{L^2\to L^2}$
with the remainders bounded uniformly in~$j$. Then:
\begin{enumerate}
\item $a_1\cdots a_N\in S^{\comp}_{\delta+}(T^*M)$;
\item $A_1\cdots A_N=\Op_h(a_1\cdots a_N)+\mathcal O(h^{1-2\delta-})_{L^2\to L^2}$.
\end{enumerate}
That is, the product of these symbols (resp. operators) is essentially
in the same symbol class (resp. operator class) as the individual factors.
\end{lemm}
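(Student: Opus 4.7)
The plan is to prove (1) by Leibniz's rule and (2) by induction on $N$, using the standard two-factor composition formula for the $S^{\comp}_\delta$ calculus together with the sharp norm bound~\eqref{e:precise-norm}. The central observation is that $N \leq C\log(1/h)$ means any factor polynomial in $N$ is $\cO(h^{-\epsilon})$ for every $\epsilon > 0$, and is absorbed into the ``$+$'' in $S^{\comp}_{\delta+}$ or the ``$-$'' in $\cO(h^{\alpha-})$.

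For (1), Leibniz's rule yields
\[
\partial^\alpha(a_1 \cdots a_N) = \sum_{\alpha_1 + \cdots + \alpha_N = \alpha} \binom{\alpha}{\alpha_1,\dots,\alpha_N} \prod_{j=1}^N \partial^{\alpha_j} a_j,
\]
with at most $N^{|\alpha|}$ nonzero terms and each summand bounded by $C_\alpha h^{-\delta|\alpha|}$ from the uniform seminorm hypothesis on the $a_j$. Combined with $\sup|a_1 \cdots a_N| \leq 1$ and the fact that the support of the product lies in $\bigcap_j \supp a_j$, this places $a_1 \cdots a_N$ in $S^{\comp}_{\delta+\epsilon}(T^*M)$ for every $\epsilon > 0$, hence in $S^{\comp}_{\delta+}(T^*M)$; crucially, the resulting seminorms are uniform in $N$ once $h$ is small enough depending on $\alpha$ and $\epsilon$.

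For (2), first separate out the remainders $R_j := A_j - \Op_h(a_j)$ with $\|R_j\| = \cO(h^{1-2\delta})$ via the telescoping identity
\[
A_1 \cdots A_N - \Op_h(a_1)\cdots\Op_h(a_N) = \sum_{j=1}^N A_1 \cdots A_{j-1}\, R_j\, \Op_h(a_{j+1}) \cdots \Op_h(a_N).
\]
By~\eqref{e:precise-norm}, each $\|\Op_h(a_j)\|$ and $\|A_j\|$ is bounded by $1+\cO(h^{1/2-\delta})$, and the product of $N$ such norms stays $1+o(1)$, so the right-hand side has norm $\cO(N h^{1-2\delta}) = \cO(h^{1-2\delta-})$. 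It remains to compare $\Op_h(a_1)\cdots \Op_h(a_N)$ with $\Op_h(a_1 \cdots a_N)$, which I handle by induction on $k$: for any fixed $\epsilon > 0$, part (1) places $b_k := a_1 \cdots a_k$ in $S^{\comp}_{\delta+\epsilon}$ with seminorms uniform in $k$, so the two-factor composition formula in this class gives $\Op_h(b_k)\Op_h(a_{k+1}) = \Op_h(b_{k+1}) + r_k$ with $\|r_k\|_{L^2\to L^2} = \cO(h^{1-2\delta-2\epsilon})$. Writing $E_k := \Op_h(a_1)\cdots \Op_h(a_k) - \Op_h(b_k)$ we get $E_{k+1} = r_k + E_k\Op_h(a_{k+1})$, and iterating with the norm bound on $\Op_h(a_{k+1})$ yields $\|E_N\| \leq CN h^{1-2\delta-2\epsilon}(1+\cO(h^{1/2-\delta}))^N = \cO(h^{1-2\delta-3\epsilon})$. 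Since $\epsilon>0$ is arbitrary, this closes the argument.

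The only bookkeeping challenge is the interaction between the polylogarithmic count $N = \cO(\log(1/h))$ and the exotic calculus: each composition step incurs a seminorm loss of $h^{-\epsilon}$ from the Leibniz bound of part (1), which could in principle compound after many iterations. The $S^{\comp}_{\delta+}$ class and the $\cO(h^{\alpha-})$ convention are precisely designed to tolerate this, so no deeper ingredient is needed; the argument is the natural adaptation of~\cite[Lemmas A.1 and A.6]{meassupp} to the present compact manifold setting.
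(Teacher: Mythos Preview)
Your proof is correct and is essentially the argument the paper intends: the paper does not spell it out but points to~\cite[Lemmas~A.1 and~A.6]{meassupp} together with the norm bound~\eqref{e:precise-norm}, and your Leibniz-rule count for part~(1) and telescoping/inductive two-factor composition for part~(2) are exactly that standard argument. The only cosmetic difference is that the paper's later proof of Lemma~\ref{l:cq-log} organizes the Leibniz expansion via the index set $\mathscr L$ of ``which factor each derivative lands on,'' but for the present lemma your direct $N^{|\alpha|}$ term count is equivalent and sufficient.
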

%%%%%%%%%%%%%%%%%%%%%%%%%%%%%%%%%%%%%%%%%%%%%%%%%%%%%%%%%%%%%%%%%%%%%%%%%%%%%%%%

%%%%%%%%%%%%%%%%%%%%%%%%%%%%%%%%%%%%%%%%%%%%%%%%%%%%%%%%%%%%%%%%%%%%%%%%%%%%%%%%
\subsection{Lagrangian distributions and Fourier integral operators}
  \label{s:prelim-fio}

In this section we review the theory of semiclassical
Lagrangian distributions and Fourier integral operators.
These are used in~\S\ref{s:longtime} to describe propagation of Lagrangian
states beyond the Ehrenfest time. In particular we use that the wave propagator $U(t)$
defined in~\eqref{e:U-t-def} is, after appropriate cutoffs, a Fourier integral operator
associated to the geodesic flow~$\varphi_t$, see~\eqref{e:U-1-fio}.
Fourier integral operators are also used in~\S\ref{s:micro-conjugation} to quantize a symplectomorphism which
locally straightens out unstable leaves.

We keep the presentation brief, referring the reader to~\cite{AlexandrovaFIO},
\cite[Chapter~5]{Guillemin-SternbergBook1}, and \cite[Chapter~8]{Guillemin-SternbergBook2}
for details. For other reviews (bearing some similarities to the one here)
see~\cite[\S3.2]{fwl}, \cite[\S3.2]{qeefun}, \cite[\S3.2]{nhp}, \cite[\S2.2]{hgap},
and~\cite[\S4.1]{NZ09}.
For the related nonsemiclassical case, see~\cite[Chapter~25]{Hormander4} and~\cite[Chapters~10--11]{Grigis-Sjostrand}.

%%%%%%%%%%%%%%%%%%%%%%%%%%%%%%%%%%%%%%%%%%%%%%%%%%%%%%%%%%%%%%%%%%%%%%%%%%%%%%%%
\subsubsection{Lagrangian manifolds and phase functions}
  \label{s:lagr-mflds}

Let $M$ be a smooth $n$-dimensional manifold (in this subsection we do not assume $M$ to be compact). Denote by
$\xi\,dx$ the canonical 1-form on $T^*M$, then the symplectic form
is given by
$$
\omega:=d(\xi\,dx).
$$
An embedded $n$-dimensional submanifold $\mathscr L\subset T^*M$
is called a \emph{Lagrangian submanifold} if the pullback of $\omega$
to $\mathscr L$ is zero; that is, the pullback of $\xi\,dx$
to $\mathscr L$ is a closed 1-form. A Lagrangian submanifold is called
\emph{exact} if the pullback of $\xi\,dx$ to $\mathscr L$
is equal to $dF$ for some function $F\in C^\infty(\mathscr L;\mathbb R)$,
called an \emph{antiderivative} on $\mathscr L$. We henceforth
define an exact Lagrangian submanifold as the pair $(\mathscr L,F)$
but still often denote it by~$\mathscr L$ for simplicity.

We note that $\mathscr L$ is exact in particular if it is \emph{conic},
namely the generator of dilations $\xi\cdot\partial_\xi$ is tangent to $\mathscr L$.
In this case the pullback of $\xi\,dx$ to $\mathscr L$ is equal to~0
(since $\omega(\xi\cdot\partial_\xi,v)=\langle \xi\,dx,v\rangle=0$
for any tangent vector $v\in T\mathscr L$), thus it is natural
to fix the antiderivative equal to~0 as well.

One way to obtain an exact Lagrangian submanifold is by using a phase function.
More precisely, if $U\subset M_x\times\mathbb R^m_\theta$ is an open set
(for some $m\in\mathbb N_0$) then we call a function
$\Phi(x,\theta)\in C^\infty(U;\mathbb R)$ a \emph{nondegenerate phase function}
if:
\begin{enumerate}
\item the differentials
$d(\partial_{\theta_j}\Phi)_{1\leq j\leq m}$
are linearly independent on the \emph{critical set}
$$
\mathcal C_\Phi:=\{(x,\theta)\in U\mid \partial_\theta \Phi(x,\theta)=0\}
$$
which is then an $n$-dimensional embedded submanifold of $U$; and
\item the following map is a smooth embedding:
$$
j_\Phi:\mathcal C_\Phi\to T^*M,\quad
j_\Phi(x,\theta)=(x,\partial_x\Phi(x,\theta)).
$$
\end{enumerate}
We call $\theta$ the \emph{oscillatory variables}.

Under the conditions~(1)--(2) above the manifold
\begin{equation}
  \label{e:L-Phi}
\mathscr L_\Phi:=j_\Phi(\mathcal C_\Phi)\ \subset\ T^*M
\end{equation}
is exact Lagrangian, with the antiderivative $F_\Phi\in
C^\infty(\mathscr L_\Phi;\mathbb R)$ given by the restriction of the
phase function on the critical set:
$$
F_\Phi(j_\Phi(x,\theta))=\Phi(x,\theta),\quad
(x,\theta)\in\mathcal C_\Phi.
$$
For an exact Lagrangian submanifold $(\mathscr L,F)$
we say that a nondegenerate phase function $\Phi$ \emph{generates} $\mathscr L$,
if $\mathscr L=\mathscr L_\Phi$ and $F=F_\Phi$.

Every exact Lagrangian submanifold $(\mathscr L,F)$ is locally generated by phase
functions: that is, each point $\rho\in\mathscr L$ has a neighborhood generated
by some phase function; see~\cite[Proposition~5.1]{Guillemin-SternbergBook1}. The simplest case is when the projection
$\pi:\mathscr L\to M$ is a diffeomorphism onto its image, in which case $\mathscr L$
is given by
\begin{equation}
  \label{e:basic-lagrangian}
\mathscr L=\mathscr L_\Phi=\{(x,\partial_x\Phi(x))\mid x\in U\},\quad
U:=\pi(\mathscr L)\subset M,
\end{equation}
where the function $\Phi\in C^\infty(U;\mathbb R)$ is defined by
$F(x,\xi)=\Phi(x)$ for all $(x,\xi)\in\mathscr L$.

Another important case is when $\mathscr L\subset T^*M\setminus 0$ is conic. In this case
each point $\rho\in\mathscr L$ has a conic neighborhood which is generated
by some phase function $\Phi(x,\theta)$, $(x,\theta)\in U$,
where $U\subset M\times\mathbb R^m$ is conic and $\Phi$ is
homogeneous of degree~1 in the $\theta$ variables. For the proof see~\cite[Proposition~5.2]{Guillemin-SternbergBook1}, \cite[Theorem~21.2.16]{Hormander3}, or~\cite[Proposition~11.4]{Grigis-Sjostrand}.

%%%%%%%%%%%%%%%%%%%%%%%%%%%%%%%%%%%%%%%%%%%%%%%%%%%%%%%%%%%%%%%%%%%%%%%%%%%%%%%%
\subsubsection{Lagrangian distributions}
  \label{s:intro-lagr}
  
Let $(\mathscr L,F)$ be an exact Lagrangian submanifold of~$T^*M$. We use
the class $I^{\comp}_h(\mathscr L)$ of (compactly microlocalized semiclassical) \emph{Lagrangian distributions} associated
to $\mathscr L$. Elements of $I^{\comp}_h(\mathscr L)$ are $h$-dependent
families of functions in $\CIc(M)$, with support contained in some $h$-independent compact set.
We give a definition and some properties of the class $I^{\comp}_h(\mathscr L)$ below.

If $\mathscr L=\mathscr L_\Phi$ is generated by some phase function $\Phi(x,\theta)$,
$(x,\theta)\in U\subset M\times \mathbb R^m$,
in the sense of~\eqref{e:L-Phi}, then $I^{\comp}_h(\mathscr L)$
consists of distributions of the form
\begin{equation}
  \label{e:lag-dist}
u(x;h)=(2\pi h)^{-m/2}\int_{\mathbb R^m} e^{i\Phi(x,\theta)/h}a(x,\theta;h)\,d\theta
+\mathcal O(h^\infty)_{\CIc(M)}.
\end{equation}
Here the amplitude $a(x,\theta;h)\in \CIc(U)$ is a \emph{classical symbol}; that is,
$\supp a$ is contained in an $h$-independent compact subset of $U$
and we have the asymptotic expansion in~$\CIc(U)$
$$
a(x,\theta;h)\sim \sum_{j=0}^\infty h^ja_j(x,\theta)\quad\text{as}\quad h\to 0
$$
for some $a_0,a_1,\ldots\in\CIc(U)$.

In the special case when $\Phi$ has no oscillatory variables (i.e. $\mathscr L$ is given by~\eqref{e:basic-lagrangian}) the expression~\eqref{e:lag-dist} simplifies to
\begin{equation}
  \label{e:lag-dist-basic}
u(x;h)=e^{i\Phi(x)/h}a(x;h)+\mathcal O(h^\infty)_{\CIc(M)}.
\end{equation}
The class of functions defined by~\eqref{e:lag-dist} does not depend
on the choice of the phase function generating $\mathscr L$.
That is, if $\Phi,\Phi'$ are two phase functions with
$\mathscr L=\mathscr L_{\Phi}=\mathscr L_{\Phi'}$ and $u$ is given by~\eqref{e:lag-dist}
for the phase function $\Phi$ and some amplitude $a$, then
$u$ is also given by~\eqref{e:lag-dist} for the phase function $\Phi'$
and some other amplitude $a'$. The simplest case of this statement is when
$\Phi'$ has no oscillatory variables (that is, $\mathscr L$ is constructed
from~$\Phi'$ using~\eqref{e:basic-lagrangian}) as we can then write
(ignoring the $\mathcal O(h^\infty)$ remainder in~\eqref{e:lag-dist})
\begin{equation}
  \label{e:lag-repas}
a'(x;h)=e^{-i\Phi'(x)/h}u(x;h)=(2\pi h)^{-m/2}\int_{\mathbb R^m} e^{{i\over h}(\Phi(x,\theta)-\Phi'(x))}
a(x,\theta;h)\,d\theta
\end{equation}
and show that $a'$ is a classical symbol using the method of stationary phase.
The proof in the general case also uses stationary phase but is more involved,
see~\cite[\S8.1.2]{Guillemin-SternbergBook2};
for the nonsemiclassical case see~\cite[\S6.4]{Guillemin-SternbergBook1}, \cite[Proposition~25.1.5]{Hormander4}, or~\cite[Theorem~11.5]{Grigis-Sjostrand}.

For general Lagrangians $\mathscr L$ (not parametrized by a single phase function)
we define $I^{\comp}_h(\mathscr L)$ as consisting of sums
$u_1+\dots+u_k$ where $u_j\in I^{\comp}_h(\mathscr L_j)$ and
each $\mathscr L_j\subset\mathscr L$ is generated by some phase function.
Here are two important properties of Lagrangian distributions:
%%%%%%%%%%%%%%%%%%%%%%%%%%%%%%%%%%%%%%%%%%%%%%%%%%%%%%%%%%%%%%%%%%%%%%%%%%%%%%%%
\begin{enumerate}
\item If $u\in I^{\comp}_h(\mathscr L)$ and $A\in \Psi^k_h(M)$ is compactly supported (which means that its Schwartz kernel is compactly supported) then
$Au\in I^{\comp}_h(\mathscr L)$;
\item If $u\in I^{\comp}_h(\mathscr L)$ then
$\WFh(u)\subset \mathscr L$; that is, for any compactly supported $A\in\Psi^k_h(M)$
with $\WFh(A)\cap \mathscr L=\emptyset$ we have $Au=\mathcal O(h^\infty)_{\CIc(M)}$.
\end{enumerate}
%%%%%%%%%%%%%%%%%%%%%%%%%%%%%%%%%%%%%%%%%%%%%%%%%%%%%%%%%%%%%%%%%%%%%%%%%%%%%%%%
To show these, we first use a partition of unity to reduce to the case when $M=\mathbb R^n$
and $\mathscr L$ is generated by some phase function $\Phi$.
We next write for $b\in \CIc(T^*\mathbb R^n)$
and $u$ given by~\eqref{e:lag-dist}
$$
\Op_h(b) u(x)=(2\pi h)^{-{m\over 2}-n}\int_{\mathbb R^{2n+m}}
e^{{i\over h}(\langle x-y,\xi\rangle+\Phi(y,\theta))} b(x,\xi)a(y,\theta;h)\,dyd\xi d\theta.
$$
We now apply stationary phase in the $(y,\xi)$ variables to get an expression
of the form~\eqref{e:lag-dist} with the phase function $\Phi(x,\theta)$ and
some amplitude which is a classical symbol.
On the other hand, if $b$ is a symbol in $S^k_h(T^*\mathbb R^n)$ and
$\supp b\cap \mathscr L=\emptyset$ then the method of nonstationary
phase in the $(y,\xi,\theta)$ variables shows that $\Op_h(b)u(x)=\mathcal O(h^\infty)_{C^\infty}$.

%%%%%%%%%%%%%%%%%%%%%%%%%%%%%%%%%%%%%%%%%%%%%%%%%%%%%%%%%%%%%%%%%%%%%%%%%%%%%%%%
\subsubsection{Fourier integral operators}
  \label{s:prelim-fio-s}

We next discuss Fourier integral operators associated to symplectomorphisms.
Let $M_1,M_2$ be two manifolds of the same dimension~$n$, $U_j\subset T^*M_j$ be
two open sets, and $\varkappa:U_2\to U_1$
be a symplectomorphism. The flipped graph
\begin{equation}
  \label{e:L-varkappa}
\mathscr L_\varkappa:=\{(x_1,\xi_1,x_2,-\xi_2)\mid (x_2,\xi_2)\in U_2,\
\varkappa(x_2,\xi_2)=(x_1,\xi_1)\}\subset T^*(M_1\times M_2)
\end{equation}
is a Lagrangian submanifold. We further assume that $\varkappa$ is \emph{exact},
namely $\mathscr L_\varkappa$ is an exact Lagrangian submanifold.
As before, we fix an antiderivative
for $\mathscr L_\varkappa$ but suppress it in the notation. The exactness condition
holds in particular if $\varkappa$ is homogeneous, that is it sends
$\xi_2\cdot\partial_{\xi_2}$ to $\xi_1\cdot\partial_{\xi_1}$;
indeed, $\mathscr L_\varkappa$ is conic and we fix the antiderivative to be~0.

We say that an $h$-dependent family of operators $B=B(h):\mathcal D'(M_2)\to \CIc(M_1)$
is a (compactly microlocalized semiclassical) \emph{Fourier integral operator} associated to $\varkappa$,
and write $B\in I^{\comp}_h(\varkappa)$, if the corresponding integral kernel
$K_B(x_1,x_2;h)\in \CIc(M_1\times M_2)$ satisfies
$K_B\in h^{-n/2}I^{\comp}_h(\mathscr L_\varkappa)$. Here
$I^{\comp}_h(\mathscr L_\varkappa)$ is the class of Lagrangian distributions defined in~\S\ref{s:intro-lagr} above.
In particular, the wavefront set $\WF'_h(B)$ is contained in the graph of~$\varkappa$.

An important special case is when $M_2=\mathbb R^n$
and the projection $\pi:\mathscr L_\varkappa\to M_1\times\mathbb R^n$ onto
the $(x_1,\xi_2)$ variables is a diffeomorphism onto its image. If $F$ is the
antiderivative on $\mathscr L_\varkappa$, then we can write
\begin{equation}
  \label{e:ct-std-par}
\mathscr L_\varkappa=\{(x_1,\partial_{x_1}S(x_1,\xi_2),\partial_{\xi_2}S(x_1,\xi_2),-\xi_2)\mid
(x_1,\xi_2)\in U\}
\end{equation}
where $U:=\{(x_1,\xi_2)\mid (x_1,\xi_1,x_2,-\xi_2)\in\mathscr L_\varkappa\}$
and $S\in C^\infty(U;\mathbb R)$ is given by
$$
F(x_1,\xi_1,x_2,-\xi_2)=S(x_1,\xi_2)-\langle x_2,\xi_2\rangle,\quad
(x_1,\xi_1,x_2,-\xi_2)\in\mathscr L_\varkappa.
$$
That is, $\mathscr L_\varkappa$ is generated by the phase
function $\Phi(x_1,x_2,\theta)=S(x_1,\theta)-\langle x_2,\theta\rangle$
in the sense of~\eqref{e:L-Phi}. Then every operator $B\in I^{\comp}_h(\varkappa)$
has the following form modulo an $\mathcal O(h^\infty)_{\mathcal D'(\mathbb R^n)\to \CIc(M_1)}$
remainder:
\begin{equation}
  \label{e:fio-std-par}
Bf(x_1)=(2\pi h)^{-n}\int_{\mathbb R^{2n}} e^{{i\over h}(S(x_1,\theta)-\langle x_2,\theta\rangle)}
b(x_1,x_2,\theta;h)f(x_2)\,dx_2d\theta
\end{equation}
for some classical symbol $b\in \CIc(U_{(x_1,\theta)}\times\mathbb R^n_{x_2})$.

We list several fundamental properties of the class $I^{\comp}_h(\varkappa)$:
%%%%%%%%%%%%%%%%%%%%%%%%%%%%%%%%%%%%%%%%%%%%%%%%%%%%%%%%%%%%%%%%%%%%%%%%%%%%%%%%
\begin{enumerate}
\item If $B\in I^{\comp}_h(\varkappa)$, then $B:L^2(M_2)\to L^2(M_1)$
is bounded in norm uniformly in~$h$;
\item If $\varkappa$ is the identity map on $T^*M$, then
$B\in I^{\comp}_h(\varkappa)$ if and only if $B$ is a compactly supported pseudodifferential
operator in $\Psi^k_h(M)$ and $\WFh(B)\subset T^*M$ is compact;
\item If $B\in I^{\comp}_h(\varkappa)$ and $u\in I^{\comp}_h(\mathscr L)$
is a Lagrangian distribution, then $Bu$ is a Lagrangian distribution in~$I^{\comp}_h(\varkappa(\mathscr L))$;
\item If $B_1\in I^{\comp}_h(\varkappa_1)$, $B_2\in I^{\comp}_h(\varkappa_2)$,
then the composition $B_1B_2$ is a Fourier integral operator in~$I^{\comp}_h(\varkappa_1\circ\varkappa_2)$;
\item If $B\in I^{\comp}_h(\varkappa)$, then the adjoint $B^*$ lies in~$I^{\comp}_h(\varkappa^{-1})$.\end{enumerate}
%%%%%%%%%%%%%%%%%%%%%%%%%%%%%%%%%%%%%%%%%%%%%%%%%%%%%%%%%%%%%%%%%%%%%%%%%%%%%%%%
Here in property~(2) we let the antiderivative equal to~0 (as the identity map
is homogeneous). In property~(3) we define the antiderivative $F_{\varkappa(\mathscr L)}$ on $\varkappa(\mathscr L)$
using the antiderivatives $F_{\mathscr L},F_{\mathscr \varkappa}$
on $\mathscr L,\mathscr L_\varkappa$ by
\begin{equation}
  \label{e:anti-composition}
F_{\varkappa(\mathscr L)}(x_1,\xi_1)=F_\varkappa(x_1,\xi_1,x_2,-\xi_2)+F_{\mathscr L}(x_2,\xi_2)\quad\text{where}\quad
(x_1,\xi_1,x_2,-\xi_2)\in\mathscr L_\varkappa
\end{equation}
and in property~(4) the antiderivative on $\mathscr L_{\varkappa_1\circ\varkappa_2}$
is defined similarly. In property~(5) the antiderivative on $\mathscr L_{\varkappa^{-1}}$
is minus the antiderivative on $\mathscr L_\varkappa$.

We briefly explain how the above properties are proven:
%%%%%%%%%%%%%%%%%%%%%%%%%%%%%%%%%%%%%%%%%%%%%%%%%%%%%%%%%%%%%%%%%%%%%%%%%%%%%%%%
\begin{itemize}
\item For property~(2), we can use a partition of unity to reduce to the case $M=\mathbb R^n$.
The flipped graph of the identity map is given by~\eqref{e:ct-std-par}
with $S(x_1,\xi_2)=\langle x_1,\xi_2\rangle$.
The corresponding expression~\eqref{e:fio-std-par} gives the class of pseudodifferential
operators with compactly supported symbols (see~\cite[Theorem~4.20]{e-z}).
\item For property~(3), we reduce to the case when
$\mathscr L=\mathscr L_\Phi$ and $\mathscr L_\varkappa=\mathscr L_\Psi$
are generated by some phase functions $\Phi(x_2,\theta_2)$ and $\Psi(x_1,x_2,\theta_1)$,
where $\theta_j\in\mathbb R^{m_j}$. Using
the corresponding representations~\eqref{e:lag-dist} for $u$ and $B$
(with some amplitudes $a$ and $b$) we get
\begin{equation}
  \label{e:stator}
\begin{aligned}
Au(x_1)=\,\,&(2\pi h)^{-{n+m_1+m_2\over 2}}\int_{\mathbb R^{n+m_1+m_2}}
e^{{i\over h}(\Psi(x_1,x_2,\theta_1)+\Phi(x_2,\theta_2))}\times
\\&
\qquad a(x_2,\theta_2;h)b(x_1,x_2,\theta_1;h)\,d\theta_1d\theta_2 dx_2.
\end{aligned}
\end{equation}
This is an expression of the form~\eqref{e:lag-dist} for
the phase function $\Psi(x_1,x_2,\theta_1)+\Phi(x_2,\theta_2)$,
with $(\theta_1,\theta_2,x_2)$ treated as oscillatory variables,
and this phase function generates the Lagrangian $\varkappa(\mathscr L)$.
See also~\cite[Lemma~4.1]{NZ09}.
\item Property~(4) is proved similarly to property~(3), see~\cite[\S8.13]{Guillemin-SternbergBook2}.
Property~(5) is immediate by writing an expression of the form~\eqref{e:lag-dist}
for the integral kernel of the adjoint of $B$.
\item Finally, to show property~(1) we note that $B^*B$ is a semiclassical pseudodifferential
operator (and thus bounded on $L^2$) by properties~(2), (4), and~(5).
\end{itemize}
%%%%%%%%%%%%%%%%%%%%%%%%%%%%%%%%%%%%%%%%%%%%%%%%%%%%%%%%%%%%%%%%%%%%%%%%%%%%%%%%
We now discuss the conjugation by Fourier integral operators.
Assume that $\varkappa:U_2\to U_1$, $U_j\subset T^*M_j$, is an exact symplectomorphism
and
\begin{equation}
  \label{e:bassumer}
B\in I^{\comp}_h(\varkappa),\quad
B'\in I^{\comp}_h(\varkappa^{-1}).
\end{equation}
By properties~(2) and~(4) above we see that
$BB'\in \Psi^0_h(M_1)$, $B'B\in\Psi^0_h(M_2)$ are pseudodifferential
operators with wavefront sets compactly contained in $T^*M_j$.
Moreover, if $a\in S^{\comp}_\delta(T^*M_2)$, $\delta\in [0,{1\over 2})$
(see~\S\ref{s:mildly-exotic}), then
\begin{equation}
  \label{e:egorov-gen}
\begin{gathered}
B\Op_h(a)B'=\Op_h(\tilde a)+\mathcal O(h^\infty)_{\Psi^{-\infty}}\quad\text{for some}\quad
\tilde a\in S^{\comp}_\delta(T^*M_1),\\
\tilde a=(a\circ\varkappa^{-1})\sigma_h(BB')+\mathcal O(h^{1-2\delta})_{S^{\comp}_\delta},\quad
\supp \tilde a\subset \varkappa(\supp a).
\end{gathered}
\end{equation}
Indeed, we may reduce to the case $M_1=M_2=\mathbb R^n$.
By oscillatory testing~\cite[Theorem~4.19]{e-z} the symbol of $B\Op_h(a)B'$ as a pseudodifferential operator is given by
$$
\tilde a(x_1,\xi_1;h)=e^{-i\langle x_1,\xi_1\rangle/h}B\Op_h(a)B'(e^{i\langle\bullet,\xi_1\rangle/h}).
$$
Taking generating functions
$\Phi(x_1,x_2,\theta)$ of $\mathscr L_\varkappa$ and $-\Phi(x_1,x_2,\theta)$ of $\mathscr L_{\varkappa^{-1}}$ we write
\begin{equation}
  \label{e:lola}
\begin{aligned}
\tilde a(x_1,\xi_1;h)=\,\,&(2\pi h)^{-2n-m}\int_{\mathbb R^{4n+2m}}e^{{i\over h}(\langle x'_1-x_1,\xi_1\rangle+\langle x_2-x'_2,\xi_2\rangle
+\Phi(x_1,x_2,\theta)-\Phi(x'_1,x'_2,\theta'))}\\
&b(x_1,x_2,\theta;h)a(x_2,\xi_2;h)b'(x'_1,x'_2,\theta';h)
\,d\theta d\theta' dx'_1 dx_2dx'_2 d\xi_2 
\end{aligned}
\end{equation}
for some classical symbols $b(x_1,x_2,\theta;h)$, $b'(x'_1,x'_2,\theta';h)$.
Using the method of stationary phase we get that
$\tilde a$ is a symbol in $S^{\comp}_\delta(T^*\mathbb R^n)$. The principal term in the stationary phase expansion is equal
to $(a\circ\varkappa^{-1})\sigma_h(BB')$, as can be seen by formally putting $a\equiv 1$.
The support property (modulo $\mathcal O(h^\infty)$) follows immediately from the expansion,
finishing the proof of~\eqref{e:egorov-gen}.
See also~\cite[\S8.9.3]{Guillemin-SternbergBook2}.

If $V_j\subset U_j$, $j=1,2$, are compact sets with $\varkappa(V_2)=V_1$
and $B,B'$ are Fourier integral operators as in~\eqref{e:bassumer}, we say that
$B,B'$ \emph{quantize~$\varkappa$ near $V_1\times V_2$}
if
\begin{equation}
  \label{e:fio-quantize}
\begin{aligned}
BB'&=I+\mathcal O(h^\infty)\quad\text{microlocally near }V_1,\\
B'B&=I+\mathcal O(h^\infty)\quad\text{microlocally near }V_2.
\end{aligned}
\end{equation}
If $\mathscr L_\varkappa$ is generated by a single phase
function $\Phi$ (in the sense of~\eqref{e:L-Phi}) then there exist $B,B'$ quantizing $\varkappa$
near $V_1\times V_2$. To show this, we choose $B$ in the form~\eqref{e:lag-dist}:
$$
Bf(x_1)=(2\pi h)^{-{n+m\over 2}}\int_{\mathbb R^{n+m}}e^{i\Phi(x_1,x_2,\theta)/h}
b(x_1,x_2,\theta)f(x_2)\,d\theta dx_2
$$
where $b\in \CIc(U)$ is chosen so that
$b(x_1,x_2,\theta)\neq 0$ for any $(x_1,x_2,\theta)\in\mathcal C_\Phi$
such that $(x_1,\partial_{x_1}\Phi(x_1,x_2,\theta))\in V_1$
(or equivalently $(x_2,-\partial_{x_2}\Phi(x_1,x_2,\theta))\in V_2$)
and $U$ is the domain of $\Phi$. We have $\sigma_h(BB^*)\neq 0$ on~$V_1$
and $\sigma_h(B^*B)\neq 0$ on~$V_2$, as can be proved using
stationary phase similarly to~\eqref{e:lola}.
Multiplying $B^*$ on the right by an elliptic parametrix of~$BB^*$ and multiplying it on the left by an elliptic parametrix of~$B^*B$ (see for instance~\cite[Proposition~E.32]{dizzy}),
we obtain two operators $B',B''\in I^{\comp}_h(\varkappa)$ such that
$$
\begin{aligned}
BB'&=I+\mathcal O(h^\infty)\quad\text{microlocally near }V_1,\\
B''B&=I+\mathcal O(h^\infty)\quad\text{microlocally near }V_2.
\end{aligned}
$$
We write
$$
I-B'B=(I-B''B)(I-B'B)+B''(I-BB')B.
$$
The wavefront set of the right-hand side does not intersect~$V_2$.
For the first term this is immediate since $\WFh(I-B''B)\cap V_2=\emptyset$.
For the second term this follows from the fact that $\WFh(I-BB')\cap V_1=\emptyset$,
computing the full symbol of $B''(I-BB')B$ similarly to~\eqref{e:lola}.
It follows that $B'B=I+\mathcal O(h^\infty)$ microlocally near $V_2$,
therefore $B,B'$ satisfy~\eqref{e:fio-quantize}.

%%%%%%%%%%%%%%%%%%%%%%%%%%%%%%%%%%%%%%%%%%%%%%%%%%%%%%%%%%%%%%%%%%%%%%%%%%%%%%%%
\subsubsection{Fourier localization}
  \label{s:fourloc-lag}

We finally prove a fine Fourier localization statement for a class of Lagrangian
distributions, used in the proof of Lemma~\ref{l:loca+} below. Its proof
is contained in Appendix~\ref{s:fourloc-proof}.
%%%%%%%%%%%%%%%%%%%%%%%%%%%%%%%%%%%%%%%%%%%%%%%%%%%%%%%%%%%%%%%%%%%%%%%%%%%%%%%%
\begin{prop}
  \label{l:fourloc-lag}
Assume that $h,h'\in (0,1]$ satisfy $h'\geq h^\tau$ for some $\tau<1$,
$U\subset\mathbb R^n$ is an open set, $K\subset U$ is compact, 
and we have for some constant $C_0>0$
\begin{equation}
  \label{e:fourloc-lag-1}
\vol(K)\leq C_0,\quad
d(K,\mathbb R^n\setminus U)\geq C_0^{-1}.
\end{equation}
Let $\Phi\in C^\infty(U;\mathbb R)$, $a\in\CIc(U;\mathbb C)$,
$\supp a\subset K$, and assume that
\begin{equation}
  \label{e:fourloc-lag-2}
\diam\Omega_\Phi\leq C_0h'\quad\text{where}\quad
\Omega_\Phi:=\{d\Phi(x)\mid x\in U\}\subset\mathbb R^n.
\end{equation}
Assume also that $\Phi$ and $a$ satisfy, for all $N\geq 1$ and some
constants $C_N$:
\begin{equation}
  \label{e:fourloc-lag-3}
\max_{0<|\alpha|\leq N}\sup_U|\partial^\alpha \Phi|\leq C_N,\quad
\max_{0\leq|\alpha|\leq N}\sup_U|\partial^\alpha a|\leq C_N.
\end{equation}
Define the Lagrangian state
\begin{equation}
  \label{e:fourloc-u-def}
u(x):=a(x)\,e^{i\Phi(x)/h}\in\CIc(U)\subset\CIc(\mathbb R^n).
\end{equation}
Denote $\Omega_\Phi(C_0^{-1}h'):=\Omega_\Phi+B(0,C_0^{-1}h')$.
Then we have for all $N\geq 1$
\begin{equation}
  \label{e:ibp-lag}
\|\indic_{\mathbb R^n\setminus \Omega_\Phi(C_0^{-1}h')}(hD_x)u\|_{L^2(\mathbb R^n)}\leq C'_Nh^N
\end{equation}
where the constant $C'_N$ depends only on $\tau,n,N,C_0,C_{N'}$ for
 $N':=\lceil{2N+n\over 1-\tau}\rceil+1$.
\end{prop}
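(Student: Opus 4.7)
The plan is to apply the method of nonstationary phase (repeated integration by parts) to the semiclassical Fourier transform
$$
\mathcal F_h u(\xi) = (2\pi h)^{-n/2}\int_K a(x)\,e^{i\psi(x,\xi)/h}\,dx,\qquad \psi(x,\xi):=\Phi(x)-x\cdot\xi,
$$
and then integrate the resulting pointwise bound in~$\xi$. On the support of $a$, the hypothesis gives the nonstationarity bound $|\nabla_x\psi(x,\xi)|=|d\Phi(x)-\xi|\geq C_0^{-1}h'$ for every $\xi\notin\Omega_\Phi(C_0^{-1}h')$. Introduce the standard nonstationary-phase operator
$$
L:=\frac{h}{i|\nabla_x\psi|^2}\nabla_x\psi\cdot\nabla_x,\qquad Le^{i\psi/h}=e^{i\psi/h},
$$
so that $N'$ integrations by parts (with no boundary terms, since $\supp a\subset K$ is compactly contained in~$U$) give
$$
\mathcal F_h u(\xi) = (2\pi h)^{-n/2}\int_K (L^t)^{N'}a(x)\,e^{i\psi/h}\,dx.
$$

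The key pointwise estimate, obtained by a careful induction and using hypothesis~\eqref{e:fourloc-lag-3} to control derivatives of $\Phi$ and $a$ through order~$N'$, is
$$
|(L^t)^{N'}a(x)|\leq C_{N'}\bigg(\frac{h}{\max(h',|\xi-\xi_0|)}\bigg)^{N'},
$$
where $\xi_0\in\Omega_\Phi$ is any fixed reference point; the diameter bound~\eqref{e:fourloc-lag-2} is used to ensure $|\nabla_x\psi(x,\xi)|$ is of uniform size $\max(h',|\xi-\xi_0|)$ for all $x\in K$, so the resulting pointwise bound on $\mathcal F_h u(\xi)$ depends on $\xi$ only through $|\xi-\xi_0|$.

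To conclude, decompose $\{\xi\notin\Omega_\Phi(C_0^{-1}h')\}$ into the \emph{near} part $\{|\xi-\xi_0|\leq 2C_0 h'\}$ (of volume $\lesssim(h')^n$) and the \emph{far} part $\{|\xi-\xi_0|>2C_0 h'\}$ (where the pointwise bound has polynomial decay in $|\xi-\xi_0|$ and can be integrated directly in polar coordinates). Using $h'\geq h^\tau$, both contributions to $\int|\mathcal F_h u|^2\,d\xi$ can be bounded by a constant times $h^{(1-\tau)(2N'-n)}$, which is at most $h^{2N}$ for the choice $N'\geq(2N+n)/(1-\tau)+1$; taking square roots yields the stated $L^2$ bound.

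The main obstacle is the pointwise estimate: a naive expansion of $L^t$ produces a divergence contribution $\nabla_x\cdot(\nabla_x\psi/|\nabla_x\psi|^2)$ of size $|\nabla_x\psi|^{-2}$ per iteration, which would only yield $(h/(h')^2)^{N'}$ and restrict one to $\tau<1/2$. Obtaining the sharper gain $(h/|\nabla_x\psi|)^{N'}$ needed for the full range $\tau<1$ requires careful combinatorial bookkeeping of the iterated operator $(L^t)^{N'}$, organized so that each application is regarded as a first-order differential operator at frequency scale $1/h'$, and using the diameter condition~\eqref{e:fourloc-lag-2} in an essential way to maintain uniform control over $|\nabla_x\psi|$ on $K$.
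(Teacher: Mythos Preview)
Your overall strategy—nonstationary phase on $\mathcal F_h u$ followed by $L^2$ integration in $\xi$—matches the paper's, and you correctly identify the central obstacle: the naive bound $|\partial b_j|\lesssim |\nabla_x\psi|^{-2}$ only yields a gain of $h/s^2$ per integration by parts (where $s:=d(\xi,\Omega_\Phi)$), which restricts one to $\tau<1/2$. However, your proposed resolution—``careful combinatorial bookkeeping of $(L^t)^{N'}$''—does not close this gap. The expansion of $(L^t)^{N'}a$ genuinely contains terms such as $(h\,\mathrm{div}\,b)^{N'}a$, whose size is governed by $|\partial^2\Phi|\cdot|\nabla_x\psi|^{-2}$. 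The diameter condition gives two-sided control $|\nabla_x\psi|\sim s$ on $K$, as you say, but by itself says nothing about $\partial^2\Phi$, which a priori is only $O(1)$ from~\eqref{e:fourloc-lag-3}. No reorganization of the iterated operator can make that term smaller; the bound $|(L^t)^{N'}a|\leq C(h/s)^{N'}$ you claim is in fact stronger than what holds, and is not obtainable from combinatorics alone.

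The missing ingredient is an \emph{interpolation inequality} (Landau--Kolmogorov type) for the derivatives of $\Phi_\xi:=\Phi-\langle\cdot,\xi\rangle$. The diameter condition~\eqref{e:fourloc-lag-2} gives $\sup_K|d\Phi_\xi|\leq Cs$, while~\eqref{e:fourloc-lag-3} gives $\sup_U|\partial^{N_0+1}\Phi_\xi|\leq C_{N'}$; interpolating between these (the paper states this as a separate $C^k$ interpolation lemma) yields
\[
\sup_K|\partial^{\ell+1}\Phi_\xi|\leq C\,s^{1-\ell/N_0},\qquad 0\leq\ell\leq N_0.
\]
This is the step that breaks the $\tau<1/2$ barrier: the intermediate derivatives of $\Phi_\xi$ inherit partial smallness from the first one. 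Feeding this into the standard expansion of $(L^t)^{N_0}$ gives $|\partial^\ell b_j|\lesssim s^{-1}h^{-(1-\tau)\ell/2}$ and hence the pointwise bound $|\hat u(\xi/h)|\leq C\,h^{(1+\tau)N_0/2}s^{-N_0}$—weaker than your claimed $(h/s)^{N_0}$ but sufficient: for $s\gtrsim h^\tau$ it gives $Ch^{(1-\tau)N_0/2}\leq Ch^{N+n/2}$, and the $s^{-N_0}$ factor handles large $|\xi|$.
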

%%%%%%%%%%%%%%%%%%%%%%%%%%%%%%%%%%%%%%%%%%%%%%%%%%%%%%%%%%%%%%%%%%%%%%%%%%%%%%%%
\Remarks
1. If $\Phi$, $a$ are fixed and $h$ goes to zero, then
the set $\Omega_\Phi$ is the projection of the Lagrangian
$\mathscr L_\Phi$ defined in~\eqref{e:basic-lagrangian}
onto the $\xi$ variables and the function $u$ defined in~\eqref{e:fourloc-u-def}
is a Lagrangian distribution in $I^{\comp}_h(\mathscr L_\Phi)$.
However, the condition~\eqref{e:fourloc-lag-2} with $h'\sim h^\tau$,
$\tau>0$, implies that, if the phase
$\Phi(x)$ is not constant (which would correspond to a ``horizontal''
Lagrangian), then it necessarily depends on~$h$. We may still view $u(h)$ as a family
of Lagrangian states, but associated to $h$-dependent Lagrangians
$\mathscr L_\Phi(h)$ which become more and more horizontal as $h\to
0$. The proposition shows that these Lagrangian states are
microlocalized in boxes which are microscopic in the momentum variables.

\noindent 2. For $\tau<{1\over 2}$ one can prove Proposition~\ref{l:fourloc-lag}
without the assumption~\eqref{e:fourloc-lag-3}
using~\cite[Theorem~7.7.1]{Hormander1}. On the other hand, if
$\tau=1-\epsilon$, the boxes of momentum diameter $h^\tau$ are almost
Planckian (they almost saturate the uncertainty principle).

%%%%%%%%%%%%%%%%%%%%%%%%%%%%%%%%%%%%%%%%%%%%%%%%%%%%%%%%%%%%%%%%%%%%%%%%%%%%%%%%
\subsection{Fractal uncertainty principle}
  \label{s:fups}

The fractal uncertainty principle of Bourgain--Dyatlov~\cite[Theorem~4]{fullgap}
is the central tool of our proof. (See~\cite[\S4]{fwlEDP}
for an expository account.)
In this section we prove a slightly
more general version, Proposition~\ref{l:fup-2}, which will be used
in~\S\ref{s:fup-applied} below. 

We recall the definition of a porous set~\cite[Definition~5.1]{meassupp}:
%%%%%%%%%%%%%%%%%%%%%%%%%%%%%%%%%%%%%%%%%%%%%%%%%%%%%%%%%%%%%%%%%%%%%%%%%%%%%%%%
\begin{defi}
  \label{d:porous}
Let $\nu\in (0,1)$ and $0<\alpha_0\leq\alpha_1$. We say that a subset
$\Omega\subset\mathbb R$ is \textbf{$\nu$-porous on scales $\alpha_0$ to $\alpha_1$}
if for every interval $I\subset\mathbb R$ of size
$|I|\in [\alpha_0,\alpha_1]$ there exists a subinterval $J\subset I$
of size $|J|=\nu|I|$ such that $J\cap\Omega=\emptyset$.
\end{defi}
%%%%%%%%%%%%%%%%%%%%%%%%%%%%%%%%%%%%%%%%%%%%%%%%%%%%%%%%%%%%%%%%%%%%%%%%%%%%%%%%
Define the unitary semiclassical Fourier transform $\mathcal F_h:L^2(\mathbb R)\to L^2(\mathbb R)$ by
\begin{equation}
  \label{e:F-h-def}
\mathcal F_h f(\xi)=(2\pi h)^{-1/2}\int_{\mathbb R}e^{-ix\xi/h}f(x)\,dx.
\end{equation}
For a set $\Omega\subset\mathbb R$, let $\indic_\Omega:L^2(\mathbb R)\to L^2(\mathbb R)$
be the multiplication operator by the indicator function of $\Omega$.

We first prove the following fractal uncertainty principle,
which is a version of~\cite[Theorem~4]{fullgap} adapted to unbounded $\nu$-porous sets
using almost orthogonality and tools from~\cite{meassupp}:
%%%%%%%%%%%%%%%%%%%%%%%%%%%%%%%%%%%%%%%%%%%%%%%%%%%%%%%%%%%%%%%%%%%%%%%%%%%%%%%%
\begin{prop}
  \label{l:fup-1}
For each $\nu\in (0,1)$ there exist $\beta=\beta(\nu)>0$ and $C=C(\nu)>0$ such that the following estimate
holds
\begin{equation}
  \label{e:fup-1}
\|\indic_{\Omega_-}\mathcal F_h\indic_{\Omega_+}\|_{L^2(\mathbb R)\to L^2(\mathbb R)}\leq Ch^\beta
\end{equation}
for all $0<h\leq 1$ and all sets $\Omega_\pm\subset \mathbb R$ which are $\nu$-porous on scales $h$ to~$1$.
\end{prop}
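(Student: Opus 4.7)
My plan is to reduce Proposition~\ref{l:fup-1} to the bounded fractal uncertainty principle of~\cite[Theorem~4]{fullgap} through a partition of $\mathbb R$ into unit intervals, and then assemble the local estimates by a Cotlar--Stein-type almost-orthogonality argument in the spirit of~\cite{meassupp}.

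First, partition $\mathbb R=\bigsqcup_{n\in\mathbb Z}I_n$ with $I_n:=[n,n+1)$, set $\Omega_\pm^n:=\Omega_\pm\cap I_n$, and decompose
\[
\indic_{\Omega_-}\mathcal F_h\indic_{\Omega_+}=\sum_{m,n}T_{m,n},\qquad T_{m,n}:=\indic_{\Omega_-^m}\mathcal F_h\indic_{\Omega_+^n}.
\]
Each $\Omega_\pm^n$ inherits $\nu$-porosity on scales $h$ to $1$. Translation by an integer conjugates $\mathcal F_h$ into itself up to a unitary modulation; combined with a dual modulation, this reduces each $T_{m,n}$ unitarily to an operator of the form $\indic_{X_-}\mathcal F_h\indic_{X_+}$ with $X_\pm\subset[0,1)$ a $\nu$-porous set on scales $h$ to $1$. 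The bounded BD estimate then yields the uniform bound $\|T_{m,n}\|\leq Ch^\beta$ with $\beta,C$ depending only on $\nu$.

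Next, I would exploit the block orthogonalities $T_{m,n}^*T_{m',n'}=0$ for $m\neq m'$ (from $\indic_{I_m}\indic_{I_{m'}}=0$) and $T_{m,n}T_{m',n'}^*=0$ for $n\neq n'$. These reduce the operator-norm estimate, via the Cotlar--Stein lemma, to controlling the cross-block sums $\sup_{m,n}\sum_{n'}\|T_{m,n}^*T_{m,n'}\|^{1/2}$ and $\sup_{m,n}\sum_{m'}\|T_{m,n}T_{m',n}^*\|^{1/2}$. The uniform $h^\beta$ bound gives only divergent sums, so additional decay of the cross-block norms as $|n-n'|\to\infty$ (respectively $|m-m'|\to\infty$) is required.

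The main obstacle is exactly this: obtaining decay of the cross-block products. Since $\mathcal F_h$ has no spatial locality, naive integration by parts applied to the sharp kernel $\int_{\Omega_-^m}e^{i(x-y)\xi/h}d\xi$ does not yield the required decay. The resolution, borrowed from~\cite{meassupp}, is to replace the sharp cutoffs $\indic_{I_n}$ by $\CIc(\mathbb R)$ cutoffs at a suitably chosen sub-unit scale, apply non-stationary phase to the resulting smoothed cross-kernels to extract polynomial decay in $|n-n'|$ (or $|m-m'|$), and at the same time preserve the BD gain near the diagonal through a sliding-window argument. Once this decay is in place, Cotlar--Stein closes the estimate to yield the bound $\|T\|\leq Ch^\beta$, possibly with a smaller exponent $\beta$ than in~\cite{fullgap}.
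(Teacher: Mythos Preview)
Your overall architecture---block decomposition at unit scale, BD on each block, Cotlar--Stein to reassemble---is exactly the paper's. The gap is in your proposed fix for the off-diagonal decay. You suggest smoothing the partition cutoffs $\indic_{I_n}$, but that does not help: in the cross-block product $T_{m,n}^*T_{m,n'}$ the kernel contains the oscillatory integral
\[
\int_{\mathbb R} \indic_{\Omega_-}(\xi)\,\psi_m(\xi)^2\,e^{i(x-y)\xi/h}\,d\xi,
\]
and no matter how smooth $\psi_m$ is, the rough factor $\indic_{\Omega_-}$ blocks integration by parts. A general porous set need not have any quantitative Fourier decay, so you get no decay in $|n-n'|$ this way.

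The paper's resolution is to smooth the \emph{other} indicators: replace $\indic_{\Omega_\pm}$ by $\chi_\pm\in C^\infty$ with $\supp\chi_\pm\subset\Omega_\pm(h)$, $\chi_\pm=1$ on $\Omega_\pm$, and $|\partial_x^N\chi_\pm|\leq C_Nh^{-N}$ (i.e.\ mollify at scale $h$), and only \emph{then} apply a fixed smooth unit-scale partition $\chi_j(x)=\chi(x-j)$. The blocks $A_{jk}=\chi_j\chi_-\,\mathcal F_h\,\chi_k\chi_+$ now have smooth symbols; for $A_{jk}A_{j'k'}^*$ one has $|k-k'|\leq 1$ by support, and $N$ integrations by parts in $\xi$ produce factors $(h/|x-y|)^N\cdot h^{-N}\sim|j-j'|^{-N}$, yielding $\|A_{jk}A_{j'k'}^*\|\leq C_Nh^{-1}(1+|j-j'|+|k-k'|)^{-N}$. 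The scale $h$ is forced: coarser smoothing would destroy porosity below that scale and ruin the diagonal BD bound. Note also that the paper needs the diagonal bound in the form $\|A_{jk}\|\leq Ch^{2\beta}$ (not $h^\beta$) so that the $\sim h^{-\beta}$ near-diagonal terms in the Cotlar--Stein sum are controlled; and that invoking~\cite{fullgap} requires first embedding the porous sets into $\delta$-regular ones via~\cite[Lemma~5.4]{meassupp}, a step you omit.
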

%%%%%%%%%%%%%%%%%%%%%%%%%%%%%%%%%%%%%%%%%%%%%%%%%%%%%%%%%%%%%%%%%%%%%%%%%%%%%%%%
\Remark
An explicit expression for the exponent $\beta$ (for the smaller class
of $\delta$-regular sets; see Step~4 of the proof below for an explanation
of why this gives a result for all $\nu$-porous sets) was obtained by Jin--Zhang~\cite{JinZhang}.
Using~\cite[Theorem~1.2]{JinZhang}, one can get~\eqref{e:fup-1} with
\begin{equation}
\beta(\nu)=\exp(-\exp(\exp(K/\nu^3)))
\end{equation}
where $K$ is a global constant.
%%%%%%%%%%%%%%%%%%%%%%%%%%%%%%%%%%%%%%%%%%%%%%%%%%%%%%%%%%%%%%%%%%%%%%%%%%%%%%%%
\begin{proof}
1. We first replace the indicator functions in~\eqref{e:fup-1} by their smoothed
out versions $\chi_\pm\in C^\infty(\mathbb R;[0,1])$.
The functions $\chi_\pm$ satisfy for all $N$,
\begin{gather}
\label{e:fup-1-cutoff-1}
  \supp\chi_\pm\subset \Omega_\pm(h),\quad
  \supp(1-\chi_\pm)\cap \Omega_\pm=\emptyset,\\
\label{e:fup-1-cutoff-2}
  \sup|\partial_x^N\chi_\pm|\leq C_Nh^{-N}. 
\end{gather}
Here $\Omega_\pm(h)=\Omega_\pm+[-h,h]$ denotes the $h$-neighborhood of $\Omega_\pm$
and the constant $C_N$ depends only on $N$.
The functions $\chi_\pm$ are constructed by convolving the indicator
function of $\Omega_\pm(h/2)$ with a smooth cutoff which is rescaled
to be supported in $(-h/2,h/2)$. See the proof of~\cite[Lemma~3.3]{hgap} for details.

The left-hand side of~\eqref{e:fup-1} is equal to
$$
\|\indic_{\Omega_-}\chi_-\mathcal F_h\chi_+\indic_{\Omega_+}\|_{L^2(\mathbb R)\to L^2(\mathbb R)}
\leq \|\chi_-\mathcal F_h\chi_+\|_{L^2(\mathbb R)\to L^2(\mathbb R)}.
$$

\noindent 2.
We next write the cutoffs $\chi_\pm$ as sums of functions $\chi^\pm_j$,
each supported in an interval of size~2. 
More precisely, fix $\chi\in \CIc(\mathbb R;[0,1])$ such that
$\supp\chi\subset (-1,1)$ and
$$
1=\sum_{j\in\mathbb Z}\chi_j\quad\text{where}\quad
\chi_j(x):=\chi(x-j).
$$
Put
\begin{equation}
  \label{e:fup-1-chi-j-pm}
\chi_j^\pm:=\chi_j\chi_\pm,\quad
\supp\chi_j^\pm\subset \Omega_\pm(h)\cap (j-1,j+1).
\end{equation}
Note that $\chi_j^\pm$ satisfy the derivative bounds~\eqref{e:fup-1-cutoff-2}
for some constants $C_N$ depending only on $N$. We have
(with convergence in strong operator topology)
$$
\chi_-\mathcal F_h\chi_+=\sum_{j,k\in\mathbb Z}A_{jk}\quad\text{where}\quad
A_{jk}:=\chi_j^-\mathcal F_h\chi_k^+.
$$
Therefore it suffices to show the estimate
\begin{equation}
  \label{e:fup-1-bd}
\Big\|\sum_{j,k}A_{jk}\Big\|_{L^2(\mathbb R)\to L^2(\mathbb R)}\leq Ch^\beta.
\end{equation}

\noindent 3.
To show~\eqref{e:fup-1-bd} we use almost orthogonality.
More precisely it suffices to prove the following bounds for all~$j,k,j',k',N$:
\begin{align}
  \label{e:fup-1-bd-1}
\|A_{jk}\|_{L^2(\mathbb R)\to L^2(\mathbb R)}&\leq Ch^{2\beta},\\
  \label{e:fup-1-bd-2}
\|A_{jk}A_{j'k'}^*\|_{L^2(\mathbb R)\to L^2(\mathbb R)}&\leq C_Nh^{-1}(1+|j-j'|+|k-k'|)^{-N},\\
  \label{e:fup-1-bd-3}
\|A_{j'k'}^*A_{jk}\|_{L^2(\mathbb R)\to L^2(\mathbb R)}&\leq C_Nh^{-1}(1+|j-j'|+|k-k'|)^{-N}.
\end{align}
for some $\beta>0,C>0$ depending only on $\nu$ and
some $C_N$ depending only on $N$. Indeed, these estimates imply
\begin{align}
  \label{e:fup-1-bdd-1}
\sup_{j,k}\sum_{j',k'}\|A_{jk}A_{j'k'}^*\|_{L^2(\mathbb R)\to L^2(\mathbb R)}^{1/2}&\leq Ch^\beta,\\
  \label{e:fup-1-bdd-2}
\sup_{j,k}\sum_{j',k'}\|A_{j'k'}^*A_{jk}\|_{L^2(\mathbb R)\to L^2(\mathbb R)}^{1/2}&\leq Ch^\beta.
\end{align}
Here we use~\eqref{e:fup-1-bd-1} for $|j-j'|+|k-k'|\leq h^{-\beta/2}$
and~\eqref{e:fup-1-bd-2}, \eqref{e:fup-1-bd-3} with $N:=\lceil 8+2/\beta\rceil$ 
for $|j-j'|+|k-k'|> h^{-\beta/2}$.
Now~\eqref{e:fup-1-bdd-1} and~\eqref{e:fup-1-bdd-2} imply~\eqref{e:fup-1-bd} by
the Cotlar--Stein Theorem~\cite[Theorem~C.5]{e-z}.

\noindent 4.
We first prove~\eqref{e:fup-1-bd-1} which
will follow from the fractal uncertainty principle~\cite[Theorem~4]{fullgap}.
However~\cite{fullgap} used a more restrictive class
of \emph{$\delta$-regular sets} rather than $\nu$-porous sets.
We recall from~\cite[Definition~1.1]{fullgap} that a nonempty closed set $\Omega\subset\mathbb R$
is called \emph{$\delta$-regular with constant $C_R$ on scales~0 to~1} if there exists
a Borel measure~$\mu$ supported on~$\Omega$ such that for each interval $I$ of size $|I|\in (0,1]$
we have the upper bound $\mu(I)\leq C_R|I|^\delta$, and if additionally $I$ is centered at a point in $\Omega$,
then we have the lower bound $\mu(I)\geq C_R^{-1}|I|^\delta$.

To address the difference between porous and regular sets we
argue similarly to the proof of~\cite[Proposition~5.5]{meassupp}.
Since $\Omega_\pm$ are $\nu$-porous on scales $h$ to 1,
by~\cite[Lemma~5.4]{meassupp} there exist sets $\widetilde\Omega_\pm\subset\mathbb R$
such that:
\begin{enumerate}
\item $\Omega_\pm\subset \widetilde\Omega_\pm(h)$;
\item $\widetilde\Omega_\pm\subset \mathbb R$ are $\delta$-regular
with constant $C_R$ on scales 0 to 1, for some $\delta\in (0,1)$ and $C_R\geq 1$
which depend only on $\nu$.
\end{enumerate}
Denote $\Omega_\pm^j:=\widetilde\Omega_\pm-j$; note that these sets are still $\delta$-regular
with constant $C_R$ on scales 0 to~1.
By~\eqref{e:fup-1-chi-j-pm}
and since the norm $\|\indic_X\mathcal F_h^*\indic_Y\|_{L^2(\mathbb R)\to L^2(\mathbb R)}$
does not change when shifting $X$ and/or $Y$, we have
\begin{equation}
  \label{e:fup-1-bd-1.1}
\begin{aligned}
\|A_{jk}\|_{L^2\to L^2}
&\leq \|\indic_{\Omega_+(h)\cap [k-1,k+1]}\mathcal F_h^*\indic_{\Omega_-(h)\cap [j-1,j+1]}\|_{L^2(\mathbb R)\to L^2(\mathbb R)}
\\&\leq \|\indic_{\Omega_+^k(2h)\cap [-1,1]}\mathcal F_h^*\indic_{\Omega_-^j(2h)\cap [-1,1]}\|_{L^2(\mathbb R)\to L^2(\mathbb R)}.
\end{aligned}
\end{equation}
By~\cite[Proposition~4.1]{fullgap}
(which is a corollary of~\cite[Theorem~4]{fullgap}) the right-hand side of~\eqref{e:fup-1-bd-1.1} is bounded by $Ch^{2\beta}$
for some $C,\beta>0$ depending only on~$\delta,C_R$ (which in turn only depend on $\nu$),
giving~\eqref{e:fup-1-bd-1}. Note that~\cite{fullgap} used a slightly different
normalization of $\mathcal F_h$, rescaled by a factor of $2\pi$,
which however makes no difference in the proof. (Alternatively
one can use the more general~\cite[Proposition~4.3]{fullgap}
with $\Phi(x,y):=xy$.) Similarly the fact that~\eqref{e:fup-1-bd-1.1} features
$\Omega_\pm^j(2h)$ instead of $\Omega_\pm^j(h)$ does not make a difference:
for instance we can write
$\Omega_\pm^j(2h)= (\Omega_\pm^j(h)+h)\cup (\Omega_\pm^j(h)-h)$
and use the triangle inequality.

\noindent 5.
It remains to show~\eqref{e:fup-1-bd-2} and~\eqref{e:fup-1-bd-3}. We only show
the former one, the latter proved similarly. We have
$$
A_{jk}A_{j'k'}^*=\chi_j^-\mathcal F_h\chi_k^+\chi_{k'}^+\mathcal F_h^*\chi_{j'}^-.
$$
If $|k-k'|>1$ then $\supp\chi_k^+\cap \supp\chi_{k'}^+=\emptyset$ and thus
$A_{jk}A_{j'k'}^*=0$. We henceforth assume that $|k-k'|\leq 1$. The integral kernel
of $A_{jk}A_{j'k'}^*$, which we denote $\mathcal K$, can be computed in terms of the Fourier
transform of $\chi_k^+\chi_{k'}^+$:
$$
\mathcal K(x,y)=(2\pi h)^{-1}\chi^-_j(x)\chi^-_{j'}(y)\int_{\mathbb R}e^{i(y-x)\xi/h}\chi_k^+(\xi)
\chi_{k'}^+(\xi)\,d\xi.
$$
We may assume that $|j-j'|>2$, then $|x-y|\geq {1\over 10}|j-j'|$ on $\supp\mathcal K$.
The function $\chi_k^+\chi_{k'}^+$ is supported inside an interval of size~2 and
satisfies the derivative bounds~\eqref{e:fup-1-cutoff-2}. Integrating by parts $N$ times in $\xi$,
we get
$$
\sup_{x,y}|\mathcal K(x,y)|\leq C_Nh^{-1}|j-j'|^{-N}.
$$
Since $\mathcal K(x,y)$ is supported in a square of size 2, this implies~\eqref{e:fup-1-bd-2}.
\end{proof}
%%%%%%%%%%%%%%%%%%%%%%%%%%%%%%%%%%%%%%%%%%%%%%%%%%%%%%%%%%%%%%%%%%%%%%%%%%%%%%%%
We now give a version of Proposition~\ref{l:fup-1} with relaxed assumptions
regarding the scales on which $\Omega_\pm$ are porous:
%%%%%%%%%%%%%%%%%%%%%%%%%%%%%%%%%%%%%%%%%%%%%%%%%%%%%%%%%%%%%%%%%%%%%%%%%%%%%%%%
\begin{prop}
  \label{l:fup-2}
Fix numbers $\gamma_j^\pm$, $j=0,1$, such that
$$
0\leq\gamma_1^\pm<\gamma_0^\pm\leq 1,\quad
\gamma_1^++\gamma_1^-<1<\gamma_0^++\gamma_0^-
$$
and define
\begin{equation}
  \label{e:fup-2-gamma}
\gamma:=\min(\gamma_0^+,1-\gamma_1^-)-\max(\gamma_1^+,1-\gamma_0^-)
=\big|[\gamma_1^+,\gamma_0^+]\cap [1-\gamma_0^-,1-\gamma_1^-]\big|>0.
\end{equation}
Then for each $\nu>0$ there exists $\beta=\beta(\nu)>0$ and
$C=C(\nu)>0$ such that the
estimate
\begin{equation}
  \label{e:fup-2}
\|\indic_{\Omega_-}\mathcal F_h\indic_{\Omega_+}\|_{L^2(\mathbb R)\to L^2(\mathbb R)}\leq Ch^{\gamma\beta}
\end{equation}
holds for all $0<h<1$ and all $\Omega_\pm\subset \mathbb R$ which are $\nu$-porous on scales $h^{\gamma_0^\pm}$
to $h^{\gamma_1^\pm}$.
\end{prop}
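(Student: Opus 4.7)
The plan is to reduce Proposition~\ref{l:fup-2} to Proposition~\ref{l:fup-1} by applying an $h$-dependent unitary rescaling on each side, which converts the two porosity windows $[h^{\gamma_0^\pm}, h^{\gamma_1^\pm}]$ into the standard window $[h', 1]$ with effective semiclassical parameter $h' := h^\gamma$. The exponent $\gamma$ in the statement should arise as the precise amount by which $h$ can be enlarged while simultaneously rescaling both sets into the range of Proposition~\ref{l:fup-1}.

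The main tool is the family of unitary dilations $U_\lambda f(x) := \lambda^{1/2} f(\lambda x)$. A direct change of variables yields the two identities $\mathcal F_h U_\lambda = \mathcal F_{\lambda h}$ and $U_\mu^{-1} \mathcal F_h = \mathcal F_{\mu h}$, together with the conjugation formula $U_\lambda^{-1}\indic_\Omega U_\lambda = \indic_{\lambda\Omega}$. Combining these I would obtain the key unitary equivalence
\[
U_\mu^{-1}\,\indic_{\Omega_-}\,\mathcal F_h\,\indic_{\Omega_+}\,U_\lambda
\;=\;\indic_{\mu\Omega_-}\,\mathcal F_{\lambda\mu h}\,\indic_{\lambda\Omega_+}
\qquad\text{for every }\lambda,\mu>0,
\]
so the $L^2\to L^2$ norms of the two sides coincide.

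Next I would choose the specific dilation parameters $\lambda = h^{-\alpha_+}$ and $\mu = h^{-\alpha_-}$ with
\[
\alpha_+ := \max(\gamma_1^+,\,1-\gamma_0^-), \qquad \alpha_- := \max(\gamma_1^-,\,1-\gamma_0^+).
\]
Using $1-\max(a,b)=\min(1-a,1-b)$ one checks that $\alpha_+ + \alpha_- = 1 - \gamma$, so the effective parameter is exactly $h' := h^{1-\alpha_+-\alpha_-} = h^\gamma$. Since rescaling by $h^{-\alpha_\pm}$ shifts the porosity window of $\Omega_\pm$ from $[h^{\gamma_0^\pm}, h^{\gamma_1^\pm}]$ to $[h^{\gamma_0^\pm - \alpha_\pm}, h^{\gamma_1^\pm - \alpha_\pm}]$, the two pairs of inequalities $\gamma_1^\pm - \alpha_\pm \leq 0$ and $\gamma_0^\pm - \alpha_\pm \geq \gamma$ (both of which are immediate consequences of the choice of $\alpha_\pm$ and the definition of $\gamma$) ensure that the rescaled sets $\tilde\Omega_\pm := h^{-\alpha_\pm}\Omega_\pm$ are $\nu$-porous on scales $h'$ to $1$.

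Finally, applying Proposition~\ref{l:fup-1} at the scale $h'$ to the sets $\tilde\Omega_\pm$ yields
\[
\|\indic_{\Omega_-}\mathcal F_h\indic_{\Omega_+}\|_{L^2\to L^2}
= \|\indic_{\tilde\Omega_-}\mathcal F_{h'}\indic_{\tilde\Omega_+}\|_{L^2\to L^2}
\leq C(\nu)\,(h')^{\beta(\nu)}
= C(\nu)\,h^{\gamma\beta(\nu)},
\]
which is the bound claimed. No serious obstacle is expected: the dilation argument is elementary, and the only nontrivial step is the arithmetic check that the gap $1-\alpha_+-\alpha_-$ produced by the optimal choice of dilation parameters coincides with the quantity $\gamma = \min(\gamma_0^+,1-\gamma_1^-) - \max(\gamma_1^+,1-\gamma_0^-)$ appearing in the statement.
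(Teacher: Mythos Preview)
Your proposal is correct and follows essentially the same approach as the paper: both arguments rescale $\Omega_+$ by $h^{-\alpha_+}$ and $\Omega_-$ by $h^{-\alpha_-}$ with exactly the exponents you chose (the paper writes $\alpha_+=\gamma_1$, $\alpha_-=1-\gamma_0$ where $\gamma_0=\min(\gamma_0^+,1-\gamma_1^-)$, $\gamma_1=\max(\gamma_1^+,1-\gamma_0^-)$), observe that the effective parameter becomes $\tilde h=h^\gamma$, and then apply Proposition~\ref{l:fup-1}. Your unitary dilation identity and the arithmetic verification of the porosity windows are the same computation as in the paper, just packaged slightly differently.
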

%%%%%%%%%%%%%%%%%%%%%%%%%%%%%%%%%%%%%%%%%%%%%%%%%%%%%%%%%%%%%%%%%%%%%%%%%%%
\Remark
The formula~\eqref{e:fup-2-gamma} is related to the fact that the proof
of the fractal uncertainty principle~\cite[Theorem~4]{fullgap} proceeds by
induction on scale and uses the structure of $\Omega_-$ on
scale $h^\mu$ together with the structure of $\Omega_+$ on the
dual scale $h^{1-\mu}$. In fact, it is likely that the proof in~\cite{fullgap}
can be adapted to yield Proposition~\ref{l:fup-2} directly.
%%%%%%%%%%%%%%%%%%%%%%%%%%%%%%%%%%%%%%%%%%%%%%%%%%%%%%%%%%%%%%%%%%%%%%%%%%%%%%%%
\begin{proof}
Define
$$
\gamma_0:=\min(\gamma_0^+,1-\gamma_1^-),\quad
\gamma_1:=\max(\gamma_1^+,1-\gamma_0^-),
$$
note that $\gamma_0-\gamma_1=\gamma>0$. The set $\Omega_+$ is $\nu$-porous
on scales $h^{\gamma_0}$ to $h^{\gamma_1}$,
and the set $\Omega_-$ is $\nu$-porous on scales $h^{1-\gamma_1}$ to $h^{1-\gamma_0}$.

Put
$$
\widehat\Omega_+ := h^{-\gamma_1}\Omega_+,\quad
\widehat\Omega_- := h^{\gamma_0-1}\Omega_-,\quad
\tilde h:=h^\gamma.
$$
Then the sets $\widehat\Omega_\pm$ are $\nu$-porous on scales $\tilde h$ to~1.
Consider the unitary rescaling operators
$$
T_\pm:L^2(\mathbb R)\to L^2(\mathbb R),\quad
T_+f(x)=h^{\gamma_1/2}f(h^{\gamma_1}x),\quad
T_-f(x)=h^{(1-\gamma_0)/2}f(h^{1-\gamma_0}x).
$$
We have
$$
T_\pm \indic_{\Omega_\pm}=\indic_{\widehat\Omega_\pm}T_\pm,\quad
T_-\mathcal F_h T_+^{-1}=\mathcal F_{\tilde h}.
$$
Therefore the left-hand side of~\eqref{e:fup-2} is equal to 
\begin{equation}
  \label{e:fup-2-int}
\|T_-\indic_{\Omega_-}\mathcal F_h\indic_{\Omega_+}T_+^{-1}\|_{L^2(\mathbb R)\to L^2(\mathbb R)}
=\|\indic_{\widehat\Omega_-}\mathcal F_{\tilde h}\indic_{\widehat\Omega_+}\|_{L^2(\mathbb R)\to L^2(\mathbb R)}.
\end{equation}
The right-hand side of~\eqref{e:fup-2-int} is bounded by $C\tilde h^\beta=Ch^{\gamma\beta}$ by Proposition~\ref{l:fup-1}.
\end{proof}
%%%%%%%%%%%%%%%%%%%%%%%%%%%%%%%%%%%%%%%%%%%%%%%%%%%%%%%%%%%%%%%%%%%%%%%%%%%%%%%%
We conclude this section with two simple lemmas used in~\S\S\ref{s:porosity-ultimate}--\ref{s:fup-applied} below:
%%%%%%%%%%%%%%%%%%%%%%%%%%%%%%%%%%%%%%%%%%%%%%%%%%%%%%%%%%%%%%%%%%%%%%%%%%%%%%%%
\begin{lemm}
  \label{l:porous-nbhd}
Let $\nu\in (0,1)$, $0<\alpha_0\leq \alpha_1$, and $0<\alpha_2\leq {\nu\over 3}\alpha_1$.
Assume that $\Omega\subset\mathbb R$ is $\nu$-porous on scales $\alpha_0$
to $\alpha_1$. Then the neighborhood
$\Omega(\alpha_2)=\Omega+[-\alpha_2,\alpha_2]$ is ${\nu\over 3}$-porous
on scales $\max(\alpha_0,{3\over\nu}\alpha_2)$ to $\alpha_1$.
\end{lemm}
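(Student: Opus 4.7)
The plan is to verify the porosity of $\Omega(\alpha_2)$ by directly constructing, for any test interval of appropriate size, a forbidden subinterval of the required length. The core idea is to take the ``hole'' guaranteed by porosity of $\Omega$ and shrink it symmetrically by $\alpha_2$ on each side so that the resulting interval stays a distance more than $\alpha_2$ from every point of $\Omega$.

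More precisely, fix an interval $I\subset\mathbb R$ with $|I|\in[\max(\alpha_0,3\alpha_2/\nu),\alpha_1]$. Since $|I|\in[\alpha_0,\alpha_1]$, the $\nu$-porosity of $\Omega$ yields a subinterval $J=[a,b]\subset I$ with $|J|=\nu|I|$ and $J\cap\Omega=\emptyset$. Define
\[
J'':=[a+\alpha_2,\,b-\alpha_2].
\]
Because $|I|\geq 3\alpha_2/\nu$, we have $|J|=\nu|I|\geq 3\alpha_2>2\alpha_2$, so $J''$ is a nonempty interval of length
\[
|J''|=\nu|I|-2\alpha_2\geq \nu|I|-\tfrac{2\nu}{3}|I|=\tfrac{\nu}{3}|I|.
\]
It then suffices to select $J'\subset J''$ with $|J'|=\tfrac{\nu}{3}|I|$ and to verify $J''\cap\Omega(\alpha_2)=\emptyset$.

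For the disjointness, fix any $x\in J''$ and any $y\in\Omega$. Since $J\cap\Omega=\emptyset$, either $y<a$ or $y>b$. In the first case $x-y\geq (a+\alpha_2)-y>\alpha_2$; in the second $y-x\geq y-(b-\alpha_2)>\alpha_2$. Either way $|x-y|>\alpha_2$, so $x\notin\Omega(\alpha_2)=\Omega+[-\alpha_2,\alpha_2]$. Hence $J''\cap\Omega(\alpha_2)=\emptyset$, which gives the same property for $J'\subset J''$.

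There is really no serious obstacle here; the only thing to get right is the bookkeeping of scales. The lower scale bound $\max(\alpha_0,3\alpha_2/\nu)$ is needed on two counts: $|I|\geq\alpha_0$ to invoke porosity of $\Omega$, and $|I|\geq 3\alpha_2/\nu$ so that the shrunk interval $J''$ still has length at least $\tfrac{\nu}{3}|I|$. The upper scale bound $\alpha_1$ is simply inherited from the porosity of $\Omega$, and the hypothesis $\alpha_2\leq \tfrac{\nu}{3}\alpha_1$ guarantees that the interval of valid scales is nonempty.
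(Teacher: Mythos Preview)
Your proof is correct and follows essentially the same approach as the paper: both take the hole $J$ guaranteed by porosity of $\Omega$ and shrink it to obtain a subinterval disjoint from $\Omega(\alpha_2)$. The only cosmetic difference is that the paper directly takes $J'\subset J$ concentric with $|J'|=\tfrac{1}{3}|J|$ and notes that $J'(\alpha_2)\subset J$, whereas you first shrink $J$ by $\alpha_2$ at each end to get $J''$ and then pass to a subinterval of the exact length.
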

%%%%%%%%%%%%%%%%%%%%%%%%%%%%%%%%%%%%%%%%%%%%%%%%%%%%%%%%%%%%%%%%%%%%%%%%%%%%%%%%
\begin{proof}
Take an interval $I\subset\mathbb R$ such that $\max(\alpha_0,{3\over\nu}\alpha_2)\leq|I|\leq \alpha_1$.
Since $\Omega$ is $\nu$-porous on scales $\alpha_0$ to $\alpha_1$,
there exists a subinterval $J\subset I$
with $|J|=\nu|I|\geq 3\alpha_2$ and $J\cap\Omega=\emptyset$.
Let $J'\subset J$ be the subinterval with the same center
and $|J'|={1\over 3}|J|={\nu\over 3}|I|$,
then $J'(\alpha_2)\subset J$ and thus
 $J'\cap \Omega(\alpha_2)=\emptyset$.
\end{proof}
%%%%%%%%%%%%%%%%%%%%%%%%%%%%%%%%%%%%%%%%%%%%%%%%%%%%%%%%%%%%%%%%%%%%%%%%%%%%%%%%
%
%%%%%%%%%%%%%%%%%%%%%%%%%%%%%%%%%%%%%%%%%%%%%%%%%%%%%%%%%%%%%%%%%%%%%%%%%%%%%%%%
\begin{lemm}
  \label{l:porous-map}
Let
$\psi:\mathbb R\to \mathbb R$ be a $C^2$ diffeomorphism such that for some $C_1\geq 1$
\begin{equation}
  \label{e:psi-derby}
\max(\sup|\psi'|,\sup|\psi'|^{-1},\sup|\psi''|)\leq C_1.
\end{equation}
Let also $\nu\in (0,1)$, $0<\alpha_0\leq \alpha_1$, and
$\alpha_0\leq \min(C_1^{-2}\alpha_1,{1\over 2}C_1^{-4})$.
Assume that $\Omega\subset \mathbb R$ is $\nu$-porous on scales $\alpha_0$
to~$\alpha_1$.
Then the image $\psi(\Omega)$
is $\nu\over 2$-porous on scales $C_1\alpha_0$ to~$\min(C_1^{-1}\alpha_1,{1\over 2}C_1^{-3})$.
\end{lemm}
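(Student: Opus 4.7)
\textbf{Proof proposal for Lemma~\ref{l:porous-map}.}
The plan is to pull the interval $I$ back by $\psi$, apply the porosity hypothesis on $\Omega$ there, and then push the obtained gap forward, controlling the distortion via the bound on $\psi''$. Concretely, let $I\subset\mathbb R$ be an interval with $|I|\in [C_1\alpha_0,\ \min(C_1^{-1}\alpha_1,\tfrac{1}{2}C_1^{-3})]$, and set $I':=\psi^{-1}(I)$, which is again an interval since $\psi$ is a diffeomorphism. From $C_1^{-1}\leq |\psi'|\leq C_1$ and the mean value theorem, $C_1^{-1}|I|\leq |I'|\leq C_1|I|$, so
\[
|I'|\in\big[\alpha_0,\ \min(\alpha_1,\tfrac{1}{2}C_1^{-2})\big]\subset[\alpha_0,\alpha_1].
\]
The hypothesis $\alpha_0\leq C_1^{-2}\alpha_1$ is exactly what makes the allowed range of $|I|$ (and thus of $|I'|$) nonempty; the hypothesis $\alpha_0\leq \tfrac{1}{2}C_1^{-4}$ ensures $C_1\alpha_0\leq \tfrac{1}{2}C_1^{-3}$. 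Since $\Omega$ is $\nu$-porous on scales $\alpha_0$ to $\alpha_1$, there exists a subinterval $J'\subset I'$ with $|J'|=\nu|I'|$ and $J'\cap\Omega=\emptyset$.

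Now set $\tilde J:=\psi(J')\subset I$. Since $\psi$ is a bijection, $\tilde J\cap\psi(\Omega)=\emptyset$. The main quantitative step is a bounded distortion estimate: writing $m:=\min_{I'}|\psi'|$ and $M:=\max_{I'}|\psi'|$, the mean value theorem applied to $\psi'$ together with $|\psi''|\leq C_1$ gives $M-m\leq C_1|I'|$, while $m\geq C_1^{-1}$, so
\[
\frac{M}{m}\leq 1+\frac{C_1|I'|}{C_1^{-1}}=1+C_1^2|I'|\leq 2,
\]
since our range forces $|I'|\leq \tfrac{1}{2}C_1^{-2}$. Writing $|I|=\int_{I'}|\psi'|\leq M|I'|$ and $|\tilde J|=\int_{J'}|\psi'|\geq m|J'|$, we conclude
\[
\frac{|\tilde J|}{|I|}\geq \frac{m}{M}\cdot\frac{|J'|}{|I'|}\geq \frac{\nu}{2}.
\]
Choosing any subinterval $J\subset\tilde J$ with $|J|=\tfrac{\nu}{2}|I|$ yields $J\subset I$, $|J|=\tfrac{\nu}{2}|I|$, and $J\cap\psi(\Omega)=\emptyset$, which is the required porosity.

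The only delicate point, and thus the main obstacle, is the bounded distortion estimate $M/m\leq 2$: one must verify that the upper endpoint $\min(C_1^{-1}\alpha_1,\tfrac{1}{2}C_1^{-3})$ of the scales for $\psi(\Omega)$ is indeed forced by the quadratic term in the Taylor expansion of $\psi$, so that $\psi$ behaves like an affine map (with the same ratio up to a factor of $2$) on intervals of this size. The loss of a factor of $2$ in the porosity constant (from $\nu$ to $\nu/2$) is exactly the price paid for this nonlinearity, and explains the scale restriction in the statement.
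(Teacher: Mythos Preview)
Your proof is correct and follows essentially the same approach as the paper: pull back $I$ by $\psi$, apply porosity of $\Omega$ to the preimage $I'$, then push the gap $J'$ forward and control the length ratio via a bounded distortion estimate on $|\psi'|$ over $I'$. The only cosmetic difference is in how the distortion bound $M/m\leq 2$ is obtained: the paper bounds the logarithmic derivative $|\psi''/\psi'|\leq C_1^2$ to get $\sup_{I'}|\psi'|\leq e^{C_1^2|I'|}\inf_{I'}|\psi'|$, whereas you use the mean value theorem on $\psi'$ directly to get $M-m\leq C_1|I'|$ and hence $M/m\leq 1+C_1^2|I'|$; both yield $M/m\leq 2$ once $|I'|\leq \tfrac{1}{2}C_1^{-2}$.
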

%%%%%%%%%%%%%%%%%%%%%%%%%%%%%%%%%%%%%%%%%%%%%%%%%%%%%%%%%%%%%%%%%%%%%%%%%%%%%%%%
\begin{proof}
We have
$$
\sup\big|\partial_x \log|\psi'(x)|\big|
=\sup\Big|{\psi''\over\psi'}\Big|\leq C_1^2.
$$
Therefore for each interval $I'\subset\mathbb R$ we have
\begin{equation}
  \label{e:kolya-2}
\sup_{I'}|\psi'|\leq e^{C_1^2|I'|}\inf_{I'}|\psi'|.
\end{equation}
Let $I\subset\mathbb R$ be an interval such that
$|I|\leq {1\over 2}C_1^{-3}$.
Put $I':=\psi^{-1}(I)$, then $|I'|\leq {1\over 2}C_1^{-2}$,
thus by~\eqref{e:kolya-2}
\begin{equation}
  \label{e:kolya-3}
{|\psi(J')|\over |J'|}\geq {|I|\over 2|I'|}
\quad\text{for all intervals}\quad J'\subset I'.
\end{equation}
Now assume additionally that $C_1\alpha_0\leq |I|\leq C_1^{-1}\alpha_1$.
Then $\alpha_0\leq |I'|\leq \alpha_1$,
thus by porosity of $\Omega$ there exists an interval
$$
J'\subset I',\quad
|J'|=\nu|I'|,\quad
J'\cap\Omega=\emptyset.
$$
Put $J:=\psi(J')\subset I$, then $J\cap \psi(\Omega)=\emptyset$
and we estimate by~\eqref{e:kolya-3}
$$
|J|\geq {|I|\cdot |J'|\over 2|I'|}={\nu\over 2}|I|.\qedhere
$$
\end{proof}
%%%%%%%%%%%%%%%%%%%%%%%%%%%%%%%%%%%%%%%%%%%%%%%%%%%%%%%%%%%%%%%%%%%%%%%%%%%%%%%%

%%%%%%%%%%%%%%%%%%%%%%%%%%%%%%%%%%%%%%%%%%%%%%%%%%%%%%%%%%%%%%%%%%%%%%%%%%%%%%%%
\subsection{Dynamics and porosity}
\label{s:porosity-basic}

In this section we use the results of~\S\ref{s:hyperbolics}
to establish porosity of certain sets in the stable/unstable direction
(Lemma~\ref{l:porosity-basic}).
This property is used in~\S\ref{s:fup-endgame} below in combination with the fractal uncertainty principle.

Recall from~\S\ref{s:stun}
that for each $\rho\in S^*M$ the local stable/unstable manifolds
$W_s(\rho),W_u(\rho)$ are $C^\infty$ submanifolds of $S^*M$ tangent to $E_s$, $E_u$
(despite the fact that $E_s(\rho),E_u(\rho)$ do not in general
depend smoothly on~$\rho$, see~\S\ref{s:stun-regularity}).
We define the \emph{global stable/unstable manifolds}
$$
\widehat W_s(\rho):=\bigcup_{j\geq 0}\varphi_{-j}\big(W_s(\varphi_{j}(\rho))\big),\quad
\widehat W_u(\rho):=\bigcup_{j\geq 0}\varphi_j\big(W_u(\varphi_{-j}(\rho))\big)
$$
which are immersed one-dimensional $C^\infty$ submanifolds of $S^*M$
tangent to $E_s(\rho),E_u(\rho)$,
see for instance~\cite[(17.4.1)]{KaHa} and~\cite[\S4.7.3]{stunnote}.

We fix a Riemannian metric on $S^*M$. A proper parametrization of pieces of global stable/unstable manifolds yields \emph{stable/unstable intervals} as defined below:
%%%%%%%%%%%%%%%%%%%%%%%%%%%%%%%%%%%%%%%%%%%%%%%%%%%%%%%%%%%%%%%%%%%%%%%%%%%%%%%%
\begin{defi}
  \label{d:stun-interval}
Let $L>0$. An \textbf{unstable interval} of length~$L$
is a $C^\infty$ map
$\gamma:I\to S^*M$, where $I\subset\mathbb R$ is an interval of size $L$,
such that for each $s\in I$ the tangent vector $\dot\gamma(s)\in T_{\gamma(s)}S^*M$
is a unit length vector in $E_u(\gamma(s))$.
A \textbf{stable interval} of length~$L$ is defined similarly except
we require $\dot\gamma(s)\in E_s(\gamma(s))$.
In both cases we denote $|\gamma|:=L$.
\end{defi}
%%%%%%%%%%%%%%%%%%%%%%%%%%%%%%%%%%%%%%%%%%%%%%%%%%%%%%%%%%%%%%%%%%%%%%%%%%%%%%%%
We sometimes identify a stable/unstable interval $\gamma$ with its range
$\gamma(I)\subset S^*M$. For a set $\mathcal W\subset S^*M$ denote
\begin{equation}
  \label{e:interval-pullback}
\gamma^{-1}(\mathcal W):=\{s\in I\mid \gamma(s)\in\mathcal W\}.
\end{equation}
If $\gamma:I\to S^*M$ is an unstable interval and $t\in\mathbb R$,
then the map $\varphi_t\circ\gamma:I\to S^*M$ can be reparametrized to yield
another unstable interval, which we denote by
$\varphi_t(\gamma)$. Same is true for stable intervals.

Recalling the definitions~\eqref{e:stun-J-def} of stable/unstable Jacobians
$J^s_t,J^u_t$, we see that there exists
a constant $C$ depending only on $(M,g)$
and the choice of the metric on $S^*M$ such that
for each unstable interval $\gamma$ and all $t\in\mathbb R$
\begin{equation}
  \label{e:unin-exp}
C^{-1}\big(\inf_{\gamma}J^u_t\big)|\gamma|\leq|\varphi_t(\gamma)|\leq C\big(\sup_{\gamma}J^u_t\big)|\gamma|.
\end{equation}
Similarly if $\gamma$ is a stable interval then
\begin{equation}
  \label{e:stin-exp}
C^{-1}\big(\inf_{\gamma}J^s_t\big)|\gamma|\leq|\varphi_t(\gamma)|\leq C\big(\sup_{\gamma}J^s_t\big)|\gamma|.
\end{equation}
In particular by~\eqref{e:Lambda-0-1} we have
\begin{equation}
  \label{e:stun-interval-expand}
|\varphi_t(\gamma)|\leq Ce^{-\Lambda_0 |t|}|\gamma|
\end{equation}
for all $t\geq 0$ and stable intervals $\gamma$,
and for all $t\leq 0$ and unstable intervals $\gamma$.
Therefore each stable/unstable interval is contained in
some global stable/unstable manifold.

Since $M$ is connected and
$\varphi_t$ is not a constant time suspension of an Anosov diffeomorphism
(being a contact flow),
each global stable/unstable manifold $\widehat W_s(\rho),\widehat W_u(\rho)$
is dense in $S^*M$, see~\cite[p.29, Theorem~15]{Anosov}.
A quantitative version of this statement is given by
%%%%%%%%%%%%%%%%%%%%%%%%%%%%%%%%%%%%%%%%%%%%%%%%%%%%%%%%%%%%%%%%%%%%%%%%%%%%%%%%
\begin{lemm}
  \label{l:unier}
Let $\mathcal U\subset S^*M$ be a nonempty open set.
Then there exists $L_{\mathcal U}>0$ such that
every unstable interval of length $L_{\mathcal U}$ intersects~$\mathcal U$.
Same is true for stable intervals.
\end{lemm}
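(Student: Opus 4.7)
The plan is to argue by contradiction using a compactness argument on limits of long unstable intervals. Suppose no such $L_{\cU}$ exists; then for each $n\in\IN$ there is an unstable interval $\gamma_n\colon I_n\to S^*M$ with $|I_n|=n$ and $\gamma_n(I_n)\cap\cU=\emptyset$. After reparametrizing we may take $I_n=[-n/2,n/2]$; set $\rho_n:=\gamma_n(0)$. By compactness of $S^*M$, pass to a subsequence so that $\rho_n\to\rho_\infty\in S^*M$.

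For each fixed $L>0$ and $n\geq L$, the restriction $\tilde\gamma_{n,L}:=\gamma_n|_{[-L/2,L/2]}$ is a well-defined unstable interval of length~$L$ passing through $\rho_n$ and still disjoint from~$\cU$. Since the unstable distribution $E_u$ depends H\"older-continuously on its base point, the local unstable leaves inherit this continuous dependence in the $C^0$ topology of graphs in a transversal chart (see e.g.\ \cite[Theorem~17.4.3]{KaHa}), and hence a unit-speed unstable interval of fixed length depends continuously on its base point. Consequently $\tilde\gamma_{n,L}$ converges in the Hausdorff distance to the unstable interval $\tilde\gamma_{\infty,L}$ of length $L$ centered at $\rho_\infty$. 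Since $\cU$ is open and each $\tilde\gamma_{n,L}$ lies in the closed set $S^*M\setminus\cU$, the limit satisfies $\tilde\gamma_{\infty,L}\cap\cU=\emptyset$.

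Now let $L\to\infty$. The family $\{\tilde\gamma_{\infty,L}\}_{L>0}$ is increasing by inclusion and exhausts the global unstable manifold $\widehat W_u(\rho_\infty)$: by~\eqref{e:unin-exp} and~\eqref{e:Lambda-0-1}, the arc length of the pieces $\varphi_j(W_u(\varphi_{-j}(\rho_\infty)))\ni\rho_\infty$ grows at least like $e^{\Lambda_0 j}$ on both sides of $\rho_\infty$, so $\widehat W_u(\rho_\infty)$ has infinite arc length in both directions from $\rho_\infty$ and is the union of the $\tilde\gamma_{\infty,L}$. Consequently $\widehat W_u(\rho_\infty)\cap\cU=\emptyset$, contradicting the density of $\widehat W_u(\rho_\infty)$ in $S^*M$ recalled just before the Lemma statement (from \cite{Anosov}). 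The stable case follows by the exact same argument with $\widehat W_s$ in place of $\widehat W_u$. The main technical point to nail down in a complete write-up is the continuous dependence of the unstable interval of a fixed length on its base point; I would deduce it from continuous dependence of local unstable leaves together with the invariance~\eqref{e:hyprop-0} and the local uniqueness of $C^\infty$ curves tangent to $E_u$, which allows one to propagate continuity from local leaves to longer pieces via forward iteration of the flow.
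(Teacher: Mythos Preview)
Your proof is correct and follows essentially the same contradiction-and-compactness argument as the paper. The only cosmetic difference is that the paper passes to a subsequence in $T(S^*M)$ (so that both $\gamma_j(0)$ and $\dot\gamma_j(0)$ converge), which immediately yields a limiting parametrized curve $\gamma:\mathbb R\to S^*M$ with $\gamma_j\to\gamma$ locally uniformly; this sidesteps the separate discussion of Hausdorff convergence and the continuous dependence of length-$L$ segments that you outline at the end.
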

%%%%%%%%%%%%%%%%%%%%%%%%%%%%%%%%%%%%%%%%%%%%%%%%%%%%%%%%%%%%%%%%%%%%%%%%%%%%%%%%
\begin{proof}
We argue by contradiction, considering the case of unstable intervals;
the case of stable intervals is handled similarly.
If the statement of the lemma fails, then there exists a sequence
of unstable intervals
$$
\gamma_j:[-\ell_j,\ell_j]\to S^*M,\quad
\ell_j\to \infty,\quad
\gamma_j([-\ell_j,\ell_j])\cap \mathcal U=\emptyset.
$$
Passing to a subsequence, we may assume that $(\gamma_j(0),\dot\gamma_j(0))$
converges to some point $(\rho,\xi)\in T(S^*M)$.
Take the unstable interval $\gamma:\mathbb R\to S^*M$
such that $(\gamma(0),\dot\gamma(0))=(\rho,\xi)$.
Then $\gamma(\mathbb R)$ is the global
unstable manifold $\widehat W_u(\rho)$.
We have $\gamma_j(s)\to\gamma(s)$ locally uniformly
in $s\in\mathbb R$. Therefore
$\widehat W_u(\rho)\cap\mathcal U=\emptyset$,
giving a contradiction with the fact that $\widehat W_u(\rho)$ is dense in $S^*M$.
\end{proof}
%%%%%%%%%%%%%%%%%%%%%%%%%%%%%%%%%%%%%%%%%%%%%%%%%%%%%%%%%%%%%%%%%%%%%%%%%%%%%%%%
To state the main result of this section, Lemma~\ref{l:porosity-basic},
we introduce some notation formally similar to the symbolic formalism in dynamical systems
and motivated by~\S\ref{s:proofs-notation} below (see also Remark~2 following Proposition~\ref{l:longdec-0}).
We fix finitely many open conic sets
\begin{equation}
  \label{e:porosity-basic-sets}
\mathcal V_1,\dots,\mathcal V_m\subset T^*M\setminus 0
\end{equation}
and assume that $S^*M\setminus\mathcal V_k$ has nonempty interior for each $k$.
In our application in Lemma~\ref{l:porosity-intersected} we will take $m=2$ and use a slight fattening of the sets $\mathcal V_1,\mathcal V_\star$
constructed in~\S\ref{s:partition} below. The set $\mathcal V_1$ will be assumed to be ``small'', as a consequence $V_\star$ will necessarily be ``large''.

%%%%%%%%%%%%%%%%%%%%%%%%%%%%%%%%%%%%%%%%%%%%%%%%%%%%%%%%%%%%%%%%%%%%%%%%%%%%%%%%
\begin{figure}
\includegraphics[height=7cm]{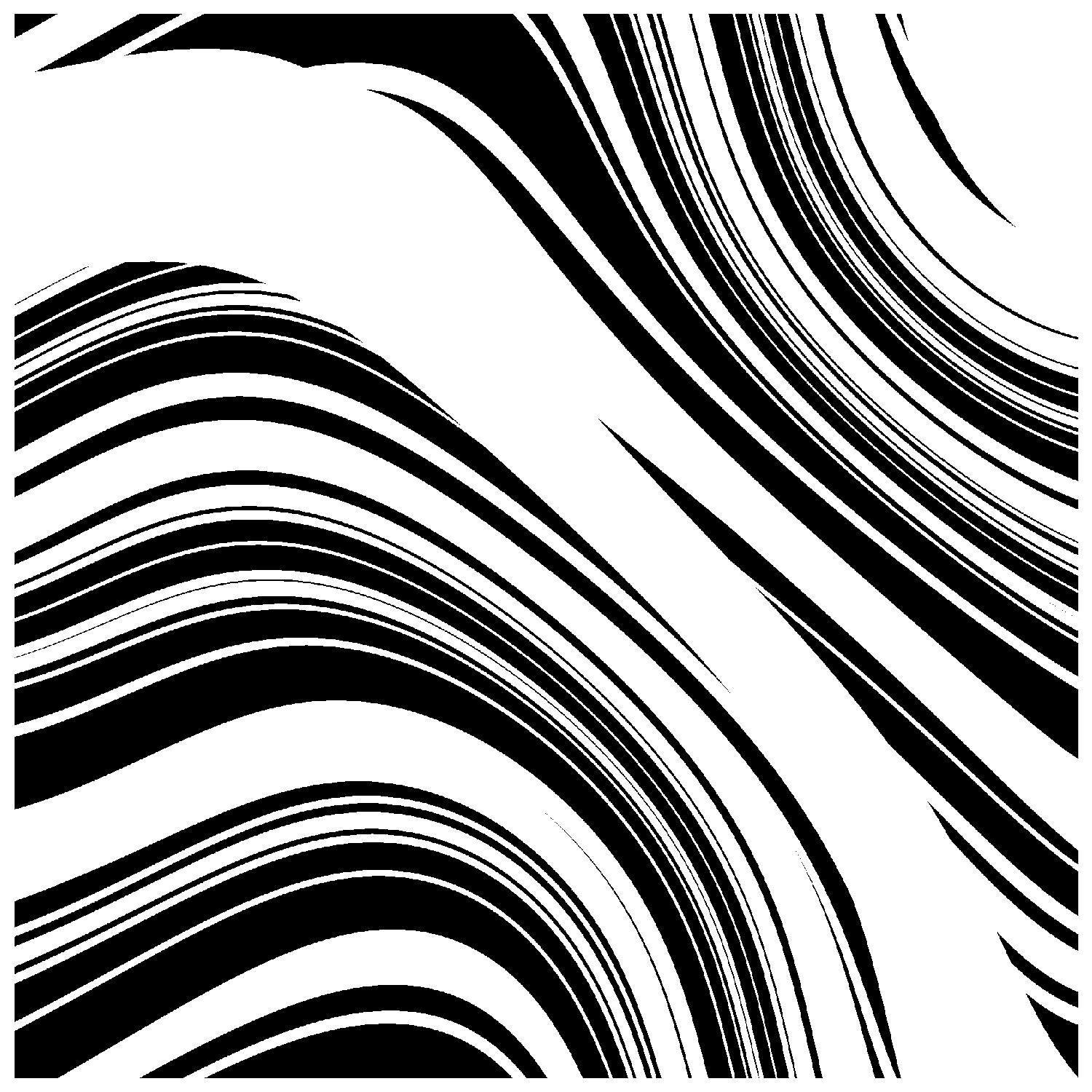}
\qquad
\includegraphics[height=7cm]{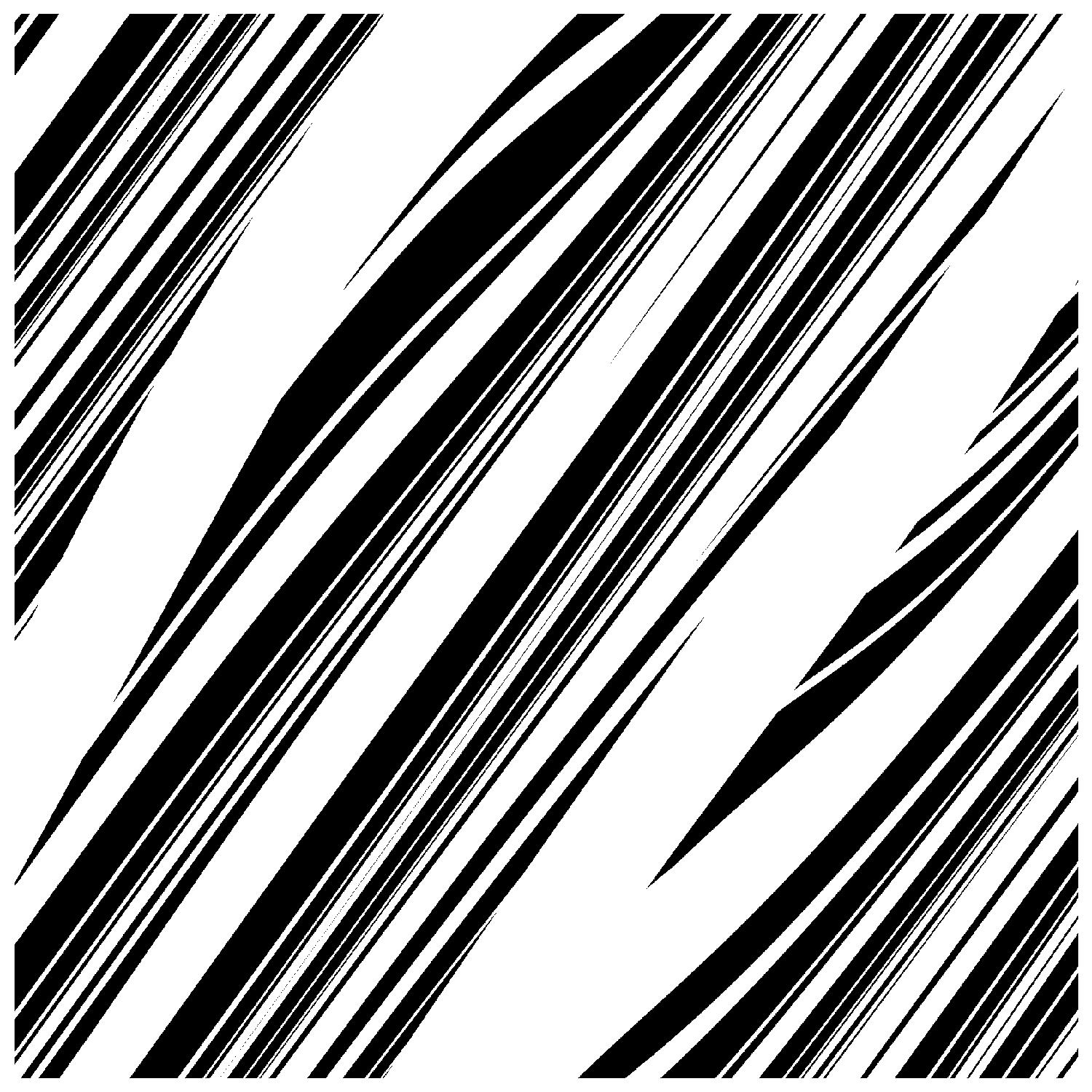}
\hbox to\hsize{\hss $\mathcal V^-_{\mathbf v}$ \hss\hss $\mathcal V^+_{\mathbf w}$\hss}
\caption{The sets $\mathcal V^-_{\mathbf v}$
and $\mathcal V^+_{\mathbf w}$ with the flow direction removed.
In this figure and in Figures~\ref{f:demo} and~\ref{f:moderate}
we use numerical simulations for a perturbed two-dimensional cat map
(which has similar properties to three-dimensional Anosov flows
studied here).}
\label{f:porosity-bas}
\end{figure}
%%%%%%%%%%%%%%%%%%%%%%%%%%%%%%%%%%%%%%%%%%%%%%%%%%%%%%%%%%%%%%%%%%%%%%%%%%%%%%%%

For words
$\mathbf v=v_0\dots v_{n-1}$, $\mathbf w=w_1\dots w_{n}$
where $v_j,w_j\in \{1,\dots,m\}$, define the open conic sets
(similarly to~\eqref{e:V+-} below)
\begin{equation}
  \label{e:V-def-porous}
\mathcal V^-_{\mathbf v}:=\bigcap_{j=0}^{n-1}\varphi_{-j}(\mathcal V_{v_j}),\quad
\mathcal V^+_{\mathbf w}:=\bigcap_{j=1}^n \varphi_j(\mathcal V_{w_j}).
\end{equation}
The following lemma shows the porosity of $\mathcal V^-_{\mathbf v}$
in the unstable direction and of $\mathcal V^+_{\mathbf w}$
in the stable direction, in the sense of Definition~\ref{d:porous}.
See Figure~\ref{f:porosity-bas}.
%%%%%%%%%%%%%%%%%%%%%%%%%%%%%%%%%%%%%%%%%%%%%%%%%%%%%%%%%%%%%%%%%%%%%%%%%%%%%%%%
\begin{lemm}
  \label{l:porosity-basic}
There exist $\nu>0$, $C_0>0$ depending only on $\mathcal V_1,\dots,\mathcal V_m$
such that
\begin{itemize}
\item for all words $\mathbf v=v_0\dots v_{n-1}$,
sets $\mathcal W^-\subset \mathcal V^-_{\mathbf v}\cap S^*M$,
and unstable intervals $\gamma:I_0\to S^*M$,
the set
$\gamma^{-1}(\mathcal W^-)$
is $\nu$-porous on scales $C_0(\inf_{\mathcal W^-} J^u_n)^{-1}$
to~1;
\item for all words $\mathbf w=w_1\dots w_n$,
sets $\mathcal W^+\subset\mathcal V^+_{\mathbf w}\cap S^*M$,
and stable intervals $\gamma:I_0\to S^*M$,
the set
$\gamma^{-1}(\mathcal W^+)
$ is $\nu$-porous on scales $C_0(\inf_{\mathcal W^+} J^s_{-n})^{-1}$ to~1.
\end{itemize}
Here the sets $\gamma^{-1}(\mathcal W^\pm)\subset I_0\subset\mathbb R$
are defined by~\eqref{e:interval-pullback}.
\end{lemm}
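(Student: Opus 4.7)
\medskip

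\noindent\textbf{Proof plan.} I will handle the unstable case in detail; the stable case is symmetric under reversal of time. The rough idea is that the hypothesis on the $\mathcal V_k$'s combined with density of unstable leaves (Lemma~\ref{l:unier}) produces a uniform ``hole'' in every sufficiently long unstable interval, and that one can calibrate the iterate $\varphi_{j^*}$ so that $\varphi_{j^*}(\gamma|_I)$ has length of the right size to reveal such a hole, then pull back.

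\emph{Setup.} For each $k=1,\dots,m$, use that $S^*M\setminus\mathcal V_k$ has nonempty interior to choose $\rho_k\in S^*M$ and $\eps>0$ such that $B_d(\rho_k,2\eps)\subset S^*M\setminus\mathcal V_k$. Apply Lemma~\ref{l:unier} to the open sets $B_d(\rho_k,\eps)$ and set
\[
L_1:=\max_k L_{B_d(\rho_k,\eps)}+2\eps.
\]
Since unstable intervals are arc-length parametrized with respect to the fixed Riemannian metric, any unstable interval $\eta$ of length $\geq L_1$ contains a subinterval $\eta'$ of length $\geq\eps$ with $\eta'\subset B_d(\rho_k,2\eps)\subset S^*M\setminus\mathcal V_k$, for every $k$: the subinterval obtained by trimming $\eps$ off each end of $\eta$ has length $\geq L_{B_d(\rho_k,\eps)}$, hence hits $B_d(\rho_k,\eps)$ at some point $\rho^*$, and the piece of $\eta$ of parameter-length $\eps$ on each side of $\rho^*$ stays in $B_d(\rho^*,\eps)\subset B_d(\rho_k,2\eps)$.

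\emph{Choosing the iterate.} Fix $\mathbf v=v_0\dots v_{n-1}$, $\mathcal W^-\subset \mathcal V^-_{\mathbf v}\cap S^*M$, an unstable interval $\gamma\colon I_0\to S^*M$, and a subinterval $I\subset I_0$ with $|I|=\alpha\in[C_0/\inf_{\mathcal W^-}J^u_n,\,1]$. We may assume $\gamma^{-1}(\mathcal W^-)\cap I\neq\emptyset$, and pick $\rho^*=\gamma(s^*)\in\mathcal W^-\cap\gamma(I)$. By~\eqref{e:unin-exp} and~\eqref{e:Lambda-0-1},
\[
|\varphi_{n}(\gamma|_I)|\geq C^{-1}J^u_{n}(\rho^*)\,\alpha\geq C^{-1}C_0,
\]
and $|\varphi_j(\gamma|_I)|\leq C e^{\Lambda_1}|\varphi_{j-1}(\gamma|_I)|$. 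So taking $C_0$ large enough (depending only on $(M,g)$ and $L_1$), we have $|\varphi_{n-1}(\gamma|_I)|\geq L_1$. Let $j^*\in\{0,1,\dots,n-1\}$ be the smallest index with $|\varphi_{j^*}(\gamma|_I)|\geq L_1$; then by minimality,
\[
|\varphi_{j^*}(\gamma|_I)|\in[L_1,\,L^*],\qquad L^*:=\max(Ce^{\Lambda_1}L_1,\,1).
\]

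\emph{Finding a porous gap.} Apply the Setup observation to $\eta:=\varphi_{j^*}(\gamma|_I)$ and $k=v_{j^*}$ to obtain $\eta'\subset\eta$ with $|\eta'|\geq\eps$ and $\eta'\subset S^*M\setminus\mathcal V_{v_{j^*}}$. Let $J\subset I$ be the preimage of $\eta'$ under $\varphi_{j^*}\circ\gamma$. Since $\mathcal W^-\subset\varphi_{-j^*}(\mathcal V_{v_{j^*}})$ and $\eta'\cap\mathcal V_{v_{j^*}}=\emptyset$, we have $J\cap\gamma^{-1}(\mathcal W^-)=\emptyset$. It remains to estimate $|J|/|I|$ from below by a constant depending only on $(M,g)$ and $\mathcal V_1,\dots,\mathcal V_m$.

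\emph{The main obstacle.} This last step, controlling the variation of $J^u_{j^*}$ on the full interval $\gamma|_I$, is the core of the lemma and the place I expect the most care. The Jacobian $J^u_{j^*}$ may a priori vary significantly across $\gamma|_I$ because $|I|$ need not be small. The point is that all pairs $\tilde\rho_1,\tilde\rho_2\in\eta$ lie on a common unstable leaf of length at most $L^*$, so by chaining at most $O_{L^*}(1)$ applications of Lemma~\ref{l:stun-main}(4) (each pair of intermediate points lying in a common local weak unstable leaf) we obtain $J^u_{-j^*}(\tilde\rho_1)\sim J^u_{-j^*}(\tilde\rho_2)$, equivalently $J^u_{j^*}(\rho_1)\sim J^u_{j^*}(\rho_2)$ for $\rho_i=\varphi_{-j^*}(\tilde\rho_i)\in\gamma|_I$, with constant depending only on $(M,g)$ and $L^*$. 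Combined with~\eqref{e:unin-exp} applied both to $I$ and to $J$, this yields
\[
\frac{|J|}{|I|}\sim\frac{|\eta'|}{|\eta|}\geq \frac{\eps}{L^*},
\]
so any $J$ obtained this way contains a subinterval of length $\nu|I|$ disjoint from $\gamma^{-1}(\mathcal W^-)$, with $\nu:=\eps/(2L^*)$ (say).

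The stable case is handled by the same argument with $\varphi_{-j^*}$ in place of $\varphi_{j^*}$, the index $j^*$ ranging over $\{1,\dots,n\}$ (rather than $\{0,\dots,n-1\}$, since $\mathcal V^+_{\mathbf w}$ indexes positive iterates), using~\eqref{e:stin-exp} in place of~\eqref{e:unin-exp} and Lemma~\ref{l:stun-main}(3) to control the variation of $J^s_{-j^*}$.
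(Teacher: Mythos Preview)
Your overall strategy matches the paper's, and your treatment of the ``main obstacle'' (bounded distortion of $J^u_{j^*}$ along $\gamma|_I$, via Lemma~\ref{l:stun-main}(4) applied on the image $\eta=\varphi_{j^*}(\gamma|_I)$) is correct. However, there is a gap in the step ``Choosing the iterate.'' The displayed inequality
\[
|\varphi_{n}(\gamma|_I)|\geq C^{-1}J^u_{n}(\rho^*)\,\alpha
\]
does not follow from~\eqref{e:unin-exp} and~\eqref{e:Lambda-0-1}: the lower bound in~\eqref{e:unin-exp} involves $\inf_{\gamma|_I}J^u_n$, not $J^u_n(\rho^*)$, and forward unstable Jacobians are \emph{not} uniformly comparable along an unstable interval of macroscopic length (Lemma~\ref{l:stun-main}(4) controls $J^u_t$ along unstable leaves only for $t\leq 0$; part~(3) is the relevant one for $t\geq 0$ and requires a common \emph{stable} leaf). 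So as written you cannot conclude $j^*\leq n-1$.

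The fix is to reorder the argument as the paper does. Let $j^*$ be the minimal \emph{nonnegative} integer with $|\varphi_{j^*}(\gamma|_I)|\geq L_1$; existence follows from the uniform lower bound $|\varphi_j(\gamma|_I)|\geq C^{-1}e^{\Lambda_0 j}|I|$ (the time-reversed~\eqref{e:stun-interval-expand}), with no reference to $\rho^*$ at all. Your bounded-distortion argument then gives $|I|\cdot J^u_{j^*}(\rho)\sim |\eta|\in[L_1,L^*]$ for every $\rho\in\gamma|_I$, in particular for $\rho=\rho^*$. \emph{Now} prove $j^*\leq n-1$ by contradiction: if $j^*\geq n$ then $J^u_{j^*}(\rho^*)\geq J^u_n(\rho^*)\geq\inf_{\mathcal W^-}J^u_n$, whence $|I|\leq C L^*/\inf_{\mathcal W^-}J^u_n$, contradicting $|I|\geq C_0/\inf_{\mathcal W^-}J^u_n$ once $C_0$ is chosen larger than $CL^*$. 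The rest of your argument then goes through unchanged.
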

%%%%%%%%%%%%%%%%%%%%%%%%%%%%%%%%%%%%%%%%%%%%%%%%%%%%%%%%%%%%%%%%%%%%%%%%%%%%%%%%
\Remarks 1. In the situation where all the $\mathcal V_j$ are ``small conic balls'', the sets $\mathcal V^-_{\mathbf v}\cap S^*M$ have the shapes of ``deformed ellipses'' aligned along a small piece of weak stable manifold. Their width transversely to this manifold is bounded by $C_0 J^u_n(\rho)^{-1}$, for $\rho$ any point in $\mathcal V^-_{\mathbf v}\cap S^*M$, so $\gamma^{-1}(\mathcal V^-_{\mathbf v})$ will be contained in an interval of length $\leq C_0 J^u_n(\rho)^{-1}$. The Lemma shows that, in the general case where some $\mathcal V_j$ may be ``not small'', $\mathcal V^-_{\mathbf v}\cap S^*M$ may be a union of many such ``deformed ellipses'', arranged in a fractal (that is, porous) way along the unstable direction.

\noindent 2. By~\eqref{e:Lambda-0-1} we see in particular that
if $\gamma$ is an unstable interval, then
$\gamma^{-1}(\mathcal V^-_{\mathbf v})$ is $\nu$-porous on scales
$C_0e^{-\Lambda_0n}$ to~1. If $\gamma$ is instead a stable
interval, then $\gamma^{-1}(\mathcal V^+_{\mathbf w})$ is
$\nu$-porous on scales $C_0e^{-\Lambda_0n}$ to~1.

%%%%%%%%%%%%%%%%%%%%%%%%%%%%%%%%%%%%%%%%%%%%%%%%%%%%%%%%%%%%%%%%%%%%%%%%%%%%%%%%
\begin{proof}
1. We consider the case of unstable intervals, with stable intervals handled similarly.
Our proof is similar to~\cite[Lemma~5.10]{meassupp}. Throughout the proof
$C$ denotes constants depending only on $\mathcal V_1,\dots,\mathcal V_m$ whose precise value
might change from place to place.

Fix nonempty open sets $\mathcal U_1,\dots,\mathcal U_m\subset S^*M$
such that $\overline{\mathcal U_k}\cap\overline{\mathcal V_k}=\emptyset$;
this is possible since $S^*M\setminus\mathcal V_k$ have nonempty interior.
Fix $\varepsilon>0$ smaller than the distance between $\mathcal U_k$
and $\mathcal V_k$ for all~$k$.
Using Lemma~\ref{l:unier}, we fix
$L_0>0$ depending only on $\mathcal V_1,\dots,\mathcal V_m$ such that every unstable interval of length~$L_0$ intersects each
of the sets $\mathcal U_1,\dots,\mathcal U_m$.

\noindent 2. We fix $C_0>0$ large enough to be chosen later in Step~4 of the proof.
Take an arbitrary unstable interval $\gamma:I_0\to S^*M$ and
extend it to an unstable interval $\gamma:\mathbb R\to S^*M$.
Let $I\subset\mathbb R$ be an interval such that
$C_0(\inf_{\mathcal W^-} J^u_n)^{-1}\leq |I|\leq 1 $ and
$\gamma_I:=\gamma|_I$ be the corresponding unstable interval,
note that $|\gamma_I|=|I|$.
We may assume that $\gamma_I\cap \mathcal W^-\neq\emptyset$
as otherwise $\gamma^{-1}(\mathcal W^-)\cap I=\emptyset$
and we could take any $J\subset I$ in Definition~\ref{d:porous}.

Let $\varphi_j(\gamma_I)$, $j\geq 0$, be the images of $\gamma_I$
under $\varphi_j$.
By~\eqref{e:stun-interval-expand}
we have $|\varphi_j(\gamma_I)|\geq C^{-1}e^{\Lambda_0 j}|I|$.
Therefore there exists an integer $\ell\geq 0$ such that
$|\varphi_\ell(\gamma_I)|\geq L_0$. Take the minimal integer $\ell\geq 0$ with this property,
then there exists $C>L_0$ such that
\begin{equation}
  \label{e:pipa2}
L_0\leq |\varphi_\ell(\gamma_I)|\leq C.
\end{equation}

\noindent 3. The map $\varphi_\ell$ has a uniform expansion rate on $\gamma_I$, namely
\begin{equation}
  \label{e:steady}
\sup_{\gamma_I} J^u_\ell\leq C\inf_{\gamma_I}J^u_\ell.
\end{equation}
Indeed, by~\eqref{e:stun-interval-expand} and~\eqref{e:pipa2} there exists $t_0>0$
depending only on the constants in~\eqref{e:pipa2} (which in turn depend only on $\mathcal V_1,\dots,\mathcal V_m$) such that
$\varphi_{\ell-t_0}(\gamma_I)=\varphi_{-t_0}(\varphi_\ell(\gamma_I))$ is contained in a local unstable manifold,
more precisely
\begin{equation}
  \label{e:steady2}
\varphi_{\ell-t_0}(\tilde\rho)\in W_u(\varphi_{\ell-t_0}(\rho))\quad\text{for all}\quad
\rho,\tilde\rho\in \gamma_I.
\end{equation}
If $\ell\leq t_0$ then~\eqref{e:steady} is immediate since
$C^{-1}\leq J^u_\ell\leq C$. Assume now that $\ell>t_0$. Then we write
for all $\rho\in\gamma_I$
$$
J^u_\ell(\rho)={J^u_{t_0}(\varphi_{\ell-t_0}(\rho))
\over J^u_{t_0-\ell}(\varphi_{\ell-t_0}(\rho))}.
$$
By part~(4) of Lemma~\ref{l:stun-main}
and~\eqref{e:steady2} we have $J^u_{t_0-\ell}(\varphi_{\ell-t_0}(\rho))\leq CJ^u_{t_0-\ell}(\varphi_{\ell-t_0}(\tilde\rho))$
for all $\rho,\tilde\rho\in \gamma_I$.
Together with the bound
$C^{-1}\leq J^u_{t_0}\leq C$ this proves~\eqref{e:steady}.

%%%%%%%%%%%%%%%%%%%%%%%%%%%%%%%%%%%%%%%%%%%%%%%%%%%%%%%%%%%%%%%%%%%%%%%%%%%%%%%%
\begin{figure}
\includegraphics{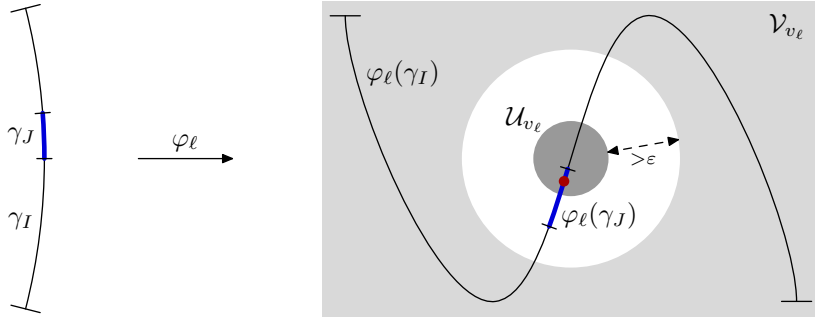}
\caption{An illustration of the proof of Lemma~\ref{l:porosity-basic}. The large lighter shaded region is $\mathcal V_{v_\ell}$
and the small darker shaded region is $\mathcal U_{v_\ell}$. The marked
point inside $\mathcal U_{v_\ell}\cap\varphi_\ell(\gamma_J)$ is $\varphi_\ell(\gamma(s))$.}
\label{f:porobas}
\end{figure}
%%%%%%%%%%%%%%%%%%%%%%%%%%%%%%%%%%%%%%%%%%%%%%%%%%%%%%%%%%%%%%%%%%%%%%%%%%%%%%%%

\noindent 4. By~\eqref{e:unin-exp}, \eqref{e:pipa2},
and~\eqref{e:steady} we relate the expansion rate of $\varphi_\ell$
on $\gamma_I$ to the length~$|I|$:
\begin{equation}
  \label{e:steady3}
C_1^{-1}\leq |I|\cdot \inf_{\gamma_I} J^u_\ell\leq |I|\cdot\sup_{\gamma_I}J^u_\ell\leq C_1
\end{equation}
where $C_1$ is some constant depending only on~$\mathcal V_1,\dots,\mathcal V_m$. Fix $C_0:=C_1+1$,
then the integer $\ell$ satisfies
$$
0\leq \ell \leq n-1,
$$
where we recall that $n=|\mathbf v|$.
Indeed, assume that $\ell\geq n$ instead.
Then $J^u_\ell(\rho)\geq J^u_n(\rho)$ for all $\rho$ by~\eqref{e:Lambda-0-1}.
Since $\gamma_I\cap \mathcal W^-\neq\emptyset$ and from our initial assumption on $|I|$, we have
\begin{equation}
  \label{e:steady4}
C_0\leq |I|\cdot \inf_{\mathcal W^-} J^u_n\leq
|I|\cdot\inf_{\mathcal W^-}J^u_\ell\leq
|I|\cdot\sup_{\gamma_I}J^u_\ell
\leq C_1
\end{equation}
giving a contradiction with our choice of~$C_0$.

\noindent 5. We finally construct an interval
$J\subset I$ such that $J\cap \gamma^{-1}(\mathcal W^-)=\emptyset$.
By~\eqref{e:pipa2} and the choice of $L_0$, the unstable interval
$\varphi_\ell(\gamma_I)$ intersects $\mathcal U_{v_\ell}$.
That is, there exists $s\in I$ such that $\varphi_\ell(\gamma(s))\in\mathcal U_{v_\ell}$.
Choose an interval $J\subset I$ such that
$s\in J$ and $|\varphi_\ell(\gamma_J)| =\varepsilon$
where $\gamma_J:=\gamma|_J$ is the corresponding unstable interval.
Since the distance between $\mathcal U_{v_\ell}$
and $\mathcal V_{v_\ell}$ is larger than $\varepsilon$,
the unstable interval $\varphi_\ell(\gamma_J)$ does not intersect~$\mathcal V_{v_\ell}$.
(See Figure~\ref{f:porobas}.)
By~\eqref{e:V-def-porous}, the unstable interval
$\gamma_J$ does not intersect $\mathcal V^-_{\mathbf v}\supset\mathcal W^-$,
so that $J\cap \gamma^{-1}(\mathcal W^-)=\emptyset$ as needed.

By~\eqref{e:unin-exp} and~\eqref{e:steady3} we obtain a lower bound on the size of $J$:
$$
|J|\geq{|\varphi_\ell(\gamma_J)|\over C\sup_{\gamma_I}J^u_\ell}
\geq {\varepsilon\over C^2}|I|.
$$
Thus $\gamma^{-1}(\mathcal W^-)$ is $\nu$-porous
on scales $C_0(\inf_{\mathcal W^-} J^u_n)^{-1}$ to~1 with
$\nu:=\varepsilon/C^2>0$.
\end{proof}
%%%%%%%%%%%%%%%%%%%%%%%%%%%%%%%%%%%%%%%%%%%%%%%%%%%%%%%%%%%%%%%%%%%%%%%%%%%%%%%%
We finally discuss the dependence of the constant $\nu$ on the sets
$\mathcal V_1,\dots,\mathcal V_m$ in Lemma~\ref{l:porosity-basic}, used in
Theorem~\ref{t:eig-quant}. We use the following
%%%%%%%%%%%%%%%%%%%%%%%%%%%%%%%%%%%%%%%%%%%%%%%%%%%%%%%%%%%%%%%%%%%%%%%%%%%%%%%%
\begin{defi}
  \label{d:stun-dense}
Let $\mathcal U\subset S^*M$ be a set and $0<L_1\leq 1\leq L_0$. We say
that $\mathcal U$ is \textbf{$(L_0,L_1)$-dense in the unstable direction} if
for each unstable interval
$\gamma:I\to S^*M$ of length $L_0$ there exists a subinterval $J\subset I$
of length $L_1$ such that $\gamma(J)\subset\mathcal U^\circ$, where $\mathcal U^\circ$
denotes the interior of $\mathcal U$.
We similarly define the notion of being dense in the stable direction.
\end{defi}
%%%%%%%%%%%%%%%%%%%%%%%%%%%%%%%%%%%%%%%%%%%%%%%%%%%%%%%%%%%%%%%%%%%%%%%%%%%%%%%%
Lemma~\ref{l:unier} implies (similarly to step~5 in the proof of Lemma~\ref{l:porosity-basic}) that if $\mathcal U$ has nonempty interior then
it is $(L_0,L_1)$-dense in both stable and unstable directions for some $L_0,L_1$. Following the proof of Lemma~\ref{l:porosity-basic}
(using density in the stable/unstable directions in step~5), we obtain
%%%%%%%%%%%%%%%%%%%%%%%%%%%%%%%%%%%%%%%%%%%%%%%%%%%%%%%%%%%%%%%%%%%%%%%%%%%%%%%%
\begin{lemm}
  \label{l:porosity-basic-quant}
In the notation of Lemma~\ref{l:porosity-basic}, assume that each of the complements
$S^*M\setminus \mathcal V_1,\dots,S^*M\setminus\mathcal V_m$ is $(L_0,L_1)$-dense in the unstable direction.
Then for all words $\mathbf v=v_0\dots v_{n-1}$, sets $\mathcal W^-\subset \mathcal V^-_{\mathbf v}\cap S^*M$,
and unstable intervals $\gamma:I_0\to S^*M$, the set $\gamma^{-1}(\mathcal W^-)$ is $\nu$-porous on scales $C_0(\inf_{\mathcal W^-}J^u_n)^{-1}$
to~1, where $\nu,C_0>0$ depend only on $(M,g),L_0,L_1$.
A similar statement holds for stable intervals under the assumption of $(L_0,L_1)$-density in the stable direction.
\end{lemm}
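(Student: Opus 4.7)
The plan is to revisit the proof of Lemma~\ref{l:porosity-basic} and identify the only place where a specific feature of the sets $\mathcal V_k$ (beyond their complements containing open regions) was used, namely Step~5, where auxiliary sets $\mathcal U_k$ with $\overline{\mathcal U_k}\cap \overline{\mathcal V_k}=\emptyset$ were introduced and a uniform ``guaranteed hitting length'' $L_0$ was obtained from Lemma~\ref{l:unier}. Under the $(L_0,L_1)$-density hypothesis these two ingredients are given directly by the definition, which will make the constants $\nu, C_0$ depend only on $(M,g), L_0, L_1$.

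I will treat the unstable case, the stable case being symmetric. Fix $\mathbf v=v_0\ldots v_{n-1}$, $\mathcal W^-\subset \mathcal V^-_{\mathbf v}\cap S^*M$, and an unstable interval $\gamma:I_0\to S^*M$, extended to $\mathbb R$. Let $I\subset\mathbb R$ satisfy $C_0(\inf_{\mathcal W^-}J^u_n)^{-1}\le|I|\le 1$ with $\gamma_I\cap\mathcal W^-\ne\emptyset$; set $\gamma_I:=\gamma|_I$. Exactly as in Steps~2--3 of the original proof, I would pick the smallest $\ell\ge 0$ with $|\varphi_\ell(\gamma_I)|\ge L_0$ and deduce both $L_0\le|\varphi_\ell(\gamma_I)|\le C$ (where $C$ depends only on $(M,g)$ and $L_0$) and the uniform expansion bound
\begin{equation*}
\sup_{\gamma_I}J^u_\ell\le C\inf_{\gamma_I}J^u_\ell,
\end{equation*}
with the constant depending only on $(M,g)$ and $L_0$. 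The argument of Step~4 then produces a constant $C_1=C_1(M,g,L_0)$ with $C_1^{-1}\le |I|\cdot\inf_{\gamma_I}J^u_\ell\le C_1$, and choosing $C_0:=C_1+1$ forces $0\le\ell\le n-1$, so that $v_\ell$ is a legitimate letter.

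The new ingredient replaces Step~5. Since $\varphi_\ell(\gamma_I)$ is an unstable interval of length at least $L_0$, the $(L_0,L_1)$-density of $S^*M\setminus\mathcal V_{v_\ell}$ in the unstable direction produces a subinterval $I'\subset\varphi_\ell(\gamma_I)$ (identified with a subinterval of $I$ via the parametrization of $\varphi_\ell\circ\gamma$) of length $L_1$ whose image under the unstable parametrization lies entirely in the interior of $S^*M\setminus\mathcal V_{v_\ell}$. Pulling back by $\varphi_{-\ell}$ gives a subinterval $J\subset I$ with $\varphi_\ell(\gamma_J)$ having length $L_1$ and disjoint from $\mathcal V_{v_\ell}$; by the definition~\eqref{e:V-def-porous} of $\mathcal V^-_{\mathbf v}$, the unstable interval $\gamma_J$ is disjoint from $\mathcal V^-_{\mathbf v}\supset\mathcal W^-$, hence $J\cap\gamma^{-1}(\mathcal W^-)=\emptyset$. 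Applying~\eqref{e:unin-exp} to $\gamma_J$, then the uniform expansion bound, and finally the upper bound on $|I|\cdot\sup_{\gamma_I}J^u_\ell$ from Step~4, I obtain
\begin{equation*}
|J|\ge\frac{|\varphi_\ell(\gamma_J)|}{C\sup_{\gamma_I}J^u_\ell}\ge\frac{L_1}{C^2\inf_{\gamma_I}J^u_\ell}\ge\frac{L_1}{C^2C_1}|I|.
\end{equation*}
Setting $\nu:=L_1/(C^2C_1)$ gives the desired porosity, with $\nu$ and $C_0$ depending only on $(M,g), L_0, L_1$. The main (and essentially only) obstacle is bookkeeping: making sure that all constants introduced along the way genuinely depend only on $(M,g), L_0, L_1$, and in particular that the geometric hitting argument which in the original proof relied on Lemma~\ref{l:unier} applied to specific sets $\mathcal U_k$ is cleanly replaced by a direct application of the $(L_0,L_1)$-density to $\varphi_\ell(\gamma_I)$, which is an interval of length at least $L_0$ by construction.
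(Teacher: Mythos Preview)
Your proposal is correct and follows essentially the same approach as the paper, which simply says that the lemma is obtained by following the proof of Lemma~\ref{l:porosity-basic} and ``using density in the stable/unstable directions in step~5.'' Your bookkeeping of the constants is accurate; the only cosmetic remark is that in the final chain of inequalities the passage through $\inf_{\gamma_I}J^u_\ell$ is unnecessary, since $|I|\cdot\sup_{\gamma_I}J^u_\ell\le C_1$ from Step~4 already gives $|J|\ge L_1/(C C_1)\,|I|$ directly.
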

%%%%%%%%%%%%%%%%%%%%%%%%%%%%%%%%%%%%%%%%%%%%%%%%%%%%%%%%%%%%%%%%%%%%%%%%%%%%%%%%
We also record here a useful property of $(L_0,L_1)$-dense sets:
%%%%%%%%%%%%%%%%%%%%%%%%%%%%%%%%%%%%%%%%%%%%%%%%%%%%%%%%%%%%%%%%%%%%%%%%%%%%%%%%
\begin{lemm}
  \label{l:dense-useful}
Assume that $\mathcal U\subset S^*M$ is $(L_0,L_1)$-dense in the unstable direction.
Then there exists $\mathcal U^\sharp\subset S^*M$ which is $(L_0,L_1)$-dense in the unstable
direction and such that the closure of $\mathcal U^\sharp$ is contained in the interior of $\mathcal U$. The same is true for $(L_0,L_1)$-dense sets in the stable direction.
\end{lemm}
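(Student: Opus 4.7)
My plan is to shrink $\mathcal U$ by a uniform amount to obtain $\mathcal U^\sharp$. Specifically, for a sufficiently small $\delta>0$ to be chosen, I would set
\[
\mathcal U^\sharp:=\{\rho\in S^*M:d(\rho,S^*M\setminus \mathcal U^\circ)>\delta\}.
\]
This set is open, and its closure is contained in $\{\rho:d(\rho,S^*M\setminus\mathcal U^\circ)\geq\delta\}\subset\mathcal U^\circ$, so the closure condition comes for free. All the work goes into choosing $\delta$ so that $\mathcal U^\sharp$ still satisfies the $(L_0,L_1)$-density in the unstable direction.

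To select $\delta$ I would use compactness of $S^*M$ together with continuity of the unstable foliation. For each $\rho\in S^*M$, fix locally an orientation of $E_u$ and let $\gamma_\rho:[0,L_0]\to S^*M$ be the associated unit-speed unstable interval of length $L_0$ starting at $\rho$. By hypothesis there exists $t_\rho\in[0,L_0-L_1]$ with $K_\rho:=\gamma_\rho([t_\rho,t_\rho+L_1])\subset\mathcal U^\circ$, and since $K_\rho$ is compact, $\delta_\rho:=d(K_\rho,S^*M\setminus\mathcal U^\circ)>0$. The H\"older continuity of the unstable foliation (recalled in~\S\ref{s:stun-regularity}) implies that the parametrization $\rho\mapsto\gamma_\rho$ is continuous into $C^0([0,L_0];S^*M)$, so there is an open neighborhood $N_\rho$ of $\rho$ such that $\sup_{s\in[0,L_0]}d(\gamma_{\rho'}(s),\gamma_\rho(s))<\delta_\rho/2$ for all $\rho'\in N_\rho$. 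I would then extract a finite subcover $N_{\rho_1},\dots,N_{\rho_k}$ of the compact space $S^*M$ and set $\delta:=\min_i\delta_{\rho_i}/2$.

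With this choice of $\delta$, the density property is immediate from the triangle inequality: any unstable interval $\gamma:I\to S^*M$ of length $L_0$ can be reparametrized so that its starting point $\rho'$ lies in some $N_{\rho_i}$, and then for every $s\in[t_{\rho_i},t_{\rho_i}+L_1]$,
\[
d(\gamma(s),S^*M\setminus\mathcal U^\circ)\geq d(\gamma_{\rho_i}(s),S^*M\setminus\mathcal U^\circ)-d(\gamma_{\rho_i}(s),\gamma(s))\geq \delta_{\rho_i}-\delta_{\rho_i}/2\geq\delta,
\]
so $\gamma([t_{\rho_i},t_{\rho_i}+L_1])\subset\mathcal U^\sharp$. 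The statement for stable intervals is handled identically with stable intervals in place of unstable ones. The main (mild) obstacle is justifying the continuous dependence $\rho\mapsto\gamma_\rho$ with a consistent local choice of orientation; this is a classical consequence of the (H\"older) continuity of~$E_u$ on base points and causes no real difficulty once one restricts to sufficiently small neighborhoods.
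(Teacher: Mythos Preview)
Your proof is correct and follows the same underlying idea as the paper's: shrink $\mathcal U$ by a uniform amount and use compactness to show the shrunk set is still $(L_0,L_1)$-dense. The paper's $\mathcal U_j=\{\rho:\overline B(\rho,1/j)\subset\mathcal U\}$ is exactly your $\mathcal U^\sharp$ with $\delta=1/j$.

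The only real difference is the form of compactness used. The paper argues by contradiction: if no $\mathcal U_j$ works, pick bad unstable intervals $\gamma_j:[0,L_0]\to S^*M$, pass to a subsequence with $(\gamma_j(0),\dot\gamma_j(0))$ convergent, and contradict the density of~$\mathcal U$ for the limit interval. Your argument is the direct finite-subcover version. The paper's route has the advantage that the orientation issue you flag disappears automatically, since parametrizing unstable intervals by the pair $(\gamma(0),\dot\gamma(0))$ in the (compact) unit tangent bundle of $S^*M$ encodes the direction; in your argument you would similarly fix this by indexing your cover by both choices of local orientation (or by $(\rho,v)$ with $v\in E_u(\rho)$ unit), which is indeed harmless.
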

%%%%%%%%%%%%%%%%%%%%%%%%%%%%%%%%%%%%%%%%%%%%%%%%%%%%%%%%%%%%%%%%%%%%%%%%%%%%%%%%
\begin{proof}
Without loss of generality we assume that $\mathcal U$ is open. We exhaust $\mathcal U$
by open subsets
$$
\mathcal U=\bigcup_{j\geq 0} \mathcal U_j,\quad
\mathcal U_j\subset \mathcal U_{j+1},\quad
\overline{\mathcal U_j}\subset\mathcal U.
$$
For instance, we may take $\mathcal U_j$ to be the set of all points $\rho\in S^*M$
such that the closed ball $\overline B(\rho,{1\over j})$ is contained in~$\mathcal U$.

We argue by contradiction, assuming that neither of the sets $\mathcal U_j$ is $(L_0,L_1)$-dense
in the unstable direction. Then there exists a sequence of unstable intervals
$\gamma_j:[0,L_0]\to S^*M$ such that for each $j$ and each subinterval $J\subset [0,L_0]$
of length~$L_1$, we have $\gamma_j(J)\not\subset \mathcal U_j$. 
Passing to a subsequence,
we may assume that $\gamma_j$ converges uniformly to some unstable interval $\gamma:[0,L_0]\to S^*M$. 
Since $\mathcal U$ is $(L_0,L_1)$-dense in the unstable direction, there exists
a subinterval $J\subset I$ of length~$L_1$ such that
$\gamma(J)\subset\mathcal U$. 
Then for $j$ large enough, $\gamma_j(J)\subset \mathcal U_j$,
giving a contradiction.
\end{proof}
%%%%%%%%%%%%%%%%%%%%%%%%%%%%%%%%%%%%%%%%%%%%%%%%%%%%%%%%%%%%%%%%%%%%%%%%%%%%%%%%

%%%%%%%%%%%%%%%%%%%%%%%%%%%%%%%%%%%%%%%%%%%%%%%%%%%%%%%%%%%%%%%%%%%%%%%%%%%%%%%%
%%%%%%%%%%%%%%%%%%%%%%%%%%%%%%%%%%%%%%%%%%%%%%%%%%%%%%%%%%%%%%%%%%%%%%%%%%%%%%%%
\section{Proofs of the theorems}
  \label{s:proofs-of-theorems}

In this section we prove Theorems~\ref{t:eig} and~\ref{t:dwe}. We follow
the strategy used in~\cite{meassupp,JinDWE} in the case of constant curvature
(which in turn was partially inspired by~\cite{AnAnn}).
The main difference
is the proof of the key fractal uncertainty estimate (Proposition~\ref{l:longdec-0}).

In~\S\S\ref{s:proofs-notation}--\ref{s:key-estimate} we provide notation and statements used
in the proofs of both theorems. The proof of Theorem~\ref{t:eig} is presented
in~\S\ref{s:proofs-1}. In~\S\ref{s:proofs-2} we prove Theorem~\ref{t:dwe},
using some parts of~\S\ref{s:proofs-1} as well.

%%%%%%%%%%%%%%%%%%%%%%%%%%%%%%%%%%%%%%%%%%%%%%%%%%%%%%%%%%%%%%%%%%%%%%%%%%%%%%%%
\subsection{Notation}
  \label{s:proofs-notation}

We first introduce some notation used throughout the rest of the paper.
Let $M$ be a compact connected Anosov surface, see~\S\ref{s:hyperbolics}.
Fix a Riemannian metric on $S^*M$
inducing a distance function $d(\bullet,\bullet)$.
We assume that:
%%%%%%%%%%%%%%%%%%%%%%%%%%%%%%%%%%%%%%%%%%%%%%%%%%%%%%%%%%%%%%%%%%%%%%%%%%%%%%%%
\begin{enumerate}
\item we are given $h$-independent functions
$a_1,a_\star\in \CIc(T^*M\setminus 0)$ with%
\footnote{The choice of $1,\star$ for indices will become clear later
in~\S\ref{s:refined-partition} where we write $a_\star=a_2+a_3+\dots$}
$$
\supp a_1,\supp a_\star\subset \{\textstyle{1\over 4}<|\xi|_g<4\},\quad
a_1,a_\star\geq 0,\quad
a_1+a_\star\leq 1;
$$
\item $\supp a_1\subset \mathcal V_1$, $\supp a_\star\subset \mathcal V_\star$
where $\mathcal V_1,\mathcal V_\star\subset T^*M\setminus 0$ are some conic open sets;
\item the complements $T^*M\setminus \mathcal V_1,T^*M\setminus \mathcal V_\star$
have nonempty interiors;
\item the diameter of $\mathcal V_1\cap S^*M$ with respect to $d(\bullet,\bullet)$ is smaller than some
 constant $\varepsilon_0>0$ to be fixed later; as a consequence, $\mathcal V_\star\cap S^*M$ will cover a large part of $S^*M$
\item we are given $A_1,A_\star\in\Psi^{-\infty}_h(M)$
with $\sigma_h(A_w)=a_w$, $\WFh(A_w)\subset \mathcal V_w\cap \{{1\over 4}<|\xi|_g<4\}$, $w\in \{1,\star\}$.
\end{enumerate}
%%%%%%%%%%%%%%%%%%%%%%%%%%%%%%%%%%%%%%%%%%%%%%%%%%%%%%%%%%%%%%%%%%%%%%%%%%%%%%%%
The specific functions $a_1,a_\star$ used in the proof of Theorem~\ref{t:eig}
are fixed in~\S\ref{s:partition} below.
Roughly speaking, $a_1,a_\star$ will form a partition of unity on $S^*M$,
$a_1$ will be supported on the region $\{a\neq 0\}$,
where $a$ is the symbol featured in Theorem~\ref{t:eig}, and
$a_\star$ will be supported near the complement of this region.
The proof of Theorem~\ref{t:dwe}
uses a damped version of these functions, see~\S\ref{s:damped-partition}.
The fact that the complements $T^*M\setminus\mathcal V_1,T^*M\setminus\mathcal V_\star$
have nonempty interiors is used in~\S\ref{s:porosity-ultimate}.

We next introduce dynamically refined symbols corresponding to words,
using the geodesic flow $\varphi_t$ defined in~\eqref{e:phi-def}.
Define
$$
\mathscr A_\star:=\{1,\star\},\quad
\mathscr A_\star^\bullet:=\{\mathbf w=w_0\dots w_{n-1}\mid n\geq 0,\ w_0,\dots,w_{n-1}\in\mathscr A_\star\}.
$$
We call elements of $\mathscr A_\star^\bullet$ \emph{words}.
Denote by $\mathscr A_\star^n\subset \mathscr A_\star^\bullet$ the set of words of length $n$.
We write $|\mathbf v|:=n$ for $v\in\mathscr A_\star^n$.

For each word $\bv=v_0\dots v_{n-1}$, resp. $\mathbf w=w_1\dots w_{n}$, define the functions
\begin{equation}
  \label{e:a-pm-def}
a^-_{\bv}:=\prod_{j=0}^{n-1}(a_{v_j}\circ\varphi_j),\quad
a^+_{\bw}:=\prod_{j=1}^{n}(a_{w_j}\circ\varphi_{-j}).
\end{equation}
Note the different indexing for $\mathbf v$ and $\mathbf w$ which makes sure
that the product $a^-_{\mathbf v}a^+_{\mathbf w}$ has only one factor of the form
$a_w\circ\varphi_0$, $w\in \{1,\star\}$.
The supports of $a^-_{\mathbf v},a^+_{\mathbf w}$ are contained in
the open conic sets
\begin{equation}
\label{e:V+-}
\mathcal V^-_{\bv}:=\bigcap_{j=0}^{n-1}\varphi_{-j}(\mathcal V_{v_j}),\quad
\mathcal V^+_{\bw}:=\bigcap_{j=1}^{n}\varphi_{j}(\mathcal V_{w_j}).
\end{equation}
The operators corresponding to $a^-_{\mathbf v},a^+_{\mathbf w}$ are
defined using the notation
$A(t):=U(-t)AU(t)$ 
from~\eqref{e:A-t-def}:
\begin{equation}
  \label{e:A-pm-def}
\begin{aligned}
A^-_{\bv}&:=A_{v_{n-1}}(n-1)A_{v_{n-2}}(n-2)\cdots A_{v_1}(1)A_{v_0}(0),\\
A^+_{\bw}&:=A_{w_1}(-1)A_{w_2}(-2)\cdots A_{w_{n-1}}(-(n-1))A_{w_{n}}(-n).
\end{aligned}
\end{equation}
If $n$ is bounded independently of~$h$ then Egorov's Theorem~\eqref{e:egorov-basic} implies
\begin{equation}
  \label{e:cq-basic}
A^-_{\mathbf v}=\Op_h(a^-_{\mathbf v})+\mathcal O(h)_{L^2\to L^2},\quad
A^+_{\mathbf w}=\Op_h(a^+_{\mathbf w})+\mathcal O(h)_{L^2\to L^2}.
\end{equation}
This is a form of \emph{classical/quantum correspondence}.

For future use we record the following \emph{concatenation formulas}:
if $\mathbf v=v_1\dots v_{k}$, $\mathbf w=w_1\dots w_\ell$, then
\begin{equation}
  \label{e:word-concat}
A^+_{\mathbf v\mathbf w}=U(k)A^-_{\overline{\mathbf v}}A^+_{\mathbf w}U(-k),\quad
A^-_{\mathbf v\mathbf w}=U(-k)A^-_{\mathbf w}A^+_{\overline{\mathbf v}}U(k)
\end{equation}
where the reverse word $\overline{\mathbf v}$ is defined
by $\overline{\mathbf v}:= v_{k}\dots v_1$. Similarly we have
\begin{align}
  \label{e:set-concat}
\mathcal V^+_{\mathbf v\mathbf w}=\varphi_k(\mathcal V^-_{\overline{\mathbf v}}\cap \mathcal V^+_{\mathbf w}),&\quad
\mathcal V^-_{\mathbf v\mathbf w}=\varphi_{-k}(\mathcal V^-_{\mathbf w}\cap \mathcal V^+_{\overline{\mathbf v}}),
\\
  \label{e:symbol-concat}
a^+_{\mathbf v\mathbf w}=(a^-_{\overline{\mathbf v}}a^+_{\mathbf w})\circ\varphi_{-k},&
\quad
a^-_{\mathbf v\mathbf w}=(a^-_{\mathbf w}a^+_{\overline{\mathbf v}})\circ\varphi_k.
\end{align}
In the particular case $\mathbf w=\emptyset$ we get the \emph{reversal formulas}
\begin{equation}
  \label{e:reversing}
A^+_{\mathbf v}=U(k)A^-_{\overline{\mathbf v}}U(-k),\quad
\mathcal V^+_{\mathbf v}=\varphi_k(\mathcal V^-_{\overline{\mathbf v}}),\quad
a^+_{\mathbf v}=a^-_{\overline{\mathbf v}}\circ\varphi_{-k}.
\end{equation}
If $\mathcal E\subset\mathscr A_\star^\bullet$ is a finite set, then we define
\begin{equation}
  \label{e:A-E-def}
a^\pm_{\mathcal E}:=\sum_{\mathbf w\in\mathcal E} a^\pm_{\mathbf w},\quad
A^\pm_{\mathcal E}:=\sum_{\mathbf w\in\mathcal E} A^\pm_{\mathbf w},
\end{equation}
and if $F:\mathscr A_\star^\bullet\to\mathbb C$ is zero except at finitely many words, then we put
\begin{equation}
  \label{e:A-F-def}
a^\pm_F:=\sum_{\mathbf w\in\mathscr A_\star^\bullet} F(\mathbf w)a^\pm_{\mathbf w},\quad
A^\pm_F:=\sum_{\mathbf w\in\mathscr A_\star^\bullet} F(\mathbf w)A^\pm_{\mathbf w}.
\end{equation}
Note that if $\mathcal E\subset\mathscr A_\star^n$ for some~$n$, then
$0\leq a^\pm_{\mathcal E}\leq 1$.

In the remainder of~\S\ref{s:proofs-of-theorems}
we will only use the operators $A_{\mathbf w}^-$. (This is an arbitrary choice~--
one could instead only use the operators $A_{\mathbf w}^+$.) To simplify notation,
we denote
$$
a_{\mathbf w}:=a_{\mathbf w}^-,\quad
A_{\mathbf w}:=A_{\mathbf w}^-,
$$
and same for $a_{\mathcal E},A_{\mathcal E},a_F,A_F$.

%%%%%%%%%%%%%%%%%%%%%%%%%%%%%%%%%%%%%%%%%%%%%%%%%%%%%%%%%%%%%%%%%%%%%%%%%%%%%%%%
\subsection{Long propagation times and the key estimate}
\label{s:key-estimate}

Similarly to~\cite{meassupp,JinDWE} our argument uses words of length that grows like $\log(1/h)$.
More precisely, we define the following integer propagation times:
\begin{equation}
  \label{e:prop-times}
N_0:=\Big\lceil{\log(1/h)\over 6\Lambda_1}\Big\rceil,\quad
N:=(6\Lambda+1)N_0 > {\log(1/h)\over\Lambda_0}
\end{equation}
where the `minimal/maximal expansion rates' $0<\Lambda_0\leq\Lambda_1$ were defined in~\eqref{e:Lambda-0-1} and $\Lambda:=\lceil \Lambda_1/\Lambda_0\rceil$.
We call $N_0$ a \emph{short logarithmic time} and $N$ a \emph{long logarithmic time}.
Note that if $(M,g)$ had constant curvature~$-1$ as in~\cite{meassupp} then we could take
$\Lambda_0=\Lambda_1=1$ and $N\approx{7\over 6}\log(1/h)$.

%%%%%%%%%%%%%%%%%%%%%%%%%%%%%%%%%%%%%%%%%%%%%%%%%%%%%%%%%%%%%%%%%%%%%%%%%%%%%%%%
\subsubsection{Short logarithmic words}
\label{s:short-words}

We first study words of length $N_0$, for which a version 
of the classical/quantum correspondence~\eqref{e:cq-basic} still applies.
We use the mildly exotic symbol classes introduced in~\S\ref{s:mildly-exotic}.
%%%%%%%%%%%%%%%%%%%%%%%%%%%%%%%%%%%%%%%%%%%%%%%%%%%%%%%%%%%%%%%%%%%%%%%%%%%%%%%%
\begin{lemm}
  \label{l:cq-log}
For each $\mathbf w\in\mathscr A_\star^{N_0}$, we have
\begin{equation}
  \label{e:cq-log-1}
a_{\mathbf w}\in S_{1/6+}^{\comp}(T^*M),\quad
A_{\mathbf w}=\Op_h(a_{\mathbf w})+\mathcal{O}(h^{2/3-})_{L^2\to L^2}.
\end{equation}
Moreover, for each $F:\mathscr A_\star^{N_0}\to\mathbb{C}$ with $\sup|F|\leq1$, we have
(using the notation~\eqref{e:A-F-def})
\begin{equation}
  \label{e:cq-log-2}
a_F\in S_{1/6+}^{\comp}(T^*M),\quad
A_F=\Op_h(a_F)+\mathcal{O}(h^{1/2-})_{L^2\to L^2}
\end{equation}
with the constant in the remainder independent of the function~$F$.
\end{lemm}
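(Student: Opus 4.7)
The strategy is to combine Lemma~\ref{l:egorov-mild} (Egorov's theorem up to the minimal Ehrenfest time, applied with $\delta=1/6$) with Lemma~\ref{l:log-product} (products of $O(\log(1/h))$ pseudodifferential operators). The short time $N_0$ is chosen precisely so that every $j\in\{0,\dots,N_0-1\}$ satisfies $j\leq\delta\Lambda_1^{-1}\log(1/h)$ with $\delta=1/6$; consequently, Lemma~\ref{l:egorov-mild} yields, uniformly in $j$ and in $w\in\{1,\star\}$,
\begin{equation*}
a_w\circ\varphi_j\in S^{\comp}_{1/6+}(T^*M),\qquad A_w(j)=\Op_h(a_w\circ\varphi_j)+\cO(h^{2/3-})_{L^2\to L^2},
\end{equation*}
with uniform $S^{\comp}_{1/6+}$-seminorms and remainders.

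Part~(1) then follows by feeding these uniform estimates into Lemma~\ref{l:log-product} with $N=N_0$ and $\delta=1/6+\epsilon$ for arbitrary $\epsilon>0$: this yields at once $a_{\mathbf w}=\prod_j(a_{w_j}\circ\varphi_j)\in S^{\comp}_{1/6+}$ and $A_{\mathbf w}=\prod_j A_{w_j}(j)=\Op_h(a_{\mathbf w})+\cO(h^{2/3-})_{L^2\to L^2}$. For the operator bound in part~(2), the triangle inequality applied to $A_F-\Op_h(a_F)=\sum_{\mathbf w}F(\mathbf w)(A_{\mathbf w}-\Op_h(a_{\mathbf w}))$ loses only the factor $|\mathscr A_\star^{N_0}|=2^{N_0}$; since $\Lambda_1\geq 1$ forces $2^{N_0}\leq 2h^{-(\log 2)/6}$ and $(\log 2)/6<1/6$, this gives $\cO(2^{N_0}h^{2/3-})=\cO(h^{1/2-})$ as required.

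The main obstacle is the symbol estimate $a_F\in S^{\comp}_{1/6+}$: a naive triangle inequality would cost the same factor $2^{N_0}$ at every derivative order, which is too expensive once $|\alpha|\geq 1$. I would instead expand $\partial^\alpha a_F$ by Leibniz's rule and perform a \emph{partial resummation} exploiting the product structure. Writing $f_j^w:=a_w\circ\varphi_j$,
\begin{equation*}
\partial^\alpha a_F=\sum_{\beta_0+\cdots+\beta_{N_0-1}=\alpha}\binom{\alpha}{\beta_0,\ldots,\beta_{N_0-1}}\sum_{\mathbf w\in\mathscr A_\star^{N_0}}F(\mathbf w)\prod_{j=0}^{N_0-1}\partial^{\beta_j}f_j^{w_j}.
\end{equation*}
For each decomposition $(\beta_j)$, let $S:=\{j:\beta_j\neq 0\}$, so $|S|\leq|\alpha|$. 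I would freeze the letters $(w_j)_{j\in S}$ and carry out the inner sum over $(w_j)_{j\notin S}$ \emph{first}: using $|F|\leq 1$, $a_1,a_\star\geq 0$ and $a_1+a_\star\leq 1$, that partial sum is bounded pointwise in modulus by $\prod_{j\notin S}(a_1+a_\star)(\varphi_j(x,\xi))\leq 1$. The $2^{|S|}\leq 2^{|\alpha|}$ remaining choices of $(w_j)_{j\in S}$, combined with the derivative bounds $|\partial^{\beta_j}f_j^{w_j}|\leq C_{\beta_j}h^{-|\beta_j|(1/6+\epsilon)}$ from Lemma~\ref{l:egorov-mild}(1), contribute at most $C_\alpha h^{-|\alpha|(1/6+\epsilon)}$. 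Summing over the $\leq N_0^{|\alpha|}$ decompositions and absorbing the logarithmic factor $N_0^{|\alpha|}$ into an extra $h^{-|\alpha|\epsilon'}$ yields $|\partial^\alpha a_F|\leq C'_\alpha h^{-|\alpha|(1/6+\epsilon'')}$ for arbitrary $\epsilon''>0$, hence $a_F\in S^{\comp}_{1/6+}$.
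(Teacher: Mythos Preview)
Your proposal is correct and follows essentially the same route as the paper. For part~(1) the paper also invokes Lemma~\ref{l:egorov-mild} with $\delta=1/6$ and then Lemma~\ref{l:log-product}; for the operator estimate in part~(2) it likewise sums the individual remainders and uses $2^{N_0}=\mathcal O(h^{-1/6})$ (your sharper exponent $(\log 2)/6$ is not needed); and for the symbol estimate on $a_F$ the paper performs the same Leibniz-type expansion and the same partial resummation over the ``undifferentiated'' letters $w_j$ (there phrased via vector fields $X_1,\dots,X_k$ and the factorization $\sum_{\mathbf w}\prod_j|\,\cdot\,|=\prod_j\sum_{w}|\,\cdot\,|$, which is exactly your ``freeze $(w_j)_{j\in S}$, sum over $(w_j)_{j\notin S}$'' step).
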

%%%%%%%%%%%%%%%%%%%%%%%%%%%%%%%%%%%%%%%%%%%%%%%%%%%%%%%%%%%%%%%%%%%%%%%%%%%%%%%%
\Remarks
1. The choice of index $\delta:={1\over 6}$ (which corresponds
to the factor ${1\over 6}$ in the definition of~$N_0$) was guided by the proof of Proposition~\ref{l:longdec-0}, yet it is somewhat arbitrary ~---
in practice one could probably replace ${1\over 6}$ 
by any $\delta\in (0,{1\over 2})$.

\noindent 2. Later we will prove much finer statements regarding the
propagation up to the \emph{local Ehrenfest time}~--- see~\S\ref{s:local-Ehr}-\ref{s:local-ehrenfest}.
It is possible to avoid the precise derivative bounds for $a_F$
by increasing the value of~$\delta$, as in~\cite[Lemma~4.4]{meassupp},
however the proof of these bounds below can seen as a basic case
of the more complicated bounds of~\S\ref{s:ehr-sum}.

%%%%%%%%%%%%%%%%%%%%%%%%%%%%%%%%%%%%%%%%%%%%%%%%%%%%%%%%%%%%%%%%%%%%%%%%%%%%%%%%
\begin{proof}
We write $\mathbf w=w_0\dots w_{N_0-1}$. By Lemma~\ref{l:egorov-mild} with $\delta:={1\over 6}$
we have uniformly in $j=0,\dots,N_0-1$
\begin{equation}
  \label{e:cq-log-int-1}
a_{w_j}\circ\varphi_j\in S_{1/6+}^{\comp}(T^*M),\quad
A_{w_j}(j)=\Op_h(a_{w_j}\circ\varphi_j)+\mathcal O(h^{2/3-})_{L^2\to L^2}.
\end{equation}
Now~\eqref{e:cq-log-1} follows from Lemma~\ref{l:log-product} with $\delta:={1\over 6}+\epsilon$
and $\epsilon>0$ arbitrarily small.

To establish bounds on $a_F$, we first note that
$\sup |a_F|\leq 1$ since $\sup |F|\leq 1$ and $|a_1|+|a_\star|=a_1+a_\star\leq 1$.
To prove bounds on derivatives, take arbitrary vector fields $X_1,\dots,X_k$
on $T^*M$. 
For a set $I\subset \{1,\dots,k\}$
define the differential operator
$$
X_I:=X_{i_1}\cdots X_{i_r}\quad\text{where}\quad
I=\{i_1,\dots,i_r\},\quad
i_1<\dots<i_r.
$$
By the product rule we have for all $\mathbf w\in \mathscr A_\star^{N_0}$
$$
X_1\dots X_k a_{\mathbf w}=\sum_{L\in\mathscr L} \prod_{j=0}^{N_0-1}X_{\mathcal I(L,j)}(a_{w_j}\circ\varphi_j).
$$
where the sum is over the set of sequences (with each $\ell_i$ encoding which of the factors of the product
defining $a_{\mathbf w}$ the vector field $X_i$ was applied to)
$$
\mathscr L:=\big\{L=(\ell_1,\dots,\ell_k)\mid \ell_1,\dots,\ell_k\in \{0,\dots,N_0-1\}\big\}
$$
and for $L\in\mathscr L$ and $j\in \{0,\dots,N_0-1\}$ we put
$$
\mathcal I(L,j):=\big\{i\in \{1,\dots,k\}\mid \ell_i=j\big\}.
$$
It follows that (with $\mathbf w$ summed over $\mathscr A_\star^{N_0}$)
$$
\begin{gathered}
|X_1\dots X_k a_{F}|\leq \sum_{L\in\mathscr L}\sum_{\mathbf w} \prod_{j=0}^{N_0-1}|X_{\mathcal I(L,j)}(a_{w_j}\circ\varphi_j)|=
\sum_{L\in\mathscr L}\prod_{j=0}^{N_0-1}\mathcal N(L,j)\\\quad\text{where}\quad
\mathcal N(L,j):=
\sum_{w\in \{1,\star\}}|X_{\mathcal I(L,j)}(a_{w}\circ\varphi_j)|.
\end{gathered}
$$
Fix arbitrary $\epsilon>0$. By~\eqref{e:cq-log-int-1} and since $|a_1|+|a_\star|\leq 1$ we have
for some constant $C$ depending only on $X_1,\dots,X_k,\epsilon$
$$
\begin{aligned}
\mathcal N(L,j)&\leq 1,&\quad\text{if}\quad \mathcal I(L,j)=\emptyset;\\
\mathcal N(L,j)&\leq Ch^{-(1/6+\epsilon)\#(\mathcal I(L,j))},&\quad\text{if}\quad\mathcal I(L,j)\neq\emptyset.
\end{aligned}
$$
For each $L\in\mathscr L$, we have $\sum_{j=0}^{N_0-1}\#(\mathcal I(L,j))=k$. Moreover, the set $\mathscr L$ has $N_0^k=\mathcal O(h^{0-})$
elements. It follows that
$$
\sup|X_1\dots X_k a_F|\leq Ch^{-(1/6+2\epsilon)k}
$$
which implies that $a_F\in S^{\comp}_{1/6+}(T^*M\setminus 0)$.

Finally, to show that $A_F=\Op_h(a_F)+\mathcal O(h^{1/2-})_{L^2\to L^2}$
it suffices to sum the second parts of~\eqref{e:cq-log-1} over $\mathbf w$ with coefficients
$F(\mathbf w)$ and use the counting bound $\#(\mathscr A_\star^{N_0})=2^{N_0}=\mathcal O(h^{-1/6})$
which holds since $\Lambda_1\geq 1$.
\end{proof}
%%%%%%%%%%%%%%%%%%%%%%%%%%%%%%%%%%%%%%%%%%%%%%%%%%%%%%%%%%%%%%%%%%%%%%%%%%%%%%%%
Lemma~\ref{l:cq-log} together with~\eqref{e:precise-norm} give
the norm bound
\begin{equation}
  \label{e:A-E-norm}
\|A_F\|_{L^2\to L^2}\leq 1+\mathcal O(h^{1/3-})\quad\text{for all}\quad
F:\mathscr A_\star^{N_0}\to\mathbb{C},\
\sup|F|\leq 1
\end{equation}
where the constant in the remainder is independent of~$F$.
This bound in particular applies to operators of the form
$A_{\mathbf w}$, $\mathbf w\in \mathscr A_\star^{N_0}$,
and more generally of the form $A_{\mathcal E}$ where $\mathcal E\subset \mathscr A_\star^{N_0}$.

%%%%%%%%%%%%%%%%%%%%%%%%%%%%%%%%%%%%%%%%%%%%%%%%%%%%%%%%%%%%%%%%%%%%%%%%%%%%%%%%
\subsubsection{Long logarithmic words}
\label{s:long-words}

We now study operators associated to words of length~$N$. 
The following key estimate is proved in~\S\ref{s:long-word-fup} below
using the fractal uncertainty principle
and the fact that the complements $T^*M\setminus\mathcal V_1,T^*M\setminus\mathcal V_\star$
have nonempty interior.
It implies that each operator
$A_{\mathbf w}$, where $\mathbf w\in\mathscr A_\star^N$,
has norm decaying with $h$.
%%%%%%%%%%%%%%%%%%%%%%%%%%%%%%%%%%%%%%%%%%%%%%%%%%%%%%%%%%%%%%%%%%%%%%%%%%%%%%%%
\begin{prop}
  \label{l:longdec-0}
Let the assumptions~(1)--(5) of~\S\ref{s:proofs-notation} hold and $\varepsilon_0$ be small enough
depending only on~$M$.
Then there exists $\beta>0$ depending only on $\mathcal V_1,\mathcal V_\star$
and there exists $C>0$ depending only on $A_1,A_\star$ such that
for all $\mathbf w\in \mathscr A_\star^N$
\begin{equation}
  \label{e:longdec-0}
\|A_{\mathbf w}\|_{L^2\to L^2}\leq Ch^\beta.
\end{equation}
\end{prop}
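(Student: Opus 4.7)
The plan is to reduce the norm bound to an application of the fractal uncertainty principle, Proposition~\ref{l:fup-2}. As a first step, I would split the word $\mathbf w$ of length $N$ in the middle via the concatenation formula~\eqref{e:word-concat}: writing $\mathbf w=\mathbf v_+\mathbf v_-$ with $|\mathbf v_\pm|\sim N/2$ and $k:=|\mathbf v_+|$, one has
$$
A_{\mathbf w}=A_{\mathbf w}^-=U(-k)\,A_{\mathbf v_-}^-A_{\overline{\mathbf v_+}}^+\,U(k).
$$
Since $U$ is unitary, the desired estimate reduces to bounding $\|A_{\mathbf v_-}^-A_{\overline{\mathbf v_+}}^+\|_{L^2\to L^2}$; the left factor has its symbol essentially supported in $\mathcal V^-_{\mathbf v_-}$, porous in the unstable direction by Lemma~\ref{l:porosity-basic}, and the right factor in $\mathcal V^+_{\overline{\mathbf v_+}}$, porous in the stable direction.

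Lemma~\ref{l:cq-log} describes partial products of length $\leq N_0\sim\log(1/h)/(6\Lambda_1)$ as mildly exotic pseudodifferential operators, which is far too short: the porous support in the unstable direction needs to reach the scale $e^{-\Lambda_0 N/2}\leq h^{1/2}$, well beyond the $h^{1/6}$ accessible via simple Egorov. To bridge this gap I would further subdivide each half-word into blocks of length $N_0$, insert a microlocal partition of $S^*M$ at scale $h^{1/6}$ between consecutive blocks, and apply the ``hyperbolic parametrix''/local Ehrenfest refinement developed in~\S\ref{s:ops-Aq} (Proposition~\ref{l:ehrenfest-prop}). This produces a decomposition $A_{\mathbf w}=\sum_q A_q$ indexed by trajectory tubes; each $A_q$ is, modulo semiclassically negligible errors, a Fourier integral operator quantizing a long geodesic arc composed with pseudodifferential cutoffs, and admits a precise wavefront description up to the \emph{local} Ehrenfest time of its base trajectory.

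Next, on each tube I would apply Lemma~\ref{l:stun-straight} to straighten the weak unstable foliation in local symplectic coordinates $(y_1,y_2,\eta_1,\eta_2)$, conjugating the analysis through the quantizations of $\varkappa_{\rho_0}$ as in~\S\ref{s:prelim-fio-s}. In these coordinates the support of the propagated left factor becomes a union of almost-horizontal sheets of total $\eta_1$-width $\cO(h)$ whose projection onto $y_1$ is $\nu$-porous on scales roughly from $h^{1/2}$ to $1$; the right factor projects similarly to an $\eta_1$-porous set. Using part~(8) of Lemma~\ref{l:stun-straight} to control non-horizontality, together with the Fourier localization result Proposition~\ref{l:fourloc-lag}, one may replace these factors by sharp cutoffs $\indic_{\Omega_-}$ in the $y_1$ variable and $\indic_{\Omega_+}$ in the $\eta_1$ variable up to negligible errors. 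The effective operator becomes of the form $\indic_{\Omega_-}\mathcal F_h\indic_{\Omega_+}$, tensored with benign operators in the transverse directions, and Proposition~\ref{l:fup-2} yields the bound $\cO(h^\beta)$.

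The main obstacle will be the second step, namely controlling the \emph{variable} Ehrenfest time. Because the expansion rates $J^u_n(\rho)$ depend on the base point, no single Egorov approximation works uniformly across phase space, so the decomposition must proceed trajectory by trajectory while only the $C^{3/2}$ regularity of the stable/unstable foliations is available. Ensuring that the $h^\beta$ gain from the FUP survives after summing over the polynomial-in-$1/h$ number of tubes, and that the errors arising from the non-smoothness of the foliations and from the approximations inherent in the almost-horizontal reduction remain subleading, is the delicate point. Part~(8) of Lemma~\ref{l:stun-straight}, which is precisely what the $C^{3/2}$ regularity provides, is what makes the reduction to the flat FUP work with a small enough remainder.
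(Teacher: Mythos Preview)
Your overall strategy---reduce to a product of forward and backward operators, refine using the local-Ehrenfest machinery of~\S\ref{s:ops-Aq}, straighten via Lemma~\ref{l:stun-straight}, and apply Proposition~\ref{l:fup-2}---is exactly the paper's. But your first step, splitting $\mathbf w$ \emph{in the middle}, is not what the paper does, and the difference is not cosmetic.

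The paper splits \emph{asymmetrically}: writing $\mathbf w=\overline{\mathbf w'}\mathbf v$ with $|\mathbf v|=N_0$ and $|\mathbf w'|=N_1=6\Lambda N_0$, one reduces to bounding $\|A^-_{\mathbf v}A^+_{\mathbf w'}\|$ (Proposition~\ref{l:longdec-1}). The short factor $A^-_{\mathbf v}$ is then a genuine pseudodifferential operator in $S^{\comp}_{1/6+}$ by Lemma~\ref{l:cq-log}, and this is precisely what resolves the obstacle you flag at the end. The long factor $A^+_{\mathbf w'}$ is refined and then partitioned into \emph{clusters}, each lying within $\mathcal O(h^{2/3})$ of a single weak unstable leaf (Lemma~\ref{l:cluster}). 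Clusters that are $\gtrsim h^{2/3}$ apart are almost orthogonal (Lemma~\ref{l:faror}) because the short factor, being a PDO oscillating at scale $h^{1/6}$, separates them: the key numerology is $h^{2/3}\cdot h^{1/6}\gg h$. Cotlar--Stein then reduces the sum over polynomially many clusters to a single-cluster estimate (Proposition~\ref{l:longdec-3}) with no loss. With your symmetric split both factors lie beyond the Ehrenfest time and neither is pseudodifferential, so there is no obvious replacement for this almost-orthogonality mechanism; your proposal does not supply one.

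The asymmetry also fixes the scales entering the FUP. The short factor yields porosity of $\Omega^-$ in $y_1$ on scales $h^{1/(6\Lambda)}$ to~$1$ (Lemma~\ref{l:poro-}); the long factor, after refinement to words with Jacobian $\sim h^{-\tau}$ where $\tau=1-\tfrac{1}{10\Lambda}$, yields porosity of $\Omega^+$ in $\eta_1$ on scales $h^\tau$ to~$1$ (Lemma~\ref{l:poro+}). The choice of $\tau$ guarantees $\tau+\tfrac{1}{6\Lambda}>1$, so Proposition~\ref{l:fup-2} applies with $\gamma>0$. Your heuristic ``roughly $h^{1/2}$ to $1$'' for both sides would sit at or near the borderline $\gamma=0$ and in any case cannot be made uniform over all base points because of the variable expansion rates.
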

%%%%%%%%%%%%%%%%%%%%%%%%%%%%%%%%%%%%%%%%%%%%%%%%%%%%%%%%%%%%%%%%%%%%%%%%%%%%%%%%
\Remarks 1. We note that $N$ is considerably larger than \emph{twice the maximal Ehrenfest time}
$\log(1/h)\over \Lambda_0$,
that is for all $\rho\in S^*M$ the norm $d\varphi_{N}(\rho)$ is much larger than $h^{-1}$.
Therefore the classical/quantum correspondence~\eqref{e:cq-basic} no longer applies to the operator $A_{\mathbf w}$, $\mathbf w\in\mathscr A_\star^N$. In fact
the norm bound~\eqref{e:longdec-0} contradicts this correspondence:
if $A_{\mathbf w}$ were a quantization of $a_{\mathbf w}$, then
we would expect the norm $\|A_{\mathbf w}\|$ to be close to $\sup |a_{\mathbf w}|$,
however in general we could have $\sup|a_{\mathbf w}|=1$ while~\eqref{e:longdec-0}
implies that $\|A_{\mathbf w}\|$ is small.

\noindent 2. In the constant curvature case a version of Proposition~\ref{l:longdec-0} is
proved in~\cite[Proposition~3.5]{meassupp}. We remark that~\cite{meassupp} considered
words of length $\approx 2\log(1/h)$, while here we study words of shorter length $N\approx {7\over 6}\log(1/h)$.
The factor ${7\over 6}$ was chosen for convenience in the proof of Proposition~\ref{l:longdec-0}; see~\S\ref{s:outline} below
and in particular~\eqref{e:almoster}, \eqref{e:almoster2}. We could probably have replaced this factor by any number in the interval $(1 , {3\over 2})$; yet we did not try to optimize the estimate in the proposition by varying this factor.

\noindent 3. Proposition~\ref{l:longdec-0} is formally similar to~\cite[Theorem~2.7]{AN07}
and~\cite[Theorem~1.3.3]{AnAnn}, as all these statements imply norm decay for operators
corresponding to words of long logarithmic length.
However~\cite{AN07,AnAnn} used a \emph{fine partition}
of $S^*M$, for which each symbol $a_{\mathbf w}$ in a thin neighbourhood of a single stable leaf
(see~\S\ref{s:refined-partition} below). On the contrary, the partition~\eqref{e:partitor} we use here is {\it not} fine,
in fact $\supp a_\star$ contains all of $S^*M$ except a small ball, and the supports of operators $a_{\mathbf w}$ typically have a complicated fractal structure.
As a result, the method of proof of Proposition~\ref{l:longdec-0} is very different
from those in~\cite{AN07,AnAnn}, it relies on the fractal uncertainty principle, which takes advantage of the ``fractality'' of $\supp a_{\mathbf w}$. A common point with the proofs in~\cite{AN07}, is that we will only use words of ``moderately long'' logarithmic length (e.g. in constant curvature words of length  $\sim {7\over 6}\log(1/h)$), instead of ``very long'' logarithmic length as in \cite{AnAnn}.

\noindent 4. Following the proof of Proposition~\ref{l:longdec-0} in~\S\ref{s:long-word-fup}
and using the remarks after Lemmas~\ref{l:poro+}--\ref{l:poro-}, we obtain the following statement:
if the complements $S^*M\setminus\mathcal V_1,S^*M\setminus \mathcal V_\star$ are $(L_0,L_1)$-dense
in both unstable and stable directions (in the sense of Definition~\ref{d:stun-dense})
then Proposition~\ref{l:longdec-0} holds for some $\beta$ depending only on~$(M,g),L_0,L_1$.

%%%%%%%%%%%%%%%%%%%%%%%%%%%%%%%%%%%%%%%%%%%%%%%%%%%%%%%%%%%%%%%%%%%%%%%%%%%%%%%%
\subsection{Proof of Theorem~\ref{t:eig}}
  \label{s:proofs-1}

We now prove Theorem~\ref{t:eig}, following the strategy of~\cite[\S\S3,4]{meassupp}.

%%%%%%%%%%%%%%%%%%%%%%%%%%%%%%%%%%%%%%%%%%%%%%%%%%%%%%%%%%%%%%%%%%%%%%%%%%%%%%%%
\subsubsection{Construction of the partition}
  \label{s:partition}

We first construct the functions $a_1,a_\star$ and the operators $A_1,A_\star$
satisfying the assumptions of~\S\ref{s:proofs-notation} and used in the proof of Theorem~\ref{t:eig}.

In addition to $A_1,A_\star$ we use an operator $A_0$
which cuts away from the cosphere bundle $S^*M$. More precisely we put
\begin{equation}
  \label{e:A-0-def}
\begin{gathered}
A_0:=\psi_0(-h^2\Delta)\quad\text{where $\psi_0\in C^\infty(\mathbb R;[0,1])$ satisfies}
\\
\supp\psi_0\cap [\textstyle{1\over 4},4]=\emptyset,\quad
\supp(1-\psi_0)\subset (\textstyle{1\over 16},16).
\end{gathered}
\end{equation}
By the functional calculus~\eqref{e:funcal} applied to $1-\psi_0$ we see that
\begin{equation}
  \label{e:A-0}
A_0\in \Psi^0_h(M),\quad
\sigma_h(A_0)=a_0:=\psi_0(|\xi|_g^2),\quad
\WFh(I-A_0)\subset \{\textstyle{1\over 4}<|\xi|_g<4\}.
\end{equation}
The functions $a_1,a_\star$ and the operators $A_1,A_\star$ are constructed in the following
lemma. Here we let $a$ be the function in the statement of
Theorem~\ref{t:eig} and $\varepsilon_0>0$ be small enough so that
Proposition~\ref{l:longdec-0} applies.
%%%%%%%%%%%%%%%%%%%%%%%%%%%%%%%%%%%%%%%%%%%%%%%%%%%%%%%%%%%%%%%%%%%%%%%%%%%%%%%%
\begin{lemm}
  \label{l:quantum-partition}
Let $a\in C^\infty(T^*M)$ satisfy $a|_{S^*M}\not\equiv 0$, and fix $\varepsilon_0>0$. Then there exist
$a_1,a_\star,A_1,A_\star$ such that conditions~(1)--(5) of~\S\ref{s:proofs-notation} hold
and moreover
\begin{enumerate}
\setcounter{enumi}{5}
\item $A_0,A_1,A_\star$ form a pseudodifferential partition of unity, namely $I=A_0+A_1+A_\star$. This in particular implies that $1=a_0+a_1+a_\star$;
\item if $\mathcal V_1\subset T^*M\setminus 0$ is the open conic set containing $\supp a_1$
introduced in~\S\ref{s:proofs-notation}, then $\mathcal V_1\cap S^*M\subset \{a\neq 0\}$.
\end{enumerate}
\end{lemm}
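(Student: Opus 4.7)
The plan is to construct the partition from a single bump around a point where $a$ is nonzero. First, I pick $\rho_0 \in S^*M$ with $a(\rho_0) \neq 0$, choose $r \in (0, \varepsilon_0/6)$ small enough that $a \neq 0$ on $B_d(\rho_0, 3r) \cap S^*M$, and fix a bump $\chi_1 \in C^\infty(S^*M; [0,1])$ with $\chi_1 \equiv 1$ on $B_d(\rho_0, r)$ and $\supp \chi_1 \subset B_d(\rho_0, 2r)$. I extend $\chi_1$ to $T^*M \setminus 0$ by homogeneity of degree zero. Define the open conic set
$$
\mathcal V_1 := \bigl\{(x,\xi) \in T^*M \setminus 0 : (x, \xi/|\xi|_g) \in B_d(\rho_0, 3r)\bigr\},
$$
so that $\mathcal V_1 \cap S^*M = B_d(\rho_0, 3r)$ has diameter $< \varepsilon_0$ and lies in $\{a \neq 0\}$; this takes care of conditions (2), (4), and (7) on $\mathcal V_1$.

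Next, assuming without loss of generality that $0 \leq \psi_0 \leq 1$, the function $1 - a_0 = 1 - \psi_0(|\xi|_g^2)$ takes values in $[0,1]$, is supported in $\{\tfrac14 < |\xi|_g < 4\}$, and equals $1$ on $S^*M$ (since $\psi_0(1)=0$). Set
$$
a_1 := \chi_1\,(1 - a_0), \qquad a_\star := (1 - \chi_1)(1 - a_0) = 1 - a_0 - a_1.
$$
These are $h$-independent, lie in $\CIc(T^*M \setminus 0)$, are supported in $\{\tfrac14 < |\xi|_g < 4\}$, satisfy $a_1, a_\star \geq 0$, $a_1 + a_\star \leq 1$, and $a_0 + a_1 + a_\star \equiv 1$. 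Since $a_1 \equiv 1$ on the cone over $B_d(\rho_0, r)$, $a_\star$ vanishes there, which allows me to take
$$
\mathcal V_\star := \bigl\{(x,\xi) \in T^*M \setminus 0 : (x, \xi/|\xi|_g) \notin \overline{B_d(\rho_0, r/2)}\bigr\}
$$
as the open conic set containing $\supp a_\star$; its complement in $S^*M$ is $\overline{B_d(\rho_0, r/2)}$, with nonempty interior. Conditions (1), (3) and the remaining parts of (4) now hold.

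The main obstacle is to produce quantizations satisfying (5) and (6) simultaneously: the identity $A_0 + A_1 + A_\star = I$ must hold \emph{exactly}, while $\WFh(A_\star) \subset \mathcal V_\star$ requires $A_\star$ to be microlocally trivial over the excluded ball $\overline{B_d(\rho_0, r/2)}$. The naive choice $A_1 := \Op_h(a_1)$, $A_\star := I - A_0 - A_1$ gives the correct principal symbols but leaves lower-order terms of $A_\star$ uncontrolled on the cone over $B_d(\rho_0, r/2)$. I would fix this by refining $A_1$ through Borel summation: starting from $A_1^{(0)} := \Op_h(a_1)$, inductively set $A_1^{(k)} := A_1^{(k-1)} + h^k \Op_h(b_k)$ with $b_k \in \CIc(T^*M \setminus 0)$ supported in $\{\chi_1 \equiv 1\} \cap \{\tfrac14 < |\xi|_g < 4\} \subset \mathcal V_1$ and chosen to cancel the $h^k$-coefficient in the full symbol of $I - A_0 - A_1^{(k-1)}$ on the cone over $B_d(\rho_0, r/2)$; at each step this is solvable because $a_0 + a_1 \equiv 1$ there, leaving no leading-order obstruction. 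A Borel sum of these corrections yields $A_1 \in \Psi^{-\infty}_h(M)$ with $\sigma_h(A_1) = a_1$, $\WFh(A_1) \subset \mathcal V_1 \cap \{\tfrac14 < |\xi|_g < 4\}$, and $I - A_0 - A_1 = \mathcal O(h^\infty)$ microlocally on the cone over $B_d(\rho_0, r/2)$. Setting $A_\star := I - A_0 - A_1 \in \Psi^{-\infty}_h(M)$ then gives $\sigma_h(A_\star) = a_\star$ and $\WFh(A_\star) \subset \mathcal V_\star \cap \{\tfrac14 < |\xi|_g < 4\}$, completing the verification of (1)--(7).
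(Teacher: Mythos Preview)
Your construction of the symbols $a_1,a_\star$ and the conic sets $\mathcal V_1,\mathcal V_\star$ matches the paper's exactly. The difference lies in how you achieve the \emph{exact} operator identity $A_0+A_1+A_\star=I$ together with the wavefront-set conditions.

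The paper avoids your Borel-summation iteration entirely. It first writes $I-A_0=\Op_h(b)+R$ with $R=\mathcal O(h^\infty)_{\Psi^{-\infty}}$ and with the \emph{full} $h$-dependent symbol $b$ supported in a compact set $K\subset\{\tfrac14<|\xi|_g<4\}$ (this is the content of the functional calculus applied to $(1-\psi_0)(-h^2\Delta)$). Then, rather than multiplying only the principal symbol $1-a_0$ by the partition $\chi_1,\chi_\star$, it multiplies $b$ itself: setting
\[
A_1:=\Op_h(\chi_1 b)+R,\qquad A_\star:=\Op_h(\chi_\star b),
\]
one gets $A_0+A_1+A_\star=A_0+\Op_h(b)+R=I$ exactly, while $\WFh(A_w)\subset\supp\chi_w\cap K\subset\mathcal V_w\cap\{\tfrac14<|\xi|_g<4\}$ is immediate from the support of the full symbol. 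The harmless remainder $R$ is simply absorbed into $A_1$.

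Your approach is correct: once the principal symbols match on the cone over $B_d(\rho_0,r)$, the remaining $h^k$-coefficients of the full symbol of $I-A_0-A_1^{(k-1)}$ can indeed be killed there by compactly supported corrections, and a Borel sum gives the desired $A_1$. But this route rediscovers, order by order, what the paper obtains in one stroke by working with the full symbol $b$ from the start. The paper's version is shorter and makes transparent why no iterative correction is needed: the partition of unity is applied at the level of full symbols, not principal symbols.
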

%%%%%%%%%%%%%%%%%%%%%%%%%%%%%%%%%%%%%%%%%%%%%%%%%%%%%%%%%%%%%%%%%%%%%%%%%%%%%%%%
\begin{proof}
We first choose a nonempty open conic set $\mathcal V_1\subset T^*M\setminus 0$ such that
$\mathcal V_1\cap S^*M\subset \{a\neq 0\}$, the diameter of $\mathcal V_1\cap S^*M$ is less
than $\varepsilon_0$, and the complement $T^*M\setminus \mathcal V_1$ has nonempty interior.
For instance, we can let $\mathcal V_1\cap S^*M$ be a small ball centered around
a point in $\{a\neq 0\}\cap S^*M$. We next choose another open conic set
$\mathcal V_\star\subset T^*M\setminus 0$ such that $T^*M\setminus \mathcal V_\star$ has nonempty interior
and   
\begin{equation}
  \label{e:partitor}
T^*M\setminus 0=\mathcal V_1\cup \mathcal V_\star.
\end{equation}
By~\eqref{e:A-0} we may write
$$
I-A_0=\Op_h(b)+R,\quad
R=\mathcal O(h^\infty)_{\Psi^{-\infty}}
$$
where the $h$-dependent symbol $b\in S^{-\infty}_h(T^*M)$ satisfies for some compact $h$-independent set~$K$
$$
\supp b\subset K\subset\{\textstyle{1\over 4}<|\xi|_g<4\},\quad
b=1-a_0+\mathcal O(h).
$$
By~\eqref{e:partitor} we see that $K\subset \widetilde{\mathcal V}_1\cup\widetilde{\mathcal V}_\star$
where $\widetilde{\mathcal V}_w:=\mathcal V_w\cap \{{1\over 4}<|\xi|_g<4\}$.
Take an $h$-independent partition of unity
$$
\chi_1\in \CIc(\widetilde{\mathcal V}_1;[0,1]),\quad
\chi_\star\in \CIc(\widetilde{\mathcal V}_\star;[0,1]),\quad
\chi_1+\chi_\star\equiv 1\text{ on }K
$$
and define
$$
A_1:=\Op_h(\chi_1 b)+R,\quad
A_\star:=\Op_h(\chi_\star b).
$$
Then the conditions~(1)--(7) hold, where the principal symbols $a_1,a_\star$
are given by
$a_1=\chi_1(1-a_0)$,
$a_\star=\chi_\star(1-a_0)$.
\end{proof}
%%%%%%%%%%%%%%%%%%%%%%%%%%%%%%%%%%%%%%%%%%%%%%%%%%%%%%%%%%%%%%%%%%%%%%%%%%%%%%%%
We now establish two corollaries of properties~(6)--(7) in Lemma~\ref{l:quantum-partition}.
First of all, since $A_1+A_\star=I-A_0$ commutes with $U(t)$, we see that (using the notation~\eqref{e:A-E-def})
\begin{equation}
\label{e:A-total}
A_{\mathscr A_\star^n}=(A_1+A_\star)^n=(I-A_0)^n\quad\text{for all }n\in\mathbb N.
\end{equation}
The proof of~\cite[Lemma~3.1]{meassupp} then implies that for all $n\in\mathbb N$ and $u\in H^2(M)$
\begin{equation}
\label{e:A-full-word}
\|u-A_{\mathscr A_\star^n}u\|_{L^2}\leq C\|(-h^2\Delta-I)u\|_{L^2}
\end{equation} 
where $C$ is a constant independent of $n$, $h$. In particular,
if $(-h^2\Delta-I)u=0$ then $u=A_{\mathscr A_\star^n}u$.

Secondly, since $\supp a_1\cap S^*M\subset \{a\neq 0\}$, the elliptic estimate~\cite[Lemma~4.1]{meassupp}
 implies that
for all $u\in H^2(M)$
\begin{equation}
  \label{e:control-1}
\|A_1u\|_{L^2}\leq C\|\Op_h(a)u\|_{L^2}+C\|(-h^2\Delta-I)u\|_{L^2}+Ch\|u\|_{L^2}.  
\end{equation}
In particular, $A_1u$ is \emph{controlled}, by which we mean that
it is bounded in terms of the right-hand side of~\eqref{e:eig}
and a remainder which goes to~0 as $h\to 0$. Later in Lemma~\ref{l:propagated-control}
we extend~\eqref{e:control-1} to the propagated operators $A_1(t)$.

We remark that if we additionally know that $\supp a_1\cap S^*M\subset \{|a|\geq 1\}$
then we may take the first constant $C$ on the right-hand side of~\eqref{e:control-1} to be equal to~2
(or in fact, any fixed number larger than~1).
This follows from the proof of~\cite[Lemma~4.1]{meassupp} together with the norm bound~\eqref{e:precise-norm}.

The rest of the proof consists of writing $u=A_{\mathcal X}u+A_{\mathcal Y}u$
(microlocally near $S^*M$, see~\eqref{e:decompost}),
with the operators $A_{\mathcal X},A_{\mathcal Y}$ defined in~\S\ref{s:con-long} below,
such that:
\begin{itemize}
\item $A_{\mathcal Y}u$ is controlled (the proof of this uses classical/quantum correspondence,
Lemma~\ref{l:cq-log}), and
\item $A_{\mathcal X}u$ is small (the proof of this uses the smallness of the norm
$\|A_{\mathcal X}\|_{L^2\to L^2}$ which follows from the key estimate, Lemma~\ref{l:longdec-0}).
\end{itemize}

%%%%%%%%%%%%%%%%%%%%%%%%%%%%%%%%%%%%%%%%%%%%%%%%%%%%%%%%%%%%%%%%%%%%%%%%%%%%%%%%
\subsubsection{Controlled short logarithmic words}
\label{s:controlled-short}

We now define the set of controlled words of length~$N_0$ (see~\eqref{e:prop-times}).
Following~\cite[\S3.2]{meassupp} we define the \emph{density function}
\begin{equation}
  \label{e:density-def}
F:\mathscr A_\star^{N_0}\to [0,1],\quad
F(w_0\dots w_{N_0-1})={\#\{j\in \{0,\dots,N_0-1\}\mid w_j=1\}\over N_0}.
\end{equation}
Fix small $\alpha\in(0,\frac{1}{2})$ to be chosen in \eqref{e:alpha-chosen} below, and define the controlled, resp. uncontrolled words in $\mathscr A_\star^{N_0}$:
\begin{equation}
  \label{e:Z-def}
\mathcal Z:=\{\bw\in \mathscr A_\star^{N_0}\mid F(\bw)\geq \alpha\},\quad
\mathcal Z^{\complement}=\{\bw\in \mathscr A_\star^{N_0}\mid F(\bw)<\alpha\}.
\end{equation}
Define the operator $A_{\mathcal Z}$ by~\eqref{e:A-E-def}.
Then $A_{\mathcal Z}u$ is estimated by the following
%%%%%%%%%%%%%%%%%%%%%%%%%%%%%%%%%%%%%%%%%%%%%%%%%%%%%%%%%%%%%%%%%%%%%%%%%%%%%%%%
\begin{lemm}
  \label{l:Z-control}
There exists a constant $C>0$ independent of $\alpha$ or $h$, such that for all $\alpha\in (0,\frac 12)$,
$h\in (0,1]$, and $u\in H^2(M)$ we have
\begin{equation}
  \label{e:AZ}
\|A_{\mathcal Z}u\|_{L^2}\leq
{C\over\alpha}\|\Op_h(a)u\|_{L^2}+{C\log(1/h)\over \alpha h}\|(-h^2\Delta-I)u\|_{L^2}
+\mathcal O(h^{1/4-})\|u\|_{L^2}
\end{equation}
where the constant in $\mathcal O(\bullet)$ depends on~$\alpha$ but not on $h,u$.
\end{lemm}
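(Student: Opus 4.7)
The plan is to compare $A_{\mathcal Z}$ to the density-weighted operator $A_F$ (in the notation of~\eqref{e:A-F-def}, with $F$ the density function from~\eqref{e:density-def}), and then exploit the very simple averaged structure of $A_F$.

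First I would prove the operator comparison $\alpha \|A_{\mathcal Z} u\|_{L^2} \leq \|A_F u\|_{L^2} + Ch^{1/4-}\|u\|_{L^2}$. The pointwise inequality $\alpha \mathbf 1_{\mathcal Z}(\mathbf w) \leq F(\mathbf w)$ together with $a_{\mathbf w} \geq 0$ gives $0 \leq \alpha a_{\mathcal Z} \leq a_F$ on $T^*M$, hence $a_F^2 - \alpha^2 a_{\mathcal Z}^2 \geq 0$. By Lemma~\ref{l:cq-log}, both $a_{\mathcal Z}$ and $a_F$ lie in $S^{\comp}_{1/6+}$ and $A_{\mathcal Z}, A_F$ equal their quantizations up to $\mathcal O(h^{1/2-})_{L^2\to L^2}$; composing in the mildly exotic calculus yields
\begin{equation*}
A_F^* A_F - \alpha^2 A_{\mathcal Z}^* A_{\mathcal Z} = \Op_h\big(a_F^2 - \alpha^2 a_{\mathcal Z}^2\big) + \mathcal O(h^{1/2-})_{L^2 \to L^2},
\end{equation*}
and the sharp G\r{a}rding inequality~\eqref{e:gaarding} applied to the nonnegative symbol then gives the claim after extracting a square root.

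Next I would rewrite $A_F$ explicitly. Writing $F(\mathbf w) = N_0^{-1}\sum_{j=0}^{N_0-1}\mathbf 1[w_j=1]$ and summing the free indices via $\sum_{w\in\{1,\star\}} A_w(i) = (I-A_0)(i) = I - A_0$ (using that $A_0 = \psi_0(-h^2\Delta)$ commutes with $U(t)$), one obtains
\begin{equation*}
A_F = \frac{1}{N_0}\sum_{j=0}^{N_0-1}(I-A_0)^{N_0-1-j}\,A_1(j)\,(I-A_0)^j.
\end{equation*}
Since $\psi_0$ takes values in $[0,1]$, the spectral theorem gives $\|(I-A_0)^k\|_{L^2\to L^2} \leq 1$, reducing the bound on $\|A_F u\|$ to a uniform bound on $\|A_1(j)(I-A_0)^j u\|_{L^2}$ for $0 \leq j < N_0$. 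Using unitarity of $U(j)$ and its commutation with $A_0$, I rewrite this as $\|A_1 \tilde u_j\|_{L^2}$ with $\tilde u_j := (I-A_0)^j U(j) u$, to which I apply the elliptic estimate~\eqref{e:control-1}.

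Among the three terms produced by~\eqref{e:control-1}, the bounds $\|\tilde u_j\|_{L^2} \leq \|u\|_{L^2}$ and $\|(-h^2\Delta - I)\tilde u_j\|_{L^2} \leq \|(-h^2\Delta - I) u\|_{L^2}$ are immediate, since $-h^2\Delta$ commutes with both $A_0$ and $U(j)$. The only delicate term is $\|\Op_h(a)\tilde u_j\|_{L^2}$, which I handle by showing $\tilde u_j = e^{-ij/h} u + r_j$ with $\|r_j\|_{L^2} \leq C(\log(1/h)/h)\|(-h^2\Delta - I)u\|_{L^2}$: this combines $\|(I-A_0)^j u - u\|_{L^2} \leq C\|(-h^2\Delta-I) u\|_{L^2}$ from~\eqref{e:A-full-word} with a Duhamel estimate $\|U(j) u - e^{-ij/h} u\|_{L^2} \leq C(j/h)\|(-h^2\Delta-I)u\|_{L^2}$ (based on $(P-I)u = g(-h^2\Delta)(-h^2\Delta - I)u$ where $\psi_P(\lambda) - 1 = (\lambda-1)g(\lambda)$), together with $j \leq N_0 \lesssim \log(1/h)$ and $\|\Op_h(a)\|_{L^2\to L^2} = \mathcal O(1)$. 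Assembling the three pieces and absorbing the $\mathcal O(h^{1/4-}/\alpha)\|u\|$ from the operator comparison and the $Ch\|u\|/\alpha$ from~\eqref{e:control-1} into the final $\mathcal O(h^{1/4-})\|u\|$ remainder (with $\alpha$-dependent constant) yields~\eqref{e:AZ}. The main technical point I anticipate is the operator comparison, namely verifying that the symbol inequality $a_F^2 \geq \alpha^2 a_{\mathcal Z}^2$ transfers to the operator level through the mildly exotic $S^{\comp}_{1/6+}$-calculus with composition and sharp G\r{a}rding losses that stay below $h^{1/4-}$; this is possible because $\delta = 1/6+ < 1/2$.
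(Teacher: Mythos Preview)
Your proof is correct and follows essentially the same route as the paper's: the paper packages the operator comparison step as a separate almost-monotonicity lemma (Lemma~\ref{l:almost-monotone}) and the propagation estimate as Lemma~\ref{l:propagated-control}, but these are exactly what you prove inline, and your rewriting of $A_F$ via $(I-A_0)^k = (A_1+A_\star)^k$ is identical to the paper's. The only cosmetic difference is that the paper first replaces $(A_1+A_\star)^j u$ by $u$ via~\eqref{e:A-full-word} and then bounds $\|A_1(j)u\|$, whereas you carry $\tilde u_j = (I-A_0)^j U(j)u$ through~\eqref{e:control-1} and split off $e^{-ij/h}u$ afterward; the estimates involved are the same.
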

%%%%%%%%%%%%%%%%%%%%%%%%%%%%%%%%%%%%%%%%%%%%%%%%%%%%%%%%%%%%%%%%%%%%%%%%%%%%%%%%
To prove Lemma~\ref{l:Z-control} we use the following almost monotonicity
property:
%%%%%%%%%%%%%%%%%%%%%%%%%%%%%%%%%%%%%%%%%%%%%%%%%%%%%%%%%%%%%%%%%%%%%%%%%%%%%%%%
\begin{lemm}
  \label{l:almost-monotone}
Assume that the functions $F_1,F_2:\sA_\star^{N_0}\to \mathbb C$ satisfy
$$
|F_1(\mathbf w)|\leq F_2(\mathbf w)\leq 1\quad\text{for all}\quad \mathbf w\in \sA_\star^{N_0}.
$$
Then for all $u\in L^2(M)$ we have (using the notation~\eqref{e:A-F-def})
\begin{equation}
  \label{e:almost-monotone}
\|A_{F_1} u\|_{L^2}\leq \|A_{F_2} u\|_{L^2}+ Ch^{1/4-}\|u\|_{L^2}
\end{equation}
where the constant $C$ is independent of $F_1,F_2,h,u$.
\end{lemm}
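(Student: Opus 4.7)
The plan is to reduce the inequality to a statement about principal symbols and then apply the sharp G\r arding inequality. The key observation is that, because each $a_{\mathbf w}=a_{\mathbf w}^-$ is a product of nonnegative functions, we have the pointwise domination
$$
|a_{F_1}(x,\xi)|\;\leq\;\sum_{\mathbf w\in\mathscr A_\star^{N_0}}|F_1(\mathbf w)|\,a_{\mathbf w}(x,\xi)
\;\leq\;\sum_{\mathbf w\in\mathscr A_\star^{N_0}}F_2(\mathbf w)\,a_{\mathbf w}(x,\xi)\;=\;a_{F_2}(x,\xi),
$$
so the auxiliary symbol $b:=a_{F_2}^2-|a_{F_1}|^2$ is nonnegative on $T^*M$.

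First I would invoke Lemma~\ref{l:cq-log} to write $A_{F_j}=\Op_h(a_{F_j})+R_j$ with $R_j=\mathcal O(h^{1/2-})_{L^2\to L^2}$ and $a_{F_j}\in S^{\comp}_{1/6+}(T^*M)$, $j=1,2$. Using the norm bound \eqref{e:A-E-norm} together with the mildly exotic product formula (cf.\ Lemma~\ref{l:log-product} with $\delta={1\over 6}+$, which gives a remainder of order $h^{2/3-}$ for pointwise products of two $S^{\comp}_{1/6+}$ symbols), I would then obtain
$$
A_{F_j}^*A_{F_j}\;=\;\Op_h(|a_{F_j}|^2)+\mathcal O(h^{1/2-})_{L^2\to L^2},\qquad j=1,2,
$$
and, subtracting,
$$
A_{F_2}^*A_{F_2}-A_{F_1}^*A_{F_1}\;=\;\Op_h(b)+\mathcal O(h^{1/2-})_{L^2\to L^2},\qquad b=a_{F_2}^2-|a_{F_1}|^2\in S^{\comp}_{1/6+}(T^*M).
$$

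Since $b\geq 0$ and $b\in S^{\comp}_{1/6+}$, the sharp G\r arding inequality \eqref{e:gaarding} with $\delta={1\over 6}+\epsilon$ yields
$$
\Re\bigl\langle \Op_h(b)u,u\bigr\rangle\;\geq\;-Ch^{2/3-}\|u\|_{L^2}^2.
$$
Combined with the $\mathcal O(h^{1/2-})$ remainder above (which dominates), pairing against $u$ and taking real parts gives
$$
\|A_{F_2}u\|_{L^2}^2-\|A_{F_1}u\|_{L^2}^2\;\geq\;-Ch^{1/2-}\|u\|_{L^2}^2.
$$
Taking square roots (via $\sqrt{X+Y}\leq\sqrt{X}+\sqrt{Y}$ for $X,Y\geq 0$) then produces \eqref{e:almost-monotone} with the stated exponent $h^{1/4-}$.

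No step in this argument is genuinely delicate: the only point to verify carefully is that the constants in the $\mathcal O(h^{1/2-})$ remainders and in the G\r arding inequality depend only on $S^{\comp}_{1/6+}$ seminorms of $a_{F_1},a_{F_2}$, and Lemma~\ref{l:cq-log} (whose proof provides uniform seminorm bounds independent of $F_1,F_2$ when $|F_j|\leq 1$) guarantees exactly this uniformity. In particular the constant $C$ in \eqref{e:almost-monotone} is independent of the particular functions $F_1,F_2$.
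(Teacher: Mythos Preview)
Your proof is correct and follows essentially the same approach as the paper: both arguments write $A_{F_2}^*A_{F_2}-A_{F_1}^*A_{F_1}=\Op_h(b)+\mathcal O(h^{1/2-})$ via Lemma~\ref{l:cq-log}, observe that $b=a_{F_2}^2-|a_{F_1}|^2\geq 0$ from the pointwise domination $|a_{F_1}|\leq a_{F_2}$, apply the sharp G\r arding inequality, and take square roots. Your write-up is slightly more explicit about the product/adjoint remainders, but the structure and the key ingredients are the same.
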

%%%%%%%%%%%%%%%%%%%%%%%%%%%%%%%%%%%%%%%%%%%%%%%%%%%%%%%%%%%%%%%%%%%%%%%%%%%%%%%%
\begin{proof}
We have 
$$
\|A_{F_2} u\|^2- \|A_{F_1} u\|^2 = \langle Bu,u\rangle
\quad\text{where}\quad
B:=A_{F_2}^* A_{F_2} - A_{F_1}^* A_{F_1}.
$$
By Lemma~\ref{l:cq-log} the operator $B$ is pseudodifferential:
$$
B=\Op_h(b)+\mathcal O(h^{1/2-})_{L^2\to L^2}\quad\text{where}\quad
b:=|a_{F_2}|^2-|a_{F_1}|^2\in S^{\comp}_{1/6+}(T^*M).
$$
From the positivity of the symbols $a_{\mathbf w}$, we deduce that 
$$
\Big|\sum_{\bw}F_1(\bw) a_{\bw}\Big|\leq \sum_{\bw}|F_1(\bw)| a_{\bw}\leq \sum_{\bw}F_2(\bw) a_{\bw},
$$
or in short $|a_{F_1}|\leq a_{F_2}$, which implies that $b\geq 0$.
By the G\r arding inequality~\eqref{e:gaarding} we have for all $\epsilon>0$
$$
\langle Bu,u\rangle\geq -C_\epsilon h^{1/2-\epsilon}\|u\|_{L^2}^2
$$
which gives $\|A_{F_1}u\|_{L^2}^2\leq \|A_{F_2}u\|_{L^2}^2+C_\epsilon h^{1/2-\epsilon}\|u\|_{L^2}^2$,
implying~\eqref{e:almost-monotone}.
\end{proof}
%%%%%%%%%%%%%%%%%%%%%%%%%%%%%%%%%%%%%%%%%%%%%%%%%%%%%%%%%%%%%%%%%%%%%%%%%%%%%%%%
We also use the following control bound on $A_1(t)u$
which is obtained from~\eqref{e:control-1} using that $\|U(t)u-e^{-it/h}u\|_{L^2}\leq C{|t|\over h} \|(-h^2\Delta-I)u\|_{L^2}$ (see~\cite[Lemma 4.3]{meassupp} for details):
%%%%%%%%%%%%%%%%%%%%%%%%%%%%%%%%%%%%%%%%%%%%%%%%%%%%%%%%%%%%%%%%%%%%%%%%%%%%%%%%
\begin{lemm}
  \label{l:propagated-control}
For all $t\in \mathbb R$ and $u\in H^2(M)$, we have
\begin{equation}
  \label{e:propagatedc}
\|A_1(t)u\|_{L^2}\leq C\|\Op_h(a) u\|_{L^2}
+{C\langle t\rangle\over h}\|(-h^2\Delta-I)u\|_{L^2}
+Ch\|u\|_{L^2}
\end{equation}
where $\langle t\rangle :=\sqrt{1+t^2}$ and
the constant $C$ is independent of $t$ and $h$.
\end{lemm}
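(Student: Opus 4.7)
The plan is to apply the unpropagated control inequality \eqref{e:control-1} to $U(t)u$ in place of $u$, then translate the resulting right-hand side back to quantities involving $u$ itself. Since $U(-t)$ is unitary we have $\|A_1(t)u\|_{L^2}=\|A_1U(t)u\|_{L^2}$, and \eqref{e:control-1} yields
\begin{equation*}
\|A_1(t)u\|_{L^2}\leq C\|\Op_h(a)U(t)u\|_{L^2}+C\|(-h^2\Delta-I)U(t)u\|_{L^2}+Ch\|U(t)u\|_{L^2}.
\end{equation*}
Two of these three terms are immediate. First, $\|U(t)u\|_{L^2}=\|u\|_{L^2}$ by unitarity, handling the last summand. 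Second, since $P=\psi_P(-h^2\Delta)$ is a function of the Laplacian, so is $U(t)=\exp(-itP/h)$; hence $U(t)$ commutes with $-h^2\Delta$ and the middle term equals $C\|(-h^2\Delta-I)u\|_{L^2}$.

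The substantive step is replacing $\Op_h(a)U(t)u$ by $\Op_h(a)u$, and this is where the hinted quasimode approximation
\begin{equation*}
\|U(t)u-e^{-it/h}u\|_{L^2}\leq C|t|h^{-1}\|(-h^2\Delta-I)u\|_{L^2}
\end{equation*}
enters. To prove it I would write $\psi_P(\lambda)-\psi_P(1)=(\lambda-1)g(\lambda)$ with $g\in C^\infty(\mathbb{R})$ bounded (using that $\psi_P(1)=1$ by the normalization following \eqref{e:U-t-def}), so that the functional calculus gives $\|(P-I)u\|_{L^2}\leq C\|(-h^2\Delta-I)u\|_{L^2}$. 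Then from the identity
\begin{equation*}
e^{-itP/h}u-e^{-it/h}u=-\tfrac{it}{h}e^{-it/h}\Bigl(\int_0^1 e^{-its(P-I)/h}\,ds\Bigr)(P-I)u,
\end{equation*}
together with the fact that the bracketed integrand is unitary on $L^2(M)$, the claim follows. Combining it with the uniform $L^2$-boundedness of $\Op_h(a)\in\Psi^0_h(M)$ and the triangle inequality gives $\|\Op_h(a)U(t)u\|_{L^2}\leq \|\Op_h(a)u\|_{L^2}+C|t|h^{-1}\|(-h^2\Delta-I)u\|_{L^2}$.

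Putting the three bounds together and absorbing the order-one and $|t|h^{-1}$ contributions to $\|(-h^2\Delta-I)u\|_{L^2}$ into a single factor $C\langle t\rangle/h$ produces \eqref{e:propagatedc}. There is no real obstacle: everything reduces to elementary functional calculus, and the precise choice of $P=\psi_P(-h^2\Delta)$ made in \eqref{e:U-t-def} (rather than $\sqrt{-\Delta}$) was tailored to make exactly these manipulations legitimate away from the zero section.
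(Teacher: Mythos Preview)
Your argument is correct and follows essentially the same route as the paper's: apply the unpropagated bound~\eqref{e:control-1} to $U(t)u$, use commutation of $U(t)$ with functions of $-h^2\Delta$, and invoke the quasimode approximation $\|U(t)u-e^{-it/h}u\|_{L^2}\leq C|t|h^{-1}\|(-h^2\Delta-I)u\|_{L^2}$ (which the paper cites as~\cite[Lemma~4.3]{meassupp}) to trade $\Op_h(a)U(t)u$ for $\Op_h(a)u$. Your derivation of that approximation via the integral identity for $e^{-itP/h}-e^{-it/h}$ is exactly the standard one.
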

%%%%%%%%%%%%%%%%%%%%%%%%%%%%%%%%%%%%%%%%%%%%%%%%%%%%%%%%%%%%%%%%%%%%%%%%%%%%%%%%
\Remark Using the remark after~\eqref{e:control-1} and the proof of~\cite[Lemma~4.3]{meassupp},
we see that under the condition $\supp a_1\cap S^*M\subset \{|a|\geq 1\}$
we may take the first constant on the right-hand side of~\eqref{e:propagatedc} to
be equal to~2 (or in fact, any fixed number larger than~1).

We are now ready to finish
%%%%%%%%%%%%%%%%%%%%%%%%%%%%%%%%%%%%%%%%%%%%%%%%%%%%%%%%%%%%%%%%%%%%%%%%%%%%%%%%
\begin{proof}[Proof of Lemma~\ref{l:Z-control}]
By the definition~\eqref{e:Z-def}
of the set~$\mathcal Z$, the indicator function $\mathbf 1_{\mathcal Z}$ satisfies $0\leq \alpha\mathbf 1_{\mathcal Z}(\mathbf w)\leq F(\mathbf w)\leq 1$ for all $\mathbf w\in\mathscr A_\star^{N_0}$.
Thus by Lemma~\ref{l:almost-monotone}
\begin{equation}
  \label{e:AZ-AF}
\alpha\|A_{\mathcal Z}u\|_{L^2}\leq \|A_Fu\|_{L^2}
+\mathcal O(h^{1/4-})\|u\|_{L^2}.
\end{equation}
On the other hand, \eqref{e:density-def} together with~\eqref{e:A-total} gives the following formula for $A_F$:
$$
A_F={1\over N_0}\sum_{j=0}^{N_0-1}\sum_{\mathbf w\in \mathscr A_\star^{N_0},w_j=1}A_{\mathbf w}
={1\over N_0}\sum_{j=0}^{N_0-1}(A_1+A_\star)^{N_0-1-j}A_1(j)(A_1+A_\star)^j.
$$
Recall that $\|A_1+A_\star\|_{L^2\to L^2}\leq 1$ by Lemma~\ref{l:quantum-partition}. It follows that
$$
\|A_Fu\|_{L^2}\leq \max_{0\leq j<N_0}\|A_1(j)(A_1+A_\star)^ju\|_{L^2}.
$$
Since $\|A_1(j)\|_{L^2\to L^2}=\|A_1\|_{L^2\to L^2}\leq C$ and
$(A_1+A_\star)^j u-u$ can be estimated by~\eqref{e:A-full-word}, we get
$$
\|A_Fu\|_{L^2}\leq 
\max_{0\leq j\leq N_0}\|A_1(j)u\|_{L^2}
+C\|(-h^2\Delta-I)u\|_{L^2}.
$$
Estimating $A_1(j)u$ by Lemma~\ref{l:propagated-control} and using that $N_0=\mathcal O(\log(1/h))$, we get
\begin{equation}
  \label{e:AF}
\|A_Fu\|_{L^2}\leq C\|\Op_h(a)u\|_{L^2}+{C\log(1/h)\over h}\|(-h^2\Delta-I)u\|_{L^2}+Ch\|u\|_{L^2}.
\end{equation}
Combining~\eqref{e:AZ-AF} and~\eqref{e:AF}, we obtain~\eqref{e:AZ}.
\end{proof}
%%%%%%%%%%%%%%%%%%%%%%%%%%%%%%%%%%%%%%%%%%%%%%%%%%%%%%%%%%%%%%%%%%%%%%%%%%%%%%%%

%%%%%%%%%%%%%%%%%%%%%%%%%%%%%%%%%%%%%%%%%%%%%%%%%%%%%%%%%%%%%%%%%%%%%%%%%%%%%%%%
\subsubsection{Controlled long logarithmic words}
  \label{s:con-long}

The proof of Lemma~\ref{l:Z-control} used the monotonicity property, Lemma~\ref{l:almost-monotone},
which in turn relied on classical/quantum correspondence. Thus it only applied
to words of short logarithmic length $N_0$. On the other hand, Lemma~\ref{l:longdec-0}
only applies to words of long logarithmic length~$N=(6\Lambda+1)N_0$. To bridge the gap between the two, we
define the sets of uncontrolled, resp. controlled words of length~$N$ as follows:
\begin{equation}
  \label{e:XY-def}
\begin{aligned}
\mathscr A_\star^{N}&=\mathcal X\sqcup\mathcal Y,\\
\mathcal X&:=\{\mathbf w^{(1)}\dots \bw^{(6\Lambda+1)}\mid \bw^{(\ell)}\in\mathcal Z^{\complement}\quad\text{for all }\ell\},\\
\mathcal Y&:=\{\mathbf w^{(1)}\dots \bw^{(6\Lambda+1)}\mid \text{there exists $\ell$ such that }\bw^{(\ell)}\in\cZ\}
\end{aligned}
\end{equation}
where $\mathcal Z\subset \mathscr A_\star^{N_0}$ is defined in~\eqref{e:Z-def} and we view words in $\mathscr A_\star^{N}$ as concatenations
$\mathbf w^{(1)}\dots \mathbf w^{(6\Lambda+1)}$ with
$\mathbf w^{(1)},\dots,\mathbf w^{(6\Lambda+1)}\in \mathscr A_\star^{N_0}$.

Using previously established bound on controlled short logarithmic words, Lemma~\ref{l:Z-control},
we now estimate the contribution of controlled long logarithmic words:
%%%%%%%%%%%%%%%%%%%%%%%%%%%%%%%%%%%%%%%%%%%%%%%%%%%%%%%%%%%%%%%%%%%%%%%%%%%%%%%%
\begin{prop}
\label{l:Y-control} For all $u\in H^2(M)$
\begin{equation}
  \label{e:controlled-estimate}
\|A_{\mathcal Y}u\|_{L^2}\leq {C\over\alpha}\|\Op_h(a)u\|_{L^2}+{C\log(1/h)\over \alpha h}\|(-h^2\Delta-I)u\|_{L^2}
+\mathcal O(h^{1/4-})\|u\|_{L^2}
\end{equation}
where the constant $C$ does not depend on $\alpha,h,u$
and the constant in $\mathcal O(\bullet)$ depends on~$\alpha$ but not on $h,u$.
\end{prop}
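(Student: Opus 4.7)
The plan is to follow~\cite[\S3]{meassupp}: decompose $\mathcal Y$ according to which block is controlled, factor each piece so the unrestricted blocks collapse to a power of $I-A_0$, and reduce to Lemma~\ref{l:Z-control}. The decisive observation is that if we merely require the $\ell$-th block to lie in $\mathcal Z$ while leaving the remaining $6\Lambda$ blocks free to range over the full alphabet $\mathscr A_\star^{N_0}$, the sums over those blocks assemble into powers of $(I-A_0)^{N_0}$ which, being a function of $-h^2\Delta$, commutes with both $-h^2\Delta$ and the wave group $U(t)$. This bypasses the delicate problem of commuting $\Op_h(a)$ past a long product of propagated $A_{\mathcal Z^\complement}$'s, which would incur errors too large for the claimed $\mathcal O(h^{1/4-})$ remainder.

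Concretely, set $m:=6\Lambda+1$ and for $\ell\in\{1,\dots,m\}$ let $\mathcal E_\ell:=\{\mathbf w\in\mathscr A_\star^N\mid \mathbf w^{(\ell)}\in\mathcal Z\}$, so that $\mathcal Y=\bigcup_\ell\mathcal E_\ell$ (non-disjointly). Since $\mathbf 1_{\mathcal Y}\leq\sum_\ell \mathbf 1_{\mathcal E_\ell}\leq m$, applying Lemma~\ref{l:almost-monotone} to the normalized functions $\mathbf 1_{\mathcal Y}/m$ and $\sum_\ell \mathbf 1_{\mathcal E_\ell}/m$ yields
$$
\|A_{\mathcal Y}u\|_{L^2}\;\leq\;\sum_{\ell=1}^m\|A_{\mathcal E_\ell}u\|_{L^2}+\mathcal O(h^{1/4-})\|u\|_{L^2}.
$$
Summing independently over the blocks $\mathbf w^{(k)}$ with $k\neq\ell$ and using that $[A_{\mathscr A_\star^{N_0}}^-](t)=(I-A_0)^{N_0}$ for every $t$, we obtain the clean factorization
$$
A_{\mathcal E_\ell}\;=\;(I-A_0)^{(m-\ell)N_0}\,[A_{\mathcal Z}^-]((\ell-1)N_0)\,(I-A_0)^{(\ell-1)N_0}.
$$

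It thus suffices to bound $\|A_{\mathcal E_\ell}u\|_{L^2}$ for each fixed $\ell$. Introducing $u'':=(I-A_0)^{(\ell-1)N_0}u$ and $w:=U((\ell-1)N_0)u''$, the bound $\|I-A_0\|_{L^2\to L^2}\leq 1$ gives $\|A_{\mathcal E_\ell}u\|_{L^2}\leq \|A_{\mathcal Z}w\|_{L^2}$, and the proof reduces to applying Lemma~\ref{l:Z-control} to $w$ and transferring the three terms on its right-hand side back to $u$. The bounds $\|w\|_{L^2}\leq \|u\|_{L^2}$ and $\|(-h^2\Delta-I)w\|_{L^2}\leq \|(-h^2\Delta-I)u\|_{L^2}$ are immediate, since $I-A_0=(1-\psi_0)(-h^2\Delta)$ is a bounded function of $-h^2\Delta$ commuting with $U(t)$. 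For the $\Op_h(a)$ term, combining the quasi-mode identity $\|U(t)\psi-e^{-it/h}\psi\|_{L^2}\leq \tfrac{C|t|}{h}\|(-h^2\Delta-I)\psi\|_{L^2}$ (as used in the proof of Lemma~\ref{l:propagated-control}) with $\psi=u''$ and $t=(\ell-1)N_0=\mathcal O(\log(1/h))$, together with the functional-calculus commutator estimate $\|[\Op_h(a),(1-\psi_0)^n(-h^2\Delta)]\|_{L^2\to L^2}=\mathcal O(hn)$, gives
$$
\|\Op_h(a)w\|_{L^2}\;\leq\;\|\Op_h(a)u\|_{L^2}+\frac{C\log(1/h)}{h}\|(-h^2\Delta-I)u\|_{L^2}+\mathcal O(h\log(1/h))\|u\|_{L^2}.
$$

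Substituting these three estimates into Lemma~\ref{l:Z-control} applied to $w$, absorbing the $\mathcal O(h\log(1/h)/\alpha)$ contribution into the $\mathcal O(h^{1/4-})$ remainder for each fixed~$\alpha$, and summing the resulting bound over the $m=\mathcal O(1)$ values of $\ell$ produces~\eqref{e:controlled-estimate}. The main technical point is the commutator bound $\|[\Op_h(a),(1-\psi_0)^n(-h^2\Delta)]\|=\mathcal O(hn)$, which holds because $|\partial_t(1-\psi_0)^n(t)|\leq Cn$ on the support of $\psi_0'$; the factorization via the commuting spectral cut-off $(I-A_0)^n$ is what makes this error manageable, whereas commuting $\Op_h(a)$ past a genuine propagated product $\prod_k[A_{\mathcal Z^\complement}]((k{-}1)N_0)$ would produce a polylogarithmic error of order $\log(1/h)^2$, insufficient for the target.
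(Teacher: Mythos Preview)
Your argument has a genuine gap at the very first step. You invoke Lemma~\ref{l:almost-monotone} for the functions $\mathbf 1_{\mathcal Y}/m$ and $\sum_\ell \mathbf 1_{\mathcal E_\ell}/m$, but these are functions on $\mathscr A_\star^{N}$, not on $\mathscr A_\star^{N_0}$. Lemma~\ref{l:almost-monotone} is stated and proved only for words of length~$N_0$, and this restriction is essential: its proof rests on Lemma~\ref{l:cq-log}, which in turn requires the symbols $a_{\mathbf w}$ to lie in $S^{\comp}_{1/6+}$. For words of length $N=(6\Lambda+1)N_0$ the propagation time exceeds the Ehrenfest time (indeed $N>\log(1/h)/\Lambda_0$ by~\eqref{e:prop-times}), the symbols $a_{\mathbf w}$ are not in any $S^{\comp}_\delta$ class with $\delta<1/2$, and no G{\aa}rding-type argument is available. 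So the inequality $\|A_{\mathcal Y}u\|\leq \sum_\ell \|A_{\mathcal E_\ell}u\|+\mathcal O(h^{1/4-})\|u\|$ is unjustified.

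The fix is to make the decomposition \emph{disjoint} so that an exact operator identity replaces the monotonicity step. This is exactly what the paper does: set
\[
\mathcal Y_\ell:=\{\mathbf w^{(1)}\cdots\mathbf w^{(m)}\mid \mathbf w^{(\ell)}\in\mathcal Z,\ \mathbf w^{(\ell+1)},\dots,\mathbf w^{(m)}\in\mathcal Z^{\complement}\},
\]
so that $\mathcal Y=\bigsqcup_\ell \mathcal Y_\ell$ and hence $A_{\mathcal Y}=\sum_\ell A_{\mathcal Y_\ell}$ exactly, after which the triangle inequality is legitimate. The factorization then reads
\[
A_{\mathcal Y_\ell}=A_{\mathcal Z^\complement}(6\Lambda N_0)\cdots A_{\mathcal Z^\complement}(\ell N_0)\,A_{\mathcal Z}((\ell-1)N_0)\,(A_1+A_\star)^{(\ell-1)N_0},
\]
and one uses the norm bound $\|A_{\mathcal Z^\complement}\|\leq C$ from~\eqref{e:A-E-norm} (this is where Lemma~\ref{l:cq-log}, at the correct length~$N_0$, enters) together with~\eqref{e:A-full-word} for the rightmost factor. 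From that point on your treatment---reducing $A_{\mathcal Z}((\ell-1)N_0)u$ to $A_{\mathcal Z}u$ via the quasimode propagation identity and then applying Lemma~\ref{l:Z-control}---matches the paper's. Your commutator estimate for $[\Op_h(a),(1-\psi_0)^n(-h^2\Delta)]$ is correct but unnecessary once the disjoint decomposition is used.
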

%%%%%%%%%%%%%%%%%%%%%%%%%%%%%%%%%%%%%%%%%%%%%%%%%%%%%%%%%%%%%%%%%%%%%%%%%%%%%%%%
\begin{proof}
The set $\mathcal{Y}$ can naturally be split as follows:
$$
\mathcal Y=\bigsqcup_{\ell=1}^{6\Lambda+1}\mathcal Y_\ell,\quad
\mathcal Y_\ell := \{\mathbf w^{(1)}\dots \mathbf w^{(6\Lambda+1)}\mid
\mathbf w^{(\ell)}\in \mathcal Z,\quad
\mathbf w^{(\ell+1)},\dots,\mathbf w^{(6\Lambda+1)}\in\cZ^{\complement}\}.
$$
Accordingly, we may write (using~\eqref{e:A-total})
$$
A_{\mathcal{Y}}=\sum_{\ell=1}^{6\Lambda+1}A_{\mathcal Y_\ell},\quad A_{\mathcal Y_\ell}=
A_{\cZ^\complement}(6\Lambda N_0)\cdots A_{\mathcal{\cZ^\complement}}(\ell N_0)A_{\mathcal{Z}}((\ell-1)N_0)
(A_1+A_\star)^{(\ell-1)N_0}.
$$
We have $\|A_1+A_\star\|_{L^2\to L^2}\leq 1$ by Lemma~\ref{l:quantum-partition}
and $\|A_{\mathcal Z}\|,\|A_{\mathcal Z^{\complement}}\|_{L^2\to L^2}\leq C$ by~\eqref{e:A-E-norm}.
Moreover, $u-(A_1+A_\star)^{(\ell-1)N_0}u$ can be estimated by~\eqref{e:A-full-word}.
It follows that for all $\ell$
\begin{equation}
  \label{e:nailao-1}
\|A_{\mathcal Y_{\ell}}u\|_{L^2}\leq C\|A_{\mathcal Z}((\ell-1)N_0)u\|_{L^2}+C\|(-h^2\Delta-I)u\|_{L^2}.
\end{equation}
We now estimate 
\begin{equation}
  \label{e:nailao-2}
\begin{gathered}
\|A_{\mathcal Z}((\ell-1)N_0)u\|_{L^2} 
\leq \|A_{\mathcal Z}u\|_{L^2}+ {C\log(1/h)\over h}\|(-h^2\Delta-I)u\|_{L^2}\\
\leq
{C\over\alpha}\|\Op_h(a)u\|_{L^2}
+{C\log(1/h)\over\alpha h}\|(-h^2\Delta-I)u\|_{L^2}
+\mathcal O(h^{1/4-})\|u\|_{L^2}
\end{gathered}
\end{equation}
where the first inequality follows similarly to~\eqref{e:propagatedc} from~\cite[Lemma~4.2]{meassupp}
and the bound $N_0=\mathcal O(\log(1/h))$,
and the second inequality follows from Lemma~\ref{l:Z-control}.
Combining~\eqref{e:nailao-1} and~\eqref{e:nailao-2} we get the bound~\eqref{e:controlled-estimate}.
\end{proof}
%%%%%%%%%%%%%%%%%%%%%%%%%%%%%%%%%%%%%%%%%%%%%%%%%%%%%%%%%%%%%%%%%%%%%%%%%%%%%%%%
\Remarks
1. In passing from $\|A_{\mathcal Y_\ell}u\|_{L^2}$ to $\|A_{\mathcal Y}u\|_{L^2}$
we used the triangle inequality. Consequently the constant $C$ in~\eqref{e:controlled-estimate}
has a factor of $6\Lambda+1=N/N_0$. Thus it is important in our argument that
the ratio $N/N_0$, where $N_0$ is the time for which classical/quantum correspondence
applies and $N$ is the time for which fractal uncertainty principle gives decay
of $\|A_{\mathbf w}\|$, is bounded by an $h$-independent constant.

\noindent 2. Following the proofs of Lemma~\ref{l:Z-control} and Proposition~\ref{l:Y-control}
and using the remark after Lemma~\ref{l:propagated-control},
we see that under the condition $\supp a_1\cap S^*M\subset \{|a|\geq 1\}$
we may take the first constant $C$ on the right-hand side of~\eqref{e:controlled-estimate}
to be equal to~$4(6\Lambda+1)$. Here the extra factor of~2 comes from taking $C:=2$ in~\eqref{e:nailao-1};
in fact, we could take that factor to be any fixed number larger than~1.

%%%%%%%%%%%%%%%%%%%%%%%%%%%%%%%%%%%%%%%%%%%%%%%%%%%%%%%%%%%%%%%%%%%%%%%%%%%%%%%%
\subsubsection{Uncontrolled long words and end of the proof}
\label{s:uncon-long}

We can now finish the proof of Theorem~\ref{t:eig}. Take arbitrary $u\in H^2(M)$. We decompose
\begin{equation}
  \label{e:decompost}
u=(u-A_{\mathscr A_\star^N}u)+A_{\mathcal Y}u+A_{\mathcal X}u
\end{equation}
where $A_{\mathcal X},A_{\mathcal Y}$ are defined using the notation~\eqref{e:A-E-def}
and the decomposition~\eqref{e:XY-def}.

The first term can be estimated by \eqref{e:A-full-word} and the second term can be estimated by
Proposition~\ref{l:Y-control}, giving
\begin{equation}
  \label{e:dianhua-1}
\begin{aligned}
\|u\|_{L^2}&\leq {C\over\alpha}\|\Op_h(a)u\|_{L^2}+{C\log(1/h)\over\alpha h}\|(-h^2\Delta-I)u\|_{L^2}
\\&\quad\,+\|A_\mathcal X u\|_{L^2}+\mathcal O(h^{1/4-})\|u\|_{L^2}.
\end{aligned}
\end{equation}
To deal with the term $A_{\mathcal X}u$ we apply the key estimate,
Proposition~\ref{l:longdec-0}, to each individual $A_{\mathbf w}$ with $\mathbf w\in \mathcal X$
and use the triangle inequality. 
For that need the following counting lemma on the number of  elements in $\mathcal X$:
%%%%%%%%%%%%%%%%%%%%%%%%%%%%%%%%%%%%%%%%%%%%%%%%%%%%%%%%%%%%%%%%%%%%%%%%%%%%%%%%
\begin{lemm}
  \label{l:X-count}
There exists a constant $C>0$ depending on $\alpha,\Lambda_0,\Lambda_1$ but not on $h$, such that
\begin{equation}
\label{e:count-X}
\#(\mathcal X)\leq Ch^{-(\Lambda_0^{-1}+2)\alpha(1-\log\alpha)}.
\end{equation}
\end{lemm}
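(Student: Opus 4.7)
The plan is to carry out an elementary counting argument using a standard binomial/entropy bound, together with the explicit expressions for $N_0$ and $N$ in~\eqref{e:prop-times}.

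First I would unpack the definitions. Since the concatenation map
$(\mathbf w^{(1)},\ldots,\mathbf w^{(6\Lambda+1)})\mapsto \mathbf w^{(1)}\cdots \mathbf w^{(6\Lambda+1)}$
is a bijection from $(\mathscr A_\star^{N_0})^{6\Lambda+1}$ onto $\mathscr A_\star^{N}$, by~\eqref{e:XY-def} we have
$\mathcal X\simeq (\mathcal Z^\complement)^{6\Lambda+1}$,
hence $\#(\mathcal X)=\#(\mathcal Z^\complement)^{6\Lambda+1}$. Next, the set $\mathcal Z^\complement$ from~\eqref{e:Z-def} consists of binary words of length $N_0$ over $\{1,\star\}$ with fewer than $\alpha N_0$ letters equal to $1$, so
\begin{equation*}
\#(\mathcal Z^\complement)=\sum_{k=0}^{\lceil\alpha N_0\rceil-1}\binom{N_0}{k}.
\end{equation*}

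The main step is to estimate this partial binomial sum by the entropy bound. Since $\alpha<\tfrac12$, for every $k\leq \alpha N_0$ the factor $\alpha^k(1-\alpha)^{N_0-k}$ is bounded below by $\alpha^{\alpha N_0}(1-\alpha)^{(1-\alpha)N_0}$, so expanding $1=(\alpha+(1-\alpha))^{N_0}$ and keeping the terms with $k\leq \alpha N_0$ yields
\begin{equation*}
\#(\mathcal Z^\complement)\leq e^{N_0\,H(\alpha)},\qquad
H(\alpha):=-\alpha\log\alpha-(1-\alpha)\log(1-\alpha).
\end{equation*}
A one-line calculus argument (the function $\alpha\mapsto \alpha+(1-\alpha)\log(1-\alpha)$ is nonnegative on $[0,1)$) gives $H(\alpha)\leq \alpha(1-\log\alpha)$. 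Raising to the power $6\Lambda+1$ produces
\begin{equation*}
\#(\mathcal X)\leq e^{N\alpha(1-\log\alpha)},\qquad N=(6\Lambda+1)N_0.
\end{equation*}

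Finally I would convert the factor of $N$ in the exponent into a power of $h$. From~\eqref{e:prop-times} we have $N_0\leq \tfrac{1}{6\Lambda_1}\log(1/h)+1$, hence
\begin{equation*}
N\leq \frac{6\Lambda+1}{6\Lambda_1}\log(1/h)+(6\Lambda+1).
\end{equation*}
The definition $\Lambda=\lceil\Lambda_1/\Lambda_0\rceil$ gives $6\Lambda+1\leq 6\Lambda_1/\Lambda_0+7$, and using $\Lambda_1\geq 1$ we obtain
\begin{equation*}
\frac{6\Lambda+1}{6\Lambda_1}\leq \frac{1}{\Lambda_0}+\frac{7}{6\Lambda_1}\leq \Lambda_0^{-1}+2.
\end{equation*}
Combining these inequalities yields
$N\alpha(1-\log\alpha)\leq (\Lambda_0^{-1}+2)\alpha(1-\log\alpha)\log(1/h)+C_{\Lambda_0,\Lambda_1}\,\alpha(1-\log\alpha)$,
which gives~\eqref{e:count-X} with the constant $C$ depending only on $\alpha,\Lambda_0,\Lambda_1$.

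There is no real obstacle in this proof: it is a bookkeeping argument combining the multiplicative structure of $\mathcal X$, the standard binomial tail bound via entropy, and the definitions of $N_0,N,\Lambda$. The only point requiring attention is verifying that the prefactor $(6\Lambda+1)/(6\Lambda_1)$ does not exceed $\Lambda_0^{-1}+2$, which is where the normalization $\Lambda_1\geq 1$ enforced in~\S\ref{s:hyperbolics} is used.
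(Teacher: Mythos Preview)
Your proof is correct and follows essentially the same route as the paper's: reduce to $\#(\mathcal X)=\#(\mathcal Z^\complement)^{6\Lambda+1}$, bound the partial binomial sum by $e^{N_0 H(\alpha)}$, use $H(\alpha)\leq\alpha(1-\log\alpha)$, and convert $N$ into a power of $h$. The only minor differences are that the paper bounds the binomial tail via a ratio argument combined with Stirling's formula (picking up a harmless factor $\tfrac{1-\alpha}{1-2\alpha}$) where you use the cleaner Chernoff-type argument, and that you spell out explicitly the inequality $(6\Lambda+1)/(6\Lambda_1)\leq\Lambda_0^{-1}+2$ (using $\Lambda_1\geq 1$) which the paper leaves implicit in its final line.
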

%%%%%%%%%%%%%%%%%%%%%%%%%%%%%%%%%%%%%%%%%%%%%%%%%%%%%%%%%%%%%%%%%%%%%%%%%%%%%%%%
\begin{proof}
By definition, elements of $\mathcal X$ are concatenations of $6\Lambda+1$ words in $\cZC$, thus $\#(\mathcal X)=\#(\cZ^\complement)^{6\Lambda+1}$. 
Since $\mathcal Z^{\complement}$ consists of words $\mathbf w\in\mathscr A_\star^{N_0}$
such that less than $\alpha N_0$ letters of $\mathbf w$ are equal to~1, we have
$$
\#(\cZ^\complement)\leq \sum_{k=0}^{\lfloor\alpha N_0\rfloor}\binom{N_0}{k}.
$$
Since $\alpha<1/2$, we have for $k=0,1,\ldots,\lfloor\alpha N_0\rfloor-1$
$$
\binom{N_0}{k}=\frac{k+1}{N_0-k}\binom{N_0}{k+1}\leq\frac{\alpha N_0}{N_0-\alpha N_0}\binom{N_0}{k+1}=\frac{\alpha}{1-\alpha}\binom{N_0}{k+1}
$$
and thus
$$
\binom{N_0}{k}\leq\left(\frac{\alpha}{1-\alpha}\right)^{\lfloor\alpha N_0\rfloor-k}\binom{N_0}{\lfloor\alpha N_0\rfloor}.
$$
In particular,
$$
\#(\cZ^\complement)\leq\frac{1-\alpha}{1-2\alpha}\binom{N_0}{\lfloor\alpha N_0\rfloor}.
$$
Using Stirling's formula, we have
$$
\binom{N_0}{\lfloor\alpha N_0\rfloor}
=\frac{N_0!}{\lfloor\alpha N_0\rfloor!(N_0-\lfloor\alpha N_0\rfloor)!}
\leq C\exp(-(\alpha\log\alpha+(1-\alpha)\log(1-\alpha))N_0).
$$
Using the elementary inequality
$$
-(\alpha\log\alpha+(1-\alpha)\log(1-\alpha))\leq\alpha(1-\log\alpha)
$$
we see that
$$
\#(\mathcal X)=\#(\cZ^\complement)^{6\Lambda+1}
\leq Ch^{-(\Lambda_0^{-1}+2)\alpha(1-\log\alpha)}.\qedhere
$$
\end{proof}
%%%%%%%%%%%%%%%%%%%%%%%%%%%%%%%%%%%%%%%%%%%%%%%%%%%%%%%%%%%%%%%%%%%%%%%%%%%%%%%%
We are now ready to finish the proof of Theorem~\ref{t:eig}. Let $\beta>0$
be the constant from Proposition~\ref{l:longdec-0}. Fix $\alpha>0$ small enough so that 
\begin{equation}
\label{e:alpha-chosen}
(\Lambda_0^{-1}+2)\alpha(1-\log\alpha)\leq\frac{\beta}{2}.
\end{equation}
Combining Proposition~\ref{l:longdec-0} and Lemma~\ref{l:X-count} we get
$$
\|A_{\mathcal X}\|_{L^2\to L^2}\leq \#(\mathcal X)\cdot Ch^{\beta}\leq Ch^{\beta/2}
$$
which (assuming without loss of generality that $\beta<{1\over 2}$) together with~\eqref{e:dianhua-1} implies
for some constant $C$ depending only on $a$
$$
\|u\|_{L^2}\leq C\|\Op_h(a)u\|_{L^2}+{C\log(1/h)\over h}\|(-h^2\Delta-I)u\|_{L^2}
+Ch^{\beta/2}\|u\|_{L^2}.
$$
Taking $h$ small enough, we can remove the last term on the right-hand side,
giving Theorem~\ref{t:eig}.

\Remark Using the remarks after Propositions~\ref{l:longdec-0} and~\ref{l:Y-control}
we obtain the following statement: if $\supp a_1\cap S^*M\subset \{|a|\geq 1\}$
and the complements $S^*M\setminus\mathcal V_1,S^*M\setminus\mathcal V_\star$
are $(L_0,L_1)$-dense in both unstable and stable directions (in the sense
of Definition~\ref{d:stun-dense}) then the first constant
on the right-hand side of~\eqref{e:eig} depends only on $(M,g),L_0,L_1$.
In fact, we can take this constant to be
\begin{equation}
\label{e:terminator}
C:={4(6\Lambda+1)\over\alpha}
\end{equation}
where $\alpha$ satisfies~\eqref{e:alpha-chosen} and thus depends on the fractal uncertainty exponent~$\beta$.
(The factors~4 and~6 above can be improved but this does not improve the result significantly since
the known bounds on~$\beta$ are very small.) In particular, as $\beta\to 0$ the constant~$C$ from~\eqref{e:terminator}
behaves like $\beta^{-1}\log(1/\beta)$ times a constant depending only on the minimal/maximal expansion rates~$\Lambda_0,\Lambda_1$.

This gives Theorem~\ref{t:eig-quant} as follows. Take an open set $U\subset S^*M$
which is $(L_0,L_1)$-dense in both unstable and stable directions and has diameter
smaller than the constant $\varepsilon_0$ from Proposition~\ref{l:longdec-0}.
Using Lemma~\ref{l:dense-useful}, fix $U^\sharp$ compactly contained in $U$
which is also $(L_0,L_1)$-dense in both unstable and stable directions.
Choose
$$
a\in\CIc(T^*M;[0,1]),\quad
\supp a\cap S^*M\subset U,\quad
\supp (1-a)\cap U^\sharp=\emptyset.
$$
We choose the sets $\mathcal V_1,\mathcal V_\star$ in the proof
of Lemma~\ref{l:quantum-partition} such that
$$
\overline{U^\sharp}\subset \mathcal V_1\cap S^*M\subset \{a=1\},\quad
\mathcal V_\star\cap S^*M=S^*M\setminus\overline{U^\sharp}.
$$
Then $\supp a_1\cap S^*M\subset \{|a|\geq 1\}$ and the complement
$S^*M\setminus \mathcal V_\star$ is $(L_0,L_1)$-dense in both unstable
and stable directions. Next, 
$S^*M\setminus \mathcal V_1$ contains the complement
of a set in $S^*M$ diameter~$\varepsilon_0$, and thus is $(1,{1\over 2})$-dense
in both unstable and stable directions for small enough $\varepsilon_0$.
Now if
$u_{j_k}$ is a sequence of Laplacian eigenfunctions converging to a measure~$\mu$
in the sense of~\eqref{e:semimes} then by~\eqref{e:eig} we have the estimate
$$
1=\|u_{j_k}\|_{L^2}\leq C\|\Op_{h_{j_k}}(a)u_{j_k}\|_{L^2}
\xrightarrow{k\to\infty}
C\int_{S^*M} |a|^2\,d\mu
\leq C\mu(U)
$$
where $C$ is the constant from~\eqref{e:terminator}, which depends only on $(M,g),L_0,L_1$.

%%%%%%%%%%%%%%%%%%%%%%%%%%%%%%%%%%%%%%%%%%%%%%%%%%%%%%%%%%%%%%%%%%%%%%%%%%%%%%%%
\subsection{Proof of Theorem~\ref{t:dwe}}
  \label{s:proofs-2}

We finally give the proof of Theorem~\ref{t:dwe}, following the strategy of~\cite{JinDWE}
and using some parts of the proof of Theorem~\ref{t:eig}. 

%%%%%%%%%%%%%%%%%%%%%%%%%%%%%%%%%%%%%%%%%%%%%%%%%%%%%%%%%%%%%%%%%%%%%%%%%%%%%%%%
\subsubsection{Reduction to decay for a microlocal damped propagator}
\label{s:dwe-reduction}

We first reduce Theorem~\ref{t:dwe} to a decay statement for a damped
propagator following~\cite[\S4]{JinDWE}. Let $b\in C^\infty(M)$ be the damping function,
with $b\geq 0$ and $b\not\equiv 0$. We replace $h\partial_t$ by $-iz$
in the semiclassically rescaled damped wave operator $h^2(\partial_t^2-\Delta+2b(x)\partial_t)$, to obtain the following
differential operator on $M$:
\begin{equation}
  \label{e:dawwy-1}
\mathcal P(z):=
-h^2\Delta-2izhb(x)-z^2,\quad
z\in\mathbb C.
\end{equation}
By a standard argument (see~\cite[\S3]{Schenk} or~\cite[Theorem~5.10]{e-z})
Theorem~\ref{t:dwe} follows from the following
high energy spectral gap:
%%%%%%%%%%%%%%%%%%%%%%%%%%%%%%%%%%%%%%%%%%%%%%%%%%%%%%%%%%%%%%%%%%%%%%%%%%%%%%%%
\begin{prop}
\label{l:gap}
There exist $C_0>0$, $\gamma_0>0$, and $h_0>0$ such that
\begin{equation}
  \label{e:gap}
\|\mathcal P(z)^{-1}\|_{L^2\to L^2}\leq Ch^{-1+C_0\min(0,\Im z/h)}\log(1/h),\quad
0<h\leq h_0,\quad
|z-1|\leq \gamma_0 h.
\end{equation}
\end{prop}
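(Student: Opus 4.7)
The plan is to establish Proposition~\ref{l:gap} by reducing it to a bound of the form $\|u\|_{L^2}\leq K(h,z)\|\mathcal P(z)u\|_{L^2}$ on $H^2(M)$ and adapting the word--decomposition machinery of~\S\ref{s:proofs-1}. The roles previously played by the quasimode hypothesis and by the control symbol $\Op_h(a)$ in Theorem~\ref{t:eig} will now be played, respectively, by an integration-by-parts identity involving $b$ and by a pseudodifferential cutoff microsupported in $\{b>0\}$.

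A direct computation with $\mathcal P(z)=-h^2\Delta-2izhb-z^2$ gives, for $\Re z\approx 1$, the damping identity
\begin{equation*}
h\langle bu,u\rangle+\Im z\cdot\|u\|_{L^2}^2\leq C\|\mathcal P(z)u\|_{L^2}\|u\|_{L^2}.
\end{equation*}
Using that $b\not\equiv 0$, I would fix $x_0$ in the interior of $\{b>0\}$ and pick a small open conic set $\mathcal V_1\subset T^*M\setminus 0$ whose intersection with $S^*M$ lies in a tiny ball above $x_0$, then construct $A_0,A_1,A_\star$ as in Lemma~\ref{l:quantum-partition}, arranging that $\WFh(A_1)\subset\{b\geq c>0\}$. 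The sharp G{\aa}rding inequality together with the identity above would then yield the damped control estimate
\begin{equation*}
\|A_1u\|_{L^2}^2\leq C\langle bu,u\rangle+\cO(h^\infty)\|u\|^2\leq Ch^{-1}\|\mathcal P(z)u\|\,\|u\|+Ch^{-1}|\Im z|\,\|u\|^2,
\end{equation*}
which replaces~\eqref{e:control-1}. Propagating this bound under the half-wave group $U(t)$ from~\eqref{e:U-t-def}, and using $[P,b]=\cO(h)$ to absorb commutator terms, would give an analog of Lemma~\ref{l:propagated-control} with $(-h^2\Delta-I)$ replaced by $\mathcal P(z)$ and a polynomial-in-$t$ loss.

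With $N_0,N$ as in~\eqref{e:prop-times} and $\mathcal X,\mathcal Y\subset\mathscr A_\star^N$ defined as in~\eqref{e:XY-def}, I would then write $u=(u-A_{\mathscr A_\star^N}u)+A_{\mathcal Y}u+A_{\mathcal X}u$ and estimate each summand separately: the first via the analog of~\eqref{e:A-full-word} with $(-h^2\Delta-I)$ replaced by $\mathcal P(z)$; the second via the density-function arguments of Lemma~\ref{l:Z-control} and Proposition~\ref{l:Y-control}, with the propagated quantity $A_1(j)u$ now controlled by the damping; and the third directly by Proposition~\ref{l:longdec-0}, which depends only on the microsupports $\mathcal V_1,\mathcal V_\star$ and hence applies verbatim, giving $\|A_{\mathcal X}\|_{L^2\to L^2}\leq Ch^{\beta/2}$ after invoking Lemma~\ref{l:X-count} and fixing $\alpha$ as in~\eqref{e:alpha-chosen}. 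Combining these estimates and absorbing $\|u\|^2$ on the left would produce $\|u\|\leq Ch^{-1}\log(1/h)\|\mathcal P(z)u\|$ in the regime $\Im z\geq 0$, establishing~\eqref{e:gap} there.

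The main obstacle will be the regime $\Im z<0$, where the term $Ch^{-1}|\Im z|\|u\|^2$ in the damping bound cannot be absorbed on the left directly. Each unit time step of propagation introduces a multiplicative factor of roughly $(1+C|\Im z|/h)$ in the damping control, and after $N\sim\log(1/h)/\Lambda_0$ iterations this produces the loss $h^{-C_0|\Im z|/h}$ advertised in~\eqref{e:gap}. The delicate point will be to carry out this iteration while keeping the mildly exotic symbol calculus of~\S\ref{s:mildly-exotic} and Lemmas~\ref{l:cq-log}--\ref{l:egorov-mild} under control, so that Proposition~\ref{l:longdec-0} remains applicable uniformly in~$z$ throughout the long-word analysis; this is where the precise form of the damped propagator and its interaction with the refined partition become the principal technical challenge.
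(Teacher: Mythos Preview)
Your plan diverges from the paper's route and contains a gap that already bites in the easy regime $\Im z\geq 0$, before you reach what you call ``the main obstacle''.

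The paper does \emph{not} mimic the proof of Theorem~\ref{t:eig}. Instead it reduces Proposition~\ref{l:gap} (following~\cite[\S4.2]{JinDWE}) to an \emph{operator norm} decay statement for the damped propagator $\widetilde U(t)=\exp(-it(P-ihA(z))/h)$, namely $\|\widetilde U(N;z)\Pi\|\leq Ch^{\beta_1}$ (Proposition~\ref{t:propagator-decay}). It then introduces damped partition operators $\widetilde A_w=U(-1)\widetilde U(1)A_w$ whose principal symbols satisfy $\widetilde a_1\leq e^{-\eta}a_1$ pointwise; this inequality is what replaces your damping identity. Crucially, the controlled-word bound becomes a pure norm estimate $\|\widetilde U_{\mathcal Z}\|\leq h^{\alpha_1}$ (Lemma~\ref{l:Z-damp}), with no reference to any particular~$u$, and the uncontrolled words are again handled by Proposition~\ref{l:longdec-0}.

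The gap in your plan is the propagated control of $A_1(t)u$. The analogue of Lemma~\ref{l:propagated-control} rests on $\|U(t)u-e^{-it/h}u\|\leq C|t|h^{-1}\|(-h^2\Delta-I)u\|$, but here
\[
(-h^2\Delta-I)u=\mathcal P(z)u+2izhb\,u+(z^2-1)u,
\]
so $\|(-h^2\Delta-I)u\|\leq \|\mathcal P(z)u\|+Ch\|u\|$ with the $Ch\|u\|$ term coming from the damping $2izhb$ (and from $z^2-1$). Feeding this into the propagation you get
\[
\|A_1(t)u\|\leq \|A_1u\|+C|t|h^{-1}\|\mathcal P(z)u\|+C|t|\,\|u\|,
\]
and for $|t|\leq N_0\sim \log(1/h)$ the last term is $C\log(1/h)\|u\|$. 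This survives the passage through Lemmas~\ref{l:Z-control}/\ref{l:almost-monotone} and Proposition~\ref{l:Y-control}, so the final inequality reads
\[
\|u\|\leq \frac{C}{\alpha}\|A_1u\|+\frac{C\log(1/h)}{\alpha h}\|\mathcal P(z)u\|+C\log(1/h)\|u\|+Ch^{\beta/2}\|u\|,
\]
and the $C\log(1/h)\|u\|$ term cannot be absorbed. Applying your damping identity at time $t$ to $U(t)u$ does not help either: $\mathcal P(z)(U(t)u)=U(t)\mathcal P(z)u+2izh[U(t),b]u$, and $\|[U(t),b]\|$ is of order one (Egorov gives $U(-t)bU(t)\approx b\circ\varphi_t$), producing the same $C\|u\|$ loss per unit time. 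The point is that the undamped group $U(t)$ is simply not adapted to $\mathcal P(z)$: once you replace it by the damped $\widetilde U(t)$ so that the commutator with the damping disappears, you are essentially carrying out the paper's argument.
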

%%%%%%%%%%%%%%%%%%%%%%%%%%%%%%%%%%%%%%%%%%%%%%%%%%%%%%%%%%%%%%%%%%%%%%%%%%%%%%%%
Recall the operator $P=\psi_P(-h^2\Delta)$ defined in~\eqref{e:U-t-def}.
Fix a cutoff function
$$
\psi_1\in \CIc((0,\infty);[0,1]),\quad
\supp\psi_1\subset \{\psi_P\neq 0\},\quad
\psi_1=1\text{ on }[\textstyle{1\over 16},16].
$$
Then
$$
\mathcal P(z)=P^2-2izhb(x)\psi_1(-h^2\Delta)-z^2+\mathcal O(h^\infty)\quad\text{microlocally near }S^*M.
$$
We now write
\begin{equation}
  \label{e:dawwy-2}
P^2-2izhb(x)\psi_1(-h^2\Delta)=(P-ih A(z))^2+\mathcal O(h^\infty)
\end{equation}
where $A(z)\in \Psi^{-\infty}_h(T^*M)$ is some family of pseudodifferential
operators entire in $z$ and satisfying $\sigma_h(A(z))=za$ with
\begin{equation}
  \label{e:damp-b-a}
a(x,\xi):={b(x)\psi_1(|\xi|_g^2)\over p(x,\xi)}.
\end{equation} 
See~\cite[\S4.1]{JinDWE} for the construction of~$A(z)$ (denoted by~$Q(z)$ there).

Define the \emph{microlocal damped propagator}
\begin{equation}
\label{e:damp-propagator}
\widetilde{U}(t)=\widetilde U(t;z):=\exp\Big(-{it(P-ihA(z))\over h}\Big),\quad t\geq 0.
\end{equation}
We also take the following frequency cutoff operator:
$$
\Pi:=\chi(-h^2\Delta)\quad\text{where}\quad
\chi\in \CIc(\mathbb{R};[0,1]),\quad
\supp\chi\subset[\textstyle{1\over 4},4],\quad
1\notin\supp(1-\chi).
$$
Following~\cite[\S4.2]{JinDWE} we see that Proposition~\ref{l:gap} (and thus Theorem~\ref{t:dwe}) follows
from a decay statement on the propagator $\widetilde U(t)$:
%%%%%%%%%%%%%%%%%%%%%%%%%%%%%%%%%%%%%%%%%%%%%%%%%%%%%%%%%%%%%%%%%%%%%%%%%%%%%%%%
\begin{prop}
	\label{t:propagator-decay}
There exists $\beta_1>0$ depending only on $M$ and $b$ such that for all $h\in (0,1]$,
$z\in\mathbb C$ such that $|z-1|\leq h$,
and $N$ defined in~\eqref{e:prop-times} we have
\begin{equation}
	\label{e:propagator-decay}
\|\widetilde U(N;z)\Pi\|_{L^2(M)\to L^2(M)}\leq Ch^{\beta_1}.
\end{equation}
\end{prop}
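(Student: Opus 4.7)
The plan is to adapt the proof of Theorem~\ref{t:eig} from \S\ref{s:proofs-1}, replacing the unitary Schr\"odinger propagator $U(t)$ by the microlocally damped propagator $\widetilde U(t;z)$ throughout. Since $\sigma_h(P-ihA(z)) = p = \sigma_h(P)$, the operator $\widetilde U(t)$ transports microlocal support along the same geodesic flow $\varphi_t$ as $U(t)$, while $A(z)$ of principal symbol $za$ with $a = b\psi_1/p\geq 0$ acts as an infinitesimal dissipation along this flow. I first adapt the partition: since $b\not\equiv 0$, there is a point of $S^*M$ at which $a>0$, so I may choose $a_1, a_\star$ and $A_1,A_\star$ as in Lemma~\ref{l:quantum-partition} with the additional requirement that $\mathcal V_1 \cap S^*M$ is a small conic ball contained in $\{a>0\}$, which ensures the hypotheses of Proposition~\ref{l:longdec-0}.

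I then decompose $\widetilde U(N)\Pi$ microlocally by inserting $I = A_0+A_1+A_\star$ between each pair of consecutive factors of $\widetilde U(N) = \widetilde U(1)^N$. Since $\widetilde U(k)\Pi$ is microlocalized near $S^*M$ for every $k\geq 0$ (as $\Pi$ commutes with $P$ and with $A(z)$ modulo $O(h^\infty)$, thanks to the fact that the cutoffs defining $\Pi$, $\psi_P$, and $\psi_1$ all equal~$1$ near $\{p=1\}$) while $A_0$ is microlocalized away from $S^*M$, the $A_0$ insertions give $O(h^\infty)$ contributions against $\Pi$, leaving
\begin{equation*}
\widetilde U(N)\Pi = \sum_{\mathbf w \in \mathscr A_\star^N}\widetilde B_{\mathbf w}\Pi + O(h^\infty)_{L^2\to L^2},\quad
\widetilde B_{\mathbf w} := \widetilde U(1)A_{w_{N-1}}\widetilde U(1)A_{w_{N-2}}\cdots \widetilde U(1)A_{w_0}.
\end{equation*}
I split $\mathscr A_\star^N = \mathcal X \sqcup \mathcal Y$ as in~\eqref{e:XY-def} via the density function~\eqref{e:density-def} with a parameter $\alpha \in (0,\tfrac12)$ to be fixed, and estimate $\|\widetilde B_{\mathcal X}\Pi\|$ and $\|\widetilde B_{\mathcal Y}\Pi\|$ separately.

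For the \emph{uncontrolled} contribution I use the factorization $\widetilde U(t) = U(t)V(t)$, where $V(t) := U(-t)\widetilde U(t)$ satisfies $\partial_t V = -A_z(t)V$ with $A_z(t) := U(-t)A(z)U(t)$. Restricted to sub-blocks of length $N_0$, Lemma~\ref{l:egorov-mild} places $A_z(t)$ in the mildly exotic class $\Psi^{\comp}_{1/6+}(M)$ with $\|A_z(t)\|_{L^2\to L^2}$ uniformly bounded, so the time-ordered exponential $V(N_0)$ also lies in $\Psi^{\comp}_{1/6+}(M)$ and is uniformly bounded on $L^2$ (using $\Re z > 0$). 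Iterating over the $6\Lambda+1$ sub-blocks and commuting the damping factors past the pseudodifferential blocks $A_{\mathbf w^{(\ell)}}$ via the calculus of Lemma~\ref{l:cq-log}, I write $\widetilde B_{\mathbf w} = U(N)A_{\mathbf w}V_{\mathbf w} + o(\|A_{\mathbf w}\|)$ for some uniformly $L^2$-bounded cofactor $V_{\mathbf w}$. Proposition~\ref{l:longdec-0} then gives $\|A_{\mathbf w}\|_{L^2\to L^2}\leq Ch^\beta$ uniformly in $\mathbf w \in \mathcal X$, and summing using Lemma~\ref{l:X-count} with $\alpha$ chosen so that $(\Lambda_0^{-1}+2)\alpha(1-\log\alpha)\leq \beta/2$ gives $\|\widetilde B_{\mathcal X}\Pi\|_{L^2\to L^2}\leq Ch^{\beta/2}$. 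For the \emph{controlled} contribution $\widetilde B_{\mathcal Y}$, I use the decomposition $\mathcal Y = \bigsqcup_{\ell=1}^{6\Lambda+1}\mathcal Y_\ell$ from Proposition~\ref{l:Y-control} to isolate, in each word, a single controlled sub-block $\mathbf w^{(\ell)}\in \mathcal Z$ of length $N_0$. On this block the symbol $a_{\mathbf w^{(\ell)}}$ is supported on the set where at least $\alpha N_0$ of the functions $a_1\circ\varphi_j$ ($0\leq j\leq N_0-1$) are nonzero, so that $\int_0^{N_0}a\circ\varphi_s\,ds \geq c\alpha N_0$ there, for some $c = c(a_1,b) > 0$. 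Combined with the principal-symbol identity $\sigma_h(V(N_0)) = \exp(-z\int_0^{N_0}a\circ\varphi_s\,ds)$ and an almost-monotonicity argument in the spirit of Lemma~\ref{l:almost-monotone}, this yields $\|\widetilde B_{\mathcal Y_\ell}\Pi\|_{L^2\to L^2} \leq Ch^{c\alpha/(6\Lambda_1)}$, and hence $\|\widetilde B_{\mathcal Y}\Pi\|_{L^2\to L^2}\leq Ch^{c\alpha/(6\Lambda_1)}$ after summing the $6\Lambda+1$ pieces. Taking $\beta_1 := \min(\beta/2,\,c\alpha/(6\Lambda_1)) > 0$ then proves~\eqref{e:propagator-decay}.

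The main obstacle is to rigorously control $\widetilde B_{\mathbf w}$ over the full length $N = (6\Lambda+1)N_0$: the mildly exotic Egorov bound of Lemma~\ref{l:egorov-mild} is only valid up to a single short logarithmic time~$N_0$, so one must interleave the block-wise symbol calculus for $V(t)$ with the pseudodifferential representation of $A_{\mathbf w^{(\ell)}}$ from Lemma~\ref{l:cq-log}, while cleanly extracting the accumulated damping factor on the controlled blocks. This parallels the strategy of~\cite{JinDWE} in constant curvature, the essential new input in the variable-curvature setting being Proposition~\ref{l:longdec-0}, whose proof uses the local Ehrenfest time analysis developed in~\S\ref{s:long-word-fup}.
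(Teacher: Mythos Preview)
Your overall strategy---decomposing $\widetilde U(N)\Pi$ into words and splitting $\mathscr A_\star^N = \mathcal X \sqcup \mathcal Y$---matches the paper's. But your execution contains an unnecessary detour that creates a genuine gap, and the paper's fix is a one-line observation you are missing.

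The problematic step is your attempt to write $\widetilde B_{\mathbf w} = U(N)A_{\mathbf w}V_{\mathbf w} + o(\|A_{\mathbf w}\|)$ by ``commuting the damping factors past the pseudodifferential blocks.'' This would require commuting $V(1)$-type factors through conjugated operators $A_{w_j}(j)$ for $j$ ranging up to $N=(6\Lambda+1)N_0$, far beyond the short logarithmic time where Lemma~\ref{l:egorov-mild} and Lemma~\ref{l:cq-log} apply. You flag this yourself as ``the main obstacle,'' and indeed the block-wise interleaving you sketch does not obviously produce a clean factor of $A_{\mathbf w}$ with a uniformly bounded cofactor; the errors in the $S^{\comp}_{1/6+}$ calculus are only $\mathcal O(h^{2/3-})$ per commutation, and you would need $\mathcal O(N)$ of them.

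The paper avoids all of this by absorbing the damping into the single-step operators. Define $\widetilde A_w := U(-1)\widetilde U(1)A_w$ for $w\in\{1,\star\}$; then your $\widetilde B_{\mathbf w}$ is \emph{exactly} $U(N)\widetilde A^-_{\mathbf w}$ with $\widetilde A^-_{\mathbf w}$ built from the $\widetilde A_w$'s via~\eqref{e:A-pm-def}. The point is that each $\widetilde A_w$ is itself in $\Psi^{-\infty}_h(M)$ with principal symbol $\widetilde a_w = a_w\exp(-\int_0^1 a\circ\varphi_s\,ds)$, and the pair $(\widetilde a_1,\widetilde a_\star)$ still satisfies conditions (1)--(5) of~\S\ref{s:proofs-notation}. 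Therefore Proposition~\ref{l:longdec-0} applies \emph{verbatim} to $\widetilde A^-_{\mathbf w}$, giving $\|\widetilde A_{\mathbf w}\|\leq Ch^\beta$ for every $\mathbf w\in\mathscr A_\star^N$ with no commutation needed. For the controlled part, the single inequality $\widetilde a_1\leq e^{-\eta}a_1$ (which holds because $\supp a_1\subset\{a>0\}$) gives $|\widetilde a_{\mathcal Z}|\leq e^{-\eta\alpha N_0}\leq h^{\alpha\eta/(6\Lambda_1)}$ directly, and Lemma~\ref{l:cq-log} plus~\eqref{e:precise-norm} finishes. This replaces both your $V(t)$ factorization and your $\int_0^{N_0}a\circ\varphi_s\,ds$ argument.
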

%%%%%%%%%%%%%%%%%%%%%%%%%%%%%%%%%%%%%%%%%%%%%%%%%%%%%%%%%%%%%%%%%%%%%%%%%%%%%%%%
In the rest of~\S\ref{s:proofs-2} we prove Proposition~\ref{t:propagator-decay}.

%%%%%%%%%%%%%%%%%%%%%%%%%%%%%%%%%%%%%%%%%%%%%%%%%%%%%%%%%%%%%%%%%%%%%%%%%%%%%%%%
\subsubsection{Damped partition of unity}
  \label{s:damped-partition}

Let $A_0$ be given by~\eqref{e:A-0-def}
and $a_1,a_\star,A_1,A_\star$ be constructed in Lemma~\ref{l:quantum-partition},
with the function $a$ given by~\eqref{e:damp-b-a} and
$\varepsilon_0>0$ taken small enough so that Proposition~\ref{l:longdec-0} applies.
Define the damped operators
\begin{equation}
\label{e:damp-op-A}
\widetilde{A}_w:=U(-1)\widetilde{U}(1)A_w,\quad w\in\{1,\star\}.
\end{equation}
Here $U(t)=\exp(-itP/h)$ is the unitary propagator defined in~\eqref{e:U-t-def}
and $\widetilde U(t)$ is the damped propagator defined in~\eqref{e:damp-propagator}.
By~\cite[(2.24)]{JinDWE} we have $\widetilde A_w\in\Psi^{-\infty}_h(M)$,
$\WFh(\widetilde A_w)\subset\WFh(A_w)$, and
\begin{equation}
	\label{e:damp-a}
\sigma_h(\widetilde A_w)=\widetilde{a}_w:=a_w\exp\left(-\int_0^1a\circ\varphi_s\,ds\right),\quad
w\in \{1,\star\}.
\end{equation}
%%%%%%%%%%%%%%%%%%%%%%%%%%%%%%%%%%%%%%%%%%%%%%%%%%%%%%%%%%%%%%%%%%%%%%%%%%%%%%%%
\begin{lemm}
  \label{l:still-fine}
The operators $\widetilde A_1,\widetilde A_\star$ and the symbols
$\widetilde a_1,\widetilde a_\star$ satisfy conditions~(1)--(5) in~\S\ref{s:proofs-notation}.
Moreover, there exists a constant $\eta>0$ such that
\begin{equation}
	\label{e:damp-a-a}
0\leq\widetilde{a}_1\leq e^{-\eta}a_1,\quad 0\leq \widetilde{a}_\star\leq a_\star. 
\end{equation}
\end{lemm}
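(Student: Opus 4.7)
The plan is to extract everything from the explicit formula $\widetilde a_w = a_w \exp\!\bigl(-\int_0^1 a\circ\varphi_s\,ds\bigr)$ in~\eqref{e:damp-a}, combined with the fact that the symbol $a$ in~\eqref{e:damp-b-a} is nonnegative. Indeed, $b\geq 0$, $\psi_1\geq 0$, and $p>0$ on $\supp\psi_1(|\xi|^2_g)$, so $a\geq 0$ on $T^*M\setminus 0$. Consequently the damping factor $\exp(-\int_0^1 a\circ\varphi_s\,ds)$ takes values in $(0,1]$, which gives $0\leq \widetilde a_w\leq a_w$ pointwise for $w\in\{1,\star\}$.

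With this pointwise bound, the properties~(1)--(4) of~\S\ref{s:proofs-notation} transfer verbatim from $a_w$ to $\widetilde a_w$: positivity is immediate; the frequency localization $\supp \widetilde a_w\subset\supp a_w\subset\{\tfrac14<|\xi|_g<4\}$ is preserved; $\widetilde a_1+\widetilde a_\star\leq a_1+a_\star\leq 1$; and the same conic sets $\mathcal V_1,\mathcal V_\star$ (with their small diameter and complement-has-nonempty-interior properties) contain $\supp \widetilde a_w$. Property~(5) is already recorded in the discussion just above the lemma, namely $\widetilde A_w\in\Psi^{-\infty}_h(M)$ with $\WF_h(\widetilde A_w)\subset\WF_h(A_w)\subset \mathcal V_w\cap\{\tfrac14<|\xi|_g<4\}$ and principal symbol $\widetilde a_w$ given by~\eqref{e:damp-a}, as supplied by~\cite[(2.24)]{JinDWE}.

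The only point requiring a small argument is the strict improvement $\widetilde a_1\leq e^{-\eta}a_1$. By property~(7) of Lemma~\ref{l:quantum-partition} we have $\mathcal V_1\cap S^*M\subset\{a\neq 0\}$, and since $a\geq 0$, this means $a>0$ on $\mathcal V_1\cap S^*M$. Because $\mathcal V_1$ is conic and $\supp a_1\subset\mathcal V_1\cap\{\tfrac14<|\xi|_g<4\}$, the projection of $\supp a_1$ to the base lies in $\{b>0\}$; by compactness of $\supp a_1$, there is $c>0$ with $a\geq c$ on $\supp a_1$. By uniform continuity of $(s,\rho)\mapsto a(\varphi_s(\rho))$ on the compact set $[0,1]\times\supp a_1$, we can find $\delta>0$ so that $a\circ\varphi_s(\rho)\geq c/2$ for every $\rho\in\supp a_1$ and $s\in[0,\delta]$. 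Hence $\int_0^1 a\circ\varphi_s\,ds\geq c\delta/2$ uniformly on $\supp a_1$, and we may set $\eta:=c\delta/2>0$. Off $\supp a_1$ the inequality $\widetilde a_1\leq e^{-\eta}a_1$ holds trivially since both sides vanish. No step presents a real obstacle; the mild subtlety is just the uniform lower bound for the integral, which follows from the compactness of $\supp a_1$ together with the pointwise positivity of $a$ inherited from Lemma~\ref{l:quantum-partition}.
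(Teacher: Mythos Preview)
Your proof is correct and follows essentially the same approach as the paper: the pointwise bound $0\leq\widetilde a_w\leq a_w$ from $a\geq 0$ gives conditions~(1)--(5) immediately, and the strict damping on $\supp a_1$ follows from positivity of $a$ there together with compactness. The paper's version is slightly more streamlined---it invokes the homogeneity of $a$ on $\{\tfrac14\leq|\xi|_g\leq 4\}$ to get $a>0$ on all of $\mathcal V_1\cap\{\tfrac14\leq|\xi|_g\leq 4\}$ and then takes $\eta$ directly as the infimum of $\int_0^1 a\circ\varphi_s\,ds$ over the compact set $\supp a_1$---whereas you pass through the base projection and a uniform-continuity $\delta$; but the content is the same.
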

%%%%%%%%%%%%%%%%%%%%%%%%%%%%%%%%%%%%%%%%%%%%%%%%%%%%%%%%%%%%%%%%%%%%%%%%%%%%%%%%
\begin{proof}
Since $a\geq 0$, we have $0\leq \widetilde a_w\leq a_w$, and conditions~(1)--(5)
in~\S\ref{s:proofs-notation} follow immediately. It remains to show that $\widetilde a_1\leq e^{-\eta}a_1$.
As a consequence of the homogeneity of $a$ in $\{{1\over 4}\leq|\xi|_g\leq4\}$, we see that condition~(7)
in Lemma~\ref{l:quantum-partition} implies that
$$
\mathcal V_1\cap\{\textstyle{1\over 4}\leq|\xi|_g\leq4\}\subset\{a > 0\}.
$$
Since $\supp a_1\subset\mathcal V_1\cap\{{1\over 4}<|\xi|_g<4\}$, there exists $\eta>0$ such that
$$
\int_0^1a\circ\varphi_s(x,\xi)\,ds\geq \eta\quad\text{for all}\quad
(x,\xi)\in\supp a_1.
$$
This immediately implies that $\widetilde a_1\leq e^{-\eta}a_1$.
\end{proof}
%%%%%%%%%%%%%%%%%%%%%%%%%%%%%%%%%%%%%%%%%%%%%%%%%%%%%%%%%%%%%%%%%%%%%%%%%%%%%%%%
Using $\widetilde A_1,\widetilde A_\star,a_1,a_\star$, we define
$\widetilde A_{\mathbf w},\widetilde A_{\mathcal E},\widetilde A_F,\widetilde a_{\mathbf w},
\widetilde a_{\mathcal E},\widetilde a_F$ by~\eqref{e:A-pm-def}, \eqref{e:A-E-def},
\eqref{e:A-F-def}. (As before, we use the notation $\widetilde A_{\mathbf w}:=\widetilde A_{\mathbf w}^-$ etc.)
We also consider the cutoff damped propagators
\begin{equation}
	\label{e:damp-word}
\widetilde{U}_{\mathbf w}:=U(n)\widetilde{A}_{\mathbf w}=\widetilde{U}(1)A_{w_{n-1}}\widetilde{U}(1)A_{w_{n-2}}\cdots \widetilde{U}(1)A_{w_0},\quad
\mathbf w=w_0\dots w_{n-1}.
\end{equation}
We define the operators $\widetilde U_{\mathcal E}$, $\widetilde U_F$ using
$\widetilde U_{\mathbf w}$ similarly to~\eqref{e:A-E-def}, \eqref{e:A-F-def}.

Let the partition $\mathcal X\sqcup\mathcal Y\subset\mathscr A_\star^N$ be defined in~\eqref{e:XY-def},
where we fix $\alpha>0$ in~\S\ref{s:dwe-uncontrolled} below.
We prove Proposition~\ref{t:propagator-decay} by establishing
decay of $\widetilde U_{\mathcal X}$ and $\widetilde U_{\mathcal Y}$.

%%%%%%%%%%%%%%%%%%%%%%%%%%%%%%%%%%%%%%%%%%%%%%%%%%%%%%%%%%%%%%%%%%%%%%%%%%%%%%%%
\subsubsection{Controlled words}
  \label{s:dwe-controlled}

To bound the norm of $\widetilde U_{\mathcal Y}$, we first
use the inequalities~\eqref{e:damp-a-a} to estimate
$\widetilde U_{\mathcal Z}$, where $\mathcal Z\subset \mathscr A_\star^{N_0}$
is defined in~\eqref{e:Z-def}:
%%%%%%%%%%%%%%%%%%%%%%%%%%%%%%%%%%%%%%%%%%%%%%%%%%%%%%%%%%%%%%%%%%%%%%%%%%%%%%%%
\begin{lemm}
	\label{l:Z-damp}
We have
\begin{equation}
  \label{e:Z-damp}
\|\widetilde{U}_\mathcal{Z}\|_{L^2\to L^2}\leq h^{\alpha_1}+\mathcal O(h^{1/3-})\quad\text{where}\quad
\alpha_1:=\textstyle{\alpha\eta\over 6\Lambda_1}>0.
\end{equation}
\end{lemm}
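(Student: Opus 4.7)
The plan is to reduce the estimate on $\widetilde{U}_{\mathcal{Z}}$ to a pseudodifferential norm bound on $\widetilde{A}_{\mathcal{Z}}$, and then exploit the pointwise damping inequality \eqref{e:damp-a-a} enforced on the $\alpha N_0$ ``controlled'' letters of each word $\mathbf{w}\in\mathcal{Z}$.

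First, from the definition \eqref{e:damp-word} and the fact that all words in $\mathcal{Z}$ have the same length $N_0$, one has
\[
\widetilde{U}_{\mathcal{Z}}=U(N_0)\,\widetilde{A}_{\mathcal{Z}},
\]
so unitarity of $U(N_0)$ reduces the problem to bounding $\|\widetilde{A}_{\mathcal{Z}}\|_{L^2\to L^2}$. By Lemma~\ref{l:still-fine} the operators $\widetilde{A}_1,\widetilde{A}_\star$ and their symbols $\widetilde{a}_1,\widetilde{a}_\star$ satisfy the same structural hypotheses of~\S\ref{s:proofs-notation} as $A_1,A_\star$. Consequently, the proof of Lemma~\ref{l:cq-log} applies verbatim (Egorov's theorem~\ref{l:egorov-mild} gives $\widetilde{A}_w(j)=\Op_h(\widetilde{a}_w\circ\varphi_j)+\mathcal O(h^{2/3-})$ for $0\leq j<N_0$, and Lemma~\ref{l:log-product} controls the product), yielding
\[
\widetilde{A}_{\mathcal{Z}}=\Op_h(\widetilde{a}_{\mathcal{Z}})+\mathcal O(h^{1/2-})_{L^2\to L^2},\qquad \widetilde{a}_{\mathcal{Z}}\in S^{\mathrm{comp}}_{1/6+}(T^*M).
\]

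The central step is the supremum bound $\sup|\widetilde{a}_{\mathcal{Z}}|\leq h^{\alpha_1}$. Using~\eqref{e:damp-a-a} termwise in the defining product \eqref{e:a-pm-def}, for any $\mathbf{w}=w_0\cdots w_{N_0-1}\in\mathcal{Z}$,
\[
\widetilde{a}_{\mathbf{w}}=\prod_{j=0}^{N_0-1}(\widetilde{a}_{w_j}\circ\varphi_j)\leq e^{-\eta\cdot\#\{j:w_j=1\}}\prod_{j=0}^{N_0-1}(a_{w_j}\circ\varphi_j)\leq e^{-\eta\alpha N_0}\,a_{\mathbf{w}},
\]
where the last inequality uses the defining density condition $F(\mathbf{w})\geq\alpha$ from \eqref{e:Z-def}. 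Summing over $\mathbf{w}\in\mathcal{Z}$ and using that $\sum_{\mathbf{w}\in\mathscr A_\star^{N_0}}a_{\mathbf{w}}=\prod_{j=0}^{N_0-1}((a_1+a_\star)\circ\varphi_j)\leq 1$, we obtain
\[
\widetilde{a}_{\mathcal{Z}}\leq e^{-\eta\alpha N_0}\,a_{\mathcal{Z}}\leq e^{-\eta\alpha N_0}\leq h^{\eta\alpha/(6\Lambda_1)}=h^{\alpha_1},
\]
where the last inequality uses $N_0\geq\log(1/h)/(6\Lambda_1)$ from \eqref{e:prop-times}.

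Finally, applying the sharp quantization norm bound \eqref{e:precise-norm} with $\delta={1\over 6}+\epsilon$ gives
\[
\|\Op_h(\widetilde{a}_{\mathcal{Z}})\|_{L^2\to L^2}\leq \sup|\widetilde{a}_{\mathcal{Z}}|+\mathcal O(h^{1/3-})\leq h^{\alpha_1}+\mathcal O(h^{1/3-}),
\]
and combining with the $\mathcal O(h^{1/2-})$ remainder above (absorbed into $\mathcal O(h^{1/3-})$) and the reduction $\|\widetilde{U}_{\mathcal{Z}}\|=\|\widetilde{A}_{\mathcal{Z}}\|$ yields \eqref{e:Z-damp}. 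No obstacle of substance arises; the only point that needs care is checking that the mildly exotic calculus of \S\ref{s:mildly-exotic} applies to $\widetilde{A}_w$ through the Egorov estimate, which is immediate since $\widetilde{A}_w\in\Psi^{-\infty}_h(M)$.
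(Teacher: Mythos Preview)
Your proof is correct and follows essentially the same approach as the paper's: reduce to $\|\widetilde A_{\mathcal Z}\|$ by unitarity of $U(N_0)$, apply Lemma~\ref{l:cq-log} (valid via Lemma~\ref{l:still-fine}) to write $\widetilde A_{\mathcal Z}=\Op_h(\widetilde a_{\mathcal Z})+\mathcal O(h^{1/2-})$, bound $\sup|\widetilde a_{\mathcal Z}|\leq e^{-\eta\alpha N_0}\leq h^{\alpha_1}$ using~\eqref{e:damp-a-a} and the density condition, and conclude with~\eqref{e:precise-norm}. The only cosmetic difference is the order in which the symbol bound and the pseudodifferential approximation are presented.
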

%%%%%%%%%%%%%%%%%%%%%%%%%%%%%%%%%%%%%%%%%%%%%%%%%%%%%%%%%%%%%%%%%%%%%%%%%%%%%%%%
\begin{proof}
Since $U(N_0)$ is unitary, we have
$\|\widetilde{U}_\mathcal{Z}\|_{L^2\to L^2}=\|\widetilde{A}_{\mathcal Z}\|_{L^2\to L^2}$.
The symbol $\widetilde{a}_{\mathcal Z}$ is given by
\begin{equation*}
\widetilde{a}_{\mathcal{Z}}=\sum_{\mathbf{w}\in\mathcal{Z}}\widetilde{a}_{\mathbf{w}}=\sum_{\mathbf{w}\in\mathcal{Z}}\prod_{j=0}^{N_0-1}(\widetilde{a}_{w_j}\circ\varphi_j).
\end{equation*}
By the definition \eqref{e:Z-def}, each $\mathbf w\in \mathcal Z$ has at least $\alpha N_0$
letters equal to~1. Therefore by~\eqref{e:damp-a-a},
recalling the definition~\eqref{e:prop-times} of~$N_0$,
\begin{equation*}
\begin{split}
|\widetilde{a}_{\mathcal{Z}}|
\leq e^{-\eta\alpha N_0}\sum_{\mathbf{w}\in\mathcal{Z}}\prod_{j=0}^{N_0-1}(a_{w_j}\circ\varphi_j)
\leq &\; e^{-\eta\alpha N_0}\sum_{\mathbf{w}\in\mathscr A_\star^{N_0}}\prod_{j=0}^{N_0-1}(a_{w_j}\circ\varphi_j)
\\
=&\; e^{-\eta\alpha N_0}\prod_{j=0}^{N_0-1}(a_1+a_\star)\circ\varphi_j\leq h^{\alpha_1}.
\end{split}
\end{equation*}
By Lemma~\ref{l:cq-log} (which still applies by Lemma~\ref{l:still-fine}) we have
$\widetilde a_{\mathcal Z}\in S^{\comp}_{1/6+}(T^*M)$ and
$\widetilde A_{\mathcal Z}=\Op_h(\widetilde a_{\mathcal Z})+\mathcal O(h^{1/2-})_{L^2\to L^2}$.
Then by~\eqref{e:precise-norm}
we have $\|\widetilde A_{\mathcal Z}\|\leq h^{\alpha_1}+\mathcal O(h^{1/3-})$,
finishing the proof.
\end{proof}
%%%%%%%%%%%%%%%%%%%%%%%%%%%%%%%%%%%%%%%%%%%%%%%%%%%%%%%%%%%%%%%%%%%%%%%%%%%%%%%%
Armed with Lemma~\ref{l:Z-damp} we now estimate the norm of $\widetilde U_{\mathcal Y}$:
%%%%%%%%%%%%%%%%%%%%%%%%%%%%%%%%%%%%%%%%%%%%%%%%%%%%%%%%%%%%%%%%%%%%%%%%%%%%%%%%
\begin{prop}
	\label{l:Y-damp}
With $\alpha_1>0$ defined in~\eqref{e:Z-damp}, we have
\begin{equation}
\|\widetilde{U}_\mathcal{Y}\|_{L^2\to L^2}\leq \mathcal O(h^{\alpha_1})+\mathcal O(h^{1/3-}).
\end{equation}
\end{prop}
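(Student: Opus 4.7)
The plan is to mimic the structure of the proof of Proposition \ref{l:Y-control}, with the simplification that here we need only a direct norm bound: the central factor $\widetilde U_{\mathcal Z}$ already comes with a decaying norm (Lemma \ref{l:Z-damp}), and no right-hand-side ``control term'' needs to be propagated. First I would split $\mathcal Y = \bigsqcup_{\ell=1}^{6\Lambda+1}\mathcal Y_\ell$ exactly as in the proof of Proposition \ref{l:Y-control}, so that $\mathcal Y_\ell$ consists of those concatenations $\mathbf w^{(1)}\cdots\mathbf w^{(6\Lambda+1)}$ whose last $\mathcal Z$-block sits at position $\ell$ (i.e.\ $\mathbf w^{(\ell+1)},\ldots,\mathbf w^{(6\Lambda+1)}\in\mathcal Z^\complement$). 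A direct calculation from the definition \eqref{e:damp-word} gives the reversed concatenation rule $\widetilde U_{\mathbf v \mathbf w} = \widetilde U_{\mathbf w}\widetilde U_{\mathbf v}$, so summing over the constituent blocks yields
\[
\widetilde U_{\mathcal Y_\ell}\ =\ \widetilde U_{\mathcal Z^\complement}^{\,6\Lambda+1-\ell}\,\widetilde U_{\mathcal Z}\,\widetilde U_{\mathscr A_\star^{N_0}}^{\,\ell-1}.
\]

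Next I would bound the ``benign'' factors uniformly in $\ell$. By Lemma \ref{l:still-fine} the damped symbols $\widetilde a_1,\widetilde a_\star$ satisfy the assumptions of \S\ref{s:proofs-notation}, so Lemma \ref{l:cq-log} applies with tildes, and combined with the pointwise inequality
\[
\widetilde a_{\mathcal Z^\complement},\ \widetilde a_{\mathscr A_\star^{N_0}}\ \leq\ \prod_{j=0}^{N_0-1}(a_1+a_\star)\circ\varphi_j\ \leq\ 1
\]
and the sharp norm bound \eqref{e:precise-norm} this gives $\|\widetilde A_{\mathcal Z^\complement}\|_{L^2\to L^2}, \|\widetilde A_{\mathscr A_\star^{N_0}}\|_{L^2\to L^2} \leq 1 + \mathcal O(h^{1/3-})$. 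Since $U(N_0)$ is unitary and $\widetilde U_{\mathbf w} = U(N_0)\widetilde A_{\mathbf w}$, the same bound passes to the corresponding propagator norms. As there are exactly $6\Lambda$ such benign factors in the product above (an $h$-independent number), their combined norm stays $1 + \mathcal O(h^{1/3-})$ uniformly in $\ell$.

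Plugging in Lemma \ref{l:Z-damp} for the central $\widetilde U_{\mathcal Z}$ factor then yields $\|\widetilde U_{\mathcal Y_\ell}\|_{L^2\to L^2} \leq \mathcal O(h^{\alpha_1}) + \mathcal O(h^{1/3-})$, and summing the $(6\Lambda+1)$ triangle-inequality contributions finishes the proof. The only point requiring real care is the uniformity in $\ell$ of the benign-factor norm bounds, which rests on the mildly exotic symbol calculus of Lemma \ref{l:cq-log} applied to the damped symbols; once this uniformity is in hand the argument is a formal transcription of the short-logarithmic-time step used in the proof of Proposition \ref{l:Y-control}.
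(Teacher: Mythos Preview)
Your proposal is correct and follows essentially the same approach as the paper's proof: the paper writes the decomposition $\widetilde U_{\mathcal Y}=\sum_{\ell=1}^{6\Lambda+1}\widetilde U_{\mathcal Z^\complement}^{\,6\Lambda+1-\ell}\widetilde U_{\mathcal Z}\widetilde U_{\mathscr A_\star^{N_0}}^{\,\ell-1}$ directly, bounds the benign factors by $1+\mathcal O(h^{1/3-})$ via~\eqref{e:A-E-norm} (which, as you correctly observe, applies to the tilded operators by Lemma~\ref{l:still-fine}), invokes Lemma~\ref{l:Z-damp} for the central factor, and sums. Your explicit verification of the reversed concatenation rule $\widetilde U_{\mathbf v\mathbf w}=\widetilde U_{\mathbf w}\widetilde U_{\mathbf v}$ is a useful sanity check that the paper leaves implicit.
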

%%%%%%%%%%%%%%%%%%%%%%%%%%%%%%%%%%%%%%%%%%%%%%%%%%%%%%%%%%%%%%%%%%%%%%%%%%%%%%%%
\begin{proof}
From the definition~\eqref{e:XY-def} of~$\mathcal Y$ we have
$$
\widetilde{U}_{\mathcal{Y}}=\sum_{\ell=1}^{6\Lambda+1}\widetilde{U}_{\mathcal{Z}^{\complement}}^{6\Lambda+1-\ell}\widetilde{U}_{\mathcal{Z}}\widetilde{U}_{\mathscr A_\star^{N_0}}^{\ell-1}.
$$
By~\eqref{e:A-E-norm} we have
$$
\|\widetilde U_{\mathcal Z^{\complement}}\|_{L^2\to L^2}=\|\widetilde A_{\mathcal Z^{\complement}}\|
\leq 1+\mathcal O(h^{1/3-})
$$
and same is true for~$\widetilde U_{\mathscr A_\star^{N_0}}$. Using Lemma~\ref{l:Z-damp}
and the triangle inequality
we then have
$$
\|\widetilde U_{\mathcal Y}\|_{L^2\to L^2}
\leq (6\Lambda+1)h^{\alpha_1}+\mathcal O(h^{1/3-}),
$$
finishing the proof.
\end{proof}
%%%%%%%%%%%%%%%%%%%%%%%%%%%%%%%%%%%%%%%%%%%%%%%%%%%%%%%%%%%%%%%%%%%%%%%%%%%%%%%%

%%%%%%%%%%%%%%%%%%%%%%%%%%%%%%%%%%%%%%%%%%%%%%%%%%%%%%%%%%%%%%%%%%%%%%%%%%%%%%%%
\subsubsection{Uncontrolled words and end of the proof}
  \label{s:dwe-uncontrolled}
  
We now finish the proof of Proposition~\ref{t:propagator-decay}
and thus of Theorem~\ref{t:dwe}. Similarly to~\cite[\S3.5]{JinDWE},
using the identities $\widetilde U_{\mathscr A_{\star}^N}=(\widetilde U(1)(I-A_0))^N$
and $A_0\Pi=0$ we have
\begin{equation}
  \label{e:david-1}
\widetilde U(N)\Pi=\widetilde U_{\mathscr A_{\star}^N}\Pi+\mathcal O(h^{1-})_{L^2\to L^2},\quad
\widetilde U_{\mathscr A_{\star}^N}=\widetilde U_{\mathcal X}+\widetilde U_{\mathcal Y}.
\end{equation}
Let $\beta>0$ be the constant in Proposition~\ref{l:longdec-0}
for the operators $\widetilde A_{\mathbf w}$, $\mathbf w\in\mathscr A_\star^N$. Choose
$\alpha>0$ satisfying~\eqref{e:alpha-chosen}. Using the triangle inequality,
Proposition~\ref{l:longdec-0}, and Lemma~\ref{l:X-count}, we have
\begin{equation}
  \label{e:david-2}
\|\widetilde U_{\mathcal X}\|_{L^2\to L^2}=\|\widetilde A_{\mathcal X}\|_{L^2\to L^2}=\mathcal O(h^{\beta/2}).
\end{equation}
Combining~\eqref{e:david-1}, \eqref{e:david-2}, and Proposition~\ref{l:Y-damp},
we get Proposition~\ref{t:propagator-decay} with
$$
\beta_1:=\min(\textstyle{\beta\over 2},\alpha_1,\textstyle{1\over 4})>0.
$$

%%%%%%%%%%%%%%%%%%%%%%%%%%%%%%%%%%%%%%%%%%%%%%%%%%%%%%%%%%%%%%%%%%%%%%%%%%%%%%%%
%%%%%%%%%%%%%%%%%%%%%%%%%%%%%%%%%%%%%%%%%%%%%%%%%%%%%%%%%%%%%%%%%%%%%%%%%%%%%%%%
\section{Decay for long words}
\label{s:long-word-fup}

In this section we prove the Proposition~\ref{l:longdec-0}, relying on propagation results up to the local Ehrenfest time
(Propositions~\ref{l:ehrenfest-prop}, \ref{l:ehrenfest-prop-sum}) established in~\S\ref{s:ops-Aq} below and on the fractal uncertainty
principle (Proposition~\ref{l:fup-2}).

Recall from~\eqref{e:prop-times} the short and long logarithmic propagation times
$N_0$ and~$N$. Put
\begin{equation}
  \label{e:N-1-def}
N_1:=N-N_0=6\Lambda N_0\geq {\log(1/h)\over\Lambda_0}.
\end{equation}
We will prove the following equivalent version of Proposition~\ref{l:longdec-0}
in terms of products of two operators corresponding to propagation
forward and backwards in time (see~\eqref{e:A-pm-def}
for the definitions of $A^-_{\mathbf v}$, $A^+_{\mathbf w}$):
%%%%%%%%%%%%%%%%%%%%%%%%%%%%%%%%%%%%%%%%%%%%%%%%%%%%%%%%%%%%%%%%%%%%%%%%%%%%%%%%
\begin{prop}
  \label{l:longdec-1}
Let the assumptions~(1)--(5) of~\S\ref{s:proofs-notation} hold and $\varepsilon_0>0$
be small enough depending only on~$(M,g)$.
Then there exists $\beta>0$ depending only on $\mathcal V_1,\mathcal V_\star$
and there exists $C>0$ depending only on $A_1,A_\star$ such that
for all $\mathbf v\in \mathscr A_\star^{N_0}$, $\mathbf w\in \mathscr A_\star^{N_1}$
\begin{equation}
  \label{e:longdec-1}
\|A^-_{\mathbf v}A^+_{\mathbf w}\|_{L^2(M)\to L^2(M)}\leq Ch^\beta.
\end{equation}
\end{prop}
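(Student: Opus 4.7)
The plan is to combine three ingredients developed in the preceding sections: a sharp Egorov theorem valid up to the local Ehrenfest time (Propositions~\ref{l:ehrenfest-prop}--\ref{l:ehrenfest-prop-sum}); the porosity of the support sets $\mathcal V^\pm_{\mathbf v}$ along unstable/stable intervals (Lemma~\ref{l:porosity-basic}); and the fractal uncertainty principle (Proposition~\ref{l:fup-2}). Since the symbols $a^-_{\mathbf v},a^+_{\mathbf w}$ become too wild for any standard pseudodifferential calculus well beyond the minimal Ehrenfest time $\log(1/h)/(2\Lambda_1)$, a single Egorov step does not suffice; I will instead decompose $A^-_{\mathbf v}A^+_{\mathbf w}$ into a sum of microlocal pieces, each of which is analyzed by conjugation to the straightened model of Lemma~\ref{l:stun-straight} and then controlled via Proposition~\ref{l:fup-2}.

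Concretely, using the refined Ehrenfest calculus of~\S\ref{s:ops-Aq} I will write
\begin{equation*}
A^-_{\mathbf v}A^+_{\mathbf w}\;=\;\sum_{q_-,q_+}A^-_{\mathbf v,q_-}A^+_{\mathbf w,q_+}\;+\;\cO(h^\infty)_{L^2\to L^2},
\end{equation*}
where each $A^-_{\mathbf v,q_-}$ is microlocalized in a rectangle of weak-stable width $\sim e^{-\Lambda_0 N_0}\sim h^{1/(6\Lambda)}$ around some point $\rho_-$, and each $A^+_{\mathbf w,q_+}$ in a rectangle of weak-unstable width $\sim e^{-\Lambda_0 N_1}\lesssim h$ around some $\rho_+$, as predicted by Corollary~\ref{l:stun-main-cor}. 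Only pairs whose rectangles intersect contribute non-negligibly, which leaves polynomially many pairs in~$1/h$; in particular the supports of the two rectangles are then within distance $\leq\varepsilon_0$ of a common reference point $\rho_0$.

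For each surviving pair I will apply the symplectomorphism $\varkappa_{\rho_0}$ of Lemma~\ref{l:stun-straight} to flatten the weak-unstable foliation, lift it to Fourier integral operators $B,B'$ quantizing $\varkappa_{\rho_0}$ in the sense of~\eqref{e:fio-quantize}, and insert $B'B=I$ microlocally. In the straightened coordinates $(y,\eta)$, the unstable leaves become almost horizontal with the $C^{3/2}$ error $\cO(\eta_1^{3/2})$ provided by Lemma~\ref{l:stun-straight}(8); at the relevant momentum scale $\eta_1\lesssim h$ this error is $\cO(h^{3/2})$, hence subcritical. Consequently the conjugated $A^+_{\mathbf w,q_+}$ has microsupport essentially inside a strip $\{\eta_1\in\Omega_-\}$ with $\Omega_-$ porous on scales $h$ to~$1$, while the conjugated $A^-_{\mathbf v,q_-}$ has microsupport in $\{y_1\in\Omega_+\}$ with $\Omega_+$ porous on scales $h^{1/(6\Lambda)}$ to~$1$; both are consequences of Lemma~\ref{l:porosity-basic} after invoking Lemmas~\ref{l:porous-nbhd}--\ref{l:porous-map} to absorb neighbourhoods and the change of variable, and Proposition~\ref{l:fourloc-lag} to enforce the Fourier-side localization. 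These porosity exponents satisfy $\gamma_0^+ +\gamma_0^- = 1+\tfrac{1}{6\Lambda}>1$ and $\gamma_1^+ +\gamma_1^- = 0<1$, so Proposition~\ref{l:fup-2} yields a norm bound $Ch^{\gamma\beta}$ on each piece with $\gamma=\tfrac{1}{6\Lambda}$.

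Summing over the polynomially many pairs $(q_-,q_+)$ and absorbing the polynomial loss into a strictly smaller exponent $\beta'<\gamma\beta$ will give~\eqref{e:longdec-1}. The principal technical obstacle is the mere $C^{3/2}$ regularity of the stable/unstable foliation: this is what forces the specific choice of propagation times $N=(6\Lambda+1)N_0$, since the factor $N_1/N_0=6\Lambda$ must be large enough that the weak-unstable direction contracts to the momentum scale $h$ at which the $C^{3/2}$ horizontal error $\cO(h^{3/2})$ is negligible, yet $N_0$ must remain short enough for Egorov's theorem and the classical/quantum correspondence to apply to each factor in the microlocal partition. The delicate bookkeeping of the \emph{local} (rather than uniform) Ehrenfest times through the conjugation by $B$, together with the matching of porosity scales to the hypotheses of Proposition~\ref{l:fup-2}, constitutes the main technical work.
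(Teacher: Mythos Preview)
Your outline hits the right ingredients (straightening via $\varkappa_{\rho_0}$, porosity from Lemma~\ref{l:porosity-basic}, FUP via Proposition~\ref{l:fup-2}), but there is a genuine gap in the way you match the scales, and it is exactly the place where the paper invests the most work.

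The problem is your treatment of the $A^+_{\mathbf w}$ side. You want each piece $A^+_{\mathbf w,q_+}$ to be microlocalized in a rectangle of weak-unstable width $\lesssim h$, and then to read off that ``$\Omega_-$ is porous on scales $h$ to $1$''. These two claims are in tension. If you decompose all the way down to individual width-$h$ rectangles, then each $\Omega_-$ is a \emph{single interval} of length $\sim h$, which carries no porosity; FUP on a single interval gives nothing. If instead $\Omega_-$ is meant to be the $\eta_1$-projection of the \emph{whole} collection $\bigcup_{q_+}\varkappa(\mathcal V^+_{\mathbf w,q_+})$, then your $C^{3/2}$ argument breaks: the error in Lemma~\ref{l:stun-straight}(8) is $\mathcal O(|\zeta|^{3/2})$ where $\zeta$ is the distance from the reference leaf $W_{0u}(\rho_0)$, and for the full collection $\zeta$ ranges over an interval of macroscopic size, so the unstable leaves are not horizontal at all. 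You cannot have both the porosity and the horizontality of the same set $\Omega_-$ without an intermediate step.

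The paper resolves this by an intermediate \emph{cluster decomposition} at scale $h^{2/3}$ (Lemma~\ref{l:cluster}): within a cluster one has $|\zeta|\lesssim h^{2/3}$, so the $C^{3/2}$ error is $\lesssim h$, and the $\eta_1$-projection of the cluster is a genuinely porous set (Lemma~\ref{l:poro+}); different clusters are handled by Cotlar--Stein (Lemma~\ref{l:faror}), so there is no polynomial loss from summing. Relatedly, the paper never attempts momentum localization at scale $h$: it stops the word decomposition at Jacobian $\sim h^{-\tau}$ with $\tau<1$ (see~\eqref{e:Q-n-def}, \eqref{e:tau-delta-def}), precisely because Proposition~\ref{l:fourloc-lag} requires $h'\geq h^\tau$ with $\tau<1$ and because operators with $\mathcal J^+_{\mathbf q}\sim h^{-1}$ are beyond the local double Ehrenfest time and escape the calculus of~\S\ref{s:ops-Aq}. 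Your ``absorb the polynomial loss into $\beta'<\gamma\beta$'' is also not safe: the number of refined words of length $N_1$ is $\sim h^{-c}$ with $c$ depending on $Q,\Lambda_0$, which can exceed any fixed $\beta$; this is another reason the Cotlar--Stein step is essential.
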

%%%%%%%%%%%%%%%%%%%%%%%%%%%%%%%%%%%%%%%%%%%%%%%%%%%%%%%%%%%%%%%%%%%%%%%%%%%%%%%%
\Remark The smallness of $\varepsilon_0$ is used in several places in the proof,
in particular at the beginning of~\S\ref{s:refined-partition},
in~\S\ref{s:longtime}, in Lemma~\ref{l:cluster},
in the beginning of~\S\ref{s:fup-normal},
and in Lemma~\ref{l:loca+}.
Roughly speaking, we need $\varepsilon_0$ to be much smaller than
the sizes of local stable/unstable leaves from~\S\ref{s:stun}
and the domains of the local coordinates constructed in Lemma~\ref{l:stun-straight}.

To show that Proposition~\ref{l:longdec-1} implies Proposition~\ref{l:longdec-0}
we note that each word in $\mathscr A_\star^N$ can be written as a concatenation
$\overline{\mathbf w}\mathbf v$ where $\mathbf v\in\mathscr A_\star^{N_0}$, $\mathbf w\in\mathscr A_\star^{N_1}$
and $\overline{\mathbf w}=w_{N_1}\dots w_2w_1$ is the reverse of $\mathbf w=w_1w_2\dots w_{N_1}$.
We have by~\eqref{e:word-concat}
$$
A_{\overline{\mathbf w}\mathbf v}=A^-_{\overline{\mathbf w}\mathbf v}=
U(-N_1) A^-_{\mathbf v}A^+_{\mathbf w}U(N_1).
$$
Since $U(N_1)$ is unitary, the bound~\eqref{e:longdec-1}
implies that $\|A_{\overline{\mathbf w}\mathbf v}\|_{L^2(M)\to L^2(M)}\leq Ch^\beta$
which gives Proposition~\ref{l:longdec-0}.

%%%%%%%%%%%%%%%%%%%%%%%%%%%%%%%%%%%%%%%%%%%%%%%%%%%%%%%%%%%%%%%%%%%%%%%%%%%%%%%%
\subsection{Outline of the proof}
  \label{s:outline}

We provide here an informal explanation of the proof of Proposition~\ref{l:longdec-1}.
For this we use a naive version of the classical/quantum correspondence,
thinking of $A^-_{\mathbf v},A^+_{\mathbf w}$ as quantizations of the symbols
$a^-_{\mathbf v}$, $a^+_{\mathbf w}$ defined in~\eqref{e:a-pm-def}
and restricting the analysis to the cosphere bundle $S^*M$. We also make the simplifying assumption
\begin{equation}
  \label{e:simplifier}
\mathbf v=\underbrace{\star\ldots\star}_{N_0\text{ times}},\quad
\mathbf w=\underbrace{\star\ldots\star}_{N_1\text{ times}}.
\end{equation}
Recall from~\eqref{e:V+-} that $a^-_{\mathbf v},a^+_{\mathbf w}$ are supported in the sets
$\mathcal V^-_{\mathbf v},\mathcal V^+_{\mathbf w}$ which under the assumption~\eqref{e:simplifier}
have the form
$$
\mathcal V^-_{\mathbf v}=\bigcap_{j=0}^{N_0-1}\varphi_{-j}(\mathcal V_\star),\quad
\mathcal V^+_{\mathbf w}=\bigcap_{j=1}^{N_1}\varphi_j(\mathcal V_\star).
$$
%%%%%%%%%%%%%%%%%%%%%%%%%%%%%%%%%%%%%%%%%%%%%%%%%%%%%%%%%%%%%%%%%%%%%%%%%%%%%%%%
\begin{figure}
\includegraphics[height=3.45cm]{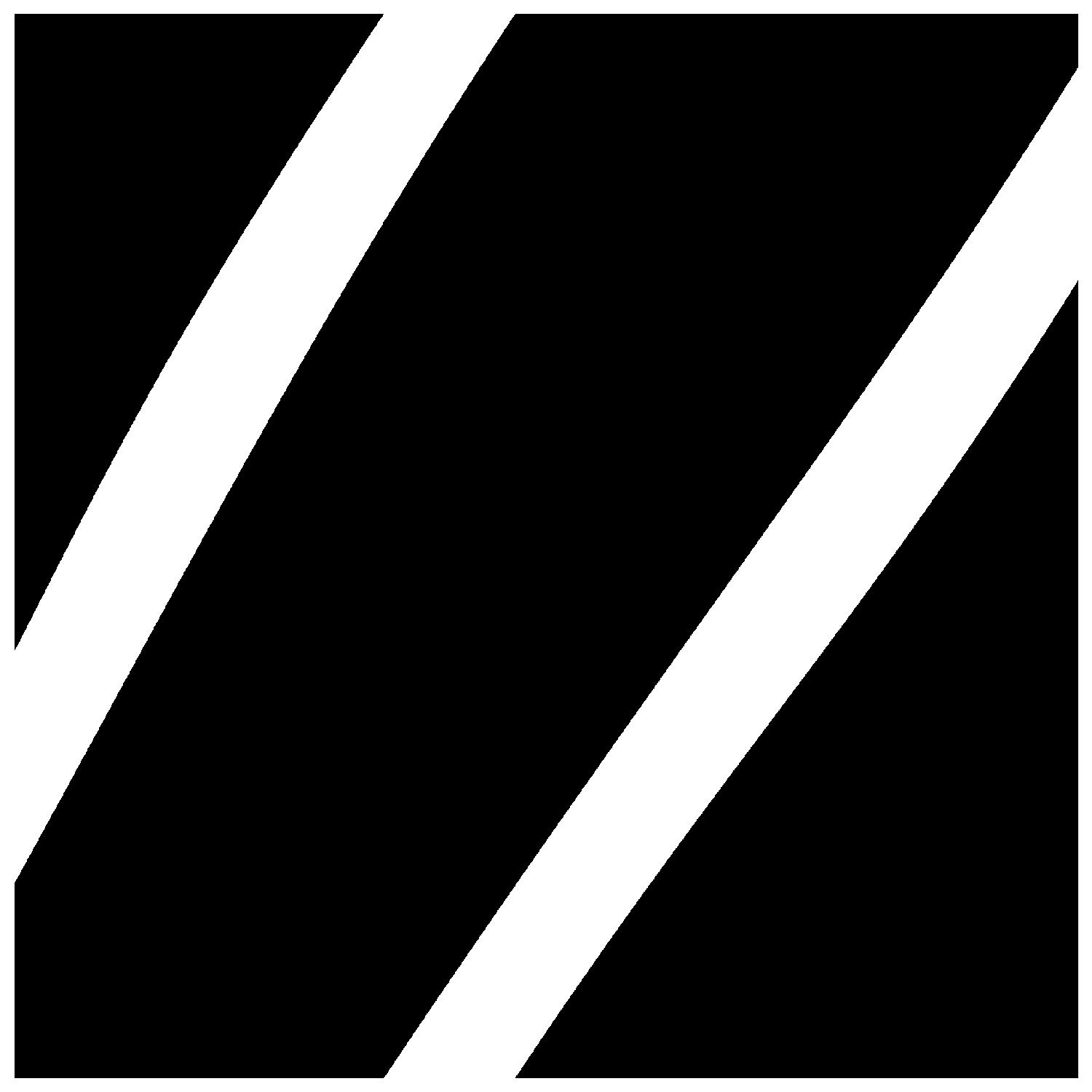}
\includegraphics[height=3.45cm]{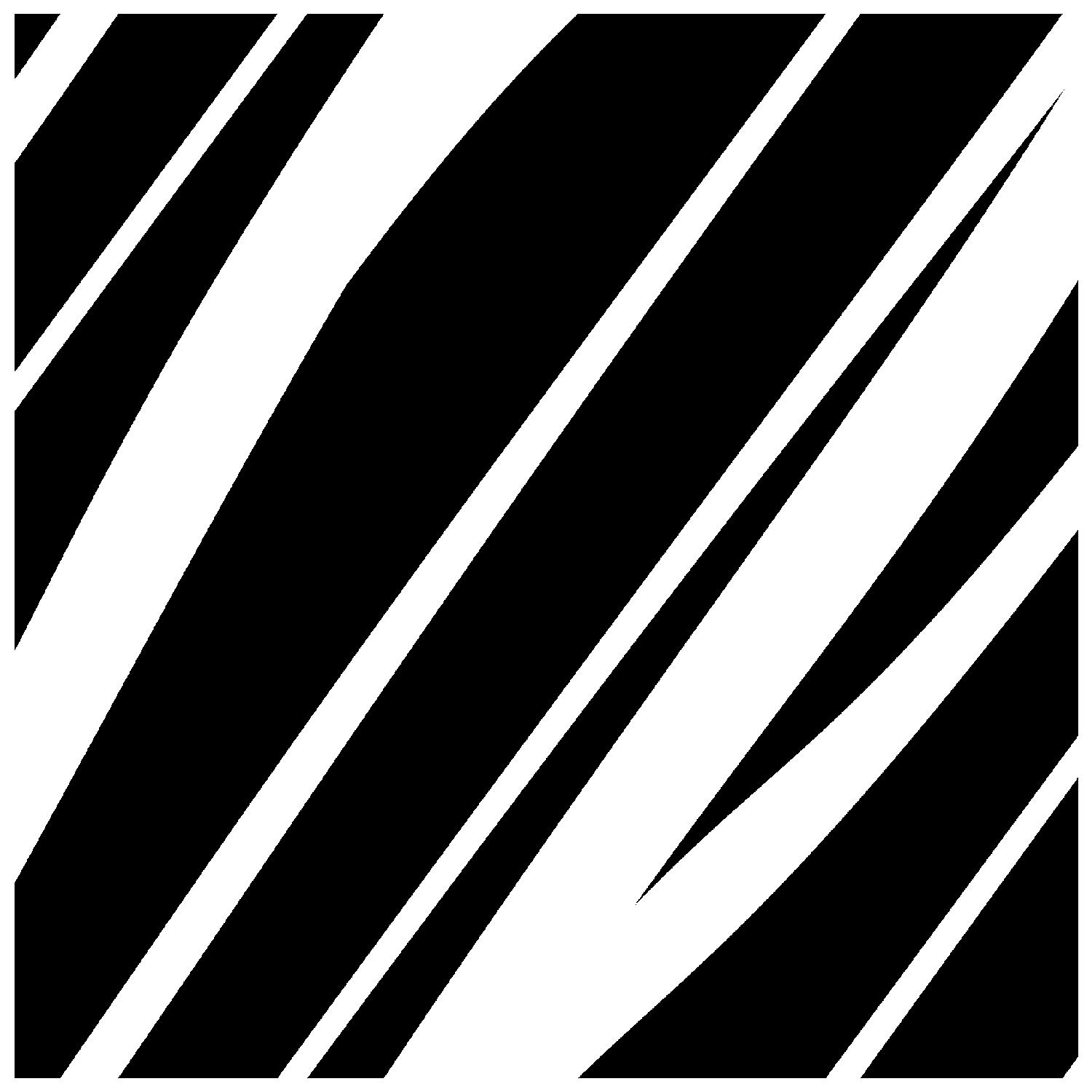}
\includegraphics[height=3.45cm]{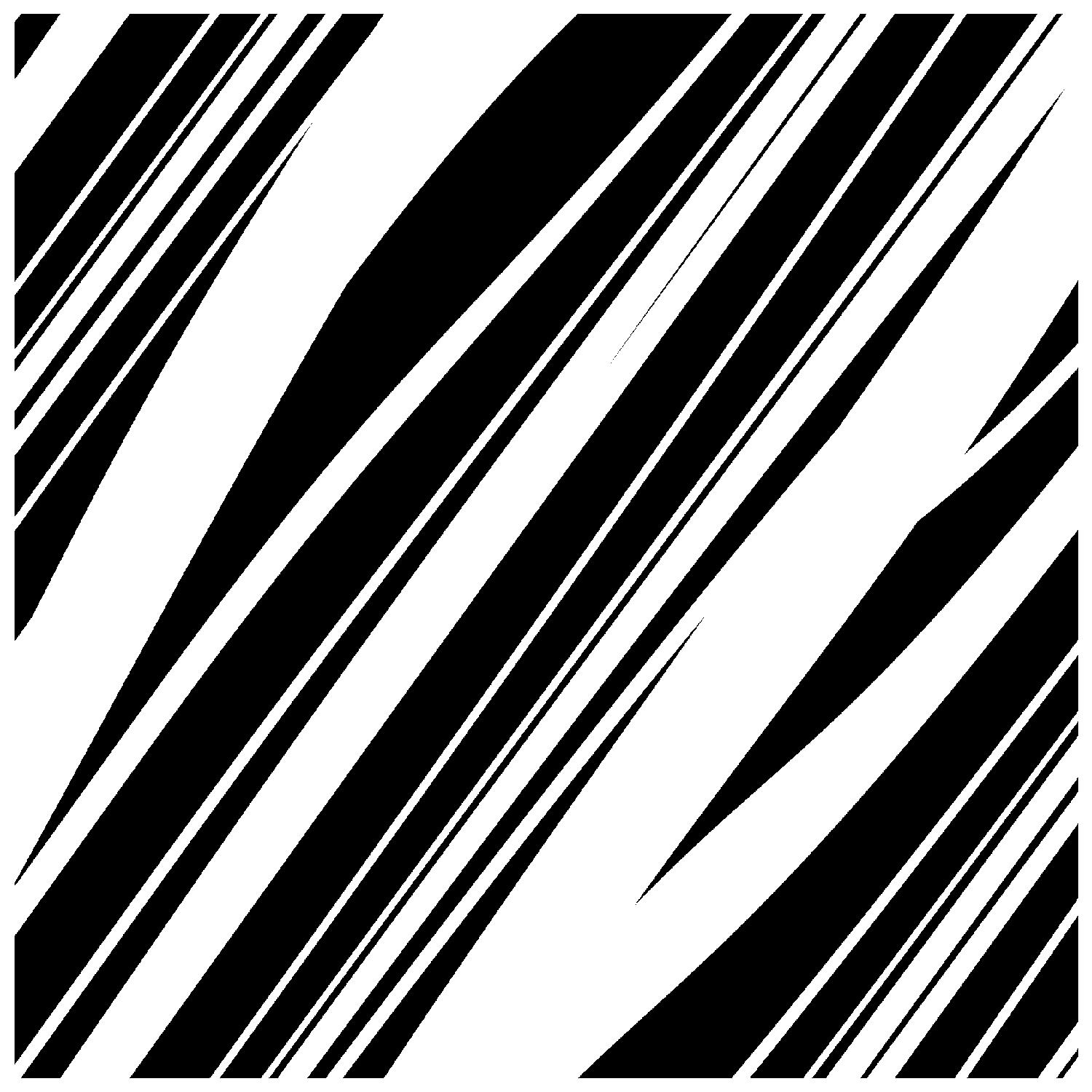}
\includegraphics[height=3.45cm]{vfmatfig/catfor4.jpg}
\caption{The sets $\bigcap_{j=1}^n \varphi_j(\mathcal V_\star)$
for $n=1,2,3,4$, pictured with the flow direction removed.
See also Figures~\ref{f:porosity-bas} (page~\pageref{f:porosity-bas})
and~\ref{f:moderate} (page~\pageref{f:moderate}).
}
\label{f:demo}
\end{figure}%
%%%%%%%%%%%%%%%%%%%%%%%%%%%%%%%%%%%%%%%%%%%%%%%%%%%%%%%%%%%%%%%%%%%%%%%%%%%%%%%%
We call the complement of $\mathcal V_\star$ (which has nonempty interior by assumption~(3) in~\S\ref{s:proofs-notation}) the \emph{hole}. 
Then $\rho\in \mathcal V^-_{\mathbf v}$
if the geodesic starting at $\rho$ does not enter the hole at least until the time~$N_0$
in the future, more precisely $\varphi_j(\rho)\in\mathcal V_\star$ for 
all integer $j\in [0,N_0-1]$. Similarly $\rho\in\mathcal V^+_{\mathbf w}$ if
that geodesic does not enter the hole up to the time $N_1$ in the past,
more precisely $\varphi_j(\rho)\in\mathcal V_\star$ for all integer $j\in [-N_1,-1]$.
See Figure~\ref{f:demo}.
Viewing $A^-_{\mathbf v},A^+_{\mathbf w}$ as operators which microlocalize
to $\mathcal V^-_{\mathbf v},\mathcal V^+_{\mathbf w}$, our goal is to use the
fractal uncertainty principle to show that microlocalizations to these two sets
are incompatible with each other, this incompatibility taking the form of the norm bound~\eqref{e:longdec-1}.

Recall from~\S\ref{s:stun} that $S^*M$ is foliated by (local) weak unstable leaves.
We use this foliation to partition $\mathcal V^+_{\mathbf w}$ into \emph{clusters}
$$
\mathcal V^+_{\mathbf w}=\bigsqcup_r \mathcal V^+_{\mathbf w,r}
$$
where each $\mathcal V^+_{\mathbf w,r}$ lies $\mathcal O(h^{2/3})$ close to a certain local weak unstable leaf (the construction of the partition uses the Lipschitz regularity of the unstable foliation).
On the operator side this gives the decomposition (see Lemma~\ref{l:cluster}
and~\eqref{e:clusterop})
\begin{equation}
  \label{e:clusterop-outline}
A^-_{\mathbf v}A^+_{\mathbf w}=\sum_r A^-_{\mathbf v}A^+_{\mathbf w,r}.
\end{equation}
If two clusters $\mathcal V^+_{\mathbf w,r},\mathcal V^+_{\mathbf w,r'}$
are ``sufficiently disjoint'', then the corresponding operators in~\eqref{e:clusterop-outline}
satisfy the almost orthogonality bounds
\begin{equation}
  \label{e:ao-outline}
(A^-_{\mathbf v}A^+_{\mathbf w,r})^* A^-_{\mathbf v}A^+_{\mathbf w,r'},\
A^-_{\mathbf v}A^+_{\mathbf w,r'}(A^-_{\mathbf v}A^+_{\mathbf w,r})^*
\ =\ \mathcal O(h^\infty)_{L^2\to L^2}.
\end{equation}
This follows from the classical/quantum correspondence and the fact that
\begin{equation}
  \label{e:almoster}
  h^{2/3}\cdot h^{1/6}\gg h
\end{equation}
where $h^{2/3}$ is the minimal distance between disjoint clusters (in the stable direction), while
$h^{1/6}$ is the minimal scale of oscillation of the symbol $a^+_{\mathbf v}$, along the unstable direction.
The almost orthogonality bounds are proved in Lemma~\ref{l:faror}, and the inequality \eqref{e:almoster} appears in~\eqref{e:tylen} in the proof. 
The remark following that Lemma gives an informal argument on how the inequality \eqref{e:almoster} leads to almost orthogonality. (Note that in~\S\ref{s:fup-endgame} the ``cluster objects'' $\mathcal V^+_{\mathbf w,r},A^+_{\mathbf w,r}$ are replaced by the more flexible objects $\mathcal V^+_{\mathcal Q},A^+_{\mathcal Q}$.)

Using~\eqref{e:clusterop-outline}, the Cotlar--Stein Theorem, and the fact that each cluster is disjoint from all but boundedly many other clusters, we reduce the estimate~\eqref{e:longdec-1} to a bound for every single cluster (see Proposition~\ref{l:longdec-3})
\begin{equation}
  \label{e:longdec-cluss}
\|A^-_{\mathbf v}A^+_{\mathbf w,r}\|_{L^2(M)\to L^2(M)}\leq Ch^\beta.
\end{equation}
We henceforth fix some cluster $\mathcal V^+_{\mathbf w,r}$, contained
in an $\mathcal O(h^{2/3})$ sized neighborhood of the weak unstable
leaf $W_{0u}(\rho_0)$ for some $\rho_0\in S^*M$. We use the symplectic coordinates
$\varkappa:(x,\xi)\mapsto (y,\eta)$ centered at $\rho_0$ which were
constructed in Lemma~\ref{l:stun-straight},
see~\eqref{e:conj-kappa}.
We conjugate $A^-_{\mathbf v},A^+_{\mathbf w,r}$ by Fourier integral operators
quantizing $\varkappa$ (see~\S\ref{s:micro-conjugation}).
This produces (still under our naive view of the classical/quantum correspondence)
pseudodifferential operators which microlocalize to the sets
$\varkappa(\mathcal V^-_{\mathbf v}),\varkappa(\mathcal V^+_{\mathbf w,r})$.
The latter are subsets of $T^*\mathbb R^2$ but we reduce them to subsets of $T^*\mathbb R$
by restricting to $\varkappa(S^*M)=\{\eta_2=1\}$ and projecting along the flow direction
$\partial_{y_2}$. Denote the resulting sets by
$\Theta^-,\Theta^+\subset T^*\mathbb R$. The informal argument above (see Lemma~\ref{l:2D-FUP} for more details
on reducing from $T^*\mathbb R^2$ to $T^*\mathbb R$ and Lemmas~\ref{l:loca+}--\ref{l:loca-}
for microlocalization of the conjugated operators)
reduces~\eqref{e:longdec-cluss} to the estimate
\begin{equation}
  \label{e:clustrofob}
\|\mathcal A^-\mathcal A^+\|_{L^2(\mathbb R)\to L^2(\mathbb R)}\leq Ch^\beta
\end{equation}
where $\mathcal A^\pm$ are operators on $L^2(\mathbb R)$ which microlocalize to
the sets $\Theta^\pm$ described above.

We next understand the structure of the sets $\Theta^\pm$.
The set $\mathcal V^+_{\mathbf w,r}$ is `smooth' along the flow and unstable directions:
if $\rho,\rho'$ lie on the same local weak unstable leaf then
the trajectories $\varphi_j(\rho),\varphi_j(\rho')$, $j\leq 0$, stay close to each other,
thus $\rho\in \mathcal V^+_{\mathbf w,r}$ if and only if $\rho'\in\mathcal V^+_{\mathbf w,r}$
unless the boundary of the hole was involved. This is easy to see on Figure~\ref{f:demo}
with the `strokes' along the unstable direction (corresponding to unstable rectangles
introduced below); see Lemma~\ref{l:blurry}
for a rigorous statement. We then embed $\mathcal V^+_{\mathbf w,r}$ into a union of many
\emph{`unstable rectangles'}, each of which is the $h^\tau$-neighborhood of a local weak unstable leaf,
with $\tau<1$, defined in~\eqref{e:tau-delta-def} below, chosen very close to~1.
This uses the inequality~\eqref{e:N-1-def} which ensures that the thickness of each
`stroke' is smaller than~$h$. On the operator side
unstable rectangles correspond to individual summands $A^+_{\mathbf q}$ in the operator
$A^+_{\mathcal Q'_n(\mathbf w,e)}$ introduced in~\S\ref{s:moderate-reduction}.
See also Figure~\ref{f:moderate} (page~\pageref{f:moderate}).

The specific unstable rectangles which are part of $\mathcal V^+_{\mathbf w,r}$ are distributed
in a porous way, which is where we use that the hole has nonempty interior (see Lemma~\ref{l:porosity-intersected}
which is an application of Lemma~\ref{l:porosity-basic}).
The set $\Theta^+$ is a union of components arising from the images of these rectangles under~$\varkappa$.
Using the fact that $\mathcal V^+_{\mathbf w,r}$ is within $\mathcal O(h^{2/3})$
of the leaf $W_{0u}(\rho_0)$ and the properties of~$\varkappa$ in Lemma~\ref{l:stun-straight}
(whose proof used the $C^{3/2}$ regularity of the unstable foliation),
we show that each component of $\Theta^+$ is contained in a `horizontal rectangle' 
of dimensions $1\times h^\tau$, stretched along the $y_1$ direction~-- see
Lemma~\ref{l:close-unstable} and Figure~\ref{f:porositors}. This gives
\begin{equation}
  \label{e:poro+over}
\Theta^+\subset \{(y_1,\eta_1)\mid \eta_1\in \Omega^+\}
\end{equation}
where $\Omega^+\subset\mathbb R$ is porous on scales $h^\tau$ to~1~--
see Lemma~\ref{l:poro+}.

As for the set $\mathcal V^-_{\mathbf v}$, it can be embedded into a union
of stable rectangles of thickness $h^{1/(6\Lambda)}$ each (here we use the definition of~$N_0$).
The corresponding components of~$\Theta^-$ look like rectangles of thickness $h^{1/(6\Lambda)}$
with the long axis aligned along the stable direction, thus transverse to the $\partial_{y_1}$ direction. Because the stable direction is usually not vertical, the projection of each of these rectangles onto the $y_1$ axis might be large (e.g. it could be an interval of a size~$1$). However, we only need to understand the intersection of $\Theta^-$ with a neighborhood of $\Theta^+$.
Since $\mathcal V^+_{\mathbf w,r}$ lies $\mathcal O(h^{2/3})$ close to the leaf $W_{0u}(\rho_0)$,
$\Theta^+$ lies $\mathcal O(h^{2/3})$ close to $\{\eta_1=0\}$, in particular
$\Theta^+\subset \{|\eta_1|\leq h^{1/6}\}$.
The intersection of each component of $\Theta^-$ with $\{|\eta_1|\leq h^{1/6}\}$
is a rectangle of thickness $h^{1/(6\Lambda)}$ and height $h^{1/6}\ll h^{1/(6\Lambda)}$,
thus its projection onto the $y_1$ variable is now contained in an $h^{1/(6\Lambda)}$ sized interval,
see Figure~\ref{f:porositors}.
This implies that
\begin{equation}
  \label{e:poro-over}
\Theta^-\subset \{(y_1,\eta_1)\mid y_1\in\Omega^-\}
\end{equation}
where $\Omega^-\subset\mathbb R$ is porous on scales $h^{1/(6\Lambda)}$ to~1~-- see Lemma~\ref{l:poro-}.

Together~\eqref{e:poro+over} and~\eqref{e:poro-over} show that in~\eqref{e:clustrofob},
we may replace $\mathcal A^+$ by the Fourier multiplier $\indic_{\Omega^+}(hD_{y_1})$ and
$\mathcal A^-$ by the multiplication operator $\indic_{\Omega^-}(y_1)$.
The resulting estimate follows by the fractal uncertainty principle, in the version given
by Proposition~\ref{l:fup-2}, see also Lemma~\ref{l:2D-FUP}.
Here we use that there is a nontrivial overlap in the porosity scales of $\Omega^+$ and $\Omega^-$, namely
\begin{equation}
  \label{e:almoster2}
h^\tau\cdot h^{1/(6\Lambda)}\ll h,
\end{equation}
see~\eqref{e:porosity-finally-meets}. This is where we use that $\tau$ is chosen very close to~1.

To make the above explanations into a rigorous proof, we in particular need to make precise
the classical/quantum correspondence naively used above. This is complicated since
to study $A^+_{\mathbf w}$ we need to go beyond the Ehrenfest time, that is the expansion rate of the geodesic flow for time $N_1$ is much larger than $h^{-1/2}$,
therefore $A^+_{\mathbf w}$ will not lie in the mildly exotic
pseudodifferential calculus $\Psi^{\comp}_\delta$ of~\S\ref{s:mildly-exotic}. To overcome this problem we use several ideas:
\begin{itemize}
\item We write $a_\star=a_2+\dots+a_Q$, $A_\star=A_2+\dots+A_Q$ where the supports of the
symbols $a_2,\dots,a_Q$ are small enough to form a dynamically fine partition
(\S\ref{s:refined-partition}). We next
write $A^+_{\mathbf w}$ as the sum of polynomially many in~$h$ terms
of the form $A^+_{\mathbf q}$ where $\mathbf q$ are words in the alphabet
$\{2,\dots,Q\}$. One advantage of this splitting is that each $\mathbf q$ has a well-defined
local expansion rate of the flow, see~\eqref{e:jacobians-same}.
\item If $\mathbf q$ has expansion rate no more than $h^{-2\tau}$ (i.e. the length of~$\mathbf q$
is below the local double Ehrenfest time) then we can conjugate
$A^+_{\mathbf q}$ by $U(t)$ for an appropriate choice of~$t$ to get a pseudodifferential
operator in the mildly exotic calculus $\Psi^{\comp}_{\delta+}$, $\delta:=\tau/2$.
Here we use Egorov's Theorem up to local Ehrenfest time and the fact that $\tau<1$,
see~\S\ref{s:local-ehrenfest}. This technique is used in the proof of the almost orthogonality
statements~\eqref{e:ao-outline} and also to show that the operators
$A^+_{\mathbf w,r}$ corresponding to individual clusters are bounded on~$L^2$
almost uniformly in~$h$
(see~\eqref{e:apn-2}). We also use mildly exotic symbol calculus to show microlocalization
of $A^-_{\mathbf v}$ in Lemma~\ref{l:loca-}.
\item For microlocalization of $A^+_{\mathbf w,r}$ (Lemma~\ref{l:loca+}) we again
write it as the sum of individual terms $A^+_{\mathbf q}$. We then study each of these
using the long logarithmic time hyperbolic parametrix of~\cite{AnAnn,AN07,NZ09}~-- see~\S\ref{s:longtime}.
\end{itemize}

%%%%%%%%%%%%%%%%%%%%%%%%%%%%%%%%%%%%%%%%%%%%%%%%%%%%%%%%%%%%%%%%%%%%%%%%%%%%%%%%
\subsection{A refined partition}
  \label{s:refined-partition}
  
For each $\mathbf w\in\mathscr A_\star^\bullet$ the supports of
$a^\pm_{\mathbf w}$ can be rather large, including many trajectories of the flow;
this is due to the fact that $\supp a_\star$ typically contains the entire
$S^*M$ minus a fixed small set.
It will be convenient to break the symbols~$a^\pm_{\mathbf w}$
and the operators~$A^\pm_{\mathbf w}$ into smaller pieces,
each of which is `dynamically simple'. To do this,
we let $\varepsilon_0>0$ be small enough so that Lemma~\ref{l:stun-main} holds and
write
\begin{equation}
  \label{e:finer-partition}
a_\star=a_2+\dots+a_Q,\quad
A_\star=A_2+\dots+A_Q
\end{equation}
where $Q$ is some $h$-independent number and:
\begin{enumerate}
\item $a_2,\dots,a_Q\in C_c^\infty(T^*M\setminus 0;[0,1])$ are $h$-independent;
\item $\supp a_q\subset \mathcal V_q\cap \{\textstyle{1\over 4}<|\xi|_g<4\}$
for all $q=2,\dots,Q$ where $\mathcal V_q\subset \mathcal V_\star$
are some conic open sets;
\item the diameter of each $\mathcal V_q\cap S^*M$ with respect to $d(\bullet,\bullet)$
is smaller than $\varepsilon_0$;
\item $A_2,\dots,A_Q\in\Psi^{-\infty}_h(M)$ satisfy for $q=2,\dots,Q$
\begin{equation}
  \label{e:A-q-prop}
\sigma_h(A_q)=a_q,\quad
\WFh(A_q)\subset \mathcal V_q\cap \{\textstyle{1\over 4}<|\xi|_g<4\}.
\end{equation}
\end{enumerate}
Following the proof of Lemma~\ref{l:quantum-partition} it is straightforward to see how to construct decompositions~\eqref{e:finer-partition}
with the above properties, given $a_\star,A_\star,\varepsilon_0$.

Denote
$$
\mathscr A:=\{1,\dots,Q\},
$$
then the properties~(1)--(4) above hold for all $q\in\mathscr A$ (indeed, for $q=1$
they follow from the assumptions of~\S\ref{s:proofs-notation}), except we do not have $\mathcal V_1\subset\mathcal V_\star$. We also note that
$$
a_1+a_2+\cdots+a_Q=a_1+a_\star\leq 1.
$$
Similarly to~\S\ref{s:proofs-notation} we define
the set of words $\mathscr A^\bullet$ over the alphabet $\mathscr A$. For $\bq\in\mathscr A^\bullet$
we define the symbols~$a^\pm_{\mathbf q}$, the conic sets~$\mathcal V^\pm_{\mathbf q}$,
and the operators~$A^\pm_{\mathbf q}$
following~\eqref{e:a-pm-def}, \eqref{e:V+-}, and~\eqref{e:A-pm-def}. We will also use the notation 
$A^\pm_{\mathcal E},\ A^\pm_F$ from~\eqref{e:A-E-def}, \eqref{e:A-F-def}, this time for $\cE$ which is a subset of $\mathscr A^\bullet$ (resp. $F$ which is a function on $\mathscr A^\bullet$).

Since $\sup |a_q|\leq 1$, we see from~\eqref{e:precise-norm} (with $\delta=0$)
that $\|A_q\|_{L^2\to L^2}\leq 1+Ch^{1/2}$. Therefore we have for any fixed constant $C_0$
and small enough~$h$ depending on $C_0$
\begin{equation}
  \label{e:A-q-bdd}
\|A^\pm_{\mathbf q}\|_{L^2\to L^2}\leq 2\quad\text{for all}\quad \mathbf q\in\mathscr A^n,\quad
n\leq C_0\log(1/h).
\end{equation}

%%%%%%%%%%%%%%%%%%%%%%%%%%%%%%%%%%%%%%%%%%%%%%%%%%%%%%%%%%%%%%%%%%%%%%%%%%%%%%%%
\subsubsection{Jacobians for the refined partition}
\label{s:refined-jacobians}

To each refined word $\bq \in \sA^n$ we associate the minimal Jacobians
\begin{equation}
  \label{e:word-J-def}
\cJ^-_{\bq}:= \inf_{\rho\in \cV^-_{\bq}} J^u_n(\rho),\quad
\cJ^+_{\bq}:= \inf_{\rho\in \cV^+_{\bq}} J^s_{-n}(\rho)
\end{equation}
where $J^u_n(\rho),J^s_{-n}(\rho)$ are defined in~\eqref{e:stun-J-def}.
Since the Jacobians $J^u$, $J^s$ are homogeneous of degree~0 on $T^*M\setminus 0$, one can replace
$\mathcal V^\pm_{\mathbf q}$ by $\mathcal V^\pm_{\mathbf q}\cap S^*M$
in~\eqref{e:word-J-def}. Note that the sets~$\mathcal V^\pm_{\mathbf q}$ might be empty
in which case we have $\mathcal J^\pm_{\mathbf q}=\infty$.

It follows from~\eqref{e:Lambda-0-1} that
the Jacobians $\mathcal J^\pm_{\mathbf q}$,
$\mathbf q\in\mathscr A^n$, grow exponentially
in~$n$:
\begin{equation}
  \label{e:jacobian-bounds}
\begin{aligned}
\mathcal V^-_{\mathbf q}\neq\emptyset
&\quad\Longrightarrow\quad
e^{\Lambda_0 n}\leq \mathcal J^-_{\mathbf q}\leq e^{\Lambda_1 n},\\
\mathcal V^+_{\mathbf q}\neq\emptyset
&\quad\Longrightarrow\quad
e^{\Lambda_0 n}\leq \mathcal J^+_{\mathbf q}\leq e^{\Lambda_1 n}.
\end{aligned}  
\end{equation}
Denote
\begin{equation}
  \label{e:q-prime}
\mathbf q':=q_1\dots q_{n-1}\quad\text{where}\quad
\mathbf q=q_1\dots q_n\in \mathscr A^n,\quad
n>0.
\end{equation}
Then we have for each $\mathbf q\in\mathscr A^n$, $n> 0$
\begin{equation}
  \label{e:jacobian-grows}
\mathcal J^\pm_{\mathbf q}\geq e^{\Lambda_0}\mathcal J^\pm_{\mathbf q'}.
\end{equation}
Indeed, for each $\rho\in \mathcal V^-_{\mathbf q}$ we have
$\rho\in \mathcal V^-_{\mathbf q'}$ and thus
$$
J^u_n(\rho)=J^u_1(\varphi_{n-1}(\rho))J^u_{n-1}(\rho)\geq e^{\Lambda_0} \mathcal J^-_{\mathbf q'}
$$
where the last inequality used~\eqref{e:Lambda-0-1}.
This proves~\eqref{e:jacobian-grows} for $\mathcal J^-$, with the case of $\mathcal J^+$ handled similarly.

Next, parts~(5)--(6) of~Lemma~\ref{l:stun-main} imply that
the quantities $\mathcal J^\pm_{\mathbf q}$ give the order of the
expansion rate of the flow $\varphi_{\mp n}$
at \emph{every} point in $\mathcal V^\pm_{\mathbf q}$:
\begin{equation}
  \label{e:jacobians-same}
\begin{aligned}
  J^u_n(\rho)\sim\mathcal J^-_{\mathbf q}&\quad\text{for all}\quad \rho\in\mathcal V^-_{\mathbf q},\\
  J^s_{-n}(\rho)\sim\mathcal J^+_{\mathbf q}&\quad\text{for all}\quad \rho\in\mathcal V^+_{\mathbf q} 
\end{aligned}
\end{equation}
where $A\sim B$ means that $C^{-1}A\leq B\leq CA$ for some constant~$C$
depending only on~$(M,g)$ (in particular, independent of $n$ and~$\mathbf q$).
More precisely, Lemma~\ref{l:stun-main} shows that
$J^u_{n-1}(\rho)\sim J^u_{n-1}(\tilde\rho)$ for all
$\rho,\tilde\rho\in \mathcal V^-_{\mathbf q}$;
using that $J^u_n(\rho)\sim J^u_{n-1}(\rho)$ we obtain
the first statement in~\eqref{e:jacobians-same}.
The second statement is obtained similarly using that
$J^s_{-n}(\rho)\sim J^s_{1-n}(\varphi_{-1}(\rho))$.
Note that~\eqref{e:jacobians-same} uses that the diameter of each $\mathcal V_q\cap S^*M$ is smaller than $\varepsilon_0$, in particular it is typically false for the sets $\mathcal V^\pm_{\mathbf v}$
corresponding to the unrefined partition defined in~\eqref{e:V+-}.

From~\eqref{e:jacobians-same} and~\eqref{e:exp-rate}
we derive the following bounds:
\begin{align}
  \label{e:global-expansion-rate-1}
\sup_{\rho\in\mathcal V^-_{\mathbf q}\cap \{{1\over 4}\leq |\xi|_g\leq 4\}}\|d\varphi_n(\rho)\|&\leq C\mathcal J^-_{\mathbf q},\\
  \label{e:global-expansion-rate-2}
\sup_{\rho\in\mathcal V^+_{\mathbf q}\cap \{{1\over 4}\leq |\xi|_g\leq 4\}}\|d\varphi_{-n}(\rho)\|&\leq C\mathcal J^+_{\mathbf q}.
\end{align}
It also follows from parts~(5) and~(6) of Lemma~\ref{l:stun-main}
that there exists $C$ depending only on $(M,g)$ such that
\begin{align}
  \label{e:strec}
d(\tilde\rho,W_{0s}(\rho))\leq {C\over \mathcal J^-_{\mathbf q}}
&\quad\text{for all}\quad
\rho,\tilde\rho\in \mathcal V^-_{\mathbf q}\cap S^*M,\\
  \label{e:unrec}
d(\tilde\rho,W_{0u}(\rho))\leq {C\over \mathcal J^+_{\mathbf q}}
&\quad\text{for all}\quad
\rho,\tilde\rho\in \mathcal V^+_{\mathbf q}\cap S^*M.
\end{align}
(Strictly speaking, for the proof of~\eqref{e:unrec} we should
strengthen the assumption on the sets $\mathcal V_1,\dots,\mathcal V_Q$,
requiring additionally that the diameter of each $\varphi_1(\mathcal V_q)\cap S^*M$
is smaller than $\varepsilon_0$.)
In other words, $\mathcal V^-_{\mathbf q}$ lies in a small neighborhood of a weak
stable leaf and $\mathcal V^+_{\mathbf q}$ lies in a small neighborhood of a weak
unstable leaf, with the sizes of the neighborhoods given by
the reciprocals of $\mathcal J^-_{\mathbf q}$, $\mathcal J^+_{\mathbf q}$.
See also Corollary~\ref{l:stun-main-cor} and Figure~\ref{f:stun-cor}.

From~\eqref{e:jacobians-same} we immediately derive the following
statement for every pair of words $\mathbf q,\tilde{\mathbf q}$ of the same length:
\begin{equation}
  \label{e:jacobian-pair}
\begin{aligned}
\mathcal V^+_{\mathbf q}\cap \mathcal V^+_{\tilde{\mathbf q}}\neq\emptyset
&\quad\Longrightarrow\quad
\mathcal J^+_{\mathbf q}\sim \mathcal J^+_{\tilde{\mathbf q}},\\
\mathcal V^-_{\mathbf q}\cap \mathcal V^-_{\tilde{\mathbf q}}\neq\emptyset
&\quad\Longrightarrow\quad
\mathcal J^-_{\mathbf q}\sim \mathcal J^-_{\tilde{\mathbf q}}.
\end{aligned}
\end{equation}

If we write a word $\mathbf q\in\mathscr A^n$ as a concatenation $\mathbf q=\mathbf q^1\mathbf q^2$ where $\mathbf q^j\in \mathscr A^{n_j}$, $n_1+n_2=n$, then
\begin{equation}
  \label{e:jacobians-concat}
\begin{aligned}
\mathcal V^-_{\mathbf q}\neq\emptyset
&\quad\Longrightarrow\quad
\mathcal J^-_{\mathbf q}\sim \mathcal J^-_{\mathbf q^1}\,\mathcal J^-_{\mathbf q^2},\\
\mathcal V^+_{\mathbf q}\neq\emptyset
&\quad\Longrightarrow\quad
\mathcal J^+_{\mathbf q}\sim \mathcal J^+_{\mathbf q^1}\,\mathcal J^+_{\mathbf q^2}.
\end{aligned}
\end{equation}
Indeed, for each $\rho\in\mathcal V^-_{\mathbf q}$ we have
$\rho\in \mathcal V^-_{\mathbf q^1}$,
$\varphi_{n_1}(\rho)\in\mathcal V^-_{\mathbf q^2}$,
and $J^u_n(\rho)=J^u_{n_1}(\rho)J^u_{n_2}(\varphi_{n_1}(\rho))$;
using~\eqref{e:jacobians-same} this gives the first statement in~\eqref{e:jacobians-concat}.
The second statement is proved similarly.

Finally, if $\mathbf q=q_1\dots q_n$ and $\overline{\mathbf q}=q_n\dots q_1$
is the reverse word, then
\begin{equation}
  \label{e:jacobians-reverse}
\mathcal J^-_{\mathbf q}\sim \mathcal J^+_{\overline{\mathbf q}}.
\end{equation}
Indeed, $\mathcal V^+_{\overline{\mathbf q}}=\varphi_n(\mathcal V^-_{\mathbf q})$
by~\eqref{e:reversing}.
It now suffices to use that for each $\rho\in T^*M$ we have
$J^u_n(\rho)=J^u_{-n}(\varphi_n(\rho))^{-1}\sim J^s_{-n}(\varphi_n(\rho))$ by~\eqref{e:J-su}.

%%%%%%%%%%%%%%%%%%%%%%%%%%%%%%%%%%%%%%%%%%%%%%%%%%%%%%%%%%%%%%%%%%%%%%%%%%%%%%%%
\subsection{Propagation results for refined words}

In this section we state several propagation results concerning
the operators $A^\pm_{\mathbf q}$, which will be used in the proof
of Proposition~\ref{l:longdec-1}. Some of these results will use the Jacobians
$\mathcal J^\pm_{\mathbf q}$ defined in~\eqref{e:word-J-def} above.
We recall that $a^\pm_{\mathbf q},\mathcal V^\pm_{\mathbf q},A^\pm_{\mathbf q}$
are defined using~\eqref{e:a-pm-def}, \eqref{e:V+-}, \eqref{e:A-pm-def}.

%%%%%%%%%%%%%%%%%%%%%%%%%%%%%%%%%%%%%%%%%%%%%%%%%%%%%%%%%%%%%%%%%%%%%%%%%%%%%%%%
\subsubsection{Local Ehrenfest times}\label{s:local-Ehr}

We have already encountered two global Ehrenfest times, a minimal one $T_{\min}= \big\lfloor\frac{\log(1/h)}{2\Lambda_1}\big\rfloor$, usually called \emph{the} Ehrenfest time, and a maximal one $T_{\max}=\big\lceil\frac{\log(1/h)}{2\Lambda_0}\big\rceil$.  
We will now attach \emph{(future or past) local Ehrenfest times} to each word~$\bq\in\mathscr A^\bullet$, describing the time it takes for the (future, resp. past) flow to expand by a factor $h^{-1/2}$, starting from points $\rho\in \mathcal V^{\mp}_{\bq}$.
We will not use these directly, but discuss them briefly here 
to motivate the constructions below.

Let us first define the \emph{future} local Ehrenfest time~$T^-_{\mathbf q}$, related to the values of $\cJ^{-}_{\bq}$. If $\cV^-_\bq=\emptyset$, we set $T^-_{\bq}=\infty$. Otherwise, let us assume that $h^{-1/2}\leq\cJ^{-}_{\bq}<\infty$ (this is for instance the case if $\cV^-_\bq\neq \emptyset$ and $|\bq|\geq T_{\max}$). Then there exists a unique integer $m\leq |\bq|$ such that, splitting $\bq$ into $\bq = \bq^1 q_{m}\bq^2$, where $\bq^1=q_{1}\ldots q_{m-1}$, we have
\begin{equation}\label{e:loc-Ehr}
\cJ^-_{\bq^1} < h^{-1/2} \leq \cJ^-_{\bq^1q_{m}}.
\end{equation}
We then call 
$$
T^-_{\bq}:=m\quad \text{the local future Ehrenfest time of the word $\bq$.} 
$$
In the case $\cJ^-_{\bq}< h^{-1/2}$, we consider the extensions $\bq\bp$ of $\bq$ with all possible words $\bp$ of length $T_{\max}$. For any such extension $\cJ^{-}_{\bq\bp}\geq h^{-1/2}$, so the corresponding times $T^-_{\bq\bp}$ can be defined as above. We then take
$$
T^-_{\bq}:=\min_{|\bp|=T_{\max}} T^-_{\bq\bp},\quad\text{a value which is necessarily finite.}
$$
For all $\bq$ such that $\cV^-_{\bq}\neq \emptyset$, the local Ehrenfest time satisfies 
$T_{\min}\leq T^{-}_{\bq}\leq T_{\max}$.

We similarly define the local \emph{past}  Ehrenfest time $T^+_{\bq}$ associated to the words $\bq$ such that $\cV^+_{\bq}\neq \emptyset$, depending on the values of the Jacobians $\cJ^+_{\bq}$.  

We also  define, similarly to the above, a \emph{local double Ehrenfest time} $\widetilde{T}^{\pm}_{\bq}$, by replacing $h^{-1/2}$ by $h^{-1}$ in the threshold property~\eqref{e:loc-Ehr}. Notice that if $\cV^{\pm}_{\bq}\neq\emptyset$, the double Ehrenfest times satisfy $ 2T_{\min}\leq \widetilde{T}^{\pm}_{\bq}\leq 2 T_{\max}$, but in general $\widetilde{T}^{\pm}_{\bq}\neq 2 T^{\pm}_{\bq}$. 

In the proofs below the thresholds $h^{-1/2}$ and $h^{-1}$ will be reduced to~$h^{-\delta}$ and~$h^{-2\delta}$ for some fixed $\delta\in (0,{1\over 2})$.
%%%%%%%%%%%%%%%%%%%%%%%%%%%%%%%%%%%%%%%%%%%%%%%%%%%%%%%%%%%%%%%
\subsubsection{Propagation up to local Ehrenfest time}
  \label{s:local-ehrenfest}

We first consider words $\mathbf q$ which are shorter than their local Ehfenfest times $T^{\pm}(\bq)$.
For these words the operators $A^\pm_{\mathbf q}$ lie in the mildly exotic
calculus introduced in~\S\ref{s:mildly-exotic}:
%%%%%%%%%%%%%%%%%%%%%%%%%%%%%%%%%%%%%%%%%%%%%%%%%%%%%%%%%%%%%%%%%%%%%%%%%%%%%%%%
\begin{prop}
  \label{l:ehrenfest-prop}
Fix $\delta\in [0,{1\over 2})$, $C_0>0$, and
let $\mathbf q\in\mathscr A^\bullet$.
 
1. Assume that $\mathcal J^-_{\mathbf q}\leq C_0h^{-\delta}$. Then we have
\begin{equation}
  \label{e:ehrenfest-prop}
A^-_{\mathbf q}=\Op_h(a_{\mathbf q}^{\flat-})+\mathcal O(h^\infty)_{L^2\to L^2}
\end{equation}
for some symbol $a_{\mathbf q}^{\flat-}\in S^{\comp}_{\delta+}(T^*M)$
such that
\begin{equation}
  \label{e:ehrenfest-prop-1}
a_{\mathbf q}^{\flat-}=a^-_{\mathbf q}+\mathcal O(h^{1-2\delta-})_{S^{\comp}_{\delta}},\quad
\supp a_{\mathbf q}^{\flat-}\subset \mathcal V^-_{\mathbf q}\cap \{\textstyle{1\over 4}\leq |\xi|_g\leq 4\}.
\end{equation}
The constants in $\mathcal O(\bullet)$ are independent of $h$ and $\mathbf q$.

2. The same is true for the operator $A^+_{\mathbf q}$ and some symbol
$a_{\mathbf q}^{\flat+}=a^+_{\mathbf q}+\mathcal O(h^{1-2\delta-})_{S^{\comp}_{\delta}}$,
$\supp a_{\mathbf q}^{\flat+}\subset \mathcal V^+_{\mathbf q}\cap \{{1\over 4}\leq |\xi|_g\leq 4\}$,
under the assumption $\mathcal J^+_{\mathbf q}\leq C_0h^{-\delta}$.
\end{prop}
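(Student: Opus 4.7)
The strategy is: first, bound the derivatives of the classical symbol $a^-_{\mathbf q}$ to show $a^-_{\mathbf q}\in S^{\comp}_{\delta+}(T^*M)$ with $\mathbf q$-uniform seminorms; second, establish the operator identity producing $a^{\flat-}_{\mathbf q}$ by induction on $n=|\mathbf q|$. Part~(2) is proved by the same method, and can alternatively be reduced to part~(1) using the reversal identity~\eqref{e:reversing}, the Jacobian equivalence~\eqref{e:jacobians-reverse}, and the support-localized Egorov described below. So I focus on part~(1).

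For the symbol bound I would use that $a^-_{\mathbf q}$ is a product of $n$ factors of the form $a_{q_j}\circ\varphi_{t_j}$, $t_j\in\{0,1,\ldots,n-1\}$, and expand $\partial^\alpha a^-_{\mathbf q}$ by the Leibniz rule into at most $n^{|\alpha|}$ terms, each of which is a product of factors $\partial^{\alpha_{L,j}}(a_{q_j}\circ\varphi_{t_j})$ with $\sum_j|\alpha_{L,j}|=|\alpha|$. At a point $\rho\in\mathcal V^-_{\mathbf q}$, Fa\`a di Bruno bounds each such factor by a polynomial in the higher derivatives $\|d^k\varphi_{t_j}(\rho)\|$, $k\leq|\alpha_{L,j}|$. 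The key input is
$$
\|d^k\varphi_{t_j}(\rho)\|\leq C_k(\mathcal J^-_{\mathbf q^{(j)}})^k\quad\text{on}\quad\mathcal V^-_{\mathbf q^{(j)}},
$$
where $\mathbf q^{(j)}$ is the length-$(t_j{+}1)$ prefix of $\mathbf q$: the $k=1$ case is~\eqref{e:global-expansion-rate-1}, higher~$k$ follow by differentiating the flow ODE (cf.\ the proof of Lemma~\ref{l:symbols-bounds}). Since $\mathcal J^-_{\mathbf q^{(j)}}\leq\mathcal J^-_{\mathbf q}\leq C_0h^{-\delta}$ by~\eqref{e:jacobian-grows}, each factor is bounded by $C(\mathcal J^-_{\mathbf q})^{|\alpha_{L,j}|}$, and $\prod_j|a_{q_j}\circ\varphi_{t_j}|\leq 1$. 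Summing over~$L$ and using $n\leq C\log(1/h)$ gives $|\partial^\alpha a^-_{\mathbf q}|\leq C_{\alpha,\eps}h^{-(\delta+\eps)|\alpha|}$ for every $\eps>0$, hence $a^-_{\mathbf q}\in S^{\comp}_{\delta+}$ with seminorms independent of~$\mathbf q$.

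For the operator identity I would induct on $n$, splitting $\mathbf q=\mathbf q'q_n$ with $\mathbf q'=q_1\ldots q_{n-1}$. Since $\mathcal J^-_{\mathbf q'}\leq\mathcal J^-_{\mathbf q}$ by~\eqref{e:jacobian-grows}, induction yields $A^-_{\mathbf q'}=\Op_h(a^{\flat-}_{\mathbf q'})+O(h^\infty)$, whence
$$
A^-_{\mathbf q}=U(-(n-1))A_{q_n}\,\bigl[U(n-1)\Op_h(a^{\flat-}_{\mathbf q'})U(-(n-1))\bigr]\,U(n-1)+O(h^\infty).
$$
The main obstacle is that $n-1$ may exceed the Ehrenfest time $\log(1/h)/(2\Lambda_1)$, so Lemma~\ref{l:egorov-mild} does not apply directly to the inner conjugation. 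The remedy is a support-localized Egorov theorem: since $\WFh(\Op_h(a^{\flat-}_{\mathbf q'}))\subset\mathcal V^-_{\mathbf q'}$ and $\|d\varphi_{n-1}\|\leq C\mathcal J^-_{\mathbf q'}\leq Ch^{-\delta}$ on $\mathcal V^-_{\mathbf q'}$, the commutator formula~\eqref{e:commfor} can be iterated along the trajectory~-- with seminorms controlled by the derivative bounds from the previous paragraph~-- to give
$$
U(n-1)\Op_h(a^{\flat-}_{\mathbf q'})U(-(n-1))=\Op_h(a^{\flat-}_{\mathbf q'}\circ\varphi_{-(n-1)})+O(h^\infty),
$$
with the transported symbol still in $S^{\comp}_{\delta+}$ and supported in $\varphi_{n-1}(\mathcal V^-_{\mathbf q'})$. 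Composing with $A_{q_n}$ in $\Psi^{\comp}_{\delta+}$ then produces $a^{\flat-}_{\mathbf q}$ with principal part $a^-_{\mathbf q}$ and $S^{\comp}_\delta$-remainder of size $O(h^{1-2\delta-})$, supported (modulo $O(h^\infty)$) in $\mathcal V^-_{\mathbf q}\cap\{\tfrac14\leq|\xi|_g\leq 4\}$. The careful execution of this support-localized Egorov~-- via systematic use of~\eqref{e:jacobians-same} and~\eqref{e:global-expansion-rate-1}~-- is where the real work lies and is carried out in~\S\ref{s:ops-Aq}.
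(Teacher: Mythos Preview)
Your first paragraph (the derivative bound $a^-_{\mathbf q}\in S^{\comp}_{\delta+}$) is correct and is essentially the $k=0$ case of the paper's key Lemma~\ref{l:symbols-bounds}.

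The inductive scheme in your second paragraph, however, differs from the paper's and contains a genuine gap. You split $\mathbf q=\mathbf q'q_n$ by removing the \emph{last} letter, which forces a time-$(n{-}1)$ Egorov on the inductively given symbol $a^{\flat-}_{\mathbf q'}$. The paper instead removes the \emph{first} letter, using the recursion $\widehat A_{\mathbf q,r}=U(-1)\widehat A_{\mathbf q,r-1}U(1)A_{q_{n-r}}$ (see~\eqref{e:A-r-iter}), so that each step is a single time-$1$ Egorov followed by a multiplication. This produces an explicit expansion $\sum_k h^k a^{(k)}_{\mathbf q,r}$ via~\eqref{e:iterated-symbol}, with all terms controlled by Lemma~\ref{l:symbols-bounds}; the proposition then follows by Borel summation and the remainder bound~\eqref{e:iterated-remainder}.

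The gap in your approach is the ``support-localized Egorov'' step. Knowing only that $a^{\flat-}_{\mathbf q'}\in S^{\comp}_{\delta+}$ is supported in $\mathcal V^-_{\mathbf q'}$ and that $\|d\varphi_{n-1}\|\leq Ch^{-\delta}$ there is \emph{not} sufficient: a direct chain rule gives
\[
\big|\partial^\alpha\big(a^{\flat-}_{\mathbf q'}\circ\varphi_{-(n-1)}\big)\big|\ \lesssim\ \sum_{\pi}\big|\partial^{|\pi|}a^{\flat-}_{\mathbf q'}\big|\cdot\prod_{B\in\pi}\|d^{|B|}\varphi_{-(n-1)}\|\ \lesssim\ h^{-2\delta|\alpha|},
\]
which falls outside any $S^{\comp}_{\delta'}$ calculus once $\delta\geq\tfrac14$. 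The correct bound $h^{-\delta|\alpha|}$ holds, but only because of the \emph{anisotropic} structure of $a^{\flat-}_{\mathbf q'}$: it is smooth along the stable direction (where $\varphi_{-(n-1)}$ expands) and oscillates only along the unstable direction (where $\varphi_{-(n-1)}$ contracts). Your Leibniz/Fa\`a~di~Bruno argument in the first paragraph exploits this for the \emph{principal} symbol $a^-_{\mathbf q'}$ via its product form, but your inductive hypothesis records no such structure for the lower-order terms of $a^{\flat-}_{\mathbf q'}$, and iterating~\eqref{e:commfor} alone only yields an $\mathcal O(h^{1-2\delta-})$ remainder, not $\mathcal O(h^\infty)$. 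The paper's time-$1$ iteration sidesteps this entirely: the anisotropy never needs to be invoked, and Lemma~\ref{l:symbols-bounds} bounds \emph{all} the $a^{(k)}_{\mathbf q,r}$ (not just $k=0$) by a graph/path analysis that tracks how derivatives are distributed over the factors.
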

%%%%%%%%%%%%%%%%%%%%%%%%%%%%%%%%%%%%%%%%%%%%%%%%%%%%%%%%%%%%%%%%%%%%%%%%%%%%%%%%
\Remarks
1. The assumption of part~1 of Proposition~\ref{l:ehrenfest-prop} does not hold
when $\mathcal V^-_{\mathbf q}=\emptyset$, as in that case $\mathcal J^-_{\mathbf q}=\infty$.
Yet, the statement~\eqref{e:ehrenfest-prop}, which in this case is $A^-_{\mathbf q}=\mathcal O(h^\infty)_{L^2\to L^2}$,
still holds (at least when $|\mathbf q|=\mathcal O(\log(1/h))$) but to prove it in the case of long logarithmic words $\mathbf q$  one would need
to employ the techniques of~\S\ref{s:longtime} below. In the present section we will only use a special
case of this rapid decay statement,
see Lemma~\ref{l:ehrenfest-prop-none} below.
The same remark applies to part~2.

\noindent 2. In the special case $\mathbf q\in\mathscr A^{N_0}$
the assumptions of Proposition~\ref{l:ehrenfest-prop} are satisfied
for $\delta={1\over 6}$ (assuming $\mathcal V^\pm_{\mathbf q}\neq\emptyset$)
as follows from~\eqref{e:jacobian-bounds} and the definition~\eqref{e:prop-times} of~$N_0$.
In this case a weaker version of~\eqref{e:ehrenfest-prop} (with $\mathcal O(h^{1-2\delta-})_{L^2\to L^2}$ remainder) follows from Lemma~\ref{l:cq-log} (more precisely, its version for the refined partition of~\S\ref{s:refined-partition}). The latter relies on Egorov's Theorem up to the (minimal) Ehrenfest time, Lemma~\ref{l:egorov-mild}.

Proposition~\ref{l:ehrenfest-prop} is proved in~\S\ref{s:ehr-step}. The argument is morally
similar to the proof of the first part of Lemma~\ref{l:cq-log}, but much more complicated
because of two reasons:
\begin{itemize}
\item We establish the classical/quantum correspondence up to the
\emph{local Ehrenfest times} associated with the particular words.
While the global expansion rates of $\varphi_{\pm n}$, where $n$ is the length of~$\mathbf q$, might be very large, the expansion rates of $\varphi_{\pm n}$ restricted to $\supp a^\pm_{\mathbf q}$ are still smaller than $h^{-\delta}\ll h^{-1/2}$.
\item We obtain asymptotic expansions of the full symbols of $A^\pm_{\mathbf q}$,
which give the $\mathcal O(h^\infty)$ remainder in~\eqref{e:ehrenfest-prop},
similarly to~\eqref{e:egorov-basic-more}.
\end{itemize}
As a corollary of Proposition~\ref{l:ehrenfest-prop} we obtain the following
rapid decay results for operators $A^\pm_{\mathbf q}$ and their products under
assumptions of empty or nonintersecting supports:
%%%%%%%%%%%%%%%%%%%%%%%%%%%%%%%%%%%%%%%%%%%%%%%%%%%%%%%%%%%%%%%%%%%%%%%%%%%%%%%%
\begin{lemm}
  \label{l:ehrenfest-prop-none}
Fix $\delta\in [0,{1\over 2})$ and $C_0>0$.  
 
1. Assume that $\mathbf p,\mathbf q\in\mathscr A^\bullet$.
Then
\begin{equation}
  \label{e:epn-1}
\max(\mathcal J^-_{\mathbf p},\mathcal J^+_{\mathbf q})\leq C_0h^{-\delta},\quad
\mathcal V^-_{\mathbf p}\cap \mathcal V^+_{\mathbf q}=\emptyset
\quad\Longrightarrow\quad
\|A^-_{\mathbf p}A^+_{\mathbf q}\|_{L^2\to L^2}=\mathcal O(h^\infty).
\end{equation}
 
2. Assume that $\mathbf q=q_1\dots q_n\in\mathscr A^\bullet$, $n\leq C_0\log(1/h)$,
satisfies $\mathcal V^+_{\mathbf q}=\emptyset$.
Take the largest~$m$ such that $\mathcal V^+_{q_1\dots q_m}\neq\emptyset$
and assume that $\mathcal J^+_{q_1\dots q_m}\leq C_0h^{-2\delta}$. Then
$$
\|A^+_{\mathbf q}\|_{L^2\to L^2}=\mathcal O(h^\infty).
$$
The same holds for $A^-_{\mathbf q}$ (under an assumption
on $\mathcal J^-_{q_1\dots q_m}$), and also if we consider subwords of the form
$q_{n-m+1}\dots q_n$ instead.

3. Assume that $\mathbf q,\tilde{\mathbf q}\in\mathscr A^\bullet$ have the same length
and $\max(\mathcal J^+_{\mathbf q},\mathcal J^+_{\tilde{\mathbf q}})\leq C_0h^{-2\delta}$,
$\mathcal V^+_{\mathbf q}\cap \mathcal V^+_{\tilde{\mathbf q}}=\emptyset$.
Then
$$
\|(A^+_{\mathbf q})^*A^+_{\tilde{\mathbf q}}\|_{L^2\to L^2}=\mathcal O(h^\infty),\quad
\|A^+_{\tilde{\mathbf q}}(A^+_{\mathbf q})^*\|_{L^2\to L^2}=\mathcal O(h^\infty).
$$
The same is true for the operators $A^-$ if we make assumptions on $\mathcal J^-,\mathcal V^-$ instead.

In all these statements the constants in $\mathcal O(\bullet)$ do not depend on $h$
and on the choice of the words.
\end{lemm}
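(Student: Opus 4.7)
All three parts share a common template: represent each $A^\pm$-factor (or a suitable subfactor) as a mildly exotic pseudodifferential operator via Proposition~\ref{l:ehrenfest-prop}, then invoke the standard disjoint-support composition in the mildly exotic calculus: if $a,b\in S^{\comp}_{\gamma+}(T^*M)$ with $\gamma<1/2$ have disjoint supports, every term in the asymptotic expansion of $a\# b$ vanishes identically (each is evaluated at points where one of the two symbols vanishes in a neighborhood), hence $\Op_h(a)\Op_h(b)=\mathcal O(h^\infty)_{L^2\to L^2}$.

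Part~(1) is direct. Under $\max(\mathcal J^-_{\mathbf p},\mathcal J^+_{\mathbf q})\leq C_0h^{-\delta}$ with $\delta<1/2$, Proposition~\ref{l:ehrenfest-prop} yields $A^-_{\mathbf p}=\Op_h(a^{\flat-}_{\mathbf p})+\mathcal O(h^\infty)$ and $A^+_{\mathbf q}=\Op_h(a^{\flat+}_{\mathbf q})+\mathcal O(h^\infty)$ with $\supp a^{\flat-}_{\mathbf p}\subset\mathcal V^-_{\mathbf p}$ and $\supp a^{\flat+}_{\mathbf q}\subset\mathcal V^+_{\mathbf q}$; the hypothesis $\mathcal V^-_{\mathbf p}\cap\mathcal V^+_{\mathbf q}=\emptyset$ then closes the argument.

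Parts~(2) and~(3) face the obstacle that the Jacobian bound is $h^{-2\delta}$, lying at the \emph{double} Ehrenfest threshold and thus outside the direct reach of Proposition~\ref{l:ehrenfest-prop} as soon as $\delta\geq 1/4$. The remedy is to split each word at its local single Ehrenfest position into two pieces with Jacobians $\leq Ch^{-\delta}$, whose existence is guaranteed by the concatenation property~\eqref{e:jacobians-concat}; then the identity~\eqref{e:word-concat} rewrites, for instance, $A^+_{\mathbf q}$ as $U(k)A^-_{\overline{\mathbf q^1}}A^+_{\mathbf q^2}U(-k)$, each of whose two inner factors now falls in the single-Ehrenfest range of Proposition~\ref{l:ehrenfest-prop}. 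For part~(3), first use the reversal~\eqref{e:reversing} to get $(A^+_{\mathbf q})^*A^+_{\tilde{\mathbf q}}=U(n)(A^-_{\overline{\mathbf q}})^*A^-_{\overline{\tilde{\mathbf q}}}U(-n)$ with disjointness $\mathcal V^-_{\overline{\mathbf q}}\cap\mathcal V^-_{\overline{\tilde{\mathbf q}}}=\varphi_{-n}(\mathcal V^+_{\mathbf q}\cap\mathcal V^+_{\tilde{\mathbf q}})=\emptyset$, and apply the split; the second product $A^+_{\tilde{\mathbf q}}(A^+_{\mathbf q})^*$ is handled by the same procedure. For part~(2), first split off the tail via the decomposition $A^+_{\mathbf q}=A^+_{q_1\dots q_{m+1}}\cdot\bigl[U(-(m+1))A^+_{q_{m+2}\dots q_n}U(m+1)\bigr]$, whose bracketed factor has norm $\leq 2$ by~\eqref{e:A-q-bdd}; then write $A^+_{q_1\dots q_{m+1}}=A^+_{q_1\dots q_m}\cdot A_{q_{m+1}}(-(m+1))$, split $q_1\dots q_m$ further into single-Ehrenfest pieces, and exploit $\mathcal V^+_{q_1\dots q_m}\cap\varphi_{m+1}(\mathcal V_{q_{m+1}})=\mathcal V^+_{q_1\dots q_{m+1}}=\emptyset$.

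The main technical obstacle is bookkeeping at the double-Ehrenfest threshold: one must arrange the splits so that, after the conjugation-unraveling supplied by~\eqref{e:word-concat}, the pseudodifferential factors that actually get composed genuinely inherit disjoint symbol supports from the original emptiness hypothesis (rather than only from some unrelated pair of sets produced by the rewriting). Once this alignment is in place, the estimate reduces to two or three iterated single-Ehrenfest disjoint-support compositions, each contributing a factor $\mathcal O(h^\infty)$, and the final bound follows since there are only boundedly many such compositions and the intermediate operator factors have norm $\mathcal O(1)$ by~\eqref{e:A-q-bdd}.
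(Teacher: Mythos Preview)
Your strategy is the paper's: part~(1) is exactly right, and for parts~(2)--(3) you correctly identify that the passage from the double-Ehrenfest bound $h^{-2\delta}$ to the single-Ehrenfest range of Proposition~\ref{l:ehrenfest-prop} goes through the concatenation formula~\eqref{e:word-concat}. But the proposal stops precisely where the real content begins. You write that ``the main technical obstacle is bookkeeping'' and that ``one must arrange the splits'' so that disjointness is inherited---but you do not say \emph{how} to arrange them, and this is not routine.

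Concretely, in part~(2) your factorization $A^+_{q_1\dots q_{m+1}}=A^+_{q_1\dots q_m}\cdot A_{q_{m+1}}(-(m{+}1))$ leaves the factor $A_{q_{m+1}}(-(m{+}1))$ conjugated by time $m{+}1$, which can be far beyond any Ehrenfest time; it is not a mildly exotic pseudodifferential operator. You must absorb this letter into one of the single-Ehrenfest pieces, i.e.\ pass to $A^+_{q_{\ell+1}\dots q_{m+1}}$---but then $\mathcal V^+_{q_{\ell+1}\dots q_{m+1}}$ may itself be empty, so Proposition~\ref{l:ehrenfest-prop} again fails to apply. The paper resolves this by a case split: if $\mathcal V^+_{q_{\ell+1}\dots q_{m+1}}=\emptyset$ one takes $\mathbf p=q_{\ell+1}\dots q_m$, $\mathbf r=q_{m+1}$ (so $\mathbf r$ is a single letter), while if it is nonempty one takes $\mathbf p=q_1\dots q_\ell$, $\mathbf r=q_{\ell+1}\dots q_{m+1}$; in both cases $\max(\mathcal J^+_{\mathbf p},\mathcal J^+_{\mathbf r})\leq Ch^{-\delta}$ and $\mathcal V^+_{\mathbf p\mathbf r}=\emptyset$, whereupon part~(1) applies to $A^-_{\overline{\mathbf p}}A^+_{\mathbf r}$.

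In part~(3), ``apply the split'' hides a second subtlety: the split position must be chosen \emph{jointly} for $\mathbf q$ and $\tilde{\mathbf q}$, namely the maximal $\ell$ with $\max(\mathcal J^+_{q_1\dots q_\ell},\mathcal J^+_{\tilde q_1\dots\tilde q_\ell})\leq h^{-\delta}$. Splitting each word at its own Ehrenfest position leaves a residual $U(k_1-k_2)$ between the pseudodifferential factors and destroys the argument. With the joint $\ell$ one again needs a case split: either the prefixes $q_1\dots q_\ell$, $\tilde q_1\dots\tilde q_\ell$ already have disjoint $\mathcal V^+$ (done), or they intersect, in which case~\eqref{e:jacobian-pair} and~\eqref{e:jacobians-concat} force the \emph{suffixes} to also have Jacobian $\leq Ch^{-\delta}$, and one uses disjointness of the four-fold intersection $\mathcal V^-_{q_\ell\dots q_1}\cap\mathcal V^+_{q_{\ell+1}\dots q_n}\cap\mathcal V^-_{\tilde q_\ell\dots\tilde q_1}\cap\mathcal V^+_{\tilde q_{\ell+1}\dots\tilde q_n}=\emptyset$.
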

%%%%%%%%%%%%%%%%%%%%%%%%%%%%%%%%%%%%%%%%%%%%%%%%%%%%%%%%%%%%%%%%%%%%%%%%%%%%%%%%
\Remark Note that the Jacobians in parts~2 and~3 above are required to be bounded by $C_0h^{-2\delta}$~--
that is Lemma~\ref{l:ehrenfest-prop-none} essentially applies up to the \emph{local double Ehrenfest time}.
We are able to do this by writing a word with Jacobian $\mathcal O(h^{-2\delta})$
as a concatenation of two words with Jacobians $\mathcal O(h^{-\delta})$ and using~\eqref{e:word-concat}. If $M$ had constant curvature, we could instead use pseudodifferential calculi
adapted to the stable/unstable foliations as in~\cite{meassupp}.
%%%%%%%%%%%%%%%%%%%%%%%%%%%%%%%%%%%%%%%%%%%%%%%%%%%%%%%%%%%%%%%%%%%%%%%%%%%%%%%%
\begin{proof}
1. Using Proposition~\ref{l:ehrenfest-prop} we write
$$
A^-_{\mathbf p}=\Op_h(a^{\flat-}_{\mathbf p})+\mathcal O(h^\infty)_{L^2\to L^2},\quad
A^+_{\mathbf q}=\Op_h(a^{\flat+}_{\mathbf q})+\mathcal O(h^\infty)_{L^2\to L^2}.
$$
Here $\supp a^{\flat-}_{\mathbf p}\subset\mathcal V^-_{\mathbf p}$
and $\supp a^{\flat+}_{\mathbf q}\subset \mathcal V^+_{\mathbf q}$,
therefore $\supp a^{\flat-}_{\mathbf p}\cap \supp a^{\flat+}_{\mathbf q}=\emptyset$.
It then follows from the product formula in the $S^{\comp}_{\delta+}$ calculus
(see for instance~\cite[Theorem~4.18]{e-z}) that $\Op_h(a^{\flat-}_{\mathbf p})\Op_h(a^{\flat+}_{\mathbf q})
=\mathcal O(h^\infty)_{L^2\to L^2}$.

2. We assume that $\mathcal V^+_{\mathbf q}=\emptyset$, with
the case of $\mathcal V^-_{\mathbf q},A^-_{\mathbf q}$ following from here using~\eqref{e:reversing}
and~\eqref{e:jacobians-reverse}.
We also assume that there exists $m<n$ such that
$\mathcal V^+_{q_1\dots q_{m+1}}=\emptyset$ and $\mathcal J^+_{q_1\dots q_m}\leq C_0h^{-2\delta}$;
the other case (when there exists $m<n$ such that
$\mathcal V^+_{q_{n-m}\dots q_n}=\emptyset$ and $\mathcal J^+_{q_{n-m+1}\dots q_n}\leq C_0h^{-2\delta}$)
is handled similarly.

We first show that $\mathbf q$ can be written as a concatenation
(where $C_1$ denotes a constant depending on $C_0$ whose exact value might
differ from place to place)
\begin{equation}
  \label{e:art-deco}
\mathbf q=\mathbf q^1\mathbf p\mathbf r\mathbf q^2\quad\text{where}\quad
\max(\mathcal J^+_{\mathbf p},\mathcal J^+_{\mathbf r})\leq C_1h^{-\delta},\quad
\mathcal V^+_{\mathbf p\mathbf r}=\emptyset.
\end{equation}
To do this we first put $\mathbf q^2:=q_{m+2}\dots q_n$.
Next, choose maximal
$\ell\leq m$ such that $\mathcal J^+_{q_1\dots q_\ell}\leq h^{-\delta}$.
We claim that
\begin{equation}
  \label{e:rooney}
\mathcal J^+_{q_{\ell+1}\dots q_{m}}\leq C_1 h^{-\delta}.
\end{equation}
Indeed, we may assume that $\ell<m$ since otherwise~\eqref{e:rooney} holds automatically.
Since $\ell$ was chosen maximal, we have
$\mathcal J^+_{q_1\dots q_{\ell+1}}> h^{-\delta}$, which by~\eqref{e:jacobians-concat}
implies that $\mathcal J^+_{q_1\dots q_\ell}\geq C_1^{-1}h^{-\delta}$.
Now~\eqref{e:rooney} follows from~\eqref{e:jacobians-concat} and the bound
$\mathcal J^+_{q_1\dots q_m}\leq C_0h^{-2\delta}$.

Now the decomposition~\eqref{e:art-deco} is obtained by considering two cases:
\begin{enumerate}
\item $\mathcal V^+_{q_{\ell+1}\dots q_{m+1}}=\emptyset$: put
$\mathbf q^1:=q_1\dots q_\ell$,
$\mathbf p:=q_{\ell+1}\dots q_m$,
$\mathbf r:=q_{m+1}$.
\item $\mathcal V^+_{q_{\ell+1}\dots q_{m+1}}\neq\emptyset$: put
$\mathbf q^1:=\emptyset$, $\mathbf p:=q_1\dots q_\ell$,
$\mathbf r:=q_{\ell+1}\dots q_{m+1}$. We have
$\mathcal J^+_{\mathbf r}\leq C_1h^{-\delta}$ by~\eqref{e:jacobians-concat}
and~\eqref{e:rooney}.
\end{enumerate}
Having established~\eqref{e:art-deco} we write by~\eqref{e:word-concat}
and~\eqref{e:reversing}
(where $\overline{\mathbf p}$ is the reverse of $\mathbf p$)
\begin{equation}
  \label{e:isaac}
\begin{aligned}
A^+_{\mathbf q}
&=U(|\mathbf q^1\mathbf p|) A^-_{\overline{\mathbf p}\,\overline{\mathbf q}^1}A^+_{\mathbf r\mathbf q_2}
U(-|\mathbf q^1\mathbf p|)\\
&=U(|\mathbf q^1|)A^-_{\overline{\mathbf q}^1}U(|\mathbf p|)
A^-_{\overline{\mathbf p}}A^+_{\mathbf r}U(|\mathbf r|)A^+_{\mathbf q^2}U(-|\mathbf q^1\mathbf p\mathbf r|).
\end{aligned}
\end{equation}
Recall that $\mathcal J^+_{\mathbf r}\leq C_1h^{-\delta}$.
We moreover have $\mathcal J^-_{\overline{\mathbf p}}\sim \mathcal J^+_{\mathbf p}\leq C_1h^{-\delta}$ by~\eqref{e:jacobians-reverse}.
Also $\mathcal V^-_{\overline{\mathbf p}}\cap \mathcal V^+_{\mathbf r}=\emptyset$
by~\eqref{e:set-concat} since $\mathcal V^+_{\mathbf p\mathbf r}=\emptyset$.
Finally $\|A^-_{\overline{\mathbf q}^1}\|_{L^2\to L^2}$
and $\|A^+_{\mathbf q^2}\|_{L^2\to L^2}$ are bounded by~\eqref{e:A-q-bdd}.
Therefore by~\eqref{e:epn-1} we have
\begin{equation}
  \label{e:wayne}
\|A^+_{\mathbf q}\|_{L^2\to L^2}\leq 
C \|A^-_{\overline{\mathbf p}}A^+_{\mathbf r}\|_{L^2\to L^2}
=\mathcal O(h^\infty).
\end{equation}

3. 
We consider the operators $A^+$, with the case of $A^-$ following from here using~\eqref{e:reversing} and~\eqref{e:jacobians-reverse}.
We first show that $\|(A^+_{\mathbf q})^*A^+_{\tilde{\mathbf q}}\|_{L^2\to L^2}=\mathcal O(h^\infty)$.
We write $\mathbf q=q_1\dots q_n$ and $\tilde{\mathbf q}=\tilde q_1\dots \tilde q_{n}$ and take maximal $\ell\leq n$ such that
\begin{equation}
  \label{e:decoto}
\max(\mathcal J^+_{q_1\dots q_\ell},\mathcal J^+_{\tilde q_1\dots \tilde q_\ell})\leq h^{-\delta}.
\end{equation}
We have the following two cases:
%%%%%%%%%%
\begin{enumerate}
\item $\mathcal V^+_{q_1\dots q_\ell}\cap \mathcal V^+_{\tilde q_1\dots \tilde q_\ell}=\emptyset$.
Arguing similarly to part~1 of this lemma and using~\eqref{e:decoto}, we see that
\begin{equation}
  \label{e:malibu}
\|(A^+_{q_1\dots q_\ell})^*A^+_{\tilde q_1\dots\tilde q_\ell}\|_{L^2\to L^2}=\mathcal O(h^\infty).
\end{equation}
By~\eqref{e:word-concat} and~\eqref{e:reversing} we have
$$
(A^+_{\mathbf q})^*A^+_{\tilde{\mathbf q}}=
U(\ell)(A^+_{q_{\ell+1}\dots q_n})^*U(-\ell)
(A^+_{q_1\dots q_\ell})^*A^+_{\tilde q_1\dots\tilde q_\ell}
U(\ell)A^+_{\tilde q_{\ell+1}\dots\tilde q_n}U(-\ell).
$$
Using~\eqref{e:malibu} and the norm bound~\eqref{e:A-q-bdd}
we get $\|(A^+_{\mathbf q})^*A^+_{\tilde{\mathbf q}}\|_{L^2\to L^2}=\mathcal O(h^\infty)$.
%%%%%%%%%%
\item $\mathcal V^+_{q_1\dots q_\ell}\cap \mathcal V^+_{\tilde q_1\dots \tilde q_\ell}\neq \emptyset$.
We claim that
\begin{equation}
  \label{e:godiva}
\max(\mathcal J^+_{q_{\ell+1}\dots q_n},\mathcal J^+_{\tilde q_{\ell+1}\dots \tilde q_n})
\leq C_1h^{-\delta}.
\end{equation}
Indeed, we may assume that $\ell<n$ since otherwise~\eqref{e:godiva} is immediate. Since $\ell$
was chosen maximal we have
$$
\max(\mathcal J^+_{q_1\dots q_{\ell+1}},\mathcal J^+_{\tilde q_1\dots \tilde q_{\ell+1}})> h^{-\delta}.
$$
Without loss of generality we may assume that 
$\mathcal J^+_{q_1\dots q_{\ell+1}}>h^{-\delta}$. Then by~\eqref{e:jacobians-concat}
we have $\mathcal J^+_{q_1\dots q_\ell}\geq C_1^{-1} h^{-\delta}$.
Since $\mathcal J^+_{q_1\dots q_\ell}\sim \mathcal J^+_{\tilde q_1\dots\tilde q_\ell}$
by~\eqref{e:jacobian-pair}, we have
$\mathcal J^+_{\tilde q_1\dots \tilde q_\ell}\geq C_1^{-1}h^{-\delta}$ as well.
Now~\eqref{e:godiva} follows from~\eqref{e:jacobians-concat} and the bound
$\max(\mathcal J^+_{\mathbf q},\mathcal J^+_{\tilde{\mathbf q}})\leq C_0h^{-2\delta}$.

Since $\mathcal V^+_{\mathbf q}\cap \mathcal V^+_{\tilde{\mathbf q}}=\emptyset$,
by~\eqref{e:set-concat} we have
$$
\mathcal V^-_{q_\ell\dots q_1}\cap \mathcal V^+_{q_{\ell+1}\dots q_n}
\cap \mathcal V^-_{\tilde q_\ell\dots \tilde q_1}\cap \mathcal V^+_{\tilde q_{\ell+1}\dots\tilde q_n}
=\emptyset.
$$
Arguing similarly to part~1 of this lemma and using~\eqref{e:decoto}, \eqref{e:godiva}, and~\eqref{e:jacobians-reverse} we get
$$
\|(A^-_{q_\ell\dots q_1}A^+_{q_{\ell+1}\dots q_n})^*
A^-_{\tilde q_\ell\dots \tilde q_1}A^+_{\tilde q_{\ell+1}\dots \tilde q_n}\|_{L^2\to L^2}=\mathcal O(h^\infty).
$$
Now by~\eqref{e:word-concat} we have
\begin{equation}
  \label{e:huntington}
(A^+_{\mathbf q})^*A^+_{\tilde{\mathbf q}}=U(\ell)(A^-_{q_\ell\dots q_1}A^+_{q_{\ell+1}\dots q_n})^*
A^-_{\tilde q_\ell\dots \tilde q_1}A^+_{\tilde q_{\ell+1}\dots \tilde q_n}U(-\ell)
\end{equation}
which gives $\|(A^+_{\mathbf q})^*A^+_{\tilde{\mathbf q}}\|_{L^2\to L^2}=\mathcal O(h^\infty)$.
\end{enumerate}
%%%%%%%%%%
To prove that $\|A^+_{\tilde{\mathbf q}}(A^+_{\mathbf q})^*\|_{L^2\to L^2}=\mathcal O(h^\infty)$ we argue
similarly. More precisely, take minimal $\ell\geq 1$ such that
$$
\max(\mathcal J^+_{q_\ell\dots q_n},\mathcal J^+_{\tilde q_\ell\dots\tilde q_n})\leq h^{-\delta}.
$$
Assume first that $\mathcal V^+_{q_\ell\dots q_n}\cap \mathcal V^+_{\tilde q_\ell\dots \tilde q_n}=\emptyset$.
Arguing similarly to part~1 of this lemma we get
\begin{equation}
  \label{e:malibu2}
\|A^+_{\tilde q_\ell\dots\tilde q_n}(A^+_{q_\ell\dots q_n})^*\|_{L^2\to L^2}=\mathcal O(h^\infty).
\end{equation}
By~\eqref{e:word-concat} we have
$$
A^+_{\tilde{\mathbf q}}(A^+_{\mathbf q})^*=
U(\ell-1)A^-_{\tilde q_{\ell-1}\dots \tilde q_1}A^+_{\tilde q_\ell\dots\tilde q_n}
(A^+_{q_\ell\dots q_n})^*(A^-_{q_{\ell-1}\dots q_1})^*U(1-\ell)
$$
and the right-hand side is $\mathcal O(h^\infty)_{L^2\to L^2}$ by~\eqref{e:malibu2}
and~\eqref{e:A-q-bdd}.

Assume now that $\mathcal V^+_{q_\ell\dots q_n}\cap \mathcal V^+_{\tilde q_\ell\dots\tilde q_n}\neq\emptyset$.
Then similarly to~\eqref{e:godiva} we get
$$
\max(\mathcal J^+_{q_1\dots q_{\ell-1}},\mathcal J^+_{\tilde q_1\dots \tilde q_{\ell-1}})\leq C_1h^{-\delta}.
$$
The bound $\|A^+_{\tilde{\mathbf q}}(A^+_{\mathbf q})^*\|_{L^2\to L^2}=\mathcal O(h^\infty)$
is now proved similarly to the case~(2) above, with~\eqref{e:huntington} replaced by the following
corollary of~\eqref{e:word-concat}:
$$
A^+_{\tilde{\mathbf q}}(A^+_{\mathbf q})^*=U(\ell-1)
A^-_{\tilde q_{\ell-1}\dots \tilde q_1}A^+_{\tilde q_\ell\dots \tilde q_n}
(A^-_{q_{\ell-1}\dots q_1}A^+_{q_\ell\dots q_n})^*
U(1-\ell).
$$
\end{proof}
%%%%%%%%%%%%%%%%%%%%%%%%%%%%%%%%%%%%%%%%%%%%%%%%%%%%%%%%%%%%%%%%%%%%%%%%%%%%%%%%
In addition to Proposition~\ref{l:ehrenfest-prop}
we will also need the following statement regarding sums of operators
of the form $A^-_{\mathbf p}A^+_{\mathbf r}$:
%%%%%%%%%%%%%%%%%%%%%%%%%%%%%%%%%%%%%%%%%%%%%%%%%%%%%%%%%%%%%%%%%%%%%%%%%%%%%%%%
\begin{prop}
  \label{l:ehrenfest-prop-sum}
Fix $\delta\in [0,{1\over 2})$, $C_0>0$.
Assume that $F:\mathscr A^\bullet\times \mathscr A^\bullet\to\mathbb C$ is a function
such that:
\begin{enumerate}
\item for each $(\mathbf p,\mathbf r)$ with $F(\mathbf p,\mathbf r)\neq 0$,
we have $\max(\mathcal J^-_{\mathbf p},\mathcal J^+_{\mathbf r})\leq C_0h^{-\delta}$;
\item $\sup |F|\leq 1$.
\end{enumerate}
Then we have for some constant $C$ independent of $h$ and~$F$
\begin{equation}
  \label{e:eps}
\|A_F\|_{L^2\to L^2}\leq C\log^2(1/h)\quad\text{where}\quad
A_F:=\sum_{\mathbf p,\mathbf r}F(\mathbf p,\mathbf r)A^-_{\mathbf p}A^+_{\mathbf r}.
\end{equation}
\end{prop}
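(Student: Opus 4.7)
\smallskip
\noindent\textbf{Proof plan for Proposition~\ref{l:ehrenfest-prop-sum}.}
The plan is to partition the sum according to the pair of lengths $(n_1,n_2)=(|\mathbf{p}|,|\mathbf{r}|)$ and to bound each block by a constant independent of $h$, $F$ and $(n_1,n_2)$, losing the factor $\log^2(1/h)$ only when summing the blocks. By hypothesis $\max(\cJ^-_{\mathbf p},\cJ^+_{\mathbf r})\le C_0h^{-\delta}$, so \eqref{e:jacobian-bounds} forces $\max(n_1,n_2)\le \delta\Lambda_0^{-1}\log(1/h)+\cO(1)$; hence the number of admissible length pairs is $\cO(\log^2(1/h))$, and the triangle inequality reduces the proposition to the uniform bound
\begin{equation*}
\|A_{F,n_1,n_2}\|_{L^2\to L^2}\le C,\qquad
A_{F,n_1,n_2}:=\sum_{|\mathbf p|=n_1,\,|\mathbf r|=n_2}F(\mathbf p,\mathbf r)A^-_{\mathbf p}A^+_{\mathbf r}.
\end{equation*}

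For each fixed $(n_1,n_2)$ I would first apply Proposition~\ref{l:ehrenfest-prop} to every $A^-_{\mathbf p}$ and $A^+_{\mathbf r}$ occurring in the sum: each becomes $\Op_h(a^{\flat-}_{\mathbf p})+\cO(h^\infty)$ or $\Op_h(a^{\flat+}_{\mathbf r})+\cO(h^\infty)$, with the implied constants and the $S^{\comp}_{\delta+}$ seminorms uniform in $\mathbf p,\mathbf r$. Since the number of nonzero terms in the sum is polynomial in $h^{-1}$, composing in the $\Psi^{\comp}_{\delta+}$ calculus and summing gives
\begin{equation*}
A_{F,n_1,n_2}=\Op_h(\sigma_{n_1,n_2})+\cO(h^\infty)_{L^2\to L^2},\qquad
\sigma_{n_1,n_2}:=\sum_{\mathbf p,\mathbf r}F(\mathbf p,\mathbf r)\,a^{\flat-}_{\mathbf p}\#a^{\flat+}_{\mathbf r}.
\end{equation*}
By \eqref{e:precise-norm} it then suffices to establish that $\sigma_{n_1,n_2}\in S^{\comp}_{\delta+}(T^*M)$ with $S^{\comp}_{\delta+}$-seminorms and sup-norm bounded independently of $h,F,n_1,n_2$, and with $\sup|\sigma_{n_1,n_2}|\le 1+o(1)$.

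The pointwise bound follows from the partition-of-unity identities
\begin{equation*}
\sum_{\mathbf p\in\mathscr A^{n_1}}a^-_{\mathbf p}=\prod_{j=0}^{n_1-1}(a_1+\cdots+a_Q)\circ\varphi_j\le 1,\qquad
\sum_{\mathbf r\in\mathscr A^{n_2}}a^+_{\mathbf r}\le 1,
\end{equation*}
combined with $a^-_{\mathbf p},a^+_{\mathbf r}\ge 0$ and $|F|\le 1$, which give $|\sum F a^-_{\mathbf p}a^+_{\mathbf r}|\le 1$; the $\cO(h^{1-2\delta-})$ correction between $a^{\flat\pm}$ and $a^\pm$ and the Moyal remainder give the $o(1)$. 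The derivative bounds I would prove by the product-rule/telescoping argument of Lemma~\ref{l:cq-log}, using the crucial identity $\sum_{p_j}\partial(a_{p_j}\circ\varphi_j)=(d\varphi_j)^\top\big(\partial(1-a_0)\big)\circ\varphi_j$ to turn summation over a single letter into a bounded quantity, together with the \emph{local} expansion bound $\|d\varphi_j\|\lesssim\cJ^-_{\mathbf p}\le C_0h^{-\delta}$ on $\cV^-_{\mathbf p}$ (and the symmetric bound on $\cV^+_{\mathbf r}$) provided by \eqref{e:global-expansion-rate-1}--\eqref{e:global-expansion-rate-2}. The number of factors differentiated is at most $|\alpha|$, and each product of sums over the remaining letters collapses to a factor $\le 1$, yielding $|\partial^\alpha\sigma_{n_1,n_2}|\le C_\alpha(\log(1/h))^{|\alpha|}h^{-\delta|\alpha|}$, which is absorbed in the $\delta+$ exponent.

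The step I expect to be the main obstacle is the uniform $S^{\comp}_{\delta+}$ bound on $\sigma_{n_1,n_2}$, because at a generic $\rho\in T^*M$ the number of summands for which $a^{\flat-}_{\mathbf p}(\rho)a^{\flat+}_{\mathbf r}(\rho)\ne 0$ is polynomial in $h^{-1}$, so one cannot naively sum individual $S^{\comp}_{\delta+}$ seminorms. The argument must proceed through the \emph{collective} cancellation above, treating the sums over $\mathbf p$ and over $\mathbf r$ as telescoping products of partitions at each time step rather than as independent sums of mildly exotic symbols; propagating this telescoping through the Moyal expansion (so that the $a^{\flat\pm}$-to-$a^\pm$ and $\#$-to-product corrections do not destroy the pointwise bound) is the technical heart of the argument.
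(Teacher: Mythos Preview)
Your overall plan is the same as the paper's: reduce by the triangle inequality to fixed length pairs $(n_-,n_+)$, write $A_{F,n_-,n_+}$ as $\Op_h$ of a single symbol, and bound that symbol in $S^{\comp}_{\delta+}$ by telescoping the word-sums through the partition $\sum_q a_q\le 1$. The sup-norm step and the counting of length pairs are exactly as in the paper.

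Two points deserve correction, one minor and one substantial. The minor one: the ``crucial identity'' $\sum_{p_j}\partial(a_{p_j}\circ\varphi_j)=\partial\big((1-a_0)\circ\varphi_j\big)$ cannot be invoked, because the weight $F(\mathbf p,\mathbf r)$ couples the letters and prevents you from summing a single letter to $1-a_0$. What actually works (and what the paper does) is to pass to absolute values: after the product rule one bounds $\sum_{p_j}|\partial^\beta(a_{p_j}\circ\varphi_j)|\le Q\max_q|\partial^\beta(a_q\circ\varphi_j)|$, paying a harmless factor $Q$ for each ``damaged'' letter, while the undamaged letters still telescope to $\le 1$.

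The more substantial gap, which you correctly flag, is the control of the full Moyal expansion and of the higher-order symbol corrections $a^{\flat\pm}-a^\pm$. The product-rule argument of Lemma~\ref{l:cq-log} does not transplant directly: there it sufficed to bound each $\partial^\beta(a_q\circ\varphi_j)$ by the \emph{global} rate $e^{\Lambda_1|\beta|j}$, whereas here one must use the \emph{local} Jacobian $\mathcal J^-_{\mathbf p}$, and one must do so for every term $h^k a^{(k)}_{\mathbf p,-}$ of the full symbol (which involves iterated differential operators, not just compositions with $\varphi_j$). The paper's device for this is the graph/path formalism of~\S\ref{s:iterated-symbol-estimates}: one encodes $(\nabla^m a^{(k)}_{\mathbf p,-})_{2k+m\le\mathbf N_0}$ as a single vector, writes the iteration as multiplication by block matrices $\mathbf M^-_q$, and expands the product over all paths in a graph whose reduced (loop-free) version is acyclic. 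Each path splits into at most $2\mathbf N_0$ ``jumps'' (bounded) and intervening diagonal loops~\eqref{e:diagonal-term}, which contribute exactly the product $\prod a_{p_j}\circ\varphi_j$ together with a tensor power of $d\varphi$ over a sub-word; the latter is bounded by the Jacobian $\mathcal J^-$ of that sub-word and hence, via~\eqref{e:jacobians-concat}, by $(\mathcal J^-_{\mathbf p})^{\mathbf N_0}\le C h^{-\delta\mathbf N_0}$. The key output (see~\eqref{e:wildfire-5}) is that at most a bounded number $|S^-_{\vec\alpha^-}|+|S^+_{\vec\alpha^+}|$ of letters are damaged, so the sum over all $(\mathbf p,\mathbf r)$ collapses to $Q^{|S^-|+|S^+|}\le C$. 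This simultaneously handles the higher derivatives of $\varphi_j$ (encoded in the jump edges) and the $h^k$-corrections in the full symbol (encoded in the $k$-coordinate of the graph), which is exactly the piece your sketch leaves open.
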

%%%%%%%%%%%%%%%%%%%%%%%%%%%%%%%%%%%%%%%%%%%%%%%%%%%%%%%%%%%%%%%%%%%%%%%%%%%%%%%%
\Remarks 1. It is easy to see that $\sup |a_F|\leq C\log^2(1/h)$
where $a_F=\sum_{(\mathbf p,\mathbf r)}F(\mathbf p,\mathbf r)a^-_{\mathbf p}a^+_{\mathbf r}$
is the symbol corresponding to $A_F$,
grouping terms in the sum by the lengths $|\mathbf p|,|\mathbf r|$. However
the statement~\eqref{e:eps} does not follow by summing Proposition~\ref{l:ehrenfest-prop}
over $(\mathbf p,\mathbf r)$, since the number of terms in this sum grows polynomially with~$h$.
(We got around this problem in Lemma~\ref{l:cq-log}
by taking $\delta:={1\over 6}$ small enough so that the individual remainder still dominates
the growth of the number of terms, however in this section we will need to take
$\delta$ very close to~$1\over 2$.) Instead the proof of Proposition~\ref{l:ehrenfest-prop-sum},
given in~\S\ref{s:ehr-sum} below,
uses fine estimates on the full symbols of $A^-_{\mathbf p}$, $A^+_{\mathbf r}$.

\noindent 2. The proof of Proposition~\ref{l:ehrenfest-prop-sum} shows
that $A_F$ is a pseudodifferential operator, similarly to Proposition~\ref{l:ehrenfest-prop}.
However, we will only need a norm bound on $A_F$.

Similarly to Lemma~\ref{l:ehrenfest-prop-none} we deduce from Proposition~\ref{l:ehrenfest-prop-sum}
a statement up to the local double Ehrenfest time
which is used to establish the norm bound~\eqref{e:apn-2} below:
%%%%%%%%%%%%%%%%%%%%%%%%%%%%%%%%%%%%%%%%%%%%%%%%%%%%%%%%%%%%%%%%%%%%%%%%%%%%%%%%
\begin{lemm}
  \label{l:ehrenfest-summary}
Fix $\delta\in [0,{1\over 2})$, $C_0>0$. Assume that $F:\mathscr A^\bullet\to\mathbb C$ and
\begin{enumerate}
\item for each $\mathbf q$ with $F(\mathbf q)\neq 0$, we have $\mathcal J^+_{\mathbf q}\leq C_0h^{-2\delta}$;
\item $\sup |F|\leq 1$.
\end{enumerate}
Then we have for some constant $C$ independent of $h$ and~$F$
\begin{equation}
  \label{e:eps-snake}
\|A^+_F\|_{L^2\to L^2}\leq C\log^3(1/h)\quad\text{where}\quad
A^+_F:=\sum_{\mathbf q}F(\mathbf q)A^+_{\mathbf q}.
\end{equation}
Same is true for $A^-_F$ if we make an assumption on $\mathcal J^-_{\mathbf q}$ instead.
\end{lemm}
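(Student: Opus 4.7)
Plan of proof for Lemma~\ref{l:ehrenfest-summary}. I will only spell out the argument for $A^+_F$; the case of $A^-_F$ is symmetric, obtained by interchanging the roles of the two identities in~\eqref{e:word-concat}.

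First, a preliminary reduction on the length of words in the support of $F$. By the lower bound in~\eqref{e:jacobian-bounds}, the hypothesis $\mathcal J^+_{\mathbf q}\leq C_0 h^{-2\delta}$ forces $|\mathbf q|\leq \Lambda_0^{-1}(2\delta\log(1/h)+\log C_0)=\cO(\log(1/h))$. Thus the sum in~\eqref{e:eps-snake} extends only over words of logarithmic length, and in particular the set of possible values of $m$ below will have cardinality $\cO(\log(1/h))$.

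The main step is a splitting at the local Ehrenfest time, following the pattern of~\eqref{e:art-deco}. For each $\mathbf q=q_1\ldots q_n$ with $F(\mathbf q)\neq 0$, let $m=m(\mathbf q)\in\{0,1,\ldots,n\}$ be the largest integer with $\mathcal J^+_{q_1\ldots q_m}\leq h^{-\delta}$ (with $m=0$ corresponding to the empty prefix). Set $\mathbf p:=q_1\ldots q_m$ and $\mathbf r:=q_{m+1}\ldots q_n$. Then $\mathcal J^+_{\mathbf p}\leq h^{-\delta}$ by construction; if $m<n$, then the maximality of $m$ together with~\eqref{e:jacobians-concat} yields $\mathcal J^+_{\mathbf p}\geq C_1^{-1}h^{-\delta}$, and since $\mathcal J^+_{\mathbf q}\sim \mathcal J^+_{\mathbf p}\mathcal J^+_{\mathbf r}\leq C_0 h^{-2\delta}$, we obtain $\mathcal J^+_{\mathbf r}\leq C_2 h^{-\delta}$ for some constant $C_2$ depending only on $C_0$. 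By~\eqref{e:jacobians-reverse}, $\mathcal J^-_{\overline{\mathbf p}}\sim \mathcal J^+_{\mathbf p}\leq C h^{-\delta}$. The concatenation formula~\eqref{e:word-concat} now gives
\begin{equation*}
A^+_{\mathbf q}=U(m)\,A^-_{\overline{\mathbf p}}A^+_{\mathbf r}\,U(-m).
\end{equation*}

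Now group the sum by the value of $m$: for each $m$ in the range $0\leq m\leq C\log(1/h)$, define
\begin{equation*}
F_m(\mathbf p',\mathbf r):=\begin{cases} F(\overline{\mathbf p'}\,\mathbf r) & \text{if } |\mathbf p'|=m \text{ and } m(\overline{\mathbf p'}\mathbf r)=m,\\ 0 & \text{otherwise.}\end{cases}
\end{equation*}
Then $|F_m|\leq 1$ and by construction every nonzero term satisfies $\max(\mathcal J^-_{\mathbf p'},\mathcal J^+_{\mathbf r})\leq C_2 h^{-\delta}$, so Proposition~\ref{l:ehrenfest-prop-sum} applies and yields $\|A_{F_m}\|_{L^2\to L^2}\leq C\log^2(1/h)$, where $A_{F_m}=\sum_{\mathbf p',\mathbf r}F_m(\mathbf p',\mathbf r)A^-_{\mathbf p'}A^+_{\mathbf r}$. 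Since the $U(\pm m)$ are unitary, the triangle inequality in
\begin{equation*}
A^+_F=\sum_{m=0}^{\cO(\log(1/h))} U(m)\,A_{F_m}\,U(-m)
\end{equation*}
produces the claimed bound $\|A^+_F\|_{L^2\to L^2}\leq C\log^3(1/h)$, with one $\log$ factor coming from the number of allowed values of $m$. The $A^-_F$ case is handled in exactly the same way using the second identity in~\eqref{e:word-concat} to write $A^-_{\mathbf q}=U(-m)A^-_{\mathbf r}A^+_{\overline{\mathbf p}}U(m)$ for a splitting $\mathbf q=\mathbf p\mathbf r$ with $m=|\mathbf p|$ maximal subject to $\mathcal J^-_{\mathbf p}\leq h^{-\delta}$.

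I do not expect genuine obstacles here: the argument is a clean bookkeeping reduction from the local double Ehrenfest regime to the local Ehrenfest regime, completely parallel to the passage from Lemma~\ref{l:ehrenfest-prop-none}(2) out of Proposition~\ref{l:ehrenfest-prop}. The only mildly delicate points are that (i) the canonical choice of $m(\mathbf q)$ really does split $\mathcal J^+_{\mathbf q}$ into two pieces each $\leq C h^{-\delta}$ — a two-line consequence of~\eqref{e:jacobians-concat} and the maximality of $m$ — and (ii) the number of distinct values taken by $m$ is only logarithmic, so that summing over $m$ contributes a single extra $\log(1/h)$ factor, giving exactly $\log^3$ rather than something worse.
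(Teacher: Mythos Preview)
Your proposal is correct and follows essentially the same approach as the paper's proof: choose the maximal prefix length $m(\mathbf q)$ with $\mathcal J^+$ bounded by $h^{-\delta}$, use~\eqref{e:jacobians-concat} to bound the Jacobian of the suffix, apply the concatenation formula~\eqref{e:word-concat} to write $A^+_{\mathbf q}=U(m)A^-_{\overline{\mathbf p}}A^+_{\mathbf r}U(-m)$, group by $m$, invoke Proposition~\ref{l:ehrenfest-prop-sum} for each fixed $m$, and sum over the $\cO(\log(1/h))$ values of $m$. The paper's proof (using $\ell$ for your $m$ and $G_\ell$ for your $F_m$) is line-for-line the same argument.
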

%%%%%%%%%%%%%%%%%%%%%%%%%%%%%%%%%%%%%%%%%%%%%%%%%%%%%%%%%%%%%%%%%%%%%%%%%%%%%%%%
\Remark We make no attempt to optimize the power of $\log(1/h)$ in~\eqref{e:eps-snake}~--
for our purposes all that matters is that $\|A^+_F\|_{L^2\to L^2}=\mathcal O(h^{0-})$.
%%%%%%%%%%%%%%%%%%%%%%%%%%%%%%%%%%%%%%%%%%%%%%%%%%%%%%%%%%%%%%%%%%%%%%%%%%%%%%%%
\begin{proof}
We prove a bound on $A^+_F$, with the case of $A^-_F$ handled similarly.

For each $\mathbf q$
with $\mathcal J^+_{\mathbf q}\leq C_0h^{-2\delta}$ there exists an integer $\ell=\ell(\mathbf q)\in [0,n]$
such that
\begin{equation}
  \label{e:snakemid0}
\max(\mathcal J^+_{q_1\dots q_\ell},\mathcal J^+_{q_{\ell+1}\dots q_n})\leq C_1 h^{-\delta}
\end{equation}
where $C_1$ is a large constant depending on $C_0$. Indeed,
we choose maximal $\ell\leq n$ such that 
$\mathcal J^+_{q_1\dots q_\ell}\leq h^{-\delta}$.
If $\ell=n$ then $\mathcal J^+_{q_{\ell+1}\dots q_n}=1$.
If $\ell<n$ then $\mathcal J^+_{q_1\dots q_{\ell+1}}>h^{-\delta}$,
which by~\eqref{e:jacobians-concat} implies that
$\mathcal J^+_{q_1\dots q_\ell}\geq C^{-1}h^{-\delta}$ and thus by another
application of~\eqref{e:jacobians-concat},
$\mathcal J^+_{q_{\ell+1}\dots q_n}\leq C_1h^{-\delta}$.

We may take $C_1$ large enough so that $\mathcal J^+_{\mathbf q}\leq C_0h^{-2\delta}$
implies that $|\mathbf q|\leq C_1\log(1/h)$. Then we decompose
\begin{equation}
  \label{e:snaketot}
A^+_F=\sum_{0\leq\ell\leq C_1\log(1/h)}A^+_{F_\ell},\quad
F_\ell(\mathbf q):=\begin{cases}
F(\mathbf q),&\text{if }\ell(\mathbf q)=\ell,\\
0,&\text{otherwise}.
\end{cases}
\end{equation}
We have by~\eqref{e:word-concat}
$$
A^+_{F_\ell}=U(\ell)A_{G_\ell} U(-\ell)\quad\text{where}\quad
A_{G_\ell}:=\sum_{(\mathbf p,\mathbf r)} G_\ell(\mathbf p,\mathbf r)A^-_{\mathbf p}A^+_{\mathbf r}
$$
and the function $G_\ell:\mathscr A^\bullet\times \mathscr A^\bullet\to\mathbb C$ is defined as follows:
$$
G_\ell(\mathbf p,\mathbf r):=\begin{cases}
F_\ell(\overline{\mathbf p}\mathbf r),&\text{if }|\mathbf p|=\ell,\\
0,&\text{otherwise.}\end{cases}
$$
For each $(\mathbf p,\mathbf r)$ with $G_\ell(\mathbf p,\mathbf r)\neq 0$
we have
$\max(\mathcal J^-_{\mathbf p},\mathcal J^+_{\mathbf r})\leq Ch^{-\delta}$
by~\eqref{e:snakemid0} and~\eqref{e:jacobians-reverse}.
Therefore by Proposition~\ref{l:ehrenfest-prop-sum}
\begin{equation}
  \label{e:snakebd}
\|A^+_{F_\ell}\|_{L^2\to L^2}=
\|A_{G_\ell}\|_{L^2\to L^2}\leq C\log^2(1/h).
\end{equation}
Using the triangle inequality in~\eqref{e:snaketot}
and the norm bound~\eqref{e:snakebd} we get~\eqref{e:eps-snake}.
\end{proof}
%%%%%%%%%%%%%%%%%%%%%%%%%%%%%%%%%%%%%%%%%%%%%%%%%%%%%%%%%%%%%%%%%%%%%%%%%%%%%%%%

%%%%%%%%%%%%%%%%%%%%%%%%%%%%%%%%%%%%%%%%%%%%%%%%%%%%%%%%%%%%%%%%%%%%%%%%%%%%%%%%
\subsubsection{Propagation beyond Ehrenfest time}
  \label{s:longtime}

We now study microlocalization of the operators $A^+_{\mathbf q}$
for words $\mathbf q$ of length no more than $C\log(1/h)$, where $C$ is any fixed constant.
The resulting Proposition~\ref{l:longtime-prop} is applied in the proof of Lemma~\ref{l:loca+} in~\S\ref{s:micro-conjugation} below
to words $\mathbf q$ with $\mathcal J^+_{\mathbf q}\sim h^{-\tau}$, where $\tau\in ({1\over 2},1)$
is defined in~\eqref{e:tau-delta-def}. Analogous statements hold for
the operators $A^-_{\mathbf q}$, but we will not make or use them here.

When $\mathcal J^+_{\mathbf q}\gg h^{-1/2}$ (as in the proof of Lemma~\ref{l:loca+}) the
symbol $a^+_{\mathbf q}$ oscillates too strongly to belong to the symbol class $S^{\comp}_{\delta}$ for any $\delta<{1\over 2}$. In the case when $M$ has constant curvature, it was shown in~\cite{hgap,meassupp} that for $\mathcal J^+_{\mathbf q}\ll h^{-1}$ the operator $A^+_{\mathbf q}$ belongs to a certain anisotropic class of pseudodifferential operators ``aligned'' with the unstable foliation, see~\cite[Lemma~3.2]{meassupp}. The construction of this anisotropic class strongly relied on the smoothness of the unstable foliation, see~\cite[\S3.3]{hgap}. However in the case
of variable curvature considered here, the unstable foliation is no longer smooth
and it is not clear how to define the corresponding anisotropic pseudodifferential class.

We will therefore take a different strategy to study the microlocalization of $A^+_{\bq}$, which uses methods developed in~\cite{AnAnn,AN07,NZ09}. Given an arbitrary function $f\in L^2(M)$ (possibly depending on $h$), we will study the microlocalization of the function $A^+_{\bq}f$. This gives less information than $A^+_{\mathbf q}$ being pseudodifferential but it suffices for the application in~\S\ref{s:micro-conjugation}.

Since $f$ is chosen arbitrary and the microlocal wave propagator
$U(t)$ defined in~\eqref{e:U-t-def} is unitary, it suffices to study microlocalization
of $U^+_{\mathbf q}f$ where the operator $U^+_{\mathbf q}:L^2(M)\to L^2(M)$ is defined similarly to~\eqref{e:damp-word} (recalling the definition~\eqref{e:A-pm-def} of $A^+_{\mathbf q}$):
\begin{equation}
  \label{e:U+def}
U^+_{\mathbf q}:=A^+_{\mathbf q}U(n)=U(1)A_{q_1}U(1)A_{q_2}\cdots U(1)A_{q_n},\quad
\mathbf q=q_1\dots q_n\in\mathscr A^\bullet.
\end{equation}
Using the Fourier inversion formula we will decompose $f$ into a superposition of Lagrangian distributions (see~\S\ref{s:intro-lagr}) associated to a family of Lagrangian submanifolds ${\mathscr L}_{q_n,\theta}\subset T^*M$, $\theta\in\mathbb R^2$.
Roughly speaking, the main result of the present subsection,
Proposition~\ref{l:longtime-prop}, shows that
\begin{equation}
  \label{e:rough-lag}
f\in I^{\comp}_h({\mathscr L}_{q_n,\theta})
\quad\Longrightarrow\quad
U(-1)U^+_{\mathbf q}f\in I^{\comp}_h({\mathscr L}_{\mathbf q,\theta})
\end{equation}
where ${\mathscr L}_{\mathbf q,\theta}$ is the propagated
Lagrangian manifold (see Definition~\ref{d:compost} below).
The key point, exploited in the proof of Lemma~\ref{l:loca+}, is that for long $\mathbf q$ the manifold ${\mathscr L}_{\mathbf q,\theta}$ depends little on~$\theta$, so that the full state $A^+_{\bq}f$ (written as an integral of propagated Lagrangian distributions over~$\theta$) is microlocalized in a very small neighborhood of a single unstable leaf.

The propagator $U(1)$ is a Fourier integral operator (see~\S\ref{s:prelim-fio-s}) associated to the time-one map
of the geodesic flow $\varphi_1$, microlocally in~$\{{1\over 4}<|\xi|_g<4\}$:
\begin{equation}
  \label{e:U-1-fio}
U(1)A,AU(1)\in I^{\comp}_h(\varphi_1)\quad\text{for all}\quad
A\in\Psi^0_h(M),\
\WFh(A)\subset \{\textstyle{1\over 4}<|\xi|_g<4\}.
\end{equation}
This follows from the definition~\eqref{e:U-t-def} and the
standard hyperbolic parametrix construction, see e.g.
\cite[Theorem~10.4]{e-z} or~\cite[Lemma~4.2]{NZ09}.

Using~\eqref{e:U-1-fio} we can prove~\eqref{e:rough-lag}
for $\mathbf q$ of bounded length
using standard properties of Lagrangian distributions (more specifically, property~(3) in~\S\ref{s:prelim-fio-s}).
However, since the length of $\mathbf q$ grows with $h$, the argument becomes more complicated.
In fact, we cannot even use the general definition of the class $I^{\comp}_h(\mathscr L)$
in~\S\ref{s:intro-lagr} since it applies to an $h$-dependent family of distributions
with $h$-independent $\mathscr L$.
We will rely on the results of~\cite{NZ09},
featuring a detailed analysis of the behavior
of the propagated Lagrangian manifolds and the oscillatory integral representations~\eqref{e:lag-dist}
for $U(-1)U^+_{\mathbf q}f$ as the length of~$\mathbf q$ grows.
For this analysis it will be important that the initial Lagrangians ${\mathscr L}_{q,\theta}$
are chosen close to weak unstable leaves, and thus transverse to stable leaves.

To fix the parametrization of propagated Lagrangian manifolds and distributions,
it is convenient to introduce adapted symplectic coordinates. For each $\rho_0\in S^*M$
let
\begin{equation}
  \label{e:kappa-rho-0}
\varkappa_{\rho_0}:U_{\rho_0}\to V_{\rho_0},\quad
U_{\rho_0}\subset T^*M\setminus 0,\quad
V_{\rho_0}\subset T^*\mathbb R^2\setminus 0
\end{equation}
be the symplectomorphism constructed in Lemma~\ref{l:stun-straight}
(in fact we will only use properties~(1)--(4) of Lemma~\ref{l:stun-straight} here). Since $\varkappa_{\rho_0}$ is homogeneous
we may shrink $U_{\rho_0}$ so that the flipped graph $\mathscr L_{\varkappa_{\rho_0}}$ is generated by a single phase function, see~\S\ref{s:lagr-mflds}.

Let $\varepsilon_0>0$ be the constant from~\S\ref{s:refined-partition};
recall that the diameter of each $\mathcal V_q\cap S^*M$ is smaller than $\varepsilon_0$.
We will assume in several places in this subsection
that $\varepsilon_0$ is small depending only on $(M,g)$. For
each $q\in \mathscr A$ fix an arbitrary point $\rho_q\in\mathcal V_q\cap S^*M$ and put
\begin{equation}
  \label{e:kappa-q}
\varkappa_q:=\varkappa_{\rho_q}:\mathcal V^\sharp_q\to \mathcal W_q,\quad
\mathcal V^\sharp_q:=U_{\rho_q},\quad
\mathcal W_q:=V_{\rho_q}.
\end{equation}
We denote elements of $T^*M$ by $\rho=(x,\xi)$
and elements of $T^*\mathbb R^2$ by $(y,\eta)$. We assume that
$\varepsilon_0$ is small enough so that
$\overline{\mathcal V}_q\subset\mathcal V^\sharp_q$ where
the closure is taken in $T^*M\setminus 0$.

We are now ready to define the Lagrangian submanifolds
${\mathscr L}_{\mathbf q,\theta}$:
%%%%%%%%%%%%%%%%%%%%%%%%%%%%%%%%%%%%%%%%%%%%%%%%%%%%%%%%%%%%%%%%%%%%%%%%%%%%%%%%
\begin{figure}
\includegraphics{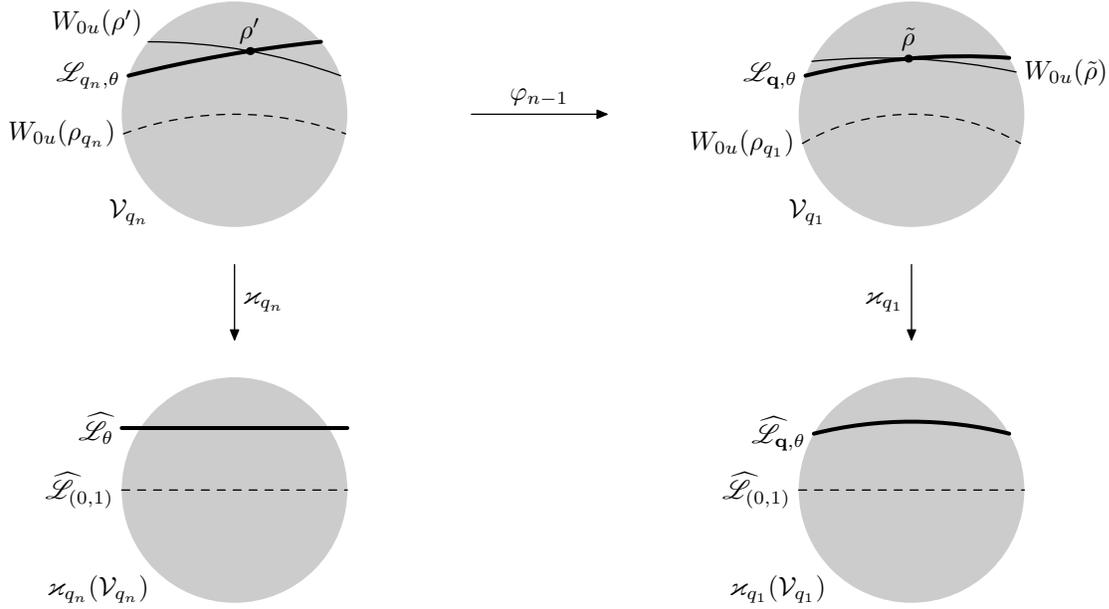}
\caption{An illustration of Definition~\ref{d:compost}
and Lemma~\ref{l:inclinator}, fixing $\tilde\rho=\varphi_{n-1}(\rho')\in \mathscr L_{\mathbf q,\theta}\cap S^*M$. We restrict to $S^*M=\{\eta_2=1\}$
and remove the flow direction $\partial_{y_2}$.
In the bottom figures the horizontal direction is $y_1$ and the vertical one
is $\eta_1$.
The original Lagrangian $\mathscr L_{q_n,\theta}$ is $\mathcal O(\varepsilon_0)$
close to the weak unstable leaf $W_{0u}(\rho')$ as a $C^\infty$ submanifold,
thus the propagated Lagrangian $\mathscr L_{\mathbf q,\theta}$ is $\mathcal O(\varepsilon_0)$
close to the weak unstable leaf $W_{0u}(\tilde\rho)$ (in fact, it is $\mathcal O(e^{-\gamma n}\varepsilon_0)$ close for some $\gamma>0$).
A word of caution: in general $\mathcal V_{q_n},W_{0u}(\rho_{q_n})$
are not mapped by $\varphi_{n-1}$ to
$\mathcal V_{q_1},W_{0u}(\rho_{q_1})$.}
\label{f:incline}
\end{figure}
%%%%%%%%%%%%%%%%%%%%%%%%%%%%%%%%%%%%%%%%%%%%%%%%%%%%%%%%%%%%%%%%%%%%%%%%%%%%%%%%
%
%%%%%%%%%%%%%%%%%%%%%%%%%%%%%%%%%%%%%%%%%%%%%%%%%%%%%%%%%%%%%%%%%%%%%%%%%%%%%%%%
\begin{defi}
\label{d:compost}
Consider the family of `horizontal' Lagrangian submanifolds
$$
\widehat{\mathscr L}_\theta:=\{(y,\theta)\mid y\in\mathbb R^2\}\subset T^*\mathbb R^2,\quad
\theta\in\mathbb R^2.
$$
For $\mathbf q=q_1\dots q_n\in\mathscr A^\bullet$ and $\theta\in\mathbb R^2$, define
\begin{equation}
  \label{e:compost-1}
\begin{aligned}
\mathscr L_{\mathbf q,\theta}&:=\varphi_{n-1}(\varkappa_{q_n}^{-1}(\widehat{\mathscr L}_\theta))\cap \varphi_{-1}(\mathcal V^+_{\mathbf q})
\ \subset\ \mathcal V_{q_1}\ \subset\ T^*M\setminus 0,\\
\widehat{\mathscr L}_{\mathbf q,\theta}&:=\varkappa_{q_1}(\mathscr L_{\mathbf q,\theta})\ \subset\ \mathcal W_{q_1}\ \subset\ T^*\mathbb R^2\setminus 0.
\end{aligned}
\end{equation}
We call $\mathscr L_{q,\theta}:=\varkappa_q^{-1}(\widehat{\mathscr L}_\theta)\cap \mathcal V_q$,
$q\in\mathscr A$, the \textbf{original Lagrangian} corresponding to~$q,\theta$,
and $\mathscr L_{\mathbf q,\theta}$, $\mathbf q\in\mathscr A^\bullet$, the \textbf{propagated Lagrangian}
corresponding to $\mathbf q,\theta$. See Figure~\ref{f:incline}.
\end{defi}
%%%%%%%%%%%%%%%%%%%%%%%%%%%%%%%%%%%%%%%%%%%%%%%%%%%%%%%%%%%%%%%%%%%%%%%%%%%%%%%%
\Remarks
1. The set $\mathscr L_{\mathbf q,\theta}$ may be empty. This happens in particular
if $\mathcal V^+_{\mathbf q}=\emptyset$, if $\theta_2\leq 0$,
or if $|\theta_1/\theta_2|\geq C\varepsilon_0$ for some large fixed~$C$.

\noindent 2. We see from the definition~\eqref{e:compost-1} and the properties
of $\varkappa_q$ in Lemma~\ref{l:stun-straight} that
$\mathscr L_{\mathbf q,\theta}$ is a Lagrangian submanifold of $p^{-1}(\theta_2)\subset T^*M\setminus 0$
and the flow lines of $\varphi_t$ are tangent to~$\mathscr L_{\mathbf q,\theta}$.
Therefore $\widehat{\mathscr L}_{\mathbf q,\theta}$ is a Lagrangian submanifold
of $\{(y,\eta)\mid \eta_2=\theta_2\}\subset T^*\mathbb R^2\setminus 0$
and $\partial_{y_2}$ is tangent to this manifold. 

\noindent 3. Recalling the definition~\eqref{e:V+-} of $\mathcal V^+_{\mathbf q}$,
we see that $\mathscr L_{\mathbf q,\theta}$ is obtained starting
from the original Lagrangian
$\mathscr L_{q_n,\theta}=\varkappa_{q_n}^{-1}(\widehat{\mathscr L}_{\theta})\cap \mathcal V_{q_n}$
by iteratively applying the map $\varphi_1$ and intersecting with $\mathcal V_{q_{n-1}},\dots,
\mathcal V_{q_1}$:
\begin{equation}
  \label{e:compost-iter}
\mathscr L_{q_j\dots q_n,\theta}
=\varphi_1(\mathscr L_{q_{j+1}\dots q_n,\theta})\cap \mathcal V_{q_j},\quad
1\leq j<n.
\end{equation}

\smallskip

By~\eqref{e:unrec} the submanifold $\mathscr L_{\mathbf q,\theta}$
is contained in a $C/\mathcal J^+_{\mathbf q}$ neighborhood of the weak unstable leaf
$W_{0u}(\tilde\rho)$, for any $\tilde\rho\in \mathscr L_{\mathbf q,\theta}$.
The next statement, which is a weak version of the \emph{Inclination Lemma}, shows
in particular that $\mathscr L_{\mathbf q,\theta}$ is controlled as a $C^\infty$ submanifold
uniformly in $\mathbf q,\theta$, regardless of the length of $\mathbf q$.
(A stronger version is that
$\mathscr L_{\mathbf q,\theta}$ is exponentially close in $C^\infty$ to $W_{0u}(\tilde\rho)$
when $|\mathbf q|$ is large.)
To make the statement precise it is convenient to write the image $\widehat{\mathscr L}_{\mathbf q,\theta}$ of $\mathscr L_{\mathbf q,\theta}$ under $\varkappa_{q_1}$ as a graph in the~$y$ variables.
%%%%%%%%%%%%%%%%%%%%%%%%%%%%%%%%%%%%%%%%%%%%%%%%%%%%%%%%%%%%%%%%%%%%%%%%%%%%%%%%
\begin{lemm}
  \label{l:inclinator}
If $\varepsilon_0>0$ is small enough depending only on $(M,g)$ then the following holds.
Let $\mathbf q\in\mathscr A^\bullet$, $\theta\in\mathbb R^2$, and 
assume that $\mathscr L_{\mathbf q,\theta}\neq \emptyset$. Then
\begin{equation}
  \label{e:inclinator}
\widehat{\mathscr L}_{\mathbf q,\theta}=\{(y,\eta)\mid y\in\mathscr U_{\mathbf q,\theta},\
\eta_1=\theta_2 G_{\mathbf q,\theta}(y_1),\ \eta_2=\theta_2\}
\end{equation}
where $\mathscr U_{\mathbf q,\theta}\subset\mathbb R^2$ is an open set and
$G_{\mathbf q,\theta}$ is a function on an open subset of $\mathbb R$ which
satisfies the following derivative bounds:
\begin{enumerate}
\item $\|G_{\mathbf q,\theta}\|_{C^1}\leq C\varepsilon_0$ for some constant
$C$ depending only on~$(M,g)$;
\item $\|G_{\mathbf q,\theta}\|_{C^{\mathbf N}}\leq C_{\mathbf N}$ for all $\mathbf N$,%
\footnote{Here and in Proposition~\ref{l:longtime-prop} below we use boldface $\mathbf N$ to distinguish it from
the propagation time defined in~\eqref{e:prop-times}.}
where the constant
$C_{\mathbf N}$ depends only on~$(M,g)$ and~$\mathbf N$.
\end{enumerate}
Moreover, if $F_{\mathbf q,\theta}:\mathscr U_{\mathbf q,\theta}\to \mathbb R^2$
is defined by
\begin{equation}
  \label{e:F-q-theta}
\varphi_{n-1}(\varkappa_{q_n}^{-1}(F_{\mathbf q,\theta}(y),\theta))
=\varkappa_{q_1}^{-1}(y,\theta_2G_{\mathbf q,\theta}(y_1),\theta_2),\quad
y\in\mathscr U_{\mathbf q,\theta}
\end{equation}
then we have the weakly contracting property for some $C$ depending only on $(M,g)$
\begin{equation}
  \label{e:F-q-theta-der}
\|dF_{\mathbf q,\theta}(y)\|\leq C\quad\text{for all}\quad
y\in \mathscr U_{\mathbf q,\theta}.
\end{equation}
\end{lemm}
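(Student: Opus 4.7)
The plan is to induct on $n=|\mathbf q|$, using the iterative relation from Remark~3 following Definition~\ref{d:compost}: $\mathscr L_{q_j\dots q_n,\theta}=\varphi_1(\mathscr L_{q_{j+1}\dots q_n,\theta})\cap \mathcal V_{q_j}$. Translated to the local charts this becomes $\widehat{\mathscr L}_{q_j\dots q_n,\theta}=\Phi_j(\widehat{\mathscr L}_{q_{j+1}\dots q_n,\theta})\cap \varkappa_{q_j}(\mathcal V_{q_j})$, where $\Phi_j:=\varkappa_{q_j}\circ\varphi_1\circ\varkappa_{q_{j+1}}^{-1}$ is a $C^\infty$ homogeneous symplectomorphism whose derivatives of all orders are uniformly bounded independently of $j$, by the last assertion of Lemma~\ref{l:stun-straight}. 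The base case $n=1$ is immediate: $\widehat{\mathscr L}_{q_1,\theta}$ is the horizontal plane $\{\eta=\theta\}$ intersected with $\varkappa_{q_1}(\mathcal V_{q_1})$, so $G_{q_1,\theta}\equiv\theta_1/\theta_2$; since $\varkappa_{q_1}(\rho_{q_1})=(0,0,0,1)$ and $\mathcal V_{q_1}\cap S^*M$ has diameter at most $\varepsilon_0$, the nonemptiness hypothesis forces $|\theta_1/\theta_2|\leq C\varepsilon_0$.

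The heart of the inductive step exploits the fact that each $\Phi_j$ maps the weak unstable foliation of $\mathcal V^\sharp_{q_{j+1}}$ into that of $\mathcal V^\sharp_{q_j}$, and that in both charts the leaves of this foliation are nearly horizontal, with slope $\mathcal O(\varepsilon_0)$ by properties~(5) and~(8) of Lemma~\ref{l:stun-straight}. Since $\Phi_j$ preserves the level sets $\{\eta_2=\mathrm{const}\}$ by property~(4), the analysis reduces to the $(y_1,\eta_1)$-plane (with $y_2$ as a parameter along the flow direction). At any base point, $d\Phi_j$ on this plane is, up to an $\mathcal O(\varepsilon_0)$-perturbation of its principal axes, diagonal with entries $J^u_1\geq e^{\Lambda_0}$ along the unstable direction $\partial_{y_1}$ and $J^s_1\leq e^{-\Lambda_0}$ along the stable direction $\partial_{\eta_1}$. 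Applying $\Phi_j$ to a Lagrangian of the form $\{\eta_1=\theta_2 g(y_1),\,\eta_2=\theta_2\}$ with $\sup|g'|\leq C\varepsilon_0$, a direct computation shows the image has the same form, with slope bounded by $e^{-2\Lambda_0}C\varepsilon_0+\mathcal O(\varepsilon_0^2)$; choosing $C$ large and $\varepsilon_0$ small closes the induction for property~(1).

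Higher-derivative estimates follow a parallel induction. Differentiating the relation $\widehat{\mathscr L}_{q_j\dots q_n,\theta}=\Phi_j(\widehat{\mathscr L}_{q_{j+1}\dots q_n,\theta})$ in the $y_1$-variable expresses $G^{(\mathbf N)}_{q_j\dots q_n,\theta}$ as a polynomial in uniformly bounded derivatives of the generating function of $\Phi_j$ and in derivatives of $G_{q_{j+1}\dots q_n,\theta}$ of order $\leq \mathbf N$; the key coefficient, multiplying the $\mathbf N$-th derivative of $G_{q_{j+1}\dots q_n,\theta}$, carries the same hyperbolic contraction factor $\lesssim e^{-2\Lambda_0}$ that governed the $C^1$ bound, so the scheme parallels the case $\mathbf N=1$. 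This iterative argument is in the spirit of the parametrix analysis in~\cite[\S4]{NZ09}. For the map $F_{\mathbf q,\theta}$, observe that it is the $y$-component of $\varkappa_{q_n}\circ\varphi_{-(n-1)}\circ\varkappa_{q_1}^{-1}$ restricted to $\widehat{\mathscr L}_{\mathbf q,\theta}$; along this Lagrangian the backward flow contracts the unstable $y_1$-direction by a factor $\lesssim 1/\mathcal J^+_{\mathbf q}\leq 1$, while preserving the flow $y_2$-direction up to bounded reparametrization, yielding~\eqref{e:F-q-theta-der} with a constant depending only on $(M,g)$.

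The main obstacle will be checking that the hyperbolic contraction in the $C^1$ bound genuinely propagates to every $C^{\mathbf N}$ norm uniformly in $n$: the naive chain-rule bound for the $\mathbf N$-th derivative of a composition of $n$ maps grows at least polynomially in~$n$, and one must verify carefully that the $e^{-2\Lambda_0}$ factor in the $\partial_{\eta_1}$-direction dominates the nonlinear contributions at every derivative order so that the induction closes. Everything else is routine verification using the explicit near-horizontal structure provided by Lemma~\ref{l:stun-straight}.
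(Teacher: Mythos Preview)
Your inductive approach via the iterated chart maps $\Phi_j=\varkappa_{q_j}\circ\varphi_1\circ\varkappa_{q_{j+1}}^{-1}$, exploiting the hyperbolic contraction of the slope function $G$ at each step, is the standard Inclination Lemma argument and is correct; this is precisely the route taken in the references the paper cites in lieu of a proof (\cite[Proposition~5.1]{NZ09}, \cite[Proposition~6.2.23]{KaHa}, and the first arXiv version~\cite[Lemma~4.7]{v1}). The paper itself omits the argument entirely, so there is nothing to compare beyond noting that your sketch matches those sources, including your correct identification of the main technical point: that the $e^{-2\Lambda_0}$ slope-contraction factor persists as the leading coefficient in the recursion for every $C^{\mathbf N}$ norm, which is what prevents the na\"ive chain-rule blowup and closes the induction uniformly in~$n$.
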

%%%%%%%%%%%%%%%%%%%%%%%%%%%%%%%%%%%%%%%%%%%%%%%%%%%%%%%%%%%%%%%%%%%%%%%%%%%%%%%%
\Remark
The set $\mathscr U_{\mathbf q,\theta}$ (the domain of the function $G_{\mathbf q,\theta}$)
depends on $\mathbf q$ but it has macroscopic size (of the same scale as the sets $\mathcal V_q$) even for long words $\mathbf q$.

We omit the proof of Lemma~\ref{l:inclinator} here, referring the reader to~\cite[Proposition~5.1]{NZ09}, \cite[Proposition~6.2.23]{KaHa}, and the first version of this article~\cite[Lemma~4.7]{v1}.

%%%%%%%%%%%%%%%%%%%%%%%%%%%%%%%%%%%%%%%%%%%%%%%%%%%%%%%%%%%%%%%%%%%%%%%%%%%%%%%%
We now quantize the symplectomorphisms $\varkappa_q$. As explained following~\eqref{e:kappa-rho-0}
the flipped graph of each~$\varkappa_q$ is generated by a single phase function.
Then (see~\S\ref{s:prelim-fio-s}) there exist Fourier integral operators
\begin{equation}
  \label{e:B-q-def}
\begin{aligned}
B_q:L^2(M)\to L^2(\mathbb R^2),&\quad
B_q\in I^{\comp}_h(\varkappa_q),\\
B'_q:L^2(\mathbb R^2)\to L^2(M),&\quad
B'_q\in I^{\comp}_h(\varkappa_q^{-1})
\end{aligned}
\end{equation}
quantizing $\varkappa_q$ near $\varkappa_q(\overline{\mathcal V}_q\cap \{{1\over 4}\leq |\xi|_g\leq 4\})\times
(\overline{\mathcal V}_q\cap \{{1\over 4}\leq|\xi|_g\leq 4\})$ in the sense of~\eqref{e:fio-quantize}.

Using the operators $B_q$ we give a precise definition of the classes
$I^{\comp}_h(\mathscr L_{q_n,\theta})$ and $I^{\comp}_h(\mathscr L_{\mathbf q,\theta})$
featured in~\eqref{e:rough-lag}. We have $\mathscr L_{q_n,\theta}= \varkappa_{q_n}^{-1}(\widehat{\mathscr L}_\theta)\cap\mathcal V_{q_n}$
where $\widehat{\mathscr L}_\theta$ is generated in the sense of~\eqref{e:basic-lagrangian}
by the function
\begin{equation}
  \label{e:Phi-theta-def}
\Phi_\theta\in C^\infty(\mathbb R^2;\mathbb R),\quad
\Phi_\theta(y)=\langle y,\theta\rangle.
\end{equation}
Thus by~\eqref{e:lag-dist-basic} the elements of $I^{\comp}_h(\mathscr L_{q_n,\theta})$ which
are microlocalized in $\mathcal \{{1\over 4}<|\xi|_g<4\}$ have the form
$B'_{q_n} (e^{i\Phi_\theta/h}a)$ for some $a\in C^\infty(\mathbb R^2)$. We will
in fact take $a\equiv 1$.

Next, by Lemma~\ref{l:inclinator} the Lagrangian manifold
$\widehat{\mathscr L}_{\mathbf q,\theta}=\varkappa_{q_1}(\mathscr L_{\mathbf q,\theta})$
is generated in the sense of~\eqref{e:basic-lagrangian} by a function
$$
\Phi_{\mathbf q,\theta}\in C^\infty(\mathscr U_{\mathbf q,\theta};\mathbb R),\quad
\partial_{y_1}\Phi_{\mathbf q,\theta}=\theta_2 G_{\mathbf q,\theta}(y_1),\quad
\partial_{y_2}\Phi_{\mathbf q,\theta}=\theta_2.
$$
Here $\Phi_{\mathbf q,\theta}$ is defined uniquely up to a locally constant function.
We fix this freedom by recalling that the functions induced on~$\widehat{\mathscr L}_\theta,
\widehat{\mathscr L}_{\mathbf q,\theta}$ by $\Phi_\theta,\Phi_{\mathbf q,\theta}$
are antiderivatives on these Lagrangian submanifolds (see~\eqref{e:basic-lagrangian}).
The antiderivative
on $\widehat{\mathscr L}_{\mathbf q,\theta}$ can be computed by applying~\eqref{e:anti-composition}
to the definition~\eqref{e:compost-1},
where the symplectomorphisms $\varkappa_{q_1},\varphi_{n-1},\varkappa_{q_n}^{-1}$ are homogeneous
and thus have zero antiderivative (see~\S\ref{s:prelim-fio-s}). 
Thus we may put
\begin{equation}
  \label{e:Phi-q-theta-def}
\Phi_{\mathbf q,\theta}(y):=\Phi_\theta(F_{\mathbf q,\theta}(y)),\quad
y\in\mathscr U_{\mathbf q,\theta},
\end{equation}
where $F_{\mathbf q,\theta}$ is defined in~\eqref{e:F-q-theta}.
Then by~\eqref{e:lag-dist-basic} the elements of $I^{\comp}_h(\mathscr L_{\mathbf q,\theta})$
which are microlocalized in $\{{1\over 4}<|\xi|_g<4\}$ have the form
$B'_{q_1}(e^{i\Phi_{\mathbf q,\theta}/h}a)$ for some $a\in \CIc(\mathscr U_{\mathbf q,\theta})$.

Building on the above discussion we now give the main statement of this subsection,
which is a precise version of~\eqref{e:rough-lag}. We again omit the proof, referring to~\cite[Proposition~4.1 and~\S7.2]{NZ09}
and to the first version of this article~\cite[Proposition~4.8]{v1}. See also~\cite[\S3]{AnantharamanShort} for a simplified proof in a model case.
%%%%%%%%%%%%%%%%%%%%%%%%%%%%%%%%%%%%%%%%%%%%%%%%%%%%%%%%%%%%%%%%%%%%%%%%%%%%%%%%
\begin{prop}
  \label{l:longtime-prop}
Assume that $\varepsilon_0$ is small enough depending only on $(M,g)$.
Let $\mathbf q=q_1\dots q_n\in\mathscr A^\bullet$, $\theta\in\mathbb R^2$,
and assume that $n\leq C_0\log(1/h)$, $|\theta_1|\leq C_0$, ${1\over 4}\leq \theta_2\leq 4$ for some constant~$C_0$.
Define $\Phi_\theta,\Phi_{\mathbf q,\theta}$ using~\eqref{e:Phi-theta-def},
\eqref{e:Phi-q-theta-def}. Let $U^+_{\mathbf q}$
be defined in~\eqref{e:U+def} and fix $\mathbf N>0$. Then we have uniformly in $\mathbf q,\theta$
\begin{equation}
  \label{e:longtime-prop}
U^+_{\mathbf q} B'_{q_n}(e^{i\Phi_\theta/h})=U(1)B'_{q_1}(e^{i\Phi_{\mathbf q,\theta}/h}a_{\mathbf q,\theta,\mathbf N})+\mathcal O(h^{\mathbf N})_{L^2(M)}
\end{equation}
for some $a_{\mathbf q,\theta,\mathbf N}(y;h)\in \CIc(\mathscr U_{\mathbf q,\theta})$ such that:
\begin{enumerate}
\item the distance between $\supp a_{\mathbf q,\theta,\mathbf N}$ and the complement
of $\mathscr U_{\mathbf q,\theta}$ is larger than $C^{-1}$
for some constant $C>0$ depending only on the choice of $A_q,\mathcal V_q,\varkappa_q$,
$q\in\mathscr A$;
\item for any multiindex $\alpha$ there exists $C_{\mathbf N,\alpha}>0$ such that
\begin{equation}
  \label{e:longprop-derbs}
\sup_y|\partial^\alpha_y a_{\mathbf q,\theta,\mathbf N}(y) |\leq C_{\mathbf N,\alpha}.
\end{equation}
Here $C_{\mathbf N,\alpha}$ depends only on the choices of $A_q,B_q,B'_q$, and~$C_0$.
\end{enumerate}
\end{prop}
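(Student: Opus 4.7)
The plan is to establish \eqref{e:longtime-prop} by downward induction on the index $j=n,n-1,\dots,1$, proving at each stage that
$$U(1)A_{q_j}U(1)A_{q_{j+1}}\cdots U(1)A_{q_n}B'_{q_n}(e^{i\Phi_\theta/h})=U(1)B'_{q_j}(e^{i\Phi_{\bq^{(j)},\theta}/h}a_j)+\mathcal O(h^{\mathbf N})_{L^2}$$
where $\bq^{(j)}=q_j\dots q_n$ and $a_j\in\CIc(\mathscr U_{\bq^{(j)},\theta})$ satisfies derivative bounds $\sup|\partial^\alpha a_j|\leq C_{\mathbf N,\alpha}$ uniform in $j$ and in $\bq$. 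The base case $j=n$ follows from oscillatory testing: since $B_{q_n}A_{q_n}B'_{q_n}$ is a compactly supported pseudodifferential operator on $\mathbb R^2$ by~\eqref{e:egorov-gen}, its application to $e^{i\Phi_\theta/h}$ produces $e^{i\Phi_\theta/h}a_n$ for a classical symbol $a_n$, and $\Phi_{q_n,\theta}=\Phi_\theta$ since $F_{q_n,\theta}=\mathrm{id}$.

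For the inductive step, assume the claim at level $j+1$ and apply $U(-1)U(1)A_{q_j}$. Using that $\WFh(A_{q_j})\subset\mathcal V_{q_j}\cap\{\tfrac14<|\xi|_g<4\}$ and that $B'_{q_j}B_{q_j}=I+\mathcal O(h^\infty)$ microlocally there, I insert $B'_{q_j}B_{q_j}$ to write
$$A_{q_j}U(1)B'_{q_{j+1}}\big(e^{i\Phi_{\bq^{(j+1)},\theta}/h}a_{j+1}\big)=B'_{q_j}T_j\big(e^{i\Phi_{\bq^{(j+1)},\theta}/h}a_{j+1}\big)+\mathcal O(h^\infty)_{L^2}$$
with $T_j:=B_{q_j}A_{q_j}U(1)B'_{q_{j+1}}\in I^{\comp}_h(\kappa_j)$ a Fourier integral operator associated to $\kappa_j:=\varkappa_{q_j}\circ\varphi_1\circ\varkappa_{q_{j+1}}^{-1}$. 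By Lemma~\ref{l:inclinator}, both $\widehat{\mathscr L}_{\bq^{(j+1)},\theta}$ and $\widehat{\mathscr L}_{\bq^{(j)},\theta}$ are graphs over the $y$-coordinates with $C^{\mathbf N}$ bounds uniform in $\bq$, and $\kappa_j$ sends (a portion of) the former to the latter. Writing $T_j$ in a standard oscillatory integral form~\eqref{e:fio-std-par} and applying stationary phase to $T_j(e^{i\Phi_{\bq^{(j+1)},\theta}/h}a_{j+1})$ yields $e^{i\Phi_{\bq^{(j)},\theta}/h}a_j+\mathcal O(h^{\mathbf N+1})$, the matching of phases being automatic from the antiderivative composition formula~\eqref{e:anti-composition} together with the homogeneity of $\varkappa_q$ and $\varphi_1$ (zero antiderivative convention).

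The central quantitative point is the derivative control on $a_j$. To leading order $a_j=(a_{j+1}\circ\Pi_j)\cdot J_j^{1/2}+\mathcal O(h)$, where $\Pi_j$ is the coordinate projection of $\kappa_j^{-1}$ restricted to the graph and $J_j$ is the induced Jacobian. Iterating, $a_j$ is essentially $a_n\circ F_{\bq^{(j)},\theta}$ times a bounded factor; the weak contraction property~\eqref{e:F-q-theta-der} of Lemma~\ref{l:inclinator} then yields $\sup|\partial^\alpha a_j|\leq C_{\mathbf N,\alpha}$ with constants \emph{independent of $n$}, which is essential since $n$ may grow like $\log(1/h)$. The support property~(1) is preserved inductively because $\mathscr U_{\bq^{(j)},\theta}$ is an open subset of $\mathscr U_{\bq^{(j+1)},\theta}$ whose boundary lies at distance $\sim 1$ from $\supp a_j$, by the macroscopic size of the cutoff set $\mathcal V_{q_j}$. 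Finally, the step errors $\mathcal O(h^{\mathbf N+1})$ sum to $\mathcal O(n\,h^{\mathbf N+1})=\mathcal O(h^{\mathbf N})$, absorbing the factor $n\leq C_0\log(1/h)$ by taking $\mathbf N$ one larger at the outset.

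The main obstacle is the uniform derivative bookkeeping for $a_j$: without~\eqref{e:F-q-theta-der}, a naive application of the chain rule would give $\|a_j\|_{C^{\mathbf N}}$ growing like $\|dF_j\|^{\mathbf N}$, which could be as large as $h^{-\mathbf N/2}$ once $n$ exceeds the Ehrenfest time, destroying the asymptotic expansion. The Inclination Lemma, which rests ultimately on the hyperbolicity of $\varphi_1$ and the transversality of $\widehat{\mathscr L}_{\bq^{(j+1)},\theta}$ to the stable direction $\partial_{\eta_1}$ at $\varkappa_{q_{j+1}}(\rho_{q_{j+1}})$, is what makes these constants uniform. All of this detailed analysis is carried out in~\cite[Proposition~4.1 and~\S7.2]{NZ09}, to which I refer for the technical verifications.
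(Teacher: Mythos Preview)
Your proposal is correct and follows exactly the inductive WKB/stationary-phase strategy of~\cite[Proposition~4.1 and~\S7.2]{NZ09}, which is precisely what the paper cites in lieu of a proof. The only minor imprecisions are that the domains $\mathscr U_{\bq^{(j)},\theta}$ and $\mathscr U_{\bq^{(j+1)},\theta}$ live in different coordinate charts (so one is not literally a subset of the other, though the transported support does stay at macroscopic distance from the boundary), and that controlling higher derivatives of $a_j$ requires the full $C^{\mathbf N}$ bounds on $G_{\bq,\theta}$ from Lemma~\ref{l:inclinator}(2), not only the first-derivative bound~\eqref{e:F-q-theta-der}; both points are handled in the reference you cite.
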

%%%%%%%%%%%%%%%%%%%%%%%%%%%%%%%%%%%%%%%%%%%%%%%%%%%%%%%%%%%%%%%%%%%%%%%%%%%%%%%%
\Remarks
1. If $\mathscr L_{\mathbf q,\theta}=\emptyset$ then we have $a_{\mathbf q,\theta,\mathbf N}=0$ and Proposition~\ref{l:longtime-prop} states that the left-hand side of~\eqref{e:longtime-prop}
is $\mathcal O(h^\infty)_{L^2(M)}$.

\noindent 2. \cite{AN07,NZ09} show that the symbols $a_{\mathbf q,\theta,\mathbf N}$ satisfy stronger bounds,
in fact they decay exponentially with $|\mathbf q|$,
see~\cite[Lemma~3.5]{AN07} and~\cite[(7.11)]{NZ09}. We state the weaker bound~\eqref{e:longprop-derbs} since it suffices for our application in~\S\ref{s:micro-conjugation}.

%%%%%%%%%%%%%%%%%%%%%%%%%%%%%%%%%%%%%%%%%%%%%%%%%%%%%%%%%%%%%%%%%%%%%%%%%%%%%%%%
\subsection{Reduction to words of moderate length}
\label{s:moderate-reduction}

We now return to the proof of Proposition~\ref{l:longdec-1}. Henceforth we fix two words
$$
\mathbf v\in \mathscr A_\star^{N_0},\quad
\mathbf w\in \mathscr A_\star^{N_1}.
$$
We first write a decomposition~\eqref{e:Aw-dec} of $A^+_{\mathbf w}$ into a sum of terms of the form $A^+_{\mathbf q}$ where
$\mathbf q$ are words over the refined alphabet $\mathscr A=\{1,\dots,Q\}$ (see~\S\ref{s:refined-partition}).
For that we use the following
%%%%%%%%%%%%%%%%%%%%%%%%%%%%%%%%%%%%%%%%%%%%%%%%%%%%%%%%%%%%%%%%%%%%%%%%%%%%%%%%
\begin{defi}
  \label{d:prec-word}
For $q\in\mathscr A$ and $w\in\mathscr A_\star$, we write
$q\lesssim w$ if one of the following holds:
\begin{itemize}
\item $w=1$ and $q=1$, or
\item $w=\star$ and $q\in \{2,\dots,Q\}$.
\end{itemize}
If $\mathbf q=q_1\dots q_{n}\in \mathscr A^\bullet$
and $\mathbf w=w_1\dots w_{m}\in \mathscr A_\star^\bullet$, then
we say that $\mathbf q\lesssim \mathbf w$ if
$n\leq m$ and $q_j\lesssim w_j$ for all $j=1,\dots,n$.
\end{defi}
%%%%%%%%%%%%%%%%%%%%%%%%%%%%%%%%%%%%%%%%%%%%%%%%%%%%%%%%%%%%%%%%%%%%%%%%%%%%%%%%
Since $A_\star=A_2+\dots +A_Q$, we have
\begin{equation}
  \label{e:Aw-dec}
A^+_{\mathbf w}=\sum_{\mathbf q\in \mathscr A^{N_1},\ \mathbf q\lesssim \mathbf w} A^+_{\mathbf q}.
\end{equation}

Since $N_1$ is larger than the maximal Ehrenfest time $T_{\max}$ (see~\eqref{e:N-1-def}), for all words $\mathbf q\in\mathscr A^{N_1}$ we have $\cJ^{+}_{\bq}>h^{-1}$, so the symbol $a^+_{\mathbf q}$
is very irregular.
To fix this problem, we will rewrite~\eqref{e:Aw-dec} in terms of an expression with involves words
with length bounded by the local double Ehrenfest time~-- see~\eqref{e:furdec}
and Figure~\ref{f:moderate}.
%%%%%%%%%%%%%%%%%%%%%%%%%%%%%%%%%%%%%%%%%%%%%%%%%%%%%%%%%%%%%%%%%%%%%%%%%%%%%%%%
\begin{figure}
\includegraphics[height=4.75cm]{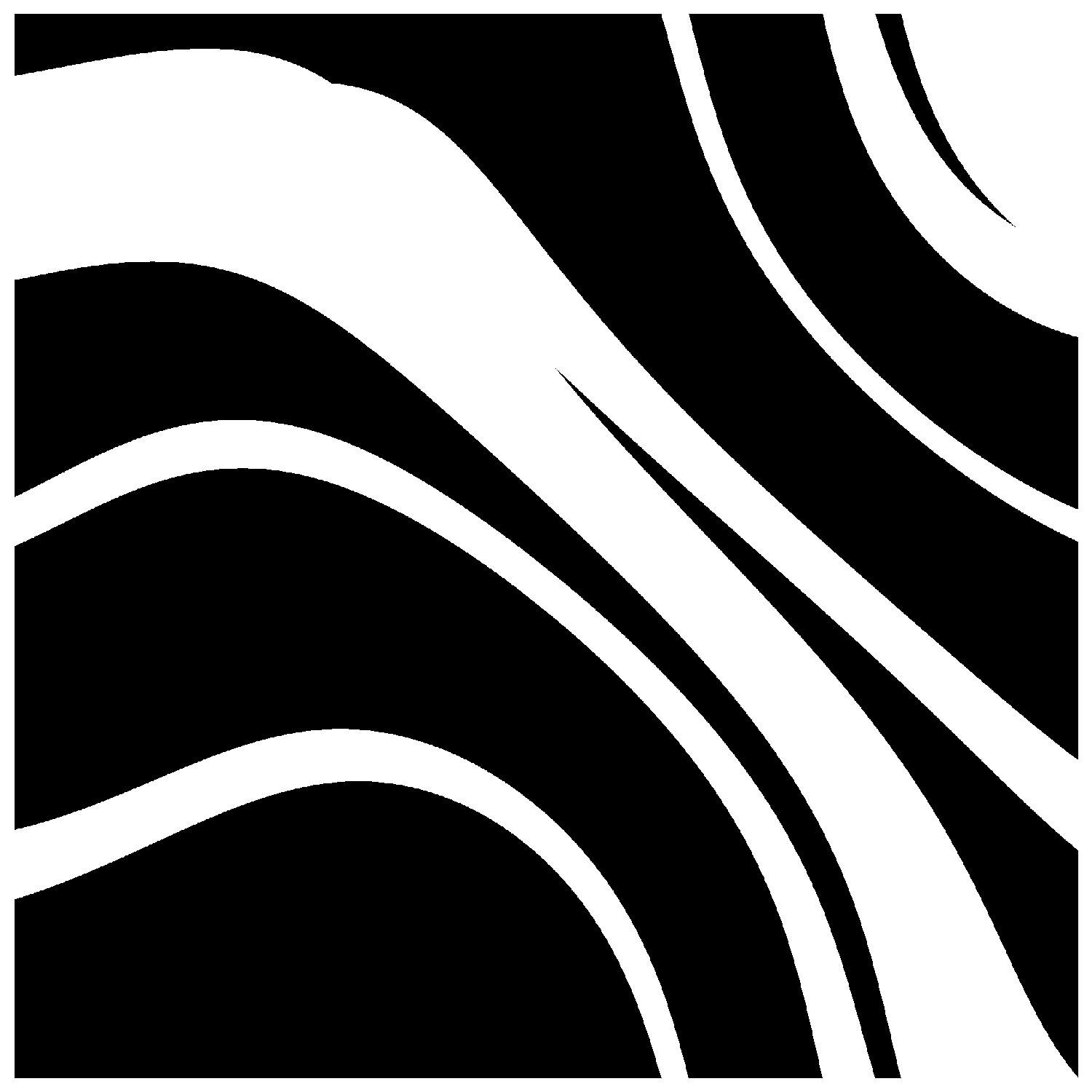}\quad
\includegraphics[height=4.75cm]{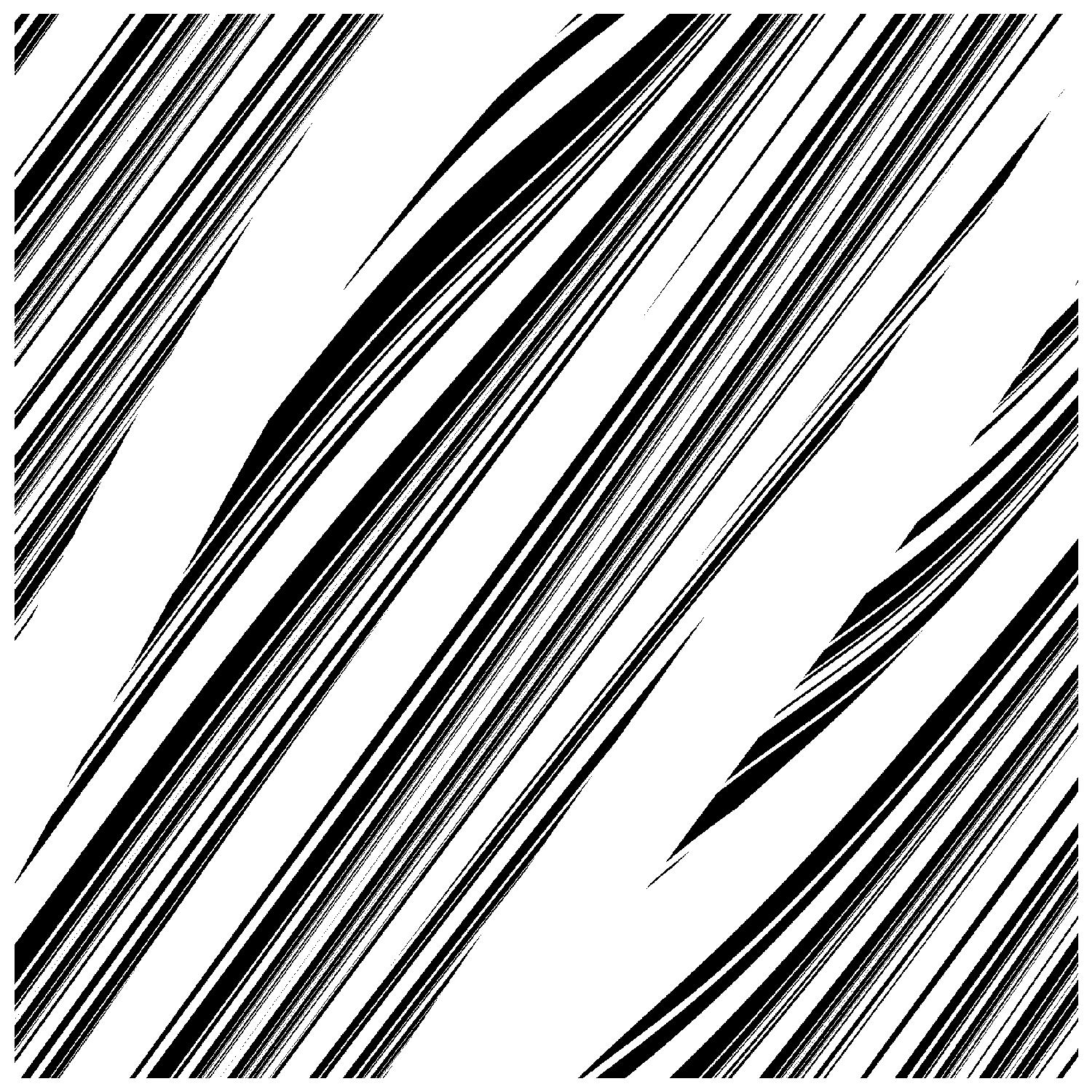}\quad
\includegraphics[height=4.75cm]{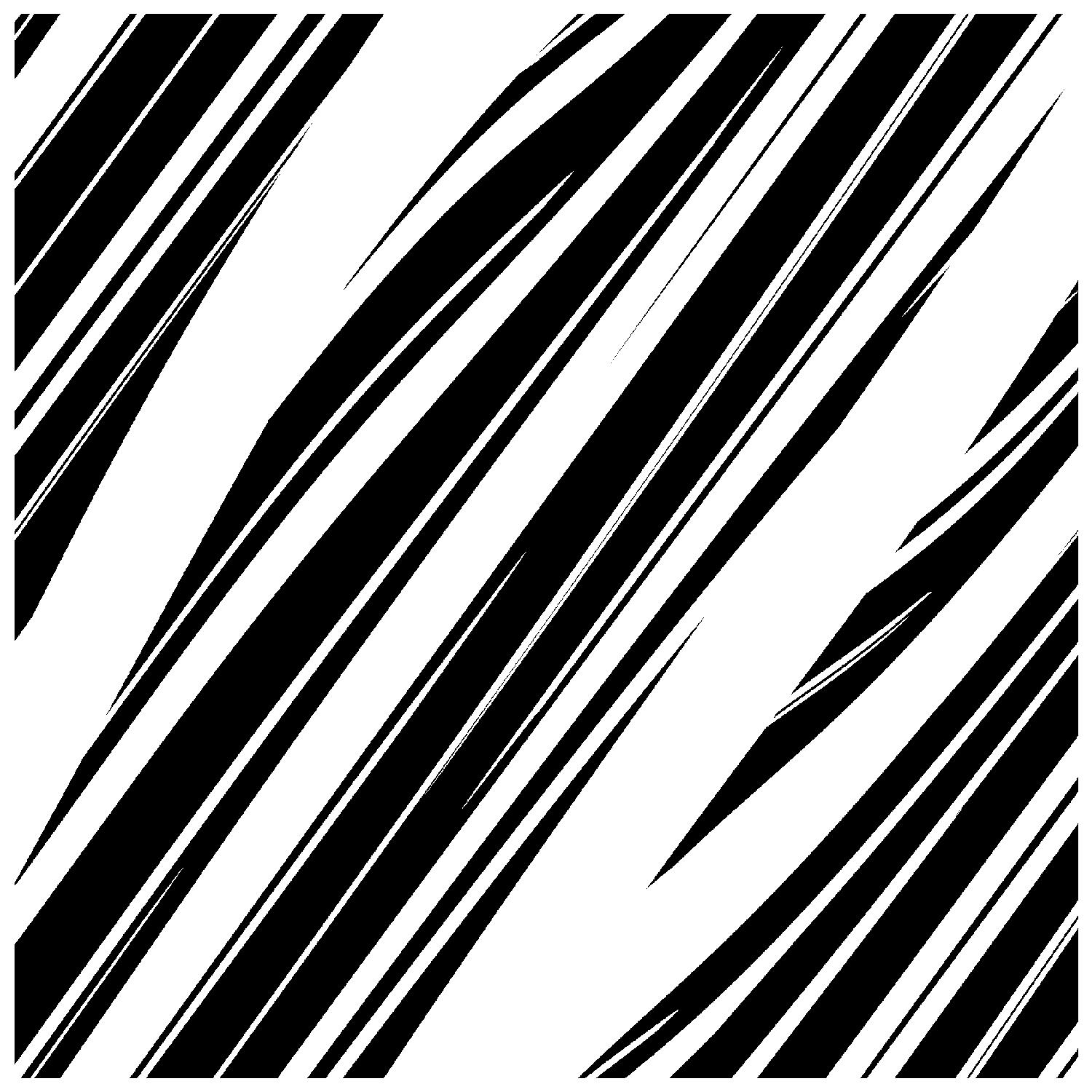}
\hbox to\hsize{\hss $\supp a^-_{\mathbf v}$\hss\hss \hskip.2in$\supp a^+_{\mathbf w}$\hskip-.2in\hss\hss 
$\supp\sum_{n,e} a^+_{\mathcal Q_n(\mathbf w,e)}$\hskip-.4in \hss}
\caption{Supports of the symbols
$a^-_{\mathbf v}$, $a^+_{\mathbf w}$, and $\sum_{n,e} a^+_{\mathcal Q_n(\mathbf w,e)}$,
corresponding to the operators $A^-_{\mathbf v}$, $A^+_{\mathbf w}$,
and $\sum_{n,e} A^+_{\mathcal Q_n(\mathbf w,e)}$. We restrict to some hypersurface
in $S^*M$ transversal to the flow direction. By~\eqref{e:prop-times} and~\eqref{e:N-1-def}
the thickness of the strokes in $\supp a^-_{\mathbf v}$
(corresponding to the Jacobian $(J^u_{N_0})^{-1}$) is at least $h^{1/6}$,
while in $\supp a^+_{\mathbf w}$ it is at most $h$.
Both of these have strokes of very different thicknesses because
the Jacobians vary from point to point.
The set $\supp\sum_{n,e} a^+_{\mathcal Q_n(\mathbf w,e)}$
contains $\supp a^+_{\mathbf w}$ and has strokes of uniform thickness approximately $h^{-\tau}=h^{-2\delta}$
(roughly speaking, each stroke corresponds to one term $a^+_{\mathbf q}$), so that classical/quantum correspondence still applies.}
\label{f:moderate}
\end{figure}
%%%%%%%%%%%%%%%%%%%%%%%%%%%%%%%%%%%%%%%%%%%%%%%%%%%%%%%%%%%%%%%%%%%%%%%%%%%%%%%%

Recall the `minimal/maximal expansion rates' $0<\Lambda_0\leq\Lambda_1$ defined in~\eqref{e:Lambda-0-1};
as before we put $\Lambda:=\lceil \Lambda_1/\Lambda_0\rceil$.
We fix constants
\begin{equation}
  \label{e:tau-delta-def}
\tau:=1-{1\over 10\Lambda},\quad
\delta := {\tau\over 2}<{1\over 2}.
\end{equation}
Note that $\tau$ is very close to~1; this will be used in~\eqref{e:porosity-finally-meets} below.
(In~\cite{meassupp} the parameter $\tau$ was denoted by $\rho$.)

For $n=1,\dots,N_1$ and $e\in\mathscr A$
let us define sets of refined words starting with the letter~$e$ and controlled by their local Jacobians:
\begin{equation}
  \label{e:Q-n-def}
\begin{aligned}
\mathcal Q_n(\mathbf w,e)&:=\{\mathbf q=q_1\dots q_n\in \mathscr A^n\mid q_1=e,\ \mathbf q\lesssim\mathbf w,\ \mathcal J^+_{\mathbf q}\geq h^{-\tau}> \mathcal J^+_{\mathbf q'}\},\\
\mathcal Q'_n(\mathbf w,e)&:=\{\mathbf q\in\mathcal Q_n(\mathbf w,e)\mid \mathcal V^+_{\mathbf q}\neq\emptyset\},\\
\mathcal Q''_n(\mathbf w,e)&:=\{\mathbf q\in\mathcal Q_n(\mathbf w,e)\mid \mathcal V^+_{\mathbf q}=\emptyset\}
\end{aligned}
\end{equation}
where we recall that for any $\bq=q_1\cdots q_{n}$, we denote $\bq':=q_1\cdots q_{n-1}$. 
By~\eqref{e:jacobians-concat} we have for some constant $C$ depending only on $(M,g)$
\begin{equation}
  \label{e:go-deep}
h^{-\tau}\leq \mathcal J^+_{\mathbf q}\leq Ch^{-\tau}=Ch^{-2\delta}\quad\text{for all}\quad
\mathbf q\in\mathcal Q'_n(\mathbf w,e).
\end{equation}
That is,  words $\bq\in \cQ_n'(\bw,e)$ correspond to sets $\cV^+_\bq$ on which the backwards stable
Jacobian $J^s_{-n}(\rho)$ is approximately equal to $h^{-\tau}$. These words are such that their local double Ehrenfest time $\widetilde T^+_{\bq}$ is approximately equal to their length $n$ (they would be equal if we had taken $\tau=1$).

For each $\mathbf q=q_1\dots q_{N_1}\in\mathscr A^{N_1}$ with $\mathbf q\lesssim \mathbf w$ 
we have $\mathcal J^+_{\mathbf q}\geq e^{\Lambda_0 N_1}\geq h^{-1}\geq h^{-\tau}$ by~\eqref{e:N-1-def} and~\eqref{e:jacobian-bounds}.
Using~\eqref{e:jacobian-grows} we see that for each such $\mathbf q$ there exists unique
$n\in\{1,\dots,N_1\}$ such that the prefix
$q_1\dots q_{n}$ lies in $\mathcal Q_n(\mathbf w,q_1)$.
We also have $\mathcal Q_n(\mathbf w,e)=\mathcal Q'_n(\mathbf w,e)\sqcup \mathcal Q''_n(\mathbf w,e)$.
Therefore the decomposition~\eqref{e:Aw-dec} can be written as
\begin{equation}
  \label{e:furdec}
A^+_{\mathbf w}=\sum_{n=1}^{N_1}\sum_{e\in\mathscr A}A^+_{\mathcal Q_n(\mathbf w,e)}Z_{n,\mathbf w}
=\sum_{n=1}^{N_1}\sum_{e\in\mathscr A}(A^+_{\mathcal Q'_n(\mathbf w,e)}
+A^+_{\mathcal Q''_n(\mathbf w,e)})Z_{n,\mathbf w}
\end{equation}
where $A^+_{\mathcal Q_n(\mathbf w,e)}$ is defined by~\eqref{e:A-E-def} and
$$
Z_{n,\mathbf w}:=A_{w_{n+1}}(-n-1)\cdots A_{w_{N_1}}(-N_1) = U(n+1) A^+_{w_{n+1}\cdots w_{N_1}}U(-n-1).
$$
We have $\|Z_{n,\mathbf w}\|_{L^2\to L^2}\leq 2$ similarly to~\eqref{e:A-q-bdd}.
Moreover, since the number of elements of $\mathcal Q''_n(\mathbf w,e)$ is bounded
by some negative power of~$h$,
by part~2 of Lemma~\ref{l:ehrenfest-prop-none} we get
$$
\|A^+_{\mathcal Q''_n(\mathbf w,e)}\|_{L^2\to L^2}=\mathcal O(h^\infty).
$$
We then estimate
$$
\|A^-_{\mathbf v}A^+_{\mathbf w}\|_{L^2\to L^2}
\leq 2\sum_{n=1}^{N_1}\sum_{e\in\mathscr A}\|A^-_{\mathbf v}A^+_{\mathcal Q'_n(\mathbf w,e)}\|_{L^2\to L^2}
+\mathcal O(h^\infty).
$$
Since $N_1=\mathcal O(\log(1/h))$,
Proposition~\ref{l:longdec-1} is proved once we establish
its analogue with $A^+_{\mathbf w}$ replaced by $A^+_{\mathcal Q'_n(\mathbf w,e)}$, that is the sum of $A^+_{\mathbf q}$
over the refined words $\mathbf q$ with length~$n$, initial letter~$e$, and local Jacobians $\cJ^+_{\bq}\sim h^{-\tau}$ (that is, their local double Ehrenfest time is approximately equal to $n$):
%%%%%%%%%%%%%%%%%%%%%%%%%%%%%%%%%%%%%%%%%%%%%%%%%%%%%%%%%%%%%%%%%%%%%%%%%%%%%%%%
\begin{prop}
\label{l:longdec-2}
Assume that $\mathbf v\in \mathscr A_\star^{N_0}$, $\mathbf w\in \mathscr A_\star^{N_1}$,
$1\leq n\leq N_1$, and $e\in\mathscr A$. Then there
exists $\beta>0$ depending only on $\mathcal V_1,\mathcal V_\star$
and there exists $C>0$ depending only on~$A_1,A_\star$ such that
$$
\|A^-_{\mathbf v}A^+_{\mathcal Q'_n(\mathbf w,e)}\|_{L^2\to L^2}\leq Ch^\beta.
$$
\end{prop}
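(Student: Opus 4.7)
The plan is to follow the outline of~\S\ref{s:outline} and reduce the proposition, via clustering and almost orthogonality, to a one-dimensional fractal uncertainty principle. First I would partition $\mathcal Q'_n(\mathbf w,e)$ into clusters: using the Lipschitz regularity of the unstable foliation (and the fact that every $\mathbf q\in \mathcal Q'_n(\mathbf w,e)$ satisfies $\mathcal J^+_{\mathbf q}\sim h^{-\tau}$, so by~\eqref{e:unrec} each set $\mathcal V^+_{\mathbf q}\cap S^*M$ lies in an $h^\tau$-neighborhood of a weak unstable leaf), I would group the words into a bounded number of families $\mathcal Q'_n(\mathbf w,e)=\bigsqcup_r \mathcal P_r$ such that each $\mathcal V^+_{\mathcal P_r}:=\bigcup_{\mathbf q\in\mathcal P_r}\mathcal V^+_{\mathbf q}$ lies in an $\mathcal O(h^{2/3})$-neighborhood of a single weak unstable leaf $W_{0u}(\rho_r)$ for some $\rho_r\in S^*M$. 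The operator splits as $A^-_{\mathbf v}A^+_{\mathcal Q'_n(\mathbf w,e)}=\sum_r A^-_{\mathbf v}A^+_{\mathcal P_r}$. Since each cluster is an $h^{-2\delta}$-Jacobian sum, the bound $\|A^+_{\mathcal P_r}\|_{L^2\to L^2}=\cO(h^{0-})$ follows from Lemma~\ref{l:ehrenfest-summary}.

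The next step is almost orthogonality. For two clusters $\mathcal P_r,\mathcal P_{r'}$ whose reference leaves $W_{0u}(\rho_r),W_{0u}(\rho_{r'})$ are separated in the stable direction (by more than a fixed multiple of $h^{2/3}$), I would show
\[
(A^-_{\mathbf v}A^+_{\mathcal P_r})^*(A^-_{\mathbf v}A^+_{\mathcal P_{r'}}),\ (A^-_{\mathbf v}A^+_{\mathcal P_{r'}})(A^-_{\mathbf v}A^+_{\mathcal P_r})^*\ =\ \cO(h^\infty)_{L^2\to L^2}.
\]
This is where the scale inequality $h^{2/3}\cdot h^{1/6}\gg h$ of~\eqref{e:almoster} matters: the symbol $a^-_{\mathbf v}$ lies in $S^{\comp}_{1/6+}$ by Lemma~\ref{l:cq-log} and oscillates at scale $h^{1/6}$ in the unstable direction, while the supports of $A^+_{\mathcal P_r},A^+_{\mathcal P_{r'}}$ are stably separated by $h^{2/3}$; expanding the products via Proposition~\ref{l:ehrenfest-prop-sum} reduces both operators above to symbols whose supports are disjoint. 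The Cotlar--Stein Theorem then reduces the proposition to the single-cluster bound $\|A^-_{\mathbf v}A^+_{\mathcal P_r}\|_{L^2\to L^2}\leq Ch^\beta$.

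Fix one cluster $\mathcal P_r$ with reference point $\rho_0=\rho_r$ and apply the straightening symplectomorphism $\varkappa_{\rho_0}$ of Lemma~\ref{l:stun-straight}. Quantizing $\varkappa_{\rho_0}$ by Fourier integral operators $B,B'$ as in~\eqref{e:fio-quantize}, I would conjugate to get operators on $L^2(\mathbb R^2)$ acting by multiplication/Fourier multiplication by indicator-like cutoffs of the sets $\widetilde\Theta^{\pm}:=\varkappa_{\rho_0}(\mathcal V^\pm\cap S^*M)$, restricted to $\{\eta_2=1\}$ and then projected along the flow direction $\partial_{y_2}$ to yield planar sets $\Theta^\pm\subset T^*\mathbb R$. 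The fine microlocalization of the conjugated $A^-_{\mathbf v}$ in the $y_1$ variable comes from Lemma~\ref{l:cq-log} and mildly exotic calculus, while the corresponding Fourier localization of $A^+_{\mathcal P_r}$ requires the hyperbolic parametrix of Proposition~\ref{l:longtime-prop} applied letter-by-letter to the decomposition of $A^+_{\mathcal P_r}$ into terms $A^+_{\mathbf q}$: here one uses the Fourier localization result Proposition~\ref{l:fourloc-lag} together with the Lagrangian transport structure to conclude that the conjugated cluster operator is essentially a Fourier multiplier by $\indic_{\Omega^+}(hD_{y_1})$ for some $\Omega^+\subset\mathbb R$.

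Finally I would read off the porosity of the two projections: using Lemma~\ref{l:porosity-basic} (more precisely, its consequence $\gamma^{-1}(\mathcal V^+_{\mathbf w})$ porous on scales $Ce^{-\Lambda_0 N_1}$ to $1$, and similarly for $\mathcal V^-_{\mathbf v}$), together with the $C^{3/2}$ near-horizontality of unstable leaves from part~(8) of Lemma~\ref{l:stun-straight} (which ensures that after projection the unstable rectangles in $\Theta^+$ stack horizontally up to error $\cO(h^{3\cdot 2/3/2})\ll h^\tau$), I obtain $\Theta^+\subset\{\eta_1\in\Omega^+\}$ with $\Omega^+$ $\nu$-porous on scales $h^\tau$ to $1$, and $\Theta^-\subset\{y_1\in\Omega^-\}$ with $\Omega^-$ $\nu$-porous on scales $h^{1/(6\Lambda)}$ to $1$. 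The single-cluster bound then reduces to
\[
\|\indic_{\Omega^-}(y_1)\,\mathcal F_h\,\indic_{\Omega^+}(\eta_1)\|_{L^2(\mathbb R)\to L^2(\mathbb R)}\leq Ch^{\gamma\beta(\nu)},
\]
which is exactly Proposition~\ref{l:fup-2}, applicable because the scale exponents satisfy $\tau+\tfrac{1}{6\Lambda}<1$ by the choice $\tau=1-\tfrac{1}{10\Lambda}$, yielding a positive overlap $\gamma>0$ as in~\eqref{e:almoster2}. The main obstacle, and the reason the proof is technical, is justifying the classical/quantum correspondence for $A^+_{\mathcal P_r}$ past the Ehrenfest time (the symbols $a^+_{\mathbf q}$ oscillate at scale $h^\tau$ with $\tau>\tfrac{1}{2}$): this is exactly why Proposition~\ref{l:longtime-prop} and the fine Fourier-localization Proposition~\ref{l:fourloc-lag} are needed, rather than a direct pseudodifferential argument as in the constant curvature case.
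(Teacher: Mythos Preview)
Your overall strategy is the paper's: cluster decomposition (Lemma~\ref{l:cluster}), almost orthogonality for far clusters (Lemma~\ref{l:faror}), Cotlar--Stein to reduce to a single-cluster bound (Proposition~\ref{l:longdec-3}), then conjugation by FIOs quantizing $\varkappa_{\rho_0}$ and the fractal uncertainty principle of Proposition~\ref{l:fup-2}. However, two concrete errors would derail the argument as written.

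First, the number of clusters is \emph{not} bounded. The words in $\mathcal Q'_n(\mathbf w,e)$ have $\mathcal V^+_{\mathbf q}$ spread over a macroscopic region of $S^*M$, and partitioning into $\mathcal O(h^{2/3})$-neighborhoods of weak unstable leaves produces on the order of $h^{-2/3}$ clusters, not boundedly many. This is precisely why Cotlar--Stein is required rather than a triangle inequality; the key property in Lemma~\ref{l:cluster} is item~(2): each cluster is \emph{non-disjoint} from at most $C$ others, while the total number $R_n(\mathbf w,e)$ is only polynomially bounded in $h^{-1}$. Your later invocation of Cotlar--Stein suggests partial awareness, but ``a bounded number of families'' is a genuine misconception that, taken at face value, would make the argument circular (you would be summing $h^{-2/3}$ terms each of size $h^\beta$).

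Second, your scale inequality is reversed. For Proposition~\ref{l:fup-2} to yield a positive exponent one needs the porosity ranges to overlap, which here is $h^\tau\cdot h^{1/(6\Lambda)}\ll h$, i.e.\ $\tau+\tfrac{1}{6\Lambda}>1$; this is exactly~\eqref{e:almoster2}. With $\tau=1-\tfrac{1}{10\Lambda}$ one has $\tau+\tfrac{1}{6\Lambda}=1+\tfrac{1}{15\Lambda}>1$, as required. The inequality $\tau+\tfrac{1}{6\Lambda}<1$ that you wrote is false and would make the FUP inapplicable.

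A smaller point: the almost orthogonality (Lemma~\ref{l:faror}) is obtained via Lemma~\ref{l:ehrenfest-prop-none} (disjoint-support rapid decay up to local double Ehrenfest time), not via Proposition~\ref{l:ehrenfest-prop-sum}, which is only a norm bound and cannot give the $\mathcal O(h^\infty)$ decay you need for Cotlar--Stein.
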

%%%%%%%%%%%%%%%%%%%%%%%%%%%%%%%%%%%%%%%%%%%%%%%%%%%%%%%%%%%%%%%%%%%%%%%%%%%%%%%%
\Remark
The value of~$\beta$ in Proposition~\ref{l:longdec-1}
can be taken to be any number smaller than the value of~$\beta$ in Proposition~\ref{l:longdec-2}.
Since we do not give a precise formula for $\beta$ we call both by the same letter
to simplify notation.

%%%%%%%%%%%%%%%%%%%%%%%%%%%%%%%%%%%%%%%%%%%%%%%%%%%%%%%%%%%%%%%%%%%%%%%%%%%%%%%%
\subsection{Partition into clusters}
  \label{s:clusters}

We fix $\mathbf v\in\mathscr A_\star^{N_0}$,
$\mathbf w\in\mathscr A_\star^{N_1}$,
$n\in \{1,\dots,N_1\}$, $e\in\mathscr A$,
and define $\mathcal Q'_n(\mathbf w,e)\subset \mathscr A^n$
by~\eqref{e:Q-n-def}.
We make the following
%%%%%%%%%%%%%%%%%%%%%%%%%%%%%%%%%%%%%%%%%%%%%%%%%%%%%%%%%%%%%%%%%%%%%%%%%%%%%%%%
\begin{defi}
  \label{d:close-far}
Let $\mathbf q,\tilde{\mathbf q}\in \mathcal Q'_n(\mathbf w,e)$. We say
$\mathbf q,\tilde{\mathbf q}$ are \textbf{close} to each other
if $\mathcal V^+_{\mathbf q}\cup\mathcal V^+_{\tilde{\mathbf q}}$
lies in the $h^{2/3}$-sized conic neighborhood of some weak unstable leaf,
more precisely there exists $\rho\in \mathcal V^+_{e}\cap S^*M$ such that
$$
d(\tilde\rho,W_{0u}(\rho))\leq h^{2/3}\quad\text{for all}\quad
\tilde\rho\in (\mathcal V^+_{\mathbf q}\cup \mathcal V^+_{\tilde{\mathbf q}})\cap S^*M.
$$
If $\mathbf q,\tilde{\mathbf q}$ are not close to each other, we say
they are \textbf{far} from each other.
\end{defi}
%%%%%%%%%%%%%%%%%%%%%%%%%%%%%%%%%%%%%%%%%%%%%%%%%%%%%%%%%%%%%%%%%%%%%%%%%%%%%%%%
\Remark If $\mathbf q$, $\tilde{\mathbf q}$ are far from each other, then $\mathcal V^+_{\mathbf q}\cap \mathcal V^+_{\tilde{\mathbf q}}=\emptyset$. The proof of Lemma~\ref{l:faror} below in fact gives a stronger statement, see~\eqref{e:pimptor}.

For words which are far from each other, we have the following almost orthogonality statement:
%%%%%%%%%%%%%%%%%%%%%%%%%%%%%%%%%%%%%%%%%%%%%%%%%%%%%%%%%%%%%%%%%%%%%%%%%%%%%%%%
\begin{lemm}
  \label{l:faror}
Assume that $\mathbf q,\tilde{\mathbf q}\in\mathcal Q'_n(\mathbf w,e)$ are
far from each other. Then
\begin{align}
  \label{e:faror-1}
\|(A^-_{\mathbf v}A^+_{\mathbf q})^* A^-_{\mathbf v}A^+_{\tilde{\mathbf q}}\|_{L^2\to L^2}
&=\mathcal O(h^\infty),\\
  \label{e:faror-2}
\| A^-_{\mathbf v}A^+_{\tilde{\mathbf q}}(A^-_{\mathbf v}A^+_{\mathbf q})^*\|_{L^2\to L^2}
&=\mathcal O(h^\infty)
\end{align}
with the constants in $\mathcal O(h^\infty)$ independent of $h,n,\mathbf v,\mathbf w,\mathbf q,\tilde{\mathbf q}$.
\end{lemm}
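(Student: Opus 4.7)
\emph{Bound~\eqref{e:faror-2}.} The key commutation
\[
A^-_{\mathbf v}A^+_{\tilde{\mathbf q}}(A^-_{\mathbf v}A^+_{\mathbf q})^*
\;=\;A^-_{\mathbf v}\,\bigl[A^+_{\tilde{\mathbf q}}(A^+_{\mathbf q})^*\bigr]\,(A^-_{\mathbf v})^*
\]
exposes the composition $A^+_{\tilde{\mathbf q}}(A^+_{\mathbf q})^*$ while leaving the outer factors uniformly bounded on $L^2$ by~\eqref{e:A-E-norm}. Since $\mathbf q,\tilde{\mathbf q}\in\mathcal Q'_n(\mathbf w,e)$, estimate~\eqref{e:go-deep} gives $\max(\mathcal J^+_{\mathbf q},\mathcal J^+_{\tilde{\mathbf q}})\leq Ch^{-\tau}=Ch^{-2\delta}$, and the Remark after Definition~\ref{d:close-far} says that ``far'' forces $\mathcal V^+_{\mathbf q}\cap \mathcal V^+_{\tilde{\mathbf q}}=\emptyset$. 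Part~3 of Lemma~\ref{l:ehrenfest-prop-none}, applied with $\delta=\tau/2<\tfrac12$, then yields $A^+_{\tilde{\mathbf q}}(A^+_{\mathbf q})^*=\mathcal O(h^\infty)_{L^2\to L^2}$, proving~\eqref{e:faror-2}.

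\emph{Reduction for~\eqref{e:faror-1}.} The harder estimate is $(A^+_{\mathbf q})^*(A^-_{\mathbf v})^* A^-_{\mathbf v}A^+_{\tilde{\mathbf q}}$, where the pseudodifferential central factor prevents the previous trick. The plan is to absorb $A^-_{\mathbf v}$ into the $A^+$ factors via the concatenation identity~\eqref{e:word-concat}, but only after first refining $A^-_{\mathbf v}=\sum_{\mathbf p\lesssim\mathbf v}A^-_{\mathbf p}$ over $\mathbf p\in\mathscr A^{N_0}$, which ensures $\mathcal J^\pm_{\mathbf p}\leq Ch^{-1/6}$. For each such $\mathbf p$, the identity $A^-_{\mathbf p}A^+_{\tilde{\mathbf q}}=U(-N_0)\,A^+_{\overline{\mathbf p}\tilde{\mathbf q}}\,U(N_0)$, which is a direct consequence of~\eqref{e:word-concat}, reduces~\eqref{e:faror-1} to controlling
\[
\sum_{\mathbf p,\mathbf p'\lesssim\mathbf v}(A^+_{\overline{\mathbf p'}\mathbf q})^*A^+_{\overline{\mathbf p}\tilde{\mathbf q}}=\mathcal O(h^\infty).
\]
By~\eqref{e:set-concat} and $\mathcal V^+_{\mathbf q}\cap \mathcal V^+_{\tilde{\mathbf q}}=\emptyset$, each pair of combined words has disjoint $\mathcal V^+$-supports, since $\mathcal V^+_{\overline{\mathbf p}\tilde{\mathbf q}}\cap \mathcal V^+_{\overline{\mathbf p'}\mathbf q}=\varphi_{N_0}(\mathcal V^-_{\mathbf p}\cap \mathcal V^-_{\mathbf p'}\cap \mathcal V^+_{\mathbf q}\cap \mathcal V^+_{\tilde{\mathbf q}})=\emptyset$.

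\emph{Main obstacle.} The combined Jacobians satisfy $\mathcal J^+_{\overline{\mathbf p}\tilde{\mathbf q}}\leq Ch^{-1/6-\tau}$, which overshoots the threshold $h^{-2\delta}=h^{-\tau}$ needed for a one-shot application of Lemma~\ref{l:ehrenfest-prop-none}(3) (pushing the exponent up to $\tfrac{1}{12}+\tfrac{\tau}{2}=\tfrac{7}{12}-\tfrac{1}{20\Lambda}$ would violate $\delta<\tfrac12$ as soon as $\Lambda\geq 2$). My strategy is to repeat the dichotomy from the proof of that part on each pair of combined words $R=\overline{\mathbf p}\tilde{\mathbf q}$, $R'=\overline{\mathbf p'}\mathbf q$: choose $L$ maximal with $\max(\mathcal J^+_{R_1\cdots R_L},\mathcal J^+_{R'_1\cdots R'_L})\leq h^{-\delta}$, split each word at position $L$ via~\eqref{e:word-concat}, apply Proposition~\ref{l:ehrenfest-prop} with $\delta=\tau/2$ to the resulting pieces to realize them as $\Psi^{\comp}_{\delta+}$ operators, and use~\eqref{e:reversing} to translate the disjointness of the relevant $\mathcal V^+$-supports into disjointness of $\mathcal V^-$-symbol supports. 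If $\mathcal V^+_{R_1\cdots R_L}\cap \mathcal V^+_{R'_1\cdots R'_L}=\emptyset$ the $S^{\comp}_{\delta+}$ calculus gives $\mathcal O(h^\infty)$ directly; otherwise the disjointness persists in the tails, whose Jacobians are then $\leq Ch^{-\delta}$, and the same argument applies. The number of refined pairs $(\mathbf p,\mathbf p')$ is only polynomial in $h^{-1}$, so summing preserves $\mathcal O(h^\infty)$, with Proposition~\ref{l:ehrenfest-prop-sum} absorbing any logarithmic overhead. The most delicate point will be the combinatorial bookkeeping across all pairs $(\mathbf p,\mathbf p')$ and split positions $L$, and verifying that the dichotomy uniformly produces $\Psi^{\comp}_{\delta+}$ operators with the required disjoint-support structure.
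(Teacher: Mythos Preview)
Your treatment of~\eqref{e:faror-2} is correct and coincides with the paper's.

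For~\eqref{e:faror-1} there is a genuine gap. In your case~(2) of the dichotomy you claim that the tail Jacobians are $\leq Ch^{-\delta}$. This is false: if the prefix Jacobians are $\sim h^{-\delta}$ (from maximality of $L$) and the total Jacobian of $R=\overline{\mathbf p}\tilde{\mathbf q}$ is $\leq Ch^{-1/6-\tau}$, then by~\eqref{e:jacobians-concat} the tail Jacobian is only $\leq Ch^{-1/6-\tau+\delta}=Ch^{-1/6-\tau/2}$, which strictly exceeds $h^{-\delta}=h^{-\tau/2}$. So you cannot apply Proposition~\ref{l:ehrenfest-prop} to the tails, and the four-factor disjoint-support argument of Lemma~\ref{l:ehrenfest-prop-none}(3) breaks down. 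One more splitting would bring the tails under $h^{-2\delta}$, but then you would need a six-factor version of the disjoint-support argument, which you have not set up.

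The paper avoids this by extracting more from the ``far'' hypothesis than mere disjointness $\mathcal V^+_{\mathbf q}\cap\mathcal V^+_{\tilde{\mathbf q}}=\emptyset$. It first chooses the maximal $m$ with $\mathcal V^+_{q_1\dots q_m}\cap\mathcal V^+_{\tilde q_1\dots \tilde q_m}\neq\emptyset$ and uses~\eqref{e:unrec} together with the $h^{2/3}$ separation in Definition~\ref{d:close-far} to force $\max(\mathcal J^+_{q_1\dots q_{m+1}},\mathcal J^+_{\tilde q_1\dots\tilde q_{m+1}})\leq Ch^{-2/3}$. The tails $q_{m+2}\dots q_n$ are then stripped off via~\eqref{e:word-concat} and bounded by~\eqref{e:A-q-bdd}. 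Only \emph{after} this truncation does one refine $A^-_{\mathbf v}$ and concatenate; the combined Jacobians are now $\leq Ch^{-1/6}\cdot h^{-2/3}=Ch^{-5/6}\leq Ch^{-\tau}=Ch^{-2\delta}$ (the key inequality~\eqref{e:tylen}), and Lemma~\ref{l:ehrenfest-prop-none}(3) applies directly. You used only the qualitative disjointness consequence of ``far'' and missed this quantitative $h^{-2/3}$ Jacobian bound, which is precisely what makes the arithmetic close.
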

%%%%%%%%%%%%%%%%%%%%%%%%%%%%%%%%%%%%%%%%%%%%%%%%%%%%%%%%%%%%%%%%%%%%%%%%%%%%%%%%
\Remark 
Lemma~\ref{l:faror} has the following informal interpretation (which is different from the formal proof below). 
Imagine that we remove the flow and dilation directions from $T^*M$
and conjugate by a Fourier integral operator whose canonical transformation maps stable leaves
into horizontal lines $\{\eta=\const\}$ and unstable leaves into vertical lines
$\{y=\const\}$ on $T^*\mathbb R_y\simeq \mathbb R^2_{y,\eta}$.
(This is not possible to do globally but the argument
in~\S\ref{s:fup-endgame} below uses a localized version of such conjugation
with the roles of $y,\eta$ switched.) 
Then $A^-_{\mathbf v}$ is replaced by a Fourier multiplier
$\chi_-(hD_y)$ where $\sup_\eta|\partial^k_\eta \chi_-(\eta;h)|=\mathcal O(h^{-k/6-})$
(corresponding to the fact that $a^-_{\mathbf v}\in S^{\comp}_{1/6+}$ which follows
from Lemma~\ref{l:cq-log}).
Next, $A^+_{\mathbf q},A^+_{\tilde{\mathbf q}}$ are replaced by multiplication operators
$\chi_+(y)$, $\widetilde\chi_+(y)$ where $\chi_+,\widetilde\chi_+$
have supports of size $\sim h^\tau$.
The condition that $\mathbf q,\tilde{\mathbf q}$ are far from each other
implies that the supports of $\chi_+$, $\widetilde\chi_+$ are
at least $h^{2/3}$ apart. Then~\eqref{e:faror-1}
turns into the estimate (assuming $\chi_-,\chi_+,\widetilde\chi_+$ are real valued)
$$
\|\chi_+(y)\chi_-^2(hD_y)\widetilde\chi_+(y)\|_{L^2(\mathbb R)\to L^2(\mathbb R)}
=\mathcal O(h^\infty)
$$
which can be proved using repeated integration by parts to establish
rapid decay of the integral kernel: at each integration we gain a factor $h\cdot h^{-2/3}\cdot h^{-1/6}=h^{1/6}$. Notice that the size of the supports of $\chi_+$ and $\tilde\chi_+$ does not matter, it is the distance between the two supports which is responsible for the factor $h^{-2/3}$.
In turn, the analogue of~\eqref{e:faror-2} trivially follows
from the fact that $\supp \chi_+\cap\supp\widetilde\chi_+=\emptyset$.
In this interpretation~\eqref{e:faror-1}, \eqref{e:faror-2}
are analogous to the bounds~\cite[(4.26),(4.25)]{fullgap}
and the decomposition into clusters below
to the one used in the proof of~\cite[Proposition~4.3]{fullgap}.
%%%%%%%%%%%%%%%%%%%%%%%%%%%%%%%%%%%%%%%%%%%%%%%%%%%%%%%%%%%%%%%%%%%%%%%%%%%%%%%%
\begin{proof}
1. Denote $\mathbf q=q_1\dots q_n$, $\tilde{\mathbf q}=\tilde q_1\dots \tilde q_n$.
Take maximal $m\leq n$ such that
$$
\mathcal V^+_{q_1\dots q_m}\cap \mathcal V^+_{\tilde q_1\dots \tilde q_m}\neq \emptyset.
$$
If $\mathcal V^+_{q_1}\cap\mathcal V^+_{\tilde q_1}=\emptyset$ then we put $m:=0$.

By~\eqref{e:jacobian-pair} we have $\mathcal J^+_{q_1\dots q_m}\sim\mathcal J^+_{\tilde q_1\dots \tilde q_m}$. We claim that
\begin{equation}
  \label{e:grizzly}
\max(\mathcal J^+_{q_1\dots q_m},\mathcal J^+_{\tilde q_1\dots \tilde q_m})\leq Ch^{-2/3}.
\end{equation}
The case $m=0$ is trivial, so we assume that $m>0$. Take
$\rho\in \mathcal V^+_{q_1\dots q_m}\cap \mathcal V^+_{\tilde q_1\dots \tilde q_m}\cap S^*M$.
Note that since $q_1=\tilde q_1=e$ we have $\rho\in \mathcal V^+_e$.
By~\eqref{e:unrec} we have for every
$\tilde\rho\in(\mathcal V^+_{\mathbf q}\cup\mathcal V^+_{\tilde{\mathbf q}})\cap S^*M
\subset (\mathcal V^+_{q_1\dots q_m}\cup\mathcal V^+_{\tilde q_1\dots \tilde q_m})\cap S^*M$
\begin{equation}
  \label{e:sunset}
d(\tilde\rho,W_{0u}(\rho))\leq {C'\over \min(\mathcal J^+_{q_1\dots q_m},\mathcal J^+_{\tilde q_1\dots \tilde q_m})}
\leq {C\over \max(\mathcal J^+_{q_1\dots q_m},\mathcal J^+_{\tilde q_1\dots \tilde q_m})}. 
\end{equation}
Since $\mathbf q,\tilde{\mathbf q}$ are far from each other, the right-hand side
of~\eqref{e:sunset} has to be greater than~$h^{2/3}$, which gives~\eqref{e:grizzly}.

By~\eqref{e:go-deep} we have $\mathcal J^+_{\mathbf q}\geq h^{-\tau}\gg h^{-2/3}$,
so from~\eqref{e:grizzly} we obtain $m<n$.
Denote
$$
\mathbf p:=q_1\dots q_{m+1},\quad
\tilde{\mathbf p}:=\tilde q_1\dots\tilde q_{m+1}.
$$
Since $m$ was chosen maximal we have
\begin{equation}
  \label{e:pimptor}
\mathcal V^+_{\mathbf p}\cap\mathcal V^+_{\tilde{\mathbf p}}=\emptyset.
\end{equation}
Moreover by~\eqref{e:grizzly} and~\eqref{e:jacobians-concat}
and since $\mathcal V^+_{\mathbf q},\mathcal V^+_{\tilde{\mathbf q}}\neq\emptyset$
and thus $\mathcal V^+_{\mathbf p},\mathcal V^+_{\tilde{\mathbf p}}\neq\emptyset$ we get
\begin{equation}
  \label{e:jabby}
\max(\mathcal J^+_{\mathbf p},\mathcal J^+_{\tilde{\mathbf p}})\leq Ch^{-2/3}.
\end{equation}

\noindent 2. We now prove~\eqref{e:faror-1}.
We have by~\eqref{e:word-concat} and~\eqref{e:reversing}
$$
\begin{aligned}
(A^-_{\mathbf v}A^+_{\mathbf q})^*A^-_{\mathbf v}A^+_{\tilde{\mathbf q}}
=\,&U(m+1)(A^+_{q_{m+2}\dots q_n})^*U(-m-1)
\\&\cdot(A^-_{\mathbf v}A^+_{\mathbf p})^* A^-_{\mathbf v}A^+_{\tilde{\mathbf p}}
U(m+1)A^+_{\tilde q_{m+2}\dots \tilde q_n}U(-m-1).
\end{aligned}
$$
Thus by~\eqref{e:A-q-bdd} it suffices to prove that
$\|(A^-_{\mathbf v}A^+_{\mathbf p})^* A^-_{\mathbf v}A^+_{\tilde{\mathbf p}}\|_{L^2\to L^2}=\mathcal O(h^\infty)$.
Similarly to~\eqref{e:Aw-dec} we write
$$
A^-_{\mathbf v}=\sum_{\mathbf s\in\mathscr A^{N_0},\
\mathbf s\lesssim \mathbf v}A^-_{\mathbf s}.
$$
Then by~\eqref{e:word-concat}
$$
(A^-_{\mathbf v}A^+_{\mathbf p})^*
A^-_{\mathbf v}A^+_{\tilde{\mathbf p}}
=\sum_{\mathbf s,\tilde{\mathbf s}\in\mathscr A^{N_0},\
\mathbf s,\tilde{\mathbf s}\lesssim \overline{\mathbf v}}
U(-N_0)(A^+_{\mathbf s\mathbf p})^*A^+_{\tilde{\mathbf s}\tilde{\mathbf p}}U(N_0).
$$
Since the number of terms in the sum above is bounded polynomially in~$h$, it suffices to show that
\begin{equation}
  \label{e:faraday}
\|(A^+_{\mathbf s\mathbf p})^*A^+_{\tilde{\mathbf s}\tilde{\mathbf p}}\|_{L^2\to L^2}
=\mathcal O(h^\infty)
\quad\text{for all}\quad
\mathbf s,\tilde{\mathbf s}\in\mathscr A^{N_0}.
\end{equation}
By~\eqref{e:prop-times}, \eqref{e:jacobian-bounds},
\eqref{e:jacobians-concat}, and~\eqref{e:jabby}
for each word $\mathbf t$ of length no more than $N_0$
we have
\begin{equation}
  \label{e:tylen}
\mathcal V^+_{\mathbf t\mathbf p}\neq\emptyset
\quad\Longrightarrow\quad
\mathcal J^+_{\mathbf t\mathbf p}\leq 
C\mathcal J^+_{\mathbf t}\mathcal J^+_{\mathbf p}\leq
Ce^{\Lambda_1N_0}\cdot h^{-2/3}\leq Ch^{-5/6}\leq Ch^{-2\delta}.
\end{equation}
Then by part~2 of Lemma~\ref{l:ehrenfest-prop-none},
if $\mathcal V^+_{\mathbf s\mathbf p}=\emptyset$ then
$\|A^+_{\mathbf s\mathbf p}\|_{L^2\to L^2}=\mathcal O(h^\infty)$
which immediately implies~\eqref{e:faraday}.
A similar argument applies to $A^+_{\tilde{\mathbf s}\tilde{\mathbf p}}$.

We may now assume that $\mathcal V^+_{\mathbf s\mathbf p}\neq\emptyset$,
$\mathcal V^+_{\tilde{\mathbf s}\tilde{\mathbf p}}\neq \emptyset$.
Then by~\eqref{e:tylen} we have
$\max(\mathcal J^+_{\mathbf s\mathbf p},\mathcal J^+_{\tilde{\mathbf s}\tilde{\mathbf p}})\leq
Ch^{-2\delta}$. Moreover $\mathcal V^+_{\mathbf s\mathbf p}\cap \mathcal V^+_{\tilde{\mathbf s}\tilde{\mathbf p}}
\subset\varphi_{N_0}(\mathcal V^+_{\mathbf p}\cap\mathcal V^+_{\tilde{\mathbf p}})=\emptyset$ by~\eqref{e:set-concat} and~\eqref{e:pimptor}.
Then~\eqref{e:faraday} follows from part~3 of Lemma~\ref{l:ehrenfest-prop-none}.

\noindent 3. To show~\eqref{e:faror-2},
we first write
$$
A^-_{\mathbf v}A^+_{\tilde{\mathbf q}}(A^-_{\mathbf v}A^+_{\mathbf q})^*
=A^-_{\mathbf v}A^+_{\tilde{\mathbf q}}(A^+_{\mathbf q})^*(A^-_{\mathbf v})^*.
$$
Thus it suffices to prove that
$$
\|A^+_{\tilde{\mathbf q}}(A^+_{\mathbf q})^*\|_{L^2\to L^2}=\mathcal O(h^\infty).
$$
This follows from part~3 of Lemma~\ref{l:ehrenfest-prop-none}.
Indeed, we have $\max(\mathcal J^+_{\mathbf q},\mathcal J^+_{\tilde{\mathbf q}})\leq Ch^{-2\delta}$
by~\eqref{e:go-deep}
and $\mathcal V^+_{\mathbf q}\cap\mathcal V^+_{\tilde{\mathbf q}}
\subset\mathcal V^+_{\mathbf p}\cap\mathcal V^+_{\tilde{\mathbf p}}=\emptyset$
by~\eqref{e:pimptor}.
\end{proof}
%%%%%%%%%%%%%%%%%%%%%%%%%%%%%%%%%%%%%%%%%%%%%%%%%%%%%%%%%%%%%%%%%%%%%%%%%%%%%%%%
We will decompose $A^-_{\mathbf v}A^+_{\mathcal Q'_n(\mathbf w,e)}$
into a sum of operators, each of which corresponds to a \emph{cluster}
of words $\mathbf q\in\mathcal Q'_n(\mathbf w,e)$~-- see~\eqref{e:clusterop} below.
Each cluster has the property that
the sets $\mathcal V^+_{\mathbf q}$ lie in an $\mathcal O(h^{2/3})$ sized conic neighborhood
of some weak unstable leaf. Moreover,
most clusters lie far from each other
in the sense of Definition~\ref{d:close-far}, which will
let us decouple different clusters using the Cotlar--Stein Theorem
and Lemma~\ref{l:faror}.
The clusters are constructed in the following
%%%%%%%%%%%%%%%%%%%%%%%%%%%%%%%%%%%%%%%%%%%%%%%%%%%%%%%%%%%%%%%%%%%%%%%%%%%%%%%%
\begin{lemm}
  \label{l:cluster}
If the constant $\varepsilon_0$ in~\S\ref{s:refined-partition} is chosen
small enough depending on $(M,g)$ then there exists a partition into clusters
$$
\mathcal Q'_n(\mathbf w,e)=\bigsqcup_{r=1}^{R_n(\mathbf w,e)}
\mathcal Q_n(\mathbf w,e,r)
$$
such that for some constant $C$ depending only on $(M,g)$ we have:
\begin{enumerate}
\item for each $r$ there exists $\rho(r)\in \mathcal V^+_e\cap S^*M$ such that
the $r$-th cluster is contained in a $Ch^{2/3}$ sized conic neighborhood
of the weak unstable leaf $W_{0u}(\rho(r))$, that is
\begin{equation}
  \label{e:cluster-1}
d\big(\tilde\rho,W_{0u}(\rho(r))\big)\leq Ch^{2/3}\quad\text{for all}\quad
\tilde\rho\in \bigcup_{\mathbf q\in \mathcal Q_n(\mathbf w,e,r)}(\mathcal V^+_{\mathbf q}\cap S^*M);
\end{equation}
\item let us call the clusters $r,\tilde r$ \textbf{disjoint} when
each pair of words $\mathbf q\in\mathcal Q_n(\mathbf w,e,r),
\tilde{\mathbf q}\in\mathcal Q_n(\mathbf w,e,\tilde r)$
is far from each other in the sense of Definition~\ref{d:close-far}.
Then for each~$r$, the number of clusters $\tilde r$ which are \textbf{not}
disjoint from $r$ is bounded by~$C$.
\end{enumerate}
\end{lemm}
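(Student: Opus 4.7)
The plan is to group words $\mathbf q$ according to the value, at an arbitrary representative point, of a transversal coordinate $Z$ to the weak unstable foliation. Specifically, the construction uses the local straightening chart of Lemma~\ref{l:stun-straight}: pick an arbitrary reference point $\rho_0\in\cV^+_e\cap S^*M$ and let $\varkappa_{\rho_0}:U_{\rho_0}\to V_{\rho_0}$ and $Z=Z_{\rho_0}:U_{\rho_0}\to\mathbb R$ be the associated symplectic chart and transversal coordinate, so that $Z$ is constant on each weak unstable leaf and vanishes at $\rho_0$. Since $\cV^+_e=\varphi_1(\cV_e)$ has diameter at most $C\varepsilon_0$, shrinking the parameter $\varepsilon_0$ of~\S\ref{s:refined-partition} (independently of the one in Lemma~\ref{l:stun-straight}) ensures $\cV^+_e\cap S^*M\subset U_{\rho_0}$. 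The key analytic input is the two-sided Lipschitz comparison
\[
c\,|Z(\tilde\rho)-Z(\rho)|\ \leq\ d\big(\tilde\rho,W_{0u}(\rho)\big)\ \leq\ C\,|Z(\tilde\rho)-Z(\rho)|,\qquad \tilde\rho,\rho\in U_{\rho_0}\cap S^*M,
\]
which follows from Lemma~\ref{l:stun-straight}(7)--(8), provided $\varepsilon_0$ is small so that $\partial_\zeta F$ stays uniformly close to $1$ on the relevant region.

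To each $\mathbf q\in\cQ'_n(\mathbf w,e)$ I attach the label $\zeta(\mathbf q):=Z(\rho(\mathbf q))$ for an arbitrary choice $\rho(\mathbf q)\in\cV^+_{\mathbf q}\cap S^*M$. Combining~\eqref{e:unrec} and~\eqref{e:go-deep} with the upper Lipschitz bound gives $|Z(\tilde\rho)-\zeta(\mathbf q)|\leq C h^\tau$ for all $\tilde\rho\in\cV^+_{\mathbf q}\cap S^*M$, which is negligible against $h^{2/3}$ since $\tau>2/3$ by~\eqref{e:tau-delta-def}. I then partition $\mathbb R$ into half-open intervals $I_r$ of length $h^{2/3}$ and set $\cQ_n(\mathbf w,e,r):=\{\mathbf q:\zeta(\mathbf q)\in I_r\}$, discarding the empty ones and reindexing. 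Choose the representative $\rho(r):=\rho(\mathbf q_r)$ for an arbitrary $\mathbf q_r\in\cQ_n(\mathbf w,e,r)$; then $\rho(r)\in\cV^+_e\cap S^*M$ automatically. Property~(1) follows from the triangle inequality combined with the upper Lipschitz bound: $|Z(\tilde\rho)-Z(\rho(r))|\leq Ch^\tau+h^{2/3}\leq 2h^{2/3}$, hence $d(\tilde\rho,W_{0u}(\rho(r)))\leq Ch^{2/3}$.

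For property~(2), suppose clusters $r$ and $r'$ are not disjoint, so that some $\mathbf q\in\cQ_n(\mathbf w,e,r)$, $\tilde{\mathbf q}\in\cQ_n(\mathbf w,e,r')$ and $\rho\in\cV^+_e\cap S^*M$ realize $\cV^+_{\mathbf q}\cup\cV^+_{\tilde{\mathbf q}}\subset\{\tilde\rho:d(\tilde\rho,W_{0u}(\rho))\leq h^{2/3}\}$. Projecting $\rho(\mathbf q)$ onto the closest point of $W_{0u}(\rho)$ and using that $Z$ is Lipschitz and constant on $W_{0u}(\rho)$ yields $|\zeta(\mathbf q)-Z(\rho)|\leq Lh^{2/3}$, and similarly for $\tilde{\mathbf q}$, so $|\zeta(\mathbf q)-\zeta(\tilde{\mathbf q})|\leq 2Lh^{2/3}$. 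Since each $I_r$ has length $h^{2/3}$ this forces $|r-r'|\leq 2L+1$, giving the required uniform bound on the number of clusters not disjoint from a given one.

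The main obstacle I foresee is purely analytic: ensuring that the two-sided Lipschitz comparison between $Z$-differences and geometric distances holds uniformly throughout $\cV^+_e$, not just in a tiny neighborhood of $\rho_0$. This is exactly the reason for Lemma~\ref{l:stun-straight}(7)--(8), whose $C^{3/2}$ control on $F$ is what makes $\partial_\zeta F$ stay near $1$ on the whole relevant region; once that is in place, the combinatorial clustering and both of its advertised properties are immediate.
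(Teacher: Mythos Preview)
Your proof is correct and follows essentially the same idea as the paper's: use the leaf-constant transversal coordinate $Z$ from Lemma~\ref{l:stun-straight} to reduce the problem to one-dimensional clustering at scale $h^{2/3}$ (the paper uses a maximal $h^{2/3}$-separated subset of the labels $\{Z(\rho(\mathbf q))\}$ rather than a fixed grid of half-open intervals, but this is purely cosmetic). Your concern about the two-sided Lipschitz comparison is overblown: the argument only needs that $Z$ is Lipschitz and constant on leaves, together with the one-sided bound $d(\tilde\rho,W_{0u}(\rho))\leq C|Z(\tilde\rho)-Z(\rho)|$, and both follow already from the $C^1$ dependence of the foliation---the $C^{3/2}$ control of items~(7)--(8) in Lemma~\ref{l:stun-straight} is not needed for this lemma.
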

%%%%%%%%%%%%%%%%%%%%%%%%%%%%%%%%%%%%%%%%%%%%%%%%%%%%%%%%%%%%%%%%%%%%%%%%%%%%%%%%
\begin{proof}
In this proof $C$ denotes constants depending only on $(M,g)$ whose precise value might change from place to place.

Since the weak unstable leaves $W_{0u}(\rho)$, $\rho\in \mathcal V^+_e\cap S^*M$, foliate $\mathcal V^+_e\cap S^*M$,
and depend Lipschitz continuously on $\rho$,
if the diameter of $\mathcal V^+_e\cap S^*M$ is less than $\varepsilon_0$
and $\varepsilon_0$ is small enough, there exists a Lipschitz continuous function
(with Lipschitz constant~$C$)
$$
Z:\mathcal V^+_e\cap S^*M\to\mathbb R
$$
which is constant on each weak unstable leaf $W_{0u}(\rho)\cap \mathcal V^+_e$, $\rho\in\mathcal V^+_e\cap S^*M$ and
\begin{equation}
  \label{e:labrador}
d(\tilde\rho,W_{0u}(\rho))\leq C|Z(\rho)-Z(\tilde\rho)|\quad\text{for all}\quad
\rho,\tilde\rho\in \mathcal V^+_e\cap S^*M.
\end{equation}
For instance, one could take as $Z(\rho)$ the function constructed in Lemma~\ref{l:stun-straight}.

For each $\mathbf q\in\mathcal Q'_n(\mathbf w,e)$, define the set
$$
I_{\mathbf q}:=Z(\mathcal V^+_{\mathbf q}\cap S^*M)\subset\mathbb R.
$$
Fix an arbitrary point $z_{\mathbf q}\in I_{\mathbf q}$.
We choose a maximal subset
$$
\{z_1,\dots,z_R\}\subset \{z_{\mathbf q}\mid \mathbf q\in\mathcal Q'_n(\mathbf w,e)\}
$$
which is $h^{2/3}$ separated, that is
$|z_r-z_{\tilde r}|\geq h^{2/3}$ for each $r\neq\tilde r$.
Put $R_n(\mathbf w,e):=R$.

Since the set $\{z_1,\dots,z_R\}$ was chosen maximal,
for each $\mathbf q\in\mathcal Q'_n(\mathbf w,e)$ there exists $r$
such that $|z_{\mathbf q}-z_r|\leq h^{2/3}$.
We can thus define a partition into clusters
$$
\mathcal Q'_n(\mathbf w,e)=\bigsqcup_{r=1}^{R}
\mathcal Q_n(\mathbf w,e,r)\quad\text{where}\quad
|z_{\mathbf q}-z_r|\leq h^{2/3}\quad\text{for all}\quad
\mathbf q\in \mathcal Q_n(\mathbf w,e,r).
$$
By~\eqref{e:unrec} and~\eqref{e:go-deep}, each $\mathcal V^+_{\mathbf q}\cap S^*M$ is contained
in a $Ch^{\tau}$ sized neighborhood of some weak unstable leaf, therefore (since the map $Z$ is Lipschitz continuous)
$I_{\mathbf q}\subset [z_{\mathbf q}-Ch^\tau,z_{\mathbf q}+Ch^\tau]$.
Since $h^{\tau}\ll h^{2/3}$ we see that for each
$\mathbf q\in\mathcal Q_n(\mathbf w,e,r)$ we have
$I_{\mathbf q}\subset [z_r-Ch^{2/3},z_r+Ch^{2/3}]$.
Take $\rho(r)\in\mathcal V^+_e\cap S^*M$ such that $Z(\rho(r))=z_r$, then
by~\eqref{e:labrador} for each $\mathbf q\in\mathcal Q_n(\mathbf w,e,r)$ and $\tilde\rho\in \mathcal V^+_{\mathbf q}\cap S^*M$ we have 
$d(\tilde \rho,W_{0u}(\rho(r)))\leq Ch^{2/3}$. This gives property~(1).

Finally, if $\mathbf q,\tilde{\mathbf q}\in \mathcal Q'_n(\mathbf w,e)$
are close in the sense of Definition~\ref{d:close-far}, then
$|z_{\mathbf q}-z_{\tilde{\mathbf q}}|\leq Ch^{2/3}$. Therefore, if the clusters
$r,\tilde r$ are not disjoint then
$|z_r-z_{\tilde r}|\leq Ch^{2/3}$.
Since $\{z_1,\dots,z_R\}$ is $h^{2/3}$ separated, we see that
for each $r$ the number of clusters $\tilde r$ not disjoint from $r$ is bounded
by some constant $C$. This gives the property~(2).
\end{proof}
%%%%%%%%%%%%%%%%%%%%%%%%%%%%%%%%%%%%%%%%%%%%%%%%%%%%%%%%%%%%%%%%%%%%%%%%%%%%%%%%
Armed with Lemma~\ref{l:cluster} we now decompose
\begin{equation}
  \label{e:clusterop}
A^-_{\mathbf v}A^+_{\mathcal Q'_n(\mathbf w,e)}
=\sum_{r=1}^{R_n(\mathbf w,e)}B_r,\quad
B_r:=A^-_{\mathbf v}A^+_{\mathcal Q_n(\mathbf w,e,r)}
=\sum_{\mathbf q\in\mathcal Q_n(\mathbf w,e,r)}A^-_{\mathbf v}A^+_{\mathbf q}.
\end{equation}
We claim that, with the constant $C$ appearing in Lemma~\ref{l:cluster},
\begin{equation}
  \label{e:clusteres}
\max_r \sum_{\tilde r}\|B_r^* B_{\tilde r}\|_{L^2\to L^2}^{1/2},
\max_r \sum_{\tilde r}\|B_{\tilde r} B_r^*\|_{L^2\to L^2}^{1/2}
\leq C\max_r \|B_r\|_{L^2\to L^2}+\mathcal O(h^\infty).
\end{equation}
Indeed, the sum over clusters $\tilde r$ not disjoint from $r$ is estimated
by $C\max_r \|B_r\|_{L^2\to L^2}$.
The sum over clusters disjoint from $r$ is $\mathcal O(h^\infty)$
by Lemma~\ref{l:faror},
using that the number of elements in $\mathcal Q'_n(\mathbf w,e)$
and thus the number $R_n(\mathbf w,e)$ of clusters are~$\mathcal O(h^{-C})$
for some constant~$C$.

Applying the Cotlar--Stein Theorem~\cite[Theorem~C.5]{e-z}, we see
that 
$$
\|A^-_{\mathbf v}A^+_{\mathcal Q'_n(\mathbf w,e)}\|_{L^2\to L^2}
\leq C\max_r \|B_r\|_{L^2\to L^2}+\mathcal O(h^\infty).
$$
Therefore Proposition~\ref{l:longdec-2} follows from the bound
$$
\max_r \|A^-_{\mathbf v}A^+_{\mathcal Q_n(\mathbf w,e,r)}\|_{L^2\to L^2}\leq Ch^\beta
$$
which in turn is implied by the following
%%%%%%%%%%%%%%%%%%%%%%%%%%%%%%%%%%%%%%%%%%%%%%%%%%%%%%%%%%%%%%%%%%%%%%%%%%%%%%%%
\begin{prop}
  \label{l:longdec-3}
Assume that $\mathbf v\in\mathscr A_\star^{N_0}$,
$\mathbf w\in\mathscr A_\star^{N_1}$,
$1\leq n\leq N_1$,
$e\in\mathscr A$,
$\rho_0\in \mathcal V^+_e\cap S^*M$,
and $\mathcal Q\subset \mathcal Q_n'(\mathbf w,e)$ lies
in an $\mathcal O(h^{2/3})$ sized conic neighborhood of
the weak unstable leaf $W_{0u}(\rho_0)$, namely
for some constant $C_0$
\begin{equation}
  \label{e:getting-close}
d(\tilde\rho,W_{0u}(\rho_0))\leq C_0h^{2/3}
\quad\text{for all}\quad
\tilde\rho\in \bigcup_{\mathbf q\in\mathcal Q}(\mathcal V^+_{\mathbf q}\cap S^*M).
\end{equation}
Then there exists $\beta>0$ depending only on $\mathcal V_1,\mathcal V_\star$
and there exists $C>0$ depending only on~$A_1,A_\star,C_0$ such that
\begin{equation}
  \label{e:longdec-3}
\|A^-_{\mathbf v}A^+_{\mathcal Q}\|_{L^2\to L^2}\leq Ch^\beta.
\end{equation}
\end{prop}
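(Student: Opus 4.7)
The plan is to exploit the fact that the cluster $\mathcal{Q}$ is confined to an $h^{2/3}$-neighborhood of the single weak unstable leaf $W_{0u}(\rho_0)$, by conjugating with Fourier integral operators that straighten this leaf. First I would invoke Lemma~\ref{l:stun-straight} to obtain the symplectomorphism $\varkappa=\varkappa_{\rho_0}$ and let $B\in I^{\comp}_h(\varkappa)$, $B'\in I^{\comp}_h(\varkappa^{-1})$ quantize $\varkappa$ near a conic neighborhood of the set $\bigcup_{\mathbf q\in\mathcal Q}\mathcal V^+_{\mathbf q}\cap\{\tfrac14\leq|\xi|_g\leq 4\}$. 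Inserting $B'B=I+\mathcal O(h^\infty)$ microlocally, it suffices to estimate the norm of the conjugated operator $BA^-_{\mathbf v}B'\cdot BA^+_{\mathcal Q}B'$ on $L^2(\mathbb R^2)$. Using a cutoff to $\{\eta_2=1\}$ via the functional calculus of $p$ and exploiting that $\partial_{y_2}$ is the flow direction (along which both symbols are invariant), I would then reduce the two-dimensional estimate to a one-dimensional one on $L^2(\mathbb R_{y_1})$ (this is the role of Lemma~\ref{l:2D-FUP} mentioned in the outline).

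Next I would show that the conjugated $BA^+_{\mathcal Q}B'$ is microlocalized in a set $\Theta^+\subset T^*\mathbb R_{y_1}$ of the form $\{\eta_1\in\Omega^+\}$ where $\Omega^+$ is porous on scales $h^\tau$ to $1$ (the content of Lemma~\ref{l:loca+} combined with Lemma~\ref{l:poro+}). The microlocalization uses the decomposition~\eqref{e:furdec} of $A^+_{\mathcal Q}$ into terms $A^+_{\mathbf q}$ with $\mathcal J^+_{\mathbf q}\sim h^{-\tau}$: for each such $\mathbf q$, the propagation result Proposition~\ref{l:longtime-prop} shows that the image of a Lagrangian distribution is again Lagrangian along a manifold $\mathscr L_{\mathbf q,\theta}$ close to $W_{0u}(\rho_0)$, to which the fine Fourier localization, Proposition~\ref{l:fourloc-lag}, applies, yielding microlocalization in a horizontal rectangle of vertical thickness $\mathcal O(h^\tau)$. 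The porosity of $\Omega^+$ in the $\eta_1$ direction comes from Lemma~\ref{l:porosity-basic-quant} applied to the family $\mathcal V_{w_j}$: transverse to the unstable direction, the conic sets $\mathcal V^+_{\mathbf q}$ arrange themselves in a porous fractal pattern, and Lemma~\ref{l:stun-straight}(8) (the $C^{3/2}$ regularity) guarantees that after pushing forward by $\varkappa$ this porosity survives on scales $h^\tau$ to $1$.

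Dually, I would show that $BA^-_{\mathbf v}B'$ is microlocalized, after intersecting with the strip $\{|\eta_1|\leq h^{1/6}\}$ (which contains $\Theta^+$ since $\mathcal Q$ is $h^{2/3}$-close to $W_{0u}(\rho_0)$), in a set $\Theta^-\subset\{y_1\in\Omega^-\}$, where $\Omega^-$ is $\nu$-porous on scales $h^{1/(6\Lambda)}$ to $1$ (Lemmas~\ref{l:loca-} and~\ref{l:poro-}). Here the mildly exotic calculus $\Psi^{\comp}_{1/6+}$ applies directly by Lemma~\ref{l:cq-log}, giving a standard pseudodifferential microlocalization; the porosity comes from Lemma~\ref{l:porosity-basic} applied to stable intervals, and the geometric fact that stable components of thickness $h^{1/(6\Lambda)}$, intersected with the thin horizontal strip $\{|\eta_1|\leq h^{1/6}\}$, project to intervals of length $\sim h^{1/(6\Lambda)}$ in the $y_1$ variable (since the stable direction is transverse to $\partial_{y_1}$ at $\rho_0$ and stable leaves are $C^{3/2}$).

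Finally, I would combine the microlocalizations by replacing, up to $\mathcal O(h^\infty)$ errors, the operator $BA^-_{\mathbf v}B'$ by $\indic_{\Omega^-(h^\tau)}(y_1)$ and $BA^+_{\mathcal Q}B'$ by $\indic_{\Omega^+(h^\tau)}(hD_{y_1})$ (thickened by the Fourier localization scale), and apply Proposition~\ref{l:fup-2} with the scale parameters $\gamma_0^+=\tau$, $\gamma_1^+=0$, $\gamma_0^-=1$, $\gamma_1^-=1/(6\Lambda)$, giving porosity overlap
\begin{equation*}
\gamma=\min(\tau,1-\tfrac{1}{6\Lambda})-\max(0,1-1)=\min(\tau,1-\tfrac{1}{6\Lambda})>0
\end{equation*}
by our choice $\tau=1-\tfrac{1}{10\Lambda}$ in~\eqref{e:tau-delta-def}, producing decay $h^{\gamma\beta(\nu)}$ for some $\beta(\nu)>0$. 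Norm bound on $BA^+_{\mathcal Q}B'$ uniformly in $h$ (needed to absorb the $L^2$ normalization after the reduction) follows from Lemma~\ref{l:ehrenfest-summary} applied to the terms in the decomposition~\eqref{e:furdec}, which controls sums of $A^+_{\mathbf q}$ with $\mathcal J^+_{\mathbf q}\leq C_0h^{-2\delta}$ up to a logarithmic loss. The main obstacle I anticipate is the precise microlocalization claim for $BA^+_{\mathcal Q}B'$: beyond the Ehrenfest time, $A^+_{\mathbf q}$ is not in any standard pseudodifferential calculus, so the proof must go through the Lagrangian parametrix of Proposition~\ref{l:longtime-prop} and then carefully combine the cluster's individual horizontal-rectangle localizations with the almost-orthogonality of their separated vertical positions — which is precisely where the $C^{3/2}$ regularity of the unstable foliation (Lemma~\ref{l:stun-straight}(8)) becomes essential.
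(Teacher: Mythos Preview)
Your proposal follows essentially the same route as the paper: conjugate by FIOs quantizing $\varkappa_{\rho_0}$, establish microlocalization of $\mathcal BA^+_{\mathcal Q}$ in $\{\eta_1\in\Omega^+\}$ via the long-time Lagrangian parametrix and Proposition~\ref{l:fourloc-lag} (Lemma~\ref{l:loca+}), of $A^-_{\mathbf v}Z_e\mathcal B'$ in $\{y_1\in\Omega^-\}$ via the $\Psi^{\comp}_{1/6+}$ calculus (Lemma~\ref{l:loca-}), prove porosity (Lemmas~\ref{l:poro+}--\ref{l:poro-}), and apply the FUP (Lemma~\ref{l:2D-FUP}). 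Two corrections worth noting: you have the porosity exponents for $\Omega^-$ swapped---$\Omega^-$ is porous on scales $h^{1/(6\Lambda)}$ to $1$, so $\gamma_0^-=1/(6\Lambda)$ and $\gamma_1^-=0$, yielding the much smaller overlap $\gamma=\tau-(1-1/(6\Lambda))\sim 1/(15\Lambda)$ (cf.~\eqref{e:porosity-finally-meets}), not $\gamma\approx 1$; and the error in the $A^-_{\mathbf v}$ microlocalization is only $\mathcal O(h^{2/3-})$, not $\mathcal O(h^\infty)$, which is why the norm bound~\eqref{e:apn-2} on $A^+_{\mathcal Q}$ is needed to absorb it.
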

%%%%%%%%%%%%%%%%%%%%%%%%%%%%%%%%%%%%%%%%%%%%%%%%%%%%%%%%%%%%%%%%%%%%%%%%%%%%%%%%
In the above expression, $A^+_{\mathcal Q}$ is a sum of many refined words operators $A^+_{\bq}$ with $\bq$ having Jacobians $\cJ^+_{\bq}\sim h^{-\tau}$; in turn, $A^-_{\mathbf v}$ can also be split into the sum of many word operators $A^-_{\tilde\bq}$ with words $|\tilde\bq |=N_0$. The \emph{hyperbolic dispersion estimates} of \cite{AN07} show that all the individual terms $A^-_{\tilde\bq}A^+_{\bq}$ are small (their norms are bounded by some $h^\alpha$), yet to cope with the sum of many such terms, we will have to use another ingredient, namely a fractal uncertainty principle.

%%%%%%%%%%%%%%%%%%%%%%%%%%%%%%%%%%%%%%%%%%%%%%%%%%%%%%%%%%%%%%%%%%%%%%%%%%%%%%%%
\subsection{Fractal uncertainty principle and decay for a single cluster}
  \label{s:fup-endgame}

In this section we prove Proposition~\ref{l:longdec-3}; as shown earlier in~\S\ref{s:long-word-fup} this implies
Proposition~\ref{l:longdec-0}. We fix
\begin{equation}
  \label{e:fixer}
\mathbf v\in\mathscr A_\star^{N_0},\quad
\mathbf w\in\mathscr A_\star^{N_1},\quad
n\in \{1,\dots,N_1\},\quad
e\in\mathscr A,\quad
\rho_0\in \mathcal V^+_e\cap S^*M,
\end{equation}
and
$\mathcal Q\subset\mathcal Q_n'(\mathbf w,e)$ which lies
in an $\mathcal O(h^{2/3})$ sized conic neighborhood of $W_{0u}(\rho_0)$ in the sense of~\eqref{e:getting-close}.

Throughout this section $C$ denotes constants depending only on
$A_1,\dots, A_Q$, and~$C_0$,
whose meaning might change from place to place,
unless noted otherwise.

The strategy of the proof is to conjugate the operators $A^-_{\mathbf v}$,
$A^+_{\mathcal Q}$ by Fourier integral operators
to obtain a situation to which the fractal uncertainty principle
of Proposition~\ref{l:fup-2} can be applied. The proof of Proposition~\ref{l:longdec-3}
is given in~\S\ref{s:micro-conjugation} below, using components described in the rest of this section.

%%%%%%%%%%%%%%%%%%%%%%%%%%%%%%%%%%%%%%%%%%%%%%%%%%%%%%%%%%%%%%%%%%%%%%%%%%%%%%%%
\subsubsection{Normal form}
  \label{s:fup-normal}
  
We first study the symbols $a^-_{\mathbf v}$, $a^+_{\mathcal Q}$.
We use the symplectomorphism constructed in Lemma~\ref{l:stun-straight},
which approximately straightens out the weak unstable leaves close
to $W_{0u}(\rho_0)$.

By the assumptions on~$\mathcal V_1,\dots,\mathcal V_Q$
in~\S\ref{s:refined-partition}, the diameter
of $\mathcal V^+_e\cap S^*M=\varphi_1(\mathcal V_e\cap S^*M)$ is bounded above
by $C\varepsilon_0$ for some $C$ depending only on $(M,g)$.
Therefore, if we fix $\varepsilon_0>0$ small enough then
by Lemma~\ref{l:stun-straight} there exists 
a symplectomorphism
\begin{equation}
  \label{e:conj-kappa}
\varkappa=\varkappa_{\rho_0}:U_{\rho_0}\to V_{\rho_0},\quad
U_{\rho_0}\subset T^*M\setminus 0,\quad
V_{\rho_0}\subset T^*\mathbb R^2\setminus 0
\end{equation}
which satisfies conditions~(1)--(7) of Lemma~\ref{l:stun-straight}
and $\overline{\mathcal V^+_e}\subset U_{\rho_0}$.
(Here the closure of $\mathcal V^+_e$ is taken in $T^*M\setminus 0$.)
We denote elements of $T^*M$ and $T^*\mathbb R^2$
by $(x,\xi)$ and $(y,\eta)=(y_1,y_2,\eta_1,\eta_2)$ respectively.

Since $\varkappa$ is homogeneous, the flipped graph $\mathscr L_\varkappa$
defined in~\eqref{e:L-varkappa} is conic. Therefore, shrinking $U_{\rho_0}$ (and reducing
$\varepsilon_0$) we may assume that $\mathscr L_\varkappa$
is generated by a single phase function, see~\S\ref{s:lagr-mflds}.

We will analyze the images of the supports $\supp a^-_{\mathbf v}$,
$\supp a^+_{\mathcal Q}$ under the map~$\varkappa$.
The goal is to relate these to localization to porous sets
in $y_1$ and $\eta_1/\eta_2$ respectively,
see~\eqref{e:Omega+con},\eqref{e:Omega-con} below.

We start with $\supp a^+_{\mathcal Q}$ which is contained in the open conic set
\begin{equation}
  \label{e:V-Q-def}
\mathcal V^+_{\mathcal Q}:=\bigcup_{\mathbf q\in\mathcal Q}\mathcal V^+_{\mathbf q}
\ \subset\ \mathcal V^+_e\ \Subset\ U_{\rho_0}.
\end{equation}
The following lemma is a key point in the argument where $C^{3/2}$ regularity
of the unstable foliation (used in Lemma~\ref{l:stun-straight}) is combined with the fact that
$\mathcal Q$ lies $\mathcal O(h^{2/3})$ close to the weak
unstable leaf $W_{0u}(\rho_0)$ (the latter was made possible
by the cluster decomposition of~\S\ref{s:clusters}).
It states that the projection of each weak unstable leaf
$\varkappa(W_{0u}(\tilde\rho))$, $\tilde\rho\in \mathcal V^+_{\mathcal Q}\cap S^*M$,
onto the $\eta_1$ coordinate lies in an interval of size $\mathcal O(h)$.
Since by~\eqref{e:unrec} and~\eqref{e:go-deep}
each $\mathcal V^+_{\mathbf q}\cap S^*M$, $\mathbf q\in\mathcal Q$, lies in an $\mathcal O(h^\tau)$ neighborhood
of some weak unstable leaf, we see that
the projection of $\varkappa(\mathcal V^+_{\mathbf q}\cap S^*M)$ 
onto the $\eta_1$ coordinate lies in an interval of size $\mathcal O(h^\tau)$.
%%%%%%%%%%%%%%%%%%%%%%%%%%%%%%%%%%%%%%%%%%%%%%%%%%%%%%%%%%%%%%%%%%%%%%%%%%%%%%%%
\begin{lemm}
  \label{l:close-unstable}
Let $\tilde\rho\in\mathcal V^+_{\mathcal Q}\cap S^*M$.
Then
\begin{equation}
  \label{e:cun-1}
|\eta_1(\varkappa(\rho))-\eta_1(\varkappa(\tilde\rho))|\leq Ch\quad\text{for all}\quad
\rho\in W_{0u}(\tilde\rho)\cap U_{\rho_0}.
\end{equation}
\end{lemm}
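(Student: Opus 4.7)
The idea is to combine three ingredients: the $C^{3/2}$ regularity of the unstable foliation (packaged into property (8) of Lemma~\ref{l:stun-straight}), the fact that $p$ and $Z$ are both constant on weak unstable leaves, and the quantitative closeness hypothesis~\eqref{e:getting-close} stating $d(\tilde\rho, W_{0u}(\rho_0)) \leq C_0 h^{2/3}$. The key observation is that raising $h^{2/3}$ to the $3/2$ power gives exactly $h$, so property (8) is perfectly calibrated for the task.

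First I would control how far the parameter $\tilde\zeta := Z(\tilde\rho)$ sits from zero. By Lemma~\ref{l:stun-straight}, the function $Z$ is in $C^{3/2}$ on $U_{\rho_0}\cap S^*M$ (hence Lipschitz), is constant on each local weak unstable leaf, and satisfies $Z(\rho_0)=0$; therefore $Z$ vanishes identically on $W_{0u}(\rho_0)$. Letting $\rho'\in W_{0u}(\rho_0)$ be the point realizing the distance from $\tilde\rho$, Lipschitz continuity of $Z$ yields
\[
|\tilde\zeta| = |Z(\tilde\rho)-Z(\rho')| \leq C\, d(\tilde\rho,W_{0u}(\rho_0)) \leq CC_0\, h^{2/3}.
\]

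Next I would use the straightening given by Lemma~\ref{l:stun-straight}(5). Since $\rho\in W_{0u}(\tilde\rho)$, both $\rho$ and $\tilde\rho$ share the same value of $Z$ (equal to $\tilde\zeta$) and the same value of $p$ (equal to $1$, as $\tilde\rho\in S^*M$ and $p$ is constant on weak unstable leaves). Writing $\varkappa(\rho)=(y,\eta)$ and $\varkappa(\tilde\rho)=(\tilde y,\tilde\eta)$, the parametrization~\eqref{e:stun-straight} yields
\[
\eta_1 = F(y_1,\tilde\zeta), \qquad \tilde\eta_1 = F(\tilde y_1,\tilde\zeta).
\]
Property (8) of Lemma~\ref{l:stun-straight} then gives $|F(s,\tilde\zeta)-\tilde\zeta|\leq C_{\rho_0}|\tilde\zeta|^{3/2}$ uniformly in $s$, so by the triangle inequality
\[
|\eta_1-\tilde\eta_1| \leq |F(y_1,\tilde\zeta)-\tilde\zeta| + |F(\tilde y_1,\tilde\zeta)-\tilde\zeta| \leq 2C_{\rho_0}|\tilde\zeta|^{3/2} \leq 2C_{\rho_0}(CC_0)^{3/2}\, h,
\]
which is the desired estimate.

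There is no real obstacle once Lemma~\ref{l:stun-straight} is in hand: the hardest work~-- the $C^{3/2}$ regularity of $F$ in the transverse variable, encoding the Hurder--Katok regularity of the unstable distribution~-- has already been done in the proof of that lemma. The only thing worth emphasizing is the delicate matching of exponents: the cluster construction in~\S\ref{s:clusters} was specifically tuned to produce the scale $h^{2/3}$ precisely because $(h^{2/3})^{3/2}=h$, so that the thickness $|\eta_1-\tilde\eta_1|\lesssim h$ will later be compatible with microlocalization in~$\eta_1$ at the Planck scale.
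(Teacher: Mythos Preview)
Your proof is correct and follows essentially the same route as the paper: both use that $Z$ is Lipschitz and vanishes on $W_{0u}(\rho_0)$ to bound $|\tilde\zeta|\leq Ch^{2/3}$, then invoke property~(8) of Lemma~\ref{l:stun-straight} to conclude $|F(y_1,\tilde\zeta)-\tilde\zeta|\leq C|\tilde\zeta|^{3/2}\leq Ch$. The paper's version is slightly terser (it bounds $\sup_{y_1}|F(y_1,\tilde\zeta)-\tilde\zeta|$ once rather than applying the triangle inequality to two evaluations), but the substance is identical.
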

%%%%%%%%%%%%%%%%%%%%%%%%%%%%%%%%%%%%%%%%%%%%%%%%%%%%%%%%%%%%%%%%%%%%%%%%%%%%%%%%
\begin{proof}
We recall the straightening of the unstable foliation described in Lemma~\ref{l:stun-straight}. 
By~\eqref{e:stun-straight} we have
\begin{equation}
  \label{e:fgame-2}
\varkappa(W_{0u}(\tilde \rho)\cap U_{\rho_0})=
\big\{\big(y_1,y_2,F(y_1,\tilde\zeta),1\big)\mid (y_1,\tilde\zeta)\in\Omega,\
y_2\in\mathbb R \big\}\cap V_{\rho_0}
\end{equation}
where $\tilde\zeta:=Z(\tilde\rho)$ and the functions
$F\in C^{3/2}(\Omega;\mathbb R)$, $Z\in C^{3/2}(U_{\rho_0}\cap S^*M;\mathbb R)$
are defined in Lemma~\ref{l:stun-straight}.
Moreover, by~\eqref{e:getting-close} we have
$d(\tilde\rho,W_{0u}(\rho_0))\leq C_0h^{2/3}$, which
by parts~(5)--(6) of Lemma~\ref{l:stun-straight} implies
\begin{equation}
  \label{e:egame-1}
|\tilde\zeta|\leq Ch^{2/3}.
\end{equation}
Combining this estimate with the point (8) of Lemma~\ref{l:stun-straight},
we obtain
\begin{equation}
  \label{e:egame-3}
\sup_{y_1}|F(y_1,\tilde\zeta)-\tilde\zeta|\leq Ch
\end{equation}
which together with~\eqref{e:fgame-2} gives~\eqref{e:cun-1}.
\end{proof}
%%%%%%%%%%%%%%%%%%%%%%%%%%%%%%%%%%%%%%%%%%%%%%%%%%%%%%%%%%%%%%%%%%%%%%%%%%%%%%%%
In~\S\ref{s:porosity-ultimate} below we use Lemma~\ref{l:close-unstable}
and the results of~\S\ref{s:porosity-basic} to show the following porosity statement
(see Definition~\ref{d:porous}):
%%%%%%%%%%%%%%%%%%%%%%%%%%%%%%%%%%%%%%%%%%%%%%%%%%%%%%%%%%%%%%%%%%%%%%%%%%%%%%%%
\begin{lemm}
  \label{l:poro+}
Define the set
\begin{equation}
  \label{e:Omega+}
\Omega^+:=\eta_1(\varkappa(\mathcal V^+_{\mathcal Q}\cap S^*M))\subset\mathbb R.
\end{equation}
Then there exist $R$ and $\nu>0$ depending only on $\mathcal V_1,\mathcal V_\star$
such that
$\Omega^+\subset \Omega^+_1\cup\dots\cup \Omega^+_R$ where
each $\Omega^+_k$ is $\nu$-porous on scales $Ch^\tau$ to $C^{-1}$.
\end{lemm}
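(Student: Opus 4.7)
The plan is to transfer the stable-direction porosity of $\mathcal V^+_{\mathbf w}$ furnished by Lemma~\ref{l:porosity-basic} into porosity of the one-dimensional projection $\Omega^+$, using the straightening coordinates $\varkappa$ of Lemma~\ref{l:stun-straight}. The three main ingredients are: the geometric observation that each component $\varkappa(\mathcal V^+_{\mathbf q}\cap S^*M)$ is trapped near a single horizontal leaf in these coordinates; Lemma~\ref{l:porosity-basic} itself, which via the bound $\inf J^s_{-N_1}\geq e^{\Lambda_0 N_1}\geq h^{-1}$ coming from~\eqref{e:N-1-def} yields $\nu$-porosity on scales $Ch$ to~$1$; and the change-of-variable and neighborhood-thickening Lemmas~\ref{l:porous-map} and~\ref{l:porous-nbhd} that convert this into porosity on the slightly coarser scales $Ch^\tau$ to~$C^{-1}$.

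First I would record the elementary geometry: for every $\mathbf q\in\mathcal Q$ and $\tilde\rho\in\mathcal V^+_{\mathbf q}\cap S^*M$, the containment~\eqref{e:unrec} combined with $\mathcal J^+_{\mathbf q}\geq h^{-\tau}$ places $\mathcal V^+_{\mathbf q}\cap S^*M$ inside a $Ch^\tau$-neighborhood of the weak unstable leaf $W_{0u}(\tilde\rho)$, while Lemma~\ref{l:close-unstable} bounds the $\eta_1$-extent of $\varkappa(W_{0u}(\tilde\rho))$ by $Ch$. Together these give that $I_{\mathbf q}:=\eta_1(\varkappa(\mathcal V^+_{\mathbf q}\cap S^*M))$ is contained in an interval of length $Ch^\tau$ around some value $\zeta_{\mathbf q}\in[-Ch^{2/3},Ch^{2/3}]$, with $\Omega^+=\bigcup_{\mathbf q\in\mathcal Q}I_{\mathbf q}$.

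Next I would pick a finite family of stable intervals $\gamma_1,\dots,\gamma_R:J\to S^*M$, with $R$ depending only on the partition, lying inside weak stable leaves through base points that cover $\mathcal V^+_e\cap S^*M$ (in the $\varkappa$-coordinates $(y_1,y_2)$) on a bounded grid, and chosen long enough so that the $\eta_1$-range of $\eta_1\circ\varkappa\circ\gamma_k$ covers $[-Ch^{2/3},Ch^{2/3}]$. The geometric fact needed is that every nonempty $\mathcal V^+_{\mathbf q}$ has macroscopic extent in the weak unstable direction, because $\varphi_{-j}$ contracts weak unstable vectors for $j\geq 0$ and each $\mathcal V_{q_j}$ has macroscopic interior; hence every $\mathcal V^+_{\mathbf q}$ is crossed by at least one $\gamma_k$. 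By Lemma~\ref{l:porosity-basic} applied to the pair $\{\mathcal V_1,\mathcal V_\star\}$ and the word $\mathbf w\in\mathscr A_\star^{N_1}$, each $\gamma_k^{-1}(\mathcal V^+_{\mathbf w})$ is $\nu$-porous on scales $Ch$ to~$1$ (with $\nu$ depending only on $\mathcal V_1,\mathcal V_\star$), and the subset $\gamma_k^{-1}(\mathcal V^+_{\mathcal Q})$ inherits the same porosity. Since $d\varkappa(\rho_0)E_s(\rho_0)=\mathbb R\partial_{\eta_1}$ and $\varkappa$ has uniformly bounded derivatives by Lemma~\ref{l:stun-straight}, the arclength-parametrized map $\psi_k:=\eta_1\circ\varkappa\circ\gamma_k$ is a $C^\infty$ diffeomorphism with derivative bounds independent of $\rho_0$, so Lemma~\ref{l:porous-map} transports the porosity to $\psi_k(\gamma_k^{-1}(\mathcal V^+_{\mathcal Q}))$.

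Finally I would define $\Omega^+_k$ as the $Ch^\tau$-neighborhood of $\psi_k(\gamma_k^{-1}(\mathcal V^+_{\mathcal Q}))$. The inclusion $\Omega^+\subset\bigcup_{k=1}^R\Omega^+_k$ follows from the first step together with the covering property: any $\eta_1^\ast\in\Omega^+$ corresponds to some $\tilde\rho\in\mathcal V^+_{\mathbf q}\cap S^*M$, and if $\gamma_k(s^\ast)\in\mathcal V^+_{\mathbf q}$, then $\psi_k(s^\ast)$ and $\eta_1^\ast$ both lie in $I_{\mathbf q}$ and so differ by at most $Ch^\tau$. Lemma~\ref{l:porous-nbhd} then upgrades the porosity of $\psi_k(\gamma_k^{-1}(\mathcal V^+_{\mathcal Q}))$ on scales $Ch$ to $C^{-1}$ into $\tfrac{\nu}{6}$-porosity of $\Omega^+_k$ on scales $Ch^\tau$ to $C^{-1}$, completing the proof. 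The main obstacle I foresee is the step of choosing the covering $\gamma_1,\dots,\gamma_R$ and verifying uniformly in $n$ that every nonempty $\mathcal V^+_{\mathbf q}$ is crossed by one of these; this requires a quantitative uniform-in-$n$ lower bound on the weak unstable extent of $\mathcal V^+_{\mathbf q}$ in the $\varkappa$-coordinates, which I expect to follow from the backward contraction of $d\varphi_{-j}$ on weak unstable vectors and the macroscopic nature of each $\mathcal V_{q_j}$.
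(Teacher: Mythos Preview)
Your overall architecture---cover by finitely many stable intervals, apply Lemma~\ref{l:porosity-basic}, push forward by $\psi_k=\eta_1\circ\varkappa\circ\gamma_k$, then thicken---is exactly the paper's strategy. However, the step you yourself flag as the main obstacle is a genuine gap, and the justification you offer does not work.

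The claim ``every nonempty $\mathcal V^+_{\mathbf q}$ has macroscopic weak unstable extent'' fails. The backward contraction of $\varphi_{-j}$ on weak unstable vectors only tells you that if $\tilde\rho\in W_{0u}(\rho)$ with $d(\rho,\tilde\rho)\leq\delta$, then $\varphi_{-j}(\tilde\rho)$ stays within $C\delta$ of $\varphi_{-j}(\rho)\in\mathcal V_{q_j}$. But $\varphi_{-j}(\rho)$ may lie arbitrarily close to $\partial\mathcal V_{q_j}$, so being $C\delta$-close to $\mathcal V_{q_j}$ does not imply membership in $\mathcal V_{q_j}$. Concretely, already for two letters the intersection $\varphi_1(\mathcal V_{q_1})\cap\varphi_2(\mathcal V_{q_2})$ can have arbitrarily small $y_1$-projection while remaining nonempty. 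Thus you cannot guarantee that a fixed, $h$-independent grid of stable intervals actually meets every $\mathcal V^+_{\mathbf q}$.

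The paper's remedy is to introduce slightly fattened open conic sets $\mathcal V^\sharp_q\supset\overline{\mathcal V_q}$ (still with nonempty complements) and the corresponding $\mathcal V^{\sharp+}_{\mathcal Q}$. One then proves the ``blurring'' Lemma~\ref{l:blurry}: if $\rho\in\mathcal V^+_{\mathcal Q}$ and $\tilde\rho\in W_{0u}(\rho)$ with $d(\rho,\tilde\rho)\leq\varepsilon_1$, then $\tilde\rho\in\mathcal V^{\sharp+}_{\mathcal Q}$. Combined with the local product structure (Lemma~\ref{l:productor}), this lets you project any $\rho\in\mathcal V^+_{\mathcal Q}$ near $\rho_k$ along its weak unstable leaf onto a point $\rho'\in\gamma_k^s\cap\mathcal V^{\sharp+}_{\mathcal Q}$. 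The porosity is then applied to $\gamma_k^{-1}(\mathcal V^{\sharp+}_{\mathcal Q})$, not to $\gamma_k^{-1}(\mathcal V^+_{\mathcal Q})$, and Lemma~\ref{l:close-unstable} gives $|\eta_1(\varkappa(\rho))-\eta_1(\varkappa(\rho'))|\leq Ch$, yielding the desired inclusion after Lemma~\ref{l:porous-nbhd}.

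There is also a smaller error: the inclusion $\mathcal V^+_{\mathcal Q}\subset\mathcal V^+_{\mathbf w}$ is false, since words $\mathbf q\in\mathcal Q$ have length $n\leq N_1$, not $N_1$. One only has $\mathcal V^+_{\mathcal Q}\subset\mathcal V^+_{w_1\dots w_n}$. Consequently Lemma~\ref{l:porosity-basic} must be applied with the truncated word $w_1\dots w_n$, and the relevant Jacobian bound is $\inf_{\mathcal V^+_{\mathcal Q}}J^s_{-n}\geq C^{-1}h^{-\tau}$ from~\eqref{e:go-deep}, giving porosity on scales $Ch^\tau$ to~$1$ (not $Ch$ to~$1$). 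This weaker input is still sufficient for the conclusion.
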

%%%%%%%%%%%%%%%%%%%%%%%%%%%%%%%%%%%%%%%%%%%%%%%%%%%%%%%%%%%%%%%%%%%%%%%%%%%%%%%%
%
%%%%%%%%%%%%%%%%%%%%%%%%%%%%%%%%%%%%%%%%%%%%%%%%%%%%%%%%%%%%%%%%%%%%%%%%%%%%%%%%
\begin{figure}
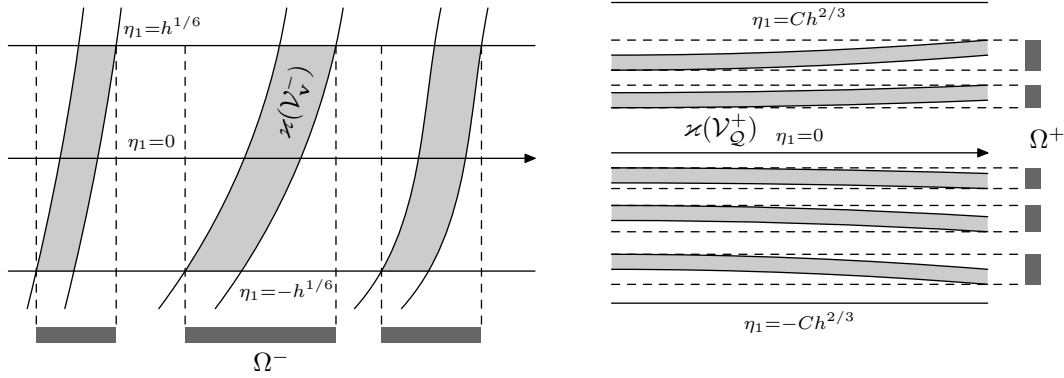

\includegraphics{varfup.5}
\qquad
\includegraphics{varfup.6}
\caption{The sets $\varkappa(\mathcal V^-_{\mathbf v}\cap \mathcal V^+_e\cap S^*M)\cap \{|\eta_1|\leq h^{1/6}\}$ and~$\varkappa(\mathcal V^+_{\mathcal Q}\cap S^*M)$ (lighter shaded).
Here $y_1$ is the horizontal coordinate (with the width of the figure having $h$-independent scale) and $\eta_1$ is the vertical coordinate.
The darker shaded sets are $\Omega^-$ and~$\Omega^+$, defined in~\eqref{e:Omega-}
and~\eqref{e:Omega+}.}
\label{f:porositors}
\end{figure}
%%%%%%%%%%%%%%%%%%%%%%%%%%%%%%%%%%%%%%%%%%%%%%%%%%%%%%%%%%%%%%%%%%%%%%%%%%%%%%%%
\Remarks 1.
Since $\varkappa(\mathcal V^+_{\mathcal Q}\cap S^*M)$ is contained in an $\mathcal O(h^{2/3})$ sized
neighborhood of $\{\eta_1=0\}$
by~\eqref{e:getting-close} and parts~(5)--(6) of Lemma~\ref{l:stun-straight}, we have
\begin{equation}
  \label{e:Omega+small}
\Omega^+\subset [-Ch^{2/3},Ch^{2/3}].
\end{equation}
In particular, it is easy to see that $\Omega^+$ is $1\over 3$-porous on scales above $Ch^{2/3}$
for $C$ large enough. Lemma~\ref{l:poro+} shows that each $\Omega^+_k$ is in fact $\nu$-porous
on scales above $Ch^\tau$ (where $\tau$ is very close to~1) for some $\nu>0$.

\noindent 2. Using Lemmas~\ref{l:porosity-basic-quant}--\ref{l:dense-useful} and following the proof of Lemma~\ref{l:poro+},
we get the following statement: if the complements $S^*M\setminus\mathcal V_1,
S^*M\setminus\mathcal V_\star$ are $(L_0,L_1)$-dense in the stable direction
(in the sense of Definition~\ref{d:stun-dense}) then Lemma~\ref{l:poro+} holds for
some $\nu$ depending only on $(M,g),L_0,L_1$.

We next study $\supp a^-_{\mathbf v}$, which is contained in $\mathcal V^-_{\mathbf v}$.
By~\eqref{e:Omega+small} and since $\supp a^+_{\mathcal Q}\subset \mathcal V^+_e$
it would be enough to study the intersection
of $\varkappa(\mathcal V^-_{\mathbf v}\cap \mathcal V^+_e\cap S^*M)$
with the set $\{|\eta_1|\leq Ch^{2/3}\}$.
However, for the purpose of microlocalization
of the operator $A^-_{\mathbf v}$ it is convenient to choose
a larger, $h^{1/6}$-sized, neighborhood of $\{\eta_1=0\}$. We thus define
\begin{equation}
  \label{e:Omega-}
\Omega^-:=y_1\big(\varkappa(\mathcal V^-_{\mathbf v}\cap \mathcal V^+_e\cap S^*M)\cap \{|\eta_1|\leq h^{1/6}\}\big)
\subset\mathbb R.
\end{equation}
The next lemma, proved in~\S\ref{s:porosity-ultimate} below, establishes
porosity of $\Omega^-$:
%%%%%%%%%%%%%%%%%%%%%%%%%%%%%%%%%%%%%%%%%%%%%%%%%%%%%%%%%%%%%%%%%%%%%%%%%%%%%%%%
\begin{lemm}
  \label{l:poro-}
Let $\Lambda:=\lceil \Lambda_1/\Lambda_0\rceil$ be defined in~\eqref{e:Lambda-def}.
Then there exist $R$ and $\nu>0$ depending only on $\mathcal V_1,\mathcal V_\star$
such that $\Omega^-\subset \Omega^-_1\cup\dots\cup\Omega^-_R$ where
each $\Omega^-_k$ is $\nu$-porous on scales $Ch^{1/(6\Lambda)}$ to $C^{-1}$.
\end{lemm}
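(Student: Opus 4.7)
The argument is dual to the proof of Lemma~\ref{l:poro+}. The choice of $N_0$ yields the key expansion estimate: since $N_0\geq\log(1/h)/(6\Lambda_1)$ and $\Lambda\geq\Lambda_1/\Lambda_0$, one has
$\inf_{\mathcal V^-_{\mathbf v}\cap S^*M}J^u_{N_0}\geq e^{\Lambda_0 N_0}\geq h^{-1/(6\Lambda)}$.
Hence Lemma~\ref{l:porosity-basic}, applied with $m=2$ to $\mathcal V_1,\mathcal V_\star$, shows that for every unstable interval $\gamma\colon I\to S^*M$ the pullback $\gamma^{-1}(\mathcal V^-_{\mathbf v}\cap S^*M)$ is $\nu$-porous on scales $Ch^{1/(6\Lambda)}$ to $1$, with $\nu,C>0$ depending only on $\mathcal V_1,\mathcal V_\star$.

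Next I would cover the relevant portion of $\mathcal V^+_e\cap S^*M$ by an $h$-independent family of tubular neighborhoods of unstable intervals. Fix a smooth $2$-dimensional transversal $T\subset \mathcal V^+_e\cap S^*M$ to the unstable foliation (for instance $T:=W_{0s}(\rho_0)\cap \mathcal V^+_e$), and cover $T$ by $R$ discs of fixed small radius $\delta>0$, centered at points $\tilde\rho_1,\dots,\tilde\rho_R$. Through each $\tilde\rho_r$ take the local unstable interval $\gamma_r$ parametrized by arc length, and note that $\psi_r:=y_1\circ\varkappa\circ\gamma_r$ is a $C^2$ diffeomorphism onto its image with derivatives bounded uniformly in $r$ and $h$, because $d\varkappa(\rho_0)E_u(\rho_0)=\mathbb R\partial_{y_1}$ by property~(3) of Lemma~\ref{l:stun-straight}. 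Applying Lemma~\ref{l:porous-map} to each $\psi_r$, the sets $\Omega_r^-:=\psi_r(\gamma_r^{-1}(\mathcal V^-_{\mathbf v}\cap S^*M))$ are each $(\nu/2)$-porous on scales $Ch^{1/(6\Lambda)}$ to $C^{-1}$.

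The main step is to prove the containment $\Omega^-\subset\bigcup_{r=1}^R\Omega_r^-$ up to a $Ch^{1/(6\Lambda)}$-fattening (absorbed by Lemma~\ref{l:porous-nbhd} at the cost of slightly degrading the constants). Given $\rho\in\mathcal V^-_{\mathbf v}\cap \mathcal V^+_e\cap S^*M$ with $|\eta_1(\varkappa(\rho))|\leq h^{1/6}$, I would select $r$ such that the weak stable leaf of $\rho$ meets $T$ within $\delta$ of $\tilde\rho_r$, and define the bracket point $\tilde\rho:=[\rho,\gamma_r]\in\gamma_r$ lying on this weak stable leaf. Part~(3) of Lemma~\ref{l:stun-main} gives $d(\varphi_j(\tilde\rho),\varphi_j(\rho))\leq C\,d(\rho,\tilde\rho)\leq C\delta$ for every $j\in[0,N_0-1]$. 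Since the weak stable distribution $E_s\oplus E_0$ is $C^{3/2}$ and is tangent at $\rho_0$ to the kernel of $dy_1$, the $y_1$-displacement along the sliding satisfies $|y_1(\varkappa(\tilde\rho))-y_1(\varkappa(\rho))|\leq C(\delta+h^{1/6})^{3/2}$, while the $\eta_1$-component $h^{1/6}$ of the sliding is already smaller than the porosity scale $h^{1/(6\Lambda)}$.

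The hard part will be balancing three constraints: making $R$ independent of $h$ (so $\delta$ must be $h$-independent), ensuring $\tilde\rho\in\mathcal V^-_{\mathbf v}$ exactly, and keeping the $y_1$-displacement below the upper porosity scale. The first two are reconciled by fixing $\delta$ small once and for all and, if necessary, replacing $\mathcal V_1,\mathcal V_\star$ by slightly shrunken versions (possible since their complements have nonempty interior) so that a definite cushion $\delta_1>0$ separates the forward orbit $\varphi_j(\rho)$ from $\partial\mathcal V_{v_j}$, whence $\tilde\rho\in\mathcal V^-_{\mathbf v}$ as soon as $C\delta<\delta_1$. The third is accommodated by allowing the upper porosity scale $C^{-1}$ (depending on $\mathcal V_1,\mathcal V_\star$) to be chosen at least $C\delta^{3/2}$, so that the $\delta^{3/2}$ fattening stays within the admissible range. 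The final application of Lemma~\ref{l:porous-nbhd} with $\alpha_2\leq C(h^{1/6}+\delta^{3/2})$ yields the claimed porosity of the enlarged sets $\Omega_r^-$ on scales $Ch^{1/(6\Lambda)}$ to $C^{-1}$, with $R,\nu,C$ depending only on $\mathcal V_1,\mathcal V_\star$, completing the proof.
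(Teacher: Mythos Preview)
There is a genuine gap in the final step. You propose to fatten each $\Omega_r^-$ by $\alpha_2\leq C(h^{1/6}+\delta^{3/2})$ and then invoke Lemma~\ref{l:porous-nbhd}. But that lemma yields porosity on scales $\max\bigl(Ch^{1/(6\Lambda)},\tfrac{3}{\nu}\alpha_2\bigr)$ to $C^{-1}$, and since $\delta$ is $h$-independent the lower scale becomes $\sim\delta^{3/2}$, not $Ch^{1/(6\Lambda)}$: porosity at all scales below this fixed threshold is destroyed, whereas the statement requires it down to $Ch^{1/(6\Lambda)}\to 0$. Adjusting the \emph{upper} scale $C^{-1}$ does not touch this. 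Moreover the displacement bound $|y_1(\varkappa(\tilde\rho))-y_1(\varkappa(\rho))|\leq C(\delta+h^{1/6})^{3/2}$ is unjustified: the chart $\varkappa$ aligns $E_s\oplus E_0$ with $\ker dy_1$ only at the single point $\rho_0$ (property~(3) of Lemma~\ref{l:stun-straight}), not along all of $W_{0u}(\rho_0)$, so along a weak-stable slide of length $\sim\delta$ through points at distance $\sim 1$ from $\rho_0$ the $y_1$-drift is generically $\sim\delta$, not $\delta^{3/2}$. (Also, your example $T=W_{0s}(\rho_0)$ is ill-posed: weak stable leaves do not meet other weak stable leaves, so $W_{0s}(\rho)\cap T=\emptyset$ for $\rho\notin T$.)

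The paper resolves this by placing the reference points $\rho_k$ on $W_{0u}(\rho_0)$ itself, so the reference unstable intervals $\gamma^u_k$ sit inside $\{\eta_1=0\}$. Because $\Omega^-$ only involves points with $|\eta_1|\leq h^{1/6}$, the stable slide from $\rho$ to $\rho'\in W_s(\rho)\cap W_{0u}(\rho_0)$ has length $\leq Ch^{1/6}$; since $y_1$ is flow-invariant, the total $y_1$-displacement to the bracket point $\rho''\in\gamma^u_k$ is $\leq Ch^{1/6}\leq Ch^{1/(6\Lambda)}$, and Lemma~\ref{l:porous-nbhd} then preserves the target lower scale. To ensure the bracket point lies in the uncontrolled set the paper passes to fattened sets $\mathcal V^\sharp_w\supset\overline{\mathcal V_w}$ and uses Lemma~\ref{l:blurry}, which is the clean formalization of your ``shrinking'' idea.
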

%%%%%%%%%%%%%%%%%%%%%%%%%%%%%%%%%%%%%%%%%%%%%%%%%%%%%%%%%%%%%%%%%%%%%%%%%%%%%%%%
\Remark Using Lemmas~\ref{l:porosity-basic-quant}--\ref{l:dense-useful} and following the proof of Lemma~\ref{l:poro-},
we get the following statement: if the complements $S^*M\setminus\mathcal V_1,
S^*M\setminus\mathcal V_\star$ are $(L_0,L_1)$-dense in the unstable direction
(in the sense of Definition~\ref{d:stun-dense}) then Lemma~\ref{l:poro-} holds for
some $\nu$ depending only on $(M,g),L_0,L_1$.

For future use we record the following corollaries of the definitions~\eqref{e:Omega+},
\eqref{e:Omega-} of~$\Omega^\pm$ and the homogeneity of~$\varkappa$:
\begin{align}
  \label{e:Omega+con}
\varkappa\Big(\mathcal V^+_{\mathcal Q}\cap \Big\{{1\over 4}\leq |\xi|_g\leq 4\Big\}\Big)\ &\subset\ \Big\{{\eta_1\over\eta_2}\in \Omega^+\Big\}
\cap \Big\{{1\over 4}\leq \eta_2\leq 4\Big\},\\
  \label{e:Omega-con}
\varkappa(\mathcal V^-_{\mathbf v}\cap \mathcal V^+_e)\cap \Big\{\Big|{\eta_1\over\eta_2}\Big|\leq h^{1/6}\Big\}
\ &\subset\ \{y_1\in \Omega^-\}.
\end{align}
See Figure~\ref{f:porositors}. For~\eqref{e:Omega+con} we additionally used
part~(4) of Lemma~\ref{l:stun-straight}.

%%%%%%%%%%%%%%%%%%%%%%%%%%%%%%%%%%%%%%%%%%%%%%%%%%%%%%%%%%%%%%%%%%%%%%%%%%%%%%%%
\subsubsection{Proof of porosity}
\label{s:porosity-ultimate}

We now prove Lemmas~\ref{l:poro+} and~\ref{l:poro-}.
We start by defining fattened versions of the sets
$\mathcal V^+_{\mathcal Q}$, $\mathcal V^-_{\mathbf v}$.
Fix two conic open sets
$$
\mathcal V_1^\sharp,
\mathcal V_\star^\sharp\subset T^*M\setminus 0
$$
such that:
\begin{itemize}
\item $\overline{\mathcal V_w}\subset \mathcal V_w^\sharp$
for $w\in\mathscr A_\star=\{1,\star\}$ where the closure is taken in $T^*M\setminus 0$;
\item the complements $T^*M\setminus \mathcal V_w^\sharp$
have nonempty interior.
\end{itemize}
This is possible since $T^*M\setminus \mathcal V_1,T^*M\setminus \mathcal V_\star$
have nonempty interior, see~\S\ref{s:proofs-notation}.

Since $\mathcal V_q\subset\mathcal V_\star$ for $q=2,\dots,Q$ (see~\S\ref{s:refined-partition}),
we can also fix conic open sets
$$
\mathcal V_q^\sharp\subset\mathcal V_\star^\sharp,\quad
\overline{\mathcal V_q}\subset \mathcal V_q^\sharp,\quad
q=2,\dots,Q.
$$
Moreover, since the diameters of $\mathcal V_q\cap S^*M$,
$q\in \mathscr A:=\{1,\dots,Q\}$,
are less than $\varepsilon_0$, we can make
the diameters of $\mathcal V_q^\sharp\cap S^*M$ less than $\varepsilon_0$ as well.
We may also assume that $\overline{\mathcal V_e^{\sharp+}}\subset U_{\rho_0}$
where $U_{\rho_0}$ is the domain of the map~$\varkappa$, see~\eqref{e:conj-kappa}.

Let $\mathbf v=v_0\dots v_{N_0-1}\in\mathscr A_\star^{N_0}$
be the word in the statement of Proposition~\ref{l:longdec-3}
and 
$\mathbf q=q_1\dots q_n\in\mathscr A^n$ be arbitrary.
Similarly to~\eqref{e:V+-}
define the open conic sets
\begin{equation}
  \label{e:sharp-words}
\mathcal V^{\sharp-}_{\mathbf v}:=\bigcap_{j=0}^{N_0-1} \varphi_{-j}(\mathcal V^\sharp_{v_j}),\quad
\mathcal V^{\sharp+}_{\mathbf q}:=\bigcap_{j=1}^n \varphi_j(\mathcal V^\sharp_{q_j}).
\end{equation}
Clearly $\mathcal V^-_{\mathbf v}\subset \mathcal V^{\sharp-}_{\mathbf v}$,
$\mathcal V^+_{\mathbf q}\subset \mathcal V^{\sharp+}_{\mathbf q}$.
Following~\eqref{e:V-Q-def} define also
\begin{equation}
  \label{e:sharp-words-2}
\mathcal V^{\sharp+}_{\mathcal Q}:=\bigcup_{\mathbf q\in\mathcal Q}
\mathcal V^{\sharp+}_{\mathbf q}
\ \supset\ \mathcal V^+_{\mathcal Q}.
\end{equation}
We use the results of~\S\ref{s:porosity-basic}
and the fact that $T^*M\setminus\mathcal V_1^{\sharp}$,
$T^*M\setminus\mathcal V_\star^\sharp$ have nonempty interiors
to establish the porosity of the intersections of $\mathcal V^{\sharp-}_{\mathbf v}$,
$\mathcal V^{\sharp+}_{\mathcal Q}$ with unstable/stable intervals:
%%%%%%%%%%%%%%%%%%%%%%%%%%%%%%%%%%%%%%%%%%%%%%%%%%%%%%%%%%%%%%%%%%%%%%%%%%%%%%%%
\begin{lemm}
  \label{l:porosity-intersected}
There exists $\nu>0$ depending only on~$\mathcal V_1,\mathcal V_\star$ such that:
\begin{enumerate}
\item for every unstable interval $\gamma:I_0\to S^*M$ 
(see Definition~\ref{d:stun-interval}),
the preimage $\gamma^{-1}(\mathcal V^{\sharp-}_{\mathbf v})\subset\mathbb R$
is $\nu$-porous on scales $Ch^{1/(6\Lambda)}$ to~1;
\item for every stable interval $\gamma:I_0\to S^*M$,
the set $\gamma^{-1}(\mathcal V^{\sharp+}_{\mathcal Q})$
is $\nu$-porous on scales $Ch^\tau$ to~1.
\end{enumerate}
\end{lemm}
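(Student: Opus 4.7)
\smallskip
\noindent\textbf{Proof plan for Lemma~\ref{l:porosity-intersected}.}
The plan is to reduce both statements to Lemma~\ref{l:porosity-basic} applied to the two-letter alphabet $\{1,\star\}$ with the fattened open conic sets $\mathcal V_1^\sharp,\mathcal V_\star^\sharp$ playing the role of $\mathcal V_1,\dots,\mathcal V_m$ there. Since both $\mathcal V_1^\sharp$ and $\mathcal V_\star^\sharp$ have complements with nonempty interior (by construction), the hypotheses of Lemma~\ref{l:porosity-basic} are satisfied, and it yields a uniform constant $\nu>0$ depending only on these conic sets, hence only on $\mathcal V_1,\mathcal V_\star$.

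For part~(1), observe that $\mathbf v\in\mathscr A_\star^{N_0}$ is already a word over the alphabet $\{1,\star\}$, and by definition $\mathcal V^{\sharp-}_{\mathbf v}=\bigcap_{j=0}^{N_0-1}\varphi_{-j}(\mathcal V^\sharp_{v_j})$ matches the form $\mathcal V^-_{\mathbf v}$ in Lemma~\ref{l:porosity-basic} for the alphabet $\{1,\star\}$ and the sets $\mathcal V_1^\sharp,\mathcal V_\star^\sharp$. Taking $\mathcal W^-:=\mathcal V^{\sharp-}_{\mathbf v}\cap S^*M$, the Lemma gives $\nu$-porosity of $\gamma^{-1}(\mathcal V^{\sharp-}_{\mathbf v})$ on scales $C_0(\inf_{\mathcal W^-}J^u_{N_0})^{-1}$ to~$1$. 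By~\eqref{e:Lambda-0-1} and the definition~\eqref{e:prop-times} of $N_0$, we have $\inf_{S^*M}J^u_{N_0}\geq e^{\Lambda_0 N_0}\geq h^{-\Lambda_0/(6\Lambda_1)}$. Since $\Lambda=\lceil\Lambda_1/\Lambda_0\rceil\geq \Lambda_1/\Lambda_0$, we get $h^{-\Lambda_0/(6\Lambda_1)}\geq h^{-1/(6\Lambda)}$, so the scale $C_0(\inf J^u_{N_0})^{-1}$ is bounded by $Ch^{1/(6\Lambda)}$, as required.

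For part~(2), each $\mathbf q=q_1\dots q_n\in\mathcal Q\subset\mathcal Q'_n(\mathbf w,e)$ satisfies $\mathbf q\lesssim \mathbf w$ in the sense of Definition~\ref{d:prec-word}, so the projection $\pi(\mathbf q)\in\mathscr A_\star^n$ obtained by collapsing the letters $\{2,\dots,Q\}$ to $\star$ equals the length-$n$ prefix $\mathbf w|_n:=w_1\dots w_n$, independently of $\mathbf q\in\mathcal Q$. Because $\mathcal V^\sharp_q\subset\mathcal V^\sharp_\star$ for $q\in\{2,\dots,Q\}$, we get the inclusion $\mathcal V^{\sharp+}_{\mathbf q}\subset\mathcal V^{\sharp+}_{\mathbf w|_n}$ (using the alphabet $\{1,\star\}$), and therefore $\mathcal V^{\sharp+}_{\mathcal Q}\subset\mathcal V^{\sharp+}_{\mathbf w|_n}$. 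Setting $\mathcal W^+:=\mathcal V^{\sharp+}_{\mathcal Q}\cap S^*M$, Lemma~\ref{l:porosity-basic} gives $\nu$-porosity of $\gamma^{-1}(\mathcal V^{\sharp+}_{\mathcal Q})$ on scales $C_0(\inf_{\mathcal W^+}J^s_{-n})^{-1}$ to~$1$. It remains to show $\inf_{\mathcal W^+}J^s_{-n}\gtrsim h^{-\tau}$: each $\rho\in\mathcal W^+$ lies in some $\mathcal V^{\sharp+}_{\mathbf q}$ with $\mathbf q\in\mathcal Q\subset\mathcal Q'_n(\mathbf w,e)$, so by~\eqref{e:go-deep} we have $\mathcal J^+_{\mathbf q}\geq h^{-\tau}$, and the extension of~\eqref{e:jacobians-same} from $\mathcal V^+_{\mathbf q}$ to $\mathcal V^{\sharp+}_{\mathbf q}$ (valid provided the fattened sets $\mathcal V^\sharp_q\cap S^*M$ still have diameter below the $\varepsilon_0$ in Lemma~\ref{l:stun-main}, which we arranged when choosing $\mathcal V^\sharp_q$) gives $J^s_{-n}(\rho)\sim \mathcal J^+_{\mathbf q}\geq h^{-\tau}$. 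Hence the porosity scale is bounded by $Ch^\tau$.

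The only genuine issue is the last step, namely transferring the local expansion rate estimate~\eqref{e:jacobians-same} from the original $\mathcal V^+_{\mathbf q}$ to the fattened $\mathcal V^{\sharp+}_{\mathbf q}$. This is the main obstacle, but it is purely technical: it amounts to checking that the Anosov ``shadowing'' estimates in parts~(5)--(6) of Lemma~\ref{l:stun-main} still apply to trajectories whose pairwise distance at each iterate remains below~$\varepsilon_0$, which holds by construction of the $\mathcal V^\sharp_q$; everything else is bookkeeping in the alphabets $\{1,\star\}$ versus $\{1,\dots,Q\}$.
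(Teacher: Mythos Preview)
Your proposal is correct and follows essentially the same route as the paper: both reduce to Lemma~\ref{l:porosity-basic} applied to the fattened conic sets $\mathcal V_1^\sharp,\mathcal V_\star^\sharp$, then check the Jacobian lower bounds $\inf J^u_{N_0}\geq h^{-1/(6\Lambda)}$ and $\inf_{\mathcal V^{\sharp+}_{\mathcal Q}}J^s_{-n}\gtrsim h^{-\tau}$. Your identification of the only nontrivial step (extending~\eqref{e:jacobians-same} to the fattened sets via the $\varepsilon_0$-diameter condition on $\mathcal V_q^\sharp\cap S^*M$) matches the paper exactly; the paper carries this out by picking a reference point $\tilde\rho\in\mathcal V^+_{\mathbf q}$ and comparing $J^s_{-n}(\rho)$ to $J^s_{-n}(\tilde\rho)$ via part~(6) of Lemma~\ref{l:stun-main}.
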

%%%%%%%%%%%%%%%%%%%%%%%%%%%%%%%%%%%%%%%%%%%%%%%%%%%%%%%%%%%%%%%%%%%%%%%%%%%%%%%%
\begin{proof}
Recall that $\mathcal Q$ is contained in the set
$\mathcal Q'_n(\mathbf w,e)$ defined by~\eqref{e:Q-n-def}.
Therefore, each $\mathbf q=q_1\dots q_n\in\mathcal Q$ satisfies
$\mathbf q\lesssim\mathbf w$
(where $\mathbf w\in\mathscr A_\star^{N_1}$ is fixed in the statement of Proposition~\ref{l:longdec-3}),
which (recalling Definition~\ref{d:prec-word}) implies that
$\mathcal V_{q_j}^\sharp\subset \mathcal V_{w_j}^\sharp$
for all $j=1,\dots,n$. It follows that
$$
\mathcal V^{\sharp+}_{\mathcal Q}\subset \mathcal V^{\sharp+}_{w_1\dots w_n}
:=\bigcap_{j=1}^n \varphi_j(\mathcal V^\sharp_{w_j}).
$$
Thus the required porosity statements follow from Lemma~\ref{l:porosity-basic}
(taking the sets $\mathcal V_1^\sharp$, $\mathcal V_\star^\sharp$ in~\eqref{e:porosity-basic-sets})
once we establish the Jacobian bounds
\begin{align}
  \label{e:pinn-1}
\inf_{\mathcal V^{\sharp-}_{\mathbf v}\cap S^*M} J^u_{N_0}&\geq h^{-1/(6\Lambda)},\\
  \label{e:pinn-2}
\inf_{\mathcal V^{\sharp+}_{\mathcal Q}\cap S^*M} J^s_{-n}&\geq C^{-1}h^{-\tau}.
\end{align}
The estimate~\eqref{e:pinn-1} follows immediately
from~\eqref{e:Lambda-0-1} and the definitions~\eqref{e:prop-times}
of~$N_0$ and~\eqref{e:Lambda-def} of $\Lambda$.

To show~\eqref{e:pinn-2}, take arbitrary
$\rho\in\mathcal V^{\sharp+}_{\mathcal Q}\cap S^*M$,
then $\rho\in\mathcal V^{\sharp+}_{\mathbf q}\cap S^*M$
for some $\mathbf q\in\mathcal Q\subset\mathcal Q'_n(\mathbf w,e)$.
Take some $\tilde\rho\in \mathcal V^+_{\mathbf q}\cap S^*M\subset\mathcal V^{\sharp+}_{\mathbf q}\cap S^*M$.
We have
$$
J^s_{-n}(\rho)\geq C^{-1}J^s_{-n}(\tilde\rho)\geq C^{-1}\mathcal J^+_{\mathbf q}\geq C^{-1}h^{-\tau}
$$
where the first inequality is proved similarly to~\eqref{e:jacobians-same} (using
that the diameter of each $\mathcal V_q^\sharp\cap S^*M$, $q\in\mathscr A$,
is less than $\varepsilon_0$), the second one follows from the definition~\eqref{e:word-J-def}
of~$\mathcal J^+_{\mathbf q}$,
and the third one follows from~\eqref{e:go-deep}.
\end{proof}
%%%%%%%%%%%%%%%%%%%%%%%%%%%%%%%%%%%%%%%%%%%%%%%%%%%%%%%%%%%%%%%%%%%%%%%%%%%%%%%%
The next lemma shows that
each sufficiently short weak stable leaf centered at a point in $\mathcal V^-_{\mathbf v}$
is contained in the slightly larger set $\mathcal V^{\sharp-}_{\mathbf v}$,
and same is true for weak unstable leaves and the sets
$\mathcal V^+_{\mathcal Q},\mathcal V^{\sharp+}_{\mathcal Q}$.
It will be useful in approximating $\Omega^\pm$ by the sets
studied in Lemma~\ref{l:porosity-intersected},
see~\eqref{e:poro+in}, \eqref{e:poro-in} below.
As in Lemma~\ref{l:stun-main} we fix a distance function $d(\bullet,\bullet)$
on $S^*M$.
%%%%%%%%%%%%%%%%%%%%%%%%%%%%%%%%%%%%%%%%%%%%%%%%%%%%%%%%%%%%%%%%%%%%%%%%%%%%%%%%
\begin{lemm}
  \label{l:blurry}
There exists $\varepsilon_1>0$ depending only on $\mathcal V_1,\mathcal V_\star$
such that for
all $\rho,\tilde\rho\in S^*M$
we have
\begin{align}
  \label{e:blurry-1}
d(\rho,\tilde\rho)\leq \varepsilon_1,\quad \tilde\rho\in W_{0s}(\rho),\quad
\rho\in \mathcal V^-_{\mathbf v}
&\quad\Longrightarrow\quad
\tilde\rho\in\mathcal V^{\sharp-}_{\mathbf v},\\
  \label{e:blurry-2}
d(\rho,\tilde\rho)\leq \varepsilon_1,\quad \tilde\rho\in W_{0u}(\rho),\quad
\rho\in \mathcal V^+_{\mathcal Q}
&\quad\Longrightarrow\quad
\tilde \rho\in\mathcal V^{\sharp+}_{\mathcal Q}.
\end{align}
\end{lemm}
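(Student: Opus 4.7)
The idea is that the two implications follow just from the fact that $\varphi_j$-trajectories along (weak) stable leaves, resp.\ $\varphi_{-j}$-trajectories along (weak) unstable leaves, stay uniformly close forever, combined with the fact that by construction there is a definite ``gap'' between $\mathcal V_w$ and $\mathcal V_w^\sharp$ on the cosphere bundle.

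First, I would extract this gap quantitatively. Since each $\overline{\mathcal V_w}\subset \mathcal V_w^\sharp$ as a conic set (for $w\in\{1,\star,2,\dots,Q\}$) and $S^*M$ is compact, the closed set $\overline{\mathcal V_w\cap S^*M}$ is contained in the open set $\mathcal V_w^\sharp\cap S^*M$. Hence there exists $\delta_0>0$, depending only on the chosen enlargements (and thus only on $\mathcal V_1,\mathcal V_\star$), such that for every $w$ and every $\rho^*\in S^*M$,
\[
\dist\big(\rho^*,\mathcal V_w\cap S^*M\big)<\delta_0 \quad\Longrightarrow\quad \rho^*\in\mathcal V_w^\sharp\cap S^*M.
\]

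Second, I would invoke the (weak) stable contraction. If $\tilde\rho\in W_{0s}(\rho)$, write $\tilde\rho=\varphi_s(\rho'')$ with $\rho''\in W_s(\rho)$ and $|s|\leq \tilde\varepsilon$ (the defining parameter of $W_{0s}$ in~\eqref{e:weak-leaves}); by smoothness of the flow and the standard transversality of $H_p$ to $W_s$, both $|s|$ and $d(\rho,\rho'')$ are bounded by $C_0\,d(\rho,\tilde\rho)$ for some $C_0$ depending only on $(M,g)$. Then for every integer $j\geq 0$,
\[
d(\varphi_j(\rho),\varphi_j(\tilde\rho))\leq d(\varphi_j(\rho),\varphi_j(\rho''))+d(\varphi_j(\rho''),\varphi_{j+s}(\rho''))\leq C\,d(\rho,\rho'')+C|s|\leq C_1\,d(\rho,\tilde\rho),
\]
where the first inequality uses part~(1) of Lemma~\ref{l:stun-main} (which gives $d(\varphi_j(\rho),\varphi_j(\rho''))\leq C J^s_j(\rho)\,d(\rho,\rho'')\leq C\,d(\rho,\rho'')$ since $J^s_j\leq 1$ for $j\geq 0$) and the second uses uniform Lipschitz continuity of the time-$s$ map for $|s|\leq \tilde\varepsilon$. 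A symmetric argument, using part~(2) of Lemma~\ref{l:stun-main} with $\varphi_{-j}$ in place of $\varphi_j$, gives the analogous bound $d(\varphi_{-j}(\rho),\varphi_{-j}(\tilde\rho))\leq C_1\,d(\rho,\tilde\rho)$ for every integer $j\geq 0$ whenever $\tilde\rho\in W_{0u}(\rho)$.

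Third, set $\varepsilon_1:=\delta_0/(2C_1)$. For~\eqref{e:blurry-1}, if $\rho\in\mathcal V^-_{\mathbf v}$ and $\tilde\rho\in W_{0s}(\rho)$ with $d(\rho,\tilde\rho)\leq\varepsilon_1$, then for each $j\in\{0,\dots,N_0-1\}$ the point $\varphi_j(\rho)$ lies in $\mathcal V_{v_j}\cap S^*M$, and by the previous paragraph $d(\varphi_j(\rho),\varphi_j(\tilde\rho))<\delta_0$, so $\varphi_j(\tilde\rho)\in\mathcal V_{v_j}^\sharp$, proving $\tilde\rho\in\mathcal V^{\sharp-}_{\mathbf v}$. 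For~\eqref{e:blurry-2}, if $\rho\in\mathcal V^+_{\mathcal Q}$, pick $\mathbf q\in\mathcal Q$ with $\rho\in\mathcal V^+_{\mathbf q}$; for $\tilde\rho\in W_{0u}(\rho)$ the same argument applied to $\varphi_{-j}$ for $j=1,\dots,n$ shows $\tilde\rho\in\mathcal V^{\sharp+}_{\mathbf q}\subset\mathcal V^{\sharp+}_{\mathcal Q}$.

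The argument is essentially routine once the gap constant $\delta_0$ is produced; the only mild wrinkle is ensuring the weak-leaf contraction estimate is uniform in the number of iterates $j$, which is handled cleanly by decomposing $\tilde\rho=\varphi_s(\rho'')$ and applying parts~(1)--(2) of Lemma~\ref{l:stun-main}, so I expect no real obstacle.
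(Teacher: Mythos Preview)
Your proof is correct and follows essentially the same route as the paper's: decompose $\tilde\rho=\varphi_s(\rho'')$ with $\rho''\in W_s(\rho)$, bound $|s|+d(\rho,\rho'')$ by transversality, use the stable contraction (you cite part~(1) of Lemma~\ref{l:stun-main}, the paper cites the coarser~\eqref{e:hyprop-1}) and Lipschitz continuity of the flow to get $d(\varphi_j(\rho),\varphi_j(\tilde\rho))\leq C_1\,d(\rho,\tilde\rho)$ uniformly in $j\geq 0$, then choose $\varepsilon_1$ smaller than the gap between each $\mathcal V_q$ and the complement of $\mathcal V_q^\sharp$ divided by $C_1$. The only cosmetic difference is the order of presentation (you fix the gap first, the paper fixes the contraction constant first).
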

%%%%%%%%%%%%%%%%%%%%%%%%%%%%%%%%%%%%%%%%%%%%%%%%%%%%%%%%%%%%%%%%%%%%%%%%%%%%%%%%
\begin{proof}
It suffices to show that
there exists a constant $C$ depending only on $(M,g)$ such that
for all $\varepsilon_1>0$ and $\rho,\tilde\rho\in S^*M$
\begin{align}
  \label{e:blurry-1.1}
d(\rho,\tilde\rho)\leq \varepsilon_1,\quad
\tilde\rho\in W_{0s}(\rho)
&\quad\Longrightarrow\quad
d(\varphi_t(\rho),\varphi_t(\tilde\rho))\leq C\varepsilon_1\quad\text{for all}\quad t\geq 0;\\
  \label{e:blurry-1.2}
d(\rho,\tilde\rho)\leq \varepsilon_1,\quad
\tilde\rho\in W_{0u}(\rho)
&\quad\Longrightarrow\quad
d(\varphi_t(\rho),\varphi_t(\tilde\rho))\leq C\varepsilon_1\quad\text{for all}\quad t\leq 0.
\end{align}
Indeed, to show~\eqref{e:blurry-1} and~\eqref{e:blurry-2} it suffices
to take $\varepsilon_1$ small enough so that the distance
between $\mathcal V_q\cap S^*M$ and $S^*M\setminus\mathcal V^\sharp_q$
is larger than $C\varepsilon_1$ for all $q\in \{1,2,\dots,Q,\star\}$
(which is possible since $\overline{\mathcal V_q}\subset \mathcal V^\sharp_q$).
Then $\varphi_t(\rho)\in\mathcal V_q\cap S^*M$ and $d(\varphi_t(\rho),\varphi_t(\tilde\rho))\leq C\varepsilon_1$
together imply that $\varphi_t(\tilde\rho)\in \mathcal V_q^{\sharp}$ and it remains to use
the definitions~\eqref{e:V+-}, \eqref{e:V-Q-def}, \eqref{e:sharp-words}, \eqref{e:sharp-words-2}.

We show~\eqref{e:blurry-1.1}, with~\eqref{e:blurry-1.2} proved similarly.
By the definition~\eqref{e:weak-leaves} of $W_{0s}(\rho)$
we have $\tilde\rho=\varphi_r(\rho')$
for some
$\rho'\in W_s(\rho)$ and $r\in [-\tilde\varepsilon,\tilde\varepsilon]$.
Since stable leaves are transversal to the flow lines of $\varphi_t$,
we have
$$
d(\rho',\rho)+|r|\leq C\varepsilon_1.
$$
By~\eqref{e:hyprop-1} there exists $\theta>0$ such that for all $t\geq 0$
\begin{equation}
  \label{e:bear-1}
d(\varphi_t(\rho),\varphi_t(\rho'))\leq Ce^{-\theta t}d(\rho,\rho')\leq C\varepsilon_1.
\end{equation}
On the other hand since $\varphi_t(\tilde\rho)=\varphi_r(\varphi_t(\rho'))$ we have
\begin{equation}
  \label{e:bear-2}
d(\varphi_t(\rho'),\varphi_t(\tilde\rho))\leq C|r|\leq C\varepsilon_1.
\end{equation}
Combining~\eqref{e:bear-1}--\eqref{e:bear-2} we get~\eqref{e:blurry-1.1}.
\end{proof}
%%%%%%%%%%%%%%%%%%%%%%%%%%%%%%%%%%%%%%%%%%%%%%%%%%%%%%%%%%%%%%%%%%%%%%%%%%%%%%%%
Since the stable leaves, the unstable leaves, and the flow trajectories
are transversal to each other, if $\rho,\tilde\rho\in S^*M$
are sufficiently close to each other then
the weak stable leaf $W_{0s}(\rho)$ intersects the unstable leaf
$W_u(\tilde\rho)$, and same is true for the stable leaf $W_s(\rho)$ and the weak unstable leaf $W_{0u}(\tilde\rho)$~-- see~\eqref{e:sma-3}.
This immediately gives
%%%%%%%%%%%%%%%%%%%%%%%%%%%%%%%%%%%%%%%%%%%%%%%%%%%%%%%%%%%%%%%%%%%%%%%%%%%%%%%%
\begin{lemm}
  \label{l:productor}
There exist $C_2\geq 1$, $\varepsilon_2>0$ depending only on $(M,g)$ such that
for each $\rho,\tilde\rho\in S^*M$ with $d(\rho,\tilde\rho)\leq\varepsilon_2$
there exist
\begin{gather}
  \label{e:productor}
\rho'\in W_s(\rho),\
\rho''\in W_u(\tilde\rho),\
r\in \mathbb R\quad\text{such that}\quad
\rho'=\varphi_r(\rho'');\\
  \label{e:productor2}
\max\big\{d(\rho_1,\rho_2)\mid \rho_1,\rho_2\in \{\rho,\tilde\rho,\rho',\rho''\}\big\}
+|r|\leq C_2d(\rho,\tilde\rho).
\end{gather}
\end{lemm}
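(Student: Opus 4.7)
The plan is to exploit uniform transversality of the stable leaves and the weak unstable leaves, which is guaranteed by the Anosov splitting together with the compactness of $S^*M$. At every $\rho\in S^*M$ we have the decomposition $T_\rho(S^*M)=E_s(\rho)\oplus E_u(\rho)\oplus E_0(\rho)$, and by the continuity of this splitting the angles between $E_s$ and $E_u\oplus E_0$ are bounded below by a positive constant depending only on $(M,g)$. Since $W_s(\rho)$ is tangent to $E_s(\rho)$ and $W_{0u}(\tilde\rho)$ is tangent to $E_u(\tilde\rho)\oplus E_0(\tilde\rho)$, these two submanifolds meet transversally at $\rho$ when $\tilde\rho=\rho$, with a uniform angle lower bound.

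Concretely, I would fix smooth unit-speed parametrizations $\gamma^s_\rho:(-\delta,\delta)\to W_s(\rho)$ with $\gamma^s_\rho(0)=\rho$ and $\gamma^u_{\tilde\rho}:(-\delta,\delta)\to W_u(\tilde\rho)$ with $\gamma^u_{\tilde\rho}(0)=\tilde\rho$ (these exist with uniform bounds since stable/unstable leaves and their base points depend continuously on $\rho$ with uniform estimates, see \cite[Theorem~17.4.3]{KaHa}). Consider the map
\[
\Psi_{\rho,\tilde\rho}: (s,u,t)\longmapsto \gamma^s_\rho(s)-\varphi_t(\gamma^u_{\tilde\rho}(u)),
\]
where the subtraction is understood via a fixed smooth atlas on $S^*M$ (or in a local chart around $\rho$, or more invariantly by taking the exponential coordinate representation). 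When $\tilde\rho=\rho$, we have $\Psi_{\rho,\rho}(0,0,0)=0$, and the Jacobian in $(s,u,t)$ evaluates to the determinant of the matrix with columns pointing along $E_s(\rho)$, $E_u(\rho)$, and $E_0(\rho)=\mathbb R H_p(\rho)$. By the Anosov splitting this determinant is nonzero, and by compactness of $S^*M$ bounded below uniformly in $\rho$.

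The implicit function theorem, applied with uniform constants, then yields $\varepsilon_2>0$ and $C_2\geq 1$ depending only on $(M,g)$ such that for every pair $\rho,\tilde\rho\in S^*M$ with $d(\rho,\tilde\rho)\leq\varepsilon_2$ there exists a unique triple $(s,u,r)$ with $|s|+|u|+|r|\leq C d(\rho,\tilde\rho)$ and $\gamma^s_\rho(s)=\varphi_r(\gamma^u_{\tilde\rho}(u))$. Setting $\rho':=\gamma^s_\rho(s)\in W_s(\rho)$ and $\rho'':=\gamma^u_{\tilde\rho}(u)\in W_u(\tilde\rho)$ gives the desired relation $\rho'=\varphi_r(\rho'')$. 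The distance bound \eqref{e:productor2} follows at once, since each of $d(\rho,\rho')$, $d(\tilde\rho,\rho'')$ is controlled by $|s|$, $|u|$ respectively (and hence by $Cd(\rho,\tilde\rho)$), while $d(\rho',\rho'')\leq C|r|$, $d(\rho,\tilde\rho)$ bounds itself, and the remaining pairwise distances are obtained by the triangle inequality.

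The only subtlety is ensuring uniformity of the implicit function theorem across all $\rho,\tilde\rho\in S^*M$; this is where compactness of $S^*M$ combined with the continuous dependence of $E_s,E_u$ on the base point (even just H\"older continuous dependence suffices, since only continuity and a lower bound on the transversality angle are used) enters, and is the only real content of the argument beyond routine calculus.
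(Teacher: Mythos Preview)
Your argument is correct and is essentially the same as the paper's: both rely on the uniform transversality of the stable leaves and the weak unstable leaves coming from the Anosov splitting, with the intersection point producing $\rho'\in W_s(\rho)\cap W_{0u}(\tilde\rho)$ and then $\rho'=\varphi_r(\rho'')$ for some $\rho''\in W_u(\tilde\rho)$. The paper simply invokes this transversality and refers back to the local product structure already established in the proof of Lemma~\ref{l:stun-main} (see~\eqref{e:sma-3}), whereas you spell out the underlying implicit function theorem step explicitly; the content is the same.
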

%%%%%%%%%%%%%%%%%%%%%%%%%%%%%%%%%%%%%%%%%%%%%%%%%%%%%%%%%%%%%%%%%%%%%%%%%%%%%%%%
We now define the sets $\Omega^\pm_k$ from Lemmas~\ref{l:poro+}--\ref{l:poro-}.
Let $\varepsilon_1,\varepsilon_2,C_2$ be the constants from Lemmas~\ref{l:blurry}
and~\ref{l:productor}.
Without loss of generality we may assume that $\varepsilon_1\leq \varepsilon_2$.
We will also assume that $\varepsilon_2$ is small enough depending only on $(M,g)$
in the beginning of the proofs of Lemmas~\ref{l:poro+help}
and~\ref{l:poro-help} below.
Fix finitely many points
$$
\rho_1,\dots,\rho_R\in W_{0u}(\rho_0),
$$
with $R$ depending only on
$(M,g)$ and~$\varepsilon_1$, such that each point in $W_{0u}(\rho_0)$
is $\varepsilon_1\over 2C_2$ close to at least one of the points $\rho_1,\dots,\rho_R$.
%%%%%%%%%%%%%%%%%%%%%%%%%%%%%%%%%%%%%%%%%%%%%%%%%%%%%%%%%%%%%%%%%%%%%%%%%%%%%%%%
\begin{lemm}
  \label{l:Omega-cont}
We have $\Omega^\pm\subset \Omega^\pm_1\cup\dots\cup \Omega^\pm_R$
where for $k=1,\dots, R$
$$
\Omega^+_k:=\eta_1(\varkappa(\Sigma^+_k)),\quad
\Omega^-_k:=y_1(\varkappa(\Sigma^-_k))
$$
and the sets $\Sigma^\pm_k\subset \mathcal V^+_e\cap S^*M$ are defined by
$$
\begin{aligned}
\Sigma^+_k\,&:=\{\rho\in\mathcal V^+_{\mathcal Q}\cap S^*M\mid
d(\rho,\rho_k)\leq \textstyle{\varepsilon_1\over C_2}\},\\
\Sigma^-_k\,&:=\big\{\rho\in \mathcal V^-_{\mathbf v}\cap \mathcal V^+_e\cap S^*M\mid
d(\rho,W_{0u}(\rho_0))\leq C_3h^{1/6},\
d(\rho,\rho_k)\leq \textstyle{\varepsilon_1\over C_2}
\big\}
\end{aligned}
$$
where $C_3$ is a sufficiently large constant depending only on $\mathcal V_1,\mathcal V_\star,C_0$.
\end{lemm}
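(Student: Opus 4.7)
\medskip

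\noindent\textbf{Proof proposal for Lemma~\ref{l:Omega-cont}.}
The plan is to argue separately for $\Omega^+$ and $\Omega^-$, in each case showing that every element of the target set comes from a point $\rho\in S^*M$ that falls into one of the local pieces $\Sigma^\pm_k$. Throughout, the choice of the $\rho_k$ guarantees that every point of $W_{0u}(\rho_0)$ lies within $\varepsilon_1/(2C_2)$ of some $\rho_k$; the strategy is to reduce the general case to this covering property by showing that $\rho$ is itself close to $W_{0u}(\rho_0)$.

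For $\Omega^+$, take any $z\in \Omega^+$; by definition there is some $\rho\in \mathcal V^+_{\mathcal Q}\cap S^*M$ with $z=\eta_1(\varkappa(\rho))$. The hypothesis~\eqref{e:getting-close} yields a point $\rho'\in W_{0u}(\rho_0)$ with $d(\rho,\rho')\leq C_0 h^{2/3}$, and the covering property of the $\rho_k$ provides an index $k$ with $d(\rho',\rho_k)\leq \varepsilon_1/(2C_2)$. The triangle inequality then gives
\begin{equation*}
d(\rho,\rho_k)\leq C_0 h^{2/3}+\frac{\varepsilon_1}{2C_2}\leq \frac{\varepsilon_1}{C_2},
\end{equation*}
provided $h$ is small enough that $C_0 h^{2/3}\leq \varepsilon_1/(2C_2)$. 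Hence $\rho\in\Sigma^+_k$ and so $z=\eta_1(\varkappa(\rho))\in\Omega^+_k$, which gives $\Omega^+\subset \Omega^+_1\cup\dots\cup\Omega^+_R$.

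For $\Omega^-$, take any $y\in\Omega^-$; by definition there is $\rho\in \mathcal V^-_{\mathbf v}\cap \mathcal V^+_e\cap S^*M$ with $y=y_1(\varkappa(\rho))$ and $|\eta_1(\varkappa(\rho))|\leq h^{1/6}$. The only nontrivial step is to deduce from the bound on $\eta_1$ an actual bound on $d(\rho,W_{0u}(\rho_0))$. Since $\rho\in S^*M$, property~(4) of Lemma~\ref{l:stun-straight} gives $\eta_2(\varkappa(\rho))=p(\rho)=1$, and~\eqref{e:regw-1} identifies $\varkappa(W_{0u}(\rho_0))\cap\{\eta_2=1\}=\{\eta_1=0,\eta_2=1\}$. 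Since $\varkappa^{-1}$ has derivatives bounded uniformly in $\rho_0$, there is a constant $C$ (depending only on $(M,g)$) with
\begin{equation*}
d(\rho,W_{0u}(\rho_0))\leq C\,|\eta_1(\varkappa(\rho))|\leq C\,h^{1/6}.
\end{equation*}
Choosing $C_3\geq C$ secures the first defining condition of $\Sigma^-_k$. For the second, pick $\rho''\in W_{0u}(\rho_0)$ with $d(\rho,\rho'')\leq C h^{1/6}$ and then $k$ with $d(\rho'',\rho_k)\leq \varepsilon_1/(2C_2)$; for $h$ small enough so that $Ch^{1/6}\leq \varepsilon_1/(2C_2)$, we obtain $d(\rho,\rho_k)\leq \varepsilon_1/C_2$. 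Thus $\rho\in\Sigma^-_k$ and $y\in\Omega^-_k$, proving $\Omega^-\subset \Omega^-_1\cup\dots\cup\Omega^-_R$.

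The argument is essentially a covering argument, so there is no serious obstacle; the only step that requires attention is the comparison of $|\eta_1|$ with the Riemannian distance to the reference leaf $W_{0u}(\rho_0)$, which reduces to the uniform smoothness of $\varkappa^{-1}$ and the normalization $\eta_2=p$ from Lemma~\ref{l:stun-straight}. The smallness-of-$h$ conditions are absorbed (for the remainder of the paper) into the standing assumption that $h\leq h_0$ for some $h_0>0$ depending on $(M,g)$ and $C_0$.
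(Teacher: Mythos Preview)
Your proof is correct and follows essentially the same approach as the paper's own argument: both reduce to the covering property of the $\rho_k$ on $W_{0u}(\rho_0)$ combined with the observation that any $\rho$ contributing to $\Omega^\pm$ is $o(1)$-close to $W_{0u}(\rho_0)$ (via~\eqref{e:getting-close} for $\Omega^+$, and via the identification $\varkappa(W_{0u}(\rho_0))=\{\eta_1=0,\eta_2=1\}$ for $\Omega^-$). Your version is slightly more explicit in justifying the step $|\eta_1|\leq h^{1/6}\Rightarrow d(\rho,W_{0u}(\rho_0))\leq Ch^{1/6}$ through the uniform derivative bounds on $\varkappa^{-1}$, but otherwise the arguments coincide.
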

%%%%%%%%%%%%%%%%%%%%%%%%%%%%%%%%%%%%%%%%%%%%%%%%%%%%%%%%%%%%%%%%%%%%%%%%%%%%%%%%
\begin{proof}
Recalling the definitions~\eqref{e:Omega+},\eqref{e:Omega-}
of $\Omega^\pm$ we see that it suffices to show the inclusions
\begin{align}
  \label{e:omc-1}
\mathcal V^+_{\mathcal Q}\cap S^*M\ &\subset\ \Sigma^+_1\cup\dots\cup \Sigma^+_R,\\
  \label{e:omc-2}
\mathcal V^-_{\mathbf v}\cap \mathcal V^+_e\cap S^*M\cap \varkappa^{-1}(\{|\eta_1|\leq h^{1/6}\})\ &\subset\
\Sigma^-_1\cup\dots\cup\Sigma^-_R.
\end{align}
We first take arbitrary $\rho\in \mathcal V^+_{\mathcal Q}\cap S^*M$.
By~\eqref{e:getting-close} we have
$d(\rho,W_{0u}(\rho_0))\leq C_0h^{2/3}\leq {\varepsilon_1\over 2C_2}$.
Therefore there exists $k\in \{1,\dots,R\}$ such that
$d(\rho,\rho_k)\leq {\varepsilon_1\over C_2}$.
It follows that $\rho\in\Sigma^+_k$ which gives~\eqref{e:omc-1}.

We next take arbitrary $\rho\in \mathcal V^-_{\mathbf v}\cap \mathcal V^+_e\cap S^*M$
such that $|\eta_1(\varkappa(\rho))|\leq h^{1/6}$.
Since $\varkappa(W_{0u}(\rho_0)\cap U_{\rho_0})=\{\eta_1=0,\ \eta_2=1\}\cap V_{\rho_0}$,
we have
$d(\rho,W_{0u}(\rho_0))\leq C_3h^{1/6}$ for some constant $C_3$.
In particular $d(\rho,W_{0u}(\rho_0))\leq {\varepsilon_1\over 2C_2}$,
so there exists $k\in \{1,\dots,R\}$ such that
$d(\rho,\rho_k)\leq {\varepsilon_1\over C_2}$.
It follows that $\rho\in \Sigma^-_k$ which gives~\eqref{e:omc-2}.
\end{proof}
%%%%%%%%%%%%%%%%%%%%%%%%%%%%%%%%%%%%%%%%%%%%%%%%%%%%%%%%%%%%%%%%%%%%%%%%%%%%%%%%
We are now ready to finish the proofs of Lemmas~\ref{l:poro+}--\ref{l:poro-}.
Using Lemma~\ref{l:Omega-cont} we see that Lemma~\ref{l:poro+} follows from
%%%%%%%%%%%%%%%%%%%%%%%%%%%%%%%%%%%%%%%%%%%%%%%%%%%%%%%%%%%%%%%%%%%%%%%%%%%%%%%%
\begin{lemm}
  \label{l:poro+help}
Let $\nu>0$ be fixed in Lemma~\ref{l:porosity-intersected}.
Then for each $k\in \{1,\dots,R\}$ the set $\Omega^+_k$
is $\nu\over 6$-porous on scales $Ch^\tau$ to~$C^{-1}$.
\end{lemm}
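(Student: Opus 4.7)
The plan is to reduce the porosity of $\Omega^+_k$ to that of $\gamma^{-1}(\mathcal V^{\sharp+}_{\mathcal Q})$ for a suitably chosen stable interval $\gamma$, using Lemma~\ref{l:porosity-intersected}(2) as the source of porosity. First I would fix a stable interval $\gamma : I_0 \to S^*M$ with $\gamma(0) = \rho_k$ and $\gamma(I_0) \subset W_s(\rho_k)$ of length comparable to $\varepsilon_1$, and consider the map $\psi : I_0 \to \mathbb R$ defined by $\psi(s) := \eta_1(\varkappa(\gamma(s)))$. Since the stable direction is transverse to the weak unstable distribution $E_0 \oplus E_u$ and $\varkappa$ straightens $W_{0u}(\rho_0)$ onto $\{\eta_1 = 0, \eta_2 = 1\}$, a continuity argument (using that $\rho_k \in W_{0u}(\rho_0)$ is close to $\rho_0$) gives $|\psi'|$ bounded above and below away from zero uniformly in $k$; so $\psi$ is a $C^\infty$ diffeomorphism onto its image with uniform $C^2$ bounds, and the hypotheses of Lemma~\ref{l:porous-map} hold with constants depending only on $(M,g)$.

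Next I would establish the key inclusion
\begin{equation*}
\Omega^+_k \ \subset\ \psi\big(\gamma^{-1}(\mathcal V^{\sharp+}_{\mathcal Q})\big) + [-Ch, Ch].
\end{equation*}
Given $\rho \in \Sigma^+_k$, Lemma~\ref{l:productor} applied to the pair $(\rho_k, \rho)$ produces $\rho' \in W_s(\rho_k)$ and $\rho'' \in W_u(\rho)$ with $\rho' = \varphi_r(\rho'')$ and all pairwise distances bounded by $C_2 \cdot (\varepsilon_1/C_2) = \varepsilon_1$. In particular $\rho' \in W_{0u}(\rho)$, so by Lemma~\ref{l:blurry}(2) we get $\rho' \in \mathcal V^{\sharp+}_{\mathcal Q}$; writing $\rho' = \gamma(s)$ for the appropriate $s \in I_0$ (permissible since $d(\rho', \rho_k) \leq \varepsilon_1$) and invoking Lemma~\ref{l:close-unstable} (applicable since $\rho \in \mathcal V^+_{\mathcal Q} \cap S^*M$ and $\rho' \in W_{0u}(\rho) \cap U_{\rho_0}$), we obtain $|\eta_1(\varkappa(\rho)) - \psi(s)| \leq Ch$, which is the desired inclusion.

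Finally I would apply the porosity transfer machinery in sequence. Lemma~\ref{l:porosity-intersected}(2) yields $\nu$-porosity of $\gamma^{-1}(\mathcal V^{\sharp+}_{\mathcal Q})$ on scales $Ch^\tau$ to $1$. Lemma~\ref{l:porous-map} then pushes this forward to $\nu/2$-porosity of $\psi(\gamma^{-1}(\mathcal V^{\sharp+}_{\mathcal Q}))$ on scales $Ch^\tau$ to $C^{-1}$. Lemma~\ref{l:porous-nbhd}, applied with $\alpha_2 = Ch$ (much smaller than $h^\tau$ since $\tau < 1$, so for small $h$ the condition $\alpha_2 \leq \tfrac{\nu}{6}\alpha_1$ is automatic), transfers this porosity to the $Ch$-neighborhood at the cost of one more factor of $3$, giving the claimed $\nu/6$-porosity of $\Omega^+_k$ on scales $Ch^\tau$ to $C^{-1}$.

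The hard part will be arranging the constants in the transversality estimate of the first step so that they depend only on $(M,g), C_0$ and not on the specific reference point $\rho_k$; since $\rho_1,\dots,\rho_R$ form a fixed finite set inside $W_{0u}(\rho_0)$ with $R$ depending only on $(M,g)$, this reduces to a finite compactness argument. One also has to verify that the portion of $W_s(\rho_k)$ parametrized by $\gamma$ contains every $\rho'$ produced by Lemma~\ref{l:productor} from a point $\rho \in \Sigma^+_k$, which is immediate from the uniform bound $d(\rho', \rho_k) \leq \varepsilon_1$ and our choice of $\gamma$.
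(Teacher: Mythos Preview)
Your proposal is correct and follows essentially the same argument as the paper: fix a stable interval through $\rho_k$, show that $\eta_1\circ\varkappa$ restricted to it is a diffeomorphism with uniform $C^2$ bounds, prove the inclusion $\Omega^+_k\subset\psi(\gamma^{-1}(\mathcal V^{\sharp+}_{\mathcal Q}))+[-Ch,Ch]$ via Lemmas~\ref{l:productor}, \ref{l:blurry}, and~\ref{l:close-unstable}, then chain Lemmas~\ref{l:porosity-intersected}, \ref{l:porous-map}, and~\ref{l:porous-nbhd}. One small correction to your last paragraph: the points $\rho_1,\dots,\rho_R$ are not a \emph{fixed} finite set, since they lie on $W_{0u}(\rho_0)$ and $\rho_0$ itself varies with the cluster; the uniformity of the transversality constant therefore comes not from finiteness but from the uniform bounds on $\varkappa_{\rho_0}$ in Lemma~\ref{l:stun-straight} together with the uniform angle between $E_s$ and $E_0\oplus E_u$ on the compact manifold $S^*M$.
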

%%%%%%%%%%%%%%%%%%%%%%%%%%%%%%%%%%%%%%%%%%%%%%%%%%%%%%%%%%%%%%%%%%%%%%%%%%%%%%%%
\begin{proof}
Without loss of generality we may assume that $\Sigma^+_k\neq\emptyset$.
Then $\rho_k$ lies in the ${\varepsilon_1\over C_2}\leq\varepsilon_2$ sized neighborhood
of $\mathcal V^+_e\cap S^*M\Subset U_{\rho_0}$.
Let $\gamma_k^s:[-C\varepsilon_2,C\varepsilon_2]\to S^*M$ be a stable interval
(see Definition~\ref{d:stun-interval}) such that
$\gamma_k^s(0)=\rho_k$. Here $C$ is chosen large enough (depending only on~$(M,g)$) so that
every point $\rho'\in W_s(\rho_k)$ with $d(\rho_k,\rho')\leq \varepsilon_2$
lies in~$\gamma_k^s$.
We may choose $\varepsilon_2$ small enough
so that
$\gamma_k^s\subset U_{\rho_0}$.

Since $E_s(\rho_k)\subset T_{\rho_k}(S^*M)$ is transversal to $T_{\rho_k}W_{0u}(\rho_0)$
and
(recalling that $\varkappa$ maps $S^*M$ to $\{\eta_2=1\}$
and $W_{0u}(\rho_0)$ to $\{\eta_1=0,\ \eta_2=1\}$)
$$
d\varkappa(\rho_k)(T_{\rho_k}(S^*M))=\{d\eta_2=0\},\quad
d\varkappa(\rho_k)(T_{\rho_k}W_{0u}(\rho_0))=\{d\eta_1=d\eta_2=0\}
$$
we have $d(\eta_1\circ\varkappa)(\rho_k)\dot \gamma_k^s(0)\neq 0$.
Therefore if $\varepsilon_2$ is small enough depending only on $(M,g)$
then the map
$$
\psi_k^s:=\eta_1\circ\varkappa\circ\gamma_k^s:[-C\varepsilon_2,C\varepsilon_2]\to\mathbb R
$$
is a diffeomorphism onto its image. We extend $\psi_k^s$ to a global diffeomorphism
$\mathbb R\to\mathbb R$ so that it satisfies the derivative bounds~\eqref{e:psi-derby}
with some constant $C_1$ depending only on $(M,g)$. Define
$$
\widetilde\Omega^+_k:=\psi_k^s\big((\gamma_k^s)^{-1}(\mathcal V^{\sharp+}_{\mathcal Q})\big)
=\eta_1(\varkappa(\gamma_k^s\cap \mathcal V^{\sharp+}_{\mathcal Q}))\subset\mathbb R.
$$
Then by Lemmas~\ref{l:porosity-intersected} and~\ref{l:porous-map}
the set $\widetilde\Omega^+_k$ is $\nu\over 2$-porous on scales
$Ch^\tau$ to~$C^{-1}$.

%%%%%%%%%%%%%%%%%%%%%%%%%%%%%%%%%%%%%%%%%%%%%%%%%%%%%%%%%%%%%%%%%%%%%%%%%%%%%%%%
\begin{figure}
\includegraphics{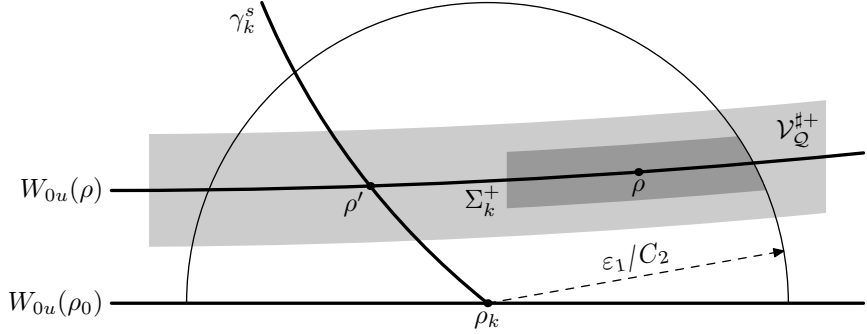}
\caption{An illustration of the proof of Lemma~\ref{l:poro+help}. We use the
coordinates provided by the diffeomorphism $\varkappa$, with $y_1$ the horizontal
coordinate and $\eta_1$ the vertical one; we restrict to $S^*M=\{\eta_2=1\}$
and suppress the flow direction $\partial_{y_2}$ (thus $\rho',\rho''$ are mapped to the same point).
The darker shaded set is $\Sigma^+_k$ and the lighter shaded set is $\mathcal V^{\sharp+}_{\mathcal Q}$.}
\label{f:poro+help}
\end{figure}
%%%%%%%%%%%%%%%%%%%%%%%%%%%%%%%%%%%%%%%%%%%%%%%%%%%%%%%%%%%%%%%%%%%%%%%%%%%%%%%%
We now claim that
\begin{equation}
  \label{e:poro+in}
\Omega^+_k\subset \widetilde\Omega^+_k+[-Ch,Ch].
\end{equation}
Indeed, take arbitrary $\rho\in \Sigma^+_k$.
Then $d(\rho,\rho_k)\leq {\varepsilon_1\over C_2}\leq \varepsilon_2$,
so by Lemma~\ref{l:productor} there exist
$$
\rho'\in W_s(\rho_k),\ \rho''\in W_u(\rho),\ r\in[-\varepsilon_1,\varepsilon_1]\quad\text{such that}\quad
\rho'=\varphi_r(\rho'').
$$
(See Figure~\ref{f:poro+help}.)
By~\eqref{e:productor2} we have
$d(\rho_k,\rho')\leq \varepsilon_1\leq\varepsilon_2$, thus $\rho'\in\gamma_k^s$.
We also have $d(\rho,\rho')\leq\varepsilon_1$,
$\rho'\in W_{0u}(\rho)$, and $\rho\in \mathcal V^+_{\mathcal Q}\cap S^*M$,
which by Lemma~\ref{l:blurry} imply that $\rho'\in \mathcal V^{\sharp+}_{\mathcal Q}$.
Therefore
\begin{equation}
  \label{e:poro+nt}
\eta_1(\varkappa(\rho'))\in \widetilde\Omega^+_k.
\end{equation}
On the other hand by Lemma~\ref{l:close-unstable}
we have
$$
|\eta_1(\varkappa(\rho))-\eta_1(\varkappa(\rho'))|\leq Ch.
$$
Since $\Omega^+_k=\eta_1(\varkappa(\Sigma^+_k))$, together with~\eqref{e:poro+nt} this gives~\eqref{e:poro+in}.

To show that $\Omega^+_k$ is $\nu\over 6$-porous on scales $Ch^\tau$ to~$C^{-1}$
it now remains to use~\eqref{e:poro+in}, Lemma~\ref{l:porous-nbhd},
and the previously established porosity of $\widetilde\Omega^+_k$.
\end{proof}
%%%%%%%%%%%%%%%%%%%%%%%%%%%%%%%%%%%%%%%%%%%%%%%%%%%%%%%%%%%%%%%%%%%%%%%%%%%%%%%%
Finally, using Lemma~\ref{l:Omega-cont} we see that Lemma~\ref{l:poro-} follows from
%%%%%%%%%%%%%%%%%%%%%%%%%%%%%%%%%%%%%%%%%%%%%%%%%%%%%%%%%%%%%%%%%%%%%%%%%%%%%%%%
\begin{lemm}
  \label{l:poro-help}
Let $\nu>0$ be fixed in Lemma~\ref{l:porosity-intersected}.
Then for each $k\in \{1,\dots,R\}$ the set $\Omega^-_k$
is $\nu\over 6$-porous on scales $Ch^{1/(6\Lambda)}$ to~$C^{-1}$.
\end{lemm}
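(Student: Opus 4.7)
The plan is to run the proof of Lemma~\ref{l:poro+help} with the roles of stable and unstable directions swapped, replacing the stable interval through $\rho_k$ by an unstable interval $\gamma_k^u\colon[-C\varepsilon_2,C\varepsilon_2]\to S^*M$ with $\gamma_k^u(0)=\rho_k$. Since $d\varkappa(\rho_0)E_u(\rho_0)=\mathbb R\partial_{y_1}$ by Lemma~\ref{l:stun-straight}(3), for $\varepsilon_0$ small the map $\psi_k^u:=y_1\circ\varkappa\circ\gamma_k^u$ is a diffeomorphism onto its image in a neighborhood of~$0$; extending it to a global $C^2$ diffeomorphism of $\mathbb R$ satisfying the hypotheses of Lemma~\ref{l:porous-map} with a universal constant, and setting
\[
\widetilde\Omega^-_k:=\psi_k^u\bigl((\gamma_k^u)^{-1}(\mathcal V^{\sharp-}_{\mathbf v})\bigr)=y_1\bigl(\varkappa(\gamma_k^u\cap\mathcal V^{\sharp-}_{\mathbf v})\bigr),
\]
we obtain by Lemmas~\ref{l:porosity-intersected}(1) and~\ref{l:porous-map} that $\widetilde\Omega^-_k$ is $\nu/2$-porous on scales $Ch^{1/(6\Lambda)}$ to $C^{-1}$.

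The heart of the proof is to show the inclusion $\Omega^-_k\subset\widetilde\Omega^-_k+[-Ch^{1/6},Ch^{1/6}]$, after which $\nu/6$-porosity of $\Omega^-_k$ on the claimed scales follows from Lemma~\ref{l:porous-nbhd} combined with the elementary inequality $h^{1/6}\leq h^{1/(6\Lambda)}$ valid for $\Lambda\geq 1$. Fix $\rho\in\Sigma^-_k$; by Lemma~\ref{l:stun-straight}, parts~(4) and~(6)--(8), the defining bound $d(\rho,W_{0u}(\rho_0))\leq C_3h^{1/6}$ translates into $\varkappa(\rho)=(y_1^\rho,y_2^\rho,\eta_1^\rho,1)$ with $|\eta_1^\rho|\leq Ch^{1/6}$. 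I construct the replacement point $\rho^{**}$ in two steps: first follow the strong stable leaf $W_s(\rho)$ from $\rho$ to its (unique, locally defined) intersection point $\rho^*$ with the central weak unstable leaf $\varkappa^{-1}(\{\eta_1=0,\eta_2=1\})=W_{0u}(\rho_0)\cap U_{\rho_0}$; second, flow $\rho^*$ by a small time $s$ to land on $W_u(\rho_k)$, setting $\rho^{**}:=\varphi_s(\rho^*)$.

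The first step is well defined for $\varepsilon_0$ small: Lipschitz continuity of $E_s$ (a consequence of the $C^{3/2}$ regularity of the stable line field, symmetric to the one in~\cite{HK90} cited in Lemma~\ref{l:stun-straight}) together with the normalization $E_s(\rho_0)=\mathbb R\partial_{\eta_1}$ imply that on a small neighborhood of $\rho_0$ the unit tangent to $W_s$ has $\eta_1$-component bounded below by $\tfrac12$ and $y_1,y_2$-components of size $O(\varepsilon_0)$; integrating along the leaf gives arclength from $\rho$ to $\rho^*$ at most $2|\eta_1^\rho|=O(h^{1/6})$ and a total $y_1$-displacement of at most $C\varepsilon_0\cdot|\eta_1^\rho|=O(h^{1/6})$. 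The second step is well defined because both $\rho^*$ and $W_u(\rho_k)$ lie in the $2$-dimensional weak unstable leaf $W_{0u}(\rho_0)$, so the flow orbit of $\rho^*$ meets $W_u(\rho_k)$ transversally at a unique nearby point, with $|s|\leq Cd(\rho,\rho_k)\leq C\varepsilon_1$. Crucially, since $d\varkappa$ sends $H_p$ to $\partial_{y_2}$ by Lemma~\ref{l:stun-straight}(4), the flow preserves $y_1$, giving $y_1(\varkappa(\rho^{**}))=y_1(\varkappa(\rho^*))=y_1^\rho+O(h^{1/6})$.

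To conclude I note that $\rho^{**}\in W_u(\rho_k)\subset\gamma_k^u$ for $\varepsilon_2$ large enough; that $\rho^{**}\in W_{0s}(\rho)$, because $\rho^*\in W_s(\rho)\subset W_{0s}(\rho)$ and $W_{0s}(\rho)$ is flow-invariant; and that $d(\rho,\rho^{**})\leq O(h^{1/6})+C\varepsilon_1\leq\varepsilon_1$ when $\varepsilon_1$ is chosen small relative to the constants above. Lemma~\ref{l:blurry}\,\eqref{e:blurry-1} then yields $\rho^{**}\in\mathcal V^{\sharp-}_{\mathbf v}$, whence $y_1(\varkappa(\rho^{**}))\in\widetilde\Omega^-_k$ and the inclusion is proved. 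The main obstacle is the sharp $y_1$-control in Step~1: the naive stable holonomy from $\rho$ to a point on $\gamma_k^u$ along $W_{0s}(\rho)$ shifts $y_1$ by a non-$h$-small amount, so one cannot just intersect $W_{0s}(\rho)\cap W_u(\rho_k)$ as in Lemma~\ref{l:poro+help}. The detour through the strong stable leaf into the slab $\{|\eta_1|\leq Ch^{1/6}\}$ is exactly what converts the $O(\varepsilon_0)$ slope of the stable direction into an $O(\varepsilon_0 h^{1/6})$ displacement in $y_1$, and this small-displacement bound is what drives the whole lemma.
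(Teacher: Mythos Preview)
Your proof is correct and follows essentially the same route as the paper's. Your pair $(\rho^*,\rho^{**})$ is exactly the paper's $(\rho',\rho'')$: the paper obtains them in one stroke by invoking Lemma~\ref{l:productor} with $(\rho,\rho_k)$, which yields $\rho'\in W_s(\rho)$, $\rho''\in W_u(\rho_k)$, $\rho'=\varphi_r(\rho'')$, and the bound $d(\rho,\rho'')\leq C_2\,d(\rho,\rho_k)\leq\varepsilon_1$; you rebuild this by hand.

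One simplification worth noting: your slope analysis of the stable direction is more than you need. Since $\rho^*\in W_s(\rho)\cap W_{0u}(\rho_0)$ and these leaves are uniformly transversal, the hypothesis $d(\rho,W_{0u}(\rho_0))\leq C_3h^{1/6}$ already gives $d(\rho,\rho^*)\leq Ch^{1/6}$ directly, and then $|y_1(\varkappa(\rho))-y_1(\varkappa(\rho^*))|\leq Ch^{1/6}$ by Lipschitz continuity of $\varkappa$; no need to track the $O(\varepsilon_0)$ size of the $y_1$-component of $E_s$. This is how the paper argues. Your closing commentary is also slightly off: the point $\rho^{**}$ \emph{is} the intersection $W_{0s}(\rho)\cap W_u(\rho_k)$, and the paper uses it too; the trick (which you carry out correctly) is to factor the path $\rho\to\rho^{**}$ through $\rho^*$ so that one leg is short and the other is along the flow, hence $y_1$-preserving.
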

%%%%%%%%%%%%%%%%%%%%%%%%%%%%%%%%%%%%%%%%%%%%%%%%%%%%%%%%%%%%%%%%%%%%%%%%%%%%%%%%
\begin{proof}
Without loss of generality we may assume that $\Sigma^-_k\neq\emptyset$.
Then $\rho_k$ lies in the ${\varepsilon_1\over C_2}\leq\varepsilon_2$ sized neighborhood
of $\mathcal V^+_e\cap S^*M\Subset U_{\rho_0}$.
Let $\gamma_k^u:[-C\varepsilon_2,C\varepsilon_2]\to S^*M$ be an unstable interval
(see Definition~\ref{d:stun-interval}) such that
$\gamma_k^u(0)=\rho_k$. Here $C$ is chosen large enough
(depending only on~$(M,g)$) so that
every point $\rho''\in W_u(\rho_k)$ with $d(\rho_k,\rho'')\leq \varepsilon_2$
lies in~$\gamma_k^u$.
We may choose $\varepsilon_2$ small enough
so that
$\gamma_k^u\subset U_{\rho_0}$.

%%%%%%%%%%%%%%%%%%%%%%%%%%%%%%%%%%%%%%%%%%%%%%%%%%%%%%%%%%%%%%%%%%%%%%%%%%%%%%%%
\begin{figure}
\includegraphics{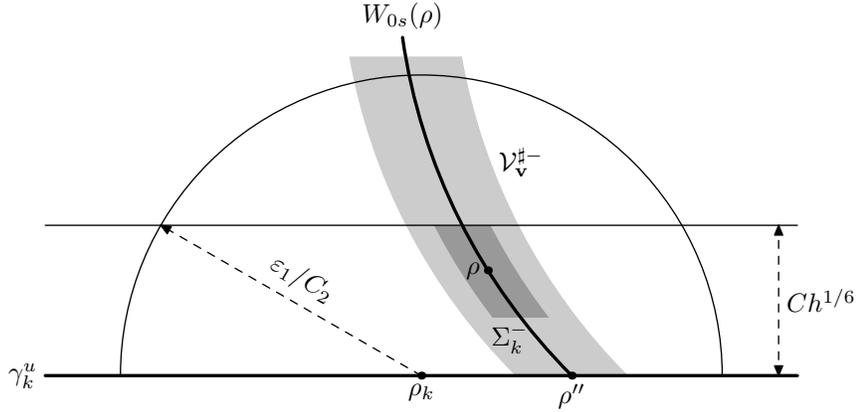}
\caption{An illustration of the proof of Lemma~\ref{l:poro-help},
following the same convention as Figure~\ref{f:poro+help}.
The darker shaded set is $\Sigma^-_k$
and the lighter shaded set is $\mathcal V^{\sharp-}_{\mathbf v}$.}
\label{f:poro-help}
\end{figure}
%%%%%%%%%%%%%%%%%%%%%%%%%%%%%%%%%%%%%%%%%%%%%%%%%%%%%%%%%%%%%%%%%%%%%%%%%%%%%%%%

Since $\varkappa$ is a symplectomorphism and $p=\eta_2\circ\varkappa$
by part~(4) of Lemma~\ref{l:stun-straight},
$\varkappa$ maps the Hamiltonian field $H_p$ into $\partial_{y_2}$.
Since $E_u(\rho_k)$ is transversal to $H_p$
and tangent to $W_{0u}(\rho_0)$, which is mapped by $\varkappa$
to $\{\eta_1=0,\ \eta_2=1\}$, we have
$d(y_1\circ\varkappa)(\rho_k)\dot\gamma^u_k(0)\neq 0$. Therefore
if $\varepsilon_2$ is small enough depending only on $(M,g)$ then
the map
$$
\psi^u_k:=y_1\circ\varkappa\circ \gamma^u_k:[-C\varepsilon_2,C\varepsilon_2]\to\mathbb R
$$
is a diffeomorphism onto its image. We extend $\psi^u_k$ to a global diffeomorphism
similarly to the proof of Lemma~\ref{l:poro+help} and define
$$
\widetilde\Omega^-_k:=\psi^u_k\big((\gamma^u_k)^{-1}(\mathcal V^{\sharp-}_{\mathbf v})\big)
=y_1(\varkappa(\gamma^u_k\cap \mathcal V^{\sharp-}_{\mathbf v}))\subset\mathbb R.
$$
Then by Lemmas~\ref{l:porosity-intersected} and~\ref{l:porous-map}
the set $\widetilde\Omega^-_k$ is $\nu\over 2$-porous on scales $Ch^{1/(6\Lambda)}$ to $C^{-1}$.

We now claim that
\begin{equation}
  \label{e:poro-in}
\Omega^-_k\subset \widetilde\Omega^-_k+[-Ch^{1/6},Ch^{1/6}].  
\end{equation}
Indeed, take arbitrary $\rho\in \Sigma^-_k$. Then $d(\rho,\rho_k)\leq {\varepsilon_1\over C_2}\leq\varepsilon_2$, so by Lemma~\ref{l:productor} there exist
$$
\rho'\in W_s(\rho),\
\rho''\in W_u(\rho_k),\
r\in [-\varepsilon_1,\varepsilon_1]\quad\text{such that}\quad
\rho'=\varphi_r(\rho'').
$$
(See Figure~\ref{f:poro-help}.)
By~\eqref{e:productor2} we have $d(\rho_k,\rho'')\leq\varepsilon_1\leq\varepsilon_2$,
thus $\rho''\in \gamma^u_k$. We also have
$d(\rho,\rho'')\leq\varepsilon_1$,
$\rho''\in W_{0s}(\rho)$, and
$\rho\in \mathcal V^-_{\mathbf v}\cap S^*M$, which by Lemma~\ref{l:blurry} imply
that $\rho''\in \mathcal V^{\sharp-}_{\mathbf v}$. Therefore
\begin{equation}
  \label{e:poro-nt}
y_1(\varkappa(\rho''))\in \widetilde \Omega^-_k.
\end{equation}
Since $d(\rho,W_{0u}(\rho_0))\leq C_3h^{1/6}$
and $\rho'\in W_{0u}(\rho_0)\cap W_s(\rho)$, we have
$d(\rho,\rho')\leq Ch^{1/6}$. We also have
$y_1(\varkappa(\rho'))=y_1(\varkappa(\rho''))$. It follows that
$$
|y_1(\varkappa(\rho))-y_1(\varkappa(\rho''))|\leq Ch^{1/6}.
$$
Since $\Omega^-_k=y_1(\varkappa(\Sigma^-_k))$, together with~\eqref{e:poro-nt} this gives~\eqref{e:poro-in}.

To show that $\Omega^-_k$ is $\nu\over 6$-porous on scales $Ch^{1/(6\Lambda)}$ to~$C^{-1}$
it remains to use~\eqref{e:poro-in}, Lemma~\ref{l:porous-nbhd},
and the previously established porosity of $\widetilde\Omega^-_k$.
\end{proof}
%%%%%%%%%%%%%%%%%%%%%%%%%%%%%%%%%%%%%%%%%%%%%%%%%%%%%%%%%%%%%%%%%%%%%%%%%%%%%%%%

%%%%%%%%%%%%%%%%%%%%%%%%%%%%%%%%%%%%%%%%%%%%%%%%%%%%%%%%%%%%%%%%%%%%%%%%%%%%%%%%
\subsubsection{Application of the fractal uncertainty principle}
\label{s:fup-applied}

We now use the fractal uncertainty principle (in the form given by Proposition~\ref{l:fup-2})
and the porosity statements proved in Lemmas~\ref{l:poro+}--\ref{l:poro-}
to establish an uncertainty principle for neighborhoods
of the right-hand sides of~\eqref{e:Omega+con}--\eqref{e:Omega-con}.
Recall the sets $\Omega^\pm\subset\mathbb R$ from~\eqref{e:Omega+},\eqref{e:Omega-}.
As before, denote by $\Omega^\pm(\alpha):=\Omega^\pm+[-\alpha,\alpha]$ the
$\alpha$-neighborhood of~$\Omega^\pm$. 
%%%%%%%%%%%%%%%%%%%%%%%%%%%%%%%%%%%%%%%%%%%%%%%%%%%%%%%%%%%%%%%%%%%%%%%%%%%%%%%%
\begin{lemm}
\label{l:2D-FUP}
Define
the following subsets of $\mathbb R^2$: 
\begin{align}
  \label{e:Upsilon+def}
\Upsilon^+:=\Big\{(\eta_1,\eta_2)\,\Big|\, {1\over 4}\leq \eta_2\leq 4,\
{\eta_1\over \eta_2}\in\Omega^+(h^\tau) \Big\},\\
  \label{e:Upsilon-def}
\Upsilon^-:=\big\{(y_1,y_2)\mid y_1\in\Omega^-(h^{1/6})\big\}.
\end{align}
Then there exists
$\beta>0$ depending only on~$\mathcal V_1,\mathcal V_\star$ such that
\begin{equation}
\label{e:2D-FUP}
\big\| \indic_{\Upsilon^-}(y) \indic_{\Upsilon^+}(hD_y) \big\|_{L^2(\mathbb R^2)\to L^2(\mathbb R^2)}
\leq Ch^{\beta}.
\end{equation}
\end{lemm}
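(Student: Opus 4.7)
The plan is to reduce the $2$D fractal uncertainty principle to the $1$D version of Proposition~\ref{l:fup-2} by slicing along the $\eta_2$ variable. Applying the unitary partial semiclassical Fourier transform $\mathcal F_{h,y_2\to\eta_2}$ on $L^2(\mathbb R^2)$, the multiplication operator $\indic_{\Upsilon^-}(y)=\indic_{\Omega^-(h^{1/6})}(y_1)$ is unchanged, while the Fourier multiplier $\indic_{\Upsilon^+}(hD_y)$ becomes, at fixed~$\eta_2$, the $y_1$-Fourier multiplier with symbol $\indic_{[1/4,4]}(\eta_2)\,\indic_{\eta_2\Omega^+(h^\tau)}(\eta_1)$. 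The operator of interest therefore decomposes as a direct integral, and the $2$D norm equals
\begin{equation*}
\sup_{\eta_2\in[1/4,4]}\big\|\indic_{\Omega^-(h^{1/6})}(y_1)\cdot \mathcal F_h^{-1}\indic_{\eta_2\Omega^+(h^\tau)}(\eta_1)\mathcal F_h\big\|_{L^2(\mathbb R)\to L^2(\mathbb R)}.
\end{equation*}
Taking the adjoint of this $1$D operator and conjugating by the unitary~$\mathcal F_h$ puts the norm into the precise form $\|\indic_{\eta_2\Omega^+(h^\tau)}\mathcal F_h\indic_{\Omega^-(h^{1/6})}\|$ covered by Proposition~\ref{l:fup-2}.

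Next I would invoke Lemmas~\ref{l:poro+} and~\ref{l:poro-} to write $\Omega^\pm\subset\Omega^\pm_1\cup\cdots\cup\Omega^\pm_R$ with each piece $\nu$-porous (where $R,\nu$ depend only on $\mathcal V_1,\mathcal V_\star$), and split the operator into an $R^2$-fold sum using the pointwise bounds $\indic_{\Omega^\pm(\alpha)}\le \sum_k\indic_{\Omega^\pm_k(\alpha)}$ (via a disjointification of the covers, under which porosity is preserved since subsets of porous sets remain porous). For each pair $(j,k)$ I would verify the porosity hypotheses needed for Proposition~\ref{l:fup-2}. Lemma~\ref{l:porous-nbhd} applied to $\Omega^-_j$ with $\alpha_2=h^{1/6}\ll h^{1/(6\Lambda)}$ gives $\tfrac{\nu}{3}$-porosity of $\Omega^-_j(h^{1/6})$ on scales $Ch^{1/(6\Lambda)}$ to~$C^{-1}$. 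For $\eta_2\Omega^+_k(h^\tau)$, first apply Lemma~\ref{l:porous-nbhd} to pass from $\Omega^+_k$ to $\Omega^+_k(h^\tau)$, then apply Lemma~\ref{l:porous-map} to the linear diffeomorphism $\psi(\xi)=\eta_2\xi$, whose $C^1$ and $C^2$ norms are bounded uniformly in $\eta_2\in[1/4,4]$; this yields $\tfrac{\nu}{6}$-porosity on scales $Ch^\tau$ to~$C^{-1}$, uniformly in $\eta_2$.

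With these porosity exponents, Proposition~\ref{l:fup-2} applies with $\gamma_0^-=\tau$, $\gamma_1^-=0$, $\gamma_0^+=1/(6\Lambda)$, $\gamma_1^+=0$. Using the definition $\tau=1-1/(10\Lambda)$ from~\eqref{e:tau-delta-def}, the overlap parameter from~\eqref{e:fup-2-gamma} is
\begin{equation*}
\gamma=\min(\tau,1)-\max(0,1-\tfrac{1}{6\Lambda})=\tau-\bigl(1-\tfrac{1}{6\Lambda}\bigr)=\tfrac{1}{6\Lambda}-\tfrac{1}{10\Lambda}=\tfrac{1}{15\Lambda}>0,
\end{equation*}
so each of the $R^2$ pairwise contributions is bounded by $Ch^{\gamma\beta(\nu/6)}$. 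Summing over pairs and taking the supremum over $\eta_2\in[1/4,4]$ yields~\eqref{e:2D-FUP} with $\beta$ depending only on $\mathcal V_1,\mathcal V_\star$. The main subtlety, rather than a genuine obstacle, lies in the interplay of taking the $h^\tau$-neighborhood with the $\eta_2$-dilation and in verifying that the tight inequality $\tau+1/(6\Lambda)>1$ built into the definition~\eqref{e:tau-delta-def} of $\tau$ is precisely what keeps $\gamma$ positive; this is the quantitative heart of the argument, directly reflecting the overlap condition designed into the choice of propagation times $N_0$ and~$N_1$.
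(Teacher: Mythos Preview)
Your argument is essentially the paper's proof: the paper conjugates by the full (rather than partial) semiclassical Fourier transform to reach the same direct-integral reduction over $\eta_2$, then applies Lemmas~\ref{l:poro+}--\ref{l:poro-}, Lemma~\ref{l:porous-nbhd}, and Proposition~\ref{l:fup-2} exactly as you do (handling the dilation by $\eta_2$ directly from Definition~\ref{d:porous} instead of Lemma~\ref{l:porous-map}). One cosmetic slip: since the porosity of $\Omega^\pm_k$ is only on scales up to $C^{-1}<1$, you should take $\gamma_1^\pm=\epsilon>0$ small rather than $0$ (the paper uses $\epsilon=1/(60\Lambda)$, giving $\gamma=1/(30\Lambda)$); also note that in your displayed computation of $\gamma$ you inadvertently swapped $\gamma_0^+$ and $\gamma_0^-$ relative to your stated assignment, though the outcome is unchanged.
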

%%%%%%%%%%%%%%%%%%%%%%%%%%%%%%%%%%%%%%%%%%%%%%%%%%%%%%%%%%%%%%%%%%%%%%%%%%%%%%%%
\begin{proof}
1. Put
$\widehat\Omega^-:=\Omega^-(h^{1/6})$,
$\widehat\Omega^+:=\Omega^+(h^\tau)$.
We first show that
\begin{equation}
  \label{e:2DF-1}
\big\| \indic_{\Upsilon^-}(y) \indic_{\Upsilon^+}(hD_y) \big\|_{L^2(\mathbb R^2)\to L^2(\mathbb R^2)}
\leq \sup_{\eta_2\in [{1\over 4},4]}
\|\indic_{\widehat\Omega^-}(hD_{\eta_1})\indic_{-\eta_2\widehat\Omega^+}(\eta_1)\|_{L^2(\mathbb R)\to L^2(\mathbb R)}.
\end{equation}
Indeed, conjugating by the semiclassical Fourier transform we see that
$$
\big\| \indic_{\Upsilon^-}(y) \indic_{\Upsilon^+}(hD_y) \big\|_{L^2(\mathbb R^2)\to L^2(\mathbb R^2)}
=
\big\| \indic_{\Upsilon^-}(hD_\eta)\indic_{\Upsilon^+}(-\eta)  \big\|_{L^2(\mathbb R^2)\to L^2(\mathbb R^2)}.
$$
Now take
$$
f\in \CIc(\IR^2),\quad
g:=\indic_{\Upsilon^-}(hD_\eta)\indic_{\Upsilon^+}(-\eta) f.
$$
For each $\eta_2\in\mathbb R$ define the functions $f_{\eta_2},g_{\eta_2}\in L^2(\mathbb R)$ by
$f_{\eta_2}(\eta_1):=f(\eta_1,-\eta_2)$,
$g_{\eta_2}(\eta_1):=g(\eta_1,-\eta_2)$.
Then
$$
g_{\eta_2}=\begin{cases}
\indic_{\widehat\Omega^-}(hD_{\eta_1})\indic_{-\eta_2\widehat\Omega^+}(\eta_1) f_{\eta_2},& \eta_2\in [{1\over 4},4];\\
0,&\text{otherwise.}
\end{cases}
$$
Writing $\|f\|_{L^2(\mathbb R^2)}^2$ as the integral of $\|f_{\eta_2}\|_{L^2(\mathbb R)}^2$
over $\eta_2$, and same for the norm of~$g$, we obtain~\eqref{e:2DF-1}.

\noindent 2. Fix $\eta_2\in [{1\over 4},4]$. Denoting by $\mathcal F_h$ the one-dimensional unitary semiclassical Fourier transform (see~\eqref{e:F-h-def}), we have  
\begin{equation}
  \label{e:2DF-2}
\|\indic_{\widehat\Omega^-}(hD_{\eta_1})\indic_{-\eta_2\widehat\Omega^+}(\eta_1)\|_{L^2(\mathbb R)\to L^2(\mathbb R)}=
\|\indic_{\widehat\Omega^-}\mathcal F_h\indic_{-\eta_2\widehat\Omega^+}\|_{L^2(\mathbb R)\to L^2(\mathbb R)}.
\end{equation}
Let $\Omega^-_k$, $\Omega^+_\ell$ be the sets defined in Lemmas~\ref{l:poro+}--\ref{l:poro-};
here $|k|,|\ell|\leq C$.
We put
$$
\widehat\Omega^-_k:=\Omega^-_k(h^{1/6}),\quad
\widehat\Omega^+_\ell:=\Omega^+_\ell(h^\tau).
$$
By Lemma~\ref{l:poro-} we have $\widehat\Omega^-\subset \bigcup_k\widehat\Omega^-_k$,
which means that $\indic_{\widehat\Omega^-}=\sum_k b_- \indic_{\widehat\Omega^-_k}$
for some $b_-\in L^\infty(\mathbb R)$, $0\leq b_-\leq 1$.
Similarly by Lemma~\ref{l:poro+} we may write
$\indic_{-\eta_2\widehat\Omega^+}=\sum_\ell\indic_{-\eta_2\widehat\Omega^+_\ell}b_+$
where $0\leq b_+\leq 1$. This gives
\begin{equation}
  \label{e:2DF-3}
\|\indic_{\widehat\Omega^-}\mathcal F_h\indic_{-\eta_2\widehat\Omega^+}\|_{L^2(\mathbb R)\to L^2(\mathbb R)}\leq\sum_{k,\ell}\|\indic_{\widehat\Omega^-_k}\mathcal F_h\indic_{-\eta_2\widehat\Omega^+_\ell}\|_{L^2(\mathbb R)\to L^2(\mathbb R)}.
\end{equation}
By Lemma~\ref{l:poro+} each set $\Omega^+_\ell$ is $\nu$-porous on scales $Ch^\tau$
to $C^{-1}$, where $\nu>0$ depends only on $\mathcal V_1,\mathcal V_\star$.
By Lemma~\ref{l:porous-nbhd} the set
$\widehat\Omega^+_\ell$ is then $\nu\over 3$-porous on scales $Ch^\tau$
to $C^{-1}$. It follows from Definition~\ref{d:porous} that
$-\eta_2\widehat\Omega^+_\ell$ is $\nu\over 3$-porous on scales $4Ch^\tau$
to $(4C)^{-1}$. Similarly, by Lemmas~\ref{l:poro-} and~\ref{l:porous-nbhd},
each set $\widehat\Omega^-_k$ is $\nu\over 3$-porous on scales $Ch^{1/(6\Lambda)}$
to~$C^{-1}$. 

We now apply Proposition~\ref{l:fup-2} to the sets $\widehat\Omega^-_k$,
$-\eta_2\widehat\Omega^+_\ell$. By the discussion in the previous paragraph,
for $h$ small enough these sets are $\nu\over 3$-porous on scales $h^{\gamma_0^-}$ to $h^{\gamma_1^-}$ and $h^{\gamma_0^+}$ to $h^{\gamma_1^+}$ respectively, where
$$
\gamma_0^-={1\over 6\Lambda}-\epsilon,\quad
\gamma_0^+=\tau-\epsilon,\quad
\gamma_1^-=\gamma_1^+=\epsilon:={1\over 60\Lambda}.
$$
Recalling from~\eqref{e:tau-delta-def} that $\tau=1-{1\over 10\Lambda}$, we compute
\begin{equation}
  \label{e:porosity-finally-meets}
\gamma:=\min(\gamma_0^+,1-\gamma_1^-)-\max(\gamma_1^+,1-\gamma_0^-)
={1\over 30\Lambda}>0.
\end{equation}
If $\beta_0>0$ is the constant from Proposition~\ref{l:fup-2} with
$\nu$ replaced by $\nu\over 3$, then~\eqref{e:fup-2} gives
\begin{equation}
  \label{e:2DF-4}
\|\indic_{\widehat\Omega^-_k}\mathcal F_h\indic_{-\eta_2\widehat\Omega^+_\ell}\|_{L^2(\mathbb R)\to L^2(\mathbb R)}
\leq Ch^{\beta},\quad
\beta:=\gamma\beta_0>0.
\end{equation}
Together~\eqref{e:2DF-1}--\eqref{e:2DF-3} and~\eqref{e:2DF-4} imply~\eqref{e:2D-FUP}.
\end{proof}
%%%%%%%%%%%%%%%%%%%%%%%%%%%%%%%%%%%%%%%%%%%%%%%%%%%%%%%%%%%%%%%%%%%%%%%%%%%%%%%%

%%%%%%%%%%%%%%%%%%%%%%%%%%%%%%%%%%%%%%%%%%%%%%%%%%%%%%%%%%%%%%%%%%%%%%%%%%%%%%%%
\subsubsection{Microlocal conjugation and the proof of Proposition~\ref{l:longdec-3}}
\label{s:micro-conjugation}
      
We now conjugate the operators $A^-_{\mathbf v}$, $A^+_{\mathcal Q}$ by Fourier
integral operators and give the proof of Proposition~\ref{l:longdec-3}
using Lemma~\ref{l:2D-FUP}.

Let $\varkappa$ be the symplectomorphism defined in~\eqref{e:conj-kappa}. As
explained in~\S\ref{s:fup-normal} we may assume that $\mathscr L_\varkappa$
is generated by a single phase function. Then (see~\S\ref{s:prelim-fio-s})
there exist Fourier integral operators
$$
\begin{aligned}
\mathcal B=\mathcal B(h):L^2(M)\to L^2(\mathbb R^2),&\quad
\mathcal B\in I^{\comp}_h(\varkappa),
\\
\mathcal B'=\mathcal B'(h):L^2(\mathbb R^2)\to L^2(M),&\quad
\mathcal B'\in I^{\comp}_h(\varkappa^{-1})
\end{aligned}
$$
which quantize $\varkappa$ near $\varkappa(\overline{\mathcal V^+_e}\cap \{{1\over 4}\leq |\xi|_g\leq 4\})\times (\overline{\mathcal V^+_e}\cap \{{1\over 4}\leq |\xi|_g\leq 4\})$ in the sense of~\eqref{e:fio-quantize}.
In particular
\begin{equation}
  \label{e:zee}
\cB'\cB=I+\mathcal O(h^\infty)\quad\text{microlocally near}\quad
\overline{\mathcal V^+_e}\cap \{\textstyle{1\over 4}\leq |\xi|_g\leq 4\}.
\end{equation}
By Lemma~\ref{l:stun-straight} all derivatives of $\varkappa$ are bounded independently
of the choice of the base point~$\rho_0$ fixed in~\eqref{e:fixer}.
Thus we may choose $\mathcal B,\mathcal B'$ which are bounded uniformly
in~$h,\rho_0$; that is, all derivatives of the corresponding phase functions and amplitudes
in the oscillatory integral representations~\eqref{e:lag-dist} are bounded.

By Egorov's Theorem~\eqref{e:egorov-basic-more}
and since $\WFh(A_e)\subset \mathcal V_e\cap \{{1\over 4}<|\xi|_g<4\}$
by~\eqref{e:A-q-prop} and
$\mathcal V^+_e=\varphi_1(\mathcal V_e)$ by~\eqref{e:V+-}, we have
$$
\WFh(A_e(-1))\subset \mathcal V^+_e\cap \{\textstyle{1\over 4}<|\xi|_g <4\}.
$$
Fix a pseudodifferential cutoff $Z_e\in \Psi^0_h(M)$ such that
\begin{equation}
  \label{e:zee2}
\WFh(Z_e)\subset  \mathcal V^+_e\cap\{\textstyle{1\over 4}< |\xi|_g< 4\},\quad
\WFh(I-Z_e)\cap\WFh(A_e(-1))=\emptyset.
\end{equation}
Since $A^+_{\mathcal Q}$ is the sum of polynomially many in $h$ terms of the form
$A^+_{\mathbf q}$ (see~\eqref{e:A-E-def}) with the words $\mathbf q\in\mathcal Q'_n(\mathbf w,e)$ starting with the letter $e$ (see~\eqref{e:Q-n-def}), we see
from the definition~\eqref{e:A-pm-def} of $A^+_{\mathbf q}$ that
\begin{equation}
  \label{e:zebra}
A^+_{\mathcal Q}=Z_eA^+_{\mathcal Q}+\mathcal O(h^\infty)_{L^2(M)\to L^2(M)}.
\end{equation}
Since $\WFh(Z_e)\cap \WFh(I-\mathcal B'\mathcal B)=\emptyset$ by~\eqref{e:zee}--\eqref{e:zee2}, we then have
\begin{equation}
  \label{e:starter}
A^-_{\mathbf v}A^+_{\mathcal Q}=
A^-_{\mathbf v}Z_e\mathcal B'\mathcal B A^+_{\mathcal Q}+\mathcal O(h^\infty)_{L^2(M)\to L^2(M)}.
\end{equation}
We also have norm bounds
\begin{align}
\label{e:apn-1}
\|A^-_{\mathbf v}\|_{L^2(M)\to L^2(M)}&\leq 2,\\
\label{e:apn-2}
\|A^+_{\mathcal Q}\|_{L^2(M)\to L^2(M)}&\leq C\log^3(1/h).
\end{align}
Here~\eqref{e:apn-1} follows from~\eqref{e:A-E-norm} and~\eqref{e:apn-2} follows from Lemma~\ref{l:ehrenfest-summary} and~\eqref{e:go-deep}.

By the equivariance of pseudodifferential operators
under conjugation by Fourier integral operators (see~\eqref{e:egorov-gen})
the conjugated operators $\cB A^-_{\mathbf v}Z_e\cB'$
and $\cB A^+_{\mathcal Q}\cB'$ formally correspond to the symbols
$$
(a^-_{\mathbf v}\sigma_h(Z_e))\circ\varkappa^{-1},\quad
a^+_{\mathcal Q}\circ\varkappa^{-1}.
$$
By~\eqref{e:Omega+con}--\eqref{e:Omega-con} the supports of the above symbols satisfy
\begin{align}
  \label{e:Omega+con2}
\varkappa(\supp a^+_{\mathcal Q})&\ \subset\
\Big\{{\eta_1\over \eta_2}\in\Omega^+\Big\}\cap\Big\{{1\over 4}\leq \eta_2\leq 4\Big\}, 
  \\
  \label{e:Omega-con2}
\varkappa\big(\supp(a^-_{\mathbf v}\sigma_h(Z_e))\big)
\cap\Big\{\Big|{\eta_1\over\eta_2}\Big|\leq h^{1/6}\Big\} &\ \subset\
\{y_1\in\Omega^-\}
\end{align}
where the sets $\Omega^\pm\subset\mathbb R$ are defined in~\eqref{e:Omega+},\eqref{e:Omega-}.
Here we denote points in~$T^*\mathbb R^2$ by~$(y,\eta)$ where $y,\eta\in\mathbb R^2$.

We now make two microlocalization statements which
quantize the above containments.
The first statement, proved using the results of~\S\ref{s:longtime}
and~\S\ref{s:fourloc-lag},
quantizes~\eqref{e:Omega+con2}:
%%%%%%%%%%%%%%%%%%%%%%%%%%%%%%%%%%%%%%%%%%%%%%%%%%%%%%%%%%%%%%%%%%%%%%%%%%%%%%%%
\begin{lemm}
  \label{l:loca+}
Assume that the constant $\varepsilon_0$ in~\S\ref{s:refined-partition} is chosen small enough depending only on $(M,g)$.
Let $\Upsilon^+\subset\mathbb R^2$ be defined in~\eqref{e:Upsilon+def}. Then
\begin{equation}
  \label{e:loca+}
\mathcal BA^+_{\mathcal Q}=\indic_{\Upsilon^+}(hD_y)\mathcal BA^+_{\mathcal Q}+\mathcal O(h^\infty)_{L^2(M)\to L^2(\mathbb R^2)}.
\end{equation} 
\end{lemm}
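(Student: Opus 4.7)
The plan is to reduce to a uniform per-$\bq$ estimate and then exhibit $\cB A^+_{\bq}f$ as a superposition (over a parameter $\theta\in\IR^2$) of Lagrangian states on $T^*\IR^2$ whose phases have uniformly controlled momentum support, at which point Proposition~\ref{l:fourloc-lag} will yield the desired Fourier localization. Since $A^+_{\cQ}=\sum_{\bq\in\cQ}A^+_{\bq}$ and $|\cQ|\leq h^{-C}$, it is enough to prove, for every $\mathbf N$ and uniformly in $\bq=q_1\ldots q_n\in\cQ$ (with $q_1=e$), that
\[
\|(I-\indic_{\Upsilon^+}(hD_y))\,\cB A^+_{\bq}\|_{L^2(M)\to L^2(\IR^2)}\leq C_{\mathbf N}h^{\mathbf N}.
\]
First I would use $A^+_{\bq}=U^+_{\bq}U(-n)$, the unitarity of $U(-n)$, and the microlocal inverse $U^+_{\bq}=U^+_{\bq}B'_{q_n}B_{q_n}+\cO(h^\infty)_{L^2\to L^2}$ (valid because $B'_{q_n}B_{q_n}=I$ microlocally near $\WFh(A_{q_n})$), reducing the problem to bounding $(I-\indic_{\Upsilon^+}(hD_y))\,\cB U^+_{\bq}B'_{q_n}g$ for $g:=B_{q_n}U(-n)f\in L^2(\IR^2)$, which has norm $\lesssim\|f\|_{L^2}$.

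Next, I would expand $g$ via the semiclassical Fourier inversion and insert a compactly supported cutoff $\chi\in\CIc(\IR^2)$ equal to $1$ on a compact set large enough that $(1-\chi(\theta))B'_{q_n}(e^{i\langle\cdot,\theta\rangle/h})=\cO(h^\infty)_{L^2(M)}$ by nonstationary phase, chosen with $\supp\chi\subset\{\tfrac14+\epsilon_1\leq\theta_2\leq 4-\epsilon_1\}\cap\{|\theta_1|\leq C_1\}$ for some small $\epsilon_1>0$. Then Proposition~\ref{l:longtime-prop} applied term by term with $\Phi_\theta(y):=\langle y,\theta\rangle$ gives
\[
\cB U^+_{\bq}B'_{q_n}g = (2\pi h)^{-1}\int \chi(\theta)\,\hat g_h(\theta)\,v_\theta\,d\theta + \cO(h^{\mathbf N})\|f\|_{L^2(M)},
\]
where $v_\theta:=\cB U(1)B'_{q_1}\bigl(e^{i\Phi_{\bq,\theta}/h}a_{\bq,\theta,\mathbf N}\bigr)$, the error bound coming from Cauchy--Schwarz and the bound $\|\chi\hat g_h\|_{L^1}\leq C\|g\|_{L^2}$.

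The third step analyzes $v_\theta$ as a Lagrangian state on $T^*\IR^2$. The composition $\cB U(1)B'_{q_1}\in I^{\comp}_h(\varkappa\circ\varphi_1\circ\varkappa_{q_1}^{-1})$ is a Fourier integral operator and sends $e^{i\Phi_{\bq,\theta}/h}a_{\bq,\theta,\mathbf N}\in I^{\comp}_h(\widehat{\mathscr L}_{\bq,\theta})$ to $v_\theta\in I^{\comp}_h(\mathscr M_{\bq,\theta})$ with
\[
\mathscr M_{\bq,\theta}:=\varkappa(\varphi_1(\mathscr L_{\bq,\theta}))\ \subset\ \varkappa\bigl(\cV^+_{\bq}\cap\{\tfrac14\leq|\xi|_g\leq 4\}\bigr).
\]
By Lemma~\ref{l:inclinator} and the fact that $\varkappa$ straightens the weak unstable foliation near $\rho_0$, $\mathscr M_{\bq,\theta}$ will be a smooth graph $\{(y,d\Psi_{\bq,\theta}(y)):y\in U_{\bq,\theta}\}$ over a macroscopic open set $U_{\bq,\theta}\subset\IR^2$, and one expects $v_\theta=a_\theta(y)e^{i\Psi_{\bq,\theta}(y)/h}+\cO(h^\infty)$ with $\Psi_{\bq,\theta}$ and $a_\theta$ uniformly bounded in every $C^{\mathbf N'}$-norm. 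By~\eqref{e:Omega+con} and $\cV^+_{\bq}\subset\cV^+_{\cQ}$, the momentum range $\Omega_{\Psi_{\bq,\theta}}$ will lie in $\{\eta_1/\eta_2\in\Omega^+,\ \eta_2=\theta_2\}$; and since~\eqref{e:unrec},\eqref{e:go-deep} confine $\cV^+_{\bq}\cap S^*M$ to distance $\cO(1/\cJ^+_{\bq})=\cO(h^\tau)$ of a single weak unstable leaf, one has $\diam\Omega_{\Psi_{\bq,\theta}}=\cO(h^\tau)$. Proposition~\ref{l:fourloc-lag} with $h':=h^\tau$ will then give
\[
\bigl\|\indic_{\IR^2\setminus\Omega_{\Psi_{\bq,\theta}}(C^{-1}h^\tau)}(hD_y)v_\theta\bigr\|_{L^2}\leq C_{\mathbf N'}h^{\mathbf N'},
\]
uniformly in $\bq$ and $\theta\in\supp\chi$, and the $C^{-1}h^\tau$-neighborhood will stay inside $\Upsilon^+$ for small $h$ (using $\theta_2\in[\tfrac14+\epsilon_1,4-\epsilon_1]$). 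Integrating against $\chi\hat g_h$ by Cauchy--Schwarz and summing the polynomially many contributions over $\bq\in\cQ$ will complete the argument.

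The hard part will be verifying, uniformly in $\bq$ and $\theta\in\supp\chi$, that $v_\theta$ really does admit a single-phase Lagrangian representation $a_\theta(y)e^{i\Psi_{\bq,\theta}(y)/h}+\cO(h^\infty)$ on a macroscopic domain $U_{\bq,\theta}$ with all of the uniform $C^{\mathbf N'}$ bounds required by the hypotheses~\eqref{e:fourloc-lag-1}--\eqref{e:fourloc-lag-3} of Proposition~\ref{l:fourloc-lag}. This amounts to carrying out a stationary phase analysis of the FIO composition $\cB U(1)B'_{q_1}$ applied to the long-time propagated Lagrangian state from Proposition~\ref{l:longtime-prop}, tracking how the uniform-in-$\bq$ controls on $F_{\bq,\theta},G_{\bq,\theta}$ from Lemma~\ref{l:inclinator} and on $a_{\bq,\theta,\mathbf N}$ from Proposition~\ref{l:longtime-prop} (together with the uniform derivative bounds on $\varkappa$, $\varkappa_{q_1}$ and $\varphi_1$) propagate through the composition.
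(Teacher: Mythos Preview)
Your proposal is correct and follows essentially the same route as the paper: reduce to a single word $\bq\in\cQ$, pass to $U^+_{\bq}$ and insert $B'_{q_n}B_{q_n}$, Fourier-decompose the input on $\IR^2$, apply Proposition~\ref{l:longtime-prop}, push the resulting Lagrangian state through the FIO $\cB\,U(1)\,B'_{q_1}$ by stationary phase, and finish with Proposition~\ref{l:fourloc-lag}. The only cosmetic differences are that the paper inserts the pseudodifferential cutoff $Z_e$ (so that $\cB Z_e U(1)B'_e$ is manifestly a compactly microlocalized FIO associated to $\widetilde\varkappa=\varkappa\circ\varphi_1\circ\varkappa_e^{-1}$) and introduces the intermediate sets $\Omega^+_{\bq},\Upsilon^+_{\bq}$ to organize the final inclusion into $\Upsilon^+$; your ``hard part'' is exactly the stationary-phase step carried out there using Lemma~\ref{l:inclinator} and the generating-function parametrization~\eqref{e:ct-std-par} of $\widetilde\varkappa$.
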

%%%%%%%%%%%%%%%%%%%%%%%%%%%%%%%%%%%%%%%%%%%%%%%%%%%%%%%%%%%%%%%%%%%%%%%%%%%%%%%%
\begin{proof}
1. By~\eqref{e:zebra} it suffices to prove that
$\indic_{\mathbb R^2\setminus\Upsilon^+}(hD_y)\mathcal BZ_eA^+_{\mathcal Q}=\mathcal O(h^\infty)_{L^2(M)\to L^2(\mathbb R^2)}$.
Since $\mathcal Q$ has polynomially many in $h$ elements, recalling
the definition~\eqref{e:A-E-def} of $A^+_{\mathcal Q}$ it suffices to show that
uniformly in $\mathbf q\in\mathcal Q$
\begin{equation}
  \label{e:cola-1}
\indic_{\mathbb R^2\setminus \Upsilon^+}(hD_y)\mathcal BZ_eA^+_{\mathbf q}=
\mathcal O(h^\infty)_{L^2(M)\to L^2(\mathbb R^2)}.
\end{equation}
We henceforth fix $\mathbf q\in\mathcal Q$. Recalling the definitions~\eqref{e:Omega+} and~\eqref{e:V-Q-def} of~$\Omega^+$ and $\mathcal V^+_{\mathcal Q}$ we see
that $\Omega^+_{\mathbf q}\subset\Omega^+$ where
\begin{equation}
  \label{e:Omega+q}
\Omega^+_{\mathbf q}:=\eta_1(\varkappa(\mathcal V^+_{\mathbf q}\cap S^*M))\ \subset\ \mathbb R.
\end{equation}
Recalling the definition~\eqref{e:Upsilon+def} of $\Upsilon^+$ we then have $\Upsilon^+_{\mathbf q}\subset\Upsilon^+$ where
\begin{equation}
  \label{e:Upsilon+q}
\Upsilon^+_{\mathbf q}:=\Big\{(\eta_1,\eta_2)\,\Big|\, {1\over 4}\leq \eta_2\leq 4,\ {\eta_1\over\eta_2}\in \Omega^+_{\mathbf q}(h^\tau)\Big\}.
\end{equation}
Moreover, we have $A^+_{\mathbf q}=U^+_{\mathbf q}U(-n)$ where the cutoff propagator $U^+_{\mathbf q}$
is defined in~\eqref{e:U+def}. Since $U(-n)$ is unitary, \eqref{e:cola-1} follows from the bound
\begin{equation}
  \label{e:cola-2}
\indic_{\mathbb R^2\setminus\Upsilon^+_{\mathbf q}}(hD_y)\mathcal BZ_eU
^+_{\mathbf q}
=\mathcal O(h^\infty)_{L^2(M)\to L^2(\mathbb R^{2})}.
\end{equation}

\noindent 2. Let $B_q,B'_q$, $q\in\mathscr A$, be the Fourier integral operators defined in~\eqref{e:B-q-def}. They quantize the symplectomorphisms $\varkappa_q$ defined in~\eqref{e:kappa-q}.
Since $\WFh(A_q)\subset \mathcal V_q\cap \{{1\over 4}<|\xi|_g<4\}$ we have
\begin{equation}
  \label{e:coca-cola}
A_{q}=B'_qB_qA_q+\mathcal O(h^\infty)_{L^2(M)\to L^2(M)}=B'_{q}B_{q}A_{q}B'_{q}B_{q}+\mathcal O(h^\infty)_{L^2(M)\to L^2(M)}.
\end{equation}
Put $\widehat A_{q}:=B_{q}A_{q}B'_{q}$. By~\eqref{e:egorov-gen} 
and part~(4) of Lemma~\ref{l:stun-straight} we have
$$
\widehat A_{q}\in\Psi^0_h(\mathbb R^2),\quad
\WFh(\widehat A_{q})\subset \varkappa_{q}(\WFh(A_{q}))\Subset \{\textstyle{1\over 4}<\eta_2<4\}.
$$
Thus there exists an $h$-independent function $\chi\in\CIc(\mathbb R^2)$ such that
for all $q\in\mathscr A$
$$
\supp\chi\subset \{\textstyle{1\over 4}<\eta_2<4\},\quad
\widehat A_{q}=\widehat A_{q}\chi(hD_y)+\mathcal O(h^\infty)_{L^2(\mathbb R^2)\to L^2(\mathbb R^2)}.
$$
Together with~\eqref{e:coca-cola} this implies
$$
A_q=A_q B'_q \chi(hD_y)B_q+\mathcal O(h^\infty)_{L^2(M)\to L^2(M)}.
$$
We write $\mathbf q=q_1\dots q_n$, where $q_1=e$ (see~\eqref{e:Q-n-def}).
Recalling~\eqref{e:U+def}, we have
$$
U^+_{\mathbf q}=U^+_{\mathbf q}B'_{q_n}\chi(hD_y)B_{q_n}+\mathcal O(h^\infty)_{L^2(M)\to L^2(M)}.
$$
Thus~\eqref{e:cola-2} follows from the estimate
\begin{equation}
  \label{e:cola-3}
\indic_{\mathbb R^2\setminus\Upsilon^+_{\mathbf q}}(hD_y)\mathcal BZ_eU
^+_{\mathbf q}B'_{q_n}\chi(hD_y)
=\mathcal O(h^\infty)_{L^2(\mathbb R^2)\to L^2(\mathbb R^2)}
\end{equation}
and the $L^2$-boundedness of Fourier integral operators.

Now, take arbitrary $f\in L^2(\mathbb R^2)$ such that $\|f\|_{L^2}=1$.
Following~\eqref{e:Phi-theta-def} define $\Phi_\theta(y)=\langle y,\theta\rangle$, $y,\theta\in\mathbb R^2$. Using the Fourier inversion formula we write
\begin{equation}
  \label{e:fourier-decomposed}
\chi(hD_y)f(y)=(2\pi h)^{-1}\int_{\mathbb R^2} \chi(\theta)\mathcal F_hf(\theta)e^{i\Phi_\theta(y)/h}\,d\theta
\end{equation}
where $\mathcal F_hf(\theta)=(2\pi h)^{-1}\hat f(\theta/h)$ is the semiclassical Fourier transform of~$f$, satisfying $\|\mathcal F_hf\|_{L^2(\mathbb R^2)}=1$.
Using H\"older's inequality we bound
$$
\begin{gathered}
\|\indic_{\mathbb R^2\setminus\Upsilon^+_{\mathbf q}}(hD_y)\mathcal BZ_eU^+_{\mathbf q}B'_{q_n}\chi(hD_y)f\|_{L^2(\mathbb R^2)}
\\\leq Ch^{-1}\sup_{\theta\in\supp\chi}\|\indic_{\mathbb R^2\setminus\Upsilon^+_{\mathbf q}}(hD_y)\mathcal BZ_eU^+_{\mathbf q}B'_{q_n}(e^{i\Phi_\theta/h})\|_{L^2(\mathbb R^2)}.
\end{gathered}
$$
Thus to prove~\eqref{e:cola-3} it is enough to show the following estimate on the propagated Lagrangian distributions $U^+_{\mathbf q}B'_{q_n}(e^{i\Phi_\theta/h})$:
\begin{equation}
  \label{e:cola-4}
\sup_{\theta\in\supp\chi}\|\indic_{\mathbb R^2\setminus\Upsilon^+_{\mathbf q}}(hD_y)\mathcal BZ_eU^+_{\mathbf q}B'_{q_n}(e^{i\Phi_\theta/h})\|_{L^2(\mathbb R^2)}=\mathcal O(h^\infty).
\end{equation}

\noindent 3.
Henceforth we fix $\theta\in\supp \chi$.
In particular, ${1\over 4}+\epsilon\leq \theta_2\leq 4-\epsilon$ for some fixed $\epsilon>0$.
Let $\mathbf N>0$. Using Proposition~\ref{l:longtime-prop} we write (recalling that $q_1=e$)
\begin{equation}
  \label{e:toro-0}
U^+_{\mathbf q}B'_{q_n}(e^{i\Phi_\theta/h})=U(1)B'_{e}(e^{i\Phi_{\mathbf q,\theta}/h}a_{\mathbf q,\theta,\mathbf N})+\mathcal O(h^{\mathbf N})_{L^2(M)}.
\end{equation}
Here $\Phi_{\mathbf q,\theta}$ is a generating function (in the sense of~\eqref{e:basic-lagrangian}) of the propagated Lagrangian
$\widehat{\mathscr L}_{\mathbf q,\theta}=\varkappa_e(\mathscr L_{\mathbf q,\theta})$
defined in~\eqref{e:compost-1}.

We now analyze the function $\mathcal BZ_eU(1)B'_e(e^{i\Phi_{\mathbf q,\theta}/h}a_{\mathbf q,\theta,\mathbf N})$.
By~\eqref{e:U-1-fio}, the composition property~(4) in~\eqref{s:prelim-fio-s},
and the condition~\eqref{e:zee2} on $\WFh(Z_e)$ we have
$$
\mathcal BZ_eU(1)B'_e\in I^{\comp}_h(\widetilde\varkappa),\quad
\widetilde\varkappa:=
\varkappa\circ \varphi_1\circ\varkappa_e^{-1}|_{\varkappa_e(\mathcal V_e)}.
$$
Recall from~\eqref{e:kappa-q} and~\eqref{e:conj-kappa} that
$\varkappa_e=\varkappa_{\rho_e}$, $\varkappa=\varkappa_{\rho_0}$
are homogeneous symplectomorphisms constructed using Lemma~\ref{l:stun-straight} and
$\rho_0\in\varphi_1(\mathcal V_e\cap S^*M)$ (as assumed in Proposition~\ref{l:longdec-3}),
$\rho_e\in \mathcal V_e\cap S^*M$,
with the diameter of $\mathcal V_e\cap S^*M$ bounded above by~$\varepsilon_0$.
In particular, $d\varkappa_e(\rho_e)$ maps the flow/stable/unstable spaces
$E_0(\rho_e),E_s(\rho_e),E_u(\rho_e)$ to $\mathbb R\partial_{y_2},\mathbb R\partial_{\eta_1},\mathbb R\partial_{y_1}$ and a similar 
statement is true for $d\varkappa(\rho_0)$. Thus for $\varepsilon_0$ small enough,
the differential $d\widetilde\varkappa(0,0,0,1)$ maps the vertical subspace $\ker dy$ to
an almost vertical subspace. It follows that
$\widetilde\varkappa$ has a generating function in the sense of~\eqref{e:ct-std-par},
and thus $\mathcal BZ_eU(1)B'_e$ can be written in the oscillatory integral form~\eqref{e:fio-std-par}.
(See the proof of~\cite[Lemma~4.4]{NZ09} for details.)
Moreover, by Lemma~\ref{l:inclinator} the Lagrangian $\widehat{\mathscr L}_{\mathbf q,\theta}$
is a graph in the $y$ variables and its tangent planes are $\mathcal O(\varepsilon_0)$
close to horizontal. Thus for $\varepsilon_0$ small enough the Lagrangian submanifold
$$
\widetilde{\mathscr L}:=\widetilde\varkappa(\widehat{\mathscr L}_{\mathbf q,\theta})
=\varkappa\big(\varphi_n(\varkappa_{q_n}^{-1}(\widehat{\mathscr L}_{\theta}))\cap\mathcal V^+_{\mathbf q}\big) \subset T^*\mathbb R^2
$$
is also a graph in the $y$ variables, and thus can be written in the form~\eqref{e:basic-lagrangian}:
$$
\widetilde{\mathscr L}=\{(y,d\widetilde\Phi(y))\mid y\in\widetilde{\mathscr U}\}.
$$
From the properties of $\widehat{\mathscr L}_{\mathbf q,\theta}$ in Lemma~\ref{l:inclinator}
we see that for every $\alpha$
\begin{equation}
  \label{e:toro-1}
\sup_{\widetilde{\mathscr U}} |\partial^\alpha\widetilde\Phi|\leq C_\alpha
\end{equation}
where the constant $C_\alpha$ depends only on $(M,g)$ and~$\alpha$.

We now apply the method of stationary phase using~\eqref{e:stator}, \eqref{e:lag-repas} and get
\begin{equation}
  \label{e:toro-1.5}
\mathcal BZ_eU(1)B'_e(e^{i\Phi_{\mathbf q,\theta}/h}a_{\mathbf q,\theta,\mathbf N})=e^{i\widetilde\Phi/h}\tilde a+\mathcal O(h^{\mathbf N})_{L^2(\mathbb R^2)}.
\end{equation}
Here $\tilde a$ is given by the stationary phase expansion and depends on the symbol $a_{\mathbf q,\theta,\mathbf N}$;
see~\cite[Lemma~4.1]{NZ09} for details. From the properties of the symbol $a_{\mathbf q,\theta,\mathbf N}$ in
Proposition~\ref{l:longtime-prop} we see that $\tilde a\in\CIc(\widetilde{\mathscr U})$ and
for all $\alpha$
\begin{equation}
  \label{e:toro-2}
d(\supp\tilde a,\mathbb R^2\setminus \widetilde{\mathscr U})\geq C^{-1},\quad
\sup |\partial^\alpha\tilde a|\leq C_{\mathbf N,\alpha}.
\end{equation}

\noindent 4.  Together~\eqref{e:toro-0} and~\eqref{e:toro-1.5} give
$$
\mathcal B Z_eU^+_{\mathbf q}B'_{q_n}(e^{i\Phi_\theta/h})
=e^{i\widetilde\Phi/h}\tilde a+\mathcal O(h^{\mathbf N})_{L^2(\mathbb R^2)}.
$$
Since $\mathbf N$ is chosen arbitrary, to prove~\eqref{e:cola-4} it suffices to show that
\begin{equation}
  \label{e:cola-5}
\|\indic_{\mathbb R^2\setminus\Upsilon^+_{\mathbf q}}(hD_y)(e^{i\widetilde\Phi/h}\tilde a)\|_{L^2(\mathbb R^2)}=\mathcal O(h^{\mathbf N}).
\end{equation}
To do that we use Proposition~\ref{l:fourloc-lag} (which is a Fourier localization statement
for Lagrangian distributions) with $h':=h^\tau$,
$U:=\widetilde{\mathscr U}$, $\Phi:=\widetilde\Phi$, $K:=\supp \tilde a$,
and $a:=\tilde a$. 
The assumptions~\eqref{e:fourloc-lag-1} and~\eqref{e:fourloc-lag-3} of that proposition
are satisfied due to~\eqref{e:toro-1} and~\eqref{e:toro-2}.
Next, define
$$
\widetilde\Omega:=\{d\widetilde\Phi(y)\mid y\in\widetilde{\mathscr U}\}\subset\mathbb R^2.
$$
Then $\widetilde\Omega$ is the projection of $\widetilde{\mathscr L}$ onto the $\eta$ variables.
Since $\widetilde{\mathscr L}\subset \varkappa(\mathcal V^+_{\mathbf q}\cap p^{-1}(\theta_2))$,
recalling the definition~\eqref{e:Omega+q} of $\Omega^+_{\mathbf q}$ we have
$$
\widetilde\Omega\subset (\theta_2\Omega^+_{\mathbf q})\times \{\theta_2\}.
$$
As explained in the paragraph preceding Lemma~\ref{l:close-unstable},
the diameter of $\Omega^+_{\mathbf q}$ is bounded above by $Ch^\tau$.
Then $\diam \widetilde\Omega\leq Ch^\tau$ as well, giving the assumption~\eqref{e:fourloc-lag-2}. Thus Proposition~\ref{l:fourloc-lag} applies, giving
$$
\|\indic_{\mathbb R^2\setminus\widetilde\Omega({1\over 8}h^\tau)}(hD_y)(e^{i\widetilde\Phi/h}\tilde a)\|_{L^2(\mathbb R^2)}\leq C_{\mathbf N}h^{\mathbf N}.
$$
Since the neighborhood $\widetilde\Omega({1\over 8}h^{\tau})$ lies inside~$\Upsilon^+_{\mathbf q}$ by~\eqref{e:Upsilon+q} and~\eqref{e:Omega+small}, this gives~\eqref{e:cola-5}, finishing the proof.
\end{proof}
%%%%%%%%%%%%%%%%%%%%%%%%%%%%%%%%%%%%%%%%%%%%%%%%%%%%%%%%%%%%%%%%%%%%%%%%%%%%%%%%
Our second microlocalization statement quantizes~\eqref{e:Omega-con2}:
%%%%%%%%%%%%%%%%%%%%%%%%%%%%%%%%%%%%%%%%%%%%%%%%%%%%%%%%%%%%%%%%%%%%%%%%%%%%%%%%
\begin{lemm}
  \label{l:loca-}
Let $\Upsilon^-\subset\mathbb R^2$ be defined in~\eqref{e:Upsilon-def}. Then
there exists $\chi_-\in\CIc(\mathbb R^2;[0,1])$ such that
$\supp\chi_-\subset\Upsilon^-$ and
\begin{equation}
  \label{e:loca-}
A^-_{\mathbf v}Z_e\mathcal B'\indic_{\Upsilon^+}(hD_y)=A^-_{\mathbf v}Z_e\mathcal B'\chi_-(y)\indic_{\Upsilon^+}(hD_y)+\mathcal O(h^{2/3-})_{L^2(\mathbb R^2)\to L^2(M)}.
\end{equation}
\end{lemm}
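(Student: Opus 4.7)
The plan is to conjugate by the Fourier integral operators $\mathcal B,\mathcal B'$ to reduce the estimate to a pseudodifferential computation on $T^*\mathbb R^2$, and then to insert a smooth dual cutoff in the $\eta$--variable at scale $h^{1/6}$ that, combined with the inclusion~\eqref{e:Omega-con2}, makes the relevant symbol product vanish identically.

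First, I would construct two cutoffs adapted to the scales at play. Choose $\chi_-^0\in C^\infty(\mathbb R;[0,1])$ equal to~$1$ on $\Omega^-(h^{1/6}/2)$ and supported in $\Omega^-(h^{1/6})$, and let $\chi_0\in\CIc(\mathbb R^2;[0,1])$ be an $h$--independent cutoff equal to~$1$ on the (bounded) $y$--projection of $\varkappa(\overline{\mathcal V^+_e}\cap\{\tfrac14\leq|\xi|_g\leq 4\})$. Put $\chi_-(y):=\chi_0(y)\chi_-^0(y_1)$; then $\chi_-\in S^{\comp}_{1/6}(T^*\mathbb R^2)$ and $\supp\chi_-\subset\Upsilon^-$. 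Next set $\widetilde\psi_+(\eta):=\phi_1\!\bigl(\eta_1/(\eta_2 h^{1/6})\bigr)\phi_2(\eta_2)$ with $\phi_1\in\CIc(\mathbb R;[0,1])$ equal to~$1$ on $[-\tfrac12,\tfrac12]$, $\supp\phi_1\subset[-1,1]$, and $\phi_2\in\CIc((0,\infty);[0,1])$ equal to~$1$ on $[\tfrac14,4]$ and supported in $(\tfrac15,5)$. Because $h^\tau+h^{2/3}\ll h^{1/6}$ for small $h$, one has $\widetilde\psi_+\equiv 1$ on $\Upsilon^+$ and $\widetilde\psi_+\in S^{\comp}_{1/6}(T^*\mathbb R^2)$, so $\widetilde\psi_+(hD_y)\indic_{\Upsilon^+}(hD_y)=\indic_{\Upsilon^+}(hD_y)$ and $\supp\widetilde\psi_+\subset\{|\eta_1/\eta_2|\leq h^{1/6},\ \eta_2\in(\tfrac15,5)\}$. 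Using Lemma~\ref{l:cq-log} (in the sharper pseudodifferential form of Remark~1) and the standard composition formula, write $A^-_{\mathbf v}Z_e=\Op_h(c)+\mathcal O(h^\infty)_{L^2\to L^2}$ for some $c\in S^{\comp}_{1/6+}(T^*M)$ with $\supp c\subset\mathcal V^-_{\mathbf v}\cap\mathcal V^+_e$. Since $\mathcal B'\mathcal B=I+\mathcal O(h^\infty)$ microlocally near $\overline{\mathcal V^+_e}\cap\{\tfrac14\leq|\xi|_g\leq 4\}\supset\WFh(A^-_{\mathbf v}Z_e)$, one has
\[
A^-_{\mathbf v}Z_e\mathcal B'=\mathcal B'\bigl(\mathcal BA^-_{\mathbf v}Z_e\mathcal B'\bigr)+\mathcal O(h^\infty)_{L^2(\mathbb R^2)\to L^2(M)},
\]
and by~\eqref{e:egorov-gen} applied with $\delta=\tfrac16+$, $\mathcal BA^-_{\mathbf v}Z_e\mathcal B'=\Op_h(b)+\mathcal O(h^\infty)_{\Psi^{-\infty}}$ with $b\in S^{\comp}_{1/6+}(T^*\mathbb R^2)$ and $\supp b\subset\varkappa(\mathcal V^-_{\mathbf v}\cap\mathcal V^+_e)$.

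The key symbol-level observation is that the pointwise product $b(y,\eta)(1-\chi_-(y))\widetilde\psi_+(\eta)$ vanishes identically on $T^*\mathbb R^2$. Indeed, suppose $b(y,\eta)\widetilde\psi_+(\eta)\neq 0$. Then $(y,\eta)\in\varkappa(\mathcal V^-_{\mathbf v}\cap\mathcal V^+_e)$, $\eta_2\in(\tfrac15,5)$ and $|\eta_1/\eta_2|\leq h^{1/6}$. Using homogeneity of $\varkappa$, conicity of $\mathcal V^\pm$, and $p=\eta_2\circ\varkappa$, the rescaling $\eta\mapsto\eta/\eta_2$ produces a point in $\varkappa(\mathcal V^-_{\mathbf v}\cap\mathcal V^+_e\cap S^*M)\cap\{|\eta_1|\leq h^{1/6}\}$ with the same $y$, so $y_1\in\Omega^-$ by the very definition~\eqref{e:Omega-} of $\Omega^-$; and the same rescaling places $y$ in the $y$--projection of $\varkappa(\overline{\mathcal V^+_e}\cap\{\tfrac14\leq|\xi|_g\leq 4\})$, where $\chi_0=1$. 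Hence $\chi_-(y)=\chi_-^0(y_1)\chi_0(y)=1$, contradicting $(1-\chi_-(y))\neq 0$.

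To conclude, since $(1-\chi_-(y))\widetilde\psi_+(hD_y)=\Op_h((1-\chi_-)\widetilde\psi_+)$ exactly for the left quantization, the $S^{\comp}_{1/6+}$ composition formula together with a rescaling of the sharp bound~\eqref{e:precise-norm} gives
\[
\Op_h(b)\Op_h\bigl((1-\chi_-)\widetilde\psi_+\bigr)=\Op_h\bigl(b(1-\chi_-)\widetilde\psi_+\bigr)+\mathcal O(h^{2/3-})_{L^2\to L^2}=\mathcal O(h^{2/3-})_{L^2\to L^2},
\]
and composing with the uniformly bounded $\mathcal B'$ on the left and $\indic_{\Upsilon^+}(hD_y)$ on the right preserves this bound, yielding~\eqref{e:loca-}. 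The main obstacle is the sharp scale match: $\widetilde\psi_+$ has to live at precisely scale $h^{1/6}$---the borderline of what the $S^{\comp}_{1/6}$ calculus permits---so that simultaneously (i) $\Upsilon^+$ is covered by $\{\widetilde\psi_+=1\}$, (ii) the inclusion~\eqref{e:Omega-con2} still applies on all of $\supp\widetilde\psi_+$, giving an \emph{exact} pointwise vanishing of the triple product rather than a vanishing modulo $\mathcal O(h^\infty)$, and (iii) the composition remainder $h^{1-2\delta-}=h^{2/3-}$ matches exactly the loss demanded by the lemma; any larger scale for $\widetilde\psi_+$ would push $\supp b\cap\supp\widetilde\psi_+$ outside the regime where~\eqref{e:Omega-con2} forces $y_1\in\Omega^-$.
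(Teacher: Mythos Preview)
Your proof is correct and follows essentially the same approach as the paper's: conjugate through $\mathcal B,\mathcal B'$ to work on $T^*\mathbb R^2$, build an $h^{1/6}$-scale cutoff $\widetilde\psi_+$ in $\eta$ that equals $1$ on $\Upsilon^+$ and is supported in $\{|\eta_1/\eta_2|\le h^{1/6}\}$, build $\chi_-$ in $y$ at the same scale, verify via the definition~\eqref{e:Omega-} that the triple symbol product vanishes identically, and conclude by the $S^{\comp}_{1/6+}$ product formula with remainder $\mathcal O(h^{2/3-})$. The paper's $\chi_+$ plays the role of your $\widetilde\psi_+$, and its set $\widetilde\Upsilon^-$ encodes exactly your implication ``$b\neq 0$ and $\widetilde\psi_+\neq 0$ forces $\chi_-=1$''. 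One cosmetic remark: you invoke a ``sharper pseudodifferential form of Remark~1'' to get $A^-_{\mathbf v}Z_e=\Op_h(c)+\mathcal O(h^\infty)$ with $\supp c\subset\mathcal V^-_{\mathbf v}\cap\mathcal V^+_e$; the paper instead uses Lemma~\ref{l:cq-log} directly, accepting the $\mathcal O(h^{2/3-})$ remainder already at that step, which is all that is needed since the final bound is $\mathcal O(h^{2/3-})$ anyway.
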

%%%%%%%%%%%%%%%%%%%%%%%%%%%%%%%%%%%%%%%%%%%%%%%%%%%%%%%%%%%%%%%%%%%%%%%%%%%%%%%%
\begin{proof}
By Lemma~\ref{l:cq-log} (recalling that we suppressed the `$-$' sign in the notation there)
and the product formula in the $\Psi_{1/6+}^{\comp}$ calculus
we have
$$
a^-_{\mathbf v}\in S^{\comp}_{1/6+}(T^*M),\quad
A^-_{\mathbf v}Z_e=\Op_h\big(a^-_{\mathbf v}\sigma_h(Z_e)\big)+\mathcal O(h^{2/3-})_{L^2(M)\to L^2(M)}.
$$
Then by~\eqref{e:zee}--\eqref{e:zee2} we get
$$
A^-_{\mathbf v}Z_e=\mathcal B'\mathcal B\Op_h\big(a^-_{\mathbf v}\sigma_h(Z_e)\big)+\mathcal O(h^{2/3-})_{L^2(M)\to L^2(M)}.
$$
Thus it suffices to show that there exists
$\chi_+\in\CIc(\mathbb R^2;[0,1])$ such that
$\chi_+=1$ on $\Upsilon^+$ and
$$
\big\|\mathcal B\Op_h\big(a^-_{\mathbf v}\sigma_h(Z_e)\big)\mathcal B'(1-\chi_-(y))\chi_+(hD_y)\big\|_{L^2(\mathbb R^2)\to L^2(\mathbb R^2)}=\mathcal O(h^{2/3-}).
$$
By~\eqref{e:egorov-gen} and since $\sigma_h(\mathcal B\mathcal B')=1$ on $\varkappa(\WFh(Z_e))$ we have
$$
\mathcal B\Op_h\big(a^-_{\mathbf v}\sigma_h(Z_e)\big)\mathcal B'
=\Op_h\big((a^-_{\mathbf v}\sigma_h(Z_e))\circ\varkappa^{-1}\big)
+\mathcal O(h^{2/3-})_{L^2(\mathbb R^2)\to L^2(\mathbb R^2)}.
$$
Thus is is enough to show the bound
\begin{equation}
  \label{e:loca-1}
\big\|\Op_h\big((a^-_{\mathbf v}\sigma_h(Z_e))\circ\varkappa^{-1}\big)(1-\chi_-(y))\chi_+(hD_y)\big\|_{L^2(\mathbb R^2)\to L^2(\mathbb R^2)}=\mathcal O(h^{2/3-}).
\end{equation}
We now define the cutoff functions $\chi_\pm$, in a way that they lie in the symbol class
$S^{\comp}_{1/6}(\mathbb R^2)$.
By~\eqref{e:Omega+small} and~\eqref{e:Upsilon+def} we have
$$
\Upsilon^+\Big({1\over 10}h^{1/6}\Big)\subset \Big\{\Big|{\eta_1\over\eta_2}\Big|\leq h^{1/6}\Big\}
$$
where $\Upsilon^+(\alpha):=\Upsilon^++B(0,\alpha)$ denotes the $\alpha$-neighborhood of $\Upsilon^+$.
By~\cite[Lemma~3.3]{hgap} there exists
$\chi_+\in S^{\comp}_{1/6}(\mathbb R^2;[0,1])$ such that
$$
\supp\chi_+\subset \Big\{\Big|{\eta_1\over\eta_2}\Big|\leq h^{1/6}\Big\},\quad
\supp(1-\chi_+)\cap\Upsilon^+=\emptyset.
$$
Next, by~\eqref{e:Omega-con2} and~\eqref{e:Upsilon-def} we have
$$
\widetilde\Upsilon^-(h^{1/6})\subset\Upsilon^-\quad\text{where}\quad
\widetilde\Upsilon^-:=y\big(\varkappa(\supp(a^-_{\mathbf v}\sigma_h(Z_e)))\cap\{\eta\in \supp\chi_+\}\big).
$$
Thus by another application of~\cite[Lemma~3.3]{hgap}
there exists $\chi_-\in S^{\comp}_{1/6}(\mathbb R^2;[0,1])$ such that
$$
\supp\chi_-\subset \Upsilon^-,\quad
\supp(1-\chi_-)\cap \widetilde\Upsilon^-=\emptyset.
$$
To prove~\eqref{e:loca-1} it remains to
use the product formula in the $\Psi_{1/6+}(\mathbb R^2)$ calculus
(see e.g.~\cite[Theorems~4.18 and~4.23]{e-z})
and the identity
$$
\big((a^-_{\mathbf v}\sigma_h(Z_e))\circ\varkappa^{-1}\big)(1-\chi_-(y))\chi_+(\eta)\equiv 0
$$
which follows from the fact that $\supp(1-\chi_-)\cap \widetilde\Upsilon^-=\emptyset$.
\end{proof}
%%%%%%%%%%%%%%%%%%%%%%%%%%%%%%%%%%%%%%%%%%%%%%%%%%%%%%%%%%%%%%%%%%%%%%%%%%%%%%%%
Armed with Lemmas~\ref{l:loca+}--\ref{l:loca-} we are finally ready to give
%%%%%%%%%%%%%%%%%%%%%%%%%%%%%%%%%%%%%%%%%%%%%%%%%%%%%%%%%%%%%%%%%%%%%%%%%%%%%%%%
\begin{proof}[Proof of Proposition~\ref{l:longdec-3}]
We have
$$
\begin{aligned}
A^-_{\mathbf v}A^+_{\mathcal Q}
&=
A^-_{\mathbf v}Z_e\mathcal B' \indic_{\Upsilon^+}(hD_y)\mathcal BA^+_{\mathcal Q}
+\mathcal O(h^\infty)_{L^2(M)\to L^2(M)}
\\
&=A^-_{\mathbf v}Z_e\mathcal B'\chi_-(y)\indic_{\Upsilon^+}(hD_y)\mathcal BA^+_{\mathcal Q}
+\mathcal O(h^{2/3-})_{L^2(M)\to L^2(M)}
\end{aligned}
$$
where the first line follows from~\eqref{e:starter}, Lemma~\ref{l:loca+}, and~\eqref{e:apn-1};
the second line follows from Lemma~\ref{l:loca-} and~\eqref{e:apn-2}.

Using the norm bounds~\eqref{e:apn-1}--\eqref{e:apn-2} and the fact that
$Z_e,\mathcal B',\mathcal B$ are bounded in $L^2\to L^2$ norm uniformly in~$h$, we get
$$
\|A^-_{\mathbf v}A^+_{\mathcal Q}\|_{L^2(M)\to L^2(M)}
\leq C\log^3(1/h)\|\indic_{\Upsilon^-}(y)\indic_{\Upsilon^+}(hD_y)\|_{L^2(\mathbb R^2)\to L^2(\mathbb R^2)}
+\mathcal O(h^{2/3-}).
$$
Using the uncertainty principle given by Lemma~\ref{l:2D-FUP} we then have
$$
\|A^-_{\mathbf v}A^+_{\mathcal Q}\|_{L^2(M)\to L^2(M)}
\leq Ch^{\beta}\log^3(1/h)+\mathcal O(h^{2/3-}).
$$
This gives~\eqref{e:longdec-3} (with a smaller value of $\beta$),
finishing the proof.
\end{proof}
%%%%%%%%%%%%%%%%%%%%%%%%%%%%%%%%%%%%%%%%%%%%%%%%%%%%%%%%%%%%%%%%%%%%%%%%%%%%%%%%

%%%%%%%%%%%%%%%%%%%%%%%%%%%%%%%%%%%%%%%%%%%%%%%%%%%%%%%%%%%%%%%%%%%%%%%%%%%%%%%%
%%%%%%%%%%%%%%%%%%%%%%%%%%%%%%%%%%%%%%%%%%%%%%%%%%%%%%%%%%%%%%%%%%%%%%%%%%%%%%%%
\section{Propagation of observables up to local Ehrenfest time}
\label{s:ops-Aq}

In this section we prove Propositions~\ref{l:ehrenfest-prop} and~\ref{l:ehrenfest-prop-sum}
on the structure of the operators $A^\pm_{\mathbf q}$ when $\mathcal J^\pm_{\mathbf q}\leq Ch^{-\delta}$. We will focus on the operators $A^-_{\mathbf q}$, with $A^+_{\mathbf q}$ handled the same way (reversing the direction of propagation).
Recall from~\eqref{e:A-pm-def} that
$$
A^-_{\mathbf q}=A_{q_{n-1}}(n-1)\cdots A_{q_0}(0),\quad
\mathbf q=q_0\dots q_{n-1}
$$
where the operators $A_q\in\Psi^{-\infty}_h(M)$, $q\in \mathscr A=\{1,\dots,Q\}$,
are defined in~\S\ref{s:refined-partition}.
Here we use the notation~\eqref{e:A-t-def}:
$$
A(t)=U(-t)AU(t),\quad
U(t)=e^{-itP/h}
$$
where $P\in\Psi^{-\infty}_h(M)$ is defined in~\eqref{e:U-t-def}.

To analyze $A^-_{\mathbf q}$ we write it as a result of an iterative process,
where at each step we conjugate by $U(1)$ and multiply by an operator $A_q$, see~\S\ref{s:ehr-step} below. We carefully estimate the resulting symbols
and the remainders at each step of the iteration, using quantitative
semiclassical expansions established in Appendix~\ref{s:semi-detail}.
This largely follows~\cite[Section 7]{Riv10}; the estimates on the symbol of $A^-_{\mathbf q}$
there are similar in spirit to those in~\cite[Section 3.4]{AN07}.
Compared to~\cite{Riv10} we will obtain more precise information on the propagated symbols
in order to control the sums over many operators $A^-_{\mathbf q}$
which is needed in the proof of Proposition~\ref{l:ehrenfest-prop-sum}.

%%%%%%%%%%%%%%%%%%%%%%%%%%%%%%%%%%%%%%%%%%%%%%%%%%%%%%%%%%%%%%%%%%%%%%%%%%%%%%%%
\subsection{Iterative construction of the operators}
\label{s:ehr-step}

Let $\mathbf q=q_0\dots q_{n-1}\in\mathscr A^\bullet$
and assume that $n\leq C_0\log(1/h)$ for some constant $C_0$.
Define
\begin{equation}
  \label{e:A-r-def}
\widehat A_{\mathbf q,r}:=A^-_{q_{n-r}\dots q_{n-1}},\quad
r=1,\dots,n.
\end{equation}
Then $\widehat A_{\mathbf q,1}=A_{q_{n-1}}$, $A^-_{\mathbf q}=\widehat A_{\mathbf q,n}$, and
we have the iterative formula
\begin{equation}
  \label{e:A-r-iter}
\widehat A_{\mathbf q,r}=U(-1)\widehat A_{\mathbf q,r-1} U(1)A_{q_{n-r}},\quad
r=2,\dots,n.
\end{equation}
The next statement gives the dependence of the full symbol
of the operator $\widehat A_{\mathbf q,r}$ on that of the operator $\widehat A_{\mathbf q,r-1}$,
with explicit remainders. We use the quantization procedure $\Op_h$
on~$M$ defined in~\eqref{e:Op-h-M}.
%%%%%%%%%%%%%%%%%%%%%%%%%%%%%%%%%%%%%%%%%%%%%%%%%%%%%%%%%%%%%%%%%%%%%%%%%%%%%%%%
\begin{lemm}\label{l:one-step}
Assume that $a\in\CIc(T^*M)$, $\supp a\subset\{{1\over 4}\leq |\xi|_g\leq 4\}$,
and $q\in \mathscr A$.
Then for each%
\footnote{We use boldface $\mathbf N$ here to avoid confusion with the propagation
time defined in~\eqref{e:prop-times}.}
$\mathbf N\in\mathbb N$ we have
\begin{equation}
  \label{e:one-step}
U(-1)\Op_h(a)U(1)A_q=\Op_h\bigg(\sum_{j=0}^{\mathbf N-1} h^j L_{j,q}(a\circ\varphi_1) \bigg)
+\mathcal O(\|a\|_{C^{2\mathbf N+17}}h^{\mathbf N})_{L^2\to L^2}.
\end{equation}
Here each $L_{j,q}$ is a differential operator of order~$2j$ on $T^*M$.
We have $L_{0,q}=a_q$. Moreover, each $L_{j,q}$ is supported in $\mathcal V_q\cap \{{1\over 4}<|\xi|_g<4\}$.

In addition to $\mathbf N$, the constant in $\mathcal O(\bullet)$ depends
only on $(M,g)$, the choice
of the coordinate charts and cutoffs in~\eqref{e:Op-h-M}, and the choice
of the operators $A_1,\dots,A_Q$. The operators $L_{j,q}$ depend only on the above data as well
as on $j,q$.
\end{lemm}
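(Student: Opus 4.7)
The plan is to treat the two operations in \eqref{e:A-r-iter} separately: first analyze the Heisenberg conjugate $U(-1)\Op_h(a)U(1)$ by a quantitative Egorov expansion at time~$1$, then apply the quantitative Moyal product formula to compose with $A_q$, and finally collect like powers of $h$ and read off the differential operators $L_{j,q}$, keeping careful track of how many derivatives of $a$ enter the remainder.

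For the Egorov step, since $P=\psi_P(-h^2\Delta)$ has principal symbol $p(x,\xi)=|\xi|_g$ on the annulus $\{\tfrac{1}{16}\leq|\xi|_g^2\leq 16\}$ which contains $\supp a$ and is preserved by $\varphi_t$, the operator $U(1)=e^{-iP/h}$ is microlocally a Fourier integral operator quantizing $\varphi_1$ on this annulus. Using the standard iterative recipe--write $\tilde a_t = \sum_{j<\mathbf N}h^j \tilde a_{t,j}$, impose $U(t)\Op_h(\tilde a_t)U(-t)=\Op_h(a)+\mathcal O(h^{\mathbf N})$, and solve the resulting transport hierarchy along $\varphi_t$ for $0\leq t\leq 1$--one obtains
$$
U(-1)\Op_h(a)U(1) \;=\; \Op_h\Big(\sum_{j=0}^{\mathbf N-1} h^j E_j(a\circ\varphi_1)\Big) \,+\, R^{(1)}_{\mathbf N}(a),
$$
with $E_0=\mathrm{id}$ and $E_j$ a differential operator of order $2j$ on $T^*M$ depending only on $P$ and the chosen quantization. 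The remainder is controlled in $L^2\to L^2$ by $\|a\|_{C^{2\mathbf N+c_1}}h^{\mathbf N}$ for a fixed integer $c_1$ coming from Calder\'on--Vaillancourt; since $\varphi_1$ is a fixed smooth diffeomorphism, $\|a\circ\varphi_1\|_{C^k}\lesssim\|a\|_{C^k}$ uniformly in $h$, so no Ehrenfest-type blow-up appears at this stage.

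For the composition step, I would write $A_q=\Op_h(\sigma(A_q))$ with $\sigma(A_q)\in S^{-\infty}_h(T^*M)$ a polyhomogeneous symbol $\sigma(A_q)\sim\sum_{j_3}h^{j_3}\sigma_{j_3}(A_q)$ whose total support is contained in $\WFh(A_q)\subset\mathcal V_q\cap\{\tfrac{1}{4}<|\xi|_g<4\}$ and whose principal term is $\sigma_0(A_q)=a_q$. The quantitative product formula from Appendix~\ref{s:semi-detail} then gives
$$
\Op_h(b)\Op_h(\sigma(A_q)) \;=\; \Op_h\Big(\sum_{k=0}^{\mathbf N-1}h^k M_k(b,\sigma(A_q))\Big) + R^{(2)}_{\mathbf N}(b),
$$
where $M_k$ is a bilinear differential operator with $M_0(b,c)=bc$ and $\supp M_k(\cdot,c)\subset\supp c$. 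Applying this to $b = \sum_{j<\mathbf N} h^j E_j(a\circ\varphi_1)$ and collecting powers of $h$ yields \eqref{e:one-step} with
$$
L_{j,q} \;=\; \sum_{\substack{j_1+j_2+j_3=j\\ j_i\geq 0}} M_{j_2}\big(E_{j_1}(\,\cdot\,),\,\sigma_{j_3}(A_q)\big).
$$
Each $L_{j,q}$ is a differential operator of order at most $2j_1+j_2\leq 2j$; its coefficients are built from derivatives of $\sigma(A_q)$ and are therefore supported in $\mathcal V_q\cap\{\tfrac{1}{4}<|\xi|_g<4\}$. Inspecting $j=0$ gives $L_{0,q} = M_0(\mathrm{id},\sigma_0(A_q)) = a_q$, as required.

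The main obstacle is simply the explicit derivative bookkeeping yielding the exponent $C^{2\mathbf N+17}$ in the remainder. Combining $R^{(1)}_{\mathbf N}$ and $R^{(2)}_{\mathbf N}$ requires controlling $\|E_{j_1}(a\circ\varphi_1)\|_{C^{2(\mathbf N-j_1)+c_2}}$ for each $j_1<\mathbf N$ and summing the Calder\'on--Vaillancourt losses incurred at both the Egorov and composition stages. The constant $17$ has no intrinsic meaning--it is an upper bound for the sum of these fixed losses in the particular quantization \eqref{e:Op-h-M}--and can be safely overestimated. Uniformity in $q$ is immediate since $\mathscr A$ is finite, and uniformity in $(M,g)$ and the auxiliary choices is encoded in the constants in Appendix~\ref{s:semi-detail}.
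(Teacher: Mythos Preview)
Your proposal is correct and follows essentially the same two-step strategy as the paper: apply the quantitative Egorov expansion (Lemma~\ref{l:egorov-precise}) to $U(-1)\Op_h(a)U(1)$, expand $A_q=\Op_h(\sum_j h^j a_{q,j})+\mathcal O(h^{\mathbf N})$ with $a_{q,0}=a_q$ and $\supp a_{q,j}\subset\mathcal V_q\cap\{\tfrac14<|\xi|_g<4\}$, and then compose via the product formula (Lemma~\ref{l:prod-mfld}). Your identification of $L_{j,q}$, its support, and $L_{0,q}=a_q$ all match; the only cosmetic slip is the order count ``$2j_1+j_2$'' for a summand of $L_{j,q}$, which on a manifold should read $\leq 2j_1+2j_2\leq 2j$ (the $h^{j_2}$ term in Lemma~\ref{l:prod-mfld} is bidifferential of total order $2j_2$, not $j_2$), but the conclusion $\ord L_{j,q}\leq 2j$ is unaffected.
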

%%%%%%%%%%%%%%%%%%%%%%%%%%%%%%%%%%%%%%%%%%%%%%%%%%%%%%%%%%%%%%%%%%%%%%%%%%%%%%%%
\begin{proof}
From the construction of $A_q$ in~\S\ref{s:refined-partition} we have
for all~$\mathbf N$
\begin{equation}
  \label{e:A-q-full}
A_q=\Op_h\bigg(\sum_{j=0}^{\mathbf N-1}h^j a_{q,j}\bigg)+\mathcal O(h^{\mathbf N})_{L^2\to L^2}
\end{equation}
for some $h$-independent $a_{q,j}\in\CIc(T^*M)$ such that
$\supp a_{q,j}\subset \mathcal V_q\cap \{{1\over 4}<|\xi|_g<4\}$
and $a_{q,0}=a_q$.
Now~\eqref{e:one-step} follows by combining the precise versions of Egorov's Theorem,
Lemma~\ref{l:egorov-precise}, and of the product formula, \eqref{e:prod-mfld}.
\end{proof}
%%%%%%%%%%%%%%%%%%%%%%%%%%%%%%%%%%%%%%%%%%%%%%%%%%%%%%%%%%%%%%%%%%%%%%%%%%%%%%%%
Now, arguing by induction on $r$ with~\eqref{e:A-q-full} as the base and~\eqref{e:A-r-iter},
\eqref{e:one-step} as the inductive step,
we write for each $\mathbf N\in\mathbb N$
\begin{equation}
  \label{e:iterated-operator}
\widehat A_{\mathbf q,r}=\Op_h\bigg(\sum_{k=0}^{\mathbf N-1} h^k a_{\mathbf q,r}^{(k)}\bigg)+ R_{\mathbf q,r}^{(\mathbf N)},\quad
r=1,\dots,n
\end{equation}
where:
\begin{itemize}
\item $a_{\mathbf q,1}^{(k)}=a_{q_{n-1},k}$ where the latter function is defined in~\eqref{e:A-q-full};
\item for $r\geq 2$, we have
\begin{equation}
  \label{e:iterated-symbol}
a_{\mathbf q,r}^{(k)}=\sum_{j=0}^k L_{j,q_{n-r}}(a_{\mathbf q,r-1}^{(k-j)}\circ\varphi_1)
\end{equation}
where $L_{j,q}$ are the operators from~\eqref{e:one-step};
\item the remainder $R_{\mathbf q,r}^{(\mathbf N)}$ satisfies the norm bound
\begin{equation}
  \label{e:iterated-remainder}
\|R_{\mathbf q,r}^{(\mathbf N)}\|_{L^2\to L^2}\leq C_{\mathbf N} h^{\mathbf N} \bigg(1+\sum_{\ell=1}^{r-1}
\sum_{k=0}^{\mathbf N-1} \|a_{\mathbf q,\ell}^{(k)}\|_{C^{2(\mathbf N-k)+17}}\bigg)
\end{equation}
for some constant $C_{\mathbf N}$ independent of $\mathbf q,r$.
\end{itemize}
Here the bound~\eqref{e:iterated-remainder} is obtained from the iterative remainder
bound
$$
\|R_{\mathbf q,r}^{(\mathbf N)}\|_{L^2\to L^2}\leq \|R_{\mathbf q,r-1}^{(\mathbf N)}\|_{L^2\to L^2}\cdot \|A_{q_{n-r}}\|_{L^2\to L^2}
+C'_{\mathbf N}h^{\mathbf N}\sum_{k=0}^{\mathbf N-1}\|a^{(k)}_{\mathbf q,r-1}\|_{C^{2(\mathbf N-k)+17}}
$$
using that $\|A_q\|_{L^2\to L^2}\leq 1+Ch^{1/2}$ similarly to~\eqref{e:A-q-bdd}.

Here are some basic properties
of the symbols $a^{(k)}_{\mathbf q,r}$ which follow immediately from their construction,
using the notation~\eqref{e:a-pm-def}, \eqref{e:V+-}:
\begin{itemize}
\item $a_{\mathbf q,r}^{(k)}\in\CIc(T^*M)$ and
\begin{equation}
  \label{e:supp-symbol}
\supp a_{\mathbf q,r}^{(k)}\ \subset\ \mathcal V^-_{q_{n-r}\dots q_{n-1}}\cap \{\textstyle{1\over 4}<|\xi|_g<4\};
\end{equation}
\item $a_{\mathbf q,r}^{(0)}=a^-_{q_{n-r}\dots q_{n-1}}$, in particular
$a_{\mathbf q,n}^{(0)}=a^-_{\mathbf q}$.
\end{itemize}
The following is a key estimate on the
symbols $a^{(k)}_{\mathbf q,r}$ and their derivatives, proved in~\S\ref{s:iterated-symbol-estimates} below.
Recall that for a word $\mathbf q\in\mathscr A^\bullet$ its Jacobian
$\mathcal J^-_{\mathbf q}$ was defined in~\eqref{e:word-J-def}.
%%%%%%%%%%%%%%%%%%%%%%%%%%%%%%%%%%%%%%%%%%%%%%%%%%%%%%%%%%%%%%%%%%%%%%%%%%%%%%%%
\begin{lemm}
  \label{l:symbols-bounds}
Assume that $\mathcal V^-_{\mathbf q}\neq\emptyset$.
Then we have the following bounds for all $r,k,m$:
\begin{equation}
  \label{e:symbols-bounds}
\|a_{\mathbf q,r}^{(k)}\|_{C^m}\leq C_{km} r^{4k+2m} (\mathcal J^-_{q_{n-r}\dots q_{n-1}})^{2k+m}
\end{equation}
where the constant $C_{km}$ depends on $k,m$ but not on $r,\mathbf q$.
\end{lemm}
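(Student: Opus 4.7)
The plan is to establish~\eqref{e:symbols-bounds} by induction on $r$, using the recursion~\eqref{e:iterated-symbol}. The base case $r=1$ is immediate: $a^{(k)}_{\mathbf q,1}=a_{q_{n-1},k}$ is $h$-independent with $C^m$-norm bounded by a constant depending only on $k,m$, and since $\mathcal J^-_{q_{n-1}}\geq e^{\Lambda_0}>1$ by~\eqref{e:jacobian-bounds}, the desired bound holds with room to spare.

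For the inductive step, I would control each summand $L_{j,q_{n-r}}(a^{(k-j)}_{\mathbf q,r-1}\circ\varphi_1)$ in $C^m$ by combining three elementary estimates. First, since $L_{j,q}$ is an $h$-independent linear differential operator of order $2j$ with smooth, compactly supported coefficients, the product rule yields $\|L_{j,q_{n-r}}(g)\|_{C^m}\leq C_{j,m}\|g\|_{C^{m+2j}}$. Second, since $\varphi_1$ is a fixed smooth diffeomorphism of $\{\tfrac14\leq|\xi|_g\leq 4\}$ with derivatives of all orders uniformly bounded, the Faà di Bruno chain rule gives $\|f\circ\varphi_1\|_{C^{m+2j}}\leq C_{m,j}\|f\|_{C^{m+2j}}$. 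Third, applying the induction hypothesis at level $(r-1,\ k-j,\ m+2j)$ produces a bound proportional to $(r-1)^{4(k-j)+2(m+2j)}\cdot(\mathcal J^-_{q_{n-r+1}\dots q_{n-1}})^{2(k-j)+(m+2j)}$. The exponents collapse algebraically: $4(k-j)+2(m+2j)=4k+2m$ and $2(k-j)+(m+2j)=2k+m$, both \emph{independent of $j$}. Thus summation over $j=0,\dots,k$ only contributes a factor of $k+1$, absorbable into the final constant.

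Two short steps then close the induction. The Jacobian on the right must be transferred from $\mathcal J^-_{q_{n-r+1}\dots q_{n-1}}$ to $\mathcal J^-_{q_{n-r}\dots q_{n-1}}$: for any $\sigma\in\mathcal V^-_{q_{n-r}\dots q_{n-1}}$ one has $\varphi_1(\sigma)\in\mathcal V^-_{q_{n-r+1}\dots q_{n-1}}$, and the cocycle identity $J^u_r(\sigma)=J^u_1(\sigma)\,J^u_{r-1}(\varphi_1(\sigma))$ combined with $J^u_1\geq e^{\Lambda_0}$ yields $\mathcal J^-_{q_{n-r+1}\dots q_{n-1}}\leq e^{-\Lambda_0}\mathcal J^-_{q_{n-r}\dots q_{n-1}}$. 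The trivial bound $(r-1)^{4k+2m}\leq r^{4k+2m}$ completes the induction with an updated constant $C_{k,m}$ depending only on $k,m$ and on the fixed data.

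The main obstacle is purely bookkeeping: one must verify carefully that the Faà di Bruno expansion of $f\circ\varphi_1$ at order $m+2j$, which is a sum over partitions of the multi-index of products of derivatives of $\varphi_1$ and of $f$, is dominated by a constant multiple of $\|f\|_{C^{m+2j}}$ with the constant depending only on $m,j$. The crucial algebraic fact enabling the whole argument is the $j$-independence of the collapsed exponents $4k+2m$ and $2k+m$; without this, the recursion would lose a geometric factor at each level and destroy the polynomial-in-$r$ growth. Beyond this, the constants that appear depend on the fixed quantization procedure and on the fixed operators $A_1,\dots,A_Q$, but crucially not on $r$, $\mathbf q$, or~$h$.
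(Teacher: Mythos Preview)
Your induction does not close. Look at the $j=0$ summand and isolate the top-order contribution in both the product rule for $L_{0,q}=a_q$ and in Fa\`a di Bruno for $\circ\,\varphi_1$: the latter carries a factor $(d\varphi_1(\rho)^T)^{\otimes m}$, which your uniform bound replaces by $(\sup_\rho\|d\varphi_1\|)^m$, a constant of order $e^{\Lambda_1 m}$. Meanwhile your Jacobian transfer uses only $J^u_1\geq e^{\Lambda_0}$ and gains merely $e^{-\Lambda_0(2k+m)}$. For the inductive constant $C_{k,m}$ to reproduce itself, already the $j=0$ term at $k=0$ forces something like $e^{(\Lambda_1-\Lambda_0)m}\leq 1$ (times further product-rule constants $\geq 1$), which fails whenever $\Lambda_1>\Lambda_0$ --- precisely the variable-curvature setting of the paper. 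The polynomial slack $r^{4k+2m}/(r-1)^{4k+2m}\to 1$ cannot compensate a fixed multiplicative excess repeated $r$ times, so your scheme yields exponential rather than polynomial growth in~$r$.

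The underlying issue is that $\mathcal J^-_{q_{n-r}\dots q_{n-1}}$ controls $\|d\varphi_r(\rho)\|$ for $\rho$ restricted to the support (this is~\eqref{e:global-expansion-rate-1}), not the $r$-fold product of global one-step suprema. A step-by-step $C^m$ estimate discards the trajectory: at each step it replaces $d\varphi_1$ at the specific orbit point $\varphi_j(\rho)$ by its global supremum. The paper therefore does not induct on~$r$; it unfolds the full $r$-step recursion~\eqref{e:itesym-2} into a sum over paths in a finite directed graph on the indices $(k,m)$ (see~\eqref{e:itesym-3}). The diagonal (loop) segments then compose exactly to products $(d\varphi_s(\varphi_j(\rho))^T)^{\otimes m}$ along the actual orbit (see~\eqref{e:diagonal-term} and~\eqref{e:Y-computed}), which are bounded by the local Jacobian via~\eqref{e:global-expansion-rate-1}; the off-diagonal jumps occur at most $2\mathbf N_0$ times since the reduced graph is acyclic, and this is what produces the polynomial factor $r^{4k+2m}$.
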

%%%%%%%%%%%%%%%%%%%%%%%%%%%%%%%%%%%%%%%%%%%%%%%%%%%%%%%%%%%%%%%%%%%%%%%%%%%%%%%%
\Remark We allow the factor $r^{4k+2m}$ in~\eqref{e:symbols-bounds} to simplify the proof;
it does not matter for Proposition~\ref{l:ehrenfest-prop} since $r=\mathcal O(\log(1/h))$.
It is quite possible that more careful analysis can remove this factor.

Using Lemma~\ref{l:symbols-bounds} we now give
%%%%%%%%%%%%%%%%%%%%%%%%%%%%%%%%%%%%%%%%%%%%%%%%%%%%%%%%%%%%%%%%%%%%%%%%%%%%%%%%
\begin{proof}[Proof of Proposition~\ref{l:ehrenfest-prop}]
We consider the case of $A^-_{\mathbf q}$, with $A^+_{\mathbf q}$ handled similarly.
By~\eqref{e:symbols-bounds}, recalling that
$\mathcal J^-_{\mathbf q}\leq C_0h^{-\delta}$ and $n\leq C_0\log(1/h)$, we have for all $k,m$
\begin{equation}
  \label{e:symbols-bounds-used}
\max_{1\leq r\leq n}\|a_{\mathbf q,r}^{(k)}\|_{C^m}\leq C'_{km}h^{-(2k+m)\delta}(\log(1/h))^{4k+2m}. 
\end{equation}
This implies that 
$h^k a_{\mathbf q,n}^{(k)}=\mathcal O(h^{(1-2\delta)k-})_{S^{\comp}_{\delta}}$.
Using additionally that $\sup|a_{\mathbf q,n}^{(0)}|\leq 1$ we see that
$a_{\mathbf q,n}^{(0)}=a^-_{\mathbf q}=\mathcal O(1)_{S^{\comp}_{\delta+}}$.

By Borel's Theorem~\cite[Theorem~4.15]{e-z} there exists a symbol
$a^{\flat-}_{\mathbf q}\in S^{\comp}_{\delta+}(T^*M)$ such that
$a^{\flat-}_{\mathbf q}\sim \sum_{k\geq 0}h^ka_{\mathbf q,n}^{(k)}$ in the following sense:
$$
a^{\flat-}_{\mathbf q}=\sum_{k=0}^{\mathbf N-1}h^ka_{\mathbf q,n}^{(k)}+\mathcal O_{\mathbf N}(h^{(1-2\delta)\mathbf N-})_{S^{\comp}_{\delta}}\quad\text{for all }\mathbf N\in\mathbb N.
$$
From the basic properties of the symbols $a_{\mathbf q,n}^{(k)}$ listed above we see that
$$
a^{\flat-}_{\mathbf q}=a^-_{\mathbf q}+\mathcal O(h^{1-2\delta-})_{S^{\comp}_{\delta}},\quad
\supp a^{\flat-}_{\mathbf q}\subset \mathcal V^-_{\mathbf q}\cap \{\textstyle{1\over 4}\leq |\xi|_g\leq 4\}.
$$
By~\eqref{e:iterated-operator} and the $L^2$ boundedness of operators
with symbols in $S^{\comp}_{\delta}$ we have for all~$\mathbf N$
\begin{equation}
  \label{e:iterated-operator-used}
A^-_{\mathbf q}=\widehat A_{\mathbf q,n}=\Op_h(a^{\flat-}_{\mathbf q})+R^{(\mathbf N)}_{\mathbf q,n}+\mathcal O(h^{(1-2\delta)\mathbf N-})_{L^2\to L^2}.
\end{equation}
The remainder $R^{(\mathbf N)}_{\mathbf q,n}$ is estimated using~\eqref{e:iterated-remainder} and~\eqref{e:symbols-bounds-used}:
\begin{equation}
  \label{e:iterated-remainder-estimated}
\|R^{(\mathbf N)}_{\mathbf q,n}\|_{L^2\to L^2}\leq C_{\mathbf N}h^{\mathbf N-(2\mathbf N+17)\delta}(\log(1/h))^{4\mathbf N+35}.
\end{equation}
Since $\mathbf N$ can be chosen arbitrarily large and $\delta<{1\over 2}$, together~\eqref{e:iterated-operator-used}
and~\eqref{e:iterated-remainder-estimated} imply that
$A^-_{\mathbf q}=\Op_h(a^{\flat-}_{\mathbf q})+\mathcal O(h^\infty)_{L^2\to L^2}$,
finishing the proof.
\end{proof}
%%%%%%%%%%%%%%%%%%%%%%%%%%%%%%%%%%%%%%%%%%%%%%%%%%%%%%%%%%%%%%%%%%%%%%%%%%%%%%%%

%%%%%%%%%%%%%%%%%%%%%%%%%%%%%%%%%%%%%%%%%%%%%%%%%%%%%%%%%%%%%%%%%%%%%%%%%%%%%%%%
\subsection{Estimating the iterated symbols}
\label{s:iterated-symbol-estimates}

In this section we prove Lemma~\ref{l:symbols-bounds}. To do this
we differentiate the inductive formulas~\eqref{e:iterated-symbol}
and represent the terms in the resulting expressions by the edges of
a directed graph $\mathscr G$. We then iterate~\eqref{e:iterated-symbol}
to write each derivative of $a^{(k)}_{\mathbf q,r}$ as the sum of many terms, each corresponding
to a path of length $r-1$ in $\mathscr G$~-- see~\eqref{e:itesym-2}. The reduced graph
$\widetilde{\mathscr G}$, obtained by removing the loops
from $\mathscr G$, is acyclic, which implies that the number of paths
of length $r-1$ in~$\mathscr G$ is bounded polynomially in~$r$. 
We finally analyze the term corresponding to each path, bounding it in terms of the
Jacobian $\mathcal J^-_{q_{n-r}\dots q_{n-1}}$.

%%%%%%%%%%%%%%%%%%%%%%%%%%%%%%%%%%%%%%%%%%%%%%%%%%%%%%%%%%%%%%%%%%%%%%%%%%%%%%%%
\subsubsection{Graph formalism}

We first introduce some notation to keep track of the derivatives of the symbols.
We fix some affine connection
$\nabla$ on $T^*M$. For each function $a\in C^\infty(T^*M)$ and $m\in\mathbb N_0$, let
$\nabla^m a$ be the $m$-th covariant derivative of~$a$, which
is a section of $\otimes^m T^*(T^*M)$, the $m$-th tensor power of the cotangent
bundle of $T^*M$. 
We fix an inner product on the fibers of $T^*(T^*M)$
which naturally induces a norm on each $\otimes^mT^*(T^*M)$.
When $\supp a\subset \{{1\over 4}\leq |\xi|_g\leq 4\}$ we have for some
constant $C$
\begin{equation}
  \label{e:C-m-norm}
C^{-1}\|a\|_{C^m}\leq \max_{j\leq m}\sup_{\rho\in T^*M} \|\nabla^j a(\rho)\|
\leq C\|a\|_{C^m}.
\end{equation}
Fix $\mathbf N_0\in\mathbb N_0$. The objects below will depend on $\mathbf N_0$ but
for the sake of brevity we will suppress it in the notation.
Denote
\begin{equation}
  \label{e:V-graph}
\mathscr V:=\{(k,m)\mid k,m\in\mathbb N_0,\ 2k+m\leq \mathbf N_0\}.
\end{equation}
Henceforth we write $\alpha=(k,m)$. Define the vector bundle over $T^*M$
$$
\mathscr E:=
\bigoplus_{\alpha\in\mathscr V}
\mathscr E_{\alpha},\quad
\mathscr E_{(k,m)}:=\otimes^m T^*(T^*M)
$$
and its sections composed of the derivatives of the symbols $a^{(k)}_{\mathbf q,r}$:
\begin{equation}
  \label{e:A-r-der-def}
\mathbf A_{\mathbf q,r}\in C^\infty(T^*M;\mathscr E),\quad
\mathbf A_{\mathbf q,r}:=(\nabla^m a^{(k)}_{\mathbf q,r})_{(k,m)\in\mathscr V},\quad
r=1,\dots,n.
\end{equation}
That is, in the biindex $(k,m)$, $k$ is the power of $h$
and $m$ is the number of derivatives taken. We denote by
$$
\iota_\alpha:\mathscr E_\alpha\to \mathscr E,\quad
\pi_\alpha:\mathscr E\to \mathscr E_\alpha
$$
the natural embedding and projection maps.

The iterative rules~\eqref{e:iterated-symbol} together
with the chain rule imply the relations
\begin{equation}
  \label{e:itesym-1}
\mathbf A_{\mathbf q,r}(\rho)=\mathbf M_{q_{n-r}}(\rho)\mathbf A_{\mathbf q,r-1}(\varphi_1(\rho)),\quad
r=2,\dots,n,\quad
\rho\in T^*M\setminus 0
\end{equation}
where the coefficients of the operators $L_{j,q}$ determine the homomorphisms
$$
\mathbf M_q\in C^\infty(T^* M\setminus 0;\Hom(\varphi_1^*\mathscr E;\mathscr E)),\quad
q\in\mathscr A.
$$
That is,
$\mathbf M_q(\rho)$ is a linear map
$\mathscr E(\varphi_1(\rho))\to \mathscr E(\rho)$
depending smoothly on $\rho\in T^*M\setminus 0$.

%%%%%%%%%%%%%%%%%%%%%%%%%%%%%%%%%%%%%%%%%%%%%%%%%%%%%%%%%%%%%%%%%%%%%%%%%%%%%%%%
\begin{figure}
\includegraphics{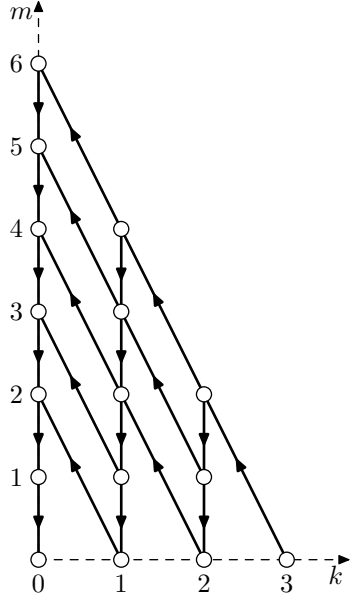}
\caption{A subgraph $\widehat{\mathscr G}$ of the reduced graph $\widetilde{\mathscr G}$ for
$\mathbf N_0=6$, with edges $(k,m)\to (k,m-1)$ and $(k,m)\to (k-1,m+2)$.
 The full graph $\widetilde{\mathscr G}$ is obtained
as follows: there is an edge from $\alpha$ to $\alpha'$ in $\widetilde{\mathscr G}$
if and only if there is a nontrivial path from $\alpha$ to $\alpha'$
in $\widehat{\mathscr G}$.}
\label{f:graph}
\end{figure}
%%%%%%%%%%%%%%%%%%%%%%%%%%%%%%%%%%%%%%%%%%%%%%%%%%%%%%%%%%%%%%%%%%%%%%%%%%%%%%%%

Define the directed graph%
\footnote{A directed graph is a pair $(V,E)$ where $V$ is a finite set of \emph{vertices}
and $E\subset V\times V$ is the set of \emph{edges}. There is an edge going from the
vertex $v_1$ to the vertex $v_2$ if and only if $(v_1,v_2)\in E$.}
 $\mathscr G$ with the set of vertices $\mathscr V$,
which has an edge from $\alpha=(k,m)$ to $\alpha'=(k',m')$ if and only if
\begin{equation}
  \label{e:edge-graph}
2k'+m'\leq 2k+m,\quad
k'\leq k.
\end{equation}
If~\eqref{e:edge-graph} holds then we write
$$
\alpha\rightarrow \alpha'.
$$
The homomorphisms $\mathbf M_q$ are subordinate to the graph $\mathscr G$ in the following
sense: we may write them in the `block matrix' form
\begin{equation}
  \label{e:matrix-reloaded}
\mathbf M_q=\sum_{\alpha\rightarrow \alpha'}
\iota_\alpha \mathbf M_{q,\alpha,\alpha'} \pi_{\alpha'}
\end{equation}
where
\begin{equation}
  \label{e:MPAA}
\mathbf M_{q,\alpha,\alpha'}:=\pi_\alpha\mathbf M_q\iota_{\alpha'}
\in C^\infty(T^*M\setminus 0;\Hom(\varphi_1^*\mathscr E_{\alpha'}; \mathscr E_\alpha)).
\end{equation}
That is, if $\nabla^{m}a_{\mathbf q,r}^{(k)}(\rho)$ depends
on $\nabla^{m'} a_{\mathbf q,r-1}^{(k')}(\varphi_1(\rho))$ in~\eqref{e:iterated-symbol},
then~\eqref{e:edge-graph} holds. This is straightforward to see using~\eqref{e:iterated-symbol}
and the chain rule.

It will be important for our analysis to separate out the `diagonal' part of $\mathbf M_q$,
consisting of the homomorphisms $\iota_\alpha \mathbf M_{q,\alpha,\alpha}\pi_\alpha$
corresponding to the loops $\alpha\to\alpha$ in the graph $\mathscr G$.
Using~\eqref{e:iterated-symbol} (recalling that $L_{0,q}=a_q$) and the chain rule we compute
\begin{equation}
  \label{e:diagonal-term}
\mathbf M_{q,\alpha,\alpha}(\rho)=a_q(\rho)\cdot (d\varphi_1(\rho)^T)^{\otimes m},\quad
\alpha=(k,m).
\end{equation}
The remaining components of $\mathbf M_q$ correspond to the \emph{reduced graph}
$\widetilde{\mathscr G}$, obtained by removing all the loops $\alpha\to \alpha$
from $\mathscr G$, see Figure~\ref{f:graph}.

%%%%%%%%%%%%%%%%%%%%%%%%%%%%%%%%%%%%%%%%%%%%%%%%%%%%%%%%%%%%%%%%%%%%%%%%%%%%%%%%
\subsubsection{Long paths and end of the proof}
  \label{s:long-paths}

We now restrict to the case $r=n$ in Lemma~\ref{l:symbols-bounds}, proving the bounds
\begin{equation}
  \label{e:symbols-bounds-tot}
\|a^{(\tilde k)}_{\mathbf q,n}\|_{C^{\widetilde m}}\leq C_{\tilde k\widetilde m}n^{4\tilde k+2\widetilde m}(\mathcal J^-_{\mathbf q})^{2\tilde k+\widetilde m},\quad
\tilde k,\widetilde m\in\mathbb N_0.
\end{equation}
The general case follows from here by replacing $\mathbf q$ with $q_{n-r}\dots q_{n-1}$. 

By~\eqref{e:C-m-norm} and the support property~\eqref{e:supp-symbol} see that~\eqref{e:symbols-bounds-tot} follows from
\begin{equation}
  \label{e:symbols-bounds-tot-2}
\sup_{\rho\in\mathcal V^-_{\mathbf q}\cap \{{1\over 4}\leq |\xi|_g\leq 4\}}\|\mathbf A_{\mathbf q,n}(\rho)\|\leq C_{\mathbf N_0} n^{2\mathbf N_0}(\mathcal J^-_{\mathbf q})^{\mathbf N_0}.
\end{equation}
Here $\mathbf N_0$ was the natural number used in~\eqref{e:V-graph} and thus in the definition~\eqref{e:A-r-der-def}
of $\mathbf A_{\mathbf q,n}$. To obtain~\eqref{e:symbols-bounds-tot} we put $\mathbf N_0:=2\tilde k+\widetilde m$.

In the rest of this section we prove~\eqref{e:symbols-bounds-tot-2}.
Iterating~\eqref{e:itesym-1} we get the following formula for $\mathbf A_{\mathbf q,n}$:
\begin{equation}
  \label{e:itesym-2}
\mathbf A_{\mathbf q,n}(\rho)=\mathbf M_{q_0}(\rho)\mathbf M_{q_1}(\varphi_1(\rho))
\cdots\mathbf M_{q_{n-2}}(\varphi_{n-2}(\rho))\mathbf A_{\mathbf q,1}(\varphi_{n-1}(\rho)).
\end{equation}
Using the decomposition~\eqref{e:matrix-reloaded} we write
\begin{equation}
  \label{e:itesym-3}
\mathbf A_{\mathbf q,n}(\rho)=\sum_{\vec\alpha\in \mathscr P}\iota_{\alpha_1}\mathbf M_{\mathbf q,\vec\alpha}(\rho)
\pi_{\alpha_n}\mathbf A_{\mathbf q,1}(\varphi_{n-1}(\rho))
\end{equation}
where  
\begin{equation}
  \label{e:path-def}
\mathscr P:=\{\vec\alpha=\alpha_1\dots\alpha_n\in\mathscr V^n\mid
\alpha_j\rightarrow \alpha_{j+1}\text{ for all }j=1,\dots,n-1\}
\end{equation}
is the set of \emph{paths} of length~$n-1$ in the graph~$\mathscr G$ and
\begin{equation}
  \label{e:MM-def}
\mathbf M_{\mathbf q,\vec\alpha}(\rho):=\mathbf M_{q_{0},\alpha_1,\alpha_2}(\rho)
\mathbf M_{q_1,\alpha_2,\alpha_3}(\varphi_1(\rho))\cdots
\mathbf M_{q_{n-2},\alpha_{n-1},\alpha_n}(\varphi_{n-2}(\rho)).
\end{equation}
Since $\sup_{T^*M}\|\mathbf A_{\mathbf q,1}\|\leq C$, using the triangle inequality
in~\eqref{e:itesym-3} we get for all $\rho\in T^*M$
\begin{equation}
  \label{e:itesym-4}
\|\mathbf A_{\mathbf q,n}(\rho)\|\leq
C\sum_{\vec\alpha\in\mathscr P}\|\mathbf M_{\mathbf q,\vec\alpha}(\rho)\|
\leq
 C\#(\mathscr P)\cdot \max_{\vec\alpha\in\mathscr P}\|\mathbf M_{\mathbf q,\vec\alpha}(\rho)\|.
\end{equation}
Thus to show~\eqref{e:symbols-bounds-tot-2}
(and thus finish the proof of Lemma~\ref{l:symbols-bounds})
it remains to prove the following
%%%%%%%%%%%%%%%%%%%%%%%%%%%%%%%%%%%%%%%%%%%%%%%%%%%%%%%%%%%%%%%%%%%%%%%%%%%%%%%%
\begin{lemm}
  \label{l:ehr-comb}
There exists a constant $C$ depending on $\mathbf N_0$ but not on $n,\mathbf q$ such that
\begin{align}
  \label{e:ehr-comb-1}
\#(\mathscr P)&\leq Cn^{2\mathbf N_0},\\
  \label{e:ehr-comb-2}
\max_{\vec\alpha\in\mathscr P}
\sup_{\rho\in\mathcal V^-_{\mathbf q}\cap \{{1\over 4}\leq |\xi|_g\leq 4\}}\|\mathbf M_{\mathbf q,\vec\alpha}(\rho)\|
&\leq C (\mathcal J^-_{\mathbf q})^{\mathbf N_0}.
\end{align}
\end{lemm}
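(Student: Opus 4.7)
Both statements hinge on the structural observation that the reduced graph $\widetilde{\mathscr G}$ is a DAG. Indeed, along any non-loop edge $(k,m)\to(k',m')$ of $\mathscr G$ the conditions \eqref{e:edge-graph} make the pair $(2k+m,k)$ strictly decrease lexicographically (equality in both would force $m=m'$, hence a loop). Equivalently, the integer $3k+m$ strictly decreases along every non-loop edge, so since $3k+m\leq 3\mathbf N_0/2$ on $\mathscr V$, every directed path in $\widetilde{\mathscr G}$ has length at most $K_{\max}:=\lfloor 3\mathbf N_0/2\rfloor\leq 2\mathbf N_0$.

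For \eqref{e:ehr-comb-1}, I would encode each $\vec\alpha\in\mathscr P$ by (i) the subsequence of distinct vertices $(\beta_0,\ldots,\beta_K)$ it visits—a directed path in $\widetilde{\mathscr G}$ of some length $K\leq K_{\max}$—and (ii) the $K$ locations of the non-loop transitions inside $\{1,\ldots,n-1\}$. The number of vertex sequences depends only on $\widetilde{\mathscr G}$, hence only on $\mathbf N_0$; the number of position choices is $\binom{n-1}{K}\leq n^{2\mathbf N_0}$, giving \eqref{e:ehr-comb-1}.

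For \eqref{e:ehr-comb-2}, decompose $\vec\alpha$ into stretches of $\ell_i+1$ consecutive visits to $\beta_i=(k_i,m_i)$ joined by non-loop transitions, and let $t_i$ be the flow-position where stretch $i$ begins (so $\sum_i\ell_i=n-1-K$). By \eqref{e:diagonal-term} together with the tensor identity $(AB)^{\otimes m}=A^{\otimes m}B^{\otimes m}$, the $\ell_i$ loop factors of stretch $i$ telescope to
\[
\Big(\prod_{s=0}^{\ell_i-1} a_{q_{t_i+s}}(\varphi_{t_i+s}(\rho))\Big)\cdot\big(d\varphi_{\ell_i}(\varphi_{t_i}(\rho))^T\big)^{\otimes m_i}.
\]
Since $0\leq a_q\leq 1$, and for $\rho\in\mathcal V^-_{\mathbf q}$ the point $\varphi_{t_i}(\rho)$ lies in $\mathcal V^-_{\mathbf r_i}$ with $\mathbf r_i$ the length-$\ell_i$ subword of $\mathbf q$ starting at $t_i$, \eqref{e:jacobians-same} and \eqref{e:global-expansion-rate-1} bound this stretch by $(C\mathcal J^-_{\mathbf r_i})^{m_i}$. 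Inspection of \eqref{e:one-step} shows the non-loop entries $\mathbf M_{q,\alpha,\alpha'}$ are polynomial expressions in the derivatives of $\varphi_1$ and $a_q$ of orders at most $\mathbf N_0$, so their norm is bounded by a constant $C_{\mathbf N_0}$. Multiplying,
\[
\|\mathbf M_{\mathbf q,\vec\alpha}(\rho)\|\leq C_{\mathbf N_0}^{K+1}\prod_{i=0}^{K}(\mathcal J^-_{\mathbf r_i})^{m_i}.
\]

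The final step, and the main obstacle, is that the naive bound $\mathcal J^-_{\mathbf r_i}\leq\mathcal J^-_{\mathbf q}$ only yields $(\mathcal J^-_{\mathbf q})^{\sum_i m_i}$, which is insufficient: along a chain such as $(k,0)\to(k-1,2)\to\cdots\to(0,2k)$ in $\widetilde{\mathscr G}$, $\sum_i m_i=k(k+1)$ may far exceed $\mathbf N_0$. The fix is to combine the multiplicativity \eqref{e:jacobians-concat}—which gives $\prod_i\mathcal J^-_{\mathbf r_i}\leq C_{\mathbf N_0}\mathcal J^-_{\mathbf q}$ after absorbing the at most $K_{\max}$ length-one non-loop gaps—with the a priori bound $m_i\leq 2k_i+m_i\leq 2k+m\leq\mathbf N_0$ built into the graph (since $2k+m$ is non-increasing along edges). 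Using $\log\mathcal J^-_{\mathbf r_i}\geq 0$,
\[
\sum_i m_i\log\mathcal J^-_{\mathbf r_i}\leq\big(\max_i m_i\big)\sum_i\log\mathcal J^-_{\mathbf r_i}\leq\mathbf N_0\big(\log\mathcal J^-_{\mathbf q}+\log C_{\mathbf N_0}\big),
\]
which on exponentiation yields $\prod_i(\mathcal J^-_{\mathbf r_i})^{m_i}\leq C(\mathcal J^-_{\mathbf q})^{\mathbf N_0}$, hence \eqref{e:ehr-comb-2}.
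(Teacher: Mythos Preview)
Your proposal is correct and follows essentially the same route as the paper. For~\eqref{e:ehr-comb-1} you use the identical encoding (reduced path in the acyclic graph $\widetilde{\mathscr G}$ plus the positions of the non-loop jumps), and for~\eqref{e:ehr-comb-2} you use the same decomposition into loop stretches telescoped via~\eqref{e:diagonal-term} together with bounded transition factors. The only cosmetic difference is in the final inequality: the paper bounds each stretch pointwise by $(\mathcal J^-_{\mathbf q_j})^{m_j}\leq(\mathcal J^-_{\mathbf q_j})^{\mathbf N_0}$ (using $\mathcal J^-_{\mathbf q_j}\geq 1$ and $m_j\leq\mathbf N_0$) and then multiplies, whereas you pass through logarithms via $\sum_i m_i\log\mathcal J^-_{\mathbf r_i}\leq(\max_i m_i)\sum_i\log\mathcal J^-_{\mathbf r_i}$; these are the same inequality, and both rest on the two facts you correctly isolate, namely $m_i\leq\mathbf N_0$ and the multiplicativity~\eqref{e:jacobians-concat}. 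Your handling of the ``length-one gaps'' is also fine---the paper avoids them only by letting each subword $\mathbf q_j$ absorb the transition letter, a bookkeeping choice that changes nothing in the estimate.
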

%%%%%%%%%%%%%%%%%%%%%%%%%%%%%%%%%%%%%%%%%%%%%%%%%%%%%%%%%%%%%%%%%%%%%%%%%%%%%%%%
\begin{proof}
1.
For each path $\vec\alpha\in\mathscr P$ we define the corresponding \emph{reduced path}
$$
\mathscr R(\vec\alpha)=\beta_1\dots\beta_{\ell+1}\in\mathscr V^{\ell+1},\quad
\beta_j\neq\beta_{j+1}\quad\text{for all }j
$$
obtained by removing all the loops in $\vec\alpha$: that is, $\vec\alpha$ has the form
\begin{equation}
  \label{e:alpha-form}
\vec\alpha=\beta_1^{s_{(1)}-s_{(0)}}\beta_2^{s_{(2)}-s_{(1)}}\dots\beta_{\ell+1}^{s_{(\ell+1)}-s_{(\ell)}}
\end{equation}
where $\beta^s=\beta\beta\dots\beta$ is the path obtained by repeating $\beta\in\mathscr V$
for $s$ times and $(s_{(j)})$ is a sequence such that
$$
0=s_{(0)}<s_{(1)}<s_{(2)}<\ldots<s_{(\ell)}<s_{(\ell+1)}=n.
$$
See Figure~\ref{f:reducedpath}.

For every $\vec\alpha\in\mathscr P$, $\mathscr R(\vec\alpha)$ is a path
in the reduced graph $\widetilde{\mathscr G}$. The latter graph is acyclic,
indeed if~\eqref{e:edge-graph} holds and $(k,m)\neq (k',m')$, then
$3k'+m'<3k+m$. Since $0\leq 3k+m\leq {3\mathbf N_0\over 2}\leq 2\mathbf N_0$ for all $(k,m)\in\mathscr V$,
we see that the length $\ell$ of any path in $\widetilde{\mathscr G}$
is bounded above by~$2\mathbf N_0$.

Now, the size of the range of $\mathscr R$ is bounded above by the number of
paths in $\widetilde{\mathscr G}$, which is finite (since $\widetilde{\mathscr G}$
is acyclic) and depends only on $\mathbf N_0$.
On the other hand, if $\vec\beta$ is a fixed path in $\widetilde{\mathscr G}$
then elements of $\mathscr R^{-1}(\vec\beta)$ are determined by $s_{(1)},\dots,s_{(\ell)}$,
thus they are in one to one correspondence with size $\ell$ subsets of $\{1,\dots,n-1\}$.
Thus $\mathscr R^{-1}(\vec\beta)$ has $\binom{n-1}{\ell}\leq n^{2\mathbf N_0}$ elements.
Together these two statements give~\eqref{e:ehr-comb-1}.

%%%%%%%%%%%%%%%%%%%%%%%%%%%%%%%%%%%%%%%%%%%%%%%%%%%%%%%%%%%%%%%%%%%%%%%%%%%%%%%%
\begin{figure}
\includegraphics{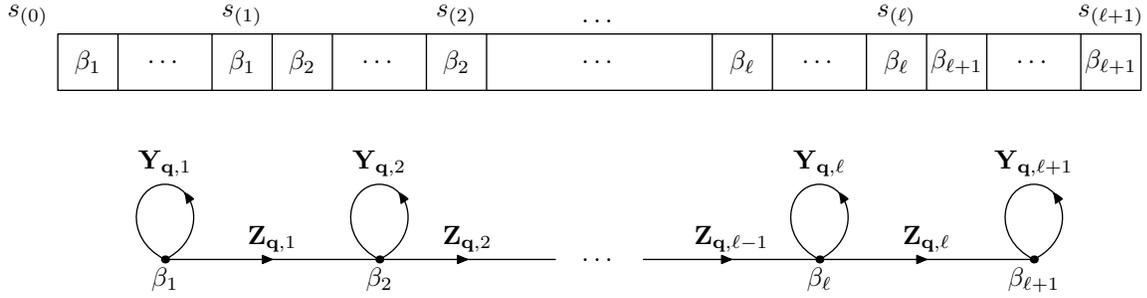}
\caption{Top: the decomposition~\eqref{e:alpha-form} of a path in $\mathscr G$,
with the indices $s_{(j)}$ marked. Bottom: a representation of this decomposition
as a combination of loops and a path in the reduced graph $\widetilde{\mathscr G}$,
with the homomorphisms in the right-hand side of~\eqref{e:M-dec}.}
\label{f:reducedpath}
\end{figure}
%%%%%%%%%%%%%%%%%%%%%%%%%%%%%%%%%%%%%%%%%%%%%%%%%%%%%%%%%%%%%%%%%%%%%%%%%%%%%%%%

\noindent 2. Take $\rho\in \mathcal V^-_{\mathbf q}\cap \{{1\over 4}\leq |\xi|_g\leq 4\}$
and $\vec\alpha\in\mathscr P$. Writing $\vec\alpha$ in the form~\eqref{e:alpha-form},
we have
\begin{equation}
  \label{e:M-dec}
\mathbf M_{\mathbf q,\vec\alpha}(\rho)=\mathbf Y_{\mathbf q,1}(\rho) \mathbf Z_{\mathbf q,1}(\rho)\cdots
\mathbf Y_{\mathbf q,\ell}(\rho)\mathbf Z_{\mathbf q,\ell}(\rho) \mathbf Y_{\mathbf q,\ell+1}(\rho)
\end{equation}
where
$$
\begin{aligned}
\mathbf Y_{\mathbf q,j}(\rho) &:= \mathbf M_{q_{s_{(j-1)}},\beta_j,\beta_j}(\varphi_{s_{(j-1)}}(\rho))\cdots\mathbf M_{q_{s_{(j)}-2},\beta_j,\beta_j}(\varphi_{s_{(j)}-2}(\rho)),\\
\mathbf Z_{\mathbf q,j}(\rho) &:= \mathbf M_{q_{s_{(j)}-1},\beta_j,\beta_{j+1}}(\varphi_{s_{(j)}-1}(\rho)).
\end{aligned}
$$
That is, the factors $\mathbf Y_{\mathbf q,j}$ correspond to loops in the path $\vec\alpha$
and the factors $\mathbf Z_{\mathbf q,j}$, to `true jumps' between the loops.
See Figure~\ref{f:reducedpath}.

Using the formula~\eqref{e:diagonal-term} for the `diagonal terms' $\mathbf M_{q,\alpha,\alpha}$
we compute
\begin{equation}
  \label{e:Y-computed}
\mathbf Y_{\mathbf q,j}(\rho)=\bigg(\prod_{r=s_{(j-1)}}^{s_{(j)}-2} a_{q_r}(\varphi_r(\rho))\bigg)
\cdot (d\varphi_{s_{(j)}-1-s_{(j-1)}}(\varphi_{s_{(j-1)}}(\rho))^T)^{\otimes m_j}
\end{equation}
where $\beta_j=(k_j,m_j)$.
Define the words
$$
\mathbf q_j:=q_{s_{(j-1)}}\dots q_{s_{(j)}-1},\quad
j=1,\dots,\ell+1,
$$
and note that $\mathbf q$ can be written as the concatenation
\begin{equation}
  \label{e:q-concat}
\mathbf q=\mathbf q_1 \mathbf q_2\dots
\mathbf q_{\ell+1}.
\end{equation}
Since $\sup |a_q|\leq 1$
and $\varphi_{s_{(j-1)}}(\rho)\in \mathcal V^-_{\mathbf q_j}\cap \{{1\over 4}\leq |\xi|_g\leq 4\}$, we obtain from~\eqref{e:global-expansion-rate-1}
$$
\|\mathbf Y_{\mathbf q,j}(\rho)\|\leq C\|d\varphi_{s_{(j)}-s_{(j-1)}}(\varphi_{s_{(j-1)}}(\rho))\|^{m_j}
\leq C(\mathcal J^-_{\mathbf q_j})^{m_j}
\leq C(\mathcal J^-_{\mathbf q_j})^{\mathbf N_0}.
$$
We have $\|\mathbf Z_{\mathbf q,j}(\rho)\|\leq C$ and the product~\eqref{e:M-dec}
has $2\ell+1\leq 4\mathbf N_0+1$ elements. Therefore
by~\eqref{e:jacobians-concat} and~\eqref{e:q-concat}
\begin{equation}
  \label{e:mastoedont}
\|\mathbf M_{\mathbf q,\vec\alpha}(\rho)\|\leq C(\mathcal J^-_{\mathbf q_1}\cdots \mathcal J^-_{\mathbf q_{\ell+1}})^{\mathbf N_0}
\leq C(\mathcal J^-_{\mathbf q})^{\mathbf N_0}
\end{equation}
giving~\eqref{e:ehr-comb-2}.
\end{proof}
%%%%%%%%%%%%%%%%%%%%%%%%%%%%%%%%%%%%%%%%%%%%%%%%%%%%%%%%%%%%%%%%%%%%%%%%%%%%%%%%

%%%%%%%%%%%%%%%%%%%%%%%%%%%%%%%%%%%%%%%%%%%%%%%%%%%%%%%%%%%%%%%%%%%%%%%%%%%%%%%%
\subsection{Summing over many words}
\label{s:ehr-sum}

We finally give the proof of Proposition~\ref{l:ehrenfest-prop-sum}.
By~\eqref{e:jacobian-bounds} for $h$ small enough we have the following
bound on the length of words with Jacobians less than $C_0h^{-\delta}\leq h^{-1/2}$:
$$
\mathcal J^-_{\mathbf p}\leq C_0h^{-\delta},\
\mathcal J^+_{\mathbf r}\leq C_0h^{-\delta}
\quad\Longrightarrow\quad
|\mathbf p|,|\mathbf r|\leq C_1\log(1/h),\quad
C_1:={1\over 2\Lambda_0}.
$$
We now split the operator $A_F$ from~\eqref{e:eps} into pieces by
the length of the words involved:
$$
A_F=\sum_{n_-,n_+\leq C_1\log(1/h)} A_{F_{n_-,n_+}},\quad
F_{n_-,n_+}(\mathbf p,\mathbf r):=
\begin{cases} F(\mathbf p,\mathbf r)&\text{if }\mathbf p\in\mathscr A^{n_-},
\mathbf r\in \mathscr A^{n_+};\\
0,&\text{otherwise}.
\end{cases}
$$
Using the triangle inequality we see that Proposition~\ref{l:ehrenfest-prop-sum}
follows from
%%%%%%%%%%%%%%%%%%%%%%%%%%%%%%%%%%%%%%%%%%%%%%%%%%%%%%%%%%%%%%%%%%%%%%%%%%%%%%%%
\begin{prop}
  \label{e:ehrenfest-prop-sum-mod}
Let $n_\pm\leq C_1\log(1/h)$, fix $\delta\in [0,{1\over 2})$ and $C_0>0$, and define
$$
\mathscr A^\pm_{\delta}:=\{\mathbf q\in \mathscr A^{n_\pm}\mid \mathcal J^\pm_{\mathbf q}\leq C_0h^{-\delta}\}.
$$
Assume that
$$
F:\mathscr A^-_\delta\times\mathscr A^+_\delta\to \mathbb C,\quad
\sup |F|\leq 1.
$$
Then there exists a constant $C$ depending only on $\delta,C_0,A_1,\dots,A_Q$ such that
$$
\|A_F\|_{L^2\to L^2}\leq C\quad\text{where}\quad
A_F:=\sum_{(\mathbf p,\mathbf r)\in\mathscr A^-_\delta\times\mathscr A^+_\delta}F(\mathbf p,\mathbf r)A^-_{\mathbf p}A^+_{\mathbf r}.
$$
\end{prop}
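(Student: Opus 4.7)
The plan is to write $A_F=\Op_h(b_F)+\mathcal O(h^\infty)_{L^2\to L^2}$ for a symbol $b_F\in S^{\comp}_{\delta+}(T^*M)$ whose $S^{\comp}_{\delta+}$ seminorms are bounded uniformly in $h$ and $F$, with $\sup|b_F|\leq 1+o(1)$, and then to invoke the norm bound~\eqref{e:precise-norm}. This will give $\|\Op_h(b_F)\|_{L^2\to L^2}\leq \sup|b_F|+\mathcal O(h^{\frac{1}{2}-\delta-})=\mathcal O(1)$. Note that $\#(\mathscr A^\pm_\delta)\leq Q^{C_1\log(1/h)}$ is polynomial in $h^{-1}$, so summing polynomially-many $\mathcal O(h^\infty)$ remainders still yields $\mathcal O(h^\infty)$.

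To construct $b_F$ I will revisit the iterative full-symbol expansion from the proof of Proposition~\ref{l:ehrenfest-prop}. Equations~\eqref{e:iterated-operator}--\eqref{e:iterated-remainder} give, uniformly in $\mathbf q\in \mathscr A^\pm_\delta$,
$$
A^\pm_{\mathbf q}=\Op_h\Big(\sum_{k<K}h^k\,a^{(k)}_{\mathbf q,n_\pm}\Big)+R^\pm_{\mathbf q,K},\qquad \|R^\pm_{\mathbf q,K}\|_{L^2\to L^2}\leq C_K h^{(1-2\delta)K-}.
$$
Composing the two expansions via the quantitative product formula (Lemma~\ref{l:prod-mfld}) and summing against $F$ produces, after picking $K=K(h)$ large enough to beat the polynomial loss from $\#(\mathscr A^-_\delta\times\mathscr A^+_\delta)$, a representation $A_F=\Op_h(b_F)+\mathcal O(h^\infty)_{L^2\to L^2}$ with
$$
b_F\sim\sum_{\mathbf p,\mathbf r}F(\mathbf p,\mathbf r)\sum_{k_1+k_2+j\geq 0}h^{k_1+k_2+j}\,c_j\big(a^{(k_1)}_{\mathbf p,n_-},\,a^{(k_2)}_{\mathbf r,n_+}\big),
$$
where $c_j$ denotes the bilinear differential operators of total order $2j$ furnished by Lemma~\ref{l:prod-mfld}.

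The crucial step is to bound the principal part $b_F^{(0)}=\sum F(\mathbf p,\mathbf r)\,a^-_{\mathbf p}\,a^+_{\mathbf r}$ in $S^{\comp}_{\delta+}$ uniformly in $F$. Since $|F|\leq 1$ and the $a^\pm_{\mathbf q}$ are nonnegative, the triangle-inequality sum factorizes:
$$
|b_F^{(0)}(\rho)|\leq\Big(\sum_{\mathbf p\in \mathscr A^{n_-}}a^-_{\mathbf p}(\rho)\Big)\Big(\sum_{\mathbf r\in \mathscr A^{n_+}}a^+_{\mathbf r}(\rho)\Big)=\prod_{j=0}^{n_--1}\Big(\sum_{q\in \mathscr A}a_q(\varphi_j\rho)\Big)\prod_{j=1}^{n_+}\Big(\sum_{q}a_q(\varphi_{-j}\rho)\Big)\leq 1,
$$
using the partition-of-unity estimate $\sum_q a_q\leq 1$. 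For derivatives, Leibniz reduces $|\partial^\alpha b_F^{(0)}|$ to $\sum_{\beta+\gamma=\alpha}\binom{\alpha}{\beta}\big(\sum_{\mathbf p}|\partial^\beta a^-_{\mathbf p}|\big)\big(\sum_{\mathbf r}|\partial^\gamma a^+_{\mathbf r}|\big)$. Expanding $\partial^\beta\prod_j(a_{p_j}\circ\varphi_j)$ by Leibniz and summing coordinate-by-coordinate in each $p_j$ gives $\sum_{q}a_q\leq 1$ when no derivative lands on the $j$-th factor and $\sum_{q}|\partial^{\beta_j}(a_q\circ\varphi_j)(\rho)|\leq C_{|\beta_j|}\|d\varphi_j(\rho)\|^{|\beta_j|}$ (plus lower-order Faà di Bruno terms) otherwise. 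At any $\rho$ where some $\mathbf p\in \mathscr A^-_\delta$ has $\rho\in\mathcal V^-_{\mathbf p}$, one has $J^u_j(\rho)\sim\mathcal J^-_{\mathbf p}\leq Ch^{-\delta}$ for all $j\leq n_-$ by~\eqref{e:jacobians-same} and~\eqref{e:exp-rate}, so $\|d\varphi_j(\rho)\|\leq Ch^{-\delta}$. Combined with $\binom{n_-}{\#\{j:\beta_j\neq 0\}}\leq(C_1\log h^{-1})^{|\beta|}$, this yields $\sum_{\mathbf p\in \mathscr A^-_\delta}|\partial^\beta a^-_{\mathbf p}|(\rho)=\mathcal O(h^{-\delta|\beta|-})$ and hence $b_F^{(0)}\in S^{\comp}_{\delta+}$ with uniform seminorms.

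The higher-order terms $h^N b_F^{(N)}=\sum_{k_1+k_2+j=N}h^N\sum_{\mathbf p,\mathbf r} F(\mathbf p,\mathbf r)c_j(a^{(k_1)}_{\mathbf p},a^{(k_2)}_{\mathbf r})$ are to be controlled by applying the same partition-of-unity contraction to the path-sum formula~\eqref{e:itesym-3} for each $a^{(k)}_{\mathbf p,n_-}$: along a path $\vec\alpha$ in the graph $\mathscr G$, each ``loop'' segment is a product of factors $a_{q_\ell}$ (see~\eqref{e:Y-computed}), which sums to $\leq 1$ over $q_\ell\in \mathscr A$, while each ``jump'' contributes a bounded $h$-independent differential operator in $q_\ell$; the $2N$ derivatives (both from the iterative $L_{j,q}$'s and from $c_j$) are absorbed by $\|d\varphi_j\|^{\leq 2N}\leq Ch^{-2N\delta}$, producing $|\partial^\alpha(h^N b_F^{(N)})|=\mathcal O(h^{(1-2\delta)N-\delta|\alpha|-})$ uniformly in $F$. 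Borel-summing then yields the required $b_F\in S^{\comp}_{\delta+}$ with $\sup|b_F|\leq 1+\mathcal O(h^{1-2\delta-})$, and~\eqref{e:precise-norm} concludes. The main obstacle is the last step: Lemma~\ref{l:symbols-bounds} alone controls each $a^{(k)}_{\mathbf p,n_-}$ with a factor $(\mathcal J^-_{\mathbf p})^{2k+m}$ which is too large to sum term-by-term over $\mathbf p\in \mathscr A^-_\delta$, and one must refine the graph-path analysis of~\S\ref{s:long-paths} so that the summation in each $p_j$ is performed \emph{inside} the product~\eqref{e:Y-computed} along loop segments, thereby exploiting $\sum_q a_q\leq 1$ rather than the crude bound $\sum_q\|a_q\|_\infty\leq Q$.
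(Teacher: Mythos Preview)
Your proposal is correct and follows the same strategy as the paper: represent $A_F$ pseudodifferentially via the iterated full-symbol expansion and the product formula, then bound the resulting symbol uniformly by performing the summation over the letters $p_j$ \emph{inside} the loop segments of the graph-path decomposition~\eqref{e:Y-computed} so as to exploit $\sum_q a_q\leq 1$ --- exactly the refinement you flag in your last sentence and which the paper implements in~\eqref{e:wildfire-5}. The paper's execution is slightly more economical: it fixes a single truncation order $\mathbf N$ via~\eqref{e:the-chosen-one} rather than Borel-summing, and it treats the derivative bounds on the principal term and on the higher-order terms uniformly through the graph formalism of~\S\ref{s:iterated-symbol-estimates} (see~\eqref{e:pyromancer-3}--\eqref{e:wildfire-6}), which also cleanly absorbs the Fa\`a di Bruno contributions that your direct Leibniz paragraph leaves implicit.
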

%%%%%%%%%%%%%%%%%%%%%%%%%%%%%%%%%%%%%%%%%%%%%%%%%%%%%%%%%%%%%%%%%%%%%%%%%%%%%%%%
\begin{proof}
The proof proceeds by writing $A_F$ as a pseudodifferential operator and estimating its full symbol.
The complications arising from the fact that $A_F$ is the sum over polynomially many in~$h$ terms
are handled similarly to the proof of Lemma~\ref{l:cq-log}.

1. Let $\mathbf p\in\mathscr A^-_\delta$, $\mathbf r\in\mathscr A^+_\delta$
and fix $\mathbf N\in\mathbb N$ to be chosen at the end of the proof in~\eqref{e:the-chosen-one}.
Following the analysis in~\S\S\ref{s:ehr-step}--\ref{s:iterated-symbol-estimates}
(and its immediate analog for the operators $A^+$) we write similarly to~\eqref{e:iterated-operator}
and~\eqref{e:iterated-remainder-estimated}
\begin{equation}
  \label{e:pyromancer-1}
\begin{aligned}
A^-_{\mathbf p}&=\Op_h\bigg(\sum_{k=0}^{\mathbf N-1}h^{k}a^{(k)}_{\mathbf p,-}\bigg)+\mathcal O(h^{\mathbf N-(2\mathbf N+17)\delta-})_{L^2\to L^2},\\
A^+_{\mathbf r}&=\Op_h\bigg(\sum_{k=0}^{\mathbf N-1}h^{k}a^{(k)}_{\mathbf r,+}\bigg)+\mathcal O(h^{\mathbf N-(2\mathbf N+17)\delta-})_{L^2\to L^2}
\end{aligned}
\end{equation}
where (note we put $a^{(k)}_{\mathbf p,-}:=a^{(k)}_{\mathbf p,n_-}$ in the notation of~\S\ref{s:ehr-step}):
\begin{itemize}
\item $a^{(k)}_{\mathbf p,-},a^{(k)}_{\mathbf r,+}\in \CIc(T^*M)$ satisfy the support conditions
\begin{equation}
  \label{e:pyromancer-support}
\supp a^{(k)}_{\mathbf p,-}\subset \mathcal V^-_{\mathbf p}\cap \{\textstyle{1\over 4}<|\xi|_g<4\},\quad
\supp a^{(k)}_{\mathbf r,+}\subset \mathcal V^+_{\mathbf r}\cap \{\textstyle{1\over 4}<|\xi|_g<4\}
\end{equation}
and the derivative bounds similar to~\eqref{e:symbols-bounds-used}
\begin{equation}
  \label{e:pyromancer-2}
\|a^{(k)}_{\mathbf p,-}\|_{C^m},\|a^{(k)}_{\mathbf r,+}\|_{C^m}
=\mathcal O(h^{-(2k+m)\delta-});
\end{equation}
\item if we fix $\mathbf N_\pm\leq 2\mathbf N$ and denote similarly to~\eqref{e:A-r-der-def}
\begin{equation}
  \label{e:pyromancer-2.5}
\mathbf A^-_{\mathbf p}:=(\nabla^m a^{(k)}_{\mathbf p,-})_{(k,m)\in\mathscr V_-},\quad
\mathbf A^+_{\mathbf r}:=(\nabla^m a^{(k)}_{\mathbf r,+})_{(k,m)\in\mathscr V_+},
\end{equation}
where $\mathscr V_\pm:=\{(k,m)\mid k,m\in\mathbb N_0,\ 2k+m\leq \mathbf N_\pm\}$,
then for each $\rho\in T^*M\setminus 0$ we have similarly to~\eqref{e:itesym-4}
\begin{equation}
  \label{e:pyromancer-3}
\|\mathbf A^-_{\mathbf p}(\rho)\|\leq C\sum_{\vec\alpha\in \mathscr P_-}\|\mathbf M^-_{\mathbf p,\vec\alpha}(\rho)\|,\quad
\|\mathbf A^+_{\mathbf r}(\rho)\|\leq C\sum_{\vec\alpha\in \mathscr P_+}\|\mathbf M^+_{\mathbf r,\vec\alpha}(\rho)\|
\end{equation}
where $\mathscr P_\pm$ are the sets of paths of length $n_\pm-1$ in the corresponding graphs
(see~\eqref{e:path-def});
\item the homomorphisms $\mathbf M^-_{\mathbf p,\vec\alpha}(\rho)$,
$\mathbf M^+_{\mathbf r,\vec\alpha}(\rho)$
are defined similarly to~\eqref{e:MM-def}: if $\vec\alpha^\pm=\alpha_1^\pm\dots \alpha^\pm_{n_\pm}\in \mathscr P_\pm$ then
$$
\begin{aligned}
\mathbf M^-_{\mathbf p,\vec\alpha^-}(\rho)&=\mathbf M^-_{p_0,\alpha_1^-,\alpha_2^-}(\rho)
\mathbf M^-_{p_1,\alpha_2^-,\alpha_3^-}(\varphi_1(\rho))
\cdots
\mathbf M^-_{p_{n_--2},\alpha_{n_--1}^-,\alpha_{n_-}^-}(\varphi_{n_--2}(\rho)),\\
\mathbf M^+_{\mathbf r,\vec\alpha^+}(\rho)&=
\mathbf M^+_{r_1,\alpha_1^+,\alpha_2^+}(\rho)
\mathbf M^+_{r_2,\alpha_2^+,\alpha_3^+}(\varphi_{-1}(\rho))
\cdots
\mathbf M^+_{r_{n_+-1},\alpha_{n_+-1}^+,\alpha_{n_+}^+}(\varphi_{-(n_+-2)}(\rho));
\end{aligned}
$$
\item finally, the homomorphisms
$$
\mathbf M^\pm_{q,\alpha,\alpha'}
\in C^\infty(T^*M\setminus 0;\Hom(\varphi_{\mp 1}^*\mathscr E_{\alpha'};\mathscr E_\alpha)),\quad
q\in \mathscr A,\
\alpha,\alpha'\in\mathscr V_\pm,\
\alpha\to \alpha'
$$
are defined similarly to~\eqref{e:MPAA}, in particular we have
similarly to~\eqref{e:diagonal-term}
$$
\begin{aligned}
\mathbf M^-_{q,\alpha,\alpha}(\rho)&=a_q(\rho)\cdot (d\varphi_1(\rho)^T)^{\otimes m},\\
\mathbf M^+_{q,\alpha,\alpha}(\rho)&=a_q(\varphi_{-1}(\rho))\cdot (d\varphi_{-1}(\rho)^T)^{\otimes m}
\end{aligned}
$$
where $\alpha=(k,m)$.
\end{itemize}

\noindent 2. Using~\eqref{e:pyromancer-1}--\eqref{e:pyromancer-2} together with
the precise version of the product formula, Lemma~\ref{l:prod-mfld}, we obtain
$$
A^-_{\mathbf p}A^+_{\mathbf r}=\Op_h\bigg(\sum_{k_\pm,i\geq 0\atop
k_-+k_++i<\mathbf N} h^{k_-+k_++i} L_i(a^{(k_-)}_{\mathbf p,-}\otimes a^{(k_+)}_{\mathbf r,+})|_{\Diag}\bigg)
+\mathcal O(h^{\mathbf N-(2\mathbf N+17)\delta-})_{L^2\to L^2}
$$
where each $L_i$ is a differential operator of order~$2i$ on $T^*M\times T^*M$.
Recalling that $\mathscr A=\{1,\dots,Q\}$, we have
$$
\#(\mathscr A^\pm_\delta)\leq h^{-C_2}\quad\text{where}\quad
C_2:=C_1\log Q.
$$
Summing over $(\mathbf p,\mathbf r)$, we get
\begin{equation}
  \label{e:pyromancer-4}
A_F=\Op_h\bigg(\sum_{k_\pm,i\geq 0\atop
k_-+k_++i<\mathbf N}h^{k_-+k_++i} a_{k_-,k_+,i}\bigg)
+\mathcal O(h^{\mathbf N-(2\mathbf N+17)\delta-2C_2-})_{L^2\to L^2}
\end{equation}
where
$$
a_{k_-,k_+,i}:=\sum_{(\mathbf p,\mathbf r)\in\mathscr A^-_\delta\times\mathscr A^+_\delta}
F(\mathbf p,\mathbf r)L_i(a^{(k_-)}_{\mathbf p,-}\otimes a^{(k_+)}_{\mathbf r,+})|_{\Diag}.
$$

\noindent 3. 
We now estimate the derivatives of the symbols $a_{k_-,k_+,i}$. We first compute the principal term $a_{0,0,0}$,
using that $a_{\mathbf p,-}^{(0)}=a_{\mathbf p}^-$, $a_{\mathbf r,+}^{(0)}=a_{\mathbf r}^+$ similarly
to the line following~\eqref{e:supp-symbol}:
$$
a_{0,0,0}=\sum_{\mathbf p,\mathbf r}F(\mathbf p,\mathbf r) a^-_{\mathbf p}a^+_{\mathbf r}
$$
which, recalling that $\sup|F|\leq 1$, $a_1,\dots,a_Q\geq 0$, and $a_1+\cdots+a_Q\leq 1$, implies\begin{equation}
  \label{e:wildfire-1}
\sup|a_{0,0,0}|\leq 1.
\end{equation}
To estimate the higher derivatives of $a_{0,0,0}$, as well as the other symbols
$a_{k_-,k_+,i}$, we argue similarly to Lemma~\ref{l:ehr-comb}, handling the sum over words
similarly to the proof of Lemma~\ref{l:cq-log}.
By the triangle inequality and since $\sup|F|\leq 1$ we have for any $m$
\begin{equation}
  \label{e:wildfire-2}
\|a_{k_-,k_+,i}\|_{C^m}\leq C
\sup_{\rho\in \{{1\over 4}\leq |\xi|_g\leq 4\}}
\max_{m_\pm\geq 0\atop m_-+m_+\leq m+2i}
\sum_{\mathbf p,\mathbf r}
\big(\|\nabla^{m_-}a^{(k_-)}_{\mathbf p,-}(\rho)\|\cdot \|\nabla^{m_+}a^{(k_+)}_{\mathbf r,+}(\rho)\|\big).
\end{equation}
Fix $m_\pm\geq 0$ such that $m_-+m_+\leq m+2i$ and put
$$
\mathbf N_\pm := 2k_\pm+m_\pm,\quad
\mathbf N_-+\mathbf N_+\leq 2(k_-+k_++i)+m.
$$
By~\eqref{e:pyromancer-3}
we then have
for each $\rho\in \{{1\over 4}\leq |\xi|_g\leq 4\}$
\begin{equation}
  \label{e:wildfire-3}
\begin{aligned}
\|\nabla^{m_-}a^{(k_-)}_{\mathbf p,-}(\rho)\|\cdot \|\nabla^{m_+}a^{(k_+)}_{\mathbf r,+}(\rho)\|
&\leq C\|\mathbf A^-_{\mathbf p}(\rho)\|\cdot \|\mathbf A^+_{\mathbf r}(\rho)\|
\\&\leq
C\sum_{\vec\alpha^\pm\in\mathscr P_\pm}
\big(\|\mathbf M^-_{\mathbf p,\vec\alpha^-}(\rho)\|\cdot \|\mathbf M^+_{\mathbf r,\vec\alpha^+}(\rho)\|\big).
\end{aligned}
\end{equation}
Fix two paths $\vec\alpha^\pm\in\mathscr P_\pm$ and write them in the form~\eqref{e:alpha-form}:
$$
\vec\alpha^\pm=\beta_{1,\pm}^{s_{(1)}^\pm-s_{(0)}^\pm}
\beta_{2,\pm}^{s_{(2)}^\pm-s_{(1)}^\pm}\dots \beta_{\ell_\pm+1,\pm}^{s_{(\ell_\pm+1)}^\pm-s_{(\ell_\pm)}^\pm}
$$
for some sequences $0=s_{(0)}^\pm<s_{(1)}^\pm<\dots<s_{(\ell_\pm)}^\pm<s_{(\ell_\pm+1)}^\pm=n_\pm$.
Define
$$
S^-_{\vec\alpha^-}:=\{s_{(1)}^--1,\dots,s^-_{(\ell_-+1)}-1\},\quad
S^+_{\vec\alpha^+}:=\{s_{(1)}^+,\dots,s_{(\ell_++1)}^+\}.
$$
Arguing similarly to~\eqref{e:mastoedont}, but keeping track of the symbols
$a_{q_r}$ in~\eqref{e:Y-computed} (rather than simply using the inequalities $|a_q|\leq 1$) and
recalling the support properties~\eqref{e:pyromancer-support}
we get for all $\rho\in \supp a^{(k_-)}_{\mathbf p,-}\cap \supp a^{(k_+)}_{\mathbf r,+}
\subset \mathcal V^-_{\mathbf p}\cap \mathcal V^+_{\mathbf r}
\cap \{{1\over 4}<|\xi|_g<4\}$
$$
\begin{aligned}
\|\mathbf M^-_{\mathbf p,\vec\alpha^-}(\rho)\|\leq C (\mathcal J^-_{\mathbf p})^{\mathbf N_-}
\tilde a^-_{\mathbf p,\vec\alpha^-}(\rho),\quad
\|\mathbf M^+_{\mathbf r,\vec\alpha^+}(\rho)\|\leq C (\mathcal J^+_{\mathbf r})^{\mathbf N_+}
\tilde a^+_{\mathbf r,\vec\alpha^+}(\rho)
\end{aligned}
$$
where we define the nonnegative functions $\tilde a^-_{\mathbf p,\vec\alpha^-}$,
$\tilde a^+_{\mathbf r,\vec\alpha^+}$ by removing certain factors
in the definitions~\eqref{e:a-pm-def} of $a^-_{\mathbf p}$, $a^+_{\mathbf r}$
(denoting $\mathbf p=p_0\dots p_{n_--1}$, $\mathbf r=r_1\dots r_{n_+}$):
$$
\tilde a^-_{\mathbf p,\vec\alpha^-}:=\prod_{0\leq j<n_-,\ j\notin S^-_{\vec\alpha^-}}
(a_{p_j}\circ\varphi_j),\quad
\tilde a^+_{\mathbf r,\vec\alpha^+}:=\prod_{1\leq j\leq n_+,\ j\notin S^+_{\vec\alpha^+}}
(a_{r_j}\circ\varphi_{-j}).
$$
Since $\mathcal J^-_\mathbf p,\mathcal J^+_{\mathbf r}\leq C_0h^{-\delta}$, we have
for all $\rho\in \supp a^{(k_-)}_{\mathbf p,-}\cap \supp a^{(k_+)}_{\mathbf r,+}$
$$
\|\mathbf M^-_{\mathbf p,\vec\alpha^-}(\rho)\|
\cdot
\|\mathbf M^+_{\mathbf r,\vec\alpha^+}(\rho)\|
\leq Ch^{-(2(k_-+k_++i)+m)\delta}\tilde a^-_{\mathbf p,\vec\alpha^-}(\rho)
\tilde a^+_{\mathbf r,\vec\alpha^+}(\rho).
$$
Combining this with~\eqref{e:wildfire-2}--\eqref{e:wildfire-3} we obtain
\begin{equation}
  \label{e:wildfire-4}
\|a_{k_-,k_+,i}\|_{C^m}\leq Ch^{-(2(k_-+k_++i)+m)\delta}\sup_{\rho\in \{{1\over 4}\leq |\xi|_g\leq 4\}}
\sum_{\vec\alpha^\pm\in\mathscr P_\pm}\sum_{\mathbf p,\mathbf r}
\big(\tilde a^-_{\mathbf p,\vec\alpha^-}(\rho)
\tilde a^+_{\mathbf r,\vec\alpha^+}(\rho)\big).
\end{equation}
Now, we have for all $\vec\alpha_\pm$
and $\rho$
\begin{equation}
  \label{e:wildfire-5}
\sum_{(\mathbf p,\mathbf r)\in\mathscr A^{n_-}\times\mathscr A^{n_+}}
\big(\tilde a^-_{\mathbf p,\vec\alpha^-}(\rho)
\tilde a^+_{\mathbf r,\vec\alpha^+}(\rho)\big)
\leq Q^{4(k_-+k_++i)+2m+2}\leq C.
\end{equation}
Indeed, we write the left-hand side as the product of sums over the individual
digits $p_{j_-}$, $r_{j_+}$. Since $a_1+\dots+a_Q\leq 1$, each such sum is bounded by $Q$
when $j_\pm\in S^\pm_{\vec\alpha^\pm}$ and by~1 otherwise. It remains to
recall from Step~1 of the proof of Lemma~\ref{l:ehr-comb} that
$\ell_\pm\leq 2\mathbf N_\pm$ and thus
$\#(S^-_{\vec\alpha^-})+\#(S^+_{\vec\alpha^+})\leq 2\mathbf N_-+2\mathbf N_++2
\leq 4(k_-+k_++i)+2m+2$.

Substituting~\eqref{e:wildfire-5} into~\eqref{e:wildfire-4} and using
the bound~\eqref{e:ehr-comb-1} on $\#(\mathscr P_\pm)$, we finally get the bound
\begin{equation}
  \label{e:wildfire-6}
\|a_{k_-,k_+,i}\|_{C^m}=\mathcal O(h^{-(2(k_-+k_++i)+m)\delta-}).
\end{equation}

\noindent 4. 
The bounds~\eqref{e:wildfire-1} and~\eqref{e:wildfire-6} give
$$
a_{0,0,0}=\mathcal O(1)_{S^{\comp}_{\delta+}},\quad
a_{k_-,k_+,i}=\mathcal O(h^{-2(k_-+k_++i)\delta-})_{S^{\comp}_\delta}.
$$
From the $L^2$ boundedness of pseudodifferential operators with symbols in $S^{\comp}_\delta$
we see that the first term on the right-hand side of~\eqref{e:pyromancer-4} is bounded
by a constant in $L^2\to L^2$ norm. The remainder in~\eqref{e:pyromancer-4} is also
bounded by a constant if we choose $\mathbf N$ large enough so that
\begin{equation}
  \label{e:the-chosen-one}
\mathbf N(1-2\delta)>17\delta+2C_2.
\end{equation}
Thus $\|A_F\|_{L^2\to L^2}\leq C$, finishing the proof.
\end{proof}
%%%%%%%%%%%%%%%%%%%%%%%%%%%%%%%%%%%%%%%%%%%%%%%%%%%%%%%%%%%%%%%%%%%%%%%%%%%%%%%%

\appendix

%%%%%%%%%%%%%%%%%%%%%%%%%%%%%%%%%%%%%%%%%%%%%%%%%%%%%%%%%%%%%%%%%%%%%%%%%%%%%%%%
%%%%%%%%%%%%%%%%%%%%%%%%%%%%%%%%%%%%%%%%%%%%%%%%%%%%%%%%%%%%%%%%%%%%%%%%%%%%%%%%
\section{Semiclassical calculus on a surface}
  \label{s:semi-detail}

In this appendix we provide versions of several standard statements from semiclassical analysis
(product and commutator rules,
Egorov's Theorem) with explicit expressions for the resulting symbols and for the $L^2\to L^2$
norms of the remainders. These are used in the proofs
of Egorov's Theorems up to minimal Ehrenfest time (Lemma~\ref{l:egorov-mild})
and local Ehrenfest time (\S\ref{s:ops-Aq}).

We restrict to the case of dimension~$n=2$. The statements below apply in the general case
but the number of derivatives needed to get an $\mathcal O(h^{\mathbf N})$ remainder%
\footnote{As in~\S\ref{s:ops-Aq}, we use boldface $\mathbf N$ here to avoid confusion
with~\eqref{e:prop-times}.}
will take the form $2\mathbf N+C_n$ where $C_n$ is a constant depending only on the dimension.
The precise values of the constants $C_n$ (which we compute for $n=2$) are not important.
We do not attempt to prove optimal bounds. This is already evident in the case
of Lemma~\ref{l:l2-bdd-Rn} below which does not recover boundedness of
pseudodifferential operators in $\Psi^{\comp}_\delta(\mathbb R^2)$.

To shorten the formulas below, we introduce the following notation:
$$
\mathbf D^k_\bullet a
$$
denotes the result of applying some differential operator of order~$k$ to $a$.
The specific operator varies from place to place, with coefficients depending on
the objects listed in `$\bullet$' but not on~$h$ or~$a$.
Next, for an operator $A$ on~$L^2$ we write
$$
A=\mathcal O_{\bullet}(h^{\mathbf N})
$$
to mean $\|A\|_{L^2\to L^2}\leq Ch^{\mathbf N}$ where the constant $C$ depends on the objects
listed in `$\bullet$'.

%%%%%%%%%%%%%%%%%%%%%%%%%%%%%%%%%%%%%%%%%%%%%%%%%%%%%%%%%%%%%%%%%%%%%%%%%%%%%%%%
\subsection{Operators on $\mathbb R^2$}
\label{s:ehr-R2}

We first discuss pseudodifferential calculus on~$\mathbb R^2$.
We use the standard quantization given by
\begin{equation}
  \label{e:Op-h-Rn}
\Op^0_h(a)f(x)=(2\pi h)^{-2}\int_{\mathbb R^4}e^{{i\over h}\langle x-y,\xi\rangle}a(x,\xi)f(y)\,dyd\xi,\quad
a\in\mathscr S(T^*\mathbb R^2).
\end{equation}
We start with a quantitative version of the basic $L^2$ boundedness statement
which follows from the proof of~\cite[Theorem~4.21]{e-z}:
%%%%%%%%%%%%%%%%%%%%%%%%%%%%%%%%%%%%%%%%%%%%%%%%%%%%%%%%%%%%%%%%%%%%%%%%%%%%%%%%
\begin{lemm}
  \label{l:l2-bdd-Rn}
We have for some global constant $C$ and all $a\in\mathscr S(T^*\mathbb R^2)$
$$
\|\Op^0_h(a)\|_{L^2(\mathbb R^2)\to L^2(\mathbb R^2)}\leq C\max_{|\alpha|,|\beta|\leq 3}\sup|\xi^\alpha\partial^\beta_\xi a|.
$$
\end{lemm}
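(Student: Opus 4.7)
The plan is to apply Schur's test to the Schwartz kernel
$$K(x,y)=(2\pi h)^{-2}\int_{\mathbb R^2}e^{i\langle x-y,\xi\rangle/h}a(x,\xi)\,d\xi$$
of $\Op^0_h(a)$ and reduce the operator bound to the uniform kernel estimate $\sup_x\int|K(x,y)|\,dy+\sup_y\int|K(x,y)|\,dx\leq CM$, where $M:=\max_{|\alpha|,|\beta|\leq 3}\sup|\xi^\alpha\partial^\beta_\xi a|$. Once that is in hand, Schur's criterion yields $\|\Op^0_h(a)\|_{L^2\to L^2}\leq CM$ as required.

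To prove the pointwise kernel bound I will combine two elementary devices. First, integration by parts in~$\xi$: for each coordinate $j\in\{1,2\}$ with $x_j\neq y_j$ and each integer $k\geq 0$,
$$\int e^{i\langle x-y,\xi\rangle/h}a(x,\xi)\,d\xi=\Bigl(\frac{ih}{x_j-y_j}\Bigr)^{k}\int e^{i\langle x-y,\xi\rangle/h}\partial_{\xi_j}^{k}a(x,\xi)\,d\xi.$$
Second, the weight $(1+|\xi|)^{-3}$, which is integrable on~$\mathbb R^2$ and dominated by a fixed finite sum of monomials $|\xi^\alpha|$ with $|\alpha|\leq 3$, gives the uniform estimate
$$\int_{\mathbb R^2}|\partial_\xi^\beta a(x,\xi)|\,d\xi\leq C\max_{|\alpha|\leq 3}\sup_{\xi}|\xi^\alpha\partial_\xi^\beta a(x,\xi)|.$$
Using $k=0$ when $|x-y|\leq h$, and $k=3$ in the coordinate direction $j$ for which $|x_j-y_j|\geq|x-y|/\sqrt 2$ when $|x-y|>h$, the two ingredients combine to yield
$$|K(x,y)|\leq CM\cdot h^{-2}\min\bigl(1,(h/|x-y|)^3\bigr).$$

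Integrating in polar coordinates on~$\mathbb R^2$, both $\int_{|x-y|\leq h}h^{-2}\,dy$ and $\int_{|x-y|>h}h\,|x-y|^{-3}\,dy$ are bounded by absolute constants uniformly in~$x$; the symmetry of the bound under $x\leftrightarrow y$ handles the dual integral, and Schur's test closes the argument. The only point requiring care is the arithmetic matching of exponents: the number~$3$ in the statement is precisely $n+1$ for $n=2$, forced simultaneously by the integrability of $(1+|\xi|)^{-M}$ on~$\mathbb R^2$ (requiring $M>2$, hence $|\alpha|\leq 3$) and by the integrability of $|x-y|^{-k}$ against the area element $r\,dr$ on $\{|x-y|>h\}$ (again requiring $k>2$, hence $|\beta|\leq 3$). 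The main work is this bookkeeping; there is no conceptual obstacle beyond the standard Schur-type argument.
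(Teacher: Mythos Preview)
Your proof is correct and follows essentially the same route as the paper, which simply cites the proof of~\cite[Theorem~4.21]{e-z}: that argument is precisely the Schur test combined with integration by parts in~$\xi$, yielding the kernel bound $|K(x,y)|\leq CMh^{-2}\min(1,(h/|x-y|)^3)$ that you derive. One wording quibble: when you say the weight $(1+|\xi|)^{-3}$ is ``dominated by a finite sum of monomials $|\xi^\alpha|$'' you presumably mean that $(1+|\xi|)^{3}$ is so dominated, which is what makes $\sup_\xi(1+|\xi|)^3|\partial_\xi^\beta a|\leq C\max_{|\alpha|\leq 3}\sup|\xi^\alpha\partial_\xi^\beta a|$ and hence $\int|\partial_\xi^\beta a|\,d\xi\leq CM$.
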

%%%%%%%%%%%%%%%%%%%%%%%%%%%%%%%%%%%%%%%%%%%%%%%%%%%%%%%%%%%%%%%%%%%%%%%%%%%%%%%%
The next statement is a quantitative version of the product formula. To prove it we
write $\Op^0_h(a)\Op^0_h(b)=\Op^0_h(a\#b)$, where $a\#b$ is determined by oscillatory
testing~\cite[Theorem~4.19]{e-z} and estimated via quadratic stationary phase~\cite[Theorem~3.13]{e-z}, and apply Lemma~\ref{l:l2-bdd-Rn}.
%%%%%%%%%%%%%%%%%%%%%%%%%%%%%%%%%%%%%%%%%%%%%%%%%%%%%%%%%%%%%%%%%%%%%%%%%%%%%%%%
\begin{lemm}
  \label{l:prod-Rn}
Let $\mathbf N\in\mathbb N_0$, $R>0$. Then for all $a,b\in\CIc(T^*\mathbb R^2)$, $\supp a\cup\supp b\subset B(0,R)$, we have
\begin{equation}
  \label{e:prod-Rn}
\Op^0_h(a)\Op^0_h(b)=\Op^0_h\bigg(\sum_{|\alpha|<\mathbf N}{(-ih)^{|\alpha|}\over\alpha!} \partial^\alpha_\xi a\,\partial^\alpha_x b\bigg) +\mathcal O_{\mathbf N,R}(
\|a\|_{C^{\mathbf N+6}}\|b\|_{C^{\mathbf N+6}}h^{\mathbf N}).
\end{equation}
\end{lemm}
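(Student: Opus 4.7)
The strategy is to compute the symbol $c$ of the composition explicitly via oscillatory testing, apply quadratic stationary phase to extract the asymptotic expansion, and estimate the remainder operator by invoking Lemma~\ref{l:l2-bdd-Rn}. First, oscillatory testing \cite[Theorem~4.19]{e-z} gives $\Op^0_h(a)\Op^0_h(b) = \Op^0_h(c)$, where
$$c(x,\xi) = (2\pi h)^{-2}\iint_{\mathbb{R}^4} e^{-i\langle y,\eta\rangle/h}\, a(x,\xi+\eta)\, b(x+y,\xi)\,dy\,d\eta.$$
Since $a$ and $b$ are both supported in $B(0,R)\subset T^*\mathbb{R}^2$, the integrand has compact support in $(y,\eta)$ for each fixed $(x,\xi)$, and $c$ is itself compactly supported in $(x,\xi)$, with support contained in some ball of radius $R'=R'(R)$.

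Next I would apply quadratic stationary phase \cite[Theorem~3.13]{e-z} to this integral, treating $(y,\eta)$ as oscillatory variables and $(x,\xi)$ as parameters. The phase $\varphi(y,\eta) = -\langle y,\eta\rangle$ has unique critical point $(0,0)$ with non-degenerate Hessian $\bigl(\begin{smallmatrix}0 & -I \\ -I & 0\end{smallmatrix}\bigr)$, and the formal expansion becomes $\exp(-ih\langle D_y, D_\eta\rangle)\bigl[a(x,\xi+\eta)b(x+y,\xi)\bigr]\big|_{y=\eta=0}$. Evaluating this series termwise, one obtains exactly the main term $\sum_{|\alpha|<\mathbf{N}} \frac{(-ih)^{|\alpha|}}{\alpha!}\partial^\alpha_\xi a(x,\xi)\,\partial^\alpha_x b(x,\xi)$ (the combinatorial identity follows from $\langle D_y,D_\eta\rangle^k = \sum_{|\alpha|=k} \tfrac{k!}{\alpha!}\partial^\alpha_y\partial^\alpha_\eta$). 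The stationary phase remainder $r_\mathbf{N}$ then admits pointwise bounds of the form
$$\sup_{x,\xi}|\partial^\gamma_{x,\xi} r_\mathbf{N}(x,\xi;h)| \leq C_{\mathbf{N},\gamma,R}\, h^\mathbf{N}\, \|a\|_{C^{\mathbf{N}+|\gamma|+3}}\|b\|_{C^{\mathbf{N}+|\gamma|+3}},$$
which one derives, e.g., by Taylor-expanding $a(x,\xi+\eta)$ to order $\mathbf{N}$ in $\eta$ with integral remainder, converting the resulting monomial factors $\eta^\alpha$ into $\partial^\alpha_y$ by integration by parts, and keeping track of the derivative cost.

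To conclude, I would apply Lemma~\ref{l:l2-bdd-Rn} to $\Op^0_h(r_\mathbf{N})$. Because $r_\mathbf{N}$ is compactly supported in $\xi$ inside $\{|\xi|\leq R'\}$, the polynomial factor $\xi^\alpha$ for $|\alpha|\leq 3$ contributes only a constant $C_R$, and only the three $\xi$-derivatives matter, giving
$$\|\Op^0_h(r_\mathbf{N})\|_{L^2\to L^2} \leq C_{\mathbf{N},R}\, h^\mathbf{N}\,\|a\|_{C^{\mathbf{N}+6}}\|b\|_{C^{\mathbf{N}+6}},$$
where the constant $6$ decomposes as $3+3$: three derivatives of the remainder needed to apply Lemma~\ref{l:l2-bdd-Rn}, plus three more absorbed in the stationary phase amplitude estimate. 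The main technical point is the careful book-keeping in the stationary phase step: one must identify the explicit binomial form of each term in the expansion (rather than merely recognizing the existence of an asymptotic series), and simultaneously track the derivative cost of the integral remainder tightly enough to reach the stated norm $C^{\mathbf{N}+6}$ without losses.
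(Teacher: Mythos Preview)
Your proposal is correct and follows essentially the same approach as the paper, which also writes $\Op^0_h(a)\Op^0_h(b)=\Op^0_h(a\#b)$ via oscillatory testing \cite[Theorem~4.19]{e-z}, expands $a\#b$ by quadratic stationary phase \cite[Theorem~3.13]{e-z}, and then applies Lemma~\ref{l:l2-bdd-Rn} to the remainder. The paper's proof is just a one-sentence sketch of this same route, so your more detailed derivative bookkeeping is a reasonable elaboration.
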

%%%%%%%%%%%%%%%%%%%%%%%%%%%%%%%%%%%%%%%%%%%%%%%%%%%%%%%%%%%%%%%%%%%%%%%%%%%%%%%%
\Remark
It is also useful to discuss composition of pseudodifferential operators with
multiplication operators. Assume that $a\in \CIc(T^*\mathbb R^2)$,
$b\in \CIc(\mathbb R^2)$, and $\supp a\subset B_{T^*\mathbb R^2}(0,R)$,
$\supp b\subset B_{\mathbb R^2}(0,R)$. Denote by~$\Op^0_h(b)$ the multiplication
operator by $b$. From~\eqref{e:Op-h-Rn} we see that
$\Op^0_h(b)\Op^0_h(a)=\Op^0_h(ab)$. Moreover, Lemma~\ref{l:prod-Rn} still applies
with the same proof.

We finally give a quantitative version of the change of variables formula.
We follow~\cite[\S E.1.6]{dizzy}. The statement below is proved by
following the proof of~\cite[Proposition~E.10]{dizzy} using
the method of stationary phase with explicit remainder~\cite[Theorem~3.16]{e-z}
and applying Lemma~\ref{l:l2-bdd-Rn}. We use the notation
\begin{equation}
  \label{e:varphi-*}
\varphi^{-*}:=(\varphi^{-1})^*,\quad
\varphi^{-*}f=f\circ\varphi^{-1}.
\end{equation}
%%%%%%%%%%%%%%%%%%%%%%%%%%%%%%%%%%%%%%%%%%%%%%%%%%%%%%%%%%%%%%%%%%%%%%%%%%%%%%%%
\begin{lemm}
  \label{l:chvar-Rn}
Assume that $\varphi:U\to V$ is a diffeomorphism where $U,V\subset\mathbb R^2$
are open sets and $\chi_1,\chi_2\in\CIc(U)$. Put
\begin{equation}
  \label{e:tilde-phi}
\widetilde \varphi:T^*U\to T^*V,\quad
\widetilde\varphi(x,\xi)=(\varphi(x),(d\varphi(x))^{-T}\xi).
\end{equation}
Let $\mathbf N\in\mathbb N$, $R>0$. Then
for all $a\in\CIc(T^*\mathbb R^2)$, $\supp a\subset B(0,R)$, we have
$$
\begin{aligned}
\chi_1\varphi^*\Op^0_h(a)\varphi^{-*}\chi_2=\,&\Op^0_h\bigg(\chi_1\Big(\chi_2+\sum_{j=1}^{\mathbf N-1} h^j \mathbf D^{2j}_{\varphi,\chi_2}\Big)
(a\circ\widetilde\varphi)\bigg)\\
&+\mathcal O_{\mathbf N,R,\varphi,\chi_1,\chi_2}(\|a\|_{C^{2\mathbf N+12}}h^{\mathbf N}).
\end{aligned}
$$
Here the operators $\mathbf D^{2j}_{\varphi,\chi_2}$ are supported in $\supp\chi_2$.
\end{lemm}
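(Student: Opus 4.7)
The plan is to follow the proof of~\cite[Proposition~E.10]{dizzy}, writing the conjugated operator as an oscillatory integral in a standard form via a change of variables, and then reducing the resulting amplitude to a symbol by applying stationary phase with explicit remainder. The $L^2$-norm of the remainder will then be controlled via Lemma~\ref{l:l2-bdd-Rn}.

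First I would write, for $u\in\CIc(\mathbb R^2)$,
\[
\chi_1\varphi^*\Op^0_h(a)\varphi^{-*}\chi_2 u(x)
=(2\pi h)^{-2}\chi_1(x)\!\int e^{i\langle\varphi(x)-\varphi(x'),\xi\rangle/h}\,a(\varphi(x),\xi)\chi_2(x')u(x')\,J(x')\,dx'\,d\xi,
\]
where $J(x')=|\det d\varphi(x')|$, having changed variables $y=\varphi(x')$. By Hadamard's lemma there is a smooth matrix $\Phi(x,x')$, defined on a neighbourhood of $\{x=x'\}$ containing $\supp\chi_1\times\supp\chi_2$, with $\Phi(x,x)=d\varphi(x)$ and $\varphi(x)-\varphi(x')=\Phi(x,x')(x-x')$. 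The substitution $\eta=\Phi(x,x')^T\xi$ (valid where $\Phi$ is invertible; I would first insert a cut-off supported where $|x-x'|$ is small, and show that the complementary contribution is $\mathcal O(h^\infty)$ by nonstationary phase in $\xi$) yields an oscillatory integral
\[
\chi_1\varphi^*\Op^0_h(a)\varphi^{-*}\chi_2 u(x)
=(2\pi h)^{-2}\int e^{i\langle x-x',\eta\rangle/h}\,b(x,x',\eta;h)u(x')\,dx'\,d\eta,
\]
with amplitude $b(x,x',\eta;h)=\chi_1(x)\chi_2(x')J(x')|\det\Phi(x,x')|^{-1}\,a\bigl(\varphi(x),\Phi(x,x')^{-T}\eta\bigr)$, supported in $\{|x-x'|\leq C\}$ and satisfying $b(x,x,\eta;h)=\chi_1(x)\chi_2(x)\,(a\circ\widetilde\varphi)(x,\eta)$.

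Next I would reduce $b(x,x',\eta;h)$ to a standard Kohn--Nirenberg symbol by applying the amplitude-to-symbol reduction (see e.g.~\cite[Theorem~4.19]{e-z} and its proof): writing $b(x,x',\eta)=\sum_{|\alpha|<\mathbf N}\tfrac{1}{\alpha!}(x'-x)^\alpha\partial^\alpha_{x'}b(x,x,\eta)+r_{\mathbf N}$ by Taylor expansion in $x'$ at $x'=x$, each monomial $(x'-x)^\alpha$ can be converted into $(-ih\partial_\eta)^\alpha$ acting on the rest via integration by parts in $\eta$. The principal term $|\alpha|=0$ produces $\chi_1\chi_2(a\circ\widetilde\varphi)$, and the terms with $1\leq|\alpha|<\mathbf N$ give the expected $h^j \mathbf D^{2j}_{\varphi,\chi_2}(a\circ\widetilde\varphi)$ contributions (each factor of $x'-x$ yields one $h$, and the remaining coefficient brings two more derivatives from the Taylor expansion, yielding $\mathbf D^{2j}$ with coefficients that are functions of $\varphi,\chi_2,\chi_1$ but not $a$). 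For the Taylor remainder $r_{\mathbf N}$, I would apply quadratic stationary phase in $(x'-x,\eta)$ (or the more refined integration-by-parts form in~\cite[Theorem~3.16]{e-z}) to express the associated operator as $\Op^0_h(\rho_{\mathbf N})$ for some remainder symbol $\rho_{\mathbf N}$.

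The main technical point, and the main place where the constant $2\mathbf N+12$ enters, is bookkeeping for the derivative count. Stationary phase with explicit remainder produces a symbol $\rho_{\mathbf N}$ bounded in terms of finitely many derivatives of $b$ in $(x',\eta)$ of order roughly $2\mathbf N$ plus a fixed number $C_0$ arising from the phase-space dimension; Lemma~\ref{l:l2-bdd-Rn} then requires a further uniform bound on $3$ derivatives in $\eta$ and on $\langle\eta\rangle^3$ (harmless since $\supp a\subset B(0,R)$ keeps $\eta$ in a bounded set). Chasing these through, the number of $(x,\xi)$-derivatives of $a$ needed to bound $\|\rho_{\mathbf N}\|$ is $2\mathbf N+12$, giving the stated remainder estimate $\mathcal O_{\mathbf N,R,\varphi,\chi_1,\chi_2}(\|a\|_{C^{2\mathbf N+12}}h^{\mathbf N})$. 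The only subtlety beyond bookkeeping is verifying that the cut-offs and the globalization of the change of variables $\eta=\Phi(x,x')^T\xi$ outside a small neighbourhood of the diagonal produce only $\mathcal O(h^\infty)$ errors; this follows from nonstationary phase in $\xi$ since on $\supp\chi_1(x)\chi_2(x')\cap\{|x-x'|\geq c\}$ we have $|\varphi(x)-\varphi(x')|\gtrsim 1$.
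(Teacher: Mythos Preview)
Your proposal is correct and follows essentially the same route as the paper: the paper's proof simply says to follow~\cite[Proposition~E.10]{dizzy} using stationary phase with explicit remainder~\cite[Theorem~3.16]{e-z} and then apply Lemma~\ref{l:l2-bdd-Rn}, and you have spelled out precisely that argument (Hadamard linearization of the phase, amplitude-to-symbol reduction by Taylor expansion in $x'$ and integration by parts in $\eta$, and derivative bookkeeping for the remainder).
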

%%%%%%%%%%%%%%%%%%%%%%%%%%%%%%%%%%%%%%%%%%%%%%%%%%%%%%%%%%%%%%%%%%%%%%%%%%%%%%%%

%%%%%%%%%%%%%%%%%%%%%%%%%%%%%%%%%%%%%%%%%%%%%%%%%%%%%%%%%%%%%%%%%%%%%%%%%%%%%%%%
\subsection{Operators on a compact surface}
\label{s:ehr-mfld}

We now study operators on a compact Riemannian surface $(M,g)$.
We define a (non-canonical) quantization procedure similarly to~\cite[Proposition~E.15]{dizzy}:
\begin{equation}
  \label{e:Op-h-M}
\Op_h(a)=\sum_\ell \chi'_\ell\varphi_\ell^*\Op_h^0\big((\chi'_\ell a)\circ \widetilde\varphi_\ell^{-1}\big)\varphi_\ell^{-*}\chi_\ell
\end{equation}
where we use the notation~\eqref{e:varphi-*},
$\Op_h^0(\bullet)$ on the right-hand side is defined by~\eqref{e:Op-h-Rn},
$\varphi_\ell:U_\ell\to V_\ell$, $U_\ell\subset M$, $V_\ell\subset\mathbb R^2$,
is a finite collection of coordinate charts with $M=\bigcup_\ell U_\ell$, the cutoff functions
$\chi_\ell,\chi'_\ell\in\CIc(U_\ell)$ satisfy
\begin{equation}
  \label{e:mfld-cutoffs}
1=\sum_\ell\chi_\ell,\quad
\supp\chi_\ell\cap\supp (1-\chi'_\ell)=\emptyset,
\end{equation}
and $\widetilde\varphi_\ell:T^*U_\ell\to T^*V_\ell$ is defined by~\eqref{e:tilde-phi}.
To simplify the formulas below we denote 
$$
\Xi:=\{(M,g)\}\cup\{ (\varphi_\ell,\chi_\ell,\chi'_\ell)\}_\ell.
$$
For each $j\in\mathbb N_0$ we fix some norm $\|\bullet\|_{C^j}$ on functions on $T^*M$ supported
in $\{|\xi|_g\leq 10\}$.

We first give an $L^2$ boundedness and pseudolocality statement:
%%%%%%%%%%%%%%%%%%%%%%%%%%%%%%%%%%%%%%%%%%%%%%%%%%%%%%%%%%%%%%%%%%%%%%%%%%%%%%%%
\begin{lemm}
  \label{l:l2-bdd-mfld}
Assume that $a\in \CIc(T^*M)$ and $\supp a\subset \{|\xi|_g\leq 10\}$.
Then
\begin{equation}
  \label{e:l2-bdd-mfld-1}
\Op_h(a)=\mathcal O_\Xi(\|a\|_{C^3}).
\end{equation}
Moreover, if $\chi_1,\chi_2\in C^\infty(M)$ and $\supp\chi_1\cap\supp\chi_2=\emptyset$,
then for every $\mathbf N\in\mathbb N_0$
\begin{equation}
  \label{e:l2-bdd-mfld-2}
\chi_1\Op_h(a)\chi_2=\mathcal O_{\mathbf N,\Xi,\chi_1,\chi_2}(\|a\|_{C^{\mathbf N+6}}h^{\mathbf N}).
\end{equation}
\end{lemm}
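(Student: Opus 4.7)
My plan is to reduce both statements to their $\mathbb{R}^2$ counterparts (Lemma~\ref{l:l2-bdd-Rn} and a direct integration-by-parts argument) by working chart by chart through the defining formula~\eqref{e:Op-h-M}. Since the sum over $\ell$ is finite and the pullback/pushforward operators $\varphi_\ell^*,\varphi_\ell^{-*}$ (when multiplied by the cutoffs $\chi_\ell,\chi_\ell'\in\CIc(U_\ell)$) are bounded $L^2(M)\to L^2(\mathbb{R}^2)$ and $L^2(\mathbb{R}^2)\to L^2(M)$ with norms depending only on $\Xi$, every term in~\eqref{e:Op-h-M} can be controlled by the norm of the corresponding standard quantization $\Op_h^0$ acting on~$L^2(\mathbb{R}^2)$.

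For~\eqref{e:l2-bdd-mfld-1}, I would apply Lemma~\ref{l:l2-bdd-Rn} to each transported symbol $b_\ell := (\chi_\ell' a)\circ\widetilde\varphi_\ell^{-1}\in\CIc(T^*\mathbb{R}^2)$. The support hypothesis $\supp a\subset\{|\xi|_g\le 10\}$ together with compactness of $\supp\chi_\ell'$ in $U_\ell$ implies that $\supp b_\ell$ is contained in an $h$-independent compact subset of $T^*\mathbb{R}^2$ depending only on~$\Xi$, so the factor $\xi^\alpha$ is bounded by a constant depending only on~$\Xi$. The $\xi$-derivatives $\partial_\xi^\beta b_\ell$ for $|\beta|\le 3$ are controlled by $\|a\|_{C^3}$ up to $\Xi$-dependent constants (this uses the chain rule applied to $\widetilde\varphi_\ell^{-1}$, whose derivatives are bounded in terms of~$\Xi$). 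Summing over the finitely many charts gives~\eqref{e:l2-bdd-mfld-1}.

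For the pseudolocality bound~\eqref{e:l2-bdd-mfld-2}, the key observation is that because $\supp\chi_1\cap\supp\chi_2=\emptyset$, after transporting to the $\ell$-th chart the cutoffs $\tilde\chi_1^\ell:=(\chi_1\chi_\ell')\circ\varphi_\ell^{-1}$ and $\tilde\chi_2^\ell:=(\chi_2\chi_\ell)\circ\varphi_\ell^{-1}$ still have disjoint supports in $V_\ell\subset\mathbb{R}^2$, separated by a positive distance $c_\ell>0$ depending only on $\Xi,\chi_1,\chi_2$. It therefore suffices to prove the model estimate
\begin{equation}
\label{e:pseudoloc-model}
\tilde\chi_1\,\Op_h^0(b)\,\tilde\chi_2 = \mathcal O_{\mathbf N,c}\big(\|b\|_{C^{\mathbf N+6}}h^{\mathbf N}\big)
\end{equation}
for $b\in\CIc(T^*\mathbb{R}^2)$ with $\xi$-support in a fixed compact set and for $\tilde\chi_1,\tilde\chi_2\in\CIc(\mathbb{R}^2)$ with $\operatorname{dist}(\supp\tilde\chi_1,\supp\tilde\chi_2)\ge c>0$. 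The Schwartz kernel of this composition is
\begin{equation*}
K(x,y)=(2\pi h)^{-2}\tilde\chi_1(x)\tilde\chi_2(y)\int_{\mathbb R^2}e^{i\langle x-y,\xi\rangle/h}b(x,\xi)\,d\xi,
\end{equation*}
and on $\supp K$ we have $|x-y|\ge c$. Integrating by parts $\mathbf N+4$ times using the operator $L=\frac{h}{i|x-y|^2}\langle x-y,\partial_\xi\rangle$, which preserves the phase, produces a factor $h^{\mathbf N+4}|x-y|^{-(\mathbf N+4)}$ and converts $b$ into a derivative of order $\mathbf N+4$. The resulting kernel is bounded pointwise by $C_{\mathbf N,c}\,h^{\mathbf N+2}\|b\|_{C^{\mathbf N+4}}$ on a fixed compact set, so the Hilbert--Schmidt (hence operator) norm is $\mathcal O(h^{\mathbf N+2}\|b\|_{C^{\mathbf N+4}})$; the extra margin of derivatives (to reach $\|a\|_{C^{\mathbf N+6}}$) absorbs the losses coming from applying the chain rule to transfer the estimate back to~$M$ via $\widetilde\varphi_\ell^{-1}$.

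Neither step involves a real obstacle: the bookkeeping in the chart-by-chart reduction is routine, and the integration-by-parts argument for~\eqref{e:pseudoloc-model} is standard. The only mild care needed is to verify that the disjointness of supports survives the chart decomposition, which follows immediately from the condition~\eqref{e:mfld-cutoffs} on $\chi_\ell,\chi_\ell'$ since $\supp(\chi_\ell'\chi_1)\cap\supp(\chi_\ell\chi_2)\subset\supp\chi_1\cap\supp\chi_2=\emptyset$, and to keep track of the precise number of derivatives needed so as to end up with the exponent $\mathbf N+6$ stated in the lemma.
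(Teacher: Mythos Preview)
Your argument is correct. For~\eqref{e:l2-bdd-mfld-1} you do exactly what the paper does: reduce chart by chart to Lemma~\ref{l:l2-bdd-Rn}. For~\eqref{e:l2-bdd-mfld-2} your route differs slightly from the paper's: you estimate the Schwartz kernel of $\tilde\chi_1\Op_h^0(b)\tilde\chi_2$ by direct nonstationary phase in~$\xi$, whereas the paper instead invokes the remark following Lemma~\ref{l:prod-Rn}, namely that multiplication by $\tilde\chi_1$ on the left is exact ($\tilde\chi_1\Op_h^0(b)=\Op_h^0(\tilde\chi_1 b)$) and the product formula~\eqref{e:prod-Rn} applied to $\Op_h^0(a)\Op_h^0(\tilde\chi_2)$ has all expansion terms killed by the disjoint supports, leaving only the $\mathcal O(\|a\|_{C^{\mathbf N+6}}\|\tilde\chi_2\|_{C^{\mathbf N+6}}h^{\mathbf N})$ remainder. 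Both approaches are standard and equally short; the paper's version has the minor advantage of reusing an already-stated lemma and explaining why the exponent $\mathbf N+6$ appears (it is inherited from~\eqref{e:prod-Rn}), while your direct kernel estimate is self-contained and in fact slightly more economical in derivatives---the chain rule through $\widetilde\varphi_\ell^{-1}$ does not actually cost extra derivatives, so your ``margin'' is larger than you suggest.
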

%%%%%%%%%%%%%%%%%%%%%%%%%%%%%%%%%%%%%%%%%%%%%%%%%%%%%%%%%%%%%%%%%%%%%%%%%%%%%%%%
\begin{proof}
The bound~\eqref{e:l2-bdd-mfld-1} follows immediately from~\eqref{e:Op-h-M} and
Lemma~\ref{l:l2-bdd-Rn}.
The bound~\eqref{e:l2-bdd-mfld-2} for the quantization $\Op_h^0$
on $\mathbb R^2$ and $\chi_1,\chi_2\in\CIc(\mathbb R^2)$ follows from the remark following Lemma~\ref{l:prod-Rn};
for the quantization $\Op_h$ it then follows from~\eqref{e:Op-h-M}.
\end{proof}
%%%%%%%%%%%%%%%%%%%%%%%%%%%%%%%%%%%%%%%%%%%%%%%%%%%%%%%%%%%%%%%%%%%%%%%%%%%%%%%%
We next give an auxiliary statement used in the proof of Lemma~\ref{l:prod-mfld} below.
We introduce the following notation: for $a\in\CIc(T^*M)$
\begin{equation}
  \label{e:op-l-def}
\Op_h^\ell(a):=\Op_h^0\big((\chi'_\ell a)\circ\widetilde\varphi_\ell^{-1}\big):L^2(\mathbb R^2)\to L^2(\mathbb R^2).
\end{equation}
%%%%%%%%%%%%%%%%%%%%%%%%%%%%%%%%%%%%%%%%%%%%%%%%%%%%%%%%%%%%%%%%%%%%%%%%%%%%%%%%
\begin{lemm}
  \label{l:psidorep}
Assume that
\begin{equation}
  \label{e:psidorep-1}
A=\sum_r \chi'_r\varphi_r^* \Op_h^r(a_r)\varphi_r^{-*}\chi_r:L^2(M)\to L^2(M)
\end{equation}
for some $a_r\in \CIc(T^*M)$ such that $\supp a_r\subset\{|\xi|_g\leq 10\}$.
Put
\begin{equation}
  \label{e:psidorep-2}
A_\ell:=\varphi_\ell^{-*}\chi'_\ell A\chi'_\ell \varphi_\ell^*:L^2(\mathbb R^2)\to L^2(\mathbb R^2).
\end{equation}
Then for every $\mathbf N\in\mathbb N$ we have
\begin{align}
  \label{e:psidorep-s-1}
A_\ell&=\Op_h^\ell\bigg(\sum_r\Big(\chi'_\ell\chi_r+\sum_{j=1}^{\mathbf N-1}h^j  \mathbf D^{2j}_{\ell,r,\Xi}\Big)\chi'_r a_r\bigg)+
\mathcal O_{\mathbf N,\Xi}\big(\max_r \|a_r\|_{C^{2\mathbf N+12}}h^{\mathbf N}\big),
\\
  \label{e:psidorep-s-2}
A&=\sum_\ell \chi'_\ell \varphi_\ell^* A_\ell \varphi_\ell^{-*}\chi_\ell+\mathcal O_{\mathbf N,\Xi}\big(\max_r \|a_r\|_{C^{\mathbf N+6}}h^{\mathbf N}\big),
\\
  \label{e:psidorep-s-3}
A&=\Op_h\bigg(\sum_r\Big(\chi_r+\sum_{j=1}^{\mathbf N-1}h^j \mathbf D^{2j}_{r,\Xi}\Big)a_r\bigg)+\mathcal O_{\mathbf N,\Xi}\big(\max_r\|a_r\|_{C^{2\mathbf N+12}}h^{\mathbf N}\big).
\end{align}
Here the operators $\mathbf D^{2j}_{\ell,r,\Xi}$ from~\eqref{e:psidorep-s-1}
and $\mathbf D^{2j}_{r,\Xi}$ from~\eqref{e:psidorep-s-3} are supported in $\supp\chi_r$.
\end{lemm}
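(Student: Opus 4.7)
The plan is to establish the three assertions in order, with each building on the previous. For part~\eqref{e:psidorep-s-1}, I would substitute the definition~\eqref{e:psidorep-1} into~\eqref{e:psidorep-2} and obtain
$$
A_\ell=\sum_r \varphi_\ell^{-*}\chi'_\ell\chi'_r\varphi_r^*\Op_h^r(a_r)\varphi_r^{-*}\chi_r\chi'_\ell\varphi_\ell^*.
$$
For each $r$, I would rewrite the conjugation using the transition diffeomorphism $\psi_{\ell r}:=\varphi_r\circ\varphi_\ell^{-1}$ (defined on $\varphi_\ell(U_\ell\cap U_r)$) by moving the multiplication operators past $\varphi_\ell^{-*}$ and $\varphi_r^*$. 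Applying Lemma~\ref{l:chvar-Rn} with $\varphi=\psi_{\ell r}$ and the pulled-back cutoffs $\chi_1=(\chi'_\ell\chi'_r)\circ\varphi_\ell^{-1}$, $\chi_2=(\chi'_\ell\chi_r)\circ\varphi_\ell^{-1}$ to the symbol $a=(\chi'_r a_r)\circ\widetilde\varphi_r^{-1}$ of $\Op_h^r(a_r)$, one obtains an $\Op_h^0$ expansion. Using the functoriality of the symplectic lift, $\widetilde\varphi_r^{-1}\circ\widetilde\psi_{\ell r}=\widetilde\varphi_\ell^{-1}$, so $a\circ\widetilde\psi_{\ell r}=(\chi'_r a_r)\circ\widetilde\varphi_\ell^{-1}$, and the resulting symbol has the form $\chi'_\ell\cdot b_\ell\circ\widetilde\varphi_\ell^{-1}$, which is exactly $\Op_h^\ell(b_\ell)$ for $b_\ell$ the claimed expression; the remainder bound from Lemma~\ref{l:chvar-Rn} gives $\mathcal{O}(\|a_r\|_{C^{2\mathbf{N}+12}}h^{\mathbf{N}})$ per summand.

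For part~\eqref{e:psidorep-s-2}, observe that $\varphi_\ell^*\varphi_\ell^{-*}=I$, so
$\chi'_\ell\varphi_\ell^*A_\ell\varphi_\ell^{-*}\chi_\ell=(\chi'_\ell)^2A\chi_\ell$; moreover $(\chi'_\ell)^2\chi_\ell=\chi_\ell$ by the cutoff condition~\eqref{e:mfld-cutoffs}. Using the partition of unity $\sum_\ell\chi_\ell=1$, we get
$$
\sum_\ell\chi'_\ell\varphi_\ell^*A_\ell\varphi_\ell^{-*}\chi_\ell-A=\sum_{\ell,r}\bigl((\chi'_\ell)^2-1\bigr)\chi'_r\varphi_r^*\Op_h^r(a_r)\varphi_r^{-*}\chi_r\chi_\ell.
$$
For each pair $(\ell,r)$ the outer factors $((\chi'_\ell)^2-1)\chi'_r$ and $\chi_r\chi_\ell$ are supported in $U_r$ and have disjoint supports, since the former vanishes on $\{\chi'_\ell=1\}\supset\supp\chi_\ell$. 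Conjugating by $\varphi_r^{\pm*}$ converts this into $(\alpha_1\circ\varphi_r)\Op_h^r(a_r)(\alpha_2\circ\varphi_r)$ on $L^2(V_r)$ with two $\CIc(V_r)$ cutoffs of disjoint support, and the pseudolocality remark after Lemma~\ref{l:prod-Rn} yields $\mathcal{O}(\|a_r\|_{C^{\mathbf{N}+6}}h^{\mathbf{N}})$.

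For part~\eqref{e:psidorep-s-3}, I would combine (1) and (2). Substituting the expansion of $A_\ell$ from~\eqref{e:psidorep-s-1} into~\eqref{e:psidorep-s-2} gives
$$
A=\sum_\ell\chi'_\ell\varphi_\ell^*\Op_h^\ell(b_\ell)\varphi_\ell^{-*}\chi_\ell+\mathcal{O}(\|a_r\|_{C^{2\mathbf{N}+12}}h^{\mathbf{N}}).
$$
By the definition~\eqref{e:Op-h-M}, $\Op_h(c)=\sum_\ell\chi'_\ell\varphi_\ell^*\Op_h^\ell(c)\varphi_\ell^{-*}\chi_\ell$ for any $c$ supported in $\{|\xi|_g\leq 10\}$. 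Taking $c=\sum_r(\chi_r+\sum_j h^j\mathbf{D}^{2j}_{r,\Xi})a_r$ for operators $\mathbf{D}^{2j}_{r,\Xi}$ to be defined inductively, one must show the difference between the double sum and $\Op_h(c)$ is $\mathcal{O}(h^{\mathbf{N}})$. At each order in $h$ this difference is a sum of terms whose inner $\Op_h^\ell$-symbols have a factor $\chi'_\ell(1-\chi'_\ell)$ (coming from the discrepancy between $(\chi'_\ell)^2$ and $\chi'_\ell$ upon inserting the definition of $\Op_h^\ell$), hence are supported away from $\widetilde\varphi_\ell(\supp\chi_\ell)$; another application of pseudolocality in chart $\ell$ absorbs them into the remainder.

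The main obstacle will be the bookkeeping in part~(3): the operators $\mathbf{D}^{2j}_{\ell,r,\Xi}$ produced by Lemma~\ref{l:chvar-Rn} depend on both chart indices, while the target operators $\mathbf{D}^{2j}_{r,\Xi}$ must be $\ell$-independent. This forces an inductive redistribution, at each order $h^j$, of the $\ell$-dependent correction terms into lower-order $r$-dependent corrections plus pseudolocal remainders, while tracking that each such pseudolocal step costs a bounded number of derivatives per order of $h$ so that the final estimate is controlled by $\max_r\|a_r\|_{C^{2\mathbf{N}+12}}$.
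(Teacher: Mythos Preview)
Your arguments for~\eqref{e:psidorep-s-1} and~\eqref{e:psidorep-s-2} are correct and coincide with the paper's proof.

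For~\eqref{e:psidorep-s-3} there is a genuine gap in your proposed mechanism. You assert that at \emph{each} order in~$h$ the discrepancy between $\sum_\ell\chi'_\ell\varphi_\ell^*\Op_h^\ell(b_\ell)\varphi_\ell^{-*}\chi_\ell$ and $\Op_h(c)$ has inner symbols carrying a factor $\chi'_\ell(1-\chi'_\ell)$, hence is pseudolocally negligible against the outer~$\chi_\ell$. This is true only at order~$h^0$: there $b_\ell^{(0)}=\chi'_\ell\sum_r\chi_r a_r$, and the difference with $c^{(0)}=\sum_r\chi_r a_r$ yields $\chi'_\ell(\chi'_\ell-1)c^{(0)}$ inside~$\Op_h^0$. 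At orders $j\geq 1$ the symbols $b_\ell^{(j)}=\sum_r\mathbf D^{2j}_{\ell,r,\Xi}\chi'_r a_r$ depend on~$\ell$ through the transition maps $\varphi_r\circ\varphi_\ell^{-1}$, and since the sets $\{\chi'_\ell=1\}$ overlap, no single $c^{(j)}$ can agree with all $b_\ell^{(j)}$ on those sets simultaneously. The factor $\chi'_\ell(1-\chi'_\ell)$ simply does not appear, and pseudolocality alone will not gain you the power of~$h$.

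The paper's device is different and worth internalizing. Write $\Op'_h(\mathbf a)$ for the operator~\eqref{e:psidorep-1} associated to a vector of symbols $\mathbf a=\{a_r\}_r$, and set $\iota(a):=\{a\}_r$, $\pi(\mathbf a):=\sum_r\chi_r a_r$, so $\Op_h(a)=\Op'_h(\iota(a))$. Given~$\mathbf a$, the defect $\mathbf b:=\mathbf a-\iota(\pi(\mathbf a))$ satisfies $\pi(\mathbf b)=0$. Now re-apply~\eqref{e:psidorep-s-1} and~\eqref{e:psidorep-s-2} to $\Op'_h(\mathbf b)$: this rewrites it as $\Op'_h(\mathbf c)$ where the leading term of each $c_\ell$ is $\chi'_\ell\sum_r\chi_r\chi'_rb_r=\chi'_\ell\,\pi(\mathbf b)=0$. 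Hence $\mathbf c=\mathcal O(h)$, giving
\[
\Op'_h(\mathbf a)=\Op_h(\pi(\mathbf a))+\Op'_h\Big(\sum_{j=1}^{\mathbf N-1}h^j\mathbf D^{2j}_{\Xi}\mathbf a\Big)+\mathcal O_{\mathbf N,\Xi}(\|\mathbf a\|_{C^{2\mathbf N+12}}h^{\mathbf N}),
\]
and one iterates. The gain of one power of~$h$ per step comes from the change-of-variables expansion~\eqref{e:psidorep-s-1} combined with the algebraic cancellation $\pi(\mathbf b)=0$, not from pseudolocality.
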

%%%%%%%%%%%%%%%%%%%%%%%%%%%%%%%%%%%%%%%%%%%%%%%%%%%%%%%%%%%%%%%%%%%%%%%%%%%%%%%%
\Remark The expression~\eqref{e:psidorep-1} is the general form of
a pseudodifferential operator on~$M$, with $\Op_h(a)$ obtained by putting $a_r:=a$ for all~$r$.
The operator $A_\ell$ is the localization of $A$ to the $\ell$-th coordinate chart.
The statement~\eqref{e:psidorep-s-1} shows that each localization is a pseudodifferential
operator on~$\mathbb R^2$; \eqref{e:psidorep-s-2} reconstructs~$A$ from its localizations;
and~\eqref{e:psidorep-s-3} writes a general pseudodifferential operator in the form $\Op_h(a)$
for some~$a$.
%%%%%%%%%%%%%%%%%%%%%%%%%%%%%%%%%%%%%%%%%%%%%%%%%%%%%%%%%%%%%%%%%%%%%%%%%%%%%%%%
\begin{proof}
The expansion~\eqref{e:psidorep-s-1} follows immediately from Lemma~\ref{l:chvar-Rn},
with $\varphi:=\varphi_r\circ\varphi_\ell^{-1}$,
$\chi_1:=(\chi_\ell'\chi_r')\circ\varphi_\ell^{-1}$,
$\chi_2:=(\chi_\ell'\chi_r)\circ\varphi_\ell^{-1}$,
and $a:=(\chi'_r a_r)\circ\widetilde\varphi_r^{-1}$.

To show~\eqref{e:psidorep-s-2} we write by~\eqref{e:mfld-cutoffs}
$$
A-\sum_\ell \chi'_\ell \varphi_\ell^* A_\ell \varphi_\ell^{-*}\chi_\ell=\sum_\ell
(1-(\chi'_\ell)^2)A\chi_\ell
$$
and estimate the right-hand side similarly to~\eqref{e:l2-bdd-mfld-2}.

To show~\eqref{e:psidorep-s-3}, we introduce a bit more notation.
For a vector of symbols $\mathbf a=\{a_r\}_r$ indexed by the coordinate charts used in~\eqref{e:Op-h-M}, let $\Op'_h(\mathbf a)$ be the operator defined in~\eqref{e:psidorep-1}.
Next, put
$$
\iota(a)=\{a\}_r,\quad
\pi(\mathbf a)=\sum_r \chi_r a_r.
$$
Recalling~\eqref{e:Op-h-M}, we have for any $a\in\CIc(T^*M)$
$$
\Op_h(a)=\Op'_h(\iota(a)).
$$
Therefore, for each vector $\mathbf a=\{a_r\}_r$ with
$a_r\in\CIc(T^*M)$, $\supp a_r\subset \{|\xi|_g\leq 10\}$,
we have $\Op'_h(\mathbf a)-\Op_h(\pi(\mathbf a))=\Op'_h(\mathbf b)$
where $\mathbf b:=\mathbf a-\iota(\pi(\mathbf a))$.
We apply~\eqref{e:psidorep-s-1} and~\eqref{e:psidorep-s-2} to this operator to write
it in the form $\Op'_h(\mathbf c)$ for some vector of symbols~$\mathbf c$ (modulo
a remainder);
note that by~\eqref{e:psidorep-s-1} the leading term of~$\mathbf c$
is zero since $\pi(\mathbf b)=0$. This implies
\begin{equation}
  \label{e:psidorep-i}
\Op'_h(\mathbf a)=\Op_h(\pi(\mathbf a))+\Op'_h\bigg(\sum_{j=1}^{\mathbf N-1}h^j \mathbf D^{2j}_{\Xi}\mathbf a\bigg)+\mathcal O_{\mathbf N,\Xi}(\|\mathbf a\|_{C^{2\mathbf N+12}}h^{\mathbf N})
\end{equation}
where the differential operators $\mathbf D^{2j}_\Xi$ act on vectors of symbols.
We iteratively apply~\eqref{e:psidorep-i} to the second term on the right-hand side
and obtain~\eqref{e:psidorep-s-3}.
\end{proof}
%%%%%%%%%%%%%%%%%%%%%%%%%%%%%%%%%%%%%%%%%%%%%%%%%%%%%%%%%%%%%%%%%%%%%%%%%%%%%%%%
We can now give the product and commutator formulas for the quantization on~$M$:
%%%%%%%%%%%%%%%%%%%%%%%%%%%%%%%%%%%%%%%%%%%%%%%%%%%%%%%%%%%%%%%%%%%%%%%%%%%%%%%%
\begin{lemm}
  \label{l:prod-mfld}
Assume that $a,b\in\CIc(T^*M)$ and $\supp a\cup\supp b\subset \{|\xi|_g\leq 10\}$.
Then for every $\mathbf N\in\mathbb N$ we have
\begin{gather}
  \label{e:prod-mfld}
\begin{aligned}
\Op_h(a)\Op_h(b)=\,&\Op_h\Big(ab+\sum_{j=1}^{\mathbf N-1}h^j\mathbf D_{\Xi}^{2j-2}(d^1 a\otimes d^1 b)|_{\Diag}\Big)
\\&+\mathcal O_{\mathbf N,\Xi}(\|a\otimes b\|_{C^{2\mathbf N+15}}h^{\mathbf N}),
\end{aligned}
\\
  \label{e:comm-mfld}
\begin{aligned}\relax
[\Op_h(a),\Op_h(b)]=\,&\Op_h\Big(-ih\{a,b\}
+
\sum_{j=2}^{\mathbf N-1}h^j\mathbf D_{\Xi}^{2j-4}(d^2 a\otimes d^2 b)|_{\Diag}\Big)
\\&+\mathcal O_{\mathbf N,\Xi}(\|a\otimes b\|_{C^{2\mathbf N+15}}h^{\mathbf N}),
\end{aligned}
\end{gather}
where $a\otimes b\in \CIc(T^*M\times T^*M)$ is defined by
$(a\otimes b)(\rho,\rho')=a(\rho)b(\rho')$, $\Diag\subset T^*M\times T^*M$ denotes the diagonal,
and $d^kb$ denotes the vector $(\partial^\alpha b)_{|\alpha|\leq k}$.
\end{lemm}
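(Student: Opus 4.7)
The plan is to prove the product formula~\eqref{e:prod-mfld} by localizing to coordinate charts and reducing to the flat statements (Lemmas~\ref{l:prod-Rn} and~\ref{l:chvar-Rn}), then to deduce the commutator formula~\eqref{e:comm-mfld} by antisymmetrization. Expanding both factors of $\Op_h(a)\Op_h(b)$ via~\eqref{e:Op-h-M} produces a double sum over chart pairs $(\ell, m)$; pairs with $\supp\chi_\ell \cap \supp\chi'_m = \emptyset$ contribute zero since the cutoffs in the middle multiply to zero, so only overlapping pairs need attention.

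For each overlapping pair I would proceed in two sub-steps. First, apply Lemma~\ref{l:chvar-Rn} with the transition diffeomorphism $\Phi := \varphi_\ell \circ \varphi_m^{-1}$ (and appropriate cutoffs supported in the overlap) to transport $\varphi_m^*\Op_h^m(b)\varphi_m^{-*}$ into the $\ell$-chart; by the composition rule $\widetilde\varphi_m^{-1}\circ\widetilde{\Phi^{-1}} = \widetilde{\varphi_\ell^{-1}}$ for cotangent lifts, the principal symbol of the transported operator becomes $(\chi_\ell\chi'_m b)\circ\widetilde{\varphi_\ell^{-1}}$, which is the natural expression of $b$ in the $\ell$-chart. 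Second, use Lemma~\ref{l:prod-Rn} to compose with $\Op_h^\ell(a)$: the resulting operator on $V_\ell$ has principal symbol $(\chi'_\ell\chi_\ell\chi'_m ab)\circ\widetilde{\varphi_\ell^{-1}}$, with $h^j$-corrections for $j\geq 1$ involving at least one derivative of $a$ and one of $b$ (visible in~\eqref{e:prod-Rn}, where each $h^j$ term is proportional to $\partial^\alpha_\xi a\,\partial^\alpha_x \widetilde b$ with $|\alpha|=j$). Summing over $(\ell,m)$ yields an operator of the form~\eqref{e:psidorep-1}, to which Lemma~\ref{l:psidorep}~\eqref{e:psidorep-s-3} applies and rewrites the whole composition as $\Op_h(\cdots)$ on $M$. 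The principal symbol telescopes to $ab$ using $\sum_\ell\chi_\ell = \sum_m\chi_m = 1$, and the subleading terms assemble into the claimed $\mathbf D_\Xi^{2j-2}(d^1 a\otimes d^1 b)|_{\Diag}$ structure.

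For~\eqref{e:comm-mfld}, apply~\eqref{e:prod-mfld} to both $\Op_h(a)\Op_h(b)$ and $\Op_h(b)\Op_h(a)$ and subtract. The principal $ab$ terms cancel. At $h^1$, the local-chart contribution is $-ih\,\partial_\xi a\,\partial_x b$ plus chart-dependent corrections from Lemma~\ref{l:chvar-Rn}; upon antisymmetrization in $(a,b)$ the chart-dependent corrections cancel (being symmetric in $(a,b)$ at this order, since the leading symbol of $[\Op_h(a),\Op_h(b)]$ is intrinsically defined), and the remaining piece gives $-ih\{a,b\}$ on $T^*M$ (the Poisson bracket being preserved by the symplectic cotangent lifts). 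For $j\geq 2$, antisymmetrization of $\mathbf D^{2j-2}_\Xi(d^1 a\otimes d^1 b)$ kills the component symmetric in $(a,b)$, lowering the effective differential order by two and producing the claimed $\mathbf D^{2j-4}_\Xi(d^2 a\otimes d^2 b)$ structure. The main obstacle is the careful bookkeeping of derivative losses through the three intermediate steps (Lemmas~\ref{l:chvar-Rn}, \ref{l:prod-Rn}, and~\ref{l:psidorep}) to achieve the sharp bound $\|a\otimes b\|_{C^{2\mathbf N+15}}$: a naive sum of the three individual losses gives roughly $5\mathbf N$ derivatives, so the expansions must be coordinated so that subleading $h$-corrections from one step are absorbed into the error budget of the next rather than propagated through the entire composition.
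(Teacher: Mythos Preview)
Your overall localization-and-reassembly strategy is sound, but there is a genuine gap at the point where you assert that ``the subleading terms assemble into the claimed $\mathbf D_\Xi^{2j-2}(d^1 a\otimes d^1 b)|_{\Diag}$ structure.'' This structure means that at order $h^j$ no term of the form $(\mathbf D^{2j}a)\cdot b$ or $a\cdot(\mathbf D^{2j}b)$ may appear. But such terms \emph{do} appear in your construction: the change-of-variables corrections from Lemma~\ref{l:chvar-Rn} produce contributions $h^k\mathbf D^{2k}b$ acting on $b$ alone, and the leading (zeroth-order) piece of the Moyal product then yields $a\cdot h^k\mathbf D^{2k}b$; likewise, applying~\eqref{e:psidorep-s-3} at the end introduces terms $h^j\mathbf D^{2j}(ab)$ which, upon expansion, contain $(\mathbf D^{2j}a)\cdot b$ etc. These terms \emph{must} cancel among themselves (the result is of course correct), but your argument does not exhibit the mechanism. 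As Remark~1 following the lemma already warns, the cancellation is not automatic: it relies on using the \emph{same} quantization $\Op_h$ on both sides.

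The paper's proof makes this cancellation explicit by a different organization. Rather than a double sum over chart pairs, it fixes an intermediate cutoff $\chi''_\ell$ with $\chi_\ell\prec\chi''_\ell\prec\chi'_\ell$, writes (up to pseudolocal errors)
\[
\Op_h(a)\Op_h(b)-\Op_h(ab)=\sum_\ell \chi'_\ell\varphi_\ell^*\big(A_\ell B_\ell-C_\ell\big)\varphi_\ell^{-*}\chi_\ell,
\]
with $A_\ell,B_\ell,C_\ell$ the $\ell$-chart localizations of $\Op_h(a),\Op_h(b),\Op_h(ab)$, and then expands each via~\eqref{e:psidorep-s-1} as $\Op_h^\ell\big(\sum_j h^jL_{j,\ell}(\,\cdot\,)\big)$ with the \emph{same} operators $L_{j,\ell}$. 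The dangerous terms $(\chi''_\ell a)(L_{j,\ell}b)+(L_{j,\ell}a)(\chi''_\ell b)$ from $A_\ell B_\ell$ are then cancelled against $L_{j,\ell}(ab)$ from $C_\ell$ by Leibniz's rule, leaving only $\mathbf D^{2j-2}(d^1a\otimes d^1b)$ on $\supp\chi_\ell$. This also keeps the derivative count at $2\mathbf N+15$, because each step is invoked once rather than composed.

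Your commutator argument has the analogous gap. Antisymmetrizing an expression of the form $\sum c_{\alpha\beta}\partial^\alpha a\,\partial^\beta b$ does \emph{not} by itself reduce $\max(|\alpha|,|\beta|)$: a term with $|\alpha|=2j-1$, $|\beta|=1$ survives antisymmetrization unless one knows $c_{\alpha\beta}=c_{\beta\alpha}$ for those indices, which is an additional structural fact you have not established. The paper instead subtracts $-ih\,\Op_h(\{a,b\})$ before localizing and uses the Poisson--Leibniz identity
\[
L_{j,\ell}\{a,b\}-\{a,L_{j,\ell}b\}-\{L_{j,\ell}a,b\}=\mathbf D^{2j-2}(d^2a\otimes d^2b),
\]
which directly produces the sharper $d^2\otimes d^2$ structure.
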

%%%%%%%%%%%%%%%%%%%%%%%%%%%%%%%%%%%%%%%%%%%%%%%%%%%%%%%%%%%%%%%%%%%%%%%%%%%%%%%%
\Remarks 1. The expression $\mathbf D^{2j-2}(d^1a\otimes d^1b)|_{\Diag}$ in~\eqref{e:prod-mfld}
is a linear combination of products $\partial^\alpha a\,\partial^\beta b$ where
$|\alpha|+|\beta|\leq 2j$ and $|\alpha|,|\beta|\leq 2j-1$. That is,
the symbol in product formula does not feature terms of the form
$h^j (\mathbf D^{2j} a)b$ or $h^j a(\mathbf D^{2j}b)$. This is not obvious,
in fact the proof needs us to use the same quantization procedures $\Op_h$
on both sides of~\eqref{e:prod-mfld}.

Here is an informal explanation:
in a fixed coordinate chart we have $\Op_h(a)=\Op_h^0(\tilde a)$,
$\Op_h(b)=\Op_h^0(\tilde b)$,
$\Op_h(ab)=\Op_h^0(\tilde c)$,
where
$\tilde a=a+\sum_{j\geq 1}h^jL_ja$,
$\tilde b=b+\sum_{j\geq 1}h^jL_jb$, and
$\tilde c=ab+\sum_{j\geq 1}h^jL_j(ab)$; here each $L_j$ is a differential
operator of order~$2j$ (depending on the chart chosen). Denote by $\tilde a\#\tilde b$ the Moyal product from~\eqref{e:prod-Rn}.
If we denote by `\dots' terms of the form $h^j \mathbf D^{2j-2}(d^1 a\otimes d^1b)|_{\Diag}$, then
$\tilde a\#\tilde b=\tilde a\tilde b+\dots=ab+\sum_{j\geq 1}h^j((L_ja)b+a(L_jb))+\dots$
and Leibniz's Rule shows that $\tilde c=ab+\sum_{j\geq 1}h^j((L_ja)b+a(L_jb))+\dots$ as well.

Similarly in the commutator formula~\eqref{e:comm-mfld}
the expression $\mathbf D^{2j-4}(d^2a\otimes d^2b)|_{\Diag}$ consists
of products $\partial^\alpha a\,\partial^\alpha b$ where
$|\alpha|+|\beta|\leq 2j$ and $|\alpha|,|\beta|\leq 2j-2$.

\noindent 2. We immediately deduce from~\eqref{e:comm-mfld}
the formula~\eqref{e:commfor} used
in the proof of Egorov's Theorem up to global Ehrenfest time:
it suffices to take $b\in S^{\comp}_0(T^*M)$ such that $P=\Op_h(b)+\mathcal O(h^\infty)$ and
choose $\mathbf N$ large enough so that~$(1-2\delta)\mathbf N>2+13\delta$.
Note that $h^j\mathbf D^{2j-4}(d^2a\otimes d^2b)|_{\Diag}\in h^{1+(j-1)(1-2\delta)}S^{\comp}_\delta(T^*M)$
when $a\in S^{\comp}_\delta(T^*M)$.
The expansion~\eqref{e:comm-mfld} is crucial in the proof of the precise version
of Egorov's Theorem in Lemma~\ref{l:egorov-precise} below.
%%%%%%%%%%%%%%%%%%%%%%%%%%%%%%%%%%%%%%%%%%%%%%%%%%%%%%%%%%%%%%%%%%%%%%%%%%%%%%%%
\begin{proof}
1. Fix cutoff functions
$$
\chi''_\ell\in\CIc(U_\ell),\quad
\supp\chi_\ell\cap \supp(1-\chi''_\ell)=\supp\chi''_\ell\cap \supp(1-\chi'_\ell)=\emptyset.
$$
We write
\begin{gather}
\label{e:rope1}
\begin{aligned}
\Op_h(a)\Op_h(b)=\,&\sum_\ell (\chi'_\ell)^2\Op_h(a)\chi''_\ell\Op_h(b)\chi_\ell
\\&+\sum_\ell (1-(\chi'_\ell)^2)\Op_h(a)\chi''_\ell\Op_h(b)\chi_\ell\\
&+\sum_\ell \Op_h(a)(1-\chi''_\ell)\Op_h(b)\chi_\ell,
\end{aligned}\\
\label{e:rope2}
\Op_h(ab)=\sum_\ell (\chi'_\ell)^2\Op_h(ab)\chi_\ell+
\sum_\ell (1-(\chi'_\ell)^2)\Op_h(ab)\chi_\ell.
\end{gather}
The last two terms on the right-hand side of~\eqref{e:rope1}
and the last term on the right-hand side of~\eqref{e:rope2}
are estimated using~\eqref{e:l2-bdd-mfld-1}
and~\eqref{e:l2-bdd-mfld-2}.
Rewriting the first terms on the right-hand sides of~\eqref{e:rope1}--\eqref{e:rope2}, we get
\begin{equation}
  \label{e:prodigy1}
\begin{aligned}
\Op_h(a)\Op_h(b)-\Op_h(ab)=&\,
\sum_\ell \chi'_\ell\varphi_\ell^*(A_\ell B_\ell-C_\ell)\varphi_\ell^{-*}\chi_\ell
\\&+\mathcal O_{\mathbf N,\Xi}(\|a\otimes b\|_{C^{\mathbf N+9}}h^{\mathbf N}),
\end{aligned}
\end{equation}
where (note we use the notation $A_\ell$ in a slightly different way than Lemma~\ref{l:psidorep})
$$
A_\ell:=\varphi_\ell^{-*}\chi'_\ell \Op_h(a)\chi''_\ell\varphi_\ell^*,\quad
B_\ell:=\varphi_\ell^{-*}\chi'_\ell \Op_h(b)\chi''_\ell\varphi_\ell^*,\quad
C_\ell:=\varphi_\ell^{-*}\chi'_\ell \Op_h(ab)\chi''_\ell\varphi_\ell^*.
$$

\noindent 2.
Similarly to~\eqref{e:psidorep-s-1} we write for every $\mathbf N$ using the notation~\eqref{e:op-l-def}
\begin{equation}
  \label{e:rope0}
A_\ell=\Op_h^\ell\bigg(\sum_{j=0}^{\mathbf N-1}h^jL_{j,\ell}a\bigg)
+\mathcal O_{\mathbf N,\Xi}(\|a\|_{C^{2\mathbf N+12}}h^{\mathbf N})
\end{equation}
where each $L_{j,\ell}$ is a differential operator of order~$2j$ supported
in $\supp\chi''_\ell$ and $L_{0,\ell}=\chi''_\ell$. Same is true for $B_\ell,C_\ell$,
with the same operators $L_{j,\ell}$.

Using~\eqref{e:rope0} and the bound~\eqref{e:l2-bdd-mfld-1} we get
\begin{equation}
  \label{e:prodigy3}
A_\ell B_\ell=\sum_{j,k\geq 0\atop j+k<\mathbf N}h^{j+k}\Op_h^\ell(L_{j,\ell}a)
\Op_h^\ell(L_{k,\ell}b)+
\mathcal O_{\mathbf N,\Xi}(\|a\otimes b\|_{C^{2\mathbf N+15}}h^{\mathbf N}).
\end{equation}
We next use the product formula for the standard quantization (Lemma~\ref{l:prod-Rn})
and the fact that $L_{j,\ell}a,L_{k,\ell}b$ are supported in
$\supp\chi''_\ell$ which does not intersect $\supp(1-\chi'_\ell)$, to write
\begin{equation}
  \label{e:rope2.5}
\begin{aligned}
\Op_h^\ell(L_{j,\ell}a)
\Op_h^\ell(L_{k,\ell}b)=&\,
\Op_h^\ell\bigg((L_{j,\ell}a)(L_{k,\ell}b)+\sum_{s=1}^{\mathbf N-j-k-1}h^s\mathbf D^{s,s}_{\ell,\Xi}(L_{j,\ell}a\otimes L_{k,\ell}b)|_{\Diag}\bigg)
\\&+
\mathcal O_{\mathbf N,\Xi}(\|a\otimes b\|_{C^{2\mathbf N+12}}h^{\mathbf N-j-k}).
\end{aligned}
\end{equation}
Here $\mathbf D^{s,s}$ denotes a differential operator of order~$2s$ on $T^*M\times T^*M$
which has no more than~$s$ differentiations in either component of the product.
This implies
\begin{equation}
  \label{e:rope3}
\begin{aligned}
A_\ell B_\ell-C_\ell=&\,\Op_h^\ell\bigg(\chi''_\ell (\chi''_\ell-1)ab
+\sum_{j=1}^{\mathbf N-1}h^j\big((\chi''_\ell a)(L_{j,\ell}b)+(L_{j,\ell}a)(\chi''_\ell b)-L_{j,\ell}(ab)\big)
\\
&+\sum_{j=1}^{\mathbf N-1} h^j \mathbf D^{2j-2}_{\ell,\Xi}(d^1a\otimes d^1b)|_{\Diag}\bigg)
+\mathcal O_{\mathbf N,\Xi}(\|a\otimes b\|_{C^{2\mathbf N+15}}h^{\mathbf N})
\end{aligned}
\end{equation}
where the second line includes all the terms in~\eqref{e:rope2.5} such that
$s\geq 1$ or $j\cdot k>0$.
Using Leibniz's Rule for the operators $L_{j,\ell}$, $j\geq 1$,
$$
L_{j,\ell}(ab)=a(L_{j,\ell}b)+(L_{j,\ell}a)b+\mathbf D^{2j-2}_{j,\ell,\Xi}(d^1a\otimes d^1b)|_{\Diag}
$$
we see that
the restriction of the first line on the right-hand side of~\eqref{e:rope3}
to $T^*M\setminus\supp(1-\chi''_\ell)\supset \supp\chi_\ell$
has the form
$\sum_{j=1}^{\mathbf N-1}h^j \mathbf D^{2j-2}_{\ell,\Xi}(d^1a\otimes d^1 b)|_{\Diag}$.
From here and~\eqref{e:psidorep-s-3} (using that the operators $\mathbf D^{2j}_{\Xi,r}$ there
are supported in $\supp\chi_r$) we get the product formula~\eqref{e:prod-mfld}.

\noindent 3.
To obtain the commutator formula~\eqref{e:comm-mfld} we write similarly to~\eqref{e:prodigy1}
$$
\begin{gathered}\relax
\begin{aligned}\relax
[\Op_h(a),\Op_h(b)]+ih\Op_h(\{a,b\})=&\,\sum_\ell \chi'_\ell\varphi_\ell^*([A_\ell,B_\ell]-E_\ell)
\varphi_\ell^{-*}\chi_\ell
\\&+\mathcal O_{\mathbf N,\Xi}(\|a\otimes b\|_{C^{\mathbf N+9}}h^{\mathbf N}),
\end{aligned}
\\
E_\ell:=\varphi_\ell^{-*}\chi'_\ell\Op_h(-ih\{a,b\})\chi''_\ell\varphi_\ell^*.
\end{gathered}
$$
Similarly to~\eqref{e:prodigy3} we get
$$
[A_\ell,B_\ell]=\sum_{j,k\geq 0\atop j+k<\mathbf N}h^{j+k}[\Op_h^\ell(L_{j,\ell}a),
\Op_h^\ell(L_{k,\ell}b)]+\mathcal O_{\mathbf N,\Xi}(\|a\otimes b\|_{C^{2\mathbf N+15}}h^{\mathbf N}).
$$
By Lemma~\ref{l:prod-Rn} we have the following analog of~\eqref{e:rope2.5}:
$$
\begin{aligned}\relax
[\Op_h^\ell(L_{j,\ell}a),
\Op_h^\ell(L_{k,\ell}b)]=&\,
\Op_h^\ell\bigg(-ih\{L_{j,\ell}a,L_{k,\ell}b\}+\sum_{s=2}^{\mathbf N-j-k-1}h^s\mathbf D^{s,s}_{\ell,\Xi}(L_{j,\ell}a\otimes L_{k,\ell}b)|_{\Diag}\bigg)
\\&+
\mathcal O_{\mathbf N,\Xi}(\|a\otimes b\|_{C^{2\mathbf N+12}}h^{\mathbf N-j-k}).
\end{aligned}
$$
This gives the following analog of~\eqref{e:rope3}:
$$
\begin{aligned}\relax
[A_\ell,B_\ell]-E_\ell=&\,\Op_h^\ell\bigg(ih\big(\chi''_\ell\{a,b\}-\{\chi''_\ell a,\chi''_\ell b\}\big)
\\&
+\sum_{j=1}^{\mathbf N-2}ih^{j+1}\big(
L_{j,\ell}\{a,b\}-\{\chi''_\ell a,L_{j,\ell}b\}-\{L_{j,\ell}a,\chi''_\ell b\}
\big)
\\&+
\sum_{j=2}^{\mathbf N-1}h^j\mathbf D^{2j-4}_{\ell,\Xi}(d^2a\otimes d^2b)|_{\Diag}
\bigg)
+
\mathcal O_{\mathbf N,\Xi}(\|a\otimes b\|_{C^{2\mathbf N+15}}h^{\mathbf N})
\end{aligned}
$$
where the third line includes all terms such that $s\geq 2$ or $j\cdot k>0$.
To get~\eqref{e:comm-mfld} it remains to
argue as at the end of Step~2 
using the following Leibniz's rule for the Poisson bracket:
$$
L_{j,\ell}\{a,b\}=\{a,L_{j,\ell}b\}+\{L_{j,\ell}a,b\}+\mathbf D^{2j-2}_{\ell,\Xi}
(d^2a\otimes d^2b)|_{\Diag}.\qedhere
$$
\end{proof}
%%%%%%%%%%%%%%%%%%%%%%%%%%%%%%%%%%%%%%%%%%%%%%%%%%%%%%%%%%%%%%%%%%%%%%%%%%%%%%%%

%%%%%%%%%%%%%%%%%%%%%%%%%%%%%%%%%%%%%%%%%%%%%%%%%%%%%%%%%%%%%%%%%%%%%%%%%%%%%%%%
\subsection{Egorov's Theorem}
\label{s:ehr-egorov}

We finally give a quantitative version of Egorov's Theorem~\eqref{e:egorov-basic}.
The proof below applies to more general situations but we restrict ourselves to
the case of the propagator $U(t)=\exp(-itP/h)$, where $P$ is defined in~\eqref{e:U-t-def},
and the flow $\varphi_t$ defined in~\eqref{e:phi-def}.
%%%%%%%%%%%%%%%%%%%%%%%%%%%%%%%%%%%%%%%%%%%%%%%%%%%%%%%%%%%%%%%%%%%%%%%%%%%%%%%%
\begin{lemm}
  \label{l:egorov-precise}
Assume that $a\in\CIc(T^*M)$ and $\supp a\subset \{{1\over 4}\leq|\xi|_g\leq 4\}$.
Then we have for all $\mathbf N\in\mathbb N$ and $0\leq t\leq 1$
\begin{equation}
  \label{e:egorov-precise}
U(-t)\Op_h(a)U(t)=\Op_h\bigg(\Big(a+\sum_{j=1}^{\mathbf N-1}h^j\mathbf D^{2j}_{t,\Xi} a\Big)\circ\varphi_t\bigg)
+\mathcal O_{\mathbf N,\Xi}\big(\|a\|_{C^{2\mathbf N+17}}h^{\mathbf N}\big).
\end{equation}
\end{lemm}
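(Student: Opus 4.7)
The plan is to construct an asymptotic symbol $B(t;h) = \sum_{j=0}^{\mathbf N - 1} h^j b_j(t)$ whose quantization satisfies the Heisenberg equation obeyed by $A(t) := U(-t)\Op_h(a) U(t)$ modulo $\mathcal{O}(h^{\mathbf N})$ in operator norm, then to close the argument by a Duhamel step that uses the unitarity of $U(t)$. Since $\partial_t A(t) = \frac{i}{h}[P, A(t)]$ and $A(0) = \Op_h(a)$, it suffices to arrange that $\Op_h(B(t;h))$ satisfies the same evolution with matching initial data. Applying the commutator formula of Lemma~\ref{l:prod-mfld} to $[P, \Op_h(c)]$ for $c \in \CIc(T^*M)$ supported in $\{\tfrac{1}{4} \leq |\xi|_g \leq 4\}$, and using that $\sigma_h(P) = p = |\xi|_g$ there (so $\{P, c\} = H_p c$, with all $P$-dependent factors absorbed into constants depending only on $\Xi$), yields the expansion
$$
\frac{i}{h}[P, \Op_h(c)] = \Op_h\bigg(H_p c + \sum_{k=1}^{K-1} h^k L_k c\bigg) + \mathcal{O}_{K, \Xi}\big(\|c\|_{C^{2K + 17}} h^K\big)_{L^2 \to L^2},
$$
valid for every $K \geq 1$, where each $L_k$ with $k \geq 1$ is a differential operator on $T^*M$ of order $\leq 2k$ determined only by $\Xi$.

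Matching powers of $h$ in $\partial_t \Op_h(B) = \frac{i}{h}[P, \Op_h(B)]$ leads to the recursive transport system
$$
\partial_t b_j = H_p b_j + \sum_{k=1}^j L_k b_{j-k}, \qquad b_0(0) = a, \quad b_j(0) = 0 \text{ for } j \geq 1,
$$
which is solved by Duhamel against the flow: $b_0(t) = a \circ \varphi_t$ and, inductively,
$$
b_j(t) = \int_0^t \sum_{k=1}^j (L_k b_{j-k})(s) \circ \varphi_{t-s}\, ds.
$$
Using the identity $(f \circ \varphi_s) \circ \varphi_{t-s} = f \circ \varphi_t$ and transferring each $L_k$ through $\varphi_s$ by the chain rule, a straightforward induction shows $b_j(t) = (\mathbf D_{t, \Xi}^{2j} a) \circ \varphi_t$ for some differential operator $\mathbf D_{t, \Xi}^{2j}$ of order $2j$ on $T^*M$ depending smoothly on $t \in [0, 1]$; the orders $2k$ of the $L_k$ combine additively in the iteration, while the principal transport generated by $H_p$ introduces no extra derivatives of $a$.

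Finally, set $R(t) := A(t) - \Op_h(B(t;h))$, so that $R(0) = 0$ and $\partial_t R(t) = \frac{i}{h}[P, R(t)] + E(t)$, with $E(t)$ collecting the commutator-formula remainders and the discarded high-order terms of the expansion. Conjugating by $U(t)$, the operator $\tilde R(t) := U(t) R(t) U(-t)$ obeys $\partial_t \tilde R(t) = U(t) E(t) U(-t)$, so direct integration and unitarity give $\|R(t)\|_{L^2 \to L^2} = \|\tilde R(t)\|_{L^2 \to L^2} \leq \int_0^t \|E(s)\|_{L^2 \to L^2}\, ds$. The hard part will be bounding $\|E(t)\|_{L^2 \to L^2}$ by $C_{\mathbf N, \Xi}\|a\|_{C^{2\mathbf N + 17}} h^{\mathbf N}$: a single application of the expansion above to the full symbol $B(t;h)$ would force an unfavorable $C^{\sim 4\mathbf N}$ cost on $a$, because $\|b_j\|_{C^m}$ loses up to $2j$ extra derivatives relative to $\|a\|_{C^m}$. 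The remedy is to apply Lemma~\ref{l:prod-mfld} separately to each $\Op_h(h^j b_j)$ with a $j$-dependent truncation parameter $K_j = \mathbf N - j$, so that the $h^j$ gain balances the $2j$ extra derivatives; the resulting $\mathbf N$ errors, each of size $\mathcal{O}(h^{\mathbf N}\|a\|_{C^{2\mathbf N + 17}})$, sum to the desired bound once the factor $\mathbf N$ is absorbed into $C_{\mathbf N, \Xi}$. Rewriting $B(t;h) = \big(a + \sum_{j=1}^{\mathbf N - 1} h^j \mathbf D_{t, \Xi}^{2j} a\big) \circ \varphi_t$ then yields the stated form.
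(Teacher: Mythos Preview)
Your proof is correct and follows essentially the same strategy as the paper's: derive the commutator expansion from Lemma~\ref{l:prod-mfld}, set up and solve the recursive transport hierarchy for the subprincipal symbols, and close by a Duhamel/unitarity argument. The only presentational differences are that the paper factors out $\circ\,\varphi_t$ from the ansatz at the outset (turning the transport PDE $\partial_t b_j = H_p b_j + \ldots$ into the ODE $\partial_t a_t^{(j)} = \sum_k L_{k,t}\,a_t^{(j-k)}$), and that your explicit use of $j$-dependent truncation $K_j = \mathbf N - j$ to keep the derivative count at $C^{2\mathbf N+17}$ is a point the paper leaves implicit.
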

%%%%%%%%%%%%%%%%%%%%%%%%%%%%%%%%%%%%%%%%%%%%%%%%%%%%%%%%%%%%%%%%%%%%%%%%%%%%%%%%
\begin{proof}
1. 
We first recall from~\eqref{e:funcal} and~\eqref{e:U-t-def} that
$$
P=\Op_h(p_0+hp')+\mathcal O(h^\infty)_{L^2\to L^2},\quad
p_0=p\quad\text{on}\quad \{\textstyle{1\over 4}\leq |\xi|_g\leq 4\}
$$
where $p_0,p'$ are classical symbols on $T^*M$ supported inside
$\{{1\over 5}<|\xi|_g<5\}$.
Here $p(x,\xi)=|\xi|_g$ and $\varphi_t=\exp(tH_p)$.

By the commutator formula~\eqref{e:comm-mfld},
for any $\tilde a\in \CIc(T^*M)$, $\supp \tilde a\subset \{{1\over 4}\leq |\xi|_g\leq 4\}$,
$$
{i\over h}[P,\Op_h(\tilde a)]=\Op_h\bigg(H_p \tilde a+\sum_{j=1}^{\mathbf N-1}h^j\mathbf D^{2j}_{\Xi} \tilde a\bigg)
+\mathcal O_{\mathbf N,\Xi}(\|\tilde a\|_{C^{2\mathbf N+17}}h^{\mathbf N}).
$$
Here we use that $p'$ is classical, i.e. has an expansion in powers of~$h$, and incorporate
the terms in that expansion into the operators $\mathbf D^{2j}_\Xi$.

Therefore, for any
family of symbols $a_t\in \CIc(T^*M)$ depending smoothly
on~$t\in [0,1]$ and such that $\supp a_t\subset \{{1\over 4}\leq|\xi|_g\leq 4\}$,
and for any $\mathbf N\in\mathbb N$
\begin{equation}
  \label{e:commune}
\begin{aligned}
\partial_t\Op_h(a_t\circ\varphi_t)-{i\over h}[P,\Op_h(a_t\circ\varphi_t)]
=&\,\Op_h\bigg(\Big(\partial_t a_t-\sum_{j=1}^{\mathbf N-1} h^j L_{j,t} a_t\Big)\circ\varphi_t\bigg)
\\&+\mathcal O_{\mathbf N,\Xi}(\|a_t\|_{C^{2\mathbf N+17}}h^{\mathbf N})
\end{aligned}
\end{equation}
where each $L_{j,t}$ is a differential operator of order~$2j$ on $T^*M$ with coefficients
depending on~$t,\Xi$.

\noindent 2. We now construct $t$-dependent families of symbols
$a_t^{(j)}\in\CIc(T^*M)$, $t\in [0,1]$,  $j=0,\dots,\mathbf N-1$, using the following iterative procedure:
$$
a_t^{(0)}:=a;\qquad
a_t^{(j)}:=\sum_{k=0}^{j-1}\int_0^t L_{j-k,s}a_s^{(k)}\,ds,\quad j=1,\dots,\mathbf N-1.
$$
Note that $a_t^{(j)}$ has the form $\mathbf D^{2j}_{t,\Xi} a$.
Put
$$
\tilde a^{(\mathbf N)}_t:=\sum_{j=0}^{\mathbf N-1}h^j a^{(j)}_t,
$$
then~\eqref{e:commune} implies
\begin{equation}
  \label{e:commune2}
\partial_t\Op_h(\tilde a_t^{(\mathbf N)}\circ\varphi_t)-{i\over h}[P,\Op_h(\tilde a_t^{(\mathbf N)}\circ\varphi_t)]
=\mathcal O_{\mathbf N,\Xi}(\|a\|_{C^{2\mathbf N+17}}h^{\mathbf N}).
\end{equation}

\noindent 3. From~\eqref{e:commune2} and the unitarity of $U(t)$ we obtain for $t\in [0,1]$
$$
\partial_t \big(U(t)\Op_h(\tilde a_t^{(\mathbf N)}\circ\varphi_t)U(-t)\big)=\mathcal O_{\mathbf N,\Xi}(\|a\|_{C^{2\mathbf N+17}}h^{\mathbf N}).
$$
Integrating this and using that $\tilde a_0^{(\mathbf N)}=a$ we have
$$
U(t)\Op_h(\tilde a_t^{(\mathbf N)}\circ\varphi_t )U(-t)=\Op_h(a)+\mathcal O_{\mathbf N,\Xi}(\|a\|_{C^{2\mathbf N+17}}h^{\mathbf N}).
$$
Conjugating this by $U(t)$ we get~\eqref{e:egorov-precise}.
\end{proof}
%%%%%%%%%%%%%%%%%%%%%%%%%%%%%%%%%%%%%%%%%%%%%%%%%%%%%%%%%%%%%%%%%%%%%%%%%%%%%%%%

%%%%%%%%%%%%%%%%%%%%%%%%%%%%%%%%%%%%%%%%%%%%%%%%%%%%%%%%%%%%%%%%%%%%%%%%%%%%%%%%
%%%%%%%%%%%%%%%%%%%%%%%%%%%%%%%%%%%%%%%%%%%%%%%%%%%%%%%%%%%%%%%%%%%%%%%%%%%%%%%%
\section{Fourier localization of Lagrangian states}
\label{s:fourloc-proof}

In this appendix we prove Proposition~\ref{l:fourloc-lag}. We use the following interpolation
inequality in the classes~$C^k$. It is standard (see for instance~\cite[Lemma~7.7.2]{Hormander1}
for a special case) but we provide a proof for the reader's convenience.
%%%%%%%%%%%%%%%%%%%%%%%%%%%%%%%%%%%%%%%%%%%%%%%%%%%%%%%%%%%%%%%%%%%%%%%%%%%%%%%%
\begin{lemm}
  \label{l:interpolation-Ck}
Assume that $U\subset\mathbb R^n$ is an open set, $K\subset U$,
$d(K,\mathbb R^n\setminus U)>r_0>0$,
and $f\in C^\infty(U)$. Denote
$$
\|f\|_m:=\max_{|\alpha|\leq m}\sup_U |\partial^\alpha f|,\quad
m\in\mathbb N_0.
$$
Let $0<\ell<m$. Then there exists a constant $C$ depending only on~$m,r_0$
such that
\begin{equation}
  \label{e:interpolation-Ck}
\max_{|\alpha|\leq \ell}\sup_K |\partial^\alpha f|\leq C \|f\|_0^{1-\ell/ m}
\|f\|_m^{\ell/ m}.
\end{equation}
\end{lemm}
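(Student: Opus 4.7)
The plan is to reduce the inequality to a single-derivative base case via Taylor's theorem, and then iterate via a log-convexity argument. The base case is $\ell=1$, $m=2$: for any $x\in K$ and unit vector $v\in\mathbb R^n$, the segment $\{x+tv:0\le t\le r_0\}$ lies in $U$, so Taylor's formula with integral remainder gives
\begin{equation*}
|v\cdot\nabla f(x)|\le \frac{2\|f\|_0}{t}+\frac{t}{2}\|f\|_2\qquad\text{for all }0<t\le r_0.
\end{equation*}
Minimizing over $t\in(0,r_0]$ with the choice $t=\min(r_0,\,2\sqrt{\|f\|_0/\|f\|_2})$ yields $|v\cdot\nabla f(x)|\le 2\|f\|_0^{1/2}\|f\|_2^{1/2}+2r_0^{-1}\|f\|_0$, with the second term present only in the regime where the optimal $t$ would exceed~$r_0$.

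For general $m$, I iterate. Fix a nested chain $K=K_m\subset K_{m-1}\subset\cdots\subset K_0\subset U$ with $d(K_{j+1},\mathbb R^n\setminus K_j)\ge r_0/m$, and set $M_j:=\max_{|\alpha|=j}\sup_{K_j}|\partial^\alpha f|$. Applying the base case (with $r_0/m$ in place of $r_0$) to an arbitrary derivative $\partial^\beta f$, $|\beta|=j$, viewed as a smooth function on $K_j$, produces the recursion
\begin{equation*}
M_{j+1}\le C\bigl(M_j^{1/2}M_{j+2}^{1/2}+M_j\bigr)\qquad(0\le j\le m-2),
\end{equation*}
together with the end conditions $M_0\le\|f\|_0$ and $M_m\le\|f\|_m$, where $C$ depends only on $m,r_0$. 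Passing to the scale-invariant quantities $\widetilde M_j:=M_j/(\|f\|_0^{1-j/m}\|f\|_m^{j/m})$ turns the recursion into $\widetilde M_{j+1}\le C\bigl((\widetilde M_j\widetilde M_{j+2})^{1/2}+\tau^j\widetilde M_j\bigr)$ for some $\tau\le 1$, subject to $\widetilde M_0,\widetilde M_m\le 1$. An elementary finite induction (only $m$ steps) then gives $\widetilde M_\ell\le C_m$, which is exactly the required inequality for the top-order terms $|\alpha|=\ell$. The lower-order terms on the left of~\eqref{e:interpolation-Ck} are controlled by applying the same bound with $\ell$ replaced by any $|\alpha|<\ell$, since $\|f\|_0^{1-|\alpha|/m}\|f\|_m^{|\alpha|/m}\le\|f\|_0^{1-\ell/m}\|f\|_m^{\ell/m}+\|f\|_0$ with the second term absorbed into the first after adjusting $C$.

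The main obstacle is the additive $M_j$ term in the recursion, which blocks a strictly log-convex chain. This is handled by a regime dichotomy: if $\|f\|_0\le r_0^m\|f\|_m$, then the leading multiplicative term dominates and the standard log-convex iteration works; if $\|f\|_0> r_0^m\|f\|_m$, then $\|f\|_0^{1-\ell/m}\|f\|_m^{\ell/m}\gtrsim r_0^{-\ell}\|f\|_0$, which already dominates any $M_\ell$ obtained by applying the base case $\ell$ times crudely (each such application costs a factor of $r_0^{-1}$). Both regimes yield the desired bound with a final constant depending only on $m$ and $r_0$.
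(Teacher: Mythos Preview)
Your Landau-iteration strategy is a classical alternative, but the implementation has a genuine gap in the recursion step. When you apply the base case to $g=\partial^\beta f$, $|\beta|=j$, on the pair $K_{j+1}\subset K_j$, the right-hand side involves $\|g\|_2$ taken over~$K_j$, i.e.\ derivatives of $f$ of order $j+2$ on the \emph{larger} set~$K_j$. Since $K_{j+2}\subset K_j$, this quantity is $\ge M_{j+2}$, not $\le M_{j+2}$, so the claimed recursion $M_{j+1}\le C\bigl(M_j^{1/2}M_{j+2}^{1/2}+M_j\bigr)$ does not follow. If you instead bound the $K_j$-sup by $\|f\|_m$ you only get $M_{j+1}\le C\bigl(M_j^{1/2}\|f\|_m^{1/2}+M_j\bigr)$, and iterating that produces exponents $1-2^{-\ell}$ on $\|f\|_0$ rather than $1-\ell/m$. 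Your regime dichotomy also contains a sign slip: when $\|f\|_0>r_0^m\|f\|_m$ one has $\|f\|_0^{1-\ell/m}\|f\|_m^{\ell/m}\le r_0^{-\ell}\|f\|_0$, not $\gtrsim$ (though that regime is still easy via the trivial bound $M_\ell\le\|f\|_m<r_0^{-m}\|f\|_0\le r_0^{-m}\|f\|_0^{1-\ell/m}\|f\|_m^{\ell/m}$).

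The fix is to reduce to dimension one first. For $x_0\in K$ and a unit vector $v$, set $g(t)=f(x_0+tv)$ on $I=[-r_0,r_0]$. In 1D the Landau inequality holds with all three norms on the \emph{same} interval (at any $t\in I$ one may Taylor-expand to the right or to the left), so the recursion $m_{j+1}\le C\bigl(m_j^{1/2}m_{j+2}^{1/2}+r_0^{-1}m_j\bigr)$ with $m_j=\|g^{(j)}\|_{L^\infty(I)}$ is now legitimate, and your log-convexity argument gives $|\partial_v^\ell f(x_0)|\le m_\ell\le C\bigl(m_0^{1-\ell/m}m_m^{\ell/m}+r_0^{-\ell}m_0\bigr)$. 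Polarization recovers the mixed partials $\partial^\alpha f(x_0)$, and the additive term is absorbed via $r_0^{-\ell}\|f\|_0\le r_0^{-\ell}\|f\|_0^{1-\ell/m}\|f\|_m^{\ell/m}$ (using $\|f\|_0\le\|f\|_m$), making the dichotomy unnecessary.

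By contrast, the paper's proof is a one-shot argument with no iteration: at each $x_0\in K$ it writes the order-$m$ Taylor bound $\bigl|\sum_{j<m}P_j(y)\bigr|\le R_0+C_mR_m|y|^m$ on $B(x_0,r_0)$, substitutes $y=(R_0/R_m)^{1/m}r\theta$ to make the right-hand side $\lesssim R_0$, and then uses that the sup-norm on $[0,r_0]$ of a degree-$(m-1)$ polynomial in $r$ controls its coefficients. This sidesteps the domain bookkeeping entirely.
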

%%%%%%%%%%%%%%%%%%%%%%%%%%%%%%%%%%%%%%%%%%%%%%%%%%%%%%%%%%%%%%%%%%%%%%%%%%%%%%%%
\begin{proof}
Since $\|f\|_0\leq\|f\|_m$ it suffices to show~\eqref{e:interpolation-Ck} for $|\alpha|=\ell$.
Then~\eqref{e:interpolation-Ck} holds once we prove the following inequality
for all $x_0\in K$:
\begin{equation}
  \label{e:interpolation-Ck-1}
\max_{|\alpha|=\ell}|\partial^\alpha f(x_0)|\leq C R_0^{1-\ell/m}R_m^{\ell/m},\quad
R_k:=\max_{|\alpha|\leq k}\sup_{B(x_0,r_0)}|\partial^\alpha f|.
\end{equation}
By Taylor's inequality we have for all $y\in B(0,r_0)$ and some constant $C_m$ depending only on~$m$
$$
\bigg|f(x_0+y)-\sum_{\ell=0}^{m-1}P_\ell(y)\bigg|\leq C_mR_m\, |y|^m,\quad
P_\ell(y):=\sum_{|\alpha|=\ell}{\partial^\alpha f(x_0)\over \alpha!}y^\alpha.
$$
Substituting
$$
y:=\Big({R_0\over R_m}\Big)^{1/m}r\theta,\quad
\theta\in\mathbb S^{n-1},\quad
0\leq r\leq r_0
$$
and using that $|f(x_0+y)|\leq R_0$ we get
$$
\sup_{r\in [0,r_0]}\bigg|\sum_{\ell=0}^{m-1} \Big({R_0\over R_m}\Big)^{\ell/m}P_\ell(\theta)r^\ell \bigg|
\leq (1+C_mr_0^m)R_0.
$$
The expression on the left-hand side is the sup-norm on the interval $[0,r_0]$
of a polynomial of degree~$m-1$ in~$r$.
Using this sup-norm to estimate the coefficients of this polynomial, we obtain
$$
\sup_{\theta\in\mathbb S^{n-1}}|P_\ell(\theta)|\leq C_{m,r_0}R_0^{1-\ell/m}R_m^{\ell/m}\quad\text{for all}\quad \ell=0,\dots,m-1
$$
where the constant $C_{m,r_0}$ depends only on~$m,r_0$.
This implies~\eqref{e:interpolation-Ck-1}.
\end{proof}
%%%%%%%%%%%%%%%%%%%%%%%%%%%%%%%%%%%%%%%%%%%%%%%%%%%%%%%%%%%%%%%%%%%%%%%%%%%%%%%%
We are now ready to give
%%%%%%%%%%%%%%%%%%%%%%%%%%%%%%%%%%%%%%%%%%%%%%%%%%%%%%%%%%%%%%%%%%%%%%%%%%%%%%%%
\begin{proof}[Proof of Proposition~\ref{l:fourloc-lag}]
We show the following stronger estimate:
\begin{equation}
  \label{e:fll-0}
|\hat u(\xi/h)|\leq C'_Nh^{N+n/2}\,\langle\xi\rangle^{-n},\quad
\xi\in\mathbb R^n\setminus \Omega_\Phi(C_0^{-1}h').
\end{equation}
Take arbitrary $\xi\in\mathbb R^n\setminus\Omega_\Phi(C_0^{-1}h')$ and put
$$
s:=d(\xi,\Omega_\Phi)\geq C_0^{-1}h'.
$$
We have
\begin{equation}
  \label{e:fll-1}
\hat u(\xi/h)=\int_U e^{i\Phi_\xi(x)/h}a(x)\,dx,\quad
\Phi_\xi(x):=\Phi(x)-\langle x,\xi\rangle.
\end{equation}
In the rest of the proof we put
$$
N_0:=\bigg\lceil{2N+n\over 1-\tau}\bigg\rceil,\quad
N':=N_0+1
$$
and denote by~$C$ constants which depend
only on $\tau,n,N,C_0,C_{N'}$, whose precise value might change
from place to place.

We integrate by parts in~\eqref{e:fll-1} using the differential operator
$L$ defined by
$$
Lf(x)=\sum_{j=1}^n b_j(x)\partial_jf(x),\quad
b_j(x):=-i{\partial_j\Phi_\xi(x)\over |d\Phi_\xi(x)|^2}.
$$
Integrating by parts $N_0$ times and using that $hLe^{i\Phi_\xi(x)/h}=e^{i\Phi_\xi(x)/h}$ we get
\begin{equation}
  \label{e:fll-2}
\big|\hat u(\xi/h)\big|=\bigg|\int_{U}e^{i\Phi_\xi(x)/h}(hL^t)^{N_0}a(x)\,dx\bigg|
\leq C_0h^{N_0}\sup_K \big|(L^t)^{N_0}a\big|
\end{equation}
where $L^t$ is the transpose operator:
$$
L^tf(x)=-\sum_{j=1}^n \partial_j\big(b_j(x)f(x)\big).
$$
To estimate the function $(L^t)^{N_0}a$ we bound the derivatives of $\Phi_\xi$.
Since $\diam\Omega_\Phi\leq C_0h'\leq C_0^2s$ we have
$$
s\leq |d\Phi_\xi(x)|\leq Cs\quad\text{for all }x\in U.
$$
By Lemma~\ref{l:interpolation-Ck} applied to the first derivatives
of $\Phi_\xi$ we obtain the derivative bounds
for $0\leq\ell\leq N_0$
\begin{equation}
  \label{e:derbs}
\max_{|\alpha|=\ell+1}\sup_K |\partial^\alpha \Phi_\xi|\leq
C s^{1-\ell/N_0}
\leq Csh^{-(1-\tau)\ell/2}
\end{equation}
where in the last inequality we used the definition of $N_0$ and the fact that $s\geq C_0^{-1}h^\tau\geq C_0^{-1}h$.
This implies the derivative bounds for $0\leq \ell\leq N_0$
\begin{equation}
  \label{e:derbs2}
\max_{|\alpha|=\ell}\sup_K |\partial^\alpha b_j|\leq Cs^{-1}h^{-(1-\tau)\ell/2}.
\end{equation}
This gives an estimate on the right-hand side of~\eqref{e:fll-2}, implying
\begin{equation}
  \label{e:fll-3}
|\hat u(\xi/h)|\leq Ch^{(1+\tau)N_0/2}s^{-N_0}.
\end{equation}
We have $s\geq C^{-1}h^\tau$, thus (using again the definition of~$N_0$)
$$
|\hat u(\xi/h)|\leq Ch^{(1-\tau)N_0/2}\leq Ch^{N+n/2}.
$$
This gives~\eqref{e:fll-0} for $|\xi|\leq C$. On the other hand,
if $\xi$ is large enough then $s\geq \langle\xi\rangle/2$
in which case~\eqref{e:fll-0} follows from~\eqref{e:fll-3} as well
since $N_0\geq n$.
\end{proof}
%%%%%%%%%%%%%%%%%%%%%%%%%%%%%%%%%%%%%%%%%%%%%%%%%%%%%%%%%%%%%%%%%%%%%%%%%%%%%%%%

%%%%%%%%%%%%%%%%%%%%%%%%%%%%%%%%%%%%%%%%%%%%%%%%%%%%%%%%%%%%%%%%%%%%%%%%%%%%%%%%
%%%%%%%%%%%%%%%%%%%%%%%%%%%%%%%%%%%%%%%%%%%%%%%%%%%%%%%%%%%%%%%%%%%%%%%%%%%%%%%%
\medskip\noindent\textbf{Acknowledgements.}
We are extremely grateful to both anonymous referees for their very careful reading, and the numerous suggestions
to improve the presentation.
SD was supported by Clay Research Fellowship, Sloan Fellowship,
and NSF CAREER Grant DMS-1749858.
LJ was supported by Recruitment Program of Young Overseas Talent Plan.
SN acknowledges partial support from
the Agence National de la Recherche, through the grants
GERASIC-ANR-13-BS01-0007-02 and ISDEEC-ANR-16-CE40-0013.
This project was initiated during the {\it IAS Emerging Topics Workshop on
 Quantum Chaos and Fractal Uncertainty Principle} in October 2017.

%%%%%%%%%%%%%%%%%%%%%%%%%%%%%%%%%%%%%%%%%%%%%%%%%%%%%%%%%%%%%%%%%%%%%%%%%%%%%%%%

\end{document}